\newtheorem{thm}{Theorem}[section]
\newtheorem{lemma}[thm]{Lemma}
\newtheorem{proposition}[thm]{Proposition}
\newtheorem{definition}[thm]{Definition}
\newtheorem{corollary}[thm]{Corollary}
\newtheorem{notation}[thm]{Notation}
\newcommand{\p}{\mathbb{P}}
\newcommand{\q}{\mathbb{Q}}
\newcommand{\ot}{\mathrm{ot}}
\newcommand{\cf}{\mathrm{cf}}
\newcommand{\cof}{\mathrm{cof}}
\newcommand{\ran}{\mathrm{ran}}
\newcommand{\cl}{\mathrm{cl}}
\begin{document}

\title{Mitchell's Theorem Revisited}

\author{Thomas Gilton and John Krueger}

\address{Thomas Gilton \\ Department of Mathematics \\
University of California, Los Angeles\\
Box 951555\\
Los Angeles, CA 90095-1555}
\email{tdgilton@math.ucla.edu}

\address{John Krueger \\ Department of Mathematics \\ 
University of North Texas \\
1155 Union Circle \#311430 \\
Denton, TX 76203}
\email{jkrueger@unt.edu}

\date{June 2015; revised November 2016}

\thanks{2010 \emph{Mathematics Subject Classification:} 
Primary 03E35, 03E40; Secondary 03E05.
}

\thanks{\emph{Key words and phrases.} Forcing, approachability ideal, side conditions, adequate sets.}

\begin{abstract}
Mitchell's theorem on the approachability ideal states that it is consistent 
relative to a greatly Mahlo cardinal that there is no stationary subset 
of $\omega_2 \cap \cof(\omega_1)$ in the approachability ideal $I[\omega_2]$. 
In this paper we give a new proof of Mitchell's theorem, deriving it from 
an abstract framework of side condition methods.
\end{abstract}

\maketitle

\tableofcontents

\newpage

\begin{center}
\textbf{Introduction} 
\end{center}

\bigskip

The approachability ideal $I[\lambda^+]$, for an uncountable cardinal $\lambda$, 
is defined as follows. 
For a given sequence $\vec a = \langle a_i : i < \lambda^+ \rangle$ 
of bounded subsets of 
$\lambda^+$, let $S_{\vec a}$ denote the set of limit ordinals 
$\alpha < \lambda^+$ for which there 
exists a set $c \subseteq \alpha$, which is club in $\alpha$ with order type $\cf(\alpha)$, such that for all 
$\beta < \alpha$, there is $i < \alpha$ with $c \cap \beta = a_i$. 
Intuitively speaking, the set $S_{\vec a}$ carries a kind of weak square sequence, namely a sequence of clubs 
such that for each $\alpha$ in $S_{\vec a}$, 
the club attached to $\alpha$ has its initial segments 
enumerated at stages prior to $\alpha$. 
Define $I[\lambda^+]$ as the collection of sets $S \subseteq \lambda^+$ for which there exists a sequence 
$\vec a$ as above and a club $C \subseteq \lambda^+$ such that 
$S \cap C \subseteq S_{\vec a}$.
In other words, $I[\lambda^+]$ is the ideal of subsets of $\lambda^+$ which is generated modulo the 
club filter by sets of the form $S_{\vec a}$.

Let $\lambda$ be a regular uncountable cardinal. 
Shelah \cite{shelah} proved that the set $\lambda^+ \cap \cof(< \! \lambda)$ is in $I[\lambda^+]$. 
Therefore the structure of $I[\lambda^+]$ is determined by which subsets of 
$\lambda^+ \cap \cof(\lambda)$ belong to it. 
At one extreme, the weak square principle $\Box_\lambda^*$ implies that 
$\lambda^+ \cap \cof(\lambda)$ is in $I[\lambda^+]$; 
therefore $I[\lambda^+]$ is just the power set of $\lambda^+$. 
The opposite extreme would be that no stationary subset of 
$\lambda^+ \cap \cof(\lambda)$ belongs to $I[\lambda^+]$, in other words, 
that $I[\lambda^+]$ is the nonstationary ideal when restricted to cofinality $\lambda$. 
Whether the second extreme is consistent was open for several decades, and was eventually 
solved by Mitchell \cite{mitchell}. 
Mitchell proved that it is consistent, relative to the consistency of a greatly Mahlo cardinal, 
that there does not exist a stationary subset of $\omega_2 \cap \cof(\omega_1)$ in $I[\omega_2]$. 
We will refer to this result as \emph{Mitchell's theorem}.

Mitchell's theorem is important not only for solving a deep and long-standing 
open problem in combinatorial set theory, but also for introducing powerful new 
techniques in forcing. 
A basic tool in the proof is a forcing poset for adding a club subset 
of $\omega_2$ with finite conditions, using finite sets of countable models as side conditions. 
A similar forcing poset was introduced by Friedman \cite{friedman} 
around the same time. 
The use of countable models in Friedman's and Mitchell's forcing posets for adding a club 
expanded the original side condition method 
of {Todor\v cevi\' c} \cite{todor}, which was designed to add a generic object of size $\omega_1$, 
to adding a generic object of size $\omega_2$. 
In addition, Mitchell's proof introduced the new concepts of strongly generic conditions 
and strongly proper forcing posets, which are closely related to the approximation property.

Several years later, Neeman \cite{neeman} developed a general framework of side 
conditions, which he called \emph{sequences of models of two types}. 
An important distinction between Neeman's side conditions and those of Friedman and 
Mitchell is that the two-type side conditions include both countable and uncountable models. 
A couple of years later, Krueger \cite{jk21} developed an alternative framework of 
side conditions called \emph{adequate sets}. 
This approach bases the analysis of side conditions on the ideas of 
the \emph{comparison point} and \emph{remainder points} of two countable models. 
Notably, this approach has led to the solution of an open problem of Friedman \cite{friedman}, 
by showing how to add a club subset of $\omega_2$ with finite conditions while 
preserving the continuum hypothesis (\cite{jk25}). 
Other applications are given in \cite{jk22}, \cite{jk23}, \cite{jk24}, and \cite{jk26}.

Notwithstanding the merits of the frameworks of Neeman \cite{neeman} and 
Krueger \cite{jk21}, these frameworks are limited in the sense that they are intended to add 
a single subset of $\omega_2$ (or of a cardinal $\kappa$ which is collapsed to become $\omega_2$). 
The proof of Mitchell's theorem, on the other hand, involves adding $\kappa^+$ many club subsets 
of a cardinal $\kappa$. 
Many consistency proofs in set theory about a cardinal $\kappa$ involve adding $\kappa^+$ 
many subsets of $\kappa$ by forcing, so that each of the potential 
counterexamples to the statement being forced 
is captured in some intermediate generic extension and dealt with by the rest of the forcing extension.

The goal of this paper is to extend the framework of adequate sets to allow for 
adding many subsets of $\omega_2$, or of a cardinal $\kappa$ which is collapsed to 
become $\omega_2$. 
The purpose of this extension is to provide general tools which will be useful for 
proving new consistency results on $\omega_2$. 
In Part III we give an example by deriving Mitchell's 
theorem from the abstract framework developed in Parts I and II. 
The paper includes a very detailed treatment of adequate sets and remainder points in Sections 1 and 2, 
and of Mitchell's application of the square principle to side conditions in Sections 7 and 8.  
We also develop some new ideas, including canonical models in 
Sections 9 and 10, and the main proxy lemma in Section 11.

We will analyze finite sets of countable elementary substructures of $H(\kappa^+)$. 
The method of adequate sets handles the interaction of the models below $\kappa$. 
Following Mitchell, we employ the square principle $\Box_{\kappa}$ to describe and control the 
interaction of countable models between $\kappa$ and $\kappa^+$. 
We introduce a new kind of side condition, which we call an 
\emph{$\vec S$-obedient side condition}. 
We show that the forcing poset consisting of $\vec S$-obedient side conditions on 
$H(\kappa^+)$, where $\kappa$ is a greatly Mahlo cardinal, ordered by 
component-wise inclusion, forces that $\kappa = \omega_2$ and 
there is no stationary subset of $\omega_2 \cap \cof(\omega_1)$ 
in the approachability ideal $I[\omega_2]$.

\bigskip

This project began with the M.S.\ thesis of Gilton at the University of North Texas, 
in which he reconstructed the original proof of 
Mitchell's theorem in the context of adequate sets. 
Krueger is indebted to Gilton for explaining to him many of the details of Mitchell's proof, 
especially the use of $\Box_\kappa$. 
Gilton isolated a workable requirement on remainder points which later evolved into the idea of 
$\vec S$-obedient side conditions.

After Gilton's thesis was complete, Krueger returned to the problem and made a number of advances. 
Krueger developed the new idea of canonical models, which is dealt with in Sections 9 and 10. 
Canonical models are models which appear in a given model $N$, reflect information about 
models lying outside of $N$, and are determined by canonical parameters which arise in the 
comparison of models. 
He isolated the main proxy lemma, Lemma 11.5, which significantly simplifies the method of proxies 
used by Mitchell. 
And he introduced the idea of $\vec S$-obedient side conditions, and showed that forcing with 
pure side conditions on a greatly Mahlo cardinal produces a generic extension in which the 
approachability ideal on $\omega_2$ restricted to cofinality $\omega_1$ is the nonstationary ideal.

\bigskip

This paper was written for an audience with a minimum 
background of one year of graduate studies in set theory, with a working knowledge of 
forcing and proper forcing, and with some familiarity with generalized stationarity.

For a regular uncountable cardinal $\mu$ and a set $X$ with $\mu \subseteq X$, we let 
$P_\mu(X)$ denote the set $\{ a \subseteq X : |a| < \mu \}$. 
A set $S \subseteq P_\mu(X)$ being stationary is equivalent to the statement that for 
any function $F : X^{<\omega} \to X$, there exists $a \in S$ such that $a \cap \mu \in \mu$ and 
$a$ is closed under $F$. 

If $a$ is a set of ordinals, then $\lim(a)$ denotes the set of ordinals $\beta$ such that 
for all $\gamma < \beta$, $a \cap (\gamma,\beta) \ne \emptyset$. 
We let $\cl(a) = a \cup \lim(a)$. 
If $M$ is a set, we write $\sup(M)$ to denote $\sup(M \cap On)$.

If $\mathcal A$ is a structure in a first order language, and $X_1,\ldots,X_k$ are subsets of 
the underlying set of $\mathcal A$, then we write 
$(\mathcal A,X_1,\ldots,X_k)$ to denote the expansion of the structure $\mathcal A$ 
obtained by adding $X_1,\ldots,X_k$ as predicates.

\bigskip

\part{Basic side condition methods}

\bigskip

\addcontentsline{toc}{section}{1. Adequate sets}

\textbf{\S 1. Adequate sets}

\stepcounter{section}

\bigskip

We begin the paper by working out the basic framework of adequate sets. 
Roughly speaking, this framework provides methods for describing and handling 
the interaction of countable elementary substructures below $\omega_2$, 
or below $\kappa$ for some regular uncountable cardinal $\kappa$ which 
is intended to become $\omega_2$ in a forcing extension.

Adequate sets were introduced by Krueger \cite{jk21}; 
many of the results of this section 
appear in \cite{jk21}, although in a slightly different form.

\bigskip

We fix objects 
$\kappa$, $\lambda$, $T^*$, $\pi^*$, $C^*$, $\Lambda$, 
$\mathcal X_0$, and $\mathcal Y_0$ as follows.

\begin{notation}
For the remainder of the paper, $\kappa$ is a regular cardinal 
with $\omega_2 \le \kappa$.
\end{notation}

In \cite{jk21} we only considered the case when 
$\kappa = \omega_2$. 
In the proof of Mitchell's theorem given in Part III, $\kappa$ is a greatly Mahlo 
cardinal.

\begin{notation}
Fix a cardinal $\lambda$ such that $\kappa \le \lambda$. 
In Parts II and III we will let $\lambda = \kappa^+$.
\end{notation}

\begin{definition}
A set $T \subseteq P_{\omega_1}(\kappa)$ is \emph{thin} 
if for all $\beta < \kappa$, 
$$
|\{ a \cap \beta : a \in T \}| < \kappa.
$$
\end{definition}

The idea of a thin stationary set was introduced by Friedman \cite{friedman}, 
who used a thin stationary set to develop a forcing poset 
for adding a club subset of a fat stationary subset of 
$\omega_2$ with finite conditions.

Observe that if $|\beta^{\omega}| < \kappa$ for all $\beta < \kappa$, 
then $P_{\omega_1}(\kappa)$ itself is thin. 
Krueger proved that the existence of a thin stationary subset of 
$P_{\omega_1}(\omega_2)$ is independent of ZFC; see \cite{kruegerthin}.

\begin{notation}
Fix a thin stationary set $T^* \subseteq P_{\omega_1}(\kappa)$ which 
satisfies the property that for all $\beta < \kappa$ and 
$a \in T^*$, $a \cap \beta \in T^*$. 
In Part III, we will let $T^* = P_{\omega_1}(\kappa)$.
\end{notation}

Note that if $T$ is a thin stationary set, then the set 
$\{ a \cap \beta : a \in T, \ \beta < \kappa \}$ is a thin stationary set which 
satisfies the property of being 
closed under initial segments which is described in Notation 1.4.

Observe that if $T$ is a thin stationary set, then $|T| = \kappa$.

\begin{notation}
Fix a bijection $\pi^* : T^* \to \kappa$. 
\end{notation}

\begin{notation}
Let $C^*$ denote the set of $\beta < \kappa$ such that 
whenever $a$ is a bounded subset of $\beta$ in $T^*$, 
then $\pi^*(a) < \beta$.
\end{notation}

The fact that $T^*$ is thin easily implies that $C^*$ is a club subset of $\kappa$.

\begin{notation}
Let $\Lambda$ denote the set $C^* \cap \cof(>\! \omega)$.
\end{notation}

\begin{notation}
For the remainder of the paper, let $\unlhd$ denote a 
well-ordering of $H(\lambda)$.
\end{notation}

\begin{notation}
Let $\mathcal X_0$ denote the set of $M$ in $P_{\omega_1}(H(\lambda))$ 
such that $M \cap \kappa \in T^*$ and $M$ is an elementary substructure 
of $(H(\lambda),\in,\unlhd,\kappa,T^*,\pi^*,C^*,\Lambda)$.
\end{notation}

\begin{notation}
Let $\mathcal Y_0$ denote the set of $P$ in $P_{\kappa}(H(\lambda))$ 
such that $P \cap \kappa \in \kappa$ and 
$P$ is an elementary substructure of 
$(H(\lambda),\in,\unlhd,\kappa,T^*,\pi^*,C^*,\Lambda)$.
\end{notation}

Note that if $P$ and $Q$ are in $\mathcal Y_0$, then 
$P \cap Q$ is in $\mathcal Y_0$. 
And if $M \in \mathcal X_0$ and $P \in \mathcal Y_0$, 
then $M \cap P$ is in $\mathcal X_0$. 
For the presence of the well-ordering $\unlhd$ implies that 
$P \cap Q$ and $M \cap P$ are elementary substructures, 
and $M \cap P \cap \kappa$ is an initial segment of $M \cap \kappa$ 
and hence is in $T^*$. 
For the intersection of models in $\mathcal X_0$, 
see Lemma 1.23.

This completes the introduction of the basic objects. 

\bigskip

Next we will define comparison points and a way to compare two models 
in $\mathcal X_0$.

\begin{definition}
For $M \in \mathcal X_0$, let $\Lambda_M$ denote the set of 
$\beta \in \Lambda$ such that 
$$
\beta = \min(\Lambda \setminus \sup(M \cap \beta)).
$$
\end{definition}

Observe that since any member of $\Lambda_M$ is determined by an 
ordinal in $\cl(M)$, and $\cl(M)$ is countable, 
it follows that $\Lambda_M$ is countable.

\begin{lemma}
Let $M \in \mathcal X_0$. 
If $\beta \in \Lambda_M$ and 
$\beta_0 \in \Lambda \cap \beta$, then 
$M \cap [\beta_0,\beta) \ne \emptyset$.
\end{lemma}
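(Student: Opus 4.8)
The plan is to argue by contradiction, directly unwinding the definition of $\Lambda_M$. Suppose toward a contradiction that $M \cap [\beta_0,\beta) = \emptyset$. Since $\beta_0 \in [\beta_0,\beta)$, this gives in particular $\beta_0 \notin M$, and moreover every ordinal of $M$ below $\beta$ must lie below $\beta_0$; hence $M \cap \beta = M \cap \beta_0$, and therefore $\sup(M \cap \beta) \le \beta_0$.

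Next I would observe that $\beta_0$ witnesses a failure of the minimality clause defining $\Lambda_M$. Indeed, $\beta_0 \in \Lambda$ by hypothesis, and $\beta_0 \ge \sup(M \cap \beta)$ by the previous paragraph, so $\beta_0 \in \Lambda \setminus \sup(M \cap \beta)$. But $\beta_0 < \beta$, and this contradicts the fact that $\beta = \min(\Lambda \setminus \sup(M \cap \beta))$, which is precisely the assertion $\beta \in \Lambda_M$.

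The only point requiring any care is the estimate on the supremum: one must note that the emptiness of $M \cap [\beta_0,\beta)$ forces $\sup(M \cap \beta) \le \beta_0$ even in the boundary situation where $\beta_0$ is a limit point of $M$, and this holds because $\beta_0$ itself is excluded from $M$ — so this is immediate. No use is made of the elementarity of $M$, nor of the specific definition of $C^*$ or of the cofinality restriction built into $\Lambda$; the statement is purely about the relative positions of $\beta_0$, $\beta$, and $\sup(M \cap \beta)$, and I anticipate no genuine obstacle.
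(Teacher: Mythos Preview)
Your proof is correct and is essentially identical to the paper's: both argue by contradiction, observe that $M \cap [\beta_0,\beta) = \emptyset$ forces $\sup(M \cap \beta) \le \beta_0$, and then note that $\beta_0 \in \Lambda$ with $\beta_0 < \beta$ contradicts $\beta = \min(\Lambda \setminus \sup(M \cap \beta))$. Your additional remarks about $\beta_0 \notin M$ and the boundary case are harmless but unnecessary.
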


\begin{proof}
If $M \cap [\beta_0,\beta) = \emptyset$, then 
$\sup(M \cap \beta) \le \beta_0$. 
So 
$$
\beta = \min(\Lambda \setminus \sup(M \cap \beta)) \le \beta_0 < \beta,
$$
which is a contradiction.
\end{proof}

\begin{lemma}
Let $M$ and $N$ be in $\mathcal X_0$. 
Then $\Lambda_M \cap \Lambda_N$ has a maximum element.
\end{lemma}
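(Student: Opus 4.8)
Proof proposal for Lemma 1.23 (the claim that $\Lambda_M \cap \Lambda_N$ has a maximum).

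The plan is to show that $\Lambda_M \cap \Lambda_N$ is nonempty and that it is closed under suprema of its bounded subsets in the sense needed to extract a maximum. For nonemptiness, I would look at $\beta^* := \min(\Lambda \setminus (\sup(M \cap \kappa) + \sup(N \cap \kappa) + 1))$, or more carefully the least element of $\Lambda$ above both $\sup(M)$ and $\sup(N)$; since $M$ and $N$ are countable and $\Lambda = C^* \cap \cof(>\omega)$ is club in the regular cardinal $\kappa$, such a $\beta^*$ exists, and one checks directly from Definition 1.21 that $\beta^* \in \Lambda_M$ (because $M \cap \beta^* = M \cap \kappa$ has supremum strictly below $\beta^*$, and $\beta^*$ is by construction the least element of $\Lambda$ with that supremum bounded below it — actually one must be slightly careful, but the least element of $\Lambda$ above $\sup(M)$ is exactly $\min(\Lambda \setminus \sup(M \cap \beta^*))$). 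Symmetrically $\beta^* \in \Lambda_N$, so the intersection is nonempty.

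Next I would show that $\Lambda_M \cap \Lambda_N$ is bounded in $\kappa$: every element of it lies below $\beta^*$ together with... no — rather, I claim every element of $\Lambda_M$ is at most $\beta^*$. Indeed if $\beta \in \Lambda_M$ then $\sup(M \cap \beta) \le \sup(M \cap \kappa) < \beta^*$, so $\beta = \min(\Lambda \setminus \sup(M \cap \beta)) \le \beta^*$ since $\beta^* \in \Lambda$ and $\beta^* > \sup(M \cap \beta)$. Hence $\Lambda_M \cap \Lambda_N \subseteq \beta^* + 1$ is bounded, and since $\Lambda_M$ is countable (as noted after Definition 1.21), it suffices to show the set attains its supremum. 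Let $\gamma = \sup(\Lambda_M \cap \Lambda_N)$. The crux is to verify $\gamma \in \Lambda_M \cap \Lambda_N$. If $\gamma$ is actually a maximum of the set we are done, so assume for contradiction that $\gamma$ is a limit of elements of $\Lambda_M \cap \Lambda_N$ strictly below it; I must then derive that $\gamma$ itself belongs to both $\Lambda_M$ and $\Lambda_N$.

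For that, suppose $\langle \beta_n : n < \omega \rangle$ is increasing and cofinal in $\gamma$ with each $\beta_n \in \Lambda_M \cap \Lambda_N$. First, $\gamma \in \Lambda$: since $\gamma \in \cof(>\omega)$? No — $\gamma$ has cofinality $\omega$ by assumption, so this is the obstacle, and the resolution must be that this case simply cannot occur. Concretely: for each $n$, since $\beta_{n+1} \in \Lambda_M$ and $\beta_n \in \Lambda \cap \beta_{n+1}$, Lemma 1.22 gives $M \cap [\beta_n, \beta_{n+1}) \ne \emptyset$; likewise $N \cap [\beta_n, \beta_{n+1}) \ne \emptyset$. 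Thus $M$ (and $N$) has elements cofinal in $\gamma$, so $\sup(M \cap \gamma) = \gamma = \sup(N \cap \gamma)$. But then consider $\beta := \min(\Lambda \setminus \gamma)$; I claim $\beta \in \Lambda_M \cap \Lambda_N$ and $\beta > \gamma$, contradicting that $\gamma$ bounds the intersection. We have $\sup(M \cap \beta) = \gamma$ (since $M \cap \beta = M \cap \gamma$ as $\gamma \notin \Lambda$ would force $\beta > \gamma$, and nothing of $M$ lies in $[\gamma,\beta)$ because... one uses that $\beta$ is the least element of $\Lambda$ past $\gamma$ and $\sup(M\cap\kappa) \ge \gamma$ forces looking at whether $M$ reaches into $[\gamma,\beta)$ — if it did, its sup would still be $\le$ the next point of $\Lambda$, fine), and $\beta = \min(\Lambda \setminus \gamma) = \min(\Lambda \setminus \sup(M\cap\beta))$, so $\beta \in \Lambda_M$, symmetrically $\beta \in \Lambda_N$, and $\beta \ge \gamma$ with $\beta \ne \gamma$ (as $\gamma$ has cofinality $\omega$ and $\Lambda \subseteq \cof(>\omega)$), so $\beta > \gamma$ — the desired contradiction. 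Therefore $\gamma$ is in fact the maximum.

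The step I expect to be the main obstacle is the bookkeeping in the last paragraph: pinning down exactly what $M \cap \beta$ and $\sup(M \cap \beta)$ are when $\beta = \min(\Lambda \setminus \gamma)$, and making sure the argument that $\gamma$ cannot have countable cofinality inside $\Lambda$ is airtight — this hinges on the interplay between $\Lambda \subseteq \cof(>\omega)$ and Lemma 1.22, which is precisely why $\Lambda$ was defined to avoid cofinality $\omega$. Everything else is routine verification against Definition 1.21.
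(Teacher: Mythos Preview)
Your core argument---assume $\gamma := \sup(\Lambda_M \cap \Lambda_N)$ is not attained, use Lemma 1.12 to see $\gamma$ is a limit point of both $M$ and $N$, then show $\beta := \min(\Lambda \setminus \gamma) > \gamma$ lies in $\Lambda_M \cap \Lambda_N$---is exactly the paper's proof.

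Two places where you make unnecessary work for yourself. First, nonemptiness: your $\beta^*$ above both $\sup(M \cap \kappa)$ and $\sup(N \cap \kappa)$ need not lie in $\Lambda_M$ (if there is an element of $\Lambda$ strictly between $\sup(M \cap \kappa)$ and $\sup(N \cap \kappa)$, that smaller element is $\min(\Lambda \setminus \sup(M \cap \beta^*))$, not $\beta^*$). The paper simply takes $\min(\Lambda)$, which trivially belongs to every $\Lambda_K$. Second, the boundedness paragraph is superfluous once you have noted $\Lambda_M$ is countable.

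On the ``bookkeeping'' you flag as the obstacle: it is not one. You do not need to know whether $M$ meets $[\gamma,\beta)$. Since $\gamma$ is a limit point of $M$ you have $\sup(M \cap \beta) \ge \gamma$, and since $\beta \in \Lambda \subseteq \cof(>\!\omega)$ and $M$ is countable you have $\sup(M \cap \beta) < \beta$. Thus $\gamma \le \sup(M \cap \beta) < \beta$, and as $\Lambda \cap [\gamma,\beta) = \emptyset$ by choice of $\beta$, you get $\min(\Lambda \setminus \sup(M \cap \beta)) = \beta$ immediately. The paper just says ``easily $\beta \in \Lambda_M \cap \Lambda_N$'' for exactly this reason.
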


\begin{proof}
Note that the first member of $\Lambda$ is in both $\Lambda_M$ and 
$\Lambda_N$, and therefore $\Lambda_M \cap \Lambda_N$ is nonempty. 
Suppose for a contradiction that 
$\gamma := \sup(\Lambda_M \cap \Lambda_N)$ 
is not in $\Lambda_M \cap \Lambda_N$. 
Fix an increasing sequence 
$\langle \gamma_n : n < \omega \rangle$ in 
$\Lambda_M \cap \Lambda_N$ which is cofinal in $\gamma$. 
Then for each $n < \omega$, 
$M \cap [\gamma_n,\gamma_{n+1})$ is nonempty by Lemma 1.12. 
So $\gamma$ is a limit point of $M$. 
Similarly, $\gamma$ is a limit point of $N$. 
Let $\beta = \min(\Lambda \setminus \gamma)$. 
Since $\gamma$ has cofinality $\omega$, $\gamma < \beta$, and since 
$\gamma$ is a limit point of $M$ and a limit point of $N$, 
easily $\beta \in \Lambda_M \cap \Lambda_N$. 
This contradicts that $\gamma = \sup(\Lambda_M \cap \Lambda_N)$ 
and $\gamma < \beta$. 
\end{proof}

\begin{definition}
For $M$ and $N$ in $\mathcal X_0$, let $\beta_{M,N}$ be 
the maximum element of $\Lambda_M \cap \Lambda_N$. 
The ordinal $\beta_{M,N}$ is called the 
\emph{comparison point of $M$ and $N$}.
\end{definition}

The most important property of $\beta_{M,N}$ is described in the next lemma.

\begin{lemma}
Let $M$ and $N$ be in $\mathcal X_0$. 
Then 
$$
\cl(M \cap \kappa) \cap \cl(N \cap \kappa) \subseteq \beta_{M,N}.
$$
\end{lemma}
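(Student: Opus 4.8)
The plan is to show that any ordinal $\delta$ lying in both $\cl(M \cap \kappa)$ and $\cl(N \cap \kappa)$ must be strictly below $\beta_{M,N}$. Suppose toward a contradiction that $\delta \in \cl(M \cap \kappa) \cap \cl(N \cap \kappa)$ and $\delta \ge \beta_{M,N}$. The key observation I would exploit is that $\beta_{M,N}$ is the \emph{maximum} element of $\Lambda_M \cap \Lambda_N$, so the strategy is to produce an element of $\Lambda_M \cap \Lambda_N$ that is strictly greater than $\beta_{M,N}$, contradicting maximality.

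First I would handle the case $\delta = \beta_{M,N}$ separately, or rather observe that since each element of $\Lambda$ has uncountable cofinality (recall $\Lambda = C^* \cap \cof(>\!\omega)$) while $\cl(M \cap \kappa)$ and $\cl(N \cap \kappa)$ contain only ordinals of cofinality at most $\omega$ at their limit points — more precisely, any limit point of the countable set $M \cap \kappa$ has cofinality $\omega$, and any member of $M \cap \kappa$ itself is an ordinal, so the genuinely relevant case is when $\delta$ is a limit point. Actually the cleanest approach: if $\delta \in \cl(M \cap \kappa)$, then either $\delta \in M$ or $\delta \in \lim(M \cap \kappa)$; in the latter case $\cf(\delta) = \omega$ so $\delta \notin \Lambda$. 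So I would argue that $\delta \in \cl(M \cap \kappa) \cap \cl(N \cap \kappa)$ with $\delta \ge \beta_{M,N}$ forces $\delta$ to be a limit point of both $M \cap \kappa$ and $N \cap \kappa$ (the case $\delta \in M \cap N$ being easy to dispatch since then $\sup(M \cap (\delta+1))$ and $\sup(N \cap (\delta+1))$ exceed $\delta$, pushing the next element of $\Lambda$ past $\delta \ge \beta_{M,N}$). Then set $\gamma = \min(\Lambda \setminus \delta)$ (using $\delta \ge \beta_{M,N} \ge$ first element of $\Lambda$, and $\delta < \kappa$ with $C^*$ club, so $\gamma$ exists and $\gamma < \kappa$). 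Since $\cf(\delta) = \omega$ but every element of $\Lambda$ has uncountable cofinality, $\delta \notin \Lambda$, hence $\gamma > \delta$.

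Now I claim $\gamma \in \Lambda_M \cap \Lambda_N$. Since $\delta$ is a limit point of $M \cap \kappa$, we have $\sup(M \cap \gamma) \ge \sup(M \cap \delta) = \delta$; combined with $\gamma = \min(\Lambda \setminus \delta)$ and the fact that there is no element of $\Lambda$ in the interval $[\delta, \gamma)$, this should give $\gamma = \min(\Lambda \setminus \sup(M \cap \gamma))$, i.e., $\gamma \in \Lambda_M$. The same argument applies to $N$. But then $\gamma \in \Lambda_M \cap \Lambda_N$ with $\gamma > \delta \ge \beta_{M,N}$, contradicting that $\beta_{M,N}$ is the maximum of $\Lambda_M \cap \Lambda_N$.

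The main obstacle I anticipate is the bookkeeping around the exact definition of $\Lambda_M$ — verifying $\gamma = \min(\Lambda \setminus \sup(M \cap \gamma))$ requires pinning down that $\sup(M \cap \gamma)$ lands in the half-open interval $[\delta, \gamma)$ and that $\Lambda$ has no points in that interval, so that $\gamma$ really is the least element of $\Lambda$ above $\sup(M \cap \gamma)$. One subtlety is whether $\sup(M \cap \gamma)$ could equal $\gamma$ itself (if $\gamma$ were a limit point of $M$), but $\gamma \in \Lambda$ has uncountable cofinality while $M$ is countable, so $\sup(M \cap \gamma) < \gamma$; this is exactly why $\Lambda$ was defined to consist of ordinals of uncountable cofinality. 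A secondary point of care is the degenerate case where $\delta = \beta_{M,N}$ and $\delta$ happens to be a limit point of both models — but then the same construction of $\gamma$ goes through verbatim and still yields $\gamma > \delta = \beta_{M,N}$, so no separate treatment is needed beyond noting $\delta \notin \Lambda$.
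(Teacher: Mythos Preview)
Your approach is essentially the same as the paper's: assume $\delta \ge \beta_{M,N}$, produce an element of $\Lambda_M \cap \Lambda_N$ strictly above $\beta_{M,N}$, and contradict maximality. The paper organizes the case split more cleanly by asking whether $\delta \in \Lambda$ rather than whether $\delta$ is a member versus a limit point: if $\delta \in \Lambda$ then $\cf(\delta) > \omega$ forces $\delta \in M \cap N$ (not a limit point of either countable set), and one takes $\beta = \min(\Lambda \setminus (\delta+1))$; if $\delta \notin \Lambda$ then $\min(\Lambda \setminus \delta) = \min(\Lambda \setminus (\delta+1))$ and your argument goes through verbatim. Your dichotomy ``limit point of both'' versus ``$\delta \in M \cap N$'' is not exhaustive (you omit the mixed case $\delta \in M$, $\delta \in \lim(N \cap \kappa) \setminus N$), though your main argument actually covers that case too since $\cf(\delta) = \omega$ there; and in the $\delta \in M \cap N$ sketch, $\sup(M \cap (\delta+1)) = \delta$, not strictly greater---the paper instead uses $\delta + 1 \in M \cap N$ by elementarity to get $\sup(M \cap \beta) \ge \delta+1$.
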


\begin{proof}
Suppose for a contradiction that $\xi$ is in 
$\cl(M \cap \kappa) \cap \cl(N \cap \kappa)$ but $\beta_{M,N} \le \xi$. 
Let $\beta = \min(\Lambda \setminus (\xi + 1))$. 
Since $\beta$ is a limit ordinal, 
$\beta_{M,N} \le \xi < \xi + 1 < \beta$. 
We claim that $\beta \in \Lambda_M \cap \Lambda_N$. 
Then by the maximality of $\beta_{M,N}$, 
$\beta \le \beta_{M,N}$, which is a contradiction.

First, assume that $\xi \in \Lambda$. 
Then $\xi$ has uncountable cofinality. 
So $\xi$ cannot be a limit point of $M$ or of $N$. 
Hence $\xi \in M \cap N$. 
By elementarity, $\xi + 1 \in M \cap N$. 
Since $\xi+1 \in M \cap \beta$, 
$\xi + 1 \le \sup(M \cap \beta) < \beta$. 
As $\beta = \min(\Lambda \setminus (\xi+1))$, clearly 
$\beta = \min(\Lambda \setminus \sup(M \cap \beta))$. 
So $\beta \in \Lambda_M$. 
The same argument shows that $\beta \in \Lambda_N$, and we are done.

Secondly, assume that $\xi \notin \Lambda$. 
Then $\min(\Lambda \setminus \xi) = \min(\Lambda \setminus (\xi+1)) = \beta$. 
Since $\xi < \beta$ and $\xi$ is either in $M \cap \kappa$ or is a limit point of $M \cap \kappa$, 
clearly $\xi \le \sup(M \cap \beta) < \beta$. 
Hence $\beta = \min(\Lambda \setminus \sup(M \cap \beta))$, and therefore 
$\beta \in \Lambda_M$. 
The same argument shows that $\beta \in \Lambda_N$, finishing the proof.
\end{proof}

The next lemma provides some useful technical facts about 
comparison points. 
Statement (4) is not very intuitive; however it turns out that this observation 
simplifies some of the material in the original development of 
adequate sets in \cite{jk21}.

\begin{lemma}
Let $L$, $M$, and $N$ be in $\mathcal X_0$. 
\begin{enumerate}
\item If $L \cap \kappa \subseteq M \cap \kappa$ 
then $\Lambda_L \subseteq \Lambda_M$. 
Hence $\beta_{L,N} \le \beta_{M,N}$.
\item If $L \cap \kappa \subseteq \beta$ where $\beta \in \Lambda$, 
then $\Lambda_L \subseteq \beta+1$. 
Hence $\beta_{L,M} \le \beta$.
\item If $\beta < \beta_{M,N}$ and $\beta \in \Lambda$, then 
$M \cap [\beta,\beta_{M,N}) \ne \emptyset$.
\item Suppose that $M \cap \beta_{L,M} \subseteq N$. 
Then $\beta_{L,M} \le \beta_{L,N}$.
\end{enumerate}
\end{lemma}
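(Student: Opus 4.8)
The plan is to prove each of the four items in turn, leaning heavily on Lemma 1.12, Lemma 1.15, and the definition of $\Lambda_M$, since each statement is ultimately about comparing suprema of the form $\sup(M \cap \beta)$ against elements of $\Lambda$.

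For item (1), I would start with an arbitrary $\beta \in \Lambda_L$, so $\beta = \min(\Lambda \setminus \sup(L \cap \beta))$. Since $L \cap \kappa \subseteq M \cap \kappa$, we have $\sup(L \cap \beta) \le \sup(M \cap \beta)$; and since $\sup(M \cap \beta) \le \beta$ always holds (as $M \cap \beta$ is bounded by $\beta$), we get $\sup(L \cap \beta) \le \sup(M \cap \beta) \le \beta$. The key point is that $\min(\Lambda \setminus x)$ is monotone in $x$, and since $\beta$ is already the least element of $\Lambda$ above $\sup(L \cap \beta)$, and $\sup(M \cap \beta) < \beta$ (one checks $\sup(M \cap \beta) \ne \beta$ since $\beta \in \Lambda$ has uncountable cofinality while $M$ is countable — unless $\beta$ is a successor in $\Lambda$, in which case $\sup(M\cap\beta)<\beta$ is even clearer, but actually the cleanest argument is that $\sup(M\cap\beta)\in[\sup(L\cap\beta),\beta)$ forces $\min(\Lambda\setminus\sup(M\cap\beta))=\beta$ too), we conclude $\beta \in \Lambda_M$. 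The conclusion $\beta_{L,N} \le \beta_{M,N}$ then follows immediately: $\beta_{L,N} \in \Lambda_L \cap \Lambda_N \subseteq \Lambda_M \cap \Lambda_N$, so it is bounded by the maximum $\beta_{M,N}$ of that set.

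For item (2), if $L \cap \kappa \subseteq \beta \in \Lambda$, then for any $\gamma \in \Lambda_L$ I want $\gamma \le \beta$. If $\gamma > \beta$, then $L \cap \gamma = L \cap \kappa \cap \gamma \supseteq$ nothing new past $\beta$, i.e. $\sup(L \cap \gamma) = \sup(L \cap \kappa) \le \beta < \gamma$; but then $\min(\Lambda \setminus \sup(L \cap \gamma)) \le \beta < \gamma$, contradicting $\gamma = \min(\Lambda \setminus \sup(L \cap \gamma))$. Hence $\Lambda_L \subseteq \beta + 1$, and $\beta_{L,M} \in \Lambda_L$ gives $\beta_{L,M} \le \beta$. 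For item (3), this is essentially Lemma 1.12 applied with $\beta_{M,N}$ in the role of the large element: since $\beta_{M,N} \in \Lambda_M$ and $\beta \in \Lambda \cap \beta_{M,N}$, Lemma 1.12 directly gives $M \cap [\beta, \beta_{M,N}) \ne \emptyset$. So (3) is almost immediate.

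Item (4) is the one I expect to be the main obstacle, since (as the authors note) it is the least intuitive. The hypothesis is $M \cap \beta_{L,M} \subseteq N$, and I want $\beta_{L,M} \le \beta_{L,N}$; by maximality of $\beta_{L,N}$ it suffices to show $\beta_{L,M} \in \Lambda_L \cap \Lambda_N$. We already know $\beta_{L,M} \in \Lambda_L$, so the real content is $\beta_{L,M} \in \Lambda_N$, i.e. $\beta_{L,M} = \min(\Lambda \setminus \sup(N \cap \beta_{L,M}))$. The plan is to show $\sup(N \cap \beta_{L,M}) = \sup(M \cap \beta_{L,M})$: the inequality $\ge$ is immediate from $M \cap \beta_{L,M} \subseteq N$, and for $\le$ I would argue that $N$ cannot have elements in $[\sup(M \cap \beta_{L,M}), \beta_{L,M})$ — this should follow from Lemma 1.15 (applied to $M$ and $N$, or rather needs the comparison point $\beta_{M,N}$ and the observation that $N\cap\kappa$'s limit points below $\beta_{L,M}$ are constrained), combined with the fact that $\beta_{L,M} \in \Lambda_L$ and item (3) controlling where $M$'s elements sit between consecutive elements of $\Lambda$. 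Once $\sup(N \cap \beta_{L,M}) = \sup(M \cap \beta_{L,M})$ is established, the fact that $\beta_{L,M} \in \Lambda_M$ means $\beta_{L,M} = \min(\Lambda \setminus \sup(M \cap \beta_{L,M})) = \min(\Lambda \setminus \sup(N \cap \beta_{L,M}))$, giving $\beta_{L,M} \in \Lambda_N$ as desired. The delicate point to get right is exactly why $N$ has no points in the half-open interval $[\sup(M\cap\beta_{L,M}),\beta_{L,M})$, which I expect requires unwinding the definition of $\beta_{L,M}$ as the comparison point together with an appeal to how $\Lambda_L$ and $\Lambda_M$ interact with $\Lambda_N$.
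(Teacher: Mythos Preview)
Your plans for (1), (2), and (3) are fine and match the paper's approach (the paper simply says (1) and (2) are straightforward from the definitions and (3) is Lemma 1.12).

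For (4), you have the right target --- show $\beta_{L,M} \in \Lambda_N$ --- but your plan to do so by proving the \emph{equality} $\sup(N \cap \beta_{L,M}) = \sup(M \cap \beta_{L,M})$ is a wrong turn. The direction you flag as ``the delicate point,'' namely that $N$ has no elements in $[\sup(M \cap \beta_{L,M}), \beta_{L,M})$, is simply false in general: the hypothesis $M \cap \beta_{L,M} \subseteq N$ says nothing about $N$ not having \emph{more} points below $\beta_{L,M}$ than $M$ does, and there is no reason to invoke Lemma 1.15 or any interaction with $\Lambda_L$ here.

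The fix is that you only need the easy inequality. From $M \cap \beta_{L,M} \subseteq N$ you get $\sup(M \cap \beta_{L,M}) \le \sup(N \cap \beta_{L,M})$, and by monotonicity of $x \mapsto \min(\Lambda \setminus x)$ this gives
\[
\beta_{L,M} = \min\bigl(\Lambda \setminus \sup(M \cap \beta_{L,M})\bigr) \le \min\bigl(\Lambda \setminus \sup(N \cap \beta_{L,M})\bigr).
\]
On the other hand, $\sup(N \cap \beta_{L,M}) \le \beta_{L,M}$ and $\beta_{L,M} \in \Lambda$, so $\min(\Lambda \setminus \sup(N \cap \beta_{L,M})) \le \beta_{L,M}$. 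Hence $\beta_{L,M} = \min(\Lambda \setminus \sup(N \cap \beta_{L,M}))$, i.e.\ $\beta_{L,M} \in \Lambda_N$, and you are done. This is exactly the paper's argument; the step you thought would be the main obstacle is not needed at all.
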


\begin{proof}
Statements (1) and (2) can be proven in a straightforward way 
from the definitions, and (3) follows immediately from Lemma 1.12. 
(4) By definition, $\beta_{L,M} \in \Lambda_L$. 
Since $M \cap \beta_{L,M} \subseteq N$, 
$\sup(M \cap \beta_{L,M}) \le \sup(N \cap \beta_{L,M})$. 
As $\beta_{L,M} \in \Lambda_M$, by definition 
$\beta_{L,M} = \min(\Lambda \setminus \sup(M \cap \beta_{L,M}))$. 
So clearly $\beta_{L,M} = \min(\Lambda \setminus \sup(N \cap \beta_{L,M}))$. 
Hence $\beta_{L,M} \in \Lambda_N$. 
So $\beta_{L,M} \in \Lambda_L \cap \Lambda_N$. 
Therefore $\beta_{L,M} \le \max(\Lambda_L \cap \Lambda_N) = \beta_{L,N}$.
\end{proof}

\bigskip

Now we introduce our way of comparing models.

\begin{definition}
Let $M$ and $N$ be in $\mathcal X_0$.
\begin{enumerate}
\item Let $M < N$ if $M \cap \beta_{M,N} \in N$.
\item Let $M \sim N$ if 
$M \cap \beta_{M,N} = N \cap \beta_{M,N}$.
\item Let $M \le N$ if either $M < N$ or $M \sim N$.
\end{enumerate}
\end{definition}

\begin{definition}
A finite set $A \subseteq \mathcal X_0$ is said to be \emph{adequate} 
if for all $M$ and $N$ in $A$, either $M < N$, $M \sim N$, or $N < M$.
\end{definition}

If $M < N$, then by elementarity $\cl(M \cap \beta_{M,N})$ is a member of $N$. 
Since $\cl(M \cap \beta_{M,N})$ is countable, 
$\cl(M \cap \beta_{M,N}) \subseteq N$. 
Also every initial segment of $M \cap \beta_{M,N}$ is in $N$. 
For any proper initial segment has the form 
$M \cap \gamma = M \cap \beta_{M,N} \cap \gamma$ 
for some $\gamma \in M \cap \beta_{M,N}$, and 
since $M \cap \beta_{M,N}$ and $\gamma$ are in $N$, so is 
$M \cap \gamma$.

The next lemma provides some useful technical facts about the relation on models 
just introduced.

\begin{lemma}
Let $\{ M, N \}$ be adequate. 
\begin{enumerate}
\item If $(N \cap \beta_{M,N}) \setminus M$ is nonempty, then $M < N$.
\item If $M \le N$ then $M \cap \beta_{M,N} = M \cap N \cap \kappa = 
M \cap N \cap \beta_{M,N}$.
\item $\beta_{M,N} = \min(\Lambda \setminus \sup(M \cap N \cap \kappa))$. 
\item If $M < N$ then $\beta_{M,N} \in N$.
\item If $\beta < \beta_{M,N}$ and $\beta \in \Lambda$, then 
$(M \cap N) \cap [\beta,\beta_{M,N}) \ne \emptyset$.
\end{enumerate}
\end{lemma}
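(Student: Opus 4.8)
The plan is to prove the five statements of Lemma 1.20 in order, each one leaning on the definitions in 1.17--1.18 and the earlier lemmas 1.12, 1.15, 1.16.

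\textbf{Statement (1).} Suppose $\xi \in (N \cap \beta_{M,N}) \setminus M$. Since $\{M,N\}$ is adequate, one of $M < N$, $M \sim N$, or $N < M$ holds. If $M \sim N$, then $N \cap \beta_{M,N} = M \cap \beta_{M,N}$, so $\xi \in M$, a contradiction. If $N < M$, then $N \cap \beta_{M,N} \in M$, and since $N \cap \beta_{M,N}$ is countable, $N \cap \beta_{M,N} \subseteq M$ (by elementarity $\cl(N \cap \beta_{M,N}) \in M$ is countable, hence contained in $M$); again $\xi \in M$, a contradiction. So $M < N$.

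\textbf{Statement (2).} Assume $M \le N$. First I claim $M \cap \beta_{M,N} = M \cap N \cap \beta_{M,N}$: the inclusion $\supseteq$ is trivial, and for $\subseteq$, if $M < N$ then every initial segment $M \cap \gamma$ with $\gamma \in M \cap \beta_{M,N}$ lies in $N$ (as noted after Definition 1.18, since $M \cap \beta_{M,N} \in N$ and $\gamma \in N$), so each element of $M \cap \beta_{M,N}$ is in $N$; and if $M \sim N$ the equality $M \cap \beta_{M,N} = N \cap \beta_{M,N}$ gives it directly. Next, $M \cap N \cap \beta_{M,N} \subseteq M \cap N \cap \kappa$ trivially, and for the reverse, Lemma 1.15 says $\cl(M \cap \kappa) \cap \cl(N \cap \kappa) \subseteq \beta_{M,N}$, so in particular $M \cap N \cap \kappa \subseteq \beta_{M,N}$; combining, $M \cap N \cap \kappa = M \cap N \cap \beta_{M,N} = M \cap \beta_{M,N}$.

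\textbf{Statement (3).} Write $\gamma = \sup(M \cap N \cap \kappa)$. By Lemma 1.15, $M \cap N \cap \kappa \subseteq \beta_{M,N}$, so $\gamma \le \beta_{M,N}$, and hence $\min(\Lambda \setminus \gamma) \le \beta_{M,N}$ since $\beta_{M,N} \in \Lambda$. For the reverse inequality I need $\min(\Lambda \setminus \gamma) \ge \beta_{M,N}$, i.e.\ $\beta_{M,N} \le \min(\Lambda \setminus \gamma)$; equivalently there is no element of $\Lambda$ strictly between $\gamma$ and $\beta_{M,N}$ (inclusive of $\gamma$, exclusive of $\beta_{M,N}$). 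Suppose $\beta \in \Lambda$ with $\gamma \le \beta < \beta_{M,N}$. Then $\beta \in \Lambda \cap \beta_{M,N}$ and $\beta \ge \gamma = \sup(M \cap N \cap \kappa)$. But here I should be careful about which case of adequacy applies; the clean route is to observe that $\beta_{M,N} \in \Lambda_M \cap \Lambda_N$, so $\beta_{M,N} = \min(\Lambda \setminus \sup(M \cap \beta_{M,N}))$, and by (2)-type reasoning one wants to identify $\sup(M \cap \beta_{M,N})$ with $\gamma$ — but (2) only applies when $M \le N$. The correct general argument: by symmetry between $M$ and $N$, it suffices to handle one of $M < N$ or $M \sim N$ or $N < M$, and in the first two cases (2) gives $M \cap \beta_{M,N} = M \cap N \cap \kappa$, so $\sup(M \cap \beta_{M,N}) = \gamma$ and thus $\beta_{M,N} = \min(\Lambda \setminus \gamma)$; the case $N < M$ is symmetric using $\sup(N \cap \beta_{M,N}) = \gamma$. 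This handles all cases since $\{M,N\}$ is adequate.

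\textbf{Statement (4).} If $M < N$ then $M \cap \beta_{M,N} \in N$. By elementarity applied in $N$ to the set $M \cap \beta_{M,N}$ and the predicate $\Lambda$, the ordinal $\min(\Lambda \setminus \sup(M \cap \beta_{M,N}))$ is definable from parameters in $N$, hence lies in $N$; but by (3) (or directly, since $\beta_{M,N} \in \Lambda_M$), this ordinal equals $\beta_{M,N}$. So $\beta_{M,N} \in N$.

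\textbf{Statement (5).} Let $\beta \in \Lambda$ with $\beta < \beta_{M,N}$. By Lemma 1.16(3) applied to the pair $\{M,N\}$, we have $M \cap [\beta, \beta_{M,N}) \ne \emptyset$; pick $\xi$ in this set. If $M \le N$, then $M \cap \beta_{M,N} = M \cap N \cap \kappa$ by (2), so $\xi \in M \cap N$, and $\xi \in [\beta,\beta_{M,N})$, giving the claim. If $N < M$, then symmetrically $N \cap [\beta,\beta_{M,N}) \ne \emptyset$ and $N \cap \beta_{M,N} = M \cap N \cap \kappa$, so again we get an element of $(M \cap N) \cap [\beta,\beta_{M,N})$. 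Since $\{M,N\}$ is adequate this exhausts all cases.

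The main obstacle I anticipate is keeping the case analysis on adequacy clean in statements (3) and (5), where the naive appeal to (2) only covers $M \le N$ and one must explicitly invoke the symmetric case $N < M$; once it is recognized that $\sup(M \cap \beta_{M,N}) = \sup(M \cap N \cap \kappa)$ holds whenever $M \le N$ (and symmetrically), everything reduces to the definition of $\Lambda_M$ and Lemma 1.16(3), and no genuinely new idea is required.
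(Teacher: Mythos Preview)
Your proof is correct and follows essentially the same route as the paper's: case analysis on adequacy for (1), the inclusion $M\cap\beta_{M,N}\subseteq N$ plus Lemma~1.15 for (2), the identification $\sup(M\cap\beta_{M,N})=\sup(M\cap N\cap\kappa)$ via (2) for (3), elementarity for (4), and Lemma~1.12/1.16(3) plus (2) for (5). One small presentational slip in (2): your justification that each element of $M\cap\beta_{M,N}$ lies in $N$ via initial segments is circular as written (the initial-segment fact after Definition~1.18 already presupposes $M\cap\beta_{M,N}\subseteq N$); the direct reason, which the paper also uses, is simply that $M\cap\beta_{M,N}\in N$ is countable and hence a subset of $N$.
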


\begin{proof}
The assumption of (1) implies that $M \sim N$ and $N < M$ are impossible. 
(2) Both $M \cap \beta_{M,N} \in N$ and $M \cap \beta_{M,N} = 
N \cap \beta_{M,N}$ imply that $M \cap \beta_{M,N} \subseteq N$. 
So $M \cap \beta_{M,N} \subseteq M \cap N \cap \kappa$. 
Conversely by Lemma 1.15, $M \cap N \cap \kappa \subseteq \beta_{M,N}$, 
so $M \cap N \cap \kappa \subseteq M \cap \beta_{M,N}$. 
This proves that $M \cap N \cap \kappa = M \cap \beta_{M,N}$. 
Since $M \cap N \cap \kappa \subseteq \beta_{M,N}$ by Lemma 1.15, 
$M \cap N \cap \kappa = M \cap N \cap \beta_{M,N}$.

(3) Without loss of generality assume that 
$M \le N$. 
Then $M \cap N \cap \kappa = M \cap \beta_{M,N}$ by (2). 
Since $\beta_{M,N} \in \Lambda_M$, by definition 
$$
\beta_{M,N} = \min(\Lambda \setminus (\sup(M \cap \beta_{M,N}))) = 
\min(\Lambda \setminus (\sup(M \cap N \cap \kappa))).
$$

(4) If $M < N$ then $M \cap \beta_{M,N} \in N$. 
By (2), $M \cap \beta_{M,N} = M \cap N \cap \kappa$. 
So $M \cap N \cap \kappa \in N$. 
By (3), $\beta_{M,N} = \min(\Lambda \setminus \sup(M \cap N \cap \kappa))$. 
So $\beta_{M,N} \in N$ by elementarity.

(5) Without loss of generality assume that $M \le N$. 
Then by (2), $M \cap N \cap \beta_{M,N} = M \cap \beta_{M,N}$. 
Since $\beta_{M,N} \in \Lambda_M$, Lemma 1.12 implies that 
$M \cap [\beta,\beta_{M,N})$ is nonempty. 
Fix $\xi \in M \cap [\beta,\beta_{M,N})$. 
Then $\xi \in M \cap \beta_{M,N} = M \cap N \cap \beta_{M,N}$. 
So $(M \cap N) \cap [\beta,\beta_{M,N})$ is nonempty.
\end{proof}

\begin{lemma}
Let $M$ and $N$ be in $\mathcal X_0$, and assume that $\{ M, N \}$ is adequate. 
Then 
$$
\cl(M \cap N \cap \kappa) = \cl(M \cap \kappa) \cap \cl(N \cap \kappa).
$$
\end{lemma}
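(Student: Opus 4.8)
The plan is to prove both inclusions. One direction is cheap: since $M \cap N \cap \kappa \subseteq M \cap \kappa$ and $M \cap N \cap \kappa \subseteq N \cap \kappa$, monotonicity of $\cl$ gives $\cl(M \cap N \cap \kappa) \subseteq \cl(M \cap \kappa) \cap \cl(N \cap \kappa)$ immediately. So the real content is the reverse inclusion. Let $\xi \in \cl(M \cap \kappa) \cap \cl(N \cap \kappa)$; I want to show $\xi \in \cl(M \cap N \cap \kappa)$, i.e.\ $\xi \in M \cap N \cap \kappa$ or $\xi$ is a limit point of $M \cap N \cap \kappa$.

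First I would invoke Lemma 1.15, which tells us $\xi < \beta_{M,N}$. Without loss of generality assume $M \le N$ (the set is adequate, so one of $M < N$, $M \sim N$, $N < M$ holds, and in either remaining case we just swap the roles of $M$ and $N$). By Lemma 1.19(2), $M \cap \beta_{M,N} = M \cap N \cap \kappa = M \cap N \cap \beta_{M,N}$. Now split into cases on whether $\xi$ is actually a member of the models or a genuine limit point. If $\xi \in M \cap \kappa$, then since $\xi < \beta_{M,N}$ we have $\xi \in M \cap \beta_{M,N} = M \cap N \cap \kappa$, hence $\xi \in \cl(M\cap N\cap\kappa)$ and we are done; symmetrically if $\xi \in N \cap \kappa$. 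So the interesting case is when $\xi$ is a limit point of both $M \cap \kappa$ and $N \cap \kappa$ but a member of neither. Here I would pick $\beta_0 \in \Lambda \cap \xi$ arbitrary (such $\beta_0$ exist, cofinally below $\xi$, since $\xi$ is a limit of limits and $\Lambda$ is club-like; more precisely for any $\gamma < \xi$ set $\beta_0 = \min(\Lambda \setminus (\gamma+1))$, which is $<\xi$ because $\xi$ is a limit point of $M$ and hence a limit ordinal above which $\Lambda$ has members below $\xi$). Since $\beta_0 < \xi < \beta_{M,N}$ and $\beta_0 \in \Lambda$, Lemma 1.19(5) gives $(M \cap N) \cap [\beta_0, \beta_{M,N}) \ne \emptyset$. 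To conclude that $\xi$ is a limit point of $M \cap N \cap \kappa$ I need, more carefully, a point of $M \cap N$ in the interval $[\beta_0, \xi)$ rather than just in $[\beta_0, \beta_{M,N})$. For this I would instead apply Lemma 1.12 inside $M$: since $\beta_{M,N} \in \Lambda_M$ and $\beta_0 \in \Lambda \cap \beta_{M,N}$, Lemma 1.12 gives $M \cap [\beta_0, \beta_{M,N}) \ne \emptyset$; but I want to land below $\xi$. The fix is to note $\xi$ itself is a limit point of $M$, so $M \cap [\beta_0,\xi) \ne \emptyset$; pick $\eta \in M \cap [\beta_0,\xi)$. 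Then $\eta < \xi < \beta_{M,N}$, so $\eta \in M \cap \beta_{M,N} = M \cap N \cap \kappa$. Letting $\beta_0$ range cofinally below $\xi$ produces points of $M \cap N \cap \kappa$ cofinally below $\xi$, so $\xi$ is a limit point of $M \cap N \cap \kappa$, as desired.

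The step I expect to be the main obstacle is exactly that last point: controlling the location of the witness relative to $\xi$ rather than merely relative to $\beta_{M,N}$. The naive appeal to Lemma 1.19(5) only places a common point of $M$ and $N$ below $\beta_{M,N}$, which could be much larger than $\xi$; the resolution is to use that $\xi \in \cl(M \cap \kappa)$ (so $M$ itself is cofinal in $\xi$) to first get a witness in $M$ below $\xi$, and then use $\xi < \beta_{M,N}$ together with Lemma 1.19(2) to upgrade that witness to a member of $M \cap N$. One should double-check the edge case where $\cl(M \cap \kappa) \cap \cl(N \cap \kappa)$ contains ordinals $\xi$ with $\xi \in \Lambda$: by the argument in the proof of Lemma 1.15 such $\xi$ cannot be a limit point of $M$ or $N$, so they fall into the "$\xi$ is a member" case and are handled trivially. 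No other subtleties are anticipated; everything else is bookkeeping with the definitions of $\Lambda_M$, $\beta_{M,N}$, and the $\le$ relation.
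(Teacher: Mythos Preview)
Your argument is correct and uses the same ingredients as the paper: Lemma~1.15 to get $\xi < \beta_{M,N}$, the WLOG assumption $M \le N$, and Lemma~1.19(2) to identify $M \cap \beta_{M,N} = M \cap N \cap \kappa$. The paper compresses your case analysis into the single line
\[
\xi \in \cl(M \cap \kappa) \cap \beta_{M,N} = \cl(M \cap \beta_{M,N}) = \cl(M \cap N \cap \kappa),
\]
which avoids splitting on whether $\xi$ is a member or a limit point; your detour through $\Lambda$ and Lemmas~1.12, 1.19(5) is unnecessary since, as you eventually realize, all you need is that $\xi$ is a limit point of $M \cap \kappa$ below $\beta_{M,N}$. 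One small slip: the ``symmetrically if $\xi \in N \cap \kappa$'' remark is not actually symmetric once you have fixed $M \le N$ (you do not know $N \cap \beta_{M,N} \subseteq M$), but this is harmless because the case $\xi \in N \setminus M$ forces $\xi \in \lim(M \cap \kappa)$ and is then covered by your limit-point argument, which only uses that $\xi$ is a limit point of $M$.
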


\begin{proof}
The forward inclusion is immediate. 
Suppose that $\alpha$ is in $\cl(M \cap \kappa) \cap \cl(N \cap \kappa)$. 
Then by Lemma 1.15, $\alpha < \beta_{M,N}$. 
Without loss of generality, assume that $M \le N$. 
Then 
$$
\alpha \in \cl(M \cap \kappa) \cap \beta_{M,N} = 
\cl(M \cap \beta_{M,N}) = \cl(M \cap N \cap \kappa)
$$
by Lemma 1.19(2). 
\end{proof}

If $\{ M, N \}$ is adequate, then the relation which holds between $M$ and $N$ 
is determined by the intersection of $M$ and $N$ with $\omega_1$.

\begin{lemma}
Let $\{ M, N \}$ be adequate. Then:
\begin{enumerate}
\item $M < N$ iff $M \cap \omega_1 < N \cap \omega_1$;
\item $M \sim N$ iff $M \cap \omega_1 = N \cap \omega_1$.
\end{enumerate}
\end{lemma}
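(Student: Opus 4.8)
The plan is to establish the forward implications of (1) and (2) directly, and then obtain their converses from the trichotomy built into the definition of adequacy (Definition 1.18). The observation that drives everything is that $\beta_{M,N}$ belongs to $\Lambda = C^* \cap \cof(>\omega)$, so $\beta_{M,N}$ is a limit ordinal of uncountable cofinality and in particular $\omega_1 \le \beta_{M,N}$; hence $M \cap \omega_1 = (M \cap \beta_{M,N}) \cap \omega_1$ and $N \cap \omega_1 = (N \cap \beta_{M,N}) \cap \omega_1$. I will also use throughout that $M \cap \omega_1$ and $N \cap \omega_1$ are countable ordinals: this follows from the elementarity of $M$ and $N$ together with the definability of $\omega$ and $\omega_1$ in the structure on $H(\lambda)$, plus the fact that $M \cap \kappa, N \cap \kappa \in T^* \subseteq P_{\omega_1}(\kappa)$. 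Granting this, the forward direction of (2) is immediate: if $M \sim N$, then $M \cap \beta_{M,N} = N \cap \beta_{M,N}$, and intersecting both sides with $\omega_1 \subseteq \beta_{M,N}$ yields $M \cap \omega_1 = N \cap \omega_1$.

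For the forward direction of (1), suppose $M < N$, so $M \cap \beta_{M,N} \in N$. By Lemma 1.19(2), $M \cap \beta_{M,N} = M \cap N \cap \kappa$; intersecting with $\omega_1$ gives $M \cap \omega_1 = M \cap N \cap \omega_1 \subseteq N \cap \omega_1$, hence $M \cap \omega_1 \le N \cap \omega_1$. The real content is strictness. Suppose for a contradiction that $M \cap \omega_1 = N \cap \omega_1 =: \delta$. Since $M \cap \beta_{M,N} \in N$ and $\omega_1 \in N$, elementarity gives $(M \cap \beta_{M,N}) \cap \omega_1 \in N$; but this set equals $M \cap \omega_1 = \delta$, so $\delta \in N$, and since $\delta < \omega_1$ this forces $\delta \in N \cap \omega_1 = \delta$, which is absurd. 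I expect this step — recovering $\delta$ as an element of $N$ from the mere fact that $M \cap \beta_{M,N} \in N$ — to be the only nonroutine part of the argument; the rest amounts to bookkeeping around the inequality $\omega_1 \le \beta_{M,N}$.

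Finally, for the converses, use that since $\{M,N\}$ is adequate, at least one of $M < N$, $M \sim N$, $N < M$ holds. If $M \cap \omega_1 < N \cap \omega_1$, then $M \sim N$ is ruled out by the forward direction of (2) and $N < M$ is ruled out by the forward direction of (1) with the roles of $M$ and $N$ exchanged, so $M < N$. If $M \cap \omega_1 = N \cap \omega_1$, then both $M < N$ and $N < M$ are ruled out by the forward direction of (1), so $M \sim N$. This completes the plan; I do not anticipate any further obstacles.
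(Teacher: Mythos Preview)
Your proof is correct and follows essentially the same approach as the paper's: both reduce to showing $M\cap\omega_1\in N$ from $M\cap\beta_{M,N}\in N$ and $\omega_1\le\beta_{M,N}$, and both derive the converses from trichotomy. The only cosmetic difference is that the paper gets $M\cap\omega_1\in N$ directly (as an initial segment of $M\cap\beta_{M,N}$, using the remark after Definition~1.18) and concludes $M\cap\omega_1<N\cap\omega_1$ immediately, whereas you first establish $\le$ via Lemma~1.19(2) and then rule out equality by contradiction --- your argument in that contradiction is in fact exactly the direct proof that $M\cap\omega_1\in N$, so the detour is unnecessary but harmless.
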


\begin{proof}
Suppose that $M < N$. Then $M \cap \beta_{M,N} \in N$. 
Since $\beta_{M,N}$ has uncountable cofinality, 
$\omega_1 \le \beta_{M,N}$. 
So $M \cap \omega_1$ is an initial segment of $M \cap \beta_{M,N}$, 
and hence $M \cap \omega_1 \in N$. 
So $M \cap \omega_1 < N \cap \omega_1$.

Suppose that $M \sim N$. 
Then $M \cap \beta_{M,N} = N \cap \beta_{M,N}$. 
Since $\omega_1 \le \beta_{M,N}$, 
$M \cap \omega_1 = N \cap \omega_1$. 

Conversely if $M \cap \omega_1 < N \cap \omega_1$, then the facts just 
proved imply that $M < N$ is the only possibility of how $M$ and $N$ relate. 
Similarly $M \cap \omega_1 = N \cap \omega_1$ implies that $M \sim N$.
\end{proof}

\begin{lemma}
Let $A$ be an adequate set. 
Then the relation $<$ is irreflexive and transitive on $A$, $\sim$ is an equivalence 
relation on $A$, and the relations $<$ and $\le$ respect $\sim$.
\end{lemma}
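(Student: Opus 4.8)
The plan is to reduce each of the four assertions --- irreflexivity and transitivity of $<$, that $\sim$ is an equivalence relation, and that $<$ and $\le$ respect $\sim$ --- to the characterization in Lemma 1.21, which says that for an adequate pair $\{M,N\}$ the relation between $M$ and $N$ is governed entirely by comparing $M \cap \omega_1$ and $N \cap \omega_1$. Since any two models drawn from $A$ form an adequate pair (a subset of an adequate set is adequate), this lets us transfer the entire question to the linear order of countable ordinals $\{M \cap \omega_1 : M \in A\}$, where irreflexivity, transitivity, and the equivalence-relation axioms are immediate.

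First I would note irreflexivity of $<$: by Lemma 1.21(1), $M < M$ would require $M \cap \omega_1 < M \cap \omega_1$, which is false. For transitivity, suppose $L < M$ and $M < N$ with $L,M,N \in A$; then $\{L,M\}$, $\{M,N\}$, and $\{L,N\}$ are all adequate, so by Lemma 1.21(1) we get $L \cap \omega_1 < M \cap \omega_1 < N \cap \omega_1$, hence $L \cap \omega_1 < N \cap \omega_1$, and again by Lemma 1.21(1) applied to the adequate pair $\{L,N\}$ we conclude $L < N$. For $\sim$: reflexivity is Lemma 1.21(2) with $M \cap \omega_1 = M \cap \omega_1$; symmetry is clear from the symmetric form of the definition (or from $M \cap \omega_1 = N \cap \omega_1$ being symmetric); transitivity follows from $M \cap \omega_1 = N \cap \omega_1$ and $N \cap \omega_1 = P \cap \omega_1$ giving $M \cap \omega_1 = P \cap \omega_1$, then Lemma 1.21(2) on $\{M,P\}$.

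For the claim that $<$ and $\le$ respect $\sim$: suppose $M \sim M'$, $N \sim N'$, and $M < N$, all four in $A$. By Lemma 1.21 these translate to $M \cap \omega_1 = M' \cap \omega_1$, $N \cap \omega_1 = N' \cap \omega_1$, and $M \cap \omega_1 < N \cap \omega_1$, whence $M' \cap \omega_1 < N' \cap \omega_1$, and Lemma 1.21(1) applied to the adequate pair $\{M',N'\}$ yields $M' < N'$. The statement for $\le$ follows since $\le$ is the disjunction of $<$ and $\sim$, each of which respects $\sim$. I do not anticipate a genuine obstacle here; the only point requiring a word of care is the implicit appeal to the fact that every two-element subset (and hence every pair of elements) of an adequate set $A$ is itself adequate, so that Lemma 1.21 applies to each pair --- this is immediate from Definition 1.18, since the defining condition is a statement about all pairs from the set. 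Everything else is routine transfer along $M \mapsto M \cap \omega_1$.
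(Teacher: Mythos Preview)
Your proposal is correct and takes exactly the same approach as the paper: the paper's entire proof is the single sentence ``Immediate from Lemma 1.21,'' and you have simply unpacked what that entails. The only difference is level of detail; your argument is a faithful expansion of the paper's one-line appeal.
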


\begin{proof}
Immediate from Lemma 1.21.
\end{proof}

\bigskip

In proving amalgamation results over countable models, 
we will need to be able to enlarge an adequate set $A$ 
by adding $M \cap N$ to $A$, where $M < N$ are in $A$. 
Let us show that we can do this while preserving adequacy.

First we note that $M \cap N$ is in $\mathcal X_0$.

\begin{lemma}
Let $\{ M, N \}$ be adequate. 
Then $M \cap N$ is in $\mathcal X_0$.
\end{lemma}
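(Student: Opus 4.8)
The plan is to verify the three defining properties of $\mathcal{X}_0$ for the set $M\cap N$: first that $M\cap N$ is countable (immediate, since $M$ is countable), second that $(M\cap N)\cap\kappa\in T^*$, and third that $M\cap N$ is an elementary substructure of the expanded structure $(H(\lambda),\in,\unlhd,\kappa,T^*,\pi^*,C^*,\Lambda)$. The elementarity will follow from the presence of the well-ordering $\unlhd$ in the language: whenever both $M$ and $N$ are elementary substructures of a structure carrying a definable well-order, their intersection is again an elementary substructure, since Skolem functions (definable from $\unlhd$) witness the Tarski--Vaught criterion and land in $M\cap N$ whenever their arguments do. This is exactly the style of argument already invoked in the excerpt for intersections of models in $\mathcal{Y}_0$ and for $M\cap P$ with $M\in\mathcal{X}_0$, $P\in\mathcal{Y}_0$.

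The substantive point, and the one I expect to be the main obstacle, is showing $(M\cap N)\cap\kappa\in T^*$. Here I would split into cases according to how $M$ and $N$ relate. By Lemma 1.19(2), since $\{M,N\}$ is adequate we may assume without loss of generality that $M\le N$, and then $M\cap N\cap\kappa = M\cap\beta_{M,N}$. So it suffices to show that $M\cap\beta_{M,N}\in T^*$. Now $M\cap\kappa\in T^*$ by the definition of $\mathcal{X}_0$, and $\beta_{M,N}<\kappa$, so $M\cap\beta_{M,N} = (M\cap\kappa)\cap\beta_{M,N}$ is an initial segment of the set $M\cap\kappa\in T^*$; by the closure property of $T^*$ required in Notation 1.4 (namely that $a\cap\beta\in T^*$ for all $a\in T^*$ and $\beta<\kappa$), we conclude $M\cap\beta_{M,N}\in T^*$, hence $(M\cap N)\cap\kappa\in T^*$.

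Finally I would assemble these pieces: $M\cap N$ is a countable elementary substructure of $(H(\lambda),\in,\unlhd,\kappa,T^*,\pi^*,C^*,\Lambda)$ with $(M\cap N)\cap\kappa\in T^*$, which is precisely the definition of membership in $\mathcal{X}_0$ (Notation 1.10). The only genuinely new observation beyond boilerplate is the reduction $M\cap N\cap\kappa = M\cap\beta_{M,N}$ via adequacy and Lemma 1.19(2), which is what lets the initial-segment closure of $T^*$ do its work; without adequacy there is no reason for $M\cap N\cap\kappa$ to be an initial segment of either $M\cap\kappa$ or $N\cap\kappa$. I would keep the proof short, citing Lemma 1.19(2) for the key identity and Notation 1.4 for the closure of $T^*$, and leaving the elementarity of the intersection as the routine Skolem-function argument already flagged in the surrounding text.
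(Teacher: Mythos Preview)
Your proposal is correct and follows essentially the same argument as the paper: assume without loss of generality that $M\le N$, invoke Lemma 1.19(2) to obtain $M\cap N\cap\kappa = M\cap\beta_{M,N}$, and then use the closure of $T^*$ under initial segments (Notation 1.4), with elementarity of $M\cap N$ handled by the presence of the well-ordering $\unlhd$. One minor citation slip: the definition of $\mathcal X_0$ is Notation 1.9, not Notation 1.10.
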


\begin{proof}
Without loss of generality, assume that $M \le N$. 
Then by Lemma 1.19(2), $M \cap N \cap \kappa = M \cap \beta_{M,N}$. 
Since $T^*$ is closed under initial segments and $M \cap \kappa \in T^*$, 
it follows that $M \cap \beta_{M,N} \in T^*$. 
Hence $M \cap N \cap \kappa \in T^*$. 
Also clearly $M \cap N$ is an elementary substructure.
\end{proof}

\begin{lemma}
Let $K$, $M$, and $N$ be in $\mathcal X_0$. 
Suppose that $M < N$ and $\{ K, M \}$ is adequate. 
Then:
\begin{enumerate}
\item $\beta_{K,M \cap N} \le \beta_{K,M}$ and 
$\beta_{K,M \cap N} \le \beta_{M,N}$;
\item $M < K$ iff $M \cap N < K$;
\item $K \sim M$ iff $K \sim M \cap N$;
\item $K < M$ iff $K < M \cap N$.
\end{enumerate}
In particular, $\{ K, M \cap N \}$ is adequate.
\end{lemma}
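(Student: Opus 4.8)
The plan is to set up the comparison point $\beta_{K, M\cap N}$ and show it is well-behaved by exploiting the fact that $M \cap N \cap \kappa = M \cap \beta_{M,N}$ (using Lemma 1.19(2), since $M < N$ gives $M \le N$), together with Lemma 1.17(1) applied to the inclusion $(M\cap N)\cap\kappa \subseteq M\cap\kappa$. First, for (1): since $(M\cap N)\cap\kappa \subseteq M\cap\kappa$, Lemma 1.17(1) yields $\Lambda_{M\cap N} \subseteq \Lambda_M$, hence $\beta_{K, M\cap N} \le \beta_{K,M}$. For the bound $\beta_{K,M\cap N} \le \beta_{M,N}$, I would use Lemma 1.17(2): we have $(M\cap N)\cap\kappa = M\cap\beta_{M,N} \subseteq \beta_{M,N}$ with $\beta_{M,N} \in \Lambda$, so $\Lambda_{M\cap N} \subseteq \beta_{M,N}+1$; combined with $\beta_{K,M\cap N} \in \Lambda_{M\cap N}$ and the fact that $\beta_{K,M\cap N}$, being in $\Lambda$, cannot equal $\beta_{M,N}$ unless... actually more carefully, $\Lambda_{M\cap N}\subseteq\beta_{M,N}+1$ directly gives $\beta_{K,M\cap N}\le\beta_{M,N}$. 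Good — that disposes of (1) cleanly.

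For (2)--(4), the key observation I would isolate first is a comparison of the relevant initial segments. Note $K \cap \beta_{K,M\cap N}$ versus $(M\cap N)\cap\beta_{K,M\cap N}$ and $K\cap\beta_{K,M}$ versus $M\cap\beta_{K,M}$. The crucial claim is that, because $\beta_{K,M\cap N} \le \beta_{K,M}$ and $\beta_{K,M\cap N} \le \beta_{M,N}$, we get
$$
(M\cap N)\cap\beta_{K,M\cap N} = M\cap\beta_{K,M\cap N} = M\cap\beta_{K,M}\cap\beta_{K,M\cap N}.
$$
The first equality holds because $\beta_{K,M\cap N}\le\beta_{M,N}$ implies $M\cap\beta_{K,M\cap N}\subseteq M\cap\beta_{M,N}=(M\cap N)\cap\kappa$, so $M\cap\beta_{K,M\cap N}=(M\cap N)\cap\beta_{K,M\cap N}$. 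Symmetrically I would want to argue $\beta_{K,M\cap N}=\beta_{K,M}$, which would make the whole thing transparent: I'd show this by checking $\beta_{K,M}\in\Lambda_{M\cap N}$ using Lemma 1.19 applied to $\{K,M\}$ — namely $K\cap M\cap\kappa = K\cap\beta_{K,M}$, and since $K\cap M\cap\kappa \subseteq M\cap\beta_{M,N} = (M\cap N)\cap\kappa$ (as $\beta_{K,M}\le\beta_{K,M\cap N}\le\beta_{M,N}$... wait, I have the inequality the other way). Let me instead argue directly: $K\cap\beta_{K,M}\subseteq\beta_{M,N}$ would give $K\cap M\cap\kappa\subseteq N$, whence Lemma 1.17(4) applies to push $\beta_{K,M}$ up to $\beta_{K,N}$, and one teases out $\beta_{K,M}=\beta_{K,M\cap N}$ from the squeeze $\beta_{K,M\cap N}\le\beta_{K,M}\le\beta_{K,N}$. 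If instead $\beta_{M,N}<\beta_{K,M}$, then since both are in $\Lambda$ and $\beta_{M,N}\le\beta_{K,M\cap N}\le\beta_{K,M}$, comparison points coincide at $\beta_{M,N}$-level and a short argument with Lemma 1.12 closes it.

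Once $\beta_{K,M\cap N}=\beta_{K,M}$ (or, in the exceptional case, the initial segments below the comparison points are identified), statements (2)--(4) follow by unwinding Definition 1.17: $K<M$ means $K\cap\beta_{K,M}\in M$, and since $K\cap\beta_{K,M}=K\cap\beta_{K,M\cap N}$ is an initial segment of $K\cap\kappa$ lying below $\beta_{M,N}$, membership in $M$ transfers to membership in $M\cap N$ (it already lies in $N$ because $M<N$ forces $K\cap M\cap\kappa\subseteq N$ via the remarks after Definition 1.18 on initial segments); conversely $K\cap\beta_{K,M\cap N}\in M\cap N$ trivially gives $K\cap\beta_{K,M}\in M$. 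The $\sim$ case is immediate from $(M\cap N)\cap\beta_{K,M\cap N}=M\cap\beta_{K,M}$. The statement $M<K$ iff $M\cap N<K$ needs the symmetric identification $(M\cap N)\cap\beta_{K,M\cap N}=M\cap\beta_{K,M}$ together with: if $M\cap\beta_{K,M}\in K$ then its initial segment $(M\cap N)\cap\beta_{K,M\cap N}\in K$ (an initial segment of an element of $K$ of the form $M\cap\gamma$ for $\gamma\in M\cap\beta_{K,M}\subseteq K$), and conversely. The adequacy of $\{K,M\cap N\}$ is then immediate since one of $<,\sim,>$ holds between $K$ and $M$.

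The main obstacle I anticipate is the exceptional case $\beta_{M,N}<\beta_{K,M}$ in establishing $\beta_{K,M\cap N}=\beta_{K,M}$: here the comparison point genuinely drops, and I must verify carefully that the relevant initial segments below $\beta_{K,M\cap N}$ still agree so that (2)--(4) go through. I expect that in this case $\beta_{K,M\cap N}=\beta_{M,N}$ in fact fails too and one lands somewhere strictly below; tracking exactly where, and confirming $K\cap\beta_{K,M\cap N}$ and $(M\cap N)\cap\beta_{K,M\cap N}$ relate correctly, is the delicate bookkeeping. But since $(M\cap N)\cap\kappa = M\cap\beta_{M,N}$ and $K\cap M\cap\kappa = K\cap\beta_{K,M}$ (by Lemma 1.19(2) applied to $\{K,M\}$, assuming WLOG $K\le M$), the intersection $K\cap(M\cap N)\cap\kappa = K\cap M\cap\beta_{M,N}$ is still an initial segment of $K\cap\kappa$, so Lemma 1.19(3) pins down $\beta_{K,M\cap N}$ as $\min(\Lambda\setminus\sup(K\cap(M\cap N)\cap\kappa))$, and the comparison reduces to a direct computation.
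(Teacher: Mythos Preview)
Your treatment of (1) is correct and matches the paper, and the identity $(M\cap N)\cap\beta_{K,M\cap N}=M\cap\beta_{K,M\cap N}$ that you isolate is exactly the observation the paper uses. However, there are two genuine gaps in your approach to (2)--(4). First, the equality $\beta_{K,M\cap N}=\beta_{K,M}$ that you spend most of the proposal pursuing is false in general: when $\beta_{M,N}<\beta_{K,M}$ (entirely possible, since $K$ is not assumed to lie in $N$), part (1) gives $\beta_{K,M\cap N}\le\beta_{M,N}<\beta_{K,M}$. You sense this and gesture at an ``exceptional case,'' but the analysis never closes. The paper does not attempt this equality at all: only the inequality from (1) is needed for the forward implications, and the converses are obtained by elimination (since $\{K,M\}$ is adequate, exactly one of $K<M$, $K\sim M$, $M<K$ holds, and the forward implications rule out the other two). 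Your direct converse, e.g.\ ``$K\cap\beta_{K,M\cap N}\in M\cap N$ trivially gives $K\cap\beta_{K,M}\in M$,'' breaks precisely when $\beta_{K,M\cap N}<\beta_{K,M}$.

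Second, in the forward direction of (4) you need $K\cap\beta_{K,M\cap N}\in N$ as an \emph{element}, not merely as a subset. Your justification, ``it already lies in $N$ because $M<N$ forces $K\cap M\cap\kappa\subseteq N$ via the remarks on initial segments,'' confuses membership with inclusion; moreover $K\cap\beta_{K,M\cap N}$ is an initial segment of $K\cap\kappa$, not of $M\cap\beta_{M,N}$, so those remarks do not apply. The paper's argument here uses the thin-set coding and is essential: $K\cap\beta_{K,M\cap N}\in T^*$, so $\pi^*(K\cap\beta_{K,M\cap N})$ is an ordinal; it lies in $M$ by elementarity (since $K\cap\beta_{K,M\cap N}\in M$), and since $K\cap\beta_{K,M\cap N}$ is bounded below $\beta_{M,N}\in C^*$, the code is below $\beta_{M,N}$. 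Hence the code lies in $M\cap\beta_{M,N}\subseteq N$, and $N$ recovers $K\cap\beta_{K,M\cap N}$ via $(\pi^*)^{-1}$.
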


\begin{proof}
(1) Since $M \cap N \subseteq M$, 
$\beta_{K,M \cap N} \le \beta_{K,M}$ by Lemma 1.16(1). 
Also $M \cap N \cap \kappa \subseteq \beta_{M,N}$ by Lemma 1.15, which 
implies that 
$\beta_{K,M \cap N} \le \beta_{M,N}$ by Lemma 1.16(2). 
This proves (1). 

Since $M \cap N \cap \beta_{M,N} = M \cap \beta_{M,N}$ 
by Lemma 1.19(2) and 
$\beta_{K,M \cap N} \le \beta_{M,N}$, it follows that 
$$
M \cap N \cap \beta_{K,M \cap N} = M \cap \beta_{K,M \cap N}.
$$

(2,3,4) First we will prove the forward implications of (2), (3), and (4). 
If $M < K$ then $M \cap \beta_{K,M}$ is in $K$. 
But since $\beta_{K,M \cap N} \le \beta_{K,M}$, 
$M \cap \beta_{K,M \cap N}$ is an initial segment 
of $M \cap \beta_{K,M}$, and hence is in $K$. 
So $M \cap N \cap \beta_{K,M \cap N} = M \cap \beta_{K,M \cap N}$ 
is in $K$, and therefore 
$M \cap N < K$. 

If $K \sim M$, then $K \cap \beta_{K,M} = M \cap \beta_{K,M}$. 
Since $\beta_{K,M \cap N} \le \beta_{K,M}$, 
$$
K \cap \beta_{K,M \cap N} = 
M \cap \beta_{K,M \cap N} = 
M \cap N \cap \beta_{K,M \cap N}.
$$
Therefore $K \sim M \cap N$.

Suppose that $K < M$. 
Then $K \cap \beta_{K,M} \in M$. 
Since $\beta_{K,M \cap N} \le \beta_{K,M}$, 
$K \cap \beta_{K,M \cap N} \in M$. 
So to show that $K < M \cap N$, it suffices to show that 
$K \cap \beta_{K,M \cap N} \in N$.

Since $K \cap \kappa \in T^*$ by the definition of $\mathcal X_0$, 
$K \cap \beta_{K,M \cap N} \in T^*$ as $T^*$ is 
closed under initial segments. 
Recall from Notation 1.5 that $\pi^* : T^* \to \kappa$ is a bijection. 
As $M$ is closed under $\pi^*$ by elementarity, 
$\pi^*(K \cap \beta_{K,M \cap N}) \in M \cap \kappa$. 
Since $K \cap \beta_{K,M \cap N}$ is a bounded subset of 
$\beta_{K,M\cap N}$ and $\beta_{K,M \cap N} \le \beta_{M,N}$, 
we have that $K \cap \beta_{K,M \cap N}$ is a bounded subset 
of $\beta_{M,N}$. 
Since $\beta_{M,N} \in \Lambda$, it follows that 
$\pi^*(K \cap \beta_{K,M \cap N}) < \beta_{M,N}$ 
by the definitions of $C^*$ and $\Lambda$ from 
Notations 1.6 and 1.7. 
Hence $\pi^*(K \cap \beta_{K,M \cap N}) \in M \cap \beta_{M,N} \subseteq N$. 
Since $N$ is closed under the inverse of $\pi^*$ by elementarity, 
$K \cap \beta_{K,M \cap N} \in N$.

Now we consider the reverse implications of (2), (3), and (4). 
Suppose that $M \cap N < K$. 
Since $\{ K, M \}$ is adequate, either $K < M$, $K \sim M$, or $M < K$. 
But $K \sim M$ and $K < M$ are ruled out by the forward implications 
of (3) and (4). 
So $M < K$. 
The other converses are proved similarly.
\end{proof}

\begin{proposition}
Let $A$ be an adequate set and $N \in \mathcal X_0$. 
Let $M$ be in $A$, and suppose that $M < N$. 
Then $A \cup \{ M \cap N \}$ is adequate.
\end{proposition}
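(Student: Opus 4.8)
The plan is to derive the proposition almost directly from Lemma 1.24 and Lemma 1.25, which between them already contain all the work. First I would note that since $M < N$, the pair $\{M,N\}$ is adequate, so Lemma 1.24 applies and $M \cap N \in \mathcal X_0$; together with the obvious fact that $A \cup \{M\cap N\}$ is finite, this reduces the proposition to checking that every two-element subset of $A \cup \{M\cap N\}$ is comparable under one of $<$, $\sim$.

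Next I would dispose of the easy cases: any pair with both members in $A$ is comparable because $A$ is adequate, and the degenerate pair $\{M\cap N\}$ is handled by reflexivity of $\sim$. This leaves the pairs $\{K, M\cap N\}$ with $K \in A$. For such a $K$, since $K$ and $M$ both lie in the adequate set $A$, the pair $\{K,M\}$ is adequate; and $M < N$ by hypothesis. Thus the triple $K$, $M$, $N$ satisfies the hypotheses of Lemma 1.25, whose final clause states precisely that $\{K, M\cap N\}$ is adequate. Assembling these cases shows that $A \cup \{M\cap N\}$ is adequate.

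The one point that deserves care is the pair $\{M, M\cap N\}$, i.e.\ the instance $K = M$ above. Lemma 1.25 is stated without the restriction $K \ne M$, so one may simply apply it with $K = M$: the hypothesis ``$\{K,M\}$ adequate'' becomes ``$\{M\}$ adequate'', which holds since $M \sim M$, and clause (3) then yields $M \sim M\cap N$. Alternatively, this instance can be checked by hand from $M \cap N \cap \kappa = M \cap \beta_{M,N}$ together with $\beta_{M, M\cap N} \le \beta_{M,N}$. Apart from confirming that Lemma 1.25 is phrased generally enough to permit $K = M$, I do not anticipate any genuine obstacle: the proposition is essentially a repackaging of Lemma 1.25.
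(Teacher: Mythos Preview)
Your argument is correct and is exactly the paper's approach: the paper's entire proof is ``Immediate from Lemma 1.24,'' and you have simply unpacked that line. The one issue is a systematic off-by-one in your citations: what you call Lemma~1.24 (yielding $M\cap N\in\mathcal X_0$) is the paper's Lemma~1.23, and what you call Lemma~1.25 (with the final clause that $\{K,M\cap N\}$ is adequate) is the paper's Lemma~1.24; the result you are proving is itself Proposition~1.25, so citing ``Lemma~1.25'' reads as circular even though you clearly mean the preceding lemma.
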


\begin{proof}
Immediate from Lemma 1.24.
\end{proof}

\bigskip

Our next goal is to prove the first amalgamation result over countable models, 
which is stated in Proposition 1.29 below. 
See Proposition 13.1 for a much deeper result.

\begin{lemma}
Let $L$, $M$, and $N$ be in $\mathcal X_0$. 
Suppose that $N \le M$ and $L \in N$. 
Then $L < M$.
\end{lemma}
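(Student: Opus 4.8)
The plan is to show that $L < M$ by verifying that $L \cap \beta_{L,M} \in M$. The natural strategy is to locate $\beta_{L,M}$ relative to $N$'s comparison point with $M$, and then exploit the fact that $L \in N$ together with elementarity of $N$. First I would note that since $L \in N$, the model $L$, being countable, is actually a subset of $N$: indeed $N$ is an elementary substructure of a structure in which ``$L$ is countable'' is true, so $N$ knows $L$ is countable, and therefore $L \cap \kappa \subseteq N \cap \kappa$ (one can also extract that $\cl(L \cap \kappa) \subseteq N$, which may be useful). In particular $L \cap \kappa \subseteq N \cap \kappa$, so by Lemma 1.16(1) we get $\beta_{L,M} \le \beta_{N,M}$.

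Next I would use the hypothesis $N \le M$. By Lemma 1.19(2), $N \cap \beta_{N,M} = N \cap M \cap \kappa$, and in particular $N \cap \beta_{N,M} \subseteq M$. Since $\beta_{L,M} \le \beta_{N,M}$, the set $L \cap \beta_{L,M}$ is contained in $N \cap \beta_{N,M}$ (using $L \subseteq N$, hence $L \cap \kappa \subseteq N \cap \kappa$), so $L \cap \beta_{L,M} \subseteq M$. That alone is not quite enough for $L < M$; I need $L \cap \beta_{L,M}$ to be an \emph{element} of $M$, not merely a subset. Here is where I would use the closure-under-initial-segments machinery of $T^*$ and $\pi^*$, in the same spirit as the proof of Lemma 1.24(4): since $L \cap \kappa \in T^*$ and $T^*$ is closed under initial segments, $L \cap \beta_{L,M} \in T^*$; it is a bounded subset of $\beta_{N,M} \in \Lambda$ (as $\beta_{L,M} \le \beta_{N,M}$), so by the definitions of $C^*$ and $\Lambda$, $\pi^*(L \cap \beta_{L,M}) < \beta_{N,M}$.

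The final step is to push this ordinal into $M$. Since $L \in N$, elementarity of $N$ gives $L \cap \beta_{L,M} \in N$ — but I should be careful that $\beta_{L,M}$ itself need not be in $N$. A cleaner route: $L \cap \kappa \in N$ (as $L \in N$ and $\kappa \in N$), and $\beta_{L,M} \le \beta_{N,M}$, so $L \cap \beta_{L,M} = (L \cap \kappa) \cap \beta_{L,M}$; I want to see this is in $N$. Since $L \cap \beta_{L,M} \subseteq N \cap M \cap \kappa = N \cap \beta_{N,M}$ and the latter is an initial segment of $N \cap \kappa$ determined by $\beta_{N,M}$, and $\beta_{L,M}$ is a limit of points of $L \subseteq N$ or a member of it, one concludes $L \cap \beta_{L,M}$ is an initial segment of $N \cap \kappa$, hence in $N$ by elementarity (initial segments of $N \cap \kappa$ below members of $\cl(N)$ are in $N$). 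Then $\pi^*(L \cap \beta_{L,M}) \in N$, and since this ordinal is below $\beta_{N,M}$, it lies in $N \cap \beta_{N,M} \subseteq M$; applying the inverse of $\pi^*$ inside $M$ yields $L \cap \beta_{L,M} \in M$, so $L < M$.

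The main obstacle I anticipate is the bookkeeping around whether various objects are \emph{elements} of the relevant models versus merely subsets — in particular, confirming that $L \cap \beta_{L,M}$ is genuinely in $N$ (which requires identifying it as an initial segment of $N \cap \kappa$, using that $\beta_{L,M} \le \beta_{N,M}$ and that $N \cap \beta_{N,M}$ is itself an initial segment of $N \cap \kappa$). Once that is pinned down, transferring through $\pi^*$ and the definition of $\Lambda$ to get membership in $M$ is routine, following the template already established in Lemma 1.24(4).
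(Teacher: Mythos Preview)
Your overall strategy matches the paper's proof exactly: use $L \in N$ and Lemma~1.16(1) to get $\beta_{L,M} \le \beta_{M,N}$, observe that $L \cap \beta_{L,M} \in T^*$ as an initial segment of $L \cap \kappa$, push it through $\pi^*$ to get an ordinal in $N \cap \beta_{M,N} \subseteq M$ (using $N \le M$), and decode via $(\pi^*)^{-1}$ inside $M$. So the architecture is right.

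However, your justification of the step $L \cap \beta_{L,M} \in N$ contains a genuine error. You claim that $L \cap \beta_{L,M}$ is an initial segment of $N \cap \kappa$; this is false in general, since $N$ can easily contain ordinals below $\beta_{L,M}$ that are not in $L$. Your further remark that ``initial segments of $N \cap \kappa$ below members of $\cl(N)$ are in $N$'' is also wrong: if $\gamma \in \lim(N) \setminus N$, then $N \cap \gamma$ cannot lie in $N$, since by elementarity $N$ would then contain $\sup(N \cap \gamma) = \gamma$.

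The correct (and much simpler) reason $L \cap \beta_{L,M} \in N$ is the one the paper gives in one line: $L \cap \beta_{L,M}$ is an initial segment of $L \cap \kappa$, and $L \cap \kappa \in N$. Any initial segment of $L \cap \kappa$ is either $L \cap \kappa$ itself or $(L \cap \kappa) \cap \alpha$ for $\alpha = \min((L \cap \kappa) \setminus (L \cap \beta_{L,M})) \in L \subseteq N$; either way it is in $N$ by elementarity. (This is exactly the observation recorded in the paragraph after Definition~1.18.) Once you have $L \cap \beta_{L,M} \in N$, the rest of your argument goes through verbatim.
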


\begin{proof}
Since $L \in N$, $\beta_{L,M} \le \beta_{M,N}$ by Lemma 1.16(1). 
Also $L \cap \beta_{L,M}$ is in $N \cap T^*$, since it is an initial segment 
of $L \cap \kappa$. 
As $N$ is closed under $\pi^*$ by elementarity, the ordinal 
$\pi^*(L \cap \beta_{L,M})$ is in $N \cap \kappa$. 
And as $\beta_{M,N} \in \Lambda$ and 
$L \cap \beta_{L,M}$ is a bounded subset of $\beta_{M,N}$ in $T^*$, 
it follows that $\pi^*(L \cap \beta_{L,M}) < \beta_{M,N}$ 
by the definition of $\Lambda$. 
Hence $\pi^*(L \cap \beta_{L,M}) \in N \cap \beta_{M,N} \subseteq M$. 
By elementarity, $M$ is closed under the inverse of $\pi^*$, 
so $L \cap \beta_{L,M} \in M$.
\end{proof}

\begin{lemma}
Let $L$, $M$, and $N$ be in $\mathcal X_0$. 
Suppose that $M < N$ and $L \in N$. 
Then:
\begin{enumerate}
\item $\beta_{L,M} = \beta_{L,M \cap N}$;
\item $L \sim M \cap N$ iff $L \sim M$;
\item $L < M \cap N$ iff $L < M$;
\item $M \cap N < L$ iff $M < L$.
\end{enumerate}
\end{lemma}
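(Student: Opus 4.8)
The plan is to reduce all four parts to a single comparison point. Since $M < N$, the pair $\{M,N\}$ is adequate, so $M \cap N \in \mathcal X_0$ by Lemma 1.23, and Lemma 1.19(2) gives $M \cap N \cap \kappa = M \cap \beta_{M,N}$; moreover this set is an element of $N$. As $L \in N$ and $L$ is countable, $L \subseteq N$, hence $L \cap \kappa \subseteq N \cap \kappa$, and so $\beta_{L,M} \le \beta_{N,M} = \beta_{M,N}$ by Lemma 1.16(1). Combining $\beta_{L,M} \le \beta_{M,N}$ with $M \cap \beta_{M,N} = M \cap N \cap \kappa$ yields the identity $M \cap \beta_{L,M} = (M \cap N) \cap \beta_{L,M}$, which is the engine for everything that follows.

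For (1): the inequality $\beta_{L,M \cap N} \le \beta_{L,M}$ is immediate from $(M \cap N) \cap \kappa \subseteq M \cap \kappa$ via Lemma 1.16(1). For the reverse inequality I would show $\beta_{L,M} \in \Lambda_{M \cap N}$: since $\beta_{L,M} \in \Lambda_M$ we have $\beta_{L,M} = \min(\Lambda \setminus \sup(M \cap \beta_{L,M}))$, and substituting the identity $M \cap \beta_{L,M} = (M \cap N) \cap \beta_{L,M}$ turns this into precisely the defining equation for membership of $\beta_{L,M}$ in $\Lambda_{M \cap N}$. As also $\beta_{L,M} \in \Lambda_L$, maximality of the comparison point gives $\beta_{L,M} \le \beta_{L,M \cap N}$, hence equality. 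Write $\beta$ for this common value.

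Parts (2), (4), and the forward direction of (3) are then purely formal, using $\beta_{L,M} = \beta_{L,M \cap N} = \beta$ (equivalently $\beta_{M,L} = \beta_{M \cap N, L} = \beta$) together with $M \cap \beta = (M \cap N) \cap \beta$. For instance, $L \sim M$ says $L \cap \beta = M \cap \beta$, which holds iff $L \cap \beta = (M \cap N) \cap \beta$, i.e.\ iff $L \sim M \cap N$; likewise $M < L$ says $M \cap \beta \in L$, which holds iff $(M \cap N) \cap \beta \in L$, i.e.\ iff $M \cap N < L$; and $L < M \cap N$ says $L \cap \beta \in M \cap N$, which trivially forces $L \cap \beta \in M$, i.e.\ $L < M$.

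The one place with real content is the reverse direction of (3): assuming $L < M$, so $L \cap \beta \in M$, I must show $L \cap \beta \in N$ as well, and then $L \cap \beta \in M \cap N$, so $L < M \cap N$. This is the $\pi^*$-coding argument from the proof of the preceding lemma. Since $L \cap \kappa \in T^*$ and $T^*$ is closed under initial segments, $L \cap \beta \in T^*$; as $L \cap \beta \in M$ and $M$ is closed under $\pi^*$, $\pi^*(L \cap \beta) \in M \cap \kappa$. Because $\beta \in \Lambda$ has uncountable cofinality while $L$ is countable, $L \cap \beta$ is bounded in $\beta$, hence a bounded subset of $\beta_{M,N}$ in $T^*$; since $\beta_{M,N} \in \Lambda \subseteq C^*$, the definition of $C^*$ gives $\pi^*(L \cap \beta) < \beta_{M,N}$, so $\pi^*(L \cap \beta) \in M \cap \beta_{M,N}$. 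Because $M < N$, the countable set $M \cap \beta_{M,N}$ is an element, hence a subset, of $N$, so $\pi^*(L \cap \beta) \in N$; as $N$ is closed under the inverse of $\pi^*$, we conclude $L \cap \beta \in N$. The only subtle point is the cofinality check on $\beta$ that makes $L \cap \beta$ genuinely bounded in $\beta_{M,N}$; everything else is bookkeeping.
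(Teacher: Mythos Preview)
Your proof is correct and follows essentially the same approach as the paper. The only presentational difference is that the paper handles the reverse inequality in (1) by citing Lemma 1.16(4) (whose proof is exactly your $\Lambda_{M\cap N}$ argument), and it obtains all three reverse implications in (2)--(4) by invoking Lemma 1.24 rather than redoing the $\pi^*$-coding for (3) and the trivial symmetric directions for (2) and (4); the underlying arguments are identical.
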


\begin{proof}
(1) Since $M \cap N \cap \kappa \subseteq M \cap \kappa$, 
$\beta_{L,M \cap N} \le \beta_{L,M}$ by Lemma 1.16(1), 
which proves one direction of the equality. 
Since $L \cap \kappa \subseteq N \cap \kappa$, $\beta_{L,M} \le \beta_{M,N}$ 
by Lemma 1.16(1). 
So 
$$
M \cap \beta_{L,M} \subseteq M \cap \beta_{M,N} \subseteq M \cap N.
$$
By Lemma 1.16(4), $\beta_{L,M} \le \beta_{L,M \cap N}$.

(2,3,4) First we will prove the forward implications 
of (2), (3), and (4). 
As $\beta_{L,M} \le \beta_{M,N}$ and 
$M \cap \beta_{M,N} = M \cap N \cap \beta_{M,N}$, it follows that 
$$
M \cap \beta_{L,M} = M \cap N \cap \beta_{L,M}.
$$ 
If $L \sim M \cap N$, then 
$$
L \cap \beta_{L,M} = L \cap \beta_{L,M \cap N} = 
M \cap N \cap \beta_{L,M \cap N} = M \cap N \cap \beta_{L,M} = 
M \cap \beta_{L,M}.
$$
So $L \cap \beta_{L,M} = M \cap \beta_{L,M}$, and hence $L \sim M$. 
And if $L < M \cap N$, then 
$$
L \cap \beta_{L,M} = L \cap \beta_{L,M \cap N} \in M \cap N \subseteq M.
$$
So $L \cap \beta_{L,M} \in M$, and hence $L < M$. 
If $M \cap N < L$, then 
$$
M \cap \beta_{L,M} = M \cap N \cap \beta_{L,M} = 
M \cap N \cap \beta_{L,M \cap N} \in L.
$$
So $M \cap \beta_{L,M} \in L$, and therefore $M < L$.

For the reverse implications, each of the assumptions 
$L \sim M$, $L < M$, and $M < L$ 
implies that $\{ L, M \}$ is adequate. 
Hence these assumptions imply that $L \sim M \cap N$, 
$L < M \cap N$, and $M \cap N < L$ respectively by Lemma 1.24.
\end{proof}

\begin{lemma}
Let $L$, $M$, and $N$ be in $\mathcal X_0$. 
Suppose that $M < N$ and $L \in N$. 
If $\{ L, M \cap N \}$ is adequate, then $\{ L, M \}$ is adequate.
\end{lemma}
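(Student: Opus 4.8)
The plan is to reduce the statement to the forward implications that are already established in Lemma 1.27. First observe that the hypothesis $M < N$ makes $\{M, N\}$ adequate, so $M \cap N \in \mathcal X_0$ by Lemma 1.23; thus the phrase ``$\{L, M \cap N\}$ is adequate'' makes sense, and it says that one of $L < M \cap N$, $L \sim M \cap N$, or $M \cap N < L$ holds.

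Now I would simply run through these three cases, invoking Lemma 1.27 in each. If $L < M \cap N$, then $L < M$ by the forward direction of Lemma 1.27(3). If $L \sim M \cap N$, then $L \sim M$ by the forward direction of Lemma 1.27(2). If $M \cap N < L$, then $M < L$ by the forward direction of Lemma 1.27(4). In every case one of $L < M$, $L \sim M$, or $M < L$ holds, and therefore $\{L, M\}$ is adequate, as required.

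The only point demanding care --- and about the only thing here resembling an obstacle --- is that the equivalences in Lemma 1.27(2)--(4) are biconditionals whose \emph{reverse} implications are deduced, in the proof of that lemma, from the hypothesis that $\{L, M\}$ is adequate (together with Lemma 1.24); quoting those reverse implications here would be circular. However, the \emph{forward} implications ($L < M \cap N \Rightarrow L < M$, and so on) are proven there directly from $M < N$, $L \in N$, Lemma 1.16, and Lemma 1.19(2), with no appeal to the adequacy of $\{L, M\}$. So it is legitimate to use precisely that half of Lemma 1.27, and the argument amounts to a one-line appeal to it once we have flagged which implications are unconditional.
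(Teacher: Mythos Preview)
Your argument is correct and is exactly what the paper intends: its proof reads simply ``Immediate from Lemma 1.27,'' and you have unpacked precisely that, including the careful observation that only the forward implications of Lemma 1.27(2)--(4) are needed (so there is no circularity).
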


\begin{proof}
Immediate from Lemma 1.27.
\end{proof}

\begin{proposition}
Let $A$ be adequate, $N \in A$, and suppose that for all $M \in A$, 
if $M < N$ then $M \cap N \in A \cap N$. 
Suppose that $B$ is adequate and 
$A \cap N \subseteq B \subseteq N$. 
Then $A \cup B$ is adequate.
\end{proposition}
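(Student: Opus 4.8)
The plan is to check adequacy of $A \cup B$ directly from the definition: given an arbitrary pair $\{M, M'\} \subseteq A \cup B$, I will show that one of $M < M'$, $M \sim M'$, $M' < M$ holds. Since the property that $\{M,M'\}$ is adequate is symmetric in $M$ and $M'$, after relabeling there are exactly three cases to consider: (a) $M, M' \in A$; (b) $M, M' \in B$; (c) $M \in A$ and $M' \in B$. Cases (a) and (b) are immediate from the adequacy of $A$ and of $B$ respectively, so all the work is in case (c).

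So suppose $M \in A$ and $M' \in B$. Since $B \subseteq N$, we have $M' \in N$; also $M$ and $N$ both belong to $A$, so by adequacy of $A$ one of $M < N$, $M \sim N$, $N < M$ holds. First suppose $M \sim N$ or $N < M$; then $N \le M$ (here I use that the relation $\sim$ is symmetric, which is clear since $\beta_{M,N} = \beta_{N,M}$). Since $M' \in N$, Lemma 1.26 applied with $L := M'$ yields $M' < M$, and we are done in this subcase.

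The remaining subcase is $M < N$, and this is where the hypothesis on $A$ and $N$ enters. Since $M \in A$ and $M < N$, the hypothesis gives $M \cap N \in A \cap N$, and $A \cap N \subseteq B$, so $M \cap N \in B$. As $M'$ is also in $B$, the pair $\{M \cap N, M'\}$ is adequate by adequacy of $B$. Now apply Lemma 1.28 with $L := M'$: the hypotheses $M < N$ and $M' \in N$ hold, and $\{M', M \cap N\}$ is adequate, so $\{M', M\}$ is adequate, i.e. one of $M < M'$, $M \sim M'$, $M' < M$ holds. This completes case (c), and therefore $A \cup B$ is adequate.

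There is essentially no obstacle beyond organizing the case split correctly; the one point to be careful about is that in the subcase $M < N$ one must route the comparison through $M \cap N$, using both that $M \cap N \in B$ (from the hypothesis together with $A \cap N \subseteq B$) and that Lemma 1.28 transfers adequacy of $\{M', M \cap N\}$ back to adequacy of $\{M', M\}$. If one prefers not to cite Lemma 1.28, the same conclusion follows by applying Lemma 1.27(2)--(4) with $L := M'$ to translate each of the three possibilities $M \cap N < M'$, $M \cap N \sim M'$, $M' < M \cap N$ (given by adequacy of $B$) into $M < M'$, $M \sim M'$, $M' < M$ respectively.
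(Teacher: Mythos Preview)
Your proof is correct and follows essentially the same approach as the paper: reduce to the mixed case $M \in A$, $L \in B$, observe $L \in N$, split on whether $N \le M$ (invoke Lemma 1.26) or $M < N$ (use the hypothesis to get $M \cap N \in B$, then apply Lemma 1.28). The organization and key lemmas are identical to the paper's argument.
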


\begin{proof}
Let $L \in B$ and $M \in A$, and we will show that $\{ L, M \}$ is adequate. 
Since $L \in B$ and $B \subseteq N$, $L \in N$. 
If $N \le M$, then $L < M$ by Lemma 1.26. 
Suppose that $M < N$. 
Then $M \cap N \in A \cap N$ by assumption. 
Since $A \cap N \subseteq B$, $M \cap N \in B$. 
As $B$ is adequate, $\{ L, M \cap N \}$ is adequate. 
Since $L \in N$ and $\{ L, M \cap N \}$ is adequate, 
$\{ L, M \}$ is adequate by Lemma 1.28.
\end{proof}

In the last proposition, we assumed that $M < N$ implies that $M \cap N \in N$, 
for $M \in A$.  
At this point we do not have any reason to believe this implication is true 
in general. 
In Section 7, we will define a subclass of $\mathcal X_0$ 
on which this implication holds. 
See Notation 7.7 and Lemma 8.2.

\bigskip

So far we have discussed the interaction of countable 
models in $\mathcal X_0$. 
We now turn our attention to how models in $\mathcal X_0$ relate to 
models in $\mathcal Y_0$.

\begin{lemma}
Let $M$ and $N$ be in $\mathcal X_0 \cup \mathcal Y_0$.
Suppose that:
\begin{enumerate}
\item $M$ and $N$ are in $\mathcal X_0$ and $M < N$, or
\item $M$ and $N$ are in $\mathcal Y_0$ and $M \cap \kappa < N \cap \kappa$, or 
\item $M \in \mathcal X_0$, $N \in \mathcal Y_0$, and 
$\sup(M \cap N \cap \kappa) < N \cap \kappa$.
\end{enumerate}
Then $M \cap N \cap \kappa \in N$.
\end{lemma}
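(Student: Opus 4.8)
The plan is to treat the three cases separately; cases (1) and (2) fall out immediately from material already developed, and case (3) is where the real argument lies. In case (1), with $M,N \in \mathcal{X}_0$ and $M < N$, I would just unwind the definitions: $M < N$ means $M \cap \beta_{M,N} \in N$ by Definition 1.17, and since $\{M,N\}$ is adequate with $M \le N$, Lemma 1.19(2) gives $M \cap \beta_{M,N} = M \cap N \cap \kappa$; substituting yields $M \cap N \cap \kappa \in N$. In case (2), with $M,N \in \mathcal{Y}_0$ and $M \cap \kappa < N \cap \kappa$, I would use that $N \cap \kappa$ is an ordinal, so $M \cap \kappa < N \cap \kappa$ forces $M \cap \kappa \subseteq N \cap \kappa$ and hence $M \cap N \cap \kappa = M \cap \kappa$; since $M \cap \kappa$ is then an element of the ordinal $N \cap \kappa$, and every ordinal below $N \cap \kappa$ belongs to $N$, we get $M \cap N \cap \kappa = M \cap \kappa \in N$.

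Case (3) is the heart of the matter: $M \in \mathcal{X}_0$, $N \in \mathcal{Y}_0$, and, writing $\delta := N \cap \kappa$ (an ordinal below $\kappa$), we are given $\sup(M \cap N \cap \kappa) < \delta$. First I would observe that $M \cap N \cap \kappa = (M \cap \kappa) \cap \delta$ is an initial segment of $M \cap \kappa \in T^*$, so that $M \cap N \cap \kappa \in T^*$ by the closure of $T^*$ under initial segments (Notation 1.4). Next, a standard elementarity argument shows that $\delta = N \cap \kappa$ is a limit point of the club $C^*$ — $N$ contains $C^*$ as a predicate and $C^*$ is unbounded in $\kappa$, so $C^* \cap \delta$ is cofinal in $\delta$ — and, $C^*$ being closed, $\delta \in C^*$. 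Now $M \cap N \cap \kappa$ is a \emph{bounded} subset of $\delta$ lying in $T^*$, precisely because $\sup(M \cap N \cap \kappa) < \delta$ by hypothesis; so the defining property of $C^*$ (Notation 1.6) gives $\pi^*(M \cap N \cap \kappa) < \delta = N \cap \kappa$. Finally, since every ordinal below $N \cap \kappa$ is a member of $N$ and $N$ is closed under the inverse of the bijection $\pi^*$ by elementarity, applying $(\pi^*)^{-1}$ to $\pi^*(M \cap N \cap \kappa) \in N$ shows $M \cap N \cap \kappa \in N$, as desired.

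The only point that requires a little care is the claim $\delta = N \cap \kappa \in C^*$: one needs that $\delta$ is a genuine limit ordinal, which holds since $\omega \subseteq N$ and $N$ is closed under the successor function while $N \cap \kappa \notin N$, and then one invokes the closedness of $C^*$ at the limit point $\delta$. Everything else is routine bookkeeping with the definitions of $T^*$, $C^*$, $\Lambda$, and $\pi^*$, closely paralleling the argument in the proof of Lemma 1.26.
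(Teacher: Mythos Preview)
Your proof is correct and matches the paper's approach in cases (1) and (2) exactly. In case (3) there is a minor variation: the paper picks an intermediate $\gamma \in N \cap \Lambda$ with $\sup(M \cap \beta) < \gamma$ and applies the defining property of $C^*$ at $\gamma$, whereas you argue directly that $\delta = N \cap \kappa$ is a limit point of $C^*$ (hence in $C^*$) and apply the defining property at $\delta$ itself; both routes are valid and rest on the same ideas (closure of $T^*$ under initial segments, the coding bijection $\pi^*$, and elementarity of $N$), so this is essentially the same argument with a slightly more direct endpoint.
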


\begin{proof}
(1) If $M$ and $N$ are in $\mathcal X_0$, then $M < N$ implies that 
$M \cap \beta_{M,N} \in N$. 
By Lemma 1.19(2), $M \cap \beta_{M,N} = M \cap N \cap \kappa$, so 
$M \cap N \cap \kappa \in N$. 
(2) If $M$ and $N$ are in $\mathcal Y_0$, then since 
$M \cap \kappa < N \cap \kappa$, 
$M \cap N \cap \kappa = M \cap \kappa \in N$.

(3) Suppose that $M \in \mathcal X_0$, $N \in \mathcal Y_0$, and 
$\sup(M \cap N \cap \kappa) < N \cap \kappa$. 
Let $\beta := N \cap \kappa$. 
By the elementarity of $N$, $\beta$ is a limit point of $\Lambda$. 
So fix $\gamma \in N \cap \Lambda$ such that 
$\sup(M \cap \beta) < \gamma$. 
Then $M \cap N \cap \kappa = M \cap \gamma$. 
Since $\gamma$ has uncountable cofinality, $M \cap \gamma$ is a bounded 
subset of $\gamma$, and as $M \in \mathcal X_0$, 
$M \cap \gamma \in T^*$. 
By the definition of $C^*$ and $\Lambda$, 
$\pi^*(M \cap \gamma) < \gamma < N \cap \kappa$. 
Since $N$ is closed under the inverse of $\pi^*$ by elementarity, 
$M \cap \gamma = M \cap N \cap \kappa \in N$.
\end{proof}

Note that (3) holds if $\cf(N \cap \kappa) > \omega$, which is the 
typical situation that we will consider.

\begin{lemma}
Let $M \in \mathcal X_0$ and $N \in \mathcal Y_0$, and assume that 
$\sup(M \cap N \cap \kappa) < N \cap \kappa$. 
Then 
$$
\cl(M \cap N \cap \kappa) = \cl(M \cap \kappa) \cap \cl(N \cap \kappa) \cap (N \cap \kappa).
$$
\end{lemma}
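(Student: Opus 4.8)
The plan is to observe that, since $N \in \mathcal Y_0$, the set $N \cap \kappa$ is an ordinal below $\kappa$; write $\beta := N \cap \kappa$. Then $M \cap N \cap \kappa$ is just $M$ intersected with the ordinal $\beta$, so $M \cap N \cap \kappa = M \cap \beta$, and the hypothesis becomes $\sup(M \cap \beta) < \beta$. Thus the statement reduces to the elementary fact
$$
\cl(M \cap \beta) = \cl(M \cap \kappa) \cap \cl(N \cap \kappa) \cap \beta
$$
about initial segments of a countable set of ordinals.

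For the forward inclusion I would use that $M \cap \beta$ is a subset of both $M \cap \kappa$ and $N \cap \kappa$, so $\cl(M \cap \beta) \subseteq \cl(M \cap \kappa) \cap \cl(N \cap \kappa)$, and that every element of $\cl(M \cap \beta)$ is at most $\sup(M \cap \beta) < \beta$, hence lies in $\beta$. (Alternatively, one could appeal to Lemma 1.30(3): under the present hypothesis it gives $M \cap N \cap \kappa \in N$, whence $\cl(M \cap N \cap \kappa) \subseteq N$ by elementarity and countability, and so $\cl(M \cap N \cap \kappa) \subseteq N \cap \kappa$ since all its members are ordinals below $\kappa$.)

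For the reverse inclusion I would take $\alpha$ in the right-hand side, so that $\alpha \in \cl(M \cap \kappa)$ and $\alpha < \beta$. If $\alpha \in M \cap \kappa$ then $\alpha \in M \cap \beta$; otherwise $\alpha$ is a limit point of $M \cap \kappa$, and, since $\alpha < \beta$, the elements of $M \cap \kappa$ witnessing this all lie in $M \cap \beta$, so $\alpha$ is a limit point of $M \cap \beta$. In either case $\alpha \in \cl(M \cap \beta) = \cl(M \cap N \cap \kappa)$.

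There is no substantial obstacle; the only point that needs care is the role of the hypothesis $\sup(M \cap N \cap \kappa) < N \cap \kappa$, which is exactly what prevents $N \cap \kappa$ itself from being a limit point of $M \cap N \cap \kappa$, and is therefore needed for the forward inclusion. I would also note that the factor $\cl(N \cap \kappa)$ on the right-hand side is not really used: since $N \cap \kappa \subseteq \cl(N \cap \kappa)$ we have $\cl(N \cap \kappa) \cap (N \cap \kappa) = N \cap \kappa$, so it appears only for parallelism with Lemma 1.20.
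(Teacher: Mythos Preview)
Your proof is correct and follows essentially the same approach as the paper: both reduce the statement to the observation that $N \cap \kappa$ is an ordinal $\beta$, so the right-hand side simplifies to $\cl(M \cap \kappa) \cap \beta$, which equals $\cl(M \cap \beta) = \cl(M \cap N \cap \kappa)$. The paper compresses what you spell out, writing the forward inclusion as ``immediate'' and the reverse as the single identity $\cl(M \cap \kappa) \cap (N \cap \kappa) = \cl(M \cap N \cap \kappa)$, but the content is the same; your remark that the factor $\cl(N \cap \kappa)$ is redundant is exactly the paper's use of $\cl(N \cap \kappa) = (N \cap \kappa) \cup \{N \cap \kappa\}$.
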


\begin{proof}
The forward inclusion is immediate. 
Let 
$$
\alpha \in \cl(M \cap \kappa) \cap \cl(N \cap \kappa) \cap (N \cap \kappa).
$$
Since $\cl(N \cap \kappa) = (N \cap \kappa) \cup \{ N \cap \kappa \}$, 
$$
\alpha \in \cl(M \cap \kappa) \cap (N \cap \kappa) = \cl(M \cap N \cap \kappa).
$$
\end{proof}

\bigskip

Recall that if $M \in \mathcal X_0$ and $P \in \mathcal Y_0$, 
then $M \cap P \in \mathcal X_0$. 
We show next that we can add $M \cap P$ to an adequate set 
and preserve adequacy.

\begin{lemma}
Let $K$ and $M$ be in $\mathcal X_0$ and $P$ in $\mathcal Y_0$. 
Assume that $\{ K, M \}$ is adequate and 
$\sup(M \cap P \cap \kappa) < P \cap \kappa$. 
Then: 
\begin{enumerate}
\item $\beta_{K,M \cap P} \le \beta_{K,M}$ and 
$\beta_{K,M \cap P} < P \cap \kappa$;
\item $M < K$ iff $M \cap P < K$;
\item $K \sim M$ iff $K \sim M \cap P$;
\item $K < M$ iff $K < M \cap P$.
\end{enumerate}
In particular, $\{ K, M \cap P \}$ is adequate.
\end{lemma}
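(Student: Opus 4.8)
The plan is to mirror the structure of the proof of Lemma 1.24, using Lemma 1.31(3) as the analogue of ``$M \cap N \cap \kappa \in N$'' in the $\mathcal Y_0$ setting. First I would establish part (1). The inequality $\beta_{K, M \cap P} \le \beta_{K,M}$ is immediate from Lemma 1.16(1), since $M \cap P \cap \kappa \subseteq M \cap \kappa$. For the bound $\beta_{K, M \cap P} < P \cap \kappa$, I would argue as in Lemma 1.31(3): since $\sup(M \cap P \cap \kappa) < P \cap \kappa$ and $P \cap \kappa$ is a limit point of $\Lambda$ by elementarity of $P$, there is $\gamma \in P \cap \Lambda$ with $\sup(M \cap P \cap \kappa) < \gamma$, so $M \cap P \cap \kappa = M \cap \gamma \subseteq \gamma$; then Lemma 1.16(2) gives $\beta_{K, M\cap P} \le \gamma < P \cap \kappa$. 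It is worth recording, for use in parts (2)--(4), the consequence that $M \cap P \cap \beta_{K, M\cap P} = M \cap \beta_{K, M \cap P}$: this holds because $M \cap P \cap \kappa = M \cap \gamma$ is an initial segment of $M \cap \kappa$ and $\beta_{K, M\cap P} \le \gamma$.

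Next I would prove the forward directions of (2), (3), and (4) simultaneously. For (2): if $M < K$, then $M \cap \beta_{K,M} \in K$; since $\beta_{K, M \cap P} \le \beta_{K,M}$, the initial segment $M \cap \beta_{K, M \cap P}$ is in $K$, and this equals $M \cap P \cap \beta_{K, M \cap P}$ by the remark above, so $M \cap P < K$. For (3): if $K \sim M$, then $K \cap \beta_{K,M} = M \cap \beta_{K,M}$, and restricting to $\beta_{K, M \cap P}$ gives $K \cap \beta_{K, M\cap P} = M \cap \beta_{K, M\cap P} = M \cap P \cap \beta_{K, M \cap P}$, so $K \sim M \cap P$. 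The delicate case is the forward direction of (4): assuming $K < M$, I want $K < M \cap P$, i.e.\ $K \cap \beta_{K, M \cap P} \in M \cap P$. From $K < M$ and $\beta_{K, M \cap P} \le \beta_{K,M}$ we get $K \cap \beta_{K, M \cap P} \in M$ directly, so the real task is to show it is in $P$. Here I would invoke that $K \cap \kappa \in T^*$, hence $K \cap \beta_{K, M \cap P} \in T^*$ (closure under initial segments), so $\pi^*(K \cap \beta_{K, M \cap P})$ is a well-defined ordinal below $\kappa$; since $K \cap \beta_{K, M\cap P}$ is a bounded subset of $P \cap \kappa$ in $T^*$ and $P \cap \kappa \in \cl(\Lambda)$ actually belongs to $C^*$ (as $P \in \mathcal Y_0$ is elementary, $P \cap \kappa$ is closed under $\pi^*$, so $P \cap \kappa \in C^*$), we get $\pi^*(K \cap \beta_{K, M\cap P}) < P \cap \kappa$, hence it lies in $P \cap \kappa$; and $P$ is closed under the inverse of $\pi^*$, so $K \cap \beta_{K, M\cap P} \in P$. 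Combined with membership in $M$, this gives $K \cap \beta_{K, M\cap P} \in M \cap P$, as desired.

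Finally, the reverse implications follow by the usual trichotomy argument: each of $M < K$, $K \sim M$, $K < M$ holds (some one of them, by adequacy of $\{K,M\}$), and the three forward implications just proved, together with the fact that $\{K, M\cap P\}$ can satisfy at most one of $M \cap P < K$, $K \sim M \cap P$, $K < M \cap P$ once it is known to be adequate, pin down the correspondence; more precisely, if say $M \cap P < K$, then $K \sim M$ and $K < M$ are impossible (they would force $K \sim M \cap P$ and $K < M \cap P$), so $M < K$. The ``in particular'' clause is then automatic: whichever of the three relations holds between $K$ and $M$, the corresponding relation holds between $K$ and $M \cap P$, so $\{K, M \cap P\}$ is adequate.

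I expect the main obstacle to be the forward direction of (4) --- showing $K \cap \beta_{K, M \cap P} \in P$ --- since this is the only place where the hypothesis $\sup(M \cap P \cap \kappa) < P \cap \kappa$ and the coding machinery $(\pi^*, C^*, \Lambda)$ genuinely enter, exactly paralleling the role of the thin-stationary coding in Lemma 1.24(4) and Lemma 1.27. Everything else is bookkeeping with initial segments and the earlier structural lemmas (1.15, 1.16, 1.19, 1.31).
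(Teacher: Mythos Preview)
Your proposal is correct and follows essentially the same route as the paper's proof: part (1) via Lemmas 1.16(1,2), the forward implications of (2)--(4) by restricting to $\beta_{K,M\cap P}$ and using $M \cap P \cap \beta_{K,M\cap P} = M \cap \beta_{K,M\cap P}$, and the converses by trichotomy. One small point: in the forward direction of (4), your justification that $P \cap \kappa \in C^*$ (``$P \cap \kappa$ is closed under $\pi^*$'') is phrased imprecisely, since $\pi^*$ takes sets as arguments; the cleaner route, which the paper takes, is to fix $\gamma \in P \cap \Lambda$ with $\beta_{K,M\cap P} < \gamma$ (possible since $\beta_{K,M\cap P} < P\cap\kappa$ and $\Lambda$ is unbounded in $P\cap\kappa$), and then use $\gamma \in C^*$ to conclude $\pi^*(K \cap \beta_{K,M\cap P}) < \gamma \in P$. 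Your conclusion $P\cap\kappa \in C^*$ is nonetheless true (it is a limit point of the club $C^*$), so the argument goes through either way.
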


\begin{proof}
(1) Since $M \cap P \subseteq M$, 
$\beta_{K,M \cap P} \le \beta_{K,M}$ by Lemma 1.16(1). 
As $\sup(M \cap P \cap \kappa) < P \cap \kappa$ and $\Lambda$ is 
unbounded in $P \cap \kappa$ by elementarity, 
we can fix $\beta \in \Lambda$ with 
$\sup(M \cap P \cap \kappa) < \beta < P \cap \kappa$. 
By Lemma 1.16(2), 
$$
\beta_{K,M \cap P} \le \beta < P \cap \kappa.
$$
This proves (1). 
It follows that 
$$
M \cap \beta_{K,M \cap P} = M \cap P \cap \beta_{K,M \cap P}.
$$

(2,3,4) First we will prove the forward implications 
of (2), (3), and (4). 
Assume that $M < K$. 
Then $M \cap \beta_{K,M} \in K$. 
Since $\beta_{K,M \cap P} \le \beta_{K,M}$, 
$M \cap \beta_{K,M \cap P} \in K$. 
So 
$$
M \cap P \cap \beta_{K,M \cap P} = M \cap \beta_{K,M \cap P} \in K.
$$
Hence $M \cap P < K$.

Suppose that $K \sim M$. 
Then $K \cap \beta_{K,M} = M \cap \beta_{K,M}$. 
Since $\beta_{K,M \cap P} \le \beta_{K,M}$, it follows that 
$$
K \cap \beta_{K,M \cap P} = 
M \cap \beta_{K,M \cap P} = M \cap P \cap \beta_{K,M \cap P}.
$$ 
Therefore $K \sim M \cap P$.

Finally, assume that $K < M$. 
Then $K \cap \beta_{K,M} \in M$. 
Since $\beta_{K,M \cap P} \le \beta_{K,M}$, 
$K \cap \beta_{K,M \cap P} \in M$. 
As $\beta_{K,M \cap P} < P \cap \kappa$, by elementarity there is 
$\gamma \in P \cap \Lambda$ with $\beta_{K,M \cap P} < \gamma$. 
Then $K \cap \beta_{K,M \cap P}$ is a bounded subset of $\gamma$ in $T^*$. 
Hence $\pi^*(K \cap \beta_{K,M \cap P}) < \gamma$. 
In particular, $\pi^*(K \cap \beta_{K,M \cap P}) \in P \cap \kappa$. 
By the elementarity of $P$, $P$ is closed under the inverse of $\pi^*$. 
So $K \cap \beta_{K,M \cap P} \in P$. 
Thus $K \cap \beta_{K,M \cap P} \in M \cap P$, and therefore 
$K < M \cap P$.

Conversely, assume that $M \cap P < K$. 
Since $\{ K, M \}$ is adequate, either $M < K$, 
$M \sim K$, or $K < M$. 
But the forward implications of (3) and (4) rule out 
$M \sim K$ and $K < M$. 
Hence $M < K$. 
The other converses are proved similarly.
\end{proof}

\begin{proposition}
Let $A$ be an adequate set. 
Let $M$ be in $A$ and $P$ in $\mathcal Y_0$, and assume that 
$\sup(M \cap P \cap \kappa) < P \cap \kappa$. 
Then $A \cup \{ M \cap P \}$ is adequate.
\end{proposition}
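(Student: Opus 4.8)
The plan is to obtain this immediately from Lemma 1.34, in the same way that Proposition 1.25 was obtained from Lemma 1.24. First I would recall that $M \cap P \in \mathcal X_0$: this was observed in the discussion following Notation 1.11, using that the well-ordering $\unlhd$ makes $M \cap P$ an elementary substructure and that $M \cap P \cap \kappa = M \cap (P \cap \kappa)$ is an initial segment of $M \cap \kappa$, hence in $T^*$. Thus $A \cup \{M \cap P\}$ is a finite subset of $\mathcal X_0$, and by Definition 1.18 we only need to check that every pair from $A \cup \{M \cap P\}$ stands in one of the relations $<$, $\sim$, or $>$.

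Pairs with both members in $A$ are adequate by the hypothesis that $A$ is adequate, and the pair $\{M \cap P, M \cap P\}$ is adequate since $M \cap P \sim M \cap P$. So fix $K \in A$; it remains to see that $\{K, M \cap P\}$ is adequate. Since $K$ and $M$ both lie in the adequate set $A$, the pair $\{K, M\}$ is adequate (in the degenerate case $K = M$ this holds because $M \sim M$). The hypothesis of the proposition supplies $\sup(M \cap P \cap \kappa) < P \cap \kappa$, so Lemma 1.34 applies to $K$, $M$, and $P$, and its final clause asserts precisely that $\{K, M \cap P\}$ is adequate. As $K \in A$ was arbitrary, $A \cup \{M \cap P\}$ is adequate.

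I do not expect any genuine obstacle: the substantive content — the inequalities $\beta_{K,M \cap P} \le \beta_{K,M}$ and $\beta_{K,M \cap P} < P \cap \kappa$, and the transfer of each of $M < K$, $K \sim M$, $K < M$ to the corresponding relation between $K$ and $M \cap P$ via the closure of $T^*$ under initial segments and the defining property of $\Lambda$ through $\pi^*$ — is exactly what Lemma 1.34 has already established. The proposition is simply the observation that adequacy, being a condition on pairs, is preserved once each new pair has been checked, so the proof is the one line: immediate from Lemma 1.34.
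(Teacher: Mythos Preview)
Your approach is correct and matches the paper's own proof exactly: the paper simply writes ``Immediate from Lemma 1.32.'' However, you have cited the wrong lemma. Lemma 1.34 carries the additional hypothesis $L \in P$, which is not available for an arbitrary $K \in A$; the lemma whose content you actually describe (the inequalities $\beta_{K,M\cap P}\le\beta_{K,M}$, $\beta_{K,M\cap P}<P\cap\kappa$, and the transfer of $M<K$, $K\sim M$, $K<M$ to the corresponding relations between $K$ and $M\cap P$) is Lemma 1.32. Replace every reference to Lemma 1.34 with Lemma 1.32 and the proof is fine. (Also, the observation that $M\cap P\in\mathcal X_0$ appears after Notation 1.10, not 1.11.)
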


\begin{proof}
Immediate from Lemma 1.32.
\end{proof}

\bigskip

Next we will prove an amalgamation result for uncountable models. 
See Proposition 13.2 for a deeper result.

\begin{lemma}
Let $L$ and $M$ be in $\mathcal X_0$ and $P \in \mathcal Y_0$. 
Assume that $L \in P$ and $\sup(M \cap P \cap \kappa) < P \cap \kappa$. 
Then: 
\begin{enumerate}
\item $\beta_{L,M} = \beta_{L,M \cap P}$ and 
$\beta_{L,M} < P \cap \kappa$;
\item $L \sim M \cap P$ iff $L \sim M$;
\item $M \cap P < L$ iff $M < L$;
\item $L < M \cap P$ iff $L < M$.
\end{enumerate}
In particular, $\{ L, M \cap P \}$ is adequate iff $\{ L, M \}$ is adequate. 
\end{lemma}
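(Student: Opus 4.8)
The plan is to follow the template of Lemma 1.27, with the uncountable model $P$ and an auxiliary ordinal $\beta \in \Lambda \cap P$ playing the roles that $N$ and the comparison point $\beta_{M,N}$ play there. First I would record two preliminary facts. Since $L \in P$ and $L$ is countable, $L \subseteq P$; also $L \cap \kappa \in P$, so by elementarity $P$ sees that $L \cap \kappa$ is bounded below the regular uncountable cardinal $\kappa$, whence $\sup(L \cap \kappa) < P \cap \kappa$. Using this, the hypothesis $\sup(M \cap P \cap \kappa) < P \cap \kappa$, and the fact that $\Lambda$ is unbounded in $P \cap \kappa$ by elementarity, I fix $\beta \in \Lambda \cap P$ with $\sup(L \cap \kappa) < \beta$ and $\sup(M \cap P \cap \kappa) < \beta$; note $\beta < P \cap \kappa$. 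Since $L \cap \kappa \subseteq \beta$, Lemma 1.16(2) gives $\beta_{L,M} \le \beta < P \cap \kappa$, and, recalling from the discussion after Notation 1.11 that $M \cap P \in \mathcal X_0$ so that $\beta_{L, M \cap P}$ is defined, likewise $\beta_{L, M \cap P} \le \beta < P \cap \kappa$. This is the second assertion of (1).

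For the equality $\beta_{L,M} = \beta_{L, M \cap P}$, one inequality is Lemma 1.16(1) applied to $(M \cap P) \cap \kappa \subseteq M \cap \kappa$. For the reverse, the crucial point --- the same one used in the proof of Lemma 1.32 --- is that $M \cap \beta \in P$: indeed $M \cap \beta$ is an initial segment of $M \cap \kappa$, which lies in $T^*$, so $M \cap \beta \in T^*$; moreover $M \cap \beta$ is a bounded subset of $\beta$ because $\beta$ has uncountable cofinality and $M$ is countable, so $\pi^*(M \cap \beta) < \beta < P \cap \kappa$ by the definition of $C^*$. Since $P$ is closed under $(\pi^*)^{-1}$, it follows that $M \cap \beta \in P$, and as $M \cap \beta$ is countable, $M \cap \beta \subseteq P$. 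Consequently $M \cap \beta_{L,M} \subseteq M \cap \beta \subseteq M \cap P$, so in fact $M \cap \beta_{L,M} = (M \cap P) \cap \beta_{L,M}$, and Lemma 1.16(4) gives $\beta_{L,M} \le \beta_{L, M \cap P}$. Writing $\beta^*$ for the common value $\beta_{L,M} = \beta_{L, M \cap P}$, I have also established $M \cap \beta^* = (M \cap P) \cap \beta^*$.

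The forward implications of (2), (3), and (4) then follow by unwinding the definitions of $\sim$ and $<$ at the single ordinal $\beta^*$ and using $M \cap \beta^* = (M \cap P) \cap \beta^*$: for instance $L < M \cap P$ says $L \cap \beta^* \in M \cap P \subseteq M$, which is $L < M$, and $M \cap P < L$ says $M \cap \beta^* = (M \cap P) \cap \beta^* \in L$, which is $M < L$. For the reverse implications, each of $L \sim M$, $L < M$, and $M < L$ makes $\{L, M\}$ adequate, so Lemma 1.32 applied with $K = L$ makes $\{L, M \cap P\}$ adequate, and the forward implications then force the matching relation; the ``in particular'' clause is the combination of these forward implications with the same application of Lemma 1.32. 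I do not anticipate a real obstacle: the only delicate step is verifying $M \cap \beta \in P$, which is where the thinness of $T^*$ (via $C^*$ and $\pi^*$) and the elementarity of $P$ enter, and this computation is already carried out in the proof of Lemma 1.32.
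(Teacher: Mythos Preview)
Your proof is correct and follows essentially the same route as the paper: establish $\beta_{L,M} = \beta_{L,M\cap P} < P \cap \kappa$ (via Lemma 1.16(1,4)), deduce $M \cap \beta_{L,M} = (M \cap P) \cap \beta_{L,M}$, unwind the definitions for the forward implications of (2)--(4), and invoke Lemma 1.32 for the converses. The one difference is in how you obtain $M \cap \beta_{L,M} \subseteq M \cap P$: the paper simply observes that $L \in P$ implies $\Lambda_L \in P$ (by elementarity) and hence $\Lambda_L \subseteq P$, so $\beta_{L,M} \in \Lambda_L \subseteq P \cap \kappa$ directly, which immediately gives $M \cap \beta_{L,M} \subseteq M \cap (P \cap \kappa) \subseteq M \cap P$ --- your detour through an auxiliary $\beta$ and the $\pi^*$ computation to show $M \cap \beta \in P$ is correct but unnecessary.
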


\begin{proof}
(1) Since $M \cap P \subseteq M$, $\beta_{L,M \cap P} \le \beta_{L,M}$ 
by Lemma 1.16(1), 
which proves one direction of the equality. 
As $L \in P$, by elementarity, $\Lambda_{L} \in P$. 
Since $\Lambda_{L}$ is countable, $\Lambda_{L} \subseteq P$. 
As $\beta_{L,M} \in \Lambda_L$, $\beta_{L,M} \in P \cap \kappa$. 
So $M \cap \beta_{L,M} \subseteq M \cap P$. 
By Lemma 1.16(4), it follows that 
$\beta_{L,M} \le \beta_{L,M \cap P}$.

(2,3,4) First we will prove the forward implications 
of (2), (3), and (4). 
Since $\beta_{L,M} \in P$ as noted above, 
$$
M \cap \beta_{L,M} = M \cap P \cap \beta_{L,M}.
$$
If $L \sim M \cap P$, then 
$$
L \cap \beta_{L,M} = L \cap \beta_{L,M \cap P} = 
M \cap P \cap \beta_{L,M \cap P} = 
M \cap P \cap \beta_{L,M} = 
M \cap \beta_{L,M}.
$$
So $L \cap \beta_{L,M} = M \cap \beta_{L,M}$, and hence 
$L \sim M$.

If $M \cap P < L$, then 
$$
M \cap \beta_{L,M} = 
M \cap P \cap \beta_{L,M} = M \cap P \cap \beta_{L,M \cap P} \in L.
$$
So $M \cap \beta_{L,M} \in L$, and therefore $M < L$. 
And if $L < M \cap P$, then 
$$
L \cap \beta_{L,M} = L \cap \beta_{L,M \cap P} \in M \cap P \subseteq M.
$$
So $L \cap \beta_{L,M} \in M$, and therefore $L < M$.

Conversely, the assumptions $M < L$, $L \sim M$, and $L < M$ imply 
that $\{ L, M \}$ is adequate. 
Hence each of these assumptions imply that $M \cap P < L$, $L \sim M \cap P$, 
$L < M \cap P$ respectively by Lemma 1.32.
\end{proof}

\begin{proposition}
Let $A$ be adequate, $P \in \mathcal Y_0$, and assume that 
for all $M \in A$, $M \cap P \in A \cap P$. 
Suppose that $B$ is adequate and $A \cap P \subseteq B \subseteq P$. 
Then $A \cup B$ is adequate.
\end{proposition}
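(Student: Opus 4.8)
The plan is to mimic the proof of Proposition 1.29, with the uncountable model $P$ playing the role there played by the countable model $N$, and with Lemma 1.35 replacing Lemma 1.28. Since $A$ and $B$ are each adequate, in order to show that $A \cup B$ is adequate it suffices to prove that $\{L,M\}$ is adequate whenever $L \in B$ and $M \in A$. So I would fix such $L$ and $M$; note that $L, M \in \mathcal X_0$ and $P \in \mathcal Y_0$.

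Since $L \in B \subseteq P$, we have $L \in P$. By the hypothesis on $A$, $M \cap P \in A \cap P$, and since $A \cap P \subseteq B$ this gives $M \cap P \in B$; as $B$ is adequate, the pair $\{L, M \cap P\}$ is adequate. I would then transfer adequacy of $\{L, M \cap P\}$ to adequacy of $\{L, M\}$ by invoking the final clause of Lemma 1.35, whose hypotheses require $L \in P$ (already noted) together with $\sup(M \cap P \cap \kappa) < P \cap \kappa$.

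To check the latter, recall that $M \cap P \in A \cap P \subseteq P$, so the set $M \cap P \cap \kappa$ is an element of $P$; since $M$ is countable and $\kappa$ is regular and uncountable, $\sup(M \cap P \cap \kappa) < \kappa$, and because this ordinal lies in $P$ it is therefore below $P \cap \kappa$. With this verified, Lemma 1.35 applies and yields that $\{L, M\}$ is adequate, which completes the argument.

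I do not expect a serious obstacle here: the proof is a routine adaptation of Proposition 1.29. The only point that needs a moment's care is the verification of the side hypothesis $\sup(M \cap P \cap \kappa) < P \cap \kappa$ that is needed to apply Lemma 1.35, but this is immediate from $M \cap P \in P$ and the regularity of $\kappa$. One small structural simplification compared with Proposition 1.29 is worth noting: there one must split into the cases $N \le M$ and $M < N$, since $M \cap N$ is a well-behaved member of $\mathcal X_0$ only when $M < N$, whereas here $M \cap P$ always lies in $\mathcal X_0$, so no case distinction is required.
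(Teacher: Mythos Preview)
Your proof is correct and follows essentially the same route as the paper's own argument. One small slip: what you call ``Lemma 1.35'' is the proposition you are proving; the lemma you actually invoke (with hypotheses $L \in P$ and $\sup(M \cap P \cap \kappa) < P \cap \kappa$, and conclusion that $\{L, M \cap P\}$ adequate iff $\{L, M\}$ adequate) is Lemma 1.34.
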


\begin{proof}
Let $L \in B$ and $M \in A$. 
Then $M \cap P \in A \cap P \subseteq B$. 
Since $B$ is adequate, $\{ L , M \cap P \}$ is adequate. 
As $M \cap P \in P$, $\sup(M \cap P \cap \kappa) < P \cap \kappa$. 
By Lemma 1.34, $\{ L, M \}$ is adequate.
\end{proof}

\bigskip

We conclude the discussion about models in $\mathcal X_0$ and 
$\mathcal Y_0$ with the following useful lemma.

\begin{lemma}
Let $M$ and $N$ be in $\mathcal X_0$, and assume that 
$\{ M, N \}$ is adequate. 
Let $P \in \mathcal Y_0$. 
Then either $\beta_{M,N} = \beta_{M \cap P,N}$, or 
$P \cap \kappa < \beta_{M,N}$. 
\end{lemma}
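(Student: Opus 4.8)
The plan is to prove the dichotomy by showing that whenever the second alternative fails, the first one holds. One half of the desired equality is essentially free and needs no hypothesis on $P$: since $(M \cap P) \cap \kappa \subseteq M \cap \kappa$, Lemma 1.16(1) gives $\beta_{M \cap P, N} \le \beta_{M,N}$. (Here I use that $M \cap P$ is in $\mathcal X_0$, so that $\beta_{M \cap P, N}$ is even defined.) Thus it suffices to treat the case in which $P \cap \kappa < \beta_{M,N}$ fails, that is, $\beta_{M,N} \le P \cap \kappa$, and to deduce from this the reverse inequality $\beta_{M,N} \le \beta_{M \cap P, N}$.

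So suppose $\beta_{M,N} \le P \cap \kappa$. The crucial observation is that $P \cap \kappa$ is an ordinal, since $P \in \mathcal Y_0$; hence every ordinal below $P \cap \kappa$, and in particular every ordinal below $\beta_{M,N}$, is a member of $P$. Consequently $M \cap \beta_{M,N} \subseteq P$, and therefore
$$
(M \cap P) \cap \beta_{M,N} = M \cap \beta_{M,N};
$$
in particular these two sets have the same supremum. Now recall that $\beta_{M,N} \in \Lambda_M$, which by the definition of $\Lambda_M$ means $\beta_{M,N} = \min(\Lambda \setminus \sup(M \cap \beta_{M,N}))$. By the displayed equality this is the same as $\min(\Lambda \setminus \sup((M \cap P) \cap \beta_{M,N}))$, so $\beta_{M,N} \in \Lambda_{M \cap P}$. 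Since also $\beta_{M,N} \in \Lambda_N$ by the definition of the comparison point, we obtain $\beta_{M,N} \in \Lambda_{M \cap P} \cap \Lambda_N$, and hence $\beta_{M,N} \le \max(\Lambda_{M \cap P} \cap \Lambda_N) = \beta_{M \cap P, N}$. Combining this with the free inequality gives $\beta_{M,N} = \beta_{M \cap P, N}$, as required.

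I do not anticipate any genuine obstacle here. The only step that needs care is the reduction $(M \cap P) \cap \beta_{M,N} = M \cap \beta_{M,N}$, which is exactly where the hypothesis $\beta_{M,N} \le P \cap \kappa$ is used: without it there could be elements of $M \cap \beta_{M,N}$ lying outside $P$, the two suprema could differ, and the conclusion that $\beta_{M,N} \in \Lambda_{M \cap P}$ would fail — which is precisely the situation captured by the second alternative in the statement. It is also worth remarking that the adequacy of $\{M,N\}$ is not actually used in this argument; it is presumably listed only for uniformity with the hypotheses of the surrounding results.
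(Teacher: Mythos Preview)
Your proof is correct and takes a genuinely different route from the paper's. The paper argues by contradiction: assuming $\beta_{M\cap P,N} < \beta_{M,N}$ and $\beta_{M,N} \le P \cap \kappa$, it invokes Lemma 1.19(5) to produce $\xi \in (M \cap N) \cap [\beta_{M\cap P,N},\beta_{M,N})$, then observes that $\xi \in P$ forces $\xi < \beta_{M\cap P,N}$ via Lemma 1.15, a contradiction. Your argument is instead direct: from $\beta_{M,N} \le P \cap \kappa$ you get $M \cap \beta_{M,N} = (M\cap P) \cap \beta_{M,N}$, and then you show $\beta_{M,N} \in \Lambda_{M\cap P}$ straight from the definition, which is essentially the mechanism behind Lemma 1.16(4). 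Your remark that adequacy is never used is correct and is a real advantage of your approach: the paper's route goes through Lemma 1.19(5), which does require $\{M,N\}$ to be adequate, so your argument actually proves a slightly stronger statement.
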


\begin{proof}
Since $M \cap P \subseteq M$, $\beta_{M \cap P,N} \le \beta_{M,N}$. 
If $\beta_{M,N} = \beta_{M \cap P,N}$, then we are done. 
So assume that $\beta_{M \cap P,N} < \beta_{M,N}$. 
We claim that $P \cap \kappa < \beta_{M,N}$. 
Suppose for a contradiction that $\beta_{M,N} \le P \cap \kappa$. 
Since $\beta_{M \cap P,N} < \beta_{M,N}$, by Lemma 1.19(5), 
we can fix 
$$
\xi \in (M \cap N) \cap [\beta_{M \cap P,N},\beta_{M,N}).
$$
As $\beta_{M,N} \le P \cap \kappa$, 
$$
\xi \in (M \cap N) \cap P \cap \kappa = 
(M \cap P) \cap N \cap \kappa.
$$
Therefore $\xi < \beta_{M \cap P,N}$ by Lemma 1.15, which contradicts the 
choice of $\xi$.
\end{proof}

\bigskip

Finally, we prove an amalgamation result over transitive models.

\begin{lemma}
Let $M$, $M'$, $N$, and $N'$ be in $\mathcal X_0$. 
Assume that $M \cap \kappa = M' \cap \kappa$ and $N \cap \kappa = 
N' \cap \kappa$. 
Then:
\begin{enumerate}
\item $\beta_{M,N} = \beta_{M',N'}$;
\item $M \sim N$ iff $M' \sim N'$;
\item $M < N$ iff $M' < N'$;
\item $N < M$ iff $N' < M'$.
\end{enumerate}
In particular, $\{ M, N \}$ is adequate iff $\{ M', N' \}$ is adequate.
\end{lemma}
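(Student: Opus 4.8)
The plan is to reduce everything to statement (1), since statements (2)--(4) and the "in particular" clause then follow immediately: once we know $\beta := \beta_{M,N} = \beta_{M',N'}$, the relations $M \sim N$, $M < N$, $N < M$ are governed entirely by whether $M \cap \beta$ equals, belongs to, or contains (as an element, the other way) $N \cap \beta$, and these sets depend only on $M \cap \kappa$, $N \cap \kappa$, and $\beta$. More precisely, $M \cap \beta = (M \cap \kappa) \cap \beta = (M' \cap \kappa) \cap \beta = M' \cap \beta$, and likewise $N \cap \beta = N' \cap \beta$; so $M \sim N$ iff $M \cap \beta = N \cap \beta$ iff $M' \cap \beta = N' \cap \beta$ iff $M' \sim N'$. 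For $M < N$: this says $M \cap \beta \in N$, i.e.\ $M' \cap \beta \in N$; but $M' \cap \beta$ is a bounded subset of $\beta \in \Lambda$ lying in $T^*$ (by closure of $T^*$ under initial segments), so its $\pi^*$-image is an ordinal below $\beta \le \beta_{M,N}$, hence below any point where $N$'s closure under $\pi^{*-1}$ can be applied --- I would argue $M' \cap \beta \in N$ iff $\pi^*(M' \cap \beta) \in N \cap \kappa = N' \cap \kappa$ iff $M' \cap \beta \in N'$, using elementarity of both $N$ and $N'$ with respect to $\pi^*$ and its inverse. (One has to check $\pi^*(M' \cap \beta) < \kappa$ is harmless and that membership really transfers; the key point is that $\pi^*(M'\cap\beta)$ is a definable-from-$M'\cap\beta$ ordinal, so it lies in $N$ exactly when $M'\cap\beta$ does, and "lies in $N$" for an ordinal depends only on $N \cap \kappa$.) The case $N < M$ is symmetric.

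So the real work is (1): $\beta_{M,N} = \beta_{M',N'}$. Here I would first observe that $\Lambda_M$ depends only on $M \cap \kappa$ --- indeed, unwinding Definition 1.11, $\beta \in \Lambda_M$ iff $\beta \in \Lambda$ and $\beta = \min(\Lambda \setminus \sup((M \cap \kappa) \cap \beta))$, which mentions $M$ only through $M \cap \kappa$. Hence $\Lambda_M = \Lambda_{M'}$ and $\Lambda_N = \Lambda_{N'}$. Since $\beta_{M,N}$ is by Definition 1.14 the maximum of $\Lambda_M \cap \Lambda_N$, we get $\beta_{M,N} = \max(\Lambda_M \cap \Lambda_N) = \max(\Lambda_{M'} \cap \Lambda_{N'}) = \beta_{M',N'}$ directly. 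This makes (1) essentially a triviality once the right observation about $\Lambda_M$ is isolated.

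With (1) in hand, I would then write out (2), (3), (4) as above. For (3) the cleanest route is: by Lemma 1.19(3) (or directly) $\beta := \beta_{M,N} = \beta_{M',N'}$ has uncountable cofinality, so $\beta \ge \omega_1$; we have $M \cap \beta = M' \cap \beta$ and $N \cap \beta = N' \cap \beta$ from the hypotheses; now $M < N$ means $M \cap \beta \in N$, and since $M \cap \beta = M' \cap \beta$ is a bounded subset of $\beta \in \Lambda \subseteq C^*$, elementarity of $N$ under $\pi^*$ gives $M \cap \beta \in N \Leftrightarrow \pi^*(M \cap \beta) \in N$, and the latter is an ordinal $< \beta \le \kappa$ so membership in $N$ is the same as membership in $N'$ (both have the same intersection with $\kappa$), and running the equivalence back through $N'$'s elementarity gives $M' \cap \beta \in N'$, i.e.\ $M' < N'$. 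Statement (4) is the same with the roles of the two pairs swapped. The "in particular" clause is then immediate since adequacy of $\{M,N\}$ is the disjunction $M < N \vee M \sim N \vee N < M$.

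The main obstacle --- such as it is --- is not conceptual difficulty but making sure the transfer "$x \in N \Leftrightarrow x \in N'$" is deployed only for objects $x$ (an ordinal, or a set coded by an ordinal via $\pi^*$) whose membership in $N$ genuinely depends only on $N \cap \kappa$; the $\pi^*$ detour is precisely what guarantees this for the sets $M \cap \beta$, which a priori are not ordinals. I expect the authors handle (1) by the $\Lambda_M = \Lambda_{M'}$ observation and then dispatch (2)--(4) by exactly this $\pi^*$-coding argument, perhaps citing Lemma 1.16 or Lemma 1.26 for the membership-transfer step rather than redoing it.
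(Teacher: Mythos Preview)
Your proposal is correct and follows essentially the same approach as the paper: for (1) you observe directly that $\Lambda_M$ depends only on $M\cap\kappa$ (the paper phrases this as two applications of Lemma~1.16(1), one for each inclusion), and for (2)--(4) you use exactly the $\pi^*$-coding argument the paper uses, transferring membership of $M\cap\beta$ in $N$ to membership in $N'$ via the ordinal $\pi^*(M\cap\beta)\in N\cap\kappa=N'\cap\kappa$. The only superfluous detail is your bound $\pi^*(M\cap\beta)<\beta$; all that is needed is that this ordinal lies in $\kappa$, so that its membership in $N$ is determined by $N\cap\kappa$.
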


\begin{proof}
(1) Since $M \cap \kappa \subseteq M' \cap \kappa$ and 
$N \cap \kappa \subseteq N' \cap \kappa$, it follows that 
$\beta_{M,N} \le \beta_{M',N} \le \beta_{M',N'}$ by Lemma 1.16(1). 
Similarly, the reverse inclusions imply that $\beta_{M',N'} \le \beta_{M,N}$. 
So $\beta_{M,N} = \beta_{M',N'}$.

(2,3,4) It suffices to prove the forward direction of the iff's of (2), (3), and (4), 
since the converses hold by symmetry. 
If $M \sim N$, then 
$$
M' \cap \beta_{M',N'} = 
M \cap \beta_{M,N} = N \cap \beta_{M,N} = N' \cap \beta_{M',N'},
$$
which proves (2). 
Suppose that $M < N$. 
Then 
$$
M' \cap \beta_{M',N'} = M \cap \beta_{M,N} \in N.
$$
By elementarity, 
$$
\pi^*(M' \cap \beta_{M',N'}) \in N \cap \kappa = N' \cap \kappa.
$$
Since $N'$ is closed under the inverse of $\pi^*$ by elementarity, 
$M' \cap \beta_{M',N'} \in N'$. 
(4) is similar.
\end{proof}

\begin{proposition}
Let $A$ be an adequate set. 
Assume that $X \prec (H(\kappa^+),\in)$, $|X| = \kappa$, and 
$X \cap \kappa^+ \in \kappa^+$. 
Let $B$ be an adequate set such that $A \cap X \subseteq B \subseteq X$. 
Suppose that for all $M \in A$, there is 
$M' \in B$ such that $M \cap \kappa = M' \cap \kappa$. 
Then $A \cup B$ is adequate. 
\end{proposition}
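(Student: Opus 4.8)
The plan is to show that every two-element subset of $A \cup B$ is adequate, which suffices since $A \cup B$ is certainly a finite subset of $\mathcal X_0$. Fix $L$ and $M$ in $A \cup B$. If both belong to $A$, or both belong to $B$, then $\{L,M\}$ is adequate because $A$ and $B$ are adequate. Otherwise, after relabeling $L$ and $M$ if necessary (the notion of a pair being adequate is symmetric), we may assume $L \in B$ and $M \in A$. This is the only case that requires work.

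First I would use the hypothesis to fix $M' \in B$ with $M' \cap \kappa = M \cap \kappa$. Since $L$ and $M'$ both lie in the adequate set $B$, the pair $\{L, M'\}$ is adequate. I would then apply Lemma 1.38 to the models $M'$ and $M$ — which have the same trace on $\kappa$ — together with $L$ playing the role of both $N$ and $N'$ in that lemma; since $L$, $M'$, $M$ all lie in $\mathcal X_0$, the lemma applies and yields that $\{L, M'\}$ is adequate if and only if $\{L, M\}$ is adequate. Hence $\{L, M\}$ is adequate, finishing the case analysis.

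There is no real obstacle here: the content of the proposition has already been packaged into Lemma 1.38, whose force is that the comparison point $\beta_{M,N}$ and each of the relations $M < N$, $M \sim N$, $N < M$ are determined by $M \cap \kappa$ and $N \cap \kappa$ alone, so that a model of $A$ may be swapped for any member of $B$ with the same trace on $\kappa$. The one point to verify carefully is that Lemma 1.38 is being applied to models all of which lie in $\mathcal X_0$, which is immediate since $A \cup B \subseteq \mathcal X_0$. The elementarity of $X$ and the inclusions $A \cap X \subseteq B \subseteq X$ are not used in the argument as such; they are the ambient hypotheses under which, in the intended applications, one is actually able to produce the required traces inside $B$.
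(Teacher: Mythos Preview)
Your argument is correct and is essentially identical to the paper's proof: pick $M' \in B$ with $M' \cap \kappa = M \cap \kappa$, use that $\{L,M'\}$ is adequate, and transfer to $\{L,M\}$ via the lemma that adequacy depends only on traces on $\kappa$. The one slip is the citation: the result you invoke is Lemma~1.37 in the paper's numbering, not Lemma~1.38, which is the very proposition you are proving.
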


\begin{proof}
Let $M \in A$ and $K \in B$ be given. 
Fix $M' \in B$ such that $M \cap \kappa = M' \cap \kappa$. 
As $\{ M', K \} \subseteq B$, $\{ M', K \}$ is adequate. 
Therefore $\{ M, K \}$ is adequate by Lemma 1.37.
\end{proof}

\bigskip

\addcontentsline{toc}{section}{2. Analysis of remainder points}

\textbf{\S 2. Analysis of remainder points}

\stepcounter{section}

\bigskip

In this section we will provide a detailed analysis of remainder points; some 
of these arguments appeared previously in \cite{jk22} and \cite{jk24}, although in a 
less complete form. 
This analysis will be the foundation from which we derive the 
amalgamation results of Section 13.

\begin{definition}
Let $\{ M, N \}$ be adequate. 
Let $R_M(N)$, the \emph{set of remainder points of $N$ over $M$}, 
be defined as the set of $\zeta$ satisfying either:
\begin{enumerate}
\item $\zeta = \min((N \cap \kappa) \setminus \beta_{M,N})$, 
provided that $M \sim N$, or
\item there is $\gamma \in (M \cap \kappa) \setminus \beta_{M,N}$ such that 
$\zeta = \min((N \cap \kappa) \setminus \gamma)$.
\end{enumerate}
\end{definition}

Note that if $N < M$, then $\beta_{M,N} \in M$ by Lemma 1.19(4). 
It follows that $\min((N \cap \kappa) \setminus \beta_{M,N}) \in R_M(N)$ by 
Definition 2.1(2).

The next lemma describes some basic properties of remainder points.

\begin{lemma}
Let $\{ M, N \}$ be adequate. 
Then:
\begin{enumerate}
\item $R_M(N) \cap \cl(M \cap \kappa) = \emptyset$;
\item $R_M(N)$ is finite;
\item suppose that $\zeta \in R_M(N)$ and $\zeta > \min(R_M(N) \cup R_N(M))$; 
then $\sigma := \min((M \cap \kappa) \setminus \sup(N \cap \zeta)) \in R_N(M)$ and 
$\zeta = \min((N \cap \kappa) \setminus \sigma)$.
\end{enumerate}
\end{lemma}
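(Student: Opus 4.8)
The plan is to record at the outset the one fact behind all three parts: every $\zeta\in R_M(N)$ satisfies $\zeta\ge\beta_{M,N}$, which is immediate from either clause of Definition 2.1 (clause (1) gives $\zeta=\min((N\cap\kappa)\setminus\beta_{M,N})\ge\beta_{M,N}$, clause (2) gives $\zeta=\min((N\cap\kappa)\setminus\gamma)\ge\gamma\ge\beta_{M,N}$). Part (1) then follows at once: if $\zeta\in R_M(N)\cap\cl(M\cap\kappa)$, then as $\zeta\in N\cap\kappa\subseteq\cl(N\cap\kappa)$ we get $\zeta\in\cl(M\cap\kappa)\cap\cl(N\cap\kappa)\subseteq\beta_{M,N}$ by Lemma 1.15, contradicting $\zeta\ge\beta_{M,N}$. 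For part (2), I would argue by contradiction: if $R_M(N)$ is infinite, then since clause (1) contributes at most one point, fix a strictly increasing $\omega$-sequence $\langle\zeta_n:n<\omega\rangle$ of clause-(2) remainder points, with witnesses $\gamma_n\in(M\cap\kappa)\setminus\beta_{M,N}$, $\zeta_n=\min((N\cap\kappa)\setminus\gamma_n)$. One checks $\gamma_m>\zeta_n$ for $n<m$ (else $\zeta_n\in(N\cap\kappa)\cap[\gamma_m,\zeta_m)=\emptyset$), so $\langle\gamma_n\rangle$ is strictly increasing and interleaves with $\langle\zeta_n\rangle$; thus $\delta:=\sup_n\gamma_n=\sup_n\zeta_n$ is a common limit point of $M\cap\kappa$ and $N\cap\kappa$ with $\delta>\gamma_0\ge\beta_{M,N}$, contradicting Lemma 1.15.

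For part (3), write $\beta=\beta_{M,N}$ and $\mu=\sup(N\cap\zeta)$. First I would note $\zeta>\beta$: since every point of $R_M(N)\cup R_N(M)$ is $\ge\beta$ and $\zeta\in R_M(N)$, $\zeta=\beta$ would make $\zeta$ the minimum, against the hypothesis. Next, obtain a clause-(2) description of $\zeta$: there is $\gamma\in M\cap\kappa$ with $\beta\le\gamma\le\zeta$ and $\zeta=\min((N\cap\kappa)\setminus\gamma)$. If $\zeta$ already comes from clause (2), take its witness; if it comes only from clause (1), so $M\sim N$ and $\zeta=\min((N\cap\kappa)\setminus\beta)$, then $(M\cap\kappa)\cap[\beta,\zeta]=\emptyset$ would force every point of $R_M(N)\cup R_N(M)$ to be $\ge\zeta$, again making $\zeta$ the minimum, so a usable $\gamma$ exists. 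Given $\gamma$ we have $N\cap[\gamma,\zeta)=\emptyset$, hence $\mu=\sup(N\cap\gamma)\le\gamma$, so $\sigma:=\min((M\cap\kappa)\setminus\mu)$ is well defined and $\mu\le\sigma\le\gamma\le\zeta$. I would then show $\sigma>\beta$: if $\sigma<\beta$, then $\mu<\beta$, $\zeta=\min((N\cap\kappa)\setminus\beta)$ and $\mu=\sup(N\cap\beta)$; when $M\le N$ this puts $\sigma\in M\cap\beta\subseteq N\cap\beta$ at or above $\sup(N\cap\beta)$, impossible since $N\cap\beta$ is closed under successor (so has no maximum); when $N<M$, $\zeta=\min((N\cap\kappa)\setminus\beta)$ is the least point of $R_M(N)$ and one checks of $R_M(N)\cup R_N(M)$, contradicting the hypothesis. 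Finally, from $\sigma>\beta$ the equality $\zeta=\min((N\cap\kappa)\setminus\sigma)$ follows: since $\sigma\ge\mu=\sup(N\cap\zeta)$, any element of $(N\cap\kappa)\cap[\sigma,\zeta)$ equals $\mu$ and lies in $N$, so $\mu=\sigma\in M\cap N\cap\kappa\subseteq\beta$, contradicting $\sigma>\beta$; hence that intersection is empty.

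It remains to prove $\sigma\in R_N(M)$, splitting into three configurations for $\mu$, with $\rho:=\sup(M\cap\sigma)=\sup(M\cap\mu)$. If $\mu\in N$: then $\mu>\beta$ (else $\mu$ is the maximum of $N\cap\beta$) and $\rho<\mu$ (else $\mu$ is a common limit point of $M\cap\kappa$ and $N\cap\kappa$, forcing $\mu<\beta$ by Lemma 1.15), so $\gamma'=\mu$ witnesses $\sigma\in R_N(M)$ via clause (2), as $\sigma=\min((M\cap\kappa)\setminus\mu)$. If $\mu\notin N$ and $\mu>\beta$: then $\mu$ is a limit point of $N\cap\kappa$ above both $\rho$ and $\beta$, so any $\gamma'\in(N\cap\kappa)\cap(\max(\rho,\beta),\mu)$ satisfies $\gamma'\ge\beta$ and $\sigma=\min((M\cap\kappa)\setminus\gamma')$, again giving clause (2). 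If $\mu\notin N$ and $\mu<\beta$: then $\zeta=\min((N\cap\kappa)\setminus\beta)$ and $\sigma=\min((M\cap\kappa)\setminus\beta)$, and the hypothesis rules out $N<M$ (as above) and rules out $M<N$ (which would place $\beta\in N$ by Lemma 1.19(4), contradicting $N\cap[\beta,\zeta)=\emptyset$), so $M\sim N$ and clause (1) of the definition of $R_N(M)$ gives $\sigma\in R_N(M)$. The main obstacle I foresee is exactly this last, degenerate configuration: it is the only place where the hypothesis that $\zeta$ is not the least remainder point is genuinely needed, and the only case in which $\sigma$ must be recognized as a remainder point through clause (1) rather than clause (2) — which is easy to overlook and makes the bookkeeping about the positions of $\mu$, $\sigma$, $\rho$ relative to $\beta_{M,N}$ essential throughout the argument.
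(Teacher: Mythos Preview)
Your arguments for parts (1) and (2) are essentially the paper's. For part (3), however, you take a genuinely different route. The paper argues \emph{top-down}: it uses part (2) to choose $\sigma_0$, the largest member of $R_M(N)\cup R_N(M)$ below $\zeta$, shows first that $\sigma_0\in R_N(M)$ and that $\zeta=\min((N\cap\kappa)\setminus\sigma_0)$ (both by maximality of $\sigma_0$), and only then verifies $\sigma_0=\sigma$. You instead compute $\sigma$ directly from its definition and verify membership in $R_N(M)$ by a three-way case analysis on the position of $\mu=\sup(N\cap\zeta)$ relative to $\beta_{M,N}$ and to $N$. Your approach avoids invoking the finiteness from part (2), but pays for this with the case split; the paper's argument is shorter precisely because the maximality of $\sigma_0$ absorbs the work your cases on $\mu$ do by hand. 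Two small imprecisions worth noting: you state and use $\sigma>\beta_{M,N}$, but your argument only establishes (and only needs) $\sigma\ge\beta_{M,N}$, since the contradiction you derive from a putative element of $(N\cap\kappa)\cap[\sigma,\zeta)$ is $\sigma\in M\cap N\cap\kappa\subseteq\beta_{M,N}$, which already clashes with $\sigma\ge\beta_{M,N}$. Also, the claim $\rho<\mu$ in your $\mu\in N$ case is not needed --- once you know $\mu\in(N\cap\kappa)\setminus\beta_{M,N}$, clause (2) gives $\sigma\in R_N(M)$ immediately.
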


\begin{proof}
(1) If $\zeta \in R_M(N)$, then by definition, $\zeta \in N$ and $\beta_{M,N} \le \zeta$. 
Hence $\zeta \notin \cl(M \cap \kappa)$ by Lemma 1.15.

(2) Suppose for a contradiction that $\langle \zeta_n : n < \omega \rangle$ 
is a strictly increasing sequence from $R_M(N)$. 
Then by definition, for each $n > 0$ there is $\gamma_n \in M$ such that 
$\zeta_n = \min((N \cap \kappa) \setminus \gamma_n)$. 
Let $\zeta := \sup \{ \zeta_n : n < \omega \}$. 
Then $\zeta = \sup \{ \gamma_n : n < \omega \}$. 
Therefore 
$$
\zeta \in \cl(M \cap \kappa) \cap \cl(N \cap \kappa).
$$
Hence $\zeta < \beta_{M,N}$ by Lemma 1.15. 
But 
$$
\beta_{M,N} \le \zeta_0 < \zeta,
$$
which is a contradiction.

(3) Since $\zeta > \min(R_M(N) \cup R_N(M))$ and $R_M(N)$ and $R_N(M)$ are finite, 
let $\sigma_0$ be the largest member of $R_M(N) \cup R_N(M)$ less than $\zeta$. 
We claim that $\sigma_0 \in R_N(M)$. 
If not, then $\sigma_0 \in R_M(N)$, and in particular, $\sigma_0 \in (N \cap \zeta) \setminus \beta_{M,N}$. 
Since $\zeta \in R_M(N)$, by the definition of $R_M(N)$ we have that 
$M \cap (\sigma_0,\zeta) \ne \emptyset$. 
But then $\min((M \cap \kappa) \setminus \sigma_0)$ is in $R_N(M)$ and is between 
$\sigma_0$ and $\zeta$, which contradicts the maximality of $\sigma_0$.

We claim that $\zeta = \min((N \cap \kappa) \setminus \sigma_0)$. 
Otherwise $\min((N \cap \kappa) \setminus \sigma_0)$ is in $R_M(N)$ and is 
between $\sigma_0$ and $\zeta$, which contradicts the maximality of $\sigma_0$. 
It follows that $\sup(N \cap \zeta) \le \sigma_0$. 
Finally, we show that $\sigma_0 = \min((M \cap \kappa) \setminus \sup(N \cap \zeta))$. 
Therefore $\sigma = \sigma_0$, and we are done. 
Suppose for a contradiction that $\sigma < \sigma_0$. 
As $\sup(N \cap \zeta) \le \sigma$, we have that $N \cap (\sigma,\sigma_0) = \emptyset$.

Observe that $\beta_{M,N} \le \sigma$. 
For if $\sigma < \beta_{M,N}$, then $\sigma \in (M \cap \beta_{M,N}) \setminus N$, which implies 
that $N < M$. 
And since $\sup(N \cap \zeta) \le \sigma$, it follows that 
$\zeta = \min((N \cap \kappa) \setminus \beta_{M,N})$. 
So $\zeta = \min(R_M(N) \cup R_N(M))$, which is a contradiction. 
Hence $\beta_{M,N} \le \sigma < \sigma_0$. 
Since $\sigma_0 \in R_N(M)$, there is $\gamma \in N$ such that 
$\sigma_0 = \min((M \cap \kappa) \setminus \gamma)$. 
But then $\sigma < \gamma < \sigma_0$, which contradicts that 
$N \cap (\sigma,\sigma_0) = \emptyset$.
\end{proof}

The rest of the section follows roughly the same sequence of topics covered in the 
previous section. 
Lemma 2.3 describes the remainder points which appear 
when adding $M \cap N$ to an adequate set, where $M < N$, as in 
Lemma 1.24 and Proposition 1.25. 
Then Lemmas 2.4--2.6 analyze remainder points which appear in the process
of amalgamating over countable models, as in Proposition 1.29.

\begin{lemma}
Let $K$, $M$, and $N$ be in $\mathcal X_0$. 
Suppose that $M < N$ and $\{ K, M, N \}$ is adequate. Then:
\begin{enumerate}
\item $R_K(M \cap N) \subseteq R_K(M)$;
\item $R_{M \cap N}(K) \subseteq R_M(K) \cup R_N(K)$.
\end{enumerate}
\end{lemma}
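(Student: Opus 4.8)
The plan is to reduce both inclusions to the comparison-point calculus of Section 1 and then run a case analysis on how $K$ relates to $M$. Since $M<N$, Lemma 1.19(2) gives the basic identity $M\cap N\cap\kappa=M\cap\beta_{M,N}$, so $M\cap N\cap\kappa$ is an initial segment of $M\cap\kappa$; in particular, for any ordinal $x$, the ``next point'' $\min((M\cap N\cap\kappa)\setminus x)$ agrees with $\min((M\cap\kappa)\setminus x)$ exactly when the latter lies below $\beta_{M,N}$. By Lemma 1.24, $\{K,M\cap N\}$ is adequate, $\beta_{K,M\cap N}\le\min(\beta_{K,M},\beta_{M,N})$, and the relation among $<,\sim,>$ holding between $K$ and $M\cap N$ is the same as the one holding between $K$ and $M$.

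The core step is to pin down $\beta_{K,M\cap N}$. Writing the three relevant comparison points as $\min(\Lambda\setminus\sup(\cdot))$ of the corresponding intersection via Lemma 1.19(3), and noting that $K\cap M\cap N\cap\kappa=(K\cap M\cap\kappa)\cap\beta_{M,N}$ (for $\xi\in K\cap M\cap\kappa$, membership in $N$ means $\xi<\beta_{M,N}$), I obtain a dichotomy: if $\beta_{K,M}\le\beta_{M,N}$ then $K\cap M\cap\kappa\subseteq\beta_{M,N}$, so $\beta_{K,M\cap N}=\beta_{K,M}$; while if $\beta_{K,M}>\beta_{M,N}$ then $\beta_{K,M\cap N}\le\beta_{M,N}<\beta_{K,M}$, and moreover $\beta_{K,M\cap N}=\beta_{M,N}$ whenever $K\not<M$ (since then $K\cap M\cap N\cap\kappa=M\cap N\cap\kappa$). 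I will also use repeatedly the observation that no $\xi\in M\cap N\cap\kappa$ with $\xi\ge\beta_{K,M\cap N}$ can belong to $K$: otherwise $\xi\in K\cap M\cap N\cap\kappa$, contradicting $\beta_{K,M\cap N}=\min(\Lambda\setminus\sup(K\cap M\cap N\cap\kappa))>\sup(K\cap M\cap N\cap\kappa)$, the inequality being strict because a countable supremum has countable cofinality and hence is not in $\Lambda$.

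For part (1): let $\zeta\in R_K(M\cap N)$, so by Definition 2.1 we have $\zeta\in M\cap N\cap\kappa$ (hence $\zeta<\beta_{M,N}$) and $\zeta$ is the next point of $M\cap N\cap\kappa$ past either $\beta_{K,M\cap N}$ (if $K\sim M$) or some $\gamma\in K\cap\kappa$ with $\gamma\ge\beta_{K,M\cap N}$. In the regime $\beta_{K,M}\le\beta_{M,N}$ we have $\beta_{K,M\cap N}=\beta_{K,M}$, and since $\zeta<\beta_{M,N}$ the relevant next point is unchanged on replacing $M\cap N\cap\kappa$ by $M\cap\kappa$; thus $\zeta\in R_K(M)$, via clause (1) if $K\sim M$ and via clause (2) otherwise. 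In the regime $\beta_{K,M}>\beta_{M,N}$ I claim $R_K(M\cap N)=\emptyset$: if $K\not<M$ this is because $\beta_{K,M\cap N}=\beta_{M,N}$ makes $(M\cap N\cap\kappa)\setminus\beta_{K,M\cap N}$ empty, killing both clauses of Definition 2.1; and if $K<M$, any $\gamma\in K\cap\kappa$ with $\gamma\ge\beta_{K,M\cap N}$ that produces a point of $M\cap N\cap\kappa$ must satisfy $\gamma<\beta_{M,N}<\beta_{K,M}$, hence $\gamma\in K\cap M\cap N\cap\kappa$, contradicting the observation above.

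For part (2): let $\zeta\in R_{M\cap N}(K)\subseteq K\cap\kappa$. In the regime $\beta_{K,M}\le\beta_{M,N}$ (so $\beta_{K,M\cap N}=\beta_{K,M}$), the witness provided by Definition 2.1 lies in $(M\cap\kappa)\setminus\beta_{K,M}$ — or, when $K\sim M$, $\zeta=\min((K\cap\kappa)\setminus\beta_{K,M})$ — and $\zeta\in R_M(K)$ at once. In the regime $\beta_{K,M}>\beta_{M,N}$: if $M<K$ then $R_{M\cap N}(K)=\emptyset$; if $K\sim M$ then $\zeta=\min((K\cap\kappa)\setminus\beta_{M,N})$, and according to whether $\zeta\ge\beta_{K,M}$ one shows $\zeta=\min((K\cap\kappa)\setminus\beta_{K,M})\in R_M(K)$ (clause (1)) or else, using that $\beta_{K,N}=\beta_{M,N}\le\zeta$ in the remaining case, $\zeta=\min((K\cap\kappa)\setminus\beta_{K,N})\in R_N(K)$; and if $K<M$ then $\zeta=\min((K\cap\kappa)\setminus\gamma)$ for some $\gamma<\beta_{M,N}$ with $\gamma\notin K$, one checks $\beta_{K,N}\le\beta_{M,N}\le\zeta$ (combining Lemma 1.16 with $\beta_{M,N}\in N$ from Lemma 1.19(4)), and then $\zeta=\min((K\cap\kappa)\setminus\beta_{M,N})\in R_N(K)$ via clause (2) with the witness $\beta_{M,N}\in N\cap\kappa$. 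The step I expect to be the main obstacle is precisely this bookkeeping in the regime $\beta_{K,M}>\beta_{M,N}$: to recognize a remainder point over the small model $M\cap N$ as one over $M$ or over $N$, one must track carefully which of $\beta_{K,M},\beta_{K,N},\beta_{M,N}$ a given point exceeds and which of $K,M,N$ contains it. The whole analysis rests on the identities $M\cap N\cap\kappa=M\cap\beta_{M,N}$ and $K\cap M\cap N\cap\kappa=(K\cap M\cap\kappa)\cap\beta_{M,N}$, together with Lemmas 1.15, 1.16, and 1.19.
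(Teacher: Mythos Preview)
Your overall architecture is sound and the dichotomy on $\beta_{K,M}$ versus $\beta_{M,N}$ is a legitimate alternative to the paper's organization; in the easy regime $\beta_{K,M}\le\beta_{M,N}$ your identification $\beta_{K,M\cap N}=\beta_{K,M}$ is correct (via Lemma~1.16(4) together with Lemma~1.24), and your part~(1) and the easy half of part~(2) then go through exactly as you say. The paper, by contrast, never computes $\beta_{K,M\cap N}$ precisely; it runs a direct case split on where the witness $\gamma$ and the point $\zeta$ sit relative to $\beta_{K,M}$ (and, when needed, $\beta_{K,N}$), which makes each sub-case short but multiplies the number of them.

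There is one genuine soft spot in your write-up. In the hard regime $\beta_{K,M}>\beta_{M,N}$, part~(2), sub-case $K<M$, you assert $\beta_{K,N}\le\beta_{M,N}$ and attribute it to ``Lemma~1.16 together with $\beta_{M,N}\in N$''. That citation does not yield the inequality. What actually proves it is this: from $K<M$ and $K<N$ one has $K\cap\beta_{K,M}\subseteq M$ and $K\cap\beta_{K,N}\subseteq N$, hence $K\cap\min(\beta_{K,M},\beta_{K,N})\subseteq M\cap N\cap\kappa\subseteq\beta_{M,N}$. If $\beta_{K,N}>\beta_{K,M}$ this forces $\sup(K\cap\beta_{K,M})<\beta_{M,N}$, whence $\beta_{K,M}=\min(\Lambda\setminus\sup(K\cap\beta_{K,M}))\le\beta_{M,N}$, contradicting the regime; so $\beta_{K,N}\le\beta_{K,M}$, and then the same computation with $\beta_{K,N}$ gives $\beta_{K,N}\le\beta_{M,N}$. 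The analogous claim $\beta_{K,N}=\beta_{M,N}$ in your $K\sim M$, $\zeta<\beta_{K,M}$ sub-sub-case is also true but likewise needs this style of argument (one direction is your observation $\beta_{M,N}\in\Lambda_K$; the other uses $\zeta\in K\cap\beta_{K,M}=M\cap\beta_{K,M}$, hence $\zeta\in M\setminus N$, to rule out $\zeta<\beta_{K,N}$). Once these inequalities are in hand, your conclusion $\zeta=\min((K\cap\kappa)\setminus\beta_{M,N})\in R_N(K)$ with witness $\beta_{M,N}\in N$ is correct.

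In short: your route is valid and arguably cleaner conceptually, but the step ``$\beta_{K,N}\le\beta_{M,N}$'' is doing real work that Lemma~1.16 alone does not supply; you should spell out the argument above. The paper avoids needing this global comparison by instead splitting on whether $\gamma$ (and then $\zeta$) lies below or above $\beta_{K,M}$ and $\beta_{K,N}$, landing in $R_M(K)$ or $R_N(K)$ case by case.
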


\begin{proof}
Note that by Lemma 1.24, 
$\{ K, M \cap N \}$ is adequate, 
$\beta_{K,M \cap N} \le \beta_{K,M}$, and 
$\beta_{K,M \cap N} \le \beta_{M,N}$.

(1) Let $\zeta \in R_K(M \cap N)$, and we will show that $\zeta \in R_K(M)$. 
Then either (a) $K \sim M \cap N$ and 
$\zeta = \min((M \cap N \cap \kappa) \setminus \beta_{K,M \cap N})$, or 
(b) there is $\gamma \in (K \cap \kappa) \setminus \beta_{K,M \cap N}$ such that 
$\zeta = \min((M \cap N \cap \kappa) \setminus \gamma)$.

\bigskip

\emph{Case a:} $K \sim M \cap N$ and 
$\zeta = \min((M \cap N \cap \kappa) \setminus \beta_{K,M \cap N})$. 
Then by Lemma 1.24, $K \sim M$. 
We claim that $\beta_{K,M} \le \zeta$. 
Suppose for a contradiction that $\zeta < \beta_{K,M}$. 
Then since $K \sim M$ and $\zeta \in M \cap \beta_{K,M}$, 
it follows that $\zeta \in K$. 
But this contradicts that $\zeta \in R_K(M \cap N)$. 

Since $\beta_{K,M \cap N} \le \beta_{K,M} \le \zeta$, 
it follows that $\zeta = \min((M \cap N \cap \kappa) \setminus \beta_{K,M})$. 
As $M < N$, $M \cap N \cap \kappa = M \cap \beta_{M,N}$, which is an initial 
segment of $M \cap \kappa$. 
So $\zeta = \min((M \cap \kappa) \setminus \beta_{K,M})$, and hence 
$\zeta \in R_K(M)$.

\bigskip

\emph{Case b:} There is $\gamma \in (K \cap \kappa) \setminus \beta_{K,M \cap N}$ such that 
$\zeta = \min((M \cap N \cap \kappa) \setminus \gamma)$. 
Since $M \cap N \cap \kappa = M \cap \beta_{M,N}$ is an initial segment of $M \cap \kappa$, 
it follows that 
$\zeta = \min((M \cap \kappa) \setminus \gamma)$. 
If $\beta_{K,M} \le \gamma$, then since $\gamma \in K$, 
$\zeta \in R_K(M)$. 
So assume that $\gamma < \beta_{K,M}$. 

Now $\zeta \in M \cap N \cap \kappa$ implies that $\zeta < \beta_{M,N}$. 
So $\gamma < \beta_{M,N}$. 
Since $\gamma \in (K \cap \kappa) \setminus \beta_{K,M \cap N}$, $\gamma \notin M \cap N$. 
But $M \cap N \cap \kappa = M \cap \beta_{M,N}$, so 
$\gamma \notin M \cap \kappa$. 
Since $\gamma < \beta_{K,M}$ and $\gamma \in K \setminus M$, 
we have that $M < K$. 
So $M \cap \beta_{K,M} \subseteq K$. 
As $\zeta \in R_K(M \cap N)$, $\zeta \notin K$. 
Since $M \cap \beta_{K,M} \subseteq K$ and $\zeta \in M \setminus K$, 
it follows that $\beta_{K,M} \le \zeta$. 
In conclusion, $\gamma < \beta_{K,M} \le \zeta$. 
Hence $\zeta = \min((M \cap \kappa) \setminus \beta_{K,M})$. 
Since $M < K$, this implies that $\zeta \in R_K(M)$.

\bigskip

(2) Let $\zeta \in R_{M \cap N}(K)$. 
Then either (a) $K \sim M \cap N$ and 
$\zeta = \min((K \cap \kappa) \setminus \beta_{K,M \cap N})$, or 
(b) there is $\gamma \in (M \cap N) \setminus \beta_{K,M \cap N}$ such that 
$\zeta = \min((K \cap \kappa) \setminus \gamma)$. 
We will show that either $\zeta \in R_M(K)$ or $\zeta \in R_N(K)$.

\bigskip

\emph{Case a:} $K \sim M \cap N$ and 
$\zeta = \min((K \cap \kappa) \setminus \beta_{K,M \cap N})$. 
Then $K \sim M$ by Lemma 1.24. 
Assume first that $\beta_{K,M} \le \zeta$. 
Then $\beta_{K,M \cap N} \le \beta_{K,M} \le \zeta$. 
So $\zeta = \min((K \cap \kappa) \setminus \beta_{K,M})$, which implies 
that $\zeta \in R_M(K)$. 

Now assume that $\zeta < \beta_{K,M}$. 
Since $K \sim M$ and $\zeta \in K \cap \beta_{K,M}$, $\zeta \in M$. 
As $\zeta \in R_{M \cap N}(K)$, $\zeta \notin M \cap N$, 
so $\zeta \notin N$. 
Since $K \sim M < N$, $K < N$. 
As $\zeta \in K \setminus N$ and $K < N$, $\beta_{K,N} \le \zeta$. 
Since $M \cap N \subseteq N$, $\beta_{K,M \cap N} \le \beta_{K,N}$. 
Hence 
$$
\beta_{K,M \cap N} \le \beta_{K,N} \le \zeta.
$$
So $\zeta = \min((K \cap \kappa) \setminus \beta_{K,N})$, and therefore 
$\zeta \in R_N(K)$.

\bigskip

\emph{Case b:} $\zeta = \min((K \cap \kappa) \setminus \gamma)$, for some 
$\gamma \in (M \cap N) \setminus \beta_{K,M \cap N}$. 
If $\beta_{K,M} \le \gamma$, then $\gamma \in (M \cap \kappa) \setminus \beta_{K,M}$, 
and hence $\zeta \in R_M(K)$. 
Suppose that $\gamma < \beta_{K,M} \le \zeta$. 
Then $\zeta = \min((K \cap \kappa) \setminus \beta_{K,M})$. 
Since $\gamma \in (M \cap \beta_{K,M}) \setminus K$, $K < M$. 
Therefore $\zeta \in R_M(K)$.

The remaining case is that $\gamma < \zeta < \beta_{K,M}$. 
Since $\beta_{K,M \cap N} \le \gamma$ and $\gamma \in M \cap N$, 
$\gamma \notin K$. 
So $\gamma \in (M \cap \beta_{K,M}) \setminus K$. 
It follows that $K < M$. 
But $\zeta \in K \cap \beta_{K,M}$, so $\zeta \in M$. 
As $\zeta \in R_{M \cap N}(K)$ and $\zeta \in M$, 
$\zeta \notin N$. 
But $K < M < N$, so $K < N$. 
As $\zeta \in K \setminus N$, $\beta_{K,N} \le \zeta$.

If $\beta_{K,N} \le \gamma$, then $\gamma \in (N \cap \kappa) \setminus \beta_{K,N}$, 
and therefore $\zeta \in R_N(K)$. 
Suppose that $\gamma < \beta_{K,N} \le \zeta$. 
Then $\zeta = \min((K \cap \kappa) \setminus \beta_{K,N})$. 
Since $K < N$, $\zeta \in R_N(K)$.
\end{proof}

Lemmas 2.4 and 2.5 describe the same situation we considered in 
Lemmas 1.26 and 1.27.

\begin{lemma}
Let $N \le M$ and $L \in N$, where $L$, $M$, and $N$ are in $\mathcal X_0$. 
Then:
\begin{enumerate}
\item for all $\zeta \in R_L(M)$, $\beta_{M,N} \le \zeta$ and 
$\zeta \in R_N(M)$;
\item for all $\zeta \in R_M(L)$, there is $\xi \in R_M(N)$ such that 
$\zeta = \min((L \cap \kappa) \setminus \xi)$.
\end{enumerate}
\end{lemma}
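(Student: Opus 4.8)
The plan is to push everything back to Definition 2.1(2), using two facts: that $N \cap \beta_{M,N} \subseteq M$ (since $N \le M$), and that a remainder point never lies in the closure of its base model (Lemma 2.2(1)). For the setup: since $N \le M$ and $L \in N$, Lemma 1.26 gives $L < M$, so $\{L,M\}$ is adequate and $L \not\sim M$; also $L \subseteq N$, hence $L \cap \kappa \subseteq N \cap \kappa$ and so $\beta_{L,M} \le \beta_{N,M} = \beta_{M,N}$ by Lemma 1.16(1); and from $N \le M$, Lemma 1.19(2) gives $N \cap \beta_{M,N} = N \cap M \cap \kappa \subseteq M$. Now for (1), let $\zeta \in R_L(M)$. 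Since $L \not\sim M$, $\zeta$ arises from Definition 2.1(2): there is $\gamma \in (L \cap \kappa) \setminus \beta_{L,M}$ with $\zeta = \min((M \cap \kappa) \setminus \gamma)$, and $\gamma \in L \subseteq N$. I claim $\beta_{M,N} \le \gamma$: otherwise $\gamma \in N \cap \beta_{M,N} \subseteq M$, so $\zeta = \gamma \in \cl(L \cap \kappa)$, contradicting Lemma 2.2(1). Thus $\beta_{M,N} \le \gamma \le \zeta$, which is the first assertion; and since $\gamma \in (N \cap \kappa) \setminus \beta_{N,M}$ with $\zeta = \min((M \cap \kappa)\setminus\gamma)$, Definition 2.1(2) yields $\zeta \in R_N(M)$.

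For (2), let $\zeta \in R_M(L)$. Since $M \not\sim L$, Definition 2.1(2) supplies $\gamma \in (M \cap \kappa) \setminus \beta_{M,L}$ with $\zeta = \min((L \cap \kappa)\setminus\gamma)$; so $\zeta \in L \subseteq N$, and $\zeta \notin \cl(M \cap \kappa)$ by Lemma 2.2(1), in particular $\zeta \notin M$. Hence $\beta_{M,N} \le \zeta$, since otherwise $\zeta \in N \cap \beta_{M,N} \subseteq M$. Now set $\gamma' := \max(\gamma, \beta_{M,N})$ and $\xi := \min((N \cap \kappa)\setminus\gamma')$, which is defined because $\zeta \in N \cap \kappa$ and $\zeta \ge \gamma'$. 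If $\gamma \ge \beta_{M,N}$, then $\gamma' = \gamma \in (M \cap \kappa)\setminus\beta_{M,N}$, so $\xi \in R_M(N)$ by Definition 2.1(2). If $\gamma < \beta_{M,N}$, then $\gamma' = \beta_{M,N}$: when $N \sim M$ this gives $\xi \in R_M(N)$ by Definition 2.1(1), and when $N < M$ then $\beta_{M,N} \in M$ by Lemma 1.19(4), so again $\xi = \min((N \cap \kappa)\setminus\beta_{M,N}) \in R_M(N)$ by Definition 2.1(2). Finally, $L \cap \kappa$ has no element in $[\gamma,\zeta)$ by the choice of $\gamma$, and $\gamma \le \xi \le \zeta$ with $\zeta \in L \cap \kappa$, so $\zeta = \min((L \cap \kappa)\setminus\xi)$, as desired.

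I expect the main obstacle to be case (2) when $\gamma < \beta_{M,N}$: one has to recognize that the correct witness is $\xi = \min((N \cap \kappa)\setminus\beta_{M,N})$ rather than anything read directly off $\gamma$, and then check that this is a bona fide remainder point of $N$ over $M$ — which is exactly where the hypothesis $N \le M$ enters, through the dichotomy that either $N \sim M$ (so Definition 2.1(1) applies) or $N < M$ and $\beta_{M,N} \in M$ (so Definition 2.1(2) applies). The auxiliary observation that $\zeta \notin M$, and hence $\zeta \ge \beta_{M,N}$, obtained from Lemma 2.2(1), is the other small but essential ingredient; an analogous use of Lemma 2.2(1) drives the claim $\beta_{M,N}\le\gamma$ in part (1).
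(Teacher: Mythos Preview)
Your proof is correct and follows essentially the same approach as the paper's. The only cosmetic differences are that you package the two cases in part (2) via $\gamma' := \max(\gamma,\beta_{M,N})$ before splitting again, and you make explicit the appeals to Lemma 2.2(1) and the dichotomy $N \sim M$ versus $N < M$ that the paper leaves implicit (the paper just writes ``Since $N \le M$, $\xi \in R_M(N)$'', relying on the remark after Definition 2.1).
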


\begin{proof}
Note that by Lemma 1.26, $L < M$.

(1) Let $\zeta \in R_L(M)$. 
Since $L < M$, 
there is $\gamma \in (L \cap \kappa) \setminus \beta_{L,M}$ such that 
$\zeta = \min((M \cap \kappa) \setminus \gamma)$. 
Since $\gamma \in L$ and $L \in N$, $\gamma \in N$. 
So $\gamma \in N \setminus M$. 
Since $N \le M$, $\beta_{M,N} \le \gamma$. 
Hence $\beta_{M,N} \le \zeta$. 
As $\zeta = \min((M \cap \kappa) \setminus \gamma)$, $\zeta \in R_N(M)$.

(2) Let $\zeta \in R_M(L)$. 
Since $L < M$, 
there is $\gamma \in (M \cap \kappa) \setminus \beta_{L,M}$ such that 
$\zeta = \min((L \cap \kappa) \setminus \gamma)$. 
Now $\zeta \in L \setminus M$ and $L \in N$. 
So $\zeta \in N \setminus M$. 
Since $N \le M$, this implies that $\beta_{M,N} \le \zeta$. 

If $\gamma < \beta_{M,N}$, then let $\xi := \min((N \cap \kappa) \setminus \beta_{M,N})$. 
Since $N \le M$, $\xi \in R_M(N)$. 
As $L \subseteq N$, clearly 
$\zeta = \min((L \cap \kappa) \setminus \xi)$. 
If $\beta_{M,N} \le \gamma$, then let 
$\xi := \min((N \cap \kappa) \setminus \gamma)$, which exists since 
$\zeta \in N$. 
Then $\xi \in R_M(N)$, and since $L \subseteq N$, 
$\zeta = \min((L \cap \kappa) \setminus \xi)$. 
\end{proof}

\begin{lemma}
Let $M < N$ and $L \in N$, where $L$, $M$, and $N$ are in $\mathcal X_0$. 
Then:
\begin{enumerate}
\item for all $\zeta \in R_L(M)$, either $\zeta < \beta_{M,N}$ and 
$\zeta \in R_L(M \cap N)$, or $\beta_{M,N} \le \zeta$ and 
$\zeta \in R_N(M)$;
\item for all $\zeta \in R_M(L)$, either 
$\zeta \in R_{M \cap N}(L)$ or there is $\xi \in R_M(N)$ 
such that $\zeta = \min((L \cap \kappa) \setminus \xi)$.
\end{enumerate}
\end{lemma}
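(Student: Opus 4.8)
This analyzes the behavior of remainder points in the setting of Lemma 1.27 ($M<N$ and $L\in N$), feeding into the amalgamation of Proposition 1.29 just as Lemma 2.4 does for the setting of Lemma 1.26; here it is understood that $\{L,M\}$ --- equivalently, by Lemmas 1.27 and 1.28, $\{L,M\cap N\}$ --- is adequate, so that all the remainder sets in the statement are defined. The plan is to prove (1) and (2) separately, in each case splitting $\zeta$ first according to which clause of Definition 2.1 witnesses its membership in the given remainder set, and then according to the position of $\zeta$ (for (1)) or of the associated ordinal $\gamma$ (for (2)) relative to $\beta_{M,N}$. I would invoke throughout: since $M<N$, Lemma 1.19(2) gives $M\cap N\cap\kappa = M\cap\beta_{M,N}$, an initial segment of $M\cap\kappa$; Lemma 1.27 gives $\beta_{L,M}=\beta_{L,M\cap N}$ together with $L\sim M\Leftrightarrow L\sim M\cap N$ and $L<M\Leftrightarrow L<M\cap N$; $L\in N$ forces $\zeta\in N$ whenever $\zeta\in R_M(L)$, and forces $\beta_{M,N}\in N$ by Lemma 1.19(4); and $L\cap\kappa\subseteq N\cap\kappa$ gives $\beta_{L,M}\le\beta_{M,N}$ by Lemma 1.16(1).

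For (1): let $\zeta\in R_L(M)$, witnessed either by $L\sim M$ with $\zeta=\min((M\cap\kappa)\setminus\beta_{L,M})$, or by some $\gamma\in(L\cap\kappa)\setminus\beta_{L,M}$ with $\zeta=\min((M\cap\kappa)\setminus\gamma)$. If $\zeta<\beta_{M,N}$, then $\zeta$ lies in $M\cap\beta_{M,N}=M\cap N\cap\kappa$; since the witnessing data transfers verbatim to $M\cap N$ by Lemma 1.27 ($\beta_{L,M}=\beta_{L,M\cap N}$, and $L\sim M\cap N$ in the first case) and $M\cap N\cap\kappa$ is an initial segment of $M\cap\kappa$, the same minimum is computed inside $M\cap N\cap\kappa$, so $\zeta\in R_L(M\cap N)$. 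If $\beta_{M,N}\le\zeta$, I consider the clause-(2) witness $\gamma$ (or, in the first case, $\beta_{L,M}$): if it is already $\ge\beta_{M,N}$, then, being in $N$, it witnesses $\zeta\in R_N(M)$ directly; otherwise $M$ has no point in $[\beta_{M,N},\zeta)$, so $\zeta=\min((M\cap\kappa)\setminus\beta_{M,N})$, and since $\beta_{M,N}\in N\cap\kappa$, Definition 2.1(2) with witness $\beta_{M,N}$ gives $\zeta\in R_N(M)$.

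For (2): let $\zeta\in R_M(L)$, so $\zeta\in L\subseteq N$. If $\zeta$ is witnessed by $L\sim M$ with $\zeta=\min((L\cap\kappa)\setminus\beta_{L,M})$, then by Lemma 1.27 this reads $L\sim M\cap N$ and $\zeta=\min((L\cap\kappa)\setminus\beta_{L,M\cap N})$, i.e. $\zeta\in R_{M\cap N}(L)$, the first alternative. Otherwise there is $\gamma\in(M\cap\kappa)\setminus\beta_{L,M}$ with $\zeta=\min((L\cap\kappa)\setminus\gamma)$; if $\gamma<\beta_{M,N}$ then $\gamma\in(M\cap N\cap\kappa)\setminus\beta_{L,M\cap N}$, so $\zeta\in R_{M\cap N}(L)$ by Definition 2.1(2); and if $\gamma\ge\beta_{M,N}$, set $\xi:=\min((N\cap\kappa)\setminus\gamma)$ (which exists since $\gamma\le\zeta\in N$), observe $\xi\in R_M(N)$ by Definition 2.1(2) with witness $\gamma\in(M\cap\kappa)\setminus\beta_{M,N}$, and note that $L$ has no point in $[\xi,\zeta)$ (such a point would be in $(L\cap\kappa)\setminus\gamma$ below $\zeta$), so $\zeta=\min((L\cap\kappa)\setminus\xi)$, the second alternative.

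These are all bookkeeping arguments about orderings of ordinals, and I expect no genuine obstacle. The point requiring the most care is the subcase $\beta_{M,N}\le\zeta$ of (1): one must check that the clause of Definition 2.1 witnessing $\zeta\in R_N(M)$ is clause (2) (with witness $\beta_{M,N}$), not clause (1), which demands $M\sim N$ and is unavailable since $M<N$ --- exactly the observation in the remark after Definition 2.1. This is also what forces the conclusion about $R_N(M)$ to be stated via remainder ordinals rather than via $\beta_{M,N}$ itself, in contrast to the cleaner Lemma 2.4.
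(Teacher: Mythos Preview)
Your proposal is correct and follows essentially the same approach as the paper's proof: the same case split on the clause of Definition 2.1 witnessing $\zeta$, the same subsidiary split on the position relative to $\beta_{M,N}$, and the same key inputs (Lemma 1.27 for $\beta_{L,M}=\beta_{L,M\cap N}$ and the $\sim$-transfer, Lemma 1.19(2) for $M\cap N\cap\kappa=M\cap\beta_{M,N}$, Lemma 1.19(4) for $\beta_{M,N}\in N$, and Lemma 1.16(1) for $\beta_{L,M}\le\beta_{M,N}$). The one cosmetic difference is that in part (2), last subcase, the paper justifies $\zeta=\min((L\cap\kappa)\setminus\xi)$ by observing $L\subseteq N$ forces $(L\cap\kappa)\setminus\gamma=(L\cap\kappa)\setminus\xi$, whereas you argue the contrapositive directly; these are the same argument.
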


\begin{proof}
Note that by Lemma 1.27, $\beta_{L,M} = \beta_{L,M \cap N}$. 
And since $M < N$, $M \cap \beta_{M,N} = M \cap N \cap \kappa$.

(1) Let $\zeta \in R_L(M)$. 
Then either (a) $L \sim M$ and $\zeta = \min((M \cap \kappa) \setminus \beta_{L,M})$, 
or (b) there is $\gamma \in (L \cap \kappa) \setminus \beta_{L,M}$ such that 
$\zeta = \min((M \cap \kappa) \setminus \gamma)$. 
Assume first that $\zeta < \beta_{M,N}$. 
In case (a), $L \sim M \cap N$ by Lemma 1.27. 
Since $\zeta < \beta_{M,N}$, 
$\zeta = \min((M \cap N \cap \kappa) \setminus \beta_{L,M \cap N})$. 
In case (b), $\gamma \in (L \cap \kappa) \setminus \beta_{L,M \cap N}$ and 
$\zeta = \min((M \cap N \cap \kappa) \setminus \gamma)$. 
In either case, $\zeta \in R_L(M \cap N)$.

Now assume that $\beta_{M,N} \le \zeta$. 
In case (a), since 
$$
\beta_{L,M} \le \beta_{M,N} \le \zeta,
$$
$\zeta = \min((M \cap \kappa) \setminus \beta_{M,N})$. 
Since $M < N$, this implies that $\zeta \in R_N(M)$. 
In case (b), if $\gamma < \beta_{M,N}$, then again 
$\zeta = \min((M \cap \kappa) \setminus \beta_{M,N})$, and so $\zeta \in R_N(M)$. 
Otherwise $\gamma \in (N \cap \kappa) \setminus \beta_{M,N}$ and 
$\zeta = \min((M \cap \kappa) \setminus \gamma)$, so $\zeta \in R_N(M)$.

\bigskip

(2) Let $\zeta \in R_M(L)$. 
Then either (a) $L \sim M$ and $\zeta = \min((L \cap \kappa) \setminus \beta_{L,M})$, 
or (b) there is $\gamma \in (M \cap \kappa) \setminus \beta_{L,M}$ such that 
$\zeta = \min((L \cap \kappa) \setminus \gamma)$. 
In case (a), $L \sim M \cap N$ by Lemma 1.27 and 
$\zeta = \min((L \cap \kappa) \setminus \beta_{L,M \cap N})$.
Hence $\zeta \in R_{M \cap N}(L)$.

Assume (b). 
First consider the case that $\gamma < \beta_{M,N}$. 
Then 
$$
\gamma \in M \cap \beta_{M,N} \subseteq M \cap N.
$$ 
So 
$$
\gamma \in (M \cap N \cap \kappa) \setminus \beta_{L,M \cap N}
$$
and 
$\zeta = \min((L \cap \kappa) \setminus \gamma)$. 
Hence $\zeta \in R_{M \cap N}(L)$. 
Now consider the case that $\beta_{M,N} \le \gamma$. 
Then $\gamma \in (M \cap \kappa) \setminus \beta_{M,N}$. 
Let $\xi := \min((N \cap \kappa) \setminus \gamma)$, which exists since 
$\zeta \in N$. 
Then $\xi \in R_M(N)$ and $\zeta = \min((L \cap \kappa) \setminus \xi)$.
\end{proof}

When amalgamating over a countable model $N$, the presence of $M \cap N$ 
prevents certain incompatibilities between $M$ and the object we build in $N$. 
But oftentimes $M \cap N$ does not have enough information about $M$. 
In that case, we will use a model $M'$ in $N$ which is more representative 
of $M$ than $M \cap N$.

\begin{lemma}
Let $L$, $M$, $M'$, and $N$ be in $\mathcal X_0$. 
Assume that $M < N$ and $L \in N$. 
Also suppose that $M' \in N$, $\{ L, M \cap N, M' \}$ is adequate, and 
$M \cap \beta_{M,N} = M' \cap \beta_{M,N}$. 
Then:
\begin{enumerate}
\item either $\beta_{L,M} = \beta_{L,M'}$ or 
$\beta_{M,N} < \beta_{L,M'}$; 
\item if $\beta_{L,M} = \beta_{L,M'}$ and $\zeta \in R_{M \cap N}(L)$, 
then $\zeta \in R_{M'}(L)$.
\end{enumerate}
\end{lemma}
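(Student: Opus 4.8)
The plan is to reduce both parts to three bookkeeping facts about comparison points. First, since $M < N$ and $L \in N$, Lemma 1.27(1) gives $\beta_{L,M} = \beta_{L,M \cap N}$. Second, since $M < N$ implies $M \le N$, Lemma 1.19(2) gives $M \cap N \cap \kappa = M \cap \beta_{M,N}$, and by the hypothesis $M \cap \beta_{M,N} = M' \cap \beta_{M,N}$; hence $M \cap N \cap \kappa = M' \cap \beta_{M,N}$, and in particular $M \cap N \cap \kappa \subseteq M' \cap \kappa$. Third, since $L \in N$ we have $L \cap \kappa \subseteq N \cap \kappa$, so Lemma 1.16(1) gives $\beta_{L,M} \le \beta_{N,M} = \beta_{M,N}$; and from $M \cap N \cap \kappa \subseteq M' \cap \kappa$ another application of Lemma 1.16(1) gives $\beta_{L, M \cap N} \le \beta_{L, M'}$, so that $\beta_{L,M} \le \beta_{L,M'}$. (Note that $\{L,M'\}$ and $\{L, M \cap N\}$ are adequate as subsets of the adequate set $\{L, M\cap N, M'\}$, so all the remainder sets below are defined.)

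For (1), it suffices by the reverse inequality $\beta_{L,M} \le \beta_{L,M'}$ just noted to show that $\beta_{L,M'} \le \beta_{M,N}$ forces $\beta_{L,M'} \le \beta_{L,M}$. So assume $\beta_{L,M'} \le \beta_{M,N}$. Then $M' \cap \beta_{L,M'} \subseteq M' \cap \beta_{M,N} = M \cap N \cap \kappa \subseteq M \cap N$, so Lemma 1.16(4), applied with $L$, $M'$, $M \cap N$ in place of $L$, $M$, $N$, yields $\beta_{L,M'} \le \beta_{L, M \cap N} = \beta_{L,M}$, as desired. Taking the contrapositive gives the stated dichotomy. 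The only slightly subtle point in the whole argument is precisely this use of the counterintuitive Lemma 1.16(4), which says that $M' \cap \beta_{L,M'}$ being swallowed by $M \cap N$ cannot push $\beta_{L,M'}$ up past $\beta_{L,M\cap N}$.

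For (2), write $\beta$ for the common value $\beta_{L,M} = \beta_{L,M'} = \beta_{L, M \cap N}$; by the third fact above $\beta \le \beta_{M,N}$, so $M' \cap \beta = (M' \cap \beta_{M,N}) \cap \beta = (M \cap N \cap \kappa) \cap \beta = M \cap N \cap \beta$. Let $\zeta \in R_{M \cap N}(L)$ and apply Definition 2.1 (with $M \cap N$ in the role of $M$ and $L$ in the role of $N$). If $\zeta$ comes from clause (1), so $M \cap N \sim L$ and $\zeta = \min((L \cap \kappa) \setminus \beta)$, then $M \cap N \cap \beta = L \cap \beta$, hence $M' \cap \beta = L \cap \beta$, i.e.\ $M' \sim L$; since $\beta_{M',L} = \beta$ this says $\zeta = \min((L \cap \kappa) \setminus \beta_{M',L})$, so $\zeta \in R_{M'}(L)$. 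If $\zeta$ comes from clause (2), there is $\gamma \in (M \cap N \cap \kappa) \setminus \beta$ with $\zeta = \min((L \cap \kappa) \setminus \gamma)$; but $M \cap N \cap \kappa = M' \cap \beta_{M,N} \subseteq M' \cap \kappa$ and $\gamma \ge \beta = \beta_{M',L}$, so $\gamma \in (M' \cap \kappa) \setminus \beta_{M',L}$ witnesses $\zeta \in R_{M'}(L)$ via clause (2). This exhausts the cases, completing (2). No step presents a real obstacle; the work is entirely in lining up the comparison points via Lemmas 1.16, 1.19, and 1.27.
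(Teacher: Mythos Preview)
Your proof is correct and follows a somewhat different route than the paper's. The paper first observes (via Lemma 1.28) that $\{L,M\}$ is adequate, then establishes $\beta_{L,M} \le \beta_{L,M'}$ by contradiction using Lemma 1.19(5) (finding a witness $\xi \in (L \cap M) \cap [\beta_{L,M'},\beta_{L,M})$ and showing $\xi \in M'$), and proves (1) by a second Lemma 1.19(5) argument (finding $\xi \in (L \cap M') \cap [\beta_{L,M},\beta_{L,M'})$ and showing $\xi \in M$). For the $\sim$ case in (2) the paper argues via $\omega_1$-intersections and Lemma 1.21 to get $L \sim M'$.

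By contrast, you route everything through the monotonicity lemmas 1.16(1) and 1.16(4): the inequality $\beta_{L,M} \le \beta_{L,M'}$ comes directly from $M \cap N \cap \kappa \subseteq M' \cap \kappa$ and $\beta_{L,M} = \beta_{L,M\cap N}$, while (1) follows by one clean application of 1.16(4) once you observe $M' \cap \beta_{L,M'} \subseteq M \cap N$. In the $\sim$ case of (2) you bypass Lemma 1.21 entirely by showing $M' \cap \beta = M \cap N \cap \beta = L \cap \beta$ directly. Your argument is slightly more economical---it avoids the ``find a point in the gap'' technique and the detour through $\omega_1$---while the paper's approach makes the geometric picture (ordinals trapped between two comparison points) more visible. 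Both are perfectly valid.
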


\begin{proof}
Note that $\{ L, M \}$ is adequate by Lemma 1.28. 
We claim that $\beta_{L,M} \le \beta_{L,M'}$. 
Otherwise $\beta_{L,M'} < \beta_{L,M}$. 
Since $\{ L, M \}$ is adequate, we can fix 
$\xi \in (L \cap M) \cap [\beta_{L,M'},\beta_{L,M})$ by Lemma 1.19(5). 
Since $L \in N$, $\xi \in N$. 
So 
$$
\xi \in M \cap N \cap \kappa = M \cap \beta_{M,N} \subseteq M'.
$$
Hence $\xi \in (L \cap M' \cap \kappa) \setminus \beta_{L,M'}$, which is 
impossible.

(1) If $\beta_{L,M} = \beta_{L,M'}$, then we are done. 
So assume that $\beta_{L,M} < \beta_{L,M'}$. 
We claim that $\beta_{M,N} < \beta_{L,M'}$. 
Otherwise 
$$
\beta_{L,M} < \beta_{L,M'} \le \beta_{M,N}.
$$ 
Since $\{ L, M' \}$ is adequate, 
we can fix $\xi \in (L \cap M') \cap [\beta_{L,M},\beta_{L,M'})$ by Lemma 1.19(5). 
Then 
$$
\xi \in M' \cap \beta_{M,N} \subseteq M.
$$
So $\xi \in (L \cap M \cap \kappa) \setminus \beta_{L,M}$, which is a contradiction.

(2) Assume that $\beta_{L,M} = \beta_{L,M'}$ and 
$\zeta \in R_{M \cap N}(L)$. 
By Lemma 1.27, 
$$
\beta_{L,M'} = \beta_{L,M} = \beta_{L,M \cap N}.
$$

First, assume that $L \sim M \cap N$ and 
$\zeta = \min((L \cap \kappa) \setminus \beta_{L,M \cap N})$. 
Then $\zeta = \min((L \cap \kappa) \setminus \beta_{L,M'})$. 
Also 
$$
L \cap \omega_1 = (M \cap N) \cap \omega_1 = M \cap \beta_{M,N} \cap \omega_1 = 
M' \cap \beta_{M,N} \cap \omega_1 = M' \cap \omega_1.
$$
Since $\{ L, M' \}$ is adequate and $L \cap \omega_1 = M' \cap \omega_1$, 
$L \sim M'$ by Lemma 1.21. 
Since $L \sim M'$ and $\zeta = \min((L \cap \kappa) \setminus \beta_{L,M'})$, 
$\zeta \in R_{M'}(L)$.

Secondly, suppose that 
$\gamma \in (M \cap N \cap \kappa) \setminus \beta_{L,M \cap N}$ and 
$\zeta = \min((L \cap \kappa) \setminus \gamma)$. 
Then 
$$
\gamma \in (M \cap N \cap \kappa) \setminus \beta_{L,M'}.
$$
Since 
$$
M \cap N \cap \kappa = M \cap \beta_{M,N} \subseteq M',
$$
$\gamma \in (M' \cap \kappa) \setminus \beta_{L,M'}$. 
So $\zeta \in R_{M'}(L)$.
\end{proof}

The statement of the next technical lemma is not very intuitive. 
But its discovery led to substantial simplifications of some of the arguments 
from \cite{jk22}.

\begin{lemma}
Let $K$, $M$, and $N$ be in $\mathcal X_0$ such that 
$\{ K, M, N \}$ is adequate. 
Suppose that 
$$
\zeta \in R_M(N), \ \zeta \notin K, \ 
\theta = \min((K \cap \kappa) \setminus \zeta), \ 
\textrm{and} \ \theta < \beta_{K,N}.
$$
Then $\theta \in R_M(K)$.
\end{lemma}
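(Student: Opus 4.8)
The plan is to first pin down how $K$ relates to $M$ and $N$, then to extract an auxiliary ordinal $\rho$ from the way $\zeta$ witnesses membership in $R_M(N)$ and transfer it from $N$ to $K$. First I would establish $K < N$: since $\zeta \notin K$, minimality of $\theta$ forces $\zeta < \theta$, so $\zeta < \theta < \beta_{K,N}$; as $\zeta \in N \cap \kappa$ (because $\zeta \in R_M(N)$), the set $(N \cap \beta_{K,N}) \setminus K$ is nonempty, and Lemma 1.19(1) gives $K < N$. Then Lemma 1.19(2) yields $K \cap \beta_{K,N} = K \cap N \cap \kappa$, so from $\theta \in K \cap \kappa$ with $\theta < \beta_{K,N}$ I conclude $\theta \in N$. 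The fact that $\theta$ lies in both $K$ and $N$ will be used later together with Lemma 1.15.

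Next I would choose $\rho$ via Definition 2.1: if clause (2) applies, fix a witness $\gamma \in (M \cap \kappa) \setminus \beta_{M,N}$ with $\zeta = \min((N\cap\kappa)\setminus\gamma)$ and set $\rho := \gamma$; otherwise clause (1) applies, $M \sim N$, and set $\rho := \beta_{M,N}$. In both cases $\beta_{M,N} \le \rho \le \zeta$, $\zeta = \min((N\cap\kappa)\setminus\rho)$, and $N \cap [\rho,\zeta) = \emptyset$. The key transfer step is the claim $\theta = \min((K\cap\kappa)\setminus\rho)$, equivalently $K \cap [\rho,\theta) = \emptyset$: indeed $K \cap [\zeta,\theta) = \emptyset$ by choice of $\theta$, and any $\delta \in K \cap [\rho,\zeta)$ would satisfy $\delta < \theta < \beta_{K,N}$, hence $\delta \in K \cap N \cap \kappa \subseteq N$, contradicting $N \cap [\rho,\zeta) = \emptyset$.

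To finish I would split into cases. If we are in the clause-(2) case with $\gamma \ge \beta_{M,K}$, then $\gamma \in (M\cap\kappa)\setminus\beta_{M,K}$ and $\theta = \min((K\cap\kappa)\setminus\gamma)$, so $\theta \in R_M(K)$ by Definition 2.1(2). In all remaining cases I would first show $K < M$: if $M \sim N$ this follows from $K < N$ since $<$ respects $\sim$ (Lemma 1.22), and if clause (2) holds with $\gamma < \beta_{M,K}$ then $\gamma \in (M\cap\beta_{M,K}) \setminus K$ — using $\gamma \notin K$, which holds as $\gamma = \rho < \theta$ and $K \cap [\rho,\theta) = \emptyset$ — so $K < M$ by Lemma 1.19(1). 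Granting $K < M$, Lemma 1.19(4) gives $\beta_{M,K} \in M$; I then verify $\beta_{M,K} \le \theta$ (otherwise $\theta \in K \cap \beta_{M,K} = K \cap M \cap \kappa \subseteq M$ by Lemma 1.19(2), and with $\theta \in N$ too, Lemma 1.15 forces $\theta < \beta_{M,N} \le \rho \le \zeta < \theta$, absurd) and $K \cap [\beta_{M,K},\theta) = \emptyset$ (immediate if $\beta_{M,K} \ge \rho$; if $\beta_{M,K} < \rho$ then necessarily $\rho = \beta_{M,N}$ with $M \sim N$, the part of the interval at or above $\beta_{M,N}$ is $[\rho,\theta)$ which misses $K$, while a $\delta \in K$ with $\beta_{M,K} \le \delta < \beta_{M,N}$ would lie in $K \cap N \cap \kappa$, hence in $N \cap \beta_{M,N} = M \cap \beta_{M,N}$ by $M \sim N$ and Lemma 1.19(2), hence in $K \cap M \cap \kappa$, contradicting Lemma 1.15). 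Then $\theta = \min((K\cap\kappa)\setminus\beta_{M,K})$ with $\beta_{M,K} \in M$, so $\theta \in R_M(K)$ by Definition 2.1(2).

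The step I expect to be the main obstacle is this final case analysis. The natural candidate witness $\rho$ for $\theta \in R_M(K)$ need not lie at or above $\beta_{M,K}$, and when $\zeta$ is only a ``clause (1)'' remainder point there may be no usable witness inside $M \cap \kappa$ below $\theta$ at all; the resolution is to fall back on $\beta_{M,K}$ itself as the witness, which forces one first to prove $K < M$ (so that $\beta_{M,K} \in M$) and then to control the interval $[\beta_{M,K},\theta)$, the bookkeeping around whether $\beta_{M,K}$ sits below or above $\beta_{M,N}$ — together with the recurring use that elements of $K$ below $\beta_{K,N}$ fall into $N$ and that $M$ and $N$ agree below $\beta_{M,N}$ when $M \sim N$ — being where the care is needed.
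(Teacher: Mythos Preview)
Your proof is correct. The approach is essentially the same as the paper's, though the organization differs: the paper splits at the top level on whether $N \le M$ or $M < N$ (using in the first case the auxiliary fact $\beta_{K,M} \le \beta_{M,N}$), whereas you introduce a unified witness $\rho$ for both clauses of Definition~2.1, prove $\theta = \min((K\cap\kappa)\setminus\rho)$ once, and then split on whether $\rho \ge \beta_{M,K}$. The underlying mechanism---deduce $K<N$ so that $K$ below $\theta$ sits inside $N$, transfer the witness, and in the residual case show $K<M$ and replace the witness by $\beta_{M,K}$---is identical; your packaging via $\rho$ is arguably a bit more uniform and avoids having to prove $\beta_{K,M} \le \beta_{M,N}$ separately.
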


\begin{proof}
Since $\zeta < \theta < \beta_{K,N}$ and $\zeta \in N \setminus K$, 
it follows that $K < N$. 
In particular, $K \cap (\theta+1) \subseteq N$. 

\bigskip

\emph{Case 1:} $N \le M$. 
Then $K < N \le M$, so $K < M$. 
We claim that $\beta_{K,M} \le \beta_{M,N}$. 
Otherwise $\beta_{M,N} < \beta_{K,M}$, which implies that 
$$
(K \cap M) \cap [\beta_{M,N},\beta_{K,M}) \ne \emptyset
$$
by Lemma 1.19(5). 
Let $\gamma = \min((K \cap \kappa) \setminus \beta_{M,N})$. 
Then since the intersection above is nonempty, $\gamma < \beta_{K,M}$, 
and hence $\gamma \in K \cap M$. 
But $\beta_{M,N} \le \zeta < \theta$ and $\theta \in K$ implies that 
$\gamma \le \theta$. 
Since $K \cap (\theta+1) \subseteq N$, $\gamma \in N$. 
So $\gamma \in (M \cap N) \setminus \beta_{M,N}$, which is impossible. 
This proves that $\beta_{K,M} \le \beta_{M,N}$.

Suppose that $\zeta = \min((N \cap \kappa) \setminus \gamma)$ for some 
$\gamma \in (M \cap \kappa) \setminus \beta_{M,N}$. 
Since $\beta_{K,M} \le \beta_{M,N}$, it follows that 
$\gamma \in (M \cap \kappa) \setminus \beta_{K,M}$. 
As $K \cap (\theta+1) \subseteq N$, 
$\theta = \min((K \cap \kappa) \setminus \gamma)$. 
Hence $\theta \in R_M(K)$.

Suppose that $M \sim N$ and $\zeta = \min((N \cap \kappa) \setminus \beta_{M,N})$. 
Since $K \cap (\theta+1) \subseteq N$, 
it follows that 
$\theta = \min((K \cap \kappa) \setminus \beta_{M,N})$. 
We claim that $\theta = \min((K \cap \kappa) \setminus \beta_{K,M})$, 
which implies that $\theta \in R_M(K)$ as desired. 
If not, then there is $\pi \in K \cap [\beta_{K,M},\beta_{M,N})$. 
But $\beta_{M,N} \le \zeta < \theta < \beta_{K,N}$, so 
$\pi \in K \cap \beta_{K,N} \subseteq N$. 
Hence $\pi \in N \cap \beta_{M,N} \subseteq M$. 
So $\pi \in M$. 
Therefore $\pi \in (K \cap M) \setminus \beta_{K,M}$, which is 
impossible.

\bigskip

\emph{Case 2:} $M < N$. 
Since $\zeta \in R_M(N)$, there is 
$\gamma \in (M \cap \kappa) \setminus \beta_{M,N}$ such that 
$\zeta = \min((N \cap \kappa) \setminus \gamma)$. 
If $\beta_{K,M} \le \gamma$, then 
$\gamma \in (M \cap \kappa) \setminus \beta_{K,M}$, and since 
$K \cap (\theta+1) \subseteq N$, 
$\theta = \min((K \cap \kappa) \setminus \gamma)$. 
Hence $\theta \in R_M(K)$.

Otherwise $\gamma < \beta_{K,M}$. 
Since $\gamma \notin N$, $\gamma < \theta$, and 
$K \cap (\theta+1) \subseteq N$, it follows that $\gamma \notin K$. 
So $\gamma \in (M \cap \beta_{K,M}) \setminus K$, 
which implies that $K < M$. 
Since $K \cap (\theta+1) \subseteq N$, it follows that 
$\theta = \min((K \cap \kappa) \setminus \gamma)$. 
As $\theta \in (N \cap \kappa) \setminus \beta_{M,N}$, 
$\theta \notin M$. 
As $K < M$ and $\theta \in K \cap \kappa$, $\beta_{K,M} \le \theta$. 
So $\gamma < \beta_{K,M} \le \theta$. 
Hence $\theta = \min((K \cap \kappa) \setminus \beta_{K,M})$, which implies 
that $\theta \in R_M(K)$.
\end{proof}

\bigskip

The next three lemmas are analogues of Lemmas 2.3, 2.5, and 2.6, where the 
countable model $N$ in $\mathcal X_0$ 
is replaced by an uncountable model $P$ in $\mathcal Y_0$.

\begin{lemma}
Let $K$ and $M$ be in $\mathcal X_0$ and $P \in \mathcal Y_0$. 
Assume that $\{ K, M \}$ is adequate and 
$\sup(M \cap P \cap \kappa) < P \cap \kappa$. 
Then:
\begin{enumerate}
\item $R_K(M \cap P) \subseteq R_K(M)$;
\item if $\zeta \in R_{M \cap P}(K)$, then either 
$\zeta \in R_M(K)$ or 
$\zeta = \min((K \cap \kappa) \setminus (P \cap \kappa))$.
\end{enumerate}
\end{lemma}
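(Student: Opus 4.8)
The plan is to imitate closely the structure of Lemma 2.6, which handles the analogous situation with a countable model $N$ in place of the uncountable $P$. The key inputs are Lemma 1.32, which gives that $\{K, M \cap P\}$ is adequate with $\beta_{K,M \cap P} \le \beta_{K,M}$ and $\beta_{K, M \cap P} < P \cap \kappa$, and Proposition 1.33. Also important is the identity $M \cap \beta_{K,M \cap P} = M \cap P \cap \beta_{K,M \cap P}$ noted in the proof of Lemma 1.32, which lets us pass between remainder points over $M$ and over $M \cap P$ whenever the relevant comparison point is below $P \cap \kappa$.

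First I would prove (1). Fix $\zeta \in R_K(M \cap P)$. By Definition 2.1 there are two cases: either $K \sim M \cap P$ with $\zeta = \min((M \cap P \cap \kappa) \setminus \beta_{K,M \cap P})$, or there is $\gamma \in (K \cap \kappa) \setminus \beta_{K,M \cap P}$ with $\zeta = \min((M \cap P \cap \kappa) \setminus \gamma)$. In the first case, Lemma 1.32 gives $K \sim M$; I would argue $\beta_{K,M} \le \zeta$ (otherwise $\zeta \in M \cap \beta_{K,M}$ and $K \sim M$ would force $\zeta \in K$, contradicting $\zeta \in R_K(M \cap P)$), and then since $M \cap P \cap \kappa$ is an initial segment of $M \cap \kappa$ below $\beta_{K,M \cap P} \le \beta_{K,M} \le \zeta$, we get $\zeta = \min((M \cap \kappa)\setminus \beta_{K,M})$, so $\zeta \in R_K(M)$. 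In the second case, if $\beta_{K,M} \le \gamma$ we immediately get $\zeta \in R_K(M)$; if $\gamma < \beta_{K,M}$, then since $\gamma \in K \setminus (M \cap P)$ and $\gamma < \beta_{K,M\cap P} \le \beta_{K,M}$... here I need to be careful: I would show $\gamma \notin M$, which gives $M < K$, hence $M \cap \beta_{K,M} \subseteq K$, and since $\zeta \in M \setminus K$ this forces $\beta_{K,M} \le \zeta$, so again $\zeta = \min((M \cap \kappa) \setminus \beta_{K,M}) \in R_K(M)$. The subtlety is whether $\gamma \notin M$ follows — I expect it does because $\gamma < \beta_{K,M \cap P} < P \cap \kappa$ and $M \cap P \cap \kappa = M \cap \beta$ for some $\beta \in \Lambda$ with $\beta_{K,M\cap P} \le \beta < P \cap \kappa$ (from the proof of Lemma 1.32), so $\gamma \notin M \cap P$ and $\gamma < \beta$ together give $\gamma \notin M$.

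For (2), fix $\zeta \in R_{M \cap P}(K)$, with the two cases of Definition 2.1: either $K \sim M \cap P$ with $\zeta = \min((K \cap \kappa) \setminus \beta_{K,M \cap P})$, or there is $\gamma \in (M \cap P \cap \kappa) \setminus \beta_{K, M \cap P}$ with $\zeta = \min((K \cap \kappa) \setminus \gamma)$. In the first case, $K \sim M$ by Lemma 1.32; if $\beta_{K,M} \le \zeta$ then $\zeta = \min((K \cap \kappa) \setminus \beta_{K,M}) \in R_M(K)$, and if $\zeta < \beta_{K,M}$ then $\zeta \in K \cap \beta_{K,M}$ and $K \sim M$ gives $\zeta \in M$, while $\zeta \notin M \cap P$ (from $\zeta \in R_{M\cap P}(K)$) forces $\zeta \notin P$, and since $\zeta \in M \subseteq$ things and $M \cap P \cap \kappa = M \cap \beta$ below $P \cap \kappa$, I get $P \cap \kappa \le \zeta$; combined with $\zeta$ being the least element of $K$ above $\beta_{K,M\cap P}$ and $P$ containing no element of $K$ strictly between $\beta$ and $P\cap\kappa$ by minimality, this should pin down $\zeta = \min((K \cap \kappa) \setminus (P \cap \kappa))$. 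In the second case, $\gamma \in M \cap P \cap \kappa = M \cap \beta \subseteq M \cap \kappa$ and $\beta_{K,M\cap P} = \beta_{K, M\cap P}$; if $\beta_{K,M} \le \gamma$ then $\gamma \in (M \cap \kappa) \setminus \beta_{K,M}$ so $\zeta \in R_M(K)$; if $\gamma < \beta_{K,M}$ then $\gamma \in (M \cap \beta_{K,M}) \setminus K$, giving $K < M$, and then $\zeta = \min((K \cap \kappa) \setminus \gamma)$ with $\gamma \in (M \cap \kappa) \setminus \beta_{K,M}$ once I check $\beta_{K,M} \le \zeta$ (which follows from $K < M$ and $\zeta \in K \cap \kappa$ only if $\zeta \notin M$ — and indeed $\zeta \notin M \cap P$; if $\zeta \in M$ then $\zeta \in M \setminus P$ so $P \cap \kappa \le \zeta$, and I would need to handle this boundary subcase, possibly landing again in the $\min((K\cap\kappa)\setminus(P\cap\kappa))$ alternative). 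The main obstacle I anticipate is exactly this bookkeeping around the boundary $P \cap \kappa$: keeping straight when a remainder point of $M \cap P$ is genuinely a remainder point of $M$ versus when it has been "created" by the uncountable model $P$ and equals $\min((K \cap \kappa) \setminus (P \cap \kappa))$. Getting the case division clean — and correctly using that $M \cap P \cap \kappa = M \cap \beta$ for a suitable $\beta \in \Lambda \cap P$ — is where the real care is needed; the rest is routine manipulation of the definitions and Lemmas 1.15, 1.16, 1.19, 1.21, and 1.32.
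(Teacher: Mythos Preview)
Your overall strategy is correct and close to the paper's, but the paper organizes the case split differently, and that organization is worth knowing.  The paper invokes Lemma~1.36, which says that either $\beta_{K,M} = \beta_{K,M\cap P}$ or $P\cap\kappa < \beta_{K,M}$.  In each of Cases~(a) and~(b) of both parts, this dichotomy replaces your direct manipulation of the inequalities among $\gamma$, $\zeta$, $\beta_{K,M\cap P}$, $\beta_{K,M}$, and $P\cap\kappa$: when $\beta_{K,M}=\beta_{K,M\cap P}$ everything passes through unchanged, and when $P\cap\kappa<\beta_{K,M}$ one quickly derives the $\min((K\cap\kappa)\setminus(P\cap\kappa))$ alternative (or a contradiction, in part~(1)).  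In part~(2)(b) the paper also first disposes of $P\cap\kappa\le\zeta$ before doing anything else, which lets the final subcase $\gamma<\zeta<\beta_{K,M}$ be ruled out entirely rather than leading to the boundary alternative you describe.

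Your route does work, but your write-up has some slips that would need fixing.  In (1)(b) you write ``$\gamma<\beta_{K,M\cap P}$'' where you mean $\gamma\ge\beta_{K,M\cap P}$; and the reason $\gamma\notin M$ is simply that $\gamma<\zeta\in M\cap P\cap\kappa\subseteq P\cap\kappa$, so $\gamma\in P$, and since $\gamma\notin M\cap P$ you get $\gamma\notin M$ --- you do not need the auxiliary $\beta\in\Lambda$ from the proof of Lemma~1.32, only the fact that $M\cap P\cap\kappa = M\cap(P\cap\kappa)$ is an initial segment of $M\cap\kappa$.  In (2)(b) your sentence ``$\gamma\in(M\cap\kappa)\setminus\beta_{K,M}$ once I check $\beta_{K,M}\le\zeta$'' is confused: when $\gamma<\beta_{K,M}$ you never get $\gamma\ge\beta_{K,M}$; what you actually need is $\zeta=\min((K\cap\kappa)\setminus\beta_{K,M})$ together with $K<M$.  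Your instinct that the remaining subcase lands in the $\min((K\cap\kappa)\setminus(P\cap\kappa))$ alternative is right, but the cleaner bookkeeping comes from handling $P\cap\kappa\le\zeta$ at the outset, or equivalently from using Lemma~1.36.
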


\begin{proof}
Note that by Lemma 1.32, 
$\beta_{K,M \cap P} \le \beta_{K,M}$, 
$\beta_{K,M \cap P} < P \cap \kappa$, and 
$\{ K, M \cap P \}$ is adequate.

\bigskip

(1) Let $\zeta \in R_K(M \cap P)$. 
Then either (a) $K \sim M \cap P$ and 
$\zeta = \min((M \cap P \cap \kappa) \setminus \beta_{K,M \cap P})$, or 
(b) there is $\gamma \in (K \cap \kappa) \setminus \beta_{K,M \cap P}$ 
such that 
$\zeta = \min((M \cap P \cap \kappa) \setminus \gamma)$.

\bigskip

\emph{Case a:} $K \sim M \cap P$ and 
$\zeta = \min((M \cap P \cap \kappa) \setminus \beta_{K,M \cap P})$. 
Then $K \sim M$ by Lemma 1.32. 
By Lemma 1.36, either $\beta_{K,M} = \beta_{K,M \cap P}$, or 
$P \cap \kappa < \beta_{K,M}$.

We claim that $\beta_{K,M} = \beta_{K,M \cap P}$. 
Suppose for a contradiction that $P \cap \kappa < \beta_{K,M}$. 
Since $\zeta \in M \cap P \cap \kappa \subseteq 
P \cap \kappa$, $\zeta < \beta_{K,M}$. 
But since $K \sim M$ and $\zeta \in M \cap \beta_{K,M}$, $\zeta \in K$. 
So $\zeta \in K \cap (M \cap P) \cap \kappa$, which contradicts that 
$\zeta \in R_{K}(M \cap P)$.

So $\beta_{K,M} = \beta_{K,M \cap P}$. 
Since $M \cap P \cap \kappa$ is an initial segment of $M \cap \kappa$, 
$\zeta = \min((M \cap \kappa) \setminus \beta_{K,M})$. 
Hence $\zeta \in R_K(M)$. 

\bigskip

\emph{Case b:} $\zeta = \min((M \cap P \cap \kappa) \setminus \gamma)$, 
for some $\gamma \in (K \cap \kappa) \setminus \beta_{K,M \cap P}$. 
Since $M \cap P \cap \kappa$ is an initial segment of $M \cap \kappa$, 
$\zeta = \min((M \cap \kappa) \setminus \gamma)$. 
By Lemma 1.36, either $\beta_{K,M} = \beta_{K,M \cap P}$ or 
$P \cap \kappa < \beta_{K,M}$. 
In the first case, $\gamma \in (K \cap \kappa) \setminus \beta_{K,M}$, 
so $\zeta \in R_K(M)$.

We prove that the other case is impossible. 
Suppose for a contradiction that $P \cap \kappa < \beta_{K,M}$. 
Since $\gamma < \zeta < P \cap \kappa$, $\gamma \in P$. 
But $\gamma \in (K \cap \kappa) \setminus \beta_{K,M \cap P}$ implies that 
$\gamma \notin M \cap P$. 
So $\gamma \notin M$. 
As $\gamma < P \cap \kappa < \beta_{K,M}$, we have that 
$\gamma \in (K \cap \beta_{K,M}) \setminus M$. 
Hence $M < K$. 
Since $\zeta \in M \cap P \cap \kappa$, $\zeta \in M \cap \beta_{K,M}$. 
As $M < K$, $\zeta \in K$. 
But this is impossible since $\zeta \in R_K(M \cap P)$.

\bigskip

(2) Let $\zeta \in R_{M \cap P}(K)$. 
We will prove that either $\zeta \in R_M(K)$, or 
$\zeta = \min((K \cap \kappa) \setminus (P \cap \kappa))$. 
Either (a) $K \sim M \cap P$ and 
$\zeta = \min((K \cap \kappa) \setminus \beta_{K,M \cap P})$, or 
(b) there is $\gamma \in (M \cap P \cap \kappa) \setminus \beta_{K,M \cap P}$ 
such that $\zeta = \min((K \cap \kappa) \setminus \gamma)$. 

\bigskip

\emph{Case a:} $K \sim M \cap P$ and 
$\zeta = \min((K \cap \kappa) \setminus \beta_{K,M \cap P})$. 
Then $K \sim M$ by Lemma 1.32. 
Also by Lemma 1.36, 
either $\beta_{K,M} = \beta_{K,M \cap P}$ or $P \cap \kappa < \beta_{K,M}$.

First, assume that $\beta_{K,M} = \beta_{K,M \cap P}$. 
Then $\zeta = \min((K \cap \kappa) \setminus \beta_{K,M})$, so 
$\zeta \in R_M(K)$.

Secondly, assume that $P \cap \kappa < \beta_{K,M}$. 
Suppose that $\beta_{K,M} \le \zeta$. 
Since $\beta_{K,M \cap P} \le \beta_{K,M}$, it follows that 
$\zeta = \min((K \cap \kappa) \setminus \beta_{K,M})$. 
Therefore $\zeta \in R_M(K)$.

Otherwise $\zeta < \beta_{K,M}$. 
But then $K \sim M$ and $\zeta \in K \cap \beta_{K,M}$ imply that $\zeta \in M$. 
Since $\zeta \in R_{M \cap P}(K)$ and $\zeta \in M$, 
$\zeta \notin P \cap \kappa$. 
Therefore $\beta_{K,M \cap P} < P \cap \kappa \le \zeta$. 
So $\zeta = \min((K \cap \kappa) \setminus (P \cap \kappa))$.

\bigskip

\emph{Case b:} 
$\zeta = \min((K \cap \kappa) \setminus \gamma)$ for some 
$\gamma \in (M \cap P \cap \kappa) \setminus \beta_{K,M \cap P}$. 
If $P \cap \kappa \le \zeta$, then $\gamma < P \cap \kappa \le \zeta$ 
implies that $\zeta = \min((K \cap \kappa) \setminus (P \cap \kappa))$.

Suppose that $\zeta < P \cap \kappa$. 
If $\beta_{K,M} \le \gamma$, 
then $\gamma \in (M \cap \kappa) \setminus \beta_{K,M}$, 
and therefore $\zeta \in R_M(K)$. 
So assume that $\gamma < \beta_{K,M}$. 
First consider the case that $\beta_{K,M} \le \zeta$. 
Then $\zeta = \min((K \cap \kappa) \setminus \beta_{K,M})$. 
Since $\gamma \in (M \cap \beta_{K,M}) \setminus K$, 
it follows that $K < M$. 
So $\zeta \in R_M(K)$.

In the final case, assume that $\gamma < \zeta < \beta_{K,M}$. 
We will show that this case does not occur. 
Then 
$$
\beta_{K,M \cap P} \le \gamma < \zeta < \beta_{K,M}.
$$
Since $\gamma \in (M \cap \beta_{K,M}) \setminus K$, 
it follows that $K < M$. 
So as $\zeta \in K \cap \beta_{K,M}$, $\zeta \in M$. 
But also $\zeta \in P \cap \kappa$. 
So $\zeta \in M \cap P$, which contradicts that $\zeta \in R_{M \cap P}(K)$.
\end{proof}

\begin{lemma}
Let $L$ and $M$ be in $\mathcal X_0$ and $P$ in $\mathcal Y_0$. 
Assume that $L \in P$, $\{ L, M \cap P \}$ is adequate, and 
$\sup(M \cap P \cap \kappa) < P \cap \kappa$. 
Then:
\begin{enumerate}
\item if $\zeta \in R_L(M)$, then either $\zeta \in R_L(M \cap P)$ or 
$\zeta = \min((M \cap \kappa) \setminus (P \cap \kappa))$;
\item $R_M(L) \subseteq R_{M \cap P}(L)$.
\end{enumerate}
\end{lemma}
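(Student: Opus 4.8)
The plan is to reduce both parts to Lemma 1.34. Since $L \in P$ and $\sup(M \cap P \cap \kappa) < P \cap \kappa$, Lemma 1.34 applies and yields three things I will use throughout: $\beta_{L,M} = \beta_{L,M \cap P}$, the inequality $\beta_{L,M} < P \cap \kappa$, and the equivalences $L \sim M \Leftrightarrow L \sim M \cap P$, $L < M \Leftrightarrow L < M \cap P$, $M < L \Leftrightarrow M \cap P < L$; in particular $\{L,M\}$ is adequate, so $R_L(M)$ and $R_M(L)$ are defined. I will also use repeatedly that $P \cap \kappa$ is an ordinal (as $P \in \mathcal Y_0$), so that $M \cap P \cap \kappa = M \cap (P \cap \kappa)$ is an initial segment of $M \cap \kappa$, and that every element of $L \cap \kappa$ lies below $P \cap \kappa$ (since $L \subseteq P$).

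For part (1), I would take $\zeta \in R_L(M)$, so that either (a) $L \sim M$ and $\zeta = \min((M \cap \kappa) \setminus \beta_{L,M})$, or (b) there is $\gamma \in (L \cap \kappa) \setminus \beta_{L,M}$ with $\zeta = \min((M \cap \kappa) \setminus \gamma)$. I split on whether $\zeta < P \cap \kappa$. If $\zeta < P \cap \kappa$, then $\zeta \in M \cap P \cap \kappa$; since $M \cap P \cap \kappa$ is an initial segment of $M \cap \kappa$ and $\beta_{L,M \cap P} = \beta_{L,M}$, the same minimum-condition holds with $M$ replaced by $M \cap P$, and using $L \sim M \cap P$ in case (a), this gives $\zeta \in R_L(M \cap P)$. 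If instead $\zeta \ge P \cap \kappa$, I claim $\zeta = \min((M \cap \kappa) \setminus (P \cap \kappa))$: in case (a) we have $\beta_{L,M} < P \cap \kappa \le \zeta$ by Lemma 1.34, and in case (b) we have $\gamma < P \cap \kappa \le \zeta$ because $\gamma \in L \subseteq P$; in either case the interval $[P \cap \kappa, \zeta)$ is contained in the "gap" $[\beta_{L,M},\zeta)$ (resp. $[\gamma,\zeta)$) that witnesses $\zeta$ as a minimum, so it contains no element of $M \cap \kappa$, and since $\zeta \in M \cap \kappa$ this is exactly the assertion $\zeta = \min((M \cap \kappa) \setminus (P \cap \kappa))$.

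For part (2), I would take $\zeta \in R_M(L)$, so that either (a) $L \sim M$ and $\zeta = \min((L \cap \kappa) \setminus \beta_{L,M})$, or (b) there is $\gamma \in (M \cap \kappa) \setminus \beta_{L,M}$ with $\zeta = \min((L \cap \kappa) \setminus \gamma)$. In case (a), Lemma 1.34 gives $L \sim M \cap P$ and $\beta_{L,M \cap P} = \beta_{L,M}$, so $\zeta = \min((L \cap \kappa) \setminus \beta_{L,M \cap P})$ and $\zeta \in R_{M \cap P}(L)$. In case (b), since $\zeta \in L \subseteq P$ we have $\zeta < P \cap \kappa$, hence $\gamma \le \zeta < P \cap \kappa$, so $\gamma \in M \cap P \cap \kappa$; together with $\gamma \ge \beta_{L,M} = \beta_{L,M \cap P}$ and $\zeta = \min((L \cap \kappa) \setminus \gamma)$, this gives $\zeta \in R_{M \cap P}(L)$.

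The only mildly delicate point — what I would flag as the main obstacle — is the bookkeeping in part (1) when $\zeta$ lands at or above $P \cap \kappa$: one must verify that the relevant witness ($\beta_{L,M}$ in case (a), $\gamma$ in case (b)) is strictly below $P \cap \kappa$, and this is precisely where the clause $\beta_{L,M} < P \cap \kappa$ of Lemma 1.34 and the containment $L \subseteq P$ are essential. Everything else is a routine transfer of $\min$-statements along the initial segment $M \cap P \cap \kappa$ of $M \cap \kappa$, together with the relation-preservation in Lemma 1.34.
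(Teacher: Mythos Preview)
Your proposal is correct and follows essentially the same approach as the paper's proof: both invoke Lemma 1.34 to obtain $\beta_{L,M} = \beta_{L,M\cap P}$, $\beta_{L,M} < P\cap\kappa$, and the preservation of the $\sim$/$<$ relations, then split each part into the two cases of Definition 2.1 and, for part (1), further split on whether $\zeta < P\cap\kappa$. The key observations you flag---that $\beta_{L,M} < P\cap\kappa$ handles case (a) and $L\subseteq P$ handles case (b) when $\zeta \ge P\cap\kappa$---are exactly the ones the paper uses.
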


\begin{proof}
Note that by Lemma 1.34, 
$\beta_{L,M} = \beta_{L,M \cap P}$, $\beta_{L,M} < P \cap \kappa$, 
and $\{ L, M \}$ is adequate.

\bigskip

(1) Let $\zeta \in R_L(M)$. 
Then either (a) $L \sim M$ and 
$\zeta = \min((M \cap \kappa) \setminus \beta_{L,M})$, 
or (b) there is $\gamma \in (L \cap \kappa) \setminus \beta_{L,M}$ such that 
$\zeta = \min((M \cap \kappa) \setminus \gamma)$. 

\bigskip

\emph{Case a:} $L \sim M$ and 
$\zeta = \min((M \cap \kappa) \setminus \beta_{L,M})$. 
Then $L \sim M \cap P$ by Lemma 1.34. 
If $P \cap \kappa \le \zeta$, then since $\beta_{L,M} < P \cap \kappa$, 
it follows that $\zeta = \min((M \cap \kappa) \setminus (P \cap \kappa))$.
Suppose that $\zeta < P \cap \kappa$. 
Then 
$$
\zeta = \min((M \cap P \cap \kappa) \setminus \beta_{L,M}) = 
\min((M \cap P \cap \kappa) \setminus \beta_{L,M \cap P}).
$$
So $\zeta \in R_L(M \cap P)$.

\bigskip

\emph{Case b:} There is 
$\gamma \in (L \cap \kappa) \setminus \beta_{L,M}$ such that 
$\zeta = \min((M \cap \kappa) \setminus \gamma)$. 
Then $\gamma \in (L \cap \kappa) \setminus \beta_{L,M \cap P}$. 
If $\zeta < P \cap \kappa$, then 
$\zeta = \min((M \cap P \cap \kappa) \setminus \gamma)$, 
so $\zeta \in R_L(M \cap P)$. 
Otherwise $P \cap \kappa \le \zeta$, and since $\gamma \in L$, 
$\gamma < P \cap \kappa$. 
So $\zeta = \min((M \cap \kappa) \setminus (P \cap \kappa))$.

\bigskip

(2) Let $\zeta \in R_M(L)$, and we will show that 
$\zeta \in R_{M \cap P}(L)$. 
Either (a) $L \sim M$ and 
$\zeta = \min((L \cap \kappa) \setminus \beta_{L,M})$,  
or (b) there is $\gamma \in (M \cap \kappa) \setminus \beta_{L,M}$ such that 
$\zeta = \min((L \cap \kappa) \setminus \gamma)$. 

Assume (a). 
Then $L \sim M \cap P$ by Lemma 1.34. 
Also $\zeta = \min((L \cap \kappa) \setminus \beta_{L,M \cap P})$, 
so $\zeta \in R_{M \cap P}(L)$.

Assume (b). 
Since $\zeta \in L$ and $L \in P$, $\zeta \in P$. 
As $\gamma < \zeta$ and 
$\zeta \in P \cap \kappa$, $\gamma < P \cap \kappa$. 
So $\gamma \in M \cap P$. 
Thus $\gamma \in (M \cap P \cap \kappa) \setminus \beta_{L,M \cap P}$ and 
$\zeta = \min((L \cap \kappa) \setminus \gamma)$. 
So $\zeta \in R_{M \cap P}(L)$.
\end{proof}

\begin{lemma}
Let $L$, $M$, and $M'$ be in $\mathcal X_0$, and let 
$P$ and $P'$ be in $\mathcal Y_0$. 
Assume that $\{ L, M, M' \}$ is adequate, and $L$, $M'$, and $P'$ are in $P$. 
Let $\beta := P \cap \kappa$ and $\beta' := P' \cap \kappa$. 
Suppose that $\sup(M \cap \beta) < \beta'$ and 
$M \cap \beta = M' \cap \beta'$. 
Then:
\begin{enumerate}
\item $\beta_{L,M} < \beta'$;

\item either $\beta_{L,M} = \beta_{L,M'}$ or 
$\beta' < \beta_{L,M'}$;

\item if $\beta_{L,M} = \beta_{L,M'}$ and 
$\zeta \in R_{M \cap P}(L)$, then $\zeta \in R_{M'}(L)$.
\end{enumerate}
\end{lemma}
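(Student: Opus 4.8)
The proof runs parallel to Lemma 2.8, with the single countable model $N$ there replaced by two uncountable models: the ordinal $\beta = P\cap\kappa$ takes over the role that $\beta_{M,N}$ plays in controlling $M\cap P$, while $\beta' = P'\cap\kappa$ takes over its role in controlling the representative $M'$, and Lemma 1.34 replaces Lemma 1.27. First I would record the facts that make the Section~1 machinery available. Since $P'\in P$ we have $\beta'<\beta$, so $\sup(M\cap P\cap\kappa)=\sup(M\cap\beta)<\beta'<\beta=P\cap\kappa$; hence Lemmas 1.32, 1.34, and 1.36 apply to $M\cap P$, and $M\cap P\in\mathcal X_0$. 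Moreover $M\cap P\cap\kappa = M\cap\beta$, which by hypothesis equals $M'\cap\beta'=M'\cap P'\cap\kappa$. Since $L\in\mathcal X_0$ is countable and $L\in P$, we have $L\subseteq P$. Finally, $\{L,M\}$, $\{L,M'\}$, and (by Lemma 1.34) $\{L,M\cap P\}$ are all adequate.

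For (1), by elementarity of $P'$ the set $\Lambda$ is unbounded in $\beta'$, so I fix $\delta\in\Lambda$ with $\sup(M\cap\beta)<\delta<\beta'$. Then $(M\cap P)\cap\kappa = M\cap\beta\subseteq\delta$, so Lemma 1.16(2) gives $\beta_{L,M\cap P}\le\delta$, and Lemma 1.34(1) gives $\beta_{L,M}=\beta_{L,M\cap P}\le\delta<\beta'$. For (2), I first show $\beta_{L,M}\le\beta_{L,M'}$: otherwise $\beta_{L,M'}<\beta_{L,M}$, and since $\beta_{L,M'}\in\Lambda$, Lemma 1.19(5) applied to the adequate pair $\{L,M\}$ produces $\xi\in(L\cap M)\cap[\beta_{L,M'},\beta_{L,M})$; as $L\subseteq P$ and $\xi<\beta_{L,M}<\beta$ by part~(1), we get $\xi\in M\cap\beta = M'\cap\beta'\subseteq M'$, so $\xi\in \cl(L\cap\kappa)\cap\cl(M'\cap\kappa)$ with $\xi\ge\beta_{L,M'}$, contradicting Lemma 1.15. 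Now if $\beta_{L,M}\ne\beta_{L,M'}$, then $\beta_{L,M}<\beta_{L,M'}$, and I claim $\beta'<\beta_{L,M'}$: otherwise $\beta_{L,M'}\le\beta'$, and Lemma 1.19(5) applied to $\{L,M'\}$ produces $\xi\in(L\cap M')\cap[\beta_{L,M},\beta_{L,M'})$; then $\xi<\beta_{L,M'}\le\beta'$ forces $\xi\in M'\cap\beta' = M\cap\beta\subseteq M$, so $\xi\in\cl(L\cap\kappa)\cap\cl(M\cap\kappa)$ with $\xi\ge\beta_{L,M}$, again contradicting Lemma 1.15.

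For (3), assume $\beta_{L,M}=\beta_{L,M'}$; by Lemma 1.34(1) this common value also equals $\beta_{L,M\cap P}$. Let $\zeta\in R_{M\cap P}(L)$ and split according to Definition 2.1. If $L\sim M\cap P$ and $\zeta=\min((L\cap\kappa)\setminus\beta_{L,M\cap P})$, then $\zeta=\min((L\cap\kappa)\setminus\beta_{L,M'})$, and by Lemma 1.21(2), $L\cap\omega_1 = (M\cap P)\cap\omega_1 = (M\cap\beta)\cap\omega_1 = (M'\cap\beta')\cap\omega_1 = M'\cap\omega_1$, where the last equality uses $\omega_1\in P'$ and hence $\omega_1<\beta'$; since $\{L,M'\}$ is adequate, Lemma 1.21(2) gives $L\sim M'$, so $\zeta\in R_{M'}(L)$ by Definition 2.1(1). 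If instead there is $\gamma\in(M\cap P\cap\kappa)\setminus\beta_{L,M\cap P}$ with $\zeta=\min((L\cap\kappa)\setminus\gamma)$, then $\gamma\in(M\cap\beta)\setminus\beta_{L,M'} = (M'\cap\beta')\setminus\beta_{L,M'}\subseteq(M'\cap\kappa)\setminus\beta_{L,M'}$, so $\zeta\in R_{M'}(L)$ by Definition 2.1(2). The one genuinely new point is the strengthening in part~(1) from the bound $\beta_{L,M}<\beta=P\cap\kappa$, which is immediate from Lemma 1.34(1), to $\beta_{L,M}<\beta'$; this is exactly where the hypothesis $\sup(M\cap\beta)<\beta'$ and the elementarity of $P'$ are used. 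The rest is the bookkeeping of intersections under the identification $M\cap\beta = M'\cap\beta'$, which I expect to be routine once $\omega_1\in P'$ is noted.
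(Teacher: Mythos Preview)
Your proof is correct and follows essentially the same approach as the paper's. The only noteworthy variation is in part (1), where the paper bounds $\sup(L\cap M\cap\kappa)$ directly and invokes Lemma~1.19(3), while you route through $\beta_{L,M\cap P}$ via Lemmas~1.16(2) and~1.34(1); and in the first half of part (2), where the paper computes $\beta_{L,M'}$ using Lemma~1.19(3) and the identity $\sup(L\cap M'\cap\kappa)=\sup(L\cap M\cap\kappa)$, while you instead pick an element of $L\cap M$ via Lemma~1.19(5) and push it into $M'$. These are interchangeable, and parts (2) second half and (3) match the paper almost verbatim.
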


\begin{proof}
(1) Since $M \cap \beta \subseteq \beta'$ and $L \cap \kappa \subseteq \beta$, 
$L \cap M \cap \kappa \subseteq \beta'$. 
As $\sup(M \cap \beta) < \beta'$, $L \cap M \cap \kappa$ is a 
bounded subset of $\beta'$. 
By the elementarity of $P'$, fix $\gamma \in \Lambda$ such that 
$\sup(L \cap M \cap \kappa) < \gamma < \beta'$. 
By Lemma 1.19(3), 
$$
\beta_{L,M} = \min(\Lambda \setminus \sup(L \cap M \cap \kappa)) 
\le \gamma < \beta'.
$$

(2) If $\beta_{L,M} = \beta_{L,M'}$, then we are done. 
So suppose not. 
We claim that $\beta_{L,M} < \beta_{L,M'}$. 
Suppose for a contradiction that $\beta_{L,M'} < \beta_{L,M}$. 
By Lemma 1.19(3), 
$\beta_{L,M'} = \min(\Lambda \setminus \sup(L \cap M' \cap \kappa))$. 
But 
$$
\beta_{L,M'} < \beta_{L,M} < \beta'
$$
by (1) and 
the assumption just made. 
So $\beta_{L,M'} < \beta'$. 
Hence $L \cap M' \cap \kappa = L \cap M' \cap \beta'$. 
Since $M \cap \beta = M' \cap \beta'$, it follows that 
$$
\sup(L \cap M' \cap \kappa) = \sup(L \cap M' \cap \beta') = 
\sup(L \cap M \cap \beta) = \sup(L \cap M \cap \kappa).
$$
So 
$$
\beta_{L,M'} = \min(\Lambda \setminus \sup(L \cap M \cap \kappa)) = 
\beta_{L,M}.
$$
But this contradicts the assumption that $\beta_{L,M'} < \beta_{L,M}$.

This proves that $\beta_{L,M} < \beta_{L,M'}$. 
By Lemma 1.19(5), 
we can fix $\xi \in (L \cap M') \cap [\beta_{L,M},\beta_{L,M'})$. 
Since $\beta_{L,M} \le \xi$ and $\xi \in L$, it follows that $\xi \notin M$. 
But $M \cap \beta = M' \cap \beta'$. 
Since $\xi \in (M' \cap \kappa) \setminus M$, $\beta' \le \xi$. 
As $\xi < \beta_{L,M'}$, it follows that $\beta' < \beta_{L,M'}$.

(3) Suppose that $\beta_{L,M} = \beta_{L,M'}$ and 
$\zeta \in R_{M \cap P}(L)$. 
We will prove that $\zeta \in R_{M'}(L)$. 
Since $\beta_{L,M} = \beta_{L,M \cap P}$ by Lemma 1.34, 
$\beta_{L,M'} = \beta_{L,M \cap P}$. 
First assume that $L \sim M \cap P$ and 
$\zeta = \min((L \cap \kappa) \setminus \beta_{L,M \cap P})$. 
Hence $\zeta = \min((L \cap \kappa) \setminus \beta_{L,M'})$. 
As $L \sim M \cap P$, 
$$
L \cap \omega_1 = M \cap P \cap \omega_1 = 
M' \cap \omega_1.
$$
So $L \sim M'$ by Lemma 1.21. 
Hence $\zeta \in R_{M'}(L)$.

Now assume that $\zeta = \min((L \cap \kappa) \setminus \gamma)$, 
where $\gamma \in (M \cap P \cap \kappa) \setminus \beta_{L,M \cap P}$. 
Since $M \cap P \cap \kappa = M \cap \beta \subseteq M'$, 
$\gamma \in M'$. 
And 
$$
\beta_{L,M \cap P} = \beta_{L,M} = \beta_{L,M'} \le \gamma.
$$
So $\gamma \in (M' \cap \kappa) \setminus \beta_{L,M'}$. 
Therefore $\zeta \in R_{M'}(L)$.
\end{proof}

\bigskip

The final lemma concerning remainder points 
will be used when amalgamating over transitive models.

\begin{lemma}
Let $M$, $M'$, $N$, and $N'$ be in $\mathcal X_0$. 
Assume that $M \cap \kappa = M' \cap \kappa$ and 
$N \cap \kappa = N' \cap \kappa$. 
Then $R_M(N) = R_{M'}(N')$.
\end{lemma}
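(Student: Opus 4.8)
The plan is to observe that the set $R_M(N)$ from Definition 2.1 depends only on three pieces of data: the set of ordinals $M \cap \kappa$, the set of ordinals $N \cap \kappa$, the comparison point $\beta_{M,N}$, and the truth value of the statement ``$M \sim N$''. (Implicitly the definition also requires $\{M,N\}$ to be adequate, and by the last clause of Lemma 1.37 adequacy of $\{M,N\}$ is equivalent to adequacy of $\{M',N'\}$, so $R_{M'}(N')$ is defined precisely when $R_M(N)$ is.) All three pieces of data are preserved when passing from $(M,N)$ to $(M',N')$: by Lemma 1.37(1), $\beta_{M,N} = \beta_{M',N'}$; by Lemma 1.37(2), $M \sim N$ iff $M' \sim N'$; and by hypothesis $M \cap \kappa = M' \cap \kappa$ and $N \cap \kappa = N' \cap \kappa$.

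Concretely I would argue that each of the two clauses of Definition 2.1 for the pair $(M,N)$ is equivalent to the corresponding clause for $(M',N')$. For clause (1): $\zeta = \min((N \cap \kappa) \setminus \beta_{M,N})$ with $M \sim N$ is, using $N \cap \kappa = N' \cap \kappa$, $\beta_{M,N} = \beta_{M',N'}$, and $M \sim N \Leftrightarrow M' \sim N'$, exactly the statement $\zeta = \min((N' \cap \kappa) \setminus \beta_{M',N'})$ with $M' \sim N'$. For clause (2): the existence of $\gamma \in (M \cap \kappa) \setminus \beta_{M,N}$ with $\zeta = \min((N \cap \kappa) \setminus \gamma)$ becomes, after substituting $M \cap \kappa = M' \cap \kappa$, $\beta_{M,N} = \beta_{M',N'}$, and $N \cap \kappa = N' \cap \kappa$, the existence of $\gamma \in (M' \cap \kappa) \setminus \beta_{M',N'}$ with $\zeta = \min((N' \cap \kappa) \setminus \gamma)$. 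Hence $\zeta \in R_M(N)$ iff $\zeta \in R_{M'}(N')$.

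There is essentially no obstacle here; the lemma is a direct consequence of Lemma 1.37 together with the fact that Definition 2.1 was phrased purely in terms of intersections with $\kappa$, comparison points, and the $\sim$-relation. The only point worth stating carefully is the remark about adequacy above, so that both sides of the claimed equality are known to be well-defined.
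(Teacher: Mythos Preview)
Your proposal is correct and follows essentially the same approach as the paper: both invoke Lemma 1.37 to get $\beta_{M,N}=\beta_{M',N'}$ and $M\sim N\Leftrightarrow M'\sim N'$, then verify each clause of Definition 2.1 transfers. The only cosmetic difference is that the paper proves one inclusion and appeals to symmetry, while you argue both directions at once; your added remark about adequacy being preserved is a nice touch the paper leaves implicit.
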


\begin{proof}
We will show that $R_M(N) \subseteq R_{M'}(N')$. 
The reverse 
inclusion follows by symmetry. 
So let $\zeta \in R_M(N)$.

First, assume that $M \sim N$ and 
$\zeta = \min((N \cap \kappa) \setminus \beta_{M,N})$. 
Then by Lemma 1.37, $\beta_{M,N} = \beta_{M',N'}$ and $M' \sim N'$. 
Since $N' \cap \kappa = N \cap \kappa$, clearly 
$\zeta = \min((N' \cap \kappa) \setminus \beta_{M',N'})$. 
So $\zeta \in R_{M'}(N')$.

Secondly, 
assume that $\zeta = \min((N \cap \kappa) \setminus \gamma)$, for some 
$\gamma \in (M \cap \kappa) \setminus \beta_{M,N}$. 
By Lemma 1.37, $\beta_{M,N} = \beta_{M',N'}$. 
Since $M \cap \kappa = M' \cap \kappa$, 
$\gamma \in (M' \cap \kappa) \setminus \beta_{M',N'}$. 
As $N \cap \kappa = N' \cap \kappa$, 
$\zeta = \min((N' \cap \kappa) \setminus \gamma)$. 
So $\zeta \in R_{M'}(N')$.
\end{proof}

\bigskip

\addcontentsline{toc}{section}{3. Strong genericity and cardinal preservation}

\textbf{\S 3. Strong genericity and cardinal preservation}

\stepcounter{section}

\bigskip

In this section we will discuss the idea of a strongly generic condition, 
which is due to Mitchell \cite{mitchell}. 
Then we will use the existence of strongly generic conditions to 
prove cardinal preservation results. 
All of the results in this section are either due to Mitchell, or are based on 
standard proper forcing arguments.

\begin{definition}
Let $\q$ be a forcing poset, $q \in \q$, and $N$ a set. 
We say that $q$ is a \emph{strongly $N$-generic condition} if 
for any set $D$ which is a dense subset of $N \cap \q$, 
$D$ is predense in $\q$ below $q$.
\end{definition}

Note that if $q$ is strongly $N$-generic and $r \le q$, then 
$r$ is strongly $N$-generic.

\begin{notation}
For a forcing poset $\q$, let $\lambda_\q$ denote the least cardinal 
such that $\q \subseteq H(\lambda_\q)$.
\end{notation}

Note that $q$ is strongly $N$-generic iff 
$q$ is strongly $(N \cap H(\lambda_\q))$-generic.

The following proposition gives a more intuitive description of 
strong genericity.

\begin{lemma}
Let $\q$ be a forcing poset, $q \in \q$, and $N \prec (H(\chi),\in,\q)$, 
where $\lambda_\q \le \chi$ is a cardinal. 
Then $q$ is a strongly $N$-generic condition iff $q$ forces 
that $N \cap \dot G$ is a $V$-generic filter on $N \cap \q$.
\end{lemma}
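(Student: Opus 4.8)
The plan is to prove the two implications separately. Throughout, I will use that, since $N \prec (H(\chi),\in,\q)$ and $\q \subseteq H(\lambda_\q) \subseteq H(\chi)$, the poset $N \cap \q$ with the inherited ordering is well-defined and that, for conditions lying in $N$, the relations ``$t \le s$'', ``$t$ and $s$ are compatible'', and ``$t \perp s$'' are absolute between $\q$ and $N$ by elementarity. I will also invoke the standard auxiliary fact: if $D$ is predense in $\q$ below $q$ and $q \in G$ for a $V$-generic filter $G$, then $D \cap G \ne \emptyset$. This is proved by checking that $\{ t \in \q : \exists d \in D \ t \le d \} \cup \{ t \in \q : t \perp q \}$ is dense in $\q$, hence meets $G$; since every element of $G$ is compatible with $q$, the witness in $G$ lies below some $d \in D$, and then $d \in G$ by upward closure.

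For the forward direction, I would assume $q$ is strongly $N$-generic, fix a $V$-generic $G$ with $q \in G$, and verify that $N \cap G$ is a $V$-generic filter on $N \cap \q$. Nonemptiness follows by applying the auxiliary fact to the trivially dense set $N \cap \q$; upward closure of $N \cap G$ inside $N \cap \q$ is immediate from upward closure of $G$; and genericity over $V$ is immediate too, since any $D \in V$ dense in $N \cap \q$ is predense below $q$ by strong genericity, hence meets $G$, and $D \subseteq N$ gives $D \cap N \cap G \ne \emptyset$. The one step requiring a small construction is directedness: given $r, s \in N \cap G$, I would consider
\[
D \ = \ \{\, t \in N \cap \q : (t \le r \ \wedge \ t \le s) \ \vee \ t \perp r \ \vee \ t \perp s \,\},
\]
and check, using elementarity, that $D$ is dense in $N \cap \q$ via a two-step extension argument: given $u \in N \cap \q$, if $u \perp r$ then $u \in D$; otherwise pick $v \in N \cap \q$ with $v \le u, r$, and if $v \perp s$ then $v \in D$, while if $v$ and $s$ are compatible, pick $t \in N \cap \q$ with $t \le v, s$, so that $t \le u, r, s$ and $t \in D$. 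By strong genericity $D$ is predense below $q$, so there is $t \in D \cap G$; since $r, s, t$ all lie in $G$ they are pairwise compatible, which rules out the disjuncts $t \perp r$ and $t \perp s$, leaving $t \le r$ and $t \le s$ with $t \in N \cap G$.

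For the converse, I would assume $q$ forces that $N \cap \dot G$ is a $V$-generic filter on $N \cap \q$, and argue by contradiction: if some $D$ dense in $N \cap \q$ fails to be predense below $q$, fix $r \le q$ incompatible with every element of $D$, and take a $V$-generic $G$ with $r \in G$, so $q \in G$. By hypothesis $N \cap G$ is a $V$-generic filter on $N \cap \q$, hence meets the dense set $D$; picking $d \in D \cap N \cap G \subseteq G$ and noting $r \in G$, we get that $d$ and $r$ are compatible, contradicting the choice of $r$. Therefore every such $D$ is predense below $q$, i.e.\ $q$ is strongly $N$-generic.

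I expect the only mildly non-routine point to be the construction of the dense set $D$ witnessing directedness in the forward direction together with the elementarity argument that it is dense; the remaining steps are bookkeeping from Definition 3.1 and the definition of a generic filter.
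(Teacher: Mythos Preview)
Your proof is correct and follows essentially the same approach as the paper's own proof: both directions are argued the same way, and in particular your dense set $D$ for directedness is exactly the one the paper uses. Your write-up is slightly more explicit in a couple of places (nonemptiness, the two-step density argument), but there is no substantive difference.
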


\begin{proof}
Suppose that $q$ is a strongly $N$-generic condition, and let 
$G$ be a $V$-generic filter on $\q$ containing $q$. 
We will show that $N \cap G$ is a 
$V$-generic filter on $N \cap \q$.

First, we show that $N \cap G$ is a filter on $N \cap \q$. 
If $p \in N \cap G$ and $t \in N \cap \q$ with $p \le t$, then 
$t \in G$ since $G$ is a filter, and hence $t \in N \cap G$.
Suppose that $s$ and $t$ are in $N \cap G$, and we will find 
$p \in N \cap G$ such that $p \le s, t$. 
The set $D$ of $p$ in $N \cap \q$ 
which are either 
incompatible with one of $s$ and $t$, or below both $s$ and $t$, is 
a dense subset of $N \cap \q$ by the elementarity of $N$. 
Since $q$ is strongly $N$-generic, 
$D$ is predense below $q$. 
As $q \in G$ and $G$ is a $V$-generic filter, we can fix 
$p \in G \cap D$. 
Since $s$, $t$, and $p$ are in $G$, $p$ is compatible with $s$ and $t$, 
and therefore $p \le s, t$ by the definition of $D$.
As $D \subseteq N$, $p \in N \cap G$.

Secondly, we prove that $N \cap G$ is $V$-generic on $N \cap \q$. 
So let $D$ be a dense subset of $N \cap \q$. 
Since $q$ is a strongly $N$-generic condition, $D$ is predense below $q$. 
As $q \in G$, it follows that $D \cap G \ne \emptyset$. 
But $D \subseteq N$, so 
$D \cap N \cap G \ne \emptyset$.

Conversely, suppose that $q$ forces that $N \cap \dot G$ 
is a $V$-generic filter on $N \cap \q$, 
and we will show that $q$ is strongly $N$-generic. 
Let $D$ be a dense subset of $N \cap \q$. 
If $D$ is not predense below $q$, then we can fix $r \le q$ which 
is incompatible with every condition in $D$. 
Let $G$ be a $V$-generic filter on $\q$ containing $r$. 
Since $r \le q$, $q \in G$. 
Hence by assumption, $N \cap G$ is a $V$-generic filter on $N \cap \q$. 
Since $D$ is dense in $N \cap \q$, 
we can fix $s \in G \cap D$. 
Then $r$ is incompatible with $s$ by the choice of $r$, and yet $r$ and $s$ 
are compatible since they are both in the filter $G$.
\end{proof}

The following combinatorial characterization of 
strong genericity is very useful in practice.

\begin{lemma}
Let $\q$ be a forcing poset, $q \in \q$, and $N$ a set. 
Then the following are equivalent:
\begin{enumerate}
\item $q$ is strongly $N$-generic;
\item for all $r \le q$, there exists $v \in N \cap \q$ such that for all 
$w \le v$ in $N \cap \q$, $r$ and $w$ are compatible.
\end{enumerate}
\end{lemma}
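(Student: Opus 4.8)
The plan is to establish the two implications separately, each by a short direct argument from the definitions of strong genericity and of predensity below a condition.

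For $(2) \Rightarrow (1)$, I would fix a dense subset $D$ of $N \cap \q$ and show it is predense in $\q$ below $q$; so let $r \le q$ be arbitrary, and the goal is to produce an element of $D$ compatible with $r$. Applying $(2)$ to $r$ yields some $v \in N \cap \q$ such that every $w \le v$ in $N \cap \q$ is compatible with $r$. Since $D$ is dense in $N \cap \q$, I choose $w \in D$ with $w \le v$; as $D \subseteq N \cap \q$, we have $w \in N \cap \q$, so the defining property of $v$ gives that $w$ and $r$ are compatible. Since $r \le q$ was arbitrary, $D$ is predense below $q$, which is precisely strong $N$-genericity of $q$.

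For $(1) \Rightarrow (2)$, I would argue by contraposition. Suppose $(2)$ fails, and fix $r \le q$ witnessing this: for every $v \in N \cap \q$ there is $w \le v$ with $w \in N \cap \q$ and $w$ incompatible with $r$. Let $D$ be the set of all $w \in N \cap \q$ which are incompatible with $r$. Then $D \subseteq N \cap \q$, and the failure of $(2)$ at $r$ says exactly that for every $v \in N \cap \q$ there is $w \in D$ with $w \le v$; that is, $D$ is dense in $N \cap \q$. If $q$ were strongly $N$-generic, $D$ would be predense below $q$, so (using $r \le q$) some $w \in D$ would be compatible with $r$, contradicting that every member of $D$ is incompatible with $r$. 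Hence $q$ is not strongly $N$-generic, completing the contrapositive.

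There is no genuine obstacle here: the lemma is a routine reformulation, and the proof amounts to carefully matching the quantifiers in the definition of strong genericity with those in clause $(2)$. The only points that warrant a moment's attention are, in $(1) \Rightarrow (2)$, verifying that the candidate set $D$ really lies inside $N \cap \q$ (immediate from its definition) and that the negation of $(2)$ is literally the assertion that $D$ is dense in $N \cap \q$; and in $(2) \Rightarrow (1)$, verifying that the witness $w$ obtained from density of $D$ satisfies $w \le v$ and $w \in N \cap \q$, so that the conclusion of $(2)$ genuinely applies to it.
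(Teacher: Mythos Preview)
Your proof is correct and is essentially identical to the paper's: both prove $(1)\Rightarrow(2)$ by contraposition using the set $D=\{w\in N\cap\q: w\text{ is incompatible with }r\}$, and both prove $(2)\Rightarrow(1)$ by applying density of $D$ below the witness $v$ given by $(2)$. The only cosmetic difference is that the paper presents the two implications in the opposite order.
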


\begin{proof}
For the forward direction, suppose that there is 
$r \le q$ for which there does not exist a condition 
$v \in N \cap \q$ all of whose extensions in $N \cap \q$ 
are compatible with $r$. 
Let $D$ be the set of $w \in N \cap \q$ which are incompatible with $r$. 
The assumption on $r$ implies that $D$ is dense in $N \cap \q$. 
But $D$ is not predense below $q$ since every condition in $D$ is 
incompatible with $r$. 
So $q$ is not strongly $N$-generic.

Conversely, assume that there is a function 
$r \mapsto v_r$ as described in (2). 
Let $D$ be dense in $N \cap \q$, and let $r \le q$. 
Since $D$ is dense in $N \cap \q$, we can 
fix $w \le v_r$ in $D$. 
Then $r$ and $w$ are compatible by the choice of $v_r$. 
So $D$ is predense below $q$.
\end{proof}

\bigskip

The next idea was introduced by Cox-Krueger \cite{jk26}.

\begin{definition}
Let $\q$ be a forcing poset, $q \in \q$, and $N$ a set. 
We say that $q$ is a \emph{universal strongly $N$-generic condition} if 
$q$ is a strongly $N$-generic condition and for all $p \in N \cap \q$, 
$p$ and $q$ are compatible.
\end{definition}

The strongly generic conditions used in this paper are universal. 
This fact allows us to factor forcing posets over 
elementary substructures in such a way that the quotient forcing has 
nice properties. 
See Section 6 for more details on this topic.

\bigskip

\begin{definition}
Let $\q$ be a forcing poset and $\mu \le \lambda_\q$ a regular 
uncountable cardinal. 
We say that $\q$ is \emph{$\mu$-strongly proper on a stationary 
set} if there are stationarily many $N$ in $P_{\mu}(H(\lambda_\q))$ 
such that for all $p \in N \cap \q$, there is $q \le p$ such that 
$q$ is strongly $N$-generic.
\end{definition}

When we say that $\q$ is strongly proper on a stationary set, we will mean 
that it is $\omega_1$-strongly proper on a stationary set. 

By standard arguments, 
$\q$ is $\mu$-strongly proper on a stationary set iff 
for any cardinal $\lambda_\q \le \chi$, there are stationarily many 
$N$ in $P_{\mu}(H(\chi))$ such that for all $p \in N \cap \q$, 
there is $q \le p$ such that $q$ is strongly $N$-generic.

\begin{lemma}
Let $\q$ be a forcing poset and $\mu \le \lambda_\q$ a regular 
uncountable cardinal. 
If there are stationarily many $N$ in $P_{\mu}(H(\lambda_\q))$ such that 
there exists a universal strongly $N$-generic condition, then $\q$ is 
$\mu$-strongly proper on a stationary set.
\end{lemma}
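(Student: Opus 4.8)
The plan is to unwind the definitions and observe that the universality hypothesis directly supplies, for each condition in $N \cap \q$, a strongly $N$-generic extension of it. Let $S$ denote the set of $N \in P_\mu(H(\lambda_\q))$ for which there exists a universal strongly $N$-generic condition; by hypothesis $S$ is stationary. I will show that every $N \in S$ witnesses the defining property of $\mu$-strong properness, which immediately yields the conclusion.

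So fix $N \in S$ and let $q$ be a universal strongly $N$-generic condition. Let $p \in N \cap \q$ be arbitrary. Since $q$ is universal, $p$ and $q$ are compatible, so we may fix $r \le p$ and $r \le q$. Because $r \le q$ and $q$ is strongly $N$-generic, the observation following Definition 3.1 (strong genericity is preserved under taking extensions) gives that $r$ is strongly $N$-generic. Thus $r$ is an extension of $p$ which is strongly $N$-generic. As $p \in N \cap \q$ was arbitrary, this shows that for all $p \in N \cap \q$ there is $q' \le p$ with $q'$ strongly $N$-generic, i.e.\ $N$ is of the required form.

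Since this holds for every $N$ in the stationary set $S$, there are stationarily many $N$ in $P_\mu(H(\lambda_\q))$ such that for all $p \in N \cap \q$ there is $q' \le p$ which is strongly $N$-generic. By Definition 3.8, $\q$ is $\mu$-strongly proper on a stationary set.

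I do not anticipate a genuine obstacle here: the argument is a routine composition of the universality clause (which produces compatibility with each $p \in N \cap \q$) with the downward persistence of strong genericity, so the only thing to be careful about is invoking the right earlier remarks rather than reproving them.
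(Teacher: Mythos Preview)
Your proof is correct and is essentially identical to the paper's own argument: both fix a universal strongly $N$-generic $q$, use universality to find $r \le p, q$, and then invoke downward persistence of strong genericity to conclude that $r$ is strongly $N$-generic.
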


\begin{proof}
Let $N \in P_{\mu}(H(\lambda_\q))$ be such that there exists a 
universal strongly $N$-generic condition $q_N$. 
Let $p \in N \cap \q$, and we will find $r \le p$ which is 
strongly $N$-generic. 
Since $q_N$ is universal, $p$ and $q_N$ are compatible. 
So fix $r \le p, q_N$. 
Then $r \le p$ and $r$ is strongly $N$-generic.
\end{proof}

\bigskip

\begin{definition}
Let $\mu$ be a regular uncountable cardinal. 
A forcing poset $\q$ is said to satisfy the \emph{$\mu$-covering property} if 
$\q$ forces that for any set $a \subseteq On$ in the generic extension, 
if $a$ has size less than $\mu$ in the generic extension, then 
there is $b$ in the ground model with size less than $\mu$ in the ground 
model such that $a \subseteq b$.
\end{definition}

Note that if $\q$ has the $\mu$-covering property, then $\q$ 
forces that $\mu$ is regular.

\begin{proposition}
Let $\q$ be a forcing poset, and let $\mu \le \lambda_\q$ be a regular 
uncountable cardinal. 
Suppose that $\q$ is $\mu$-strongly proper on a stationary set. 
Then $\q$ satisfies the $\mu$-covering property.\footnote{The proof of this proposition is 
basically the same as a standard proof that proper forcing posets preserve $\omega_1$.}
\end{proposition}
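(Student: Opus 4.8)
The plan is to adapt the standard argument showing that proper forcing preserves $\omega_1$, as the footnote suggests. First I would reduce to the following concrete statement: given a $\q$-name $\dot a$ and a condition $p$ with $p \Vdash$ ``$\dot a \subseteq On$ and $|\dot a| < \mu$'', and given any $p_0 \le p$, I must produce $r \le p_0$ and a set $b \in V$ with $|b| < \mu$ such that $r \Vdash \dot a \subseteq \check b$. Since $p_0 \Vdash |\dot a| < \mu$, I can choose $p_1 \le p_0$, an ordinal $\delta < \mu$, and a $\q$-name $\dot f$ such that $p_1 \Vdash$ ``$\dot f : \check\delta \to \dot a$ is a surjection''.

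Next, using the reformulation of $\mu$-strong properness recorded just after Definition 3.6, I would pass to a cardinal $\chi$ large enough that $\q, \dot a, \dot f \in H(\chi)$, and invoke the characterization of stationarity in $P_\mu$ recalled in the introduction to obtain $N \prec (H(\chi),\in)$ with $N \cap \mu \in \mu$, with $\q, p_1, \dot f, \delta \in N$ (so in particular $\delta \subseteq N$, since $N \cap \mu$ is an ordinal strictly above $\delta$), and lying in the stationary set witnessing that $\q$ is $\mu$-strongly proper. Since $p_1 \in N \cap \q$, there is then $q \le p_1$ which is strongly $N$-generic. Put $b := N \cap On$; then $|b| = |N| < \mu$, and I claim $q \Vdash \dot a \subseteq \check b$. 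As $q \le p_1 \le p_0$, this completes the proof.

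To verify the claim, observe that $q$ forces $\dot a = \{ \dot f(\check\alpha) : \alpha < \delta \}$ and that every $\alpha < \delta$ lies in $N$, so it suffices to show $q \Vdash \dot f(\check\alpha) \in \check N$ for each $\alpha < \delta$. Suppose otherwise and fix $r \le q$ with $r \Vdash \dot f(\check\alpha) \notin \check N$. Let $D_\alpha$ be the set of $w \in N \cap \q$ such that either $w$ is incompatible with $p_1$, or $w \le p_1$ and $w$ decides the value of $\dot f(\check\alpha)$. By elementarity of $N$, $D_\alpha$ is dense in $N \cap \q$, so by strong $N$-genericity of $q$ it is predense below $q$, and hence $r$ is compatible with some $w \in D_\alpha$. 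Since $r \le p_1$, $w$ cannot be incompatible with $p_1$, so $w \le p_1$ and $w \Vdash \dot f(\check\alpha) = \check\gamma$ for the unique ordinal $\gamma$ it determines; as $\gamma$ is definable in $H(\chi)$ from $w, p_1, \dot f, \alpha \in N$, we get $\gamma \in N$. But then a common extension of $r$ and $w$ forces both $\dot f(\check\alpha) = \check\gamma$ and $\dot f(\check\alpha) \notin \check N$, contradicting $\gamma \in N$.

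The bulk of the work in the first two paragraphs is routine forcing bookkeeping. The one point to handle with care is the density of $D_\alpha$ in $N \cap \q$: it must be phrased so that elementarity of $N$ applies cleanly, which is why conditions incompatible with $p_1$ are included. The essential content — and the only step genuinely using the hypothesis — is the tension between two facts: dense subsets of $N \cap \q$ remain predense below the strongly $N$-generic condition $q$, while elementarity of $N$ pushes any value of $\dot f(\check\alpha)$ decided by a condition in $N$ back inside $N$. This is a direct application of the defining property of strong genericity (cf.\ Lemmas 3.3 and 3.4), and I do not anticipate any real obstacle beyond stating it carefully.
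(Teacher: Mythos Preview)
Your proposal is correct and follows essentially the same route as the paper's proof: pick an elementary $N \in P_\mu(H(\chi))$ containing the relevant names and the small index set, extend to a strongly $N$-generic $q$, and argue via a dense-in-$N\cap\q$ set that each value of the enumeration is decided by a condition in $N$ and hence lies in $N$. The only cosmetic differences are that the paper works directly with a sequence of names $\langle \dot\alpha_i : i < \mu_0\rangle$ rather than a name for a surjection $\dot f$, which lets it take the simpler dense set $\{s \in N\cap\q : s \text{ decides } \dot\alpha_i\}$ without your incompatibility clause; your more careful formulation of $D_\alpha$ is needed precisely because $\dot f(\check\alpha)$ is only guaranteed to be meaningful below $p_1$.
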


\begin{proof}
Let $p$ be a condition, and suppose that $p$ forces that 
$\dot a$ is a set of ordinals of size less than $\mu$. 
We will find $q \le p$ and a set $x$ of size less than $\mu$ such that 
$q$ forces that $\dot a \subseteq x$. 
Extending $p$ if necessary, we can assume that $p$ forces that $\dot a$ has 
size $\mu_0$, for some cardinal $\mu_0 < \mu$. 
Fix a sequence 
$\langle \dot \alpha_i : i < \mu_0 \rangle$ of $\q$-names 
such that $p$ forces that $\dot a = \{ \dot \alpha_i : i < \mu_0 \}$.

Fix a regular cardinal $\lambda_\q \le \chi$ such that 
$\q$, $\dot a$, and $\langle \dot \alpha_i : i < \mu_0 \rangle$ 
are members of $H(\chi)$. 
Fix $N \in P_{\mu}(H(\chi))$ 
such that $N \prec 
(H(\chi),\in,\q,p,\dot a,\langle \dot \alpha_i : i < \mu_0 \rangle)$, 
$\mu_0 \subseteq N$, 
and for all $p_0 \in N \cap \q$, 
there is $q \le p_0$ which is a strongly $N$-generic condition. 
In particular, since $p \in N \cap \q$, we can fix $q \le p$ such that 
$q$ is strongly $N$-generic.

We claim that $q$ forces that for all $i < \mu_0$, 
$\dot \alpha_i \in N$. 
Let $i < \mu_0$. 
Let $D$ be the set of 
$s \in N \cap \q$ such that $s$ decides the 
value of $\dot \alpha_i$. 
By the elementarity of $N$, it is easy to see that 
$D$ is dense in $N \cap \q$. 
Since $q$ is strongly $N$-generic, $D$ is predense below $q$. 
Therefore $q$ forces that $\dot \alpha_i$ is decided by a condition in $N$. 
By elementarity, the value of the name $\dot \alpha_i$ decided by a 
condition in $N$ lies in $N$. 
Hence $q$ forces that $\dot \alpha_i \in N$. 
It follows that $q$ forces that $\dot a \subseteq N \cap On$. 
Since $N$ has size less than $\mu$, we are done.
\end{proof}

\begin{corollary}
Let $\q$ be a forcing poset, and let $\mu \le \lambda_\q$ be a regular 
uncountable cardinal. 
Suppose that there are stationarily many $N$ in 
$P_{\mu}(H(\lambda_\q))$ for which there exists a universal 
strongly $N$-generic condition. 
Then $\q$ satisfies the $\mu$-covering property. 
In particular, $\q$ forces that $\mu$ is a regular cardinal.
\end{corollary}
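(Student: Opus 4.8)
The plan is to derive this corollary by simply composing the two preceding results of the section with the observation recorded immediately after Definition~3.8; there is no new combinatorial content to produce, only a matching of hypotheses. The given assumption — that there are stationarily many $N \in P_\mu(H(\lambda_\q))$ carrying a universal strongly $N$-generic condition — is verbatim the hypothesis of Lemma~3.7, and the conclusion of Lemma~3.7 is verbatim the hypothesis of Proposition~3.9, so the two lemmas dovetail directly.

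Concretely, I would first invoke Lemma~3.7: since $\mu \le \lambda_\q$ is a regular uncountable cardinal and there are stationarily many $N$ in $P_\mu(H(\lambda_\q))$ admitting a universal strongly $N$-generic condition, Lemma~3.7 yields that $\q$ is $\mu$-strongly proper on a stationary set. Then I would feed this into Proposition~3.9, which (again for our fixed $\mu \le \lambda_\q$) asserts that any forcing poset that is $\mu$-strongly proper on a stationary set satisfies the $\mu$-covering property. This already gives the first assertion of the corollary.

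For the ``in particular'' clause, I would cite the remark following Definition~3.8: a poset with the $\mu$-covering property forces that $\mu$ is regular. The reason is routine — if in the generic extension $\mu$ had cofinality $\mu_0 < \mu$, witnessed by a cofinal set $a \subseteq \mu$ of size $\mu_0$, then the $\mu$-covering property would provide a ground-model set $b$ with $|b| < \mu$ and $a \subseteq b$; but $b \cap \mu$ would then be bounded below $\mu$ by the regularity of $\mu$ in the ground model, contradicting that $a$ is cofinal. Hence $\q$ forces that $\mu$ is regular, completing the proof. The only thing needing attention is the (entirely trivial) verification that the phrase ``$\mu \le \lambda_\q$, $\mu$ regular uncountable'' is common to Lemma~3.7, Proposition~3.9, and the present statement, so there is no real obstacle here.
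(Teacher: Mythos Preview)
Your proposal is correct and matches the paper's own proof, which simply reads ``Immediate from Lemma 3.7 and Proposition 3.9.'' You have unpacked exactly this chain and additionally spelled out the standard argument behind the remark after Definition~3.8, so nothing is missing.
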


\begin{proof}
Immediate from Lemma 3.7 and Proposition 3.9.
\end{proof}

\bigskip

\begin{proposition}
Let $\q$ be a forcing poset, and let $\mu \le \lambda_\q$ be a regular 
uncountable cardinal. 
Suppose that there are stationarily many 
$N \in P_{\mu}(H(\lambda_\q))$ 
such that every condition in $\q$ is a strongly $N$-generic condition. 
Then $\q$ is $\mu$-c.c.
\end{proposition}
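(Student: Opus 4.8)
The plan is to argue by contradiction, reducing everything to the behavior of \emph{maximal} antichains. Suppose $\q$ is not $\mu$-c.c., and fix an antichain of size $\mu$; by Zorn's lemma it extends to a maximal antichain $A \subseteq \q$ with $|A| \ge \mu$, and I fix an injective enumeration $A = \langle q_i : i < |A| \rangle$. Then I choose a cardinal $\chi \ge \lambda_\q$ large enough that $A$ and this enumeration belong to $H(\chi)$.

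The next step is to pass from $H(\lambda_\q)$ up to $H(\chi)$. Using the standard reflection argument indicated in the remark following Definition 3.6, together with the remark following Notation 3.2 that strong $N$-genericity depends only on $N \cap H(\lambda_\q)$, the set of $N \in P_\mu(H(\chi))$ such that $N \prec (H(\chi),\in,\q,A,\langle q_i : i < |A|\rangle)$, $N \cap \mu \in \mu$, and every condition in $\q$ is strongly $N$-generic, is stationary. I fix such an $N$ and set $\delta := N \cap \mu$, so $\delta < \mu$. Since the enumeration of $A$ lies in $N$, we have $q_i \in N$ iff $i \in N$ for each $i < |A|$; as $\delta \notin N$ but $\delta < \mu \le |A|$, the condition $q_\delta$ is defined and is not a member of $A \cap N$, and hence $q_\delta$ is incompatible with every member of $A \cap N$.

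Then I would consider the set $D$ of all $r \in N \cap \q$ such that $r \le q_i$ for some $q_i \in A \cap N$. By elementarity $N$ believes that $A$ is a maximal antichain, so for each $p \in N \cap \q$ there is some $q_i \in A \cap N$ compatible with $p$ and hence a common lower bound in $N$; thus $D$ is a dense subset of $N \cap \q$. Since $q_\delta$ is strongly $N$-generic, $D$ is predense in $\q$ below $q_\delta$ by Definition 3.1. Applying this to the condition $q_\delta$ itself produces $r \in D$ compatible with $q_\delta$, and then the witness $q_i \in A \cap N$ with $r \le q_i$ is compatible with $q_\delta$, contradicting that $A$ is an antichain containing both. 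One could equally well route this through the combinatorial characterization in Lemma 3.4.

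The only delicate point is the routine bookkeeping in the second step: arranging a single model $N$ which simultaneously sees the possibly large maximal antichain $A$ and still inherits the strong-genericity property of the hypothesis, which relies on that property being insensitive to the part of $N$ lying outside $H(\lambda_\q)$. Everything else is the classical argument that a poset all of whose conditions are strongly $N$-generic for stationarily many $N$ satisfies the corresponding chain condition.
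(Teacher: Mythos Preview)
Your proof is correct and follows essentially the same approach as the paper: both form the dense set $D$ of conditions in $N \cap \q$ lying below a member of $A \cap N$, use strong $N$-genericity to make $D$ predense, and conclude that any element of $A$ is compatible with---hence equal to---some member of $A \cap N$. The only cosmetic differences are that the paper argues directly (showing $A \subseteq N$ for an arbitrary maximal antichain) rather than by contradiction via a specific $q_\delta$, and stays inside $H(\lambda_\q)$ by treating $A$ as a predicate rather than lifting to a larger $H(\chi)$.
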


Note that if $\q$ has a maximum condition, then every condition in $\q$ 
being strongly $N$-generic is equivalent to the maximum condition 
being strongly $N$-generic.

\begin{proof}
Let $A$ be a maximal antichain in $\q$, and we will show that 
$|A| < \mu$. 
Let $N \in P_{\mu}(H(\lambda_\q))$ be such that 
$N \prec (H(\lambda_\q),\in,\q,A)$ and every condition in $\q$ is 
strongly $N$-generic. 

Note that by the elementarity of $N$ and since $A$ is a maximal antichain, 
$N \cap A$ is predense in $N \cap \q$.  
Namely, if $u \in N \cap \q$, then $u$ is compatible with some member of $A$. 
By elementarity, $u$ is compatible with some member of $N \cap A$. 
Let $D$ be the set of $t \in N \cap \q$ 
such that for some $s \in N \cap A$, $t \le s$. 
Then easily $D$ is dense in $N \cap \q$. 
Since every condition in $\q$ is strongly $N$-generic, $D$ is predense in $\q$.

We claim that $A \subseteq N$, which implies that $|A| \le |N| < \mu$. 
So let $p \in A$ be given, and we will show that $p \in N$. 
Since $D$ is predense in $\q$, fix $t \in D$ which is compatible with $p$. 
By the definition of $D$, we can fix $s \in N \cap A$ such that $t \le s$. 
Then $p$ and $s$ are compatible. 
But $p$ and $s$ are both in $A$ and $A$ is an antichain, 
so $p = s$. 
Since $s \in N$, $p \in N$.
\end{proof}

In general, forcings which include adequate sets as side conditions 
will collapse $\kappa$ to become $\omega_2$. 
In other words, all the cardinals $\mu$ with $\omega_1 < \mu < \kappa$ 
will be collapsed to have size $\omega_1$. 
The next result describes some general properties of a forcing poset 
which imply that such collapsing takes place.

\begin{proposition}
Suppose that $\q$ is a forcing poset which preserves 
$\omega_1$ and satisfies:
\begin{enumerate}
\item there exists an integer $k < \omega$ such that 
the conditions of $\q$ are of the form 
$(x_1,\ldots,x_k,A)$, 
where $x_1,\ldots,x_k$ are finite subsets of $H(\lambda)$, 
and $A$ is an adequate set;
\item if 
$(y_1,\ldots,y_k,B) \le (x_1,\ldots,x_k,A)$, then $A \subseteq B$;
\item there are stationarily many $N \in \mathcal X_0$ 
such that whenever $(x_1,\ldots,x_k,A) \in N \cap \q$, then 
$(x_1,\ldots,x_k,A \cup \{ N \})$ is a condition below 
$(x_1,\ldots,x_k,A)$.
\end{enumerate}
Then for any cardinal $\omega_1 < \mu < \kappa$, $\q$ collapses 
$\mu$ to have size $\omega_1$.
\end{proposition}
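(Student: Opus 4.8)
The plan is to fix a cardinal $\mu$ with $\omega_1 < \mu < \kappa$, let $G$ be a $V$-generic filter on $\q$, and construct in $V[G]$ a function $f : \omega_1 \to P_{\omega_1}(\mu)$ with $\bigcup_{\delta < \omega_1} f(\delta) = \mu$. This suffices: $\mu$ is then a union of $\omega_1$ many countable sets in $V[G]$, so $|\mu|^{V[G]} \le \omega_1$, and since $\q$ preserves $\omega_1$ and $\mu \ge \omega_2^V > \omega_1$, it follows that $|\mu|^{V[G]} = \omega_1$, i.e.\ $\q$ collapses $\mu$ to have size $\omega_1$. Let $\mathcal A_G = \bigcup \{ A : (x_1,\ldots,x_k,A) \in G \}$ denote the union of the side conditions appearing in conditions in $G$. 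Using hypothesis (2) together with the fact that $G$ is directed, for any $M, N \in \mathcal A_G$ there is a condition in $G$ whose adequate component contains both $M$ and $N$; hence $\{ M, N \}$ is adequate for all $M, N \in \mathcal A_G$.

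First I would prove the following density fact: for each $\alpha < \mu$, the set $D_\alpha$ of conditions $(x_1,\ldots,x_k,A)$ for which there is $N \in A$ with $\{ \alpha, \mu \} \subseteq N$ is dense in $\q$. Given a condition $p = (x_1,\ldots,x_k,A)$, the set of $N \in \mathcal X_0$ with $p, \alpha, \mu \in N$ contains a club subset of $P_{\omega_1}(H(\lambda))$, so it meets the stationary set provided by hypothesis (3); fixing such an $N$, the tuple $(x_1,\ldots,x_k,A \cup \{ N \})$ is a condition extending $p$ by (3), and it belongs to $D_\alpha$ since $\{ \alpha, \mu \} \subseteq N \in A \cup \{ N \}$. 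By genericity, it follows that for every $\alpha < \mu$ there is $N \in \mathcal A_G$ with $\{ \alpha, \mu \} \subseteq N$.

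The key observation is that for $N \in \mathcal A_G$ with $\mu \in N$, the set $N \cap \mu$ depends only on $N \cap \omega_1$. Suppose $M, N \in \mathcal A_G$ satisfy $\mu \in M \cap N$ and $M \cap \omega_1 = N \cap \omega_1$. Since $\{ M, N \}$ is adequate, Lemma 1.21 gives $M \sim N$, i.e.\ $M \cap \beta_{M,N} = N \cap \beta_{M,N}$. As $\mu \in (M \cap \kappa) \cap (N \cap \kappa)$, Lemma 1.15 gives $\mu < \beta_{M,N}$, and hence
\[
M \cap \mu \;=\; M \cap \beta_{M,N} \cap \mu \;=\; N \cap \beta_{M,N} \cap \mu \;=\; N \cap \mu .
\]
Therefore, working in $V[G]$, I would define $f : \omega_1 \to P_{\omega_1}(\mu)$ by letting $f(\delta)$ be the common value of $N \cap \mu$ over all $N \in \mathcal A_G$ with $\mu \in N$ and $N \cap \omega_1 = \delta$, and $f(\delta) = \emptyset$ if there is no such $N$. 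Each $f(\delta)$ is countable because members of $\mathcal X_0$ are countable, and the density fact shows that every $\alpha < \mu$ lies in $f(N \cap \omega_1)$ for a suitable $N \in \mathcal A_G$; thus $\bigcup_{\delta<\omega_1} f(\delta) = \mu$, completing the argument.

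There is no single hard step here — the real content is already packaged in the comparison point $\beta_{M,N}$ and in Lemmas 1.15 and 1.21. The points that require a little care are: (i) checking that the model $N$ furnished by hypothesis (3) can be taken to contain the finitely many prescribed parameters, which holds because the relevant elementary substructures form a club while (3) only restricts us to a stationary set; and (ii) noting that (2) and the directedness of $G$ force $\mathcal A_G$ to be pairwise adequate, so that the Section 1 machinery is applicable to arbitrary pairs from $\mathcal A_G$. Note also that the argument applies uniformly to every $\mu$ in the interval $(\omega_1,\kappa)$, regardless of the cofinality of $\mu$, since it only uses that $\mu$ is an element of the models rather than any regularity of $\mu$.
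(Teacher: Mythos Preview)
Your proof is correct and uses the same essential machinery as the paper: the density argument from hypothesis (3) to place prescribed parameters into models in the generic adequate set, together with Lemmas 1.15 and 1.21 to see that $N \cap \mu$ is determined by $N \cap \omega_1$ among models $N$ with $\mu \in N$.

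The only difference is in how the endgame is organized. The paper first reduces to regular $\mu$, then shows that $\{\sup(N \cap \mu) : N \in X_\mu\}$ is a cofinal subset of $\mu$ of order type at most $\omega_1$, so every regular $\mu \in (\omega_1,\kappa)$ is singularized; a short minimality argument then converts ``all regular $\mu$ singularized'' into ``all $\mu$ collapsed.'' You instead package the same information as a covering of $\mu$ by $\omega_1$ many countable sets $f(\delta) = N \cap \mu$, which directly yields $|\mu|^{V[G]} = \omega_1$ without the reduction step. Your route is slightly more direct; the paper's route has the mild advantage of making explicit the cofinal $\omega_1$-sequence in $\mu$, which is sometimes what one wants to quote.
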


\begin{proof}
It suffices to show that $\q$ singularizes all regular cardinals $\mu$ with 
$\omega_1 < \mu < \kappa$. 
For suppose that this is true, but there is a cardinal 
$\mu$ in the interval $(\omega_1,\kappa)$ in some generic extension. 
Assume moreover that $\mu$ is the least such cardinal. 
Then $\mu = \omega_2$ in the generic extension. 
By downwards absoluteness, $\mu$ is regular in the ground model. 
This contradicts our assumption that all regular cardinals in the interval 
$(\omega_1,\kappa)$ are singularized.

Let $G$ be a $V$-generic filter on $\q$. 
Define 
$$
X := \{ N : \exists (x_1,\ldots,x_k,A) \in G, \ N \in A \}.
$$
Let 
$$
X_\mu := \{ N \in X : \mu \in N \}.
$$
Then by (2) and the fact that $G$ is a filter, for any 
$M$ and $N$ in $X_\mu$, there is 
a condition $(x_1,\ldots,x_k,A) \in G$ such that 
$M$ and $N$ are in $A$. 
Since $A$ is adequate, $\{ M, N \}$ is adequate. 
As $\mu \in M \cap N \cap \kappa$, $\mu < \beta_{M,N}$. 
Therefore either $M \cap \mu = N \cap \mu$, $M \cap \mu \in N$, 
or $N \cap \mu \in M$. 
Moreover, which of these three relations holds is determined by how 
$M \cap \omega_1$ and $N \cap \omega_1$ are ordered, by Lemma 1.21. 
It follows that 
$\{ \sup(N \cap \mu) : N \in X_\mu \}$ is a strictly increasing sequence of 
ordinals with order type at most $\omega_1$.

We claim that the set $\{ \sup(N \cap \mu) : N \in X_\mu \}$ is cofinal 
in $\mu$. 
The claim implies that $\mu$ has cofinality less than or equal to $\omega_1$ 
in $V[G]$, finishing the proof. 
Fix a name $\dot X_\mu$ which is forced to be equal to the set $X_\mu$ defined above.

Let $\gamma < \mu$ and $(x_1,\ldots,x_k,A)$ be a condition. 
By (3), there is $N \in \mathcal X_0$ 
such that $(x_1,\ldots,x_k,A)$, $\gamma$, and $\mu$ are in $N$, and 
$(x_1,\ldots,x_k,A \cup \{ N \})$ is a condition below 
$(x_1,\ldots,x_k,A)$. 
Since $\mu \in N$, $(x_1,\ldots,x_k,A \cup \{ N \})$ forces that 
$N \in \dot X_\mu$. 
As $\gamma \in N$, $\gamma < \sup(N \cap \mu)$.
\end{proof}

\bigskip

\addcontentsline{toc}{section}{4. Adding a club}

\textbf{\S 4. Adding a club}

\stepcounter{section}

\bigskip

In this section we give an example 
to illustrate the methods developed so far, by showing how 
to add a club subset of a stationary subset of $\omega_2$ 
using adequate sets of models. 
Adding a club with finite conditions was the original application 
of the side conditions of Friedman \cite{friedman} 
and Mitchell \cite{mitchell}. 
Later Neeman \cite{neeman} defined a forcing for adding a club using 
his method of two-type side conditions. 
The forcing poset we develop in this section is the first example 
of a forcing which adds a club subset of $\omega_2$ using conditions 
which are just finite sets of models ordered by reverse inclusion.

The following general lemma will be used frequently in this section.

\begin{lemma}
Let $A$ be an adequate set. 
Let $K$, $M$, and $N$ be in $A$, and $\zeta \in R_M(N)$. 
Suppose that $\theta = \min((K \cap \kappa) \setminus \zeta)$. 
Then 
$$
\theta \in R_M(N) \cup R_M(K) \cup R_N(K).
$$
\end{lemma}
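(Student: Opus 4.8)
The plan is to break into cases according to whether $\zeta$ already lies in $K$, and, if not, according to how $\theta$ sits relative to the comparison point $\beta_{K,N}$; most cases will reduce either to Lemma 2.7 or to a one-line verification against Definition 2.1. Throughout, $\{K,M,N\}$ is adequate, being a finite subset of the adequate set $A$. First I would dispose of the trivial case $\zeta \in K$: then $\theta = \min((K\cap\kappa)\setminus\zeta) = \zeta$, and $\theta = \zeta \in R_M(N)$ by hypothesis. So from here on I assume $\zeta \notin K$, which forces $\zeta < \theta$.

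Suppose next that $\theta < \beta_{K,N}$. Then all the hypotheses of Lemma 2.7 are satisfied: $\{K,M,N\}$ is adequate, $\zeta \in R_M(N)$, $\zeta \notin K$, $\theta = \min((K\cap\kappa)\setminus\zeta)$, and $\theta < \beta_{K,N}$. Lemma 2.7 then yields $\theta \in R_M(K)$, which is what is wanted.

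It remains to treat the case $\theta \ge \beta_{K,N}$, and here the goal is to show $\theta \in R_N(K)$ via clause (2) of Definition 2.1 applied to $R_N(K)$, i.e.\ to produce $\gamma \in (N\cap\kappa)\setminus\beta_{K,N}$ with $\theta = \min((K\cap\kappa)\setminus\gamma)$. If $\beta_{K,N} \le \zeta$, the choice $\gamma = \zeta$ works at once, since $\zeta \in N\cap\kappa$ (because $\zeta\in R_M(N)$) and $\theta = \min((K\cap\kappa)\setminus\zeta)$ by definition. The one genuinely delicate case --- the step I expect to be the main obstacle --- is $\zeta < \beta_{K,N} \le \theta$. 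Here $\zeta$ witnesses that $(N\cap\beta_{K,N})\setminus K$ is nonempty, so Lemma 1.19(1) gives $K < N$, and then Lemma 1.19(4) gives $\beta_{K,N} \in N$, hence $\beta_{K,N} \in N\cap\kappa$. Since $\zeta < \beta_{K,N}$ we have $(K\cap\kappa)\setminus\beta_{K,N} \subseteq (K\cap\kappa)\setminus\zeta$, and $\theta$ lies in the smaller set because $\theta\in K\cap\kappa$ and $\theta\ge\beta_{K,N}$; therefore $\theta = \min((K\cap\kappa)\setminus\beta_{K,N})$. Taking $\gamma = \beta_{K,N}$ then shows $\theta\in R_N(K)$.

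Apart from that one step, everything is a direct unwinding of Definitions 2.1 and 1.17; the only care needed is to track which clause of Definition 2.1 is being invoked and to confirm, via $K<N$, that $\beta_{K,N}$ really lands inside $N$.
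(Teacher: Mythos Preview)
Your proof is correct and follows essentially the same case split as the paper's own proof: first dispose of $\theta=\zeta$, then apply Lemma 2.7 when $\theta<\beta_{K,N}$, and finally handle $\beta_{K,N}\le\theta$ by splitting on whether $\beta_{K,N}\le\zeta$ or $\zeta<\beta_{K,N}\le\theta$, in the latter case using $K<N$ to put $\beta_{K,N}\in N$. The only difference is cosmetic---you spell out the appeal to Lemma 1.19(4) explicitly where the paper leaves it implicit via the remark following Definition 2.1.
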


\begin{proof}
If $\theta = \zeta$, then $\theta \in R_M(N)$ and we are done. 
Assume that $\zeta < \theta$, which means that $\zeta \notin K$. 
If $\theta < \beta_{K,N}$, then $\theta \in R_M(K)$ by Lemma 2.7. 

Suppose that $\beta_{K,N} \le \theta$. 
If $\beta_{K,N} \le \zeta$, then since $\zeta \in N$, we have that 
$$
\theta = \min((K \cap \kappa) \setminus \zeta) \in R_N(K).
$$
Otherwise $\zeta < \beta_{K,N} \le \theta$. 
Then $\theta = \min((K \cap \kappa) \setminus \beta_{K,N})$. 
Since $\zeta \in (N \cap \beta_{K,N}) \setminus K$, 
it follows that $K < N$. 
So $\theta \in R_N(K)$. 
\end{proof}

For the remainder of this section, let $\kappa = \lambda = \omega_2$. 
Recall that $T^*$ is a thin stationary subset of $P_{\omega_1}(\omega_2)$. 
We will also assume that $2^{\omega_1} = \omega_2$, and hence 
that $H(\omega_2)$ has size $\omega_2$. 
Fix a bijection $g^* : \omega_2 \to H(\omega_2)$. 

Let $\mathcal B$ denote the structure 
$$
(H(\omega_2),\in,\unlhd,T^*,\pi^*,C^*,\Lambda,g^*).
$$
Note that if $N$ is a countable elementary substructure of $\mathcal B$ 
and $N \cap \omega_2 \in T^*$, then $N \in \mathcal X_0$. 
Also note that if $M$ and $N$ are countable elementary substructures of 
$\mathcal B$ and $M \cap \omega_2 \in N$, then 
by the elementarity of $M$, $M = g^*[M \cap \omega_2]$, and 
hence by the elementarity of $N$, $M \in N$.

Fix a stationary set $S \subseteq \omega_2 \cap \cof(\omega_1)$. 
We will define a forcing poset which adds a club subset of 
$S \cup \cof(\omega)$.\footnote{More generally, 
it is possible to add a club subset 
to a fat stationary subset of $\omega_2$ using adequate sets as 
side conditions, but the argument is more complicated than the one 
which we give here. 
See \cite{jk24}.}

\begin{definition}
A finite set $A$ of elementary substructures of $\mathcal B$ in 
$\mathcal X_0$ is \emph{$S$-adequate} if 
it is adequate, and for all $M$ and $N$ in $A$, 
$R_M(N) \subseteq S$.
\end{definition}

Recall that $(\mathcal B,S)$ is the structure $\mathcal B$ augmented with the 
additional predicate $S$. 
Note that the property of being $S$-adequate is definable 
in the structure $(\mathcal B,S)$.

\begin{definition}
Let $\p$ be the forcing poset consisting of $S$-adequate sets, ordered 
by reverse inclusion.
\end{definition}

We will show that $\p$ preserves all cardinals, and adds a club 
subset of $S \cup \cof(\omega)$.

Note that since $H(\omega_2)$ has size $\omega_2$ and 
$\p \subseteq H(\omega_2)$, $\p$ has size $\omega_2$ and thus 
preserves all cardinals greater than $\omega_2$.

\begin{proposition}
The forcing poset $\p$ is strongly proper on a stationary set. 
Therefore $\p$ satisfies the $\omega_1$-covering property and 
preserves $\omega_1$.
\end{proposition}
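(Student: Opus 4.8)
The plan is to establish strong properness on a stationary set directly, by showing that for stationarily many countable $N \prec (\mathcal B,S)$ with $N \cap \omega_2 \in T^*$, and for every $p \in N \cap \p$, the condition $p \cup \{N\}$ is a strongly $N$-generic extension of $p$. Given this, the second sentence of the proposition is immediate: strong properness on a stationary set yields the $\omega_1$-covering property by Proposition 3.9 with $\mu = \omega_1$, and the $\omega_1$-covering property forces $\omega_1$ to remain regular, hence preserves $\omega_1$.

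First I would fix the stationary set. Since $\p \subseteq H(\omega_2)$ we have $\lambda_\p = \omega_2$, and by the remark following the definition of $\mathcal B$, any countable $N \prec (\mathcal B,S)$ with $N \cap \omega_2 \in T^*$ lies in $\mathcal X_0$; using that $T^*$ is stationary in $P_{\omega_1}(\omega_2)$ together with a standard Skolem hull argument (coding the Skolem functions of $(\mathcal B,S)$ and the bijection $g^*$ into a function on $\omega_2^{<\omega}$), the set of such $N$ is stationary in $P_{\omega_1}(H(\omega_2))$. Fix such an $N$ and let $p \in N \cap \p$. Since $p$ is a finite subset of $N$, every $M \in p$ lies in $N$, and as $M = g^*[M \cap \omega_2]$ this gives $M \subseteq N$; then Lemma 1.15 yields $M \cap \kappa \subseteq \beta_{M,N}$, so $M \cap \beta_{M,N} = M \cap \kappa = M \cap \omega_2 \in N$ and $M < N$, and since $(M \cap \kappa) \setminus \beta_{M,N} = \emptyset$ both $R_M(N)$ and $R_N(M)$ are empty. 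Hence $q := p \cup \{N\}$ is $S$-adequate, so $q \in \p$ and $q \le p$.

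To see that $q$ is strongly $N$-generic I would use the combinatorial criterion of Lemma 3.4: given $r \le q$, I must exhibit $v \in N \cap \p$ such that every $w \le v$ in $N \cap \p$ is compatible with $r$. I propose the reduction
\[
v := (r \cap N) \cup \{\, M \cap N : M \in r,\ M < N \,\}.
\]
Since $N \in r$, the pair $\{M,N\}$ is adequate for every $M \in r$, so each $M \cap N$ lies in $\mathcal X_0$ by Lemma 1.23, and when $M < N$ the definition of $<$, Lemma 1.19(2), and the bijection $g^*$ give $M \cap N \in N$; thus $v$ is a finite subset of $N$ and $v \in N$. That $v$ is adequate follows by adjoining the intersections $M \cap N$ to $r \cap N$ one at a time via Proposition 1.25, and that $v$ is $S$-adequate follows from Lemma 2.3, which keeps every newly created remainder point inside a remainder-point set of a pair already present in $r$, hence inside $S$. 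So $v \in N \cap \p$. Now let $w \le v$ in $N \cap \p$; I would show that $w \cup r$ is a condition, which witnesses compatibility of $w$ and $r$. For $M \in w$ and $M' \in r$: if $M' \in N$ then $M' \in r \cap N \subseteq w$ and the pair is internal to $w$; otherwise $M' \notin N$ while $M \subseteq N$, and $\{M,M'\}$ is adequate by Lemma 1.26 (if $N \le M'$) or by Lemma 1.28 applied with $M' \cap N \in w$ (if $M' < N$). For remainder points, the elements of $R_M(M')$ land in $R_N(M') \subseteq S$ or (when $M' < N$) in $R_{M' \cap N}(M) \subseteq S$ by Lemma 2.4(1) or Lemma 2.6(1); and by Lemma 2.4(2) and Lemma 2.6(2) each $\zeta \in R_{M'}(M)$ either lies in $R_{M' \cap N}(M) \subseteq S$ or has the form $\zeta = \min((M \cap \kappa) \setminus \xi)$ for some $\xi \in R_{M'}(N) \subseteq S$, and in the latter case Lemma 4.1 (applied with seed $\xi$ in the adequate set $w \cup r$, using that $R_N(M) = \emptyset$) together with the finiteness and pairing of remainder points from Lemma 2.2 forces $\zeta \in S$.

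The step I expect to be the main obstacle is precisely this last one: verifying that amalgamating an arbitrary $w \le v$ in $N$ with $r$ never manufactures a remainder point outside $S$, i.e.\ that $R_{M'}(M) \subseteq S$ whenever $M \in w$ and $M' \in r \setminus N$. This is exactly what Lemma 4.1 and the remainder-point analysis of Section 2 are designed for: every remainder point produced by the amalgamation must be traced back to a remainder point of a pair already lying in $w$ or in $r$, each of which is $S$-adequate by hypothesis. Once strong genericity is established for every $N$ in the stationary set, $\p$ is strongly proper on a stationary set, and the covering and preservation claims follow as noted above from Proposition 3.9.
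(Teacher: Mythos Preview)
Your setup and the first half of the argument are correct and match the paper: one fixes a countable $N \prec (\mathcal B,S)$ with $N\cap\omega_2\in T^*$, checks that $p\cup\{N\}\in\p$, closes $r\le p\cup\{N\}$ under intersections with $N$, and then tries to name a $v\in N\cap\p$ so that any $w\le v$ in $N$ amalgamates with $r$. Your analysis of $R_M(M')$ (remainders of the outer model) via Lemmas 2.4(1) and 2.5(1) is also fine; note that where you cite Lemma 2.6 you mean Lemma 2.5.

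The genuine gap is in the other direction, the case of $\zeta\in R_{M'}(M)$ with $M\in w\subseteq N$ and $M'\in r\setminus N$. By Lemmas 2.4(2) and 2.5(2) such a $\zeta$ may take the form $\zeta=\min\bigl((M\cap\kappa)\setminus\xi\bigr)$ for some $\xi\in R_{M'}(N)$. You then invoke Lemma 4.1 with seed $\xi$ and models $M',N,M$: this yields only
\[
\zeta\in R_{M'}(N)\cup R_{M'}(M)\cup R_N(M).
\]
Granted $R_N(M)=\emptyset$ and $R_{M'}(N)\subseteq S$; but the disjunct $R_{M'}(M)$ is exactly the set you are trying to prove is contained in $S$, so the argument is circular. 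Nothing in Lemma 2.2 (finiteness, pairing of remainder points) breaks this circle, because the ``bad'' alternative does not reduce to a strictly smaller or previously handled remainder point.

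The paper fixes this with a reflection step your $v$ lacks. After closing under intersections to form $A$, one records the finite set $x=\bigcup\{R_K(N):K\in A\}\subseteq N$, so $x\in N$. The pair $(A,N)$ witnesses, in $(\mathcal B,S)$, the existence of $B$ and $N'$ with $B$ $S$-adequate, $A\cap N\subseteq B$, $N'\in B$, and $x=\bigcup\{R_K(N'):K\in B\}$; by elementarity one finds such $B,N'\in N$ and takes $v=B$. Now for any $C\le B$ in $N\cap\p$ and any such $\xi\in x$, one has $\xi\in R_K(N')$ for some $K\in B\subseteq C$, and Lemma 4.1 applied to $K,N',M\in C$ gives
\[
\zeta\in R_K(N')\cup R_K(M)\cup R_{N'}(M),
\]
three remainder sets each computed between models of the $S$-adequate set $C$, hence all contained in $S$. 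The point is that $N'$ is an internal surrogate for $N$ living inside the condition in $N$; without it Lemma 4.1 cannot be applied inside a single $S$-adequate set, and your amalgamation claim need not hold.
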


\begin{proof}
Let $N$ be a countable elementary substructure of $(\mathcal B,S)$ 
such that $N \cap \omega_2 \in T^*$. 
Note that $N \in \mathcal X_0$. 
Let $A_0$ be in $N \cap \p$. 
Define $A_1 := A_0 \cup \{ N \}$. 
Observe that $A_1$ is adequate, since for all $M \in A_0$, 
$M \cap \beta_{M,N} = M \cap \omega_2$, which is in $N$. 
Also $A_1$ is $S$-adequate, because for all $M \in A_0$, 
$R_M(N)$ and $R_N(M)$ are empty. 
Thus $A_1$ is in $\p$ and $A_1 \le A_0$.

We claim that $A_1$ is a strongly $N$-generic condition. 
By Lemma 3.4, it suffices to show that for all 
$A_2 \le A_1$, there exists $B \in N \cap \p$ such that for all 
$C \le B$ in $N \cap \p$, $A_2 \cup C$ is $S$-adequate. 
Let $A_2 \le A_1$.

We claim that whenever $A_3 \le A_2$, $K$ and $M$ are in $A_3$, 
and $M < N$, then 
$$
R_{M \cap N}(K) \cup R_K(M \cap N) \subseteq S.
$$
But this follows immediately from Lemma 2.3 and the fact that 
$A_3$ is $S$-adequate. 

By applying Proposition 1.25 and the last claim 
finitely many times, we get that the set 
$$
A := A_2 \cup \{ M \cap N : M \in A_2, \ M < N \}
$$
is $S$-adequate. 
Hence $A \in \p$ and $A \le A_2$.

Let 
$$
x := \bigcup \{ R_M(N) : M \in A \}.
$$
Since $x \subseteq N$ and $x$ is finite, $x \in N$.

The sets $A$ and $N$ 
witness that the following statement holds in $(\mathcal B,S)$:

\bigskip

There are $B$ and $N'$ such that $B$ is $S$-adequate, 
$A \cap N \subseteq B$, $N' \in B$, 
and $x = \bigcup \{ R_M(N') : M \in B \}$.

\bigskip

The parameters which appear in the above statement, namely 
$A \cap N$ and $x$, are members of $N$. 
By the elementarity of $N$, we can find $B$ and $N'$ in $N$ which satisfy 
the same statement.

Suppose that $C \in N \cap \p$ and $C \le B$. 
We claim that $A \cup C$ is $S$-adequate, which finishes the proof. 
Note that if $M \in A$ and $M < N$, then $M \cap N \in A$. 
By Lemma 1.19(2), $M \cap \beta_{M,N} = M \cap N \cap \omega_2$. 
Since $M < N$, it follows that $M \cap N \cap \omega_2 \in N$. 
But $M \cap N = g^*[M \cap N \cap \omega_2]$ by the elementarity of $M \cap N$, 
so $M \cap N \in N$ by the elementarity of $N$. 
Hence the assumptions of Proposition 1.29 hold. 
Therefore $A \cup C$ is adequate.

To show that $A \cup C$ is $S$-adequate, let $M \in A$ and $L \in C$. 
Let $\zeta \in R_L(M)$, and we will show that $\zeta \in S$. 
By Lemmas 2.4(1) and 2.5(1), we have that 
$$
\zeta \in R_N(M) \cup R_L(M \cap N) .
$$
Since $A$ and $C$ are $S$-adequate, 
$M$ and $N$ are in $A$, and $L$ and $M \cap N$ are in $C$, 
it follows that $\zeta \in S$.

Let $\theta \in R_M(L)$. 
By Lemmas 2.4(2) and 2.5(2), either $M < N$ and 
$\theta \in R_{M \cap N}(L)$, or there is $\xi \in R_M(N)$ such that 
$\theta = \min((L \cap \omega_2) \setminus \xi)$. 
In the first case, $\theta \in S$ since $C$ is $S$-adequate 
and $M \cap N$ and $L$ are in $C$. 
In the second case, $\xi \in x$, and hence for some $K \in B$, 
$\xi \in R_{K}(N')$. 
By Lemma 4.1, 
$$
\theta \in R_{K}(N') \cup R_{K}(L) \cup R_{N'}(L).
$$
Since $K$, $L$, and $N'$ are in $C$ and $C$ is $S$-adequate, 
it follows that $\theta \in S$.
\end{proof}

\begin{lemma}
Suppose that $M \in \mathcal X_0$ and $Q$ is in $\mathcal Y_0$. 
Let $\beta := Q \cap \omega_2$, and assume that $\cf(\beta) = \omega_1$ 
and $\beta \in M$.  
Then $M \sim M \cap Q$, $R_M(M \cap Q) = \emptyset$, 
and $R_{M \cap Q}(M) = \{ \beta \}$. 
\end{lemma}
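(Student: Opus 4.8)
The plan is to identify the comparison point $\beta_0 := \beta_{M, M\cap Q}$ precisely enough to read all three assertions directly off Definition 2.1. Write $\beta := Q\cap\omega_2$, a limit ordinal, and $\delta := \sup(M\cap\beta)$. Since $Q\cap\omega_2 = \beta$ we have $\beta\subseteq Q$, so $(M\cap Q)\cap\omega_2 = M\cap\beta$ and $(M\cap Q)\cap\omega_1 = M\cap\omega_1$; also $M\cap Q\in\mathcal X_0$ by the remark following Notation 1.11.

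The first step is to record the basic facts about $\delta$ and $\beta$. Because $\beta\in M$ is a limit ordinal, by the elementarity of $M$ the set $M\cap\beta$ has no largest element; hence every element of $M\cap\beta$ is strictly below $\delta$, and $\delta$ is a limit point of $M\cap\beta$ (and of $M\cap\omega_2$). Since $M$ is countable and $\cf(\beta) = \omega_1$, also $\delta<\beta$. Moreover $\beta\in\Lambda$: the elementarity of $Q$ makes $C^*\cap\beta$ unbounded in $\beta$, so $\beta\in C^*$ as $C^*$ is closed, and $\cf(\beta) = \omega_1>\omega$, so $\beta\in C^*\cap\cof(>\omega) = \Lambda$.

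The second step is adequacy, the $\sim$ relation, and the placement of $\beta_0$. Since $\sup(M\cap Q\cap\omega_2) = \delta<\beta = Q\cap\omega_2$, Proposition 1.33 applied to the adequate set $\{M\}$ shows that $\{M, M\cap Q\}$ is adequate; and since $(M\cap Q)\cap\omega_1 = M\cap\omega_1$, Lemma 1.21 gives $M\sim M\cap Q$. As $\delta$ is a limit point of $M\cap\beta$, it lies in $\cl(M\cap\omega_2)\cap\cl((M\cap Q)\cap\omega_2)$, so $\delta<\beta_0$ by Lemma 1.15; and $(M\cap Q)\cap\omega_2 = M\cap\beta\subseteq\beta$ with $\beta\in\Lambda$, so $\beta_0\le\beta$ by Lemma 1.16(2). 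Thus $\delta<\beta_0\le\beta$, and since every element of $M\cap\beta$ lies below $\delta<\beta_0$ we get $M\cap[\beta_0,\beta) = \emptyset$.

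The last step is to compute the remainder sets from Definition 2.1 using $M\sim M\cap Q$. For $R_M(M\cap Q)$: clause (1) refers to $(M\cap\beta)\setminus\beta_0$, which is empty since $M\cap\beta\subseteq\beta_0$; clause (2) would require some $\gamma\in(M\cap\omega_2)\setminus\beta_0$, but then $\gamma\ge\beta_0>\delta = \sup(M\cap\beta)$, so $(M\cap\beta)\setminus\gamma = \emptyset$; hence $R_M(M\cap Q) = \emptyset$. For $R_{M\cap Q}(M)$, in which $M$ plays the role of the second model: clause (1) produces $\min\big((M\cap\omega_2)\setminus\beta_0\big)$, which equals $\beta$ because $M\cap[\beta_0,\beta) = \emptyset$ while $\beta\in M$; clause (2) would require $\gamma\in((M\cap Q)\cap\omega_2)\setminus\beta_0 = (M\cap\beta)\setminus\beta_0 = \emptyset$, hence contributes nothing; so $R_{M\cap Q}(M) = \{\beta\}$. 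The only genuinely nonroutine ingredients are the two observations of the first step — that $\beta\in M$ a limit ordinal forces $M\cap\beta$ to lie below its supremum $\delta$, and that $\beta\in\Lambda$; granted the resulting sandwich $\delta<\beta_0\le\beta$, the three conclusions fall straight out of the definitions.
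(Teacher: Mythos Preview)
Your proof is correct and follows essentially the same line as the paper's: locate the comparison point $\beta_0$ strictly between $\delta=\sup(M\cap\beta)$ and $\beta$, then read off $M\sim M\cap Q$ and the two remainder sets from the definitions. The only noteworthy difference is that the paper computes $\beta_0$ explicitly as $\min(\Lambda\setminus\delta)$ (using that $\beta$ is a limit point of $\Lambda$ by elementarity of $Q$) and verifies $M\sim M\cap Q$ directly from Definition~1.17, whereas you sandwich $\beta_0$ via Lemmas~1.15 and~1.16(2) and obtain $M\sim M\cap Q$ indirectly through adequacy (Proposition~1.33) plus the $\omega_1$-criterion (Lemma~1.21); both routes yield the same information and neither is materially shorter.
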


\begin{proof}
By the comments after Notation 1.10, $M \cap Q \in \mathcal X_0$. 
By the elementarity of $Q$, $\beta$ is a limit point of $\Lambda$. 
Hence the ordinal 
$$
\beta_0 := \min(\Lambda \setminus \sup(M \cap \beta))
$$
is less than $\beta$. 
By the definition of $\beta_0$, clearly 
$$
\beta_0 \in \Lambda_M \cap \Lambda_{M \cap Q}.
$$
And since $M \cap Q \cap \omega_2 \subseteq \beta_0$, $\beta_0$ is the 
maximal element of $\Lambda_M \cap \Lambda_{M \cap Q}$. 
Therefore $\beta_0 = \beta_{M,M \cap Q}$. 
As $M \cap \beta_0 = M \cap Q \cap \beta_0$, we have that 
$M \sim M \cap Q$.

Since $M \cap Q \cap \omega_2 \subseteq \beta_0 = \beta_{M,M \cap Q}$, 
$R_M(M \cap Q) = \emptyset$. 
As $M \sim M \cap Q$ and 
$$
\beta = \min((M \cap \omega_2) \setminus \beta_0) = 
\min((M \cap \omega_2) \setminus \beta_{M,M \cap Q}),
$$
we have that $\beta \in R_{M \cap Q}(M)$. 
And the fact that $M \cap Q \cap \omega_2 \subseteq \beta_0$ implies that 
$R_{M \cap Q}(M) = \{ \beta \}$.
\end{proof}

\begin{lemma}
Let $Q$ be an elementary substructure of $(\mathcal B,S)$ such that 
$Q$ has size $\omega_1$ and $Q \cap \omega_2 \in S$. 
Let $\beta := Q \cap \omega_2$. 
Let $A_0 \in Q \cap \p$. 
Suppose that $M \in \mathcal X_0$, and 
$A_0$ and $\beta$ are in $M$. 
Then $\beta \in R_{M \cap Q}(M)$, and 
$$
A_0 \cup \{ M \} \cup \{ M \cap Q \}
$$
is a strongly $Q$-generic condition.
\end{lemma}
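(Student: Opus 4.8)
The plan is to treat the three claims separately. Since $Q \prec (\mathcal B,S)$ has size $\omega_1$ and $Q \cap \omega_2 = \beta$ lies in $S \subseteq \omega_2 \cap \cof(\omega_1)$, we have $Q \in \mathcal Y_0$ and $\cf(\beta) = \omega_1$. Hence Lemma 4.5 applies to $M$ and $Q$, giving $M \sim M \cap Q$, $R_M(M\cap Q) = \emptyset$, and $R_{M\cap Q}(M) = \{\beta\}$; since $\beta \in S$, the pair $\{M, M\cap Q\}$ is $S$-adequate, and in particular $\beta \in R_{M\cap Q}(M)$. To see that $A_0 \cup \{M, M\cap Q\}$ is a condition extending $A_0$: the pairs inside $A_0$ are $S$-adequate since $A_0 \in \p$; and for $K \in A_0$, the assumptions $A_0 \in Q$ and $A_0 \in M$ give (as $A_0$ is finite) $K \in Q \cap M$, hence $K \in M \cap Q$, and the same reasoning as in the proof of Proposition 4.4 (using $K \subseteq M$ and $K \subseteq M\cap Q$) shows $K < M$ and $K < M\cap Q$ with all associated remainder sets empty. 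Together with the $S$-adequacy of $\{M, M\cap Q\}$, this shows $A_0 \cup \{M, M\cap Q\} \in \p$ and it contains $A_0$.

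For strong $Q$-genericity I would use the combinatorial characterization of Lemma 3.4: given $A_2 \le A_0 \cup \{M, M\cap Q\}$, it suffices to find $B \in Q \cap \p$ such that $A_2 \cup C$ is a condition for every $C \le B$ in $Q \cap \p$. Following the pattern of Proposition 4.4, I would first set
$$
A := A_2 \cup \{ M' \cap Q : M' \in A_2 \}.
$$
Since each $M'$ is countable and $\cf(\beta) = \omega_1$, we have $\sup(M' \cap Q \cap \omega_2) < \beta = Q \cap \omega_2$, so finitely many applications of Proposition 1.33 show that $A$ is adequate. For $S$-adequacy of $A$, the new pairs are of the form $\{L, M'\cap Q\}$; Lemma 2.8 bounds $R_L(M'\cap Q)$ and $R_{M'\cap Q}(L)$ in terms of remainder sets computed inside $A_2$ — which lie in $S$ — together with at most one extra ordinal of the form $\min((L\cap\omega_2)\setminus\beta)$. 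The crucial point is that, since $\beta \in R_{M\cap Q}(M)$ and $\{M\cap Q, M, L\} \subseteq A_2$ is adequate, Lemma 4.1 applied with the remainder point $\beta$ gives
$$
\min((L\cap\omega_2)\setminus\beta) \in R_{M\cap Q}(M) \cup R_{M\cap Q}(L) \cup R_M(L),
$$
and all three sets lie in $S$ because $A_2$ is $S$-adequate; the same argument handles pairs of the form $\{M''\cap Q, M'\cap Q\}$ after iterating Lemma 2.8. Hence $A$ is $S$-adequate, $A \le A_2$, and moreover $M' \cap Q \in A \cap Q$ for every $M' \in A$: indeed $M'\cap Q \in A$ by construction, and $M' \cap Q \in Q$ because $M'\cap Q \in \mathcal X_0$, so $(M'\cap Q)\cap\omega_2 \in T^*$ is bounded below $\beta \in C^*$, whence $\pi^*((M'\cap Q)\cap\omega_2) < \beta$, so $(M'\cap Q)\cap\omega_2 \in Q$ and therefore $M'\cap Q = g^*[(M'\cap Q)\cap\omega_2] \in Q$.

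Now $A \cap Q$ is a finite subset of $Q$, hence an element of $Q$, and being a subset of the $S$-adequate set $A$ it is a condition; so I would take $B := A \cap Q$. Given $C \le B$ in $Q \cap \p$, we have $A \cap Q \subseteq C \subseteq Q$, so Proposition 1.35 — whose remaining hypothesis, that $M'\cap Q \in A\cap Q$ for all $M' \in A$, was just checked — gives that $A \cup C$ is adequate. To check $A\cup C$ is $S$-adequate, let $L \in C \subseteq Q$ and $M' \in A$; then $M'\cap Q \in A \cap Q \subseteq C$, so $\{L, M'\cap Q\}$ is adequate, $\sup(M'\cap Q\cap\omega_2) < \beta$, and Lemma 2.9 gives $R_{M'}(L) \subseteq R_{M'\cap Q}(L) \subseteq S$ and shows that every $\zeta \in R_L(M')$ either lies in $R_L(M'\cap Q) \subseteq S$ or equals $\min((M'\cap\omega_2)\setminus\beta)$, which lies in $S$ by the same Lemma 4.1 argument as above (now with $\{M\cap Q, M, M'\} \subseteq A$ and $A$ already known to be $S$-adequate). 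Thus $A \cup C \in \p$, so $A_2 \cup C \subseteq A \cup C$ is a condition, which gives the compatibility of $A_2$ and $C$ required by Lemma 3.4.

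The part I expect to be most delicate is the $S$-adequacy bookkeeping in the two enlargement steps: the remainder-point analysis inevitably produces the stray ordinal $\min((L\cap\omega_2)\setminus\beta)$, and the whole reason for including \emph{both} $M$ and $M\cap Q$ in the condition — so that $\beta$ is realized as the unique remainder point of $M$ over $M\cap Q$ — is precisely that this stray ordinal is then absorbed by Lemma 4.1 into a union of remainder sets that are forced into $S$. Everything else reduces to careful application of Lemmas 2.8 and 2.9, Propositions 1.33 and 1.35, and the standard fact that $M'\cap Q$ again lies in $\mathcal X_0 \cap Q$.
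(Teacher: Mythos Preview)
Your proof is correct and follows essentially the same route as the paper's: build the condition $A_0 \cup \{M, M\cap Q\}$ using Lemma~4.5, then for any $A_2$ below it close off under intersections with $Q$ (the paper calls this set $A^*$), take $B = A^* \cap Q$, and amalgamate with any $C \le B$ in $Q$ via Proposition~1.35 and Lemma~2.9, using Lemma~4.1 at the key step to absorb the stray ordinal $\min((N\cap\omega_2)\setminus\beta)$ as a remainder point witnessed by $\{M\cap Q, M\}$. The only cosmetic difference is that the paper cites Lemma~1.30 for $K\cap Q\cap\omega_2 \in Q$ whereas you unwind the $\pi^*$ argument directly, and the paper organizes the $S$-adequacy of $A^*$ as a single claim quantified over all $A_3 \le A_2$ to make the iteration transparent.
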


\begin{proof}
Define $A_1 := A_0 \cup \{ M \}$. 
Then $A_1$ is $S$-adequate and $A_1 \le A_0$. 
Namely, for all $K \in A_0$, $K \in M$ implies that 
$K \cap \beta_{K,M} = K \cap \omega_2 \in M$. 
So $K < M$ for all $K \in A_0$. 
Also $R_K(M)$ and $R_M(K)$ are both empty.

Define $A := A_1 \cup \{ M \cap Q \}$. 
By Proposition 1.33, $A$ is adequate. 
We claim that $A$ is $S$-adequate. 
So let $K$ be in $A_1$. 
If $K \in A_0$, then $K \in M \cap Q$. 
So $R_K(M \cap Q)$ and $R_{M \cap Q}(K)$ are empty. 
Suppose that $K = M$. 
Then by Lemma 4.5, 
$M \sim M \cap Q$, 
$R_{M}(M \cap Q) = \emptyset$, and $R_{M \cap Q}(M) = \{ \beta \}$. 
Since $\beta \in S$, we are done.

Thus we have established that $A$ is $S$-adequate. 
We will show that $A$ is strongly $Q$-generic. 
So let $A_2 \le A$ be given. 

We claim that for all $A_3 \le A_2$, for all 
$K \in A_3$, $A_3 \cup \{ K \cap Q \}$ is $S$-adequate. 
By Proposition 1.33, $A_3 \cup \{ K \cap Q \}$ is adequate. 
To show that it is $S$-adequate, let $N \in A_3$, and we 
will show that $R_N(K \cap Q)$ and $R_{K \cap Q}(N)$ are subsets of $S$. 

By Lemma 2.8(1), $R_N(K \cap Q) \subseteq R_N(K)$. 
Since $K$ and $N$ are in $A_3$ and $A_3$ is $S$-adequate, 
$R_N(K) \subseteq S$. 
Thus $R_N(K \cap Q) \subseteq S$. 
Now suppose that $\zeta \in R_{K \cap Q}(N)$. 
Then by Lemma 2.8(2), either $\zeta \in R_K(N)$, or 
$\zeta = \min((N \cap \omega_2) \setminus \beta)$. 
In the first case, since $K$ and $N$ are in $A_3$, we have that 
$\zeta \in R_K(N) \subseteq S$. 
Assume the second case. 
Since $\beta \in R_{M \cap Q}(M)$ by Lemma 4.5, and 
$M \cap Q$, $M$, and $N$ are in $A_3$, 
Lemma 4.1 implies that $\zeta$ is in $R_{M \cap Q}(M)$, 
$R_{M \cap Q}(N)$, or $R_M(N)$. 
Since $A_3$ is $S$-adequate, $\zeta \in S$, which proves the claim.

By applying the claim finitely many times, we get that the set 
$$
A^* := A_2 \cup \{ K \cap Q : K \in A_2 \}
$$ 
is $S$-adequate. 

Next we claim that for all $K \in A_2$, $K \cap Q$ is in $Q$. 
By Lemma 1.30, $K \cap Q \cap \omega_2$ is in $Q$. 
Since $K$ and $Q$ are elementary in $\mathcal B$, 
$K \cap Q = g^*[K \cap Q \cap \omega_2]$. 
As $Q$ is elementary in $\mathcal B$, 
$K \cap Q = g^*[K \cap Q \cap \omega_2]$ is in $Q$. 
It follows that for all $K \in A^*$, $K \cap Q \in Q$.

Let $B := A^* \cap Q$. 
We will show that for any $C \le B$ in $\p \cap Q$, 
$A^* \cup C$ is $S$-adequate, which finishes the proof. 
So let $C \le B$ be in $\p \cap Q$. 
By the previous claim, for all $K \in A^*$, $K \cap Q \in A^* \cap Q$. 
And $A^* \cap Q \subseteq C \subseteq Q$. 
By Proposition 1.35, $A^* \cup C$ is adequate.

To prove that $A^* \cup C$ is $S$-adequate, 
let $L \in C$ and $N \in A^*$, and we will show that 
$$
R_L(N) \cup R_N(L) \subseteq S.
$$
By Lemma 2.9(2), $R_N(L) \subseteq R_{N \cap Q}(L)$. 
Since $L$ and $N \cap Q$ are in $C$, $R_{N \cap Q}(L) \subseteq S$. 
Hence $R_N(L) \subseteq S$.

Let $\zeta \in R_L(N)$. 
Then by Lemma 2.9(1), either $\zeta \in R_L(N \cap Q)$, or 
$\zeta = \min((N \cap \omega_2) \setminus \beta)$. 
In the first case, since $L$ and $N \cap Q$ are in $C$ and $C$ is $S$-adequate, 
it follows that $\zeta \in S$. 
Assume the second case. 
Then since $\beta \in R_{M \cap Q}(M)$, and $M$, $M \cap Q$, and $N$ 
are in $A^*$, 
by Lemma 4.1 we have that 
$\zeta$ is in 
$R_{M \cap Q}(M)$, $R_{M \cap Q}(N)$, or $R_M(N)$. 
Since $A^*$ is $S$-adequate, it follows that $\zeta \in S$.
\end{proof}

\begin{corollary}
The forcing poset $\p$ is $\omega_2$-strongly proper on a stationary 
set. 
Therefore $\p$ preserves $\omega_2$ and has the 
$\omega_2$-covering property.
\end{corollary}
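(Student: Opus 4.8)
The plan is to reduce the statement to a single stationarity claim, discharge that claim by pushing the stationary set $S$ forward along elementary submodels, and then feed the resulting models into Lemma 4.6; cardinal preservation will then be a formality via Proposition 3.9. Since every condition of $\p$ is a finite set of countable subsets of $H(\omega_2)$, we have $\p \subseteq H(\omega_2)$, so $\lambda_{\p} = \omega_2$, and by Definition 3.6 it suffices to exhibit stationarily many $Q \in P_{\omega_2}(H(\omega_2))$ such that for every $p \in Q \cap \p$ there is $q \le p$ which is strongly $Q$-generic.

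Let $\mathcal S$ be the set of $Q \in P_{\omega_2}(H(\omega_2))$ such that $Q$ is an elementary substructure of $(\mathcal B, S)$, $|Q| = \omega_1$, and $Q \cap \omega_2 \in S$. I would first show $\mathcal S$ is stationary. Given $F \colon H(\omega_2)^{<\omega} \to H(\omega_2)$, build a continuous $\in$-increasing chain $\langle Q_i : i < \omega_1 \rangle$ of elementary substructures of $(\mathcal B, S)$, each of size $\omega_1$ and closed under $F$, with $Q_i \in Q_{i+1}$ and $\sup(Q_i \cap \omega_2) \subseteq Q_{i+1}$; then $Q := \bigcup_{i<\omega_1} Q_i$ is elementary in $(\mathcal B, S)$, has size $\omega_1$, is closed under $F$, and $Q \cap \omega_2$ is an ordinal of cofinality $\omega_1$. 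The usual pushforward/reflection argument shows that the set of ordinals arising as $Q \cap \omega_2$ from such constructions contains a club in $\omega_2$, so intersecting with the stationary set $S \subseteq \omega_2 \cap \cof(\omega_1)$ lets us arrange in addition that $Q \cap \omega_2 \in S$; then $Q \in \mathcal S$ is closed under $F$. This is the one place the stationarity of $S$ is used, and it is the only mildly technical point.

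Next, fix $Q \in \mathcal S$, set $\beta := Q \cap \omega_2 \in S$, and let $p \in Q \cap \p$; write $p = A_0$. Since the countable elementary substructures $N$ of $\mathcal B$ with $N \cap \omega_2 \in T^*$ are stationary in $P_{\omega_1}(H(\omega_2))$ (as $T^*$ is stationary) and all of them lie in $\mathcal X_0$, we may choose $M \in \mathcal X_0$ with $A_0 \in M$ and $\beta \in M$. Now $Q$, $A_0$, and $M$ satisfy the hypotheses of Lemma 4.6, so $q := A_0 \cup \{M\} \cup \{M \cap Q\}$ is an $S$-adequate set --- hence a condition of $\p$ --- and it is strongly $Q$-generic; moreover $A_0 \subseteq q$, so $q \le p$. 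Since $p \in Q \cap \p$ was arbitrary, this is exactly the required property for each $Q \in \mathcal S$, and therefore $\p$ is $\omega_2$-strongly proper on a stationary set.

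Finally, Proposition 3.9 applied with $\mu = \omega_2$ gives that $\p$ satisfies the $\omega_2$-covering property, and by the remark following Definition 3.8 this implies that $\p$ forces $\omega_2$ to be regular; in particular $\omega_2$ is not collapsed, so $\p$ preserves $\omega_2$. The main obstacle in the whole argument is the stationarity of $\mathcal S$ --- locating an elementary $Q$ of size $\omega_1$ with $Q \cap \omega_2 \in S$ and closed under a prescribed function --- while everything else is a direct appeal to Lemma 4.6, to the stationarity of the countable models in $\mathcal X_0$, and to Proposition 3.9.
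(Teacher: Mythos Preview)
Your proof is correct and follows essentially the same approach as the paper, which simply says ``Immediate from Lemma 4.6.'' You have spelled out the two details the paper leaves implicit: the standard argument that the class $\mathcal S$ of $Q \prec (\mathcal B,S)$ of size $\omega_1$ with $Q \cap \omega_2 \in S$ is stationary in $P_{\omega_2}(H(\omega_2))$, and the appeal to Proposition 3.9 for the covering property.
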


\begin{proof}
Immediate from Lemma 4.6.
\end{proof}

\begin{proposition}
The forcing poset $\p$ adds a club subset of $S \cup \cof(\omega)$.
\end{proposition}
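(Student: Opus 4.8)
The plan is to read the club directly off the generic filter. Let $\dot G$ be the canonical name for the generic filter, let $\dot X$ name the set $\{ N : N \in A \text{ for some } A \in \dot G \}$, and let $\dot C$ name $\cl(\dot D) \cap \omega_2$, where $\dot D := \{ \sup(N \cap \omega_2) : N \in \dot X \}$. I will check that $\dot C$ is forced to be closed (immediate from the definition of $\cl$), unbounded in $\omega_2$, and contained in $S \cup \cof(\omega)$. Since $\p$ preserves $\omega_1$ and $\omega_2$ (Proposition 4.3 and Corollary 4.7), this shows that $\dot C$ names a club subset of $\omega_2$ lying inside $S \cup \cof(\omega)$, which is the assertion.

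For unboundedness, given a condition $p$ and $\alpha < \omega_2$, I would use the stationarity of $T^*$ to choose a countable $M \prec (\mathcal B, S)$ with $p, \alpha \in M$ and $M \cap \omega_2 \in T^*$, so $M \in \mathcal X_0$. Exactly as in the first paragraph of the proof of Proposition 4.3, $A := p \cup \{ M \}$ is then $S$-adequate: every $K \in p$ lies in $M$, hence $K = g^*[K \cap \omega_2] \subseteq M$, so $K < M$ and $R_K(M) = R_M(K) = \emptyset$. Then $A \le p$, and $A$ forces $M \in \dot X$, hence $\sup(M \cap \omega_2) \in \dot D$; as $M \cap \omega_2$ is closed under successor it has no maximum, so $A$ forces $\alpha < \sup(M \cap \omega_2)$. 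Thus densely many conditions force a member of $\dot D$ above $\alpha$.

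For $\dot C \subseteq S \cup \cof(\omega)$: if $N \in \mathcal X_0$ then $\sup(N \cap \omega_2)$ is a limit ordinal of cofinality $\omega$, so every element of $\dot D$, and every limit point of $\dot D$ of $V[G]$-cofinality $\omega$, lies in $\cof(\omega)$; and since $\p$ has the $\omega_1$-covering property, an ordinal below $\omega_2$ has cofinality $\omega$, resp.\ $\omega_1$, in $V[G]$ iff it does in $V$. So it is enough to show that if $\gamma \notin S$ and $\cf^V(\gamma) = \omega_1$, then $\Vdash \gamma \notin \lim(\dot D)$. Fixing such $\gamma$ and an arbitrary condition $p$, I would pick a countable $M \prec (\mathcal B, S)$ with $p, \gamma \in M$ and $M \cap \omega_2 \in T^*$, and set $q := p \cup \{ M \}$, which is a condition below $p$ by the computation just cited. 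As $M$ is countable and $\cf(\gamma) > \omega$, the ordinal $\delta := \sup(M \cap \gamma)$ is less than $\gamma$. The crux is the claim that $q \Vdash \dot D \cap (\delta, \gamma) = \emptyset$: this forces $\gamma \notin \lim(\dot D)$, and since $p$ was arbitrary we conclude $\Vdash \gamma \notin \lim(\dot D)$.

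The verification of this claim is where the remainder-point analysis is used, and I expect it to be the main point to get right. Suppose $r \le q$ is a condition and $N \in r$ with $\delta < \sup(N \cap \omega_2) < \gamma$; then $\{ M, N \} \subseteq r$ is $S$-adequate, and I claim $\gamma \in R_N(M)$, contradicting $\gamma \notin S$. Indeed, $\sup(N \cap \omega_2) < \gamma$ gives $N \cap \omega_2 \subseteq \gamma$, so $M \cap N \cap \omega_2 \subseteq M \cap \gamma$ and hence $\sup(M \cap N \cap \omega_2) \le \delta < \sup(N \cap \omega_2)$. By Lemma 1.19(3), $\beta_{M,N} = \min(\Lambda \setminus \sup(M \cap N \cap \omega_2))$, and since $N$ is elementary and $\Lambda$ is unbounded in $\omega_2$, the set $\Lambda \cap N$ is cofinal in $\sup(N \cap \omega_2)$; it follows that $\beta_{M,N} < \sup(N \cap \omega_2)$. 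Choose $\gamma' \in N \cap \omega_2$ with $\max(\delta, \beta_{M,N}) < \gamma' < \sup(N \cap \omega_2)$. Since $\gamma' > \delta = \sup(M \cap \gamma)$ and $\gamma \in M$, we have $\min((M \cap \omega_2) \setminus \gamma') = \gamma$, while $\gamma' \in (N \cap \omega_2) \setminus \beta_{M,N}$, so $\gamma \in R_N(M)$ by clause (2) of Definition 2.1. This establishes the claim. Beyond this computation the remaining work is routine bookkeeping, chiefly keeping careful track — via the $\omega_1$-covering property — of the fact that cofinalities $\omega$ and $\omega_1$ below $\omega_2$ are absolute between $V$ and $V[G]$.
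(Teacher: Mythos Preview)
Your argument is correct, but it takes a different route from the paper's. The paper defines its club not from suprema of models but from remainder points: $\dot C = \bigcup\{ R_M(N) : M,N \in A \text{ for some } A \in \dot G\}$. Unboundedness is then read off Lemma~4.6 (the strong genericity lemma for $\omega_1$-sized $Q$, which in particular puts $\beta = Q \cap \omega_2$ into $R_{M\cap Q}(M)$ arbitrarily high), and the key inclusion $\lim(\dot C) \cap \cof(\omega_1) \subseteq S$ is proved by the general transfer lemma, Lemma~4.1: given a limit point $\beta$ of uncountable cofinality, one picks $N \ni \beta$, extends to find $\gamma \in R_K(M)$ in the window $(\sup(N\cap\beta),\beta)$, and Lemma~4.1 applied to $K,M,N$ forces $\beta$ itself to be a remainder point, hence in $S$. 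Your version avoids both of these ingredients: your unboundedness is a one-line density argument, and your containment is a direct, bare-hands computation of a single remainder ordinal $\gamma \in R_N(M)$ via Lemma~1.19(3) and the definition. What your approach buys is self-containment --- you never invoke Lemma~4.1 or the machinery behind Lemma~4.6 --- at the cost of needing to track absoluteness of cofinalities through the $\omega_1$-covering property, which the paper's formulation sidesteps by working directly with remainder points (which lie in $S$ by definition of $S$-adequacy) rather than with suprema.
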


\begin{proof}
Let $\dot C$ be a $\p$-name for the set 
$$
\bigcup \{ R_M(N) : \exists A \in \dot G, \ M, N \in A \}.
$$
It follows easily from Lemma 4.6 that $\p$ forces that 
$\dot C$ is cofinal in $\omega_2$.

We claim that $\p$ forces that 
$$
\lim(\dot C) \subseteq S \cup \cof(\omega),
$$
which completes the proof. 
Let $\beta < \omega_2$, and assume that $A$ is a condition which forces 
that $\beta$ is a limit point of $\dot C$. 
We will prove that $\beta$ is in $S \cup \cof(\omega)$. 
If $\beta$ has cofinality $\omega$, then we are done. 
So assume that $\cf(\beta) = \omega_1$. 
We will show that $\beta \in S$.

Fix $N \in \mathcal X_0$ such that $A$ and $\beta$ are in $N$. 
Then $A \cup \{ N \}$ is an $S$-adequate set, and thus is in $\p$. 
Since $A \cup \{ N \} \le A$, $A \cup \{ N \}$ forces that 
$\beta$ is a limit point of $\dot C$ with uncountable cofinality. 
Hence we can fix $B \le A \cup \{ N \}$, $K$ and $M$ in $B$, and 
$\gamma \in R_K(M)$ such that 
$$
\sup(N \cap \beta) < \gamma < \beta.
$$
Since $\beta \in N$, we have that 
$\beta = \min((N \cap \omega_2) \setminus \gamma)$. 
By Lemma 4.1, 
$$
\beta \in R_K(M) \cup R_K(N) \cup R_M(N).
$$
As $B$ is $S$-adequate, it follows that $\beta \in S$.
\end{proof}

\bigskip

We remark that it is not necessary to assume that 
$\kappa$ is $\omega_2$. 
If $\kappa > \omega_2$, 
we can fix any stationary set $S \subseteq \kappa \cap \cof(> \! \omega)$, 
and then the forcing poset 
$\p$ defined above will add a club subset of $S \cup \cof(\omega)$, 
and collapse $\kappa$ to become $\omega_2$ by Proposition 3.12.

\bigskip

\addcontentsline{toc}{section}{5. $\vec S$-obedient 
side conditions}

\textbf{\S 5. $\vec S$-obedient side conditions}

\stepcounter{section}

\bigskip

We now generalize the idea of an $S$-adequate set to the case when we 
have a sequence $\vec S$ of sets, instead of a single set $S$. 
For the remainder of this section fix a sequence 
$\vec S = \langle S_\eta : \eta < \lambda \rangle$, where each $S_\eta$ 
is a subset of $\kappa \cap \cof(>\!\omega)$.

\begin{definition}
A set $P \in \mathcal Y_0$ is \emph{$\vec S$-strong} if for all 
$\tau \in P \cap \kappa^+$, $P \cap \kappa \in S_\tau$.
\end{definition}

Note that if $P$ is $\vec S$-strong, then $\cf(P \cap \kappa) > \omega$, 
since $P \cap \kappa \in S_0 \subseteq \kappa \cap \cof(>\!\omega)$.

For the next two definitions, we fix a class $\mathcal Y \subseteq \mathcal Y_0$. 
The definitions of $\vec S$-adequate and $\vec S$-obedient are made relative 
to the class $\mathcal Y$.

\begin{definition}
Let $A$ be an adequate set. 
We say that $A$ is \emph{$\vec S$-adequate} 
if for all $M$ and $N$ in 
$A$ and $\zeta \in R_M(N)$:
\begin{enumerate}
\item for all $\tau \in M \cap N \cap \kappa^+$, $\zeta \in S_\tau$;
\item if $P \in M \cap \mathcal Y$ is $\vec S$-strong and 
$\sup(N \cap \zeta) < P \cap \kappa < \zeta$, then for all 
$\tau \in N \cap P \cap \kappa^+$, $\zeta \in S_\tau$.
\end{enumerate}
\end{definition}

\begin{definition}
A pair $(A,B)$ is an \emph{$\vec S$-obedient side condition} 
if:
\begin{enumerate}
\item $A$ is an $\vec S$-adequate set;
\item $B$ is a finite set of $\vec S$-strong models in $\mathcal Y_0$;
\item for all $M \in A$ and $P \in B$, if 
$\zeta = \min((M \cap \kappa) \setminus (P \cap \kappa))$, 
then for all $\tau \in P \cap M \cap \kappa^+$, $\zeta \in S_\tau$.
\end{enumerate}
\end{definition}

The next two lemmas show that we can add certain models to an 
$\vec S$-obedient side condition and preserve 
$\vec S$-obedience.

\begin{lemma}
Let $(A,B)$ be an $\vec S$-obedient side condition.
\begin{enumerate}
\item If $N \in \mathcal X_0$ and $(A,B) \in N$, then $(\{N\},\emptyset)$ and 
$(A \cup \{ N \},B)$ are $\vec S$-obedient side conditions.
\item If $P \in \mathcal Y_0$ is $\vec S$-strong 
and $(A,B) \in P$, then $(\emptyset,\{P\})$ and  
$(A,B \cup \{ P \})$ are $\vec S$-obedient side conditions.
\end{enumerate}
\end{lemma}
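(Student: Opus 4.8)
The plan is to verify the three defining clauses of an $\vec S$-obedient side condition directly for each of the four enlarged pairs, after isolating two routine structural facts. First, since $\unlhd$ belongs to the structures defining $\mathcal X_0$ and $\mathcal Y_0$ and $A$, $B$ are finite, $(A,B) \in N$ yields $A, B \in N$ and hence $M \in N$ for every $M \in A$ and $P \in B$ (each such model is $\unlhd$-definable from $A$, resp.\ $B$); the same holds when $N$ is replaced by a model in $\mathcal Y_0$. Second, if a countable $M \in \mathcal X_0$ is an element of a model $W$ lying in $\mathcal X_0$ or in $\mathcal Y_0$, then $M \subseteq W$, because $W$ sees $M$ as countable and $\omega \subseteq W$, so $W$ contains a surjection of $\omega$ onto $M$.

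From these I would extract the consequences that trivialize the verification. If $M \in \mathcal X_0$ and $N \in \mathcal X_0$ with $M \in N$, then $M \cap \kappa \subseteq N \cap \kappa$, so $\cl(M \cap \kappa) \subseteq \beta_{M,N}$ by Lemma 1.15; hence $M \cap \beta_{M,N} = M \cap \kappa \in N$, so $M < N$ and $\{M, N\}$ is adequate. Inspecting Definition 2.1 with $M \cap \kappa \subseteq \beta_{M,N}$ then gives $R_M(N) = R_N(M) = \emptyset$, and $R_N(N) = \emptyset$ trivially. Thus enlarging an $\vec S$-adequate set $A$ by such an $N$ creates no new pair of models carrying a nonempty remainder set, so the defining conditions of $\vec S$-adequacy introduce no new instances: $A \cup \{N\}$ is again $\vec S$-adequate (and adequate, since $M < N$ for each $M \in A$). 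Likewise, if $M \in \mathcal X_0$ and $P \in \mathcal Y_0$ with $M \in P$, then $M \cap \kappa \subseteq P$, hence $M \cap \kappa \subseteq P \cap \kappa$.

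For part (1): the pair $(\{N\},\emptyset)$ is $\vec S$-obedient because $R_N(N) = \emptyset$ makes the $\vec S$-adequacy clause vacuous and the clause comparing $A$-models with $B$-models is vacuous. For $(A \cup \{N\},B)$: $\vec S$-adequacy of $A \cup \{N\}$ was just noted and $B$ is unchanged, so the only new instances of the third clause of $\vec S$-obedience have $M = N$ and $P \in B$; there, $P \in N$ forces $P \cap \kappa \in N \cap \kappa$, so $\zeta := \min((N \cap \kappa) \setminus (P \cap \kappa)) = P \cap \kappa$, and for each $\tau \in P \cap N \cap \kappa^+ \subseteq P \cap \kappa^+$ the $\vec S$-strongness of $P$ gives $\zeta = P \cap \kappa \in S_\tau$. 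For part (2): $(\emptyset,\{P\})$ is immediate since $P$ is $\vec S$-strong. For $(A, B \cup \{P\})$: $\vec S$-adequacy of $A$ and the $\vec S$-strongness of $B \cup \{P\}$ are inherited, and the only new instances of the third clause have $M \in A$ and the $\mathcal Y_0$-model equal to $P$; but $M \in A \subseteq P$ gives $M \cap \kappa \subseteq P \cap \kappa$, so $(M \cap \kappa) \setminus (P \cap \kappa) = \emptyset$ and there is no relevant $\zeta$, so the clause holds vacuously.

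The one step meriting a moment's care is the passage from ``$M$ is an element of $W$'' to ``$M$ is a subset of $W$'' for countable $M$, and the ensuing collapse of the new remainder sets and of the new ``$\min((M \cap \kappa) \setminus (P \cap \kappa))$'' terms; this is a standard absorption fact for countable elementary substructures, and everything else is bookkeeping against the definitions of $\vec S$-strong, $\vec S$-adequate, and $\vec S$-obedient.
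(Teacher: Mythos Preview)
Your proof is correct and follows essentially the same approach as the paper's, just with more detail spelled out. The paper dispatches (2) as ``trivial'' and gives for (1) exactly the facts you derive: $M \cap \beta_{M,N} = M \cap \kappa \in N$, the emptiness of $R_M(N)$ and $R_N(M)$, and for $P \in B$ the identity $\min((N \cap \kappa)\setminus(P\cap\kappa)) = P \cap \kappa$ together with $\vec S$-strongness of $P$.
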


\begin{proof}
(2) is trivial. 
(1) The fact that $(\{N\},\emptyset)$ is an $\vec S$-obedient side condition 
is easy. 
The set $A \cup \{N\}$ is $\vec S$-adequate because for all $M \in A$, 
$M \cap \beta_{M,N} = M \cap \kappa$ is in $N$, and 
$R_M(N)$ and $R_N(M)$ are empty. 
If $P \in B$, then $\min((N \cap \kappa) \setminus (P \cap \kappa)) = P \cap \kappa$. 
And if $\tau \in P \cap N \cap \kappa^+$, then 
$P \cap \kappa \in S_\tau$ since $P$ is $\vec S$-strong.
\end{proof}

\begin{lemma}
Let $(A,B)$ be an $\vec S$-obedient side condition.
\begin{enumerate}
\item Let $M$ and $N$ be in $A$, and suppose that $M < N$. 
Then $(A \cup \{ M \cap N \},B)$ is an $\vec S$-obedient side condition.
\item Let $M \in A$ and $P \in B$. 
Then $(A \cup \{ M \cap P \},B)$ is an $\vec S$-obedient side condition.
\item Suppose that $P$ and $Q$ are in $B$ and 
$P \cap \kappa < Q \cap \kappa$. 
Then $(A,B \cup \{ P \cap Q \})$ is an $\vec S$-obedient side condition.
\end{enumerate}
\end{lemma}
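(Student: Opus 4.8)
The plan is to verify, for each of the three enlargements, the three clauses of Definition 5.4. In every case the adequacy of the enlarged set is already available — from Proposition 1.25 in (1), from Proposition 1.33 in (2) (applicable because $P \in B$ is $\vec S$-strong, hence $\cf(P \cap \kappa) > \omega$, so $\sup(M \cap P \cap \kappa) < P \cap \kappa$), and from the remarks after Notation 1.11 in (3) — so the work is entirely in transferring clauses (a), (b) of Definition 5.3 and clause (3) of Definition 5.4 to the pairs and model interactions involving the new model. The main tools are Lemma 2.3 for (1) and Lemma 2.8 for (2): these say that every new remainder point is an old remainder point belonging to a pair already present in the side condition.

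For (1): since $M < N$, $M \cap N \cap \kappa = M \cap \beta_{M,N}$ is an initial segment of $M \cap \kappa$ by Lemma 1.19(2). Hence for $\zeta \in R_K(M \cap N)$ we have $\zeta < \beta_{M,N}$, so $M \cap \zeta = (M \cap N) \cap \zeta$; together with $R_K(M \cap N) \subseteq R_K(M)$ and the inclusions $K \cap M \cap N \cap \kappa^+ \subseteq K \cap M \cap \kappa^+$ and $M \cap N \cap P' \cap \kappa^+ \subseteq M \cap P' \cap \kappa^+$, this makes clauses (a), (b) for the pair $(K, M \cap N)$ follow from those for $(K, M)$. For $\zeta \in R_{M \cap N}(K)$, Lemma 2.3(2) places $\zeta$ in $R_M(K)$ or $R_N(K)$, and as the second model is $K$ in each of the pairs $(M \cap N, K)$, $(M, K)$, $(N, K)$, clauses (a), (b) transfer directly. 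Clause (3) of Definition 5.4: if $\min((M \cap N \cap \kappa) \setminus (P \cap \kappa))$ exists for some $P \in B$, then it lies below $\beta_{M,N}$ and hence equals $\min((M \cap \kappa) \setminus (P \cap \kappa))$, so this is an instance of clause (3) for $(M, P)$. Part (3) is of the same flavour but quicker: $(P \cap Q) \cap \kappa = P \cap \kappa$ because $P \cap \kappa < Q \cap \kappa$, so $P \cap Q$ is $\vec S$-strong (as $(P \cap Q) \cap \kappa^+ \subseteq P \cap \kappa^+$), and $\min((M \cap \kappa) \setminus ((P \cap Q) \cap \kappa)) = \min((M \cap \kappa) \setminus (P \cap \kappa))$, so clause (3) for $(M, P \cap Q)$ reduces to clause (3) for $(M, P)$.

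Part (2) contains the real difficulty. The direction $\zeta \in R_K(M \cap P)$ runs as in (1): Lemma 2.8(1) gives $\zeta \in R_K(M)$, and $\zeta \in M \cap P \cap \kappa$ forces $\zeta < P \cap \kappa$ and hence $M \cap \zeta = (M \cap P) \cap \zeta$, so clauses (a), (b) for $(K, M \cap P)$ come from those for $(K, M)$. For $\zeta \in R_{M \cap P}(K)$ I would split via Lemma 2.8(2): in the case $\zeta \in R_M(K)$, clauses (a), (b) for $(M \cap P, K)$ again reduce to those for $(M, K)$; in the remaining case $\zeta = \min((K \cap \kappa) \setminus (P \cap \kappa))$ with $\zeta \notin R_M(K)$, clause (a) follows from clause (3) of Definition 5.4 for $(A, B)$ applied to $K$ and $P$ (since $M \cap P \cap K \cap \kappa^+ \subseteq P \cap K \cap \kappa^+$), and the crux is clause (b). For clause (b) here I would show that its hypothesis cannot be satisfied: I would unwind $\zeta \in R_{M \cap P}(K)$ using Definition 2.1, split according to Lemma 1.36 (whether $\beta_{K,M} = \beta_{K, M \cap P}$ or $P \cap \kappa < \beta_{K,M}$), and use Lemma 1.32 and Lemma 1.19 to locate $K$ and $M$ relative to each other, the key device being that any auxiliary model $P'$ figuring in clause (b) has $P' \cap \kappa \in M \cap P \cap \kappa$ by the elementarity of $M$ and $P$; one then finds that $K \cap \zeta$ coincides with a subset of $M \cap P \cap \kappa$ (or of $M \cap \kappa$) lying below $P' \cap \kappa$, which either contradicts the requirement $\sup(K \cap \zeta) < P' \cap \kappa$ in clause (b) or else forces $\zeta \in R_M(K)$, against the case hypothesis. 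The careful interleaving of $\beta_{K,M}$, $\beta_{K, M \cap P}$, $P \cap \kappa$, $P' \cap \kappa$ and $\zeta$ in this last argument is the step I expect to be the most delicate. Once $\vec S$-adequacy of $A \cup \{M \cap P\}$ is in hand, clause (3) of Definition 5.4 for $(A \cup \{M \cap P\}, B)$ is checked exactly as in (1), using $\min((M \cap P \cap \kappa) \setminus (P'' \cap \kappa)) = \min((M \cap \kappa) \setminus (P'' \cap \kappa))$ whenever the left side exists.
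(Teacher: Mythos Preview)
Your plan for parts (1) and (3), and for most of part (2), matches the paper's proof. The one substantive divergence is in part (2), in the subcase of $\zeta\in R_{M\cap P}(K)$ where $\zeta=\min((K\cap\kappa)\setminus(P\cap\kappa))$ and you need to verify clause (2) of Definition~5.2 (your ``clause (b)''). You propose to show that the hypothesis of this clause is \emph{vacuous}, via a case split on Lemma~1.36 and an interleaving argument with $\beta_{K,M}$, $\beta_{K,M\cap P}$, $P\cap\kappa$, $P'\cap\kappa$, and $\zeta$. Your sketch does not actually close the case $K<M$ with $P\cap\kappa\le\zeta<\beta_{K,M}$: here $\zeta\in K\cap\beta_{K,M}\subseteq M$, so $\zeta\notin R_M(K)$ holds automatically, and nothing you have written rules out the existence of a strong $P'\in(M\cap P)\cap\mathcal Y$ with $\sup(K\cap\zeta)<P'\cap\kappa<P\cap\kappa$.

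The paper dispatches this case in one line, and you are missing the relevant observation. If $Q\in(M\cap P)\cap\mathcal Y$ is $\vec S$-strong with $\sup(K\cap\zeta)<Q\cap\kappa<\zeta$ and $\tau\in K\cap Q\cap\kappa^+$, then in particular $Q\in P$; since $|Q|<\kappa$ and $P\cap\kappa\in\kappa$, elementarity gives $Q\subseteq P$, hence $\tau\in P$. Thus $\tau\in K\cap P\cap\kappa^+$, and clause~(3) of Definition~5.3 for the pair $(K,P)$ in the original side condition yields $\zeta\in S_\tau$ directly. No vacuity argument and no analysis of comparison points is needed. (The paper also does not bother to assume $\zeta\notin R_M(K)$ in this subcase; the two alternatives of Lemma~2.8(2) are treated non-exclusively.)
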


\begin{proof}
(1) The set $A \cup \{ M \cap N \}$ is adequate by Proposition 1.25. 
To show that $A \cup \{ M \cap N \}$ is $\vec S$-adequate, 
it suffices to show that 
for all $K \in A$, $\{ K, M \cap N \}$ is $\vec S$-adequate. 
So let $K \in A$ be given.

Let $\zeta \in R_K(M \cap N)$. 
Then $\zeta \in R_K(M)$ by Lemma 2.3. 
Let $\tau \in K \cap (M \cap N) \cap \kappa^+$, 
and we will show that $\zeta \in S_\tau$. 
Then $\tau \in K \cap M$, which implies that $\zeta \in S_\tau$ 
since $\zeta \in R_K(M)$.

Suppose that $P \in K \cap \mathcal Y$ is $\vec S$-strong and 
$\sup(M \cap N \cap \zeta) < P \cap \kappa < \zeta$. 
Let $\tau \in (M \cap N) \cap P \cap \kappa^+$, and we will show that 
$\zeta \in S_\tau$. 
Since $\zeta \in M \cap N \cap \kappa$, $\zeta < \beta_{M,N}$. 
And since $M \cap N \cap \kappa = M \cap \beta_{M,N}$ is an initial 
segment of $M \cap \kappa$, 
$\sup(M \cap N \cap \zeta) = \sup(M \cap \zeta)$. 
So $\sup(M \cap \zeta) < P \cap \kappa < \zeta$. 
Since $\zeta \in R_K(M)$, it follows that $\zeta \in S_\tau$.

Let $\zeta \in R_{M \cap N}(K)$. 
Then by Lemma 2.3, either (i) $\zeta \in R_M(K)$ or (ii) $\zeta \in R_N(K)$. 
Consider $\tau \in K \cap (M \cap N) \cap \kappa^+$, and we will show that 
$\zeta \in S_\tau$. 
Then $\tau \in K \cap M$, so in case (i), $\zeta \in S_\tau$. 
Also $\tau \in K \cap N$, so in case (ii), $\zeta \in S_\tau$. 
Suppose that $P \in (M \cap N) \cap \mathcal Y$ is $\vec S$-strong and 
$\sup(K \cap \zeta) < P \cap \kappa < \zeta$. 
Let $\tau \in K \cap P \cap \kappa^+$, and we will show that $\zeta \in S_\tau$. 
Then $P \in M \cap \mathcal Y$, so in case (i), $\zeta \in S_\tau$. 
And $P \in N \cap \mathcal Y$, so in case (ii), $\zeta \in S_\tau$. 
This completes the proof that $A \cup \{ M \cap N \}$ is $\vec S$-adequate.

Let $Q \in B$, and suppose that 
$\xi = \min((M \cap N \cap \kappa) \setminus (Q \cap \kappa))$. 
Since $M < N$, $M \cap N \cap \kappa = M \cap \beta_{M,N}$, which is 
an initial segment of $M \cap \kappa$. 
Hence $\xi = \min((M \cap \kappa) \setminus (Q \cap \kappa))$.  
Let $\tau \in (M \cap N) \cap Q \cap \kappa^+$, 
and we will show that $\xi \in S_\tau$. 
Then $\tau \in M \cap Q$, so $\xi \in S_\tau$ since $(A,B)$ is $\vec S$-obedient.

\bigskip

(2) Since $P$ is $\vec S$-strong, $\cf(P \cap \kappa) > \omega$. 
So clearly $\sup(M \cap P \cap \kappa) < P \cap \kappa$. 
It follows that $A \cup \{ M \cap P \}$ is adequate by Proposition 1.33.

To show that $A \cup \{ M \cap P \}$ is $\vec S$-adequate, let 
$K \in A$ be given, and we will show that $\{ K, M \cap P \}$ 
is $\vec S$-adequate.

Let $\zeta \in R_K(M \cap P)$. 
Then $\zeta \in R_K(M)$ by Lemma 2.8. 
Let $\tau \in K \cap (M \cap P) \cap \kappa^+$, 
and we will show that $\zeta \in S_\tau$. 
Then $\tau \in K \cap M$ implies that $\zeta \in S_\tau$.

Suppose that $Q \in K \cap \mathcal Y$ is $\vec S$-strong and 
$\sup(M \cap P \cap \zeta) < Q \cap \kappa < \zeta$. 
Let $\tau \in Q \cap (M \cap P) \cap \kappa^+$, 
and we will show that $\zeta \in S_\tau$. 
Since $\zeta \in P \cap \kappa$ and $P \cap \kappa$ is an ordinal, 
$\sup(M \cap P \cap \zeta) = \sup(M \cap \zeta)$. 
So $\sup(M \cap \zeta) < Q \cap \kappa < \zeta$. 
Since $\zeta \in R_K(M)$, $Q \in K \cap \mathcal Y$, and 
$\tau \in M \cap Q$, it follows that $\zeta \in S_\tau$.

Let $\zeta \in R_{M \cap P}(K)$. 
Then by Lemma 2.8, either (i) $\zeta \in R_M(K)$ or 
(ii) $\zeta = \min((K \cap \kappa) \setminus (P \cap \kappa))$.

Let $\tau \in (M \cap P) \cap K \cap \kappa^+$, 
and we will show that $\zeta \in S_\tau$. 
In case (i), $\tau \in M \cap K$ implies that $\zeta \in S_\tau$ since 
$A$ is $\vec S$-adequate. 
In case (ii), $\tau \in K \cap P$ implies that $\zeta \in S_\tau$ since 
$(A,B)$ is $\vec S$-obedient.

Suppose that $Q \in (M \cap P) \cap \mathcal Y$ is $\vec S$-strong and 
$\sup(K \cap \zeta) < Q \cap \kappa < \zeta$. 
Let $\tau \in K \cap Q \cap \kappa^+$, and we will show that $\zeta \in S_\tau$. 
In case (i), $Q \in M \cap \mathcal Y$ implies that $\zeta \in S_\tau$ 
since $A$ is $\vec S$-adequate. 
In case (ii), since $\tau \in Q$ and $Q \in P$, $\tau \in P$. 
So $\zeta \in S_\tau$ since $(A,B)$ is $\vec S$-obedient. 
This completes the proof that $A \cup \{ M \cap P \}$ is $\vec S$-adequate.

Let $Q \in B$, and suppose that 
$\zeta = \min((M \cap P \cap \kappa) \setminus (Q \cap \kappa))$. 
Let $\tau \in (M \cap P) \cap Q \cap \kappa^+$, 
and we will show that $\zeta \in S_\tau$. 
Since $P \cap \kappa \in \kappa$, $M \cap P \cap \kappa$ is an 
initial segment of $M \cap \kappa$. 
Hence $\zeta = \min((M \cap \kappa) \setminus (Q \cap \kappa))$. 
Since $\tau \in M \cap Q$, it follows that $\zeta \in S_\tau$ since $(A,B)$ is $\vec S$-obedient.

\bigskip

(3) Note that since $P \cap \kappa < Q \cap \kappa$, 
$P \cap Q \cap \kappa = P \cap \kappa$.

To show that $P \cap Q$ is $\vec S$-strong, let $\tau \in P \cap Q \cap \kappa^+$. 
Then $\tau \in P$. 
Since $P$ is $\vec S$-strong, 
$P \cap Q \cap \kappa = P \cap \kappa \in S_\tau$. 

Let $M \in A$, and suppose that 
$\zeta = \min((M \cap \kappa) \setminus (P \cap Q \cap \kappa))$. 
Then $\zeta = \min((M \cap \kappa) \setminus (P \cap \kappa))$. 
Let $\tau \in M \cap (P \cap Q) \cap \kappa^+$. 
Then $\tau \in M \cap P$, so $\zeta \in S_\tau$.
\end{proof}

We conclude the section with an easy lemma which will be used frequently 
for checking that certain models are $\vec S$-strong.

\begin{lemma}
Suppose that $N \in \mathcal X_0 \cup \mathcal Y_0$, 
$Q \in N \cap \mathcal Y_0$, and $P \in \mathcal Y_0$. 
Suppose that $P$ is $\vec S$-strong and 
$N \prec (H(\lambda),\in,\mathcal Y_0,\vec S)$. 
Assume that $Q \cap N \cap \kappa^+ \subseteq P$ and 
$P \cap \kappa = Q \cap \kappa$. 
Then $Q$ is $\vec S$-strong.
\end{lemma}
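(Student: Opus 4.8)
The plan is to exploit the elementarity of $N$ together with the fact that ``being $\vec S$-strong'' is a first-order property of a single parameter, expressible in the very structure that $N$ is elementary in. Note first that $\kappa$ is definable in $(H(\lambda),\in)$ alone (it is the largest cardinal), so that ``$X$ is $\vec S$-strong'' is expressed by a formula $\varphi(X)$ in the language $\{\in,\mathcal Y_0,\vec S\}$, where we regard $\vec S$ as the binary predicate $\{(\beta,\eta):\beta\in S_\eta\}$: namely, $\varphi(X)$ says that $X\in\mathcal Y_0$ and that for every ordinal $\tau\in X$ with $\tau<\kappa^+$ the pair $(X\cap\kappa,\tau)$ lies in $\vec S$. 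Since $Q\in N$ and $N\prec(H(\lambda),\in,\mathcal Y_0,\vec S)$, we have that $Q$ is $\vec S$-strong in $V$ if and only if $N\models\varphi(Q)$.

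Next I would unwind what $N\models\varphi(Q)$ means. The ordinal $Q\cap\kappa$ is definable from the parameters $Q$ and $\kappa$, both of which lie in $N$, so $Q\cap\kappa\in N\cap\kappa$. Relativized to $N$, the universal quantifier in $\varphi(Q)$ ranges precisely over the ordinals below $\kappa^+$ that belong to $Q$ and to $N$, that is, over $Q\cap N\cap\kappa^+$; and the atomic statement ``$(Q\cap\kappa,\tau)\in\vec S$'' is absolute between $N$ and $H(\lambda)$ because all its parameters lie in $N$. Hence $N\models\varphi(Q)$ holds if and only if $Q\cap\kappa\in S_\tau$ for every $\tau\in Q\cap N\cap\kappa^+$.

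It then remains to verify this last condition, which is where the remaining hypotheses enter. Given $\tau\in Q\cap N\cap\kappa^+$, the assumption $Q\cap N\cap\kappa^+\subseteq P$ gives $\tau\in P\cap\kappa^+$; since $P$ is $\vec S$-strong, $P\cap\kappa\in S_\tau$; and since $P\cap\kappa=Q\cap\kappa$, we get $Q\cap\kappa\in S_\tau$, as required. Therefore $N\models\varphi(Q)$, and by elementarity $Q$ is $\vec S$-strong.

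I do not expect any serious obstacle here; the only point needing a little care is the first step, making precise that ``$\vec S$-strong'' is captured by a genuine first-order formula over $(H(\lambda),\in,\mathcal Y_0,\vec S)$ (in particular that $\kappa$, and ``$\tau<\kappa^+$'', are available), after which the argument is a routine reflection. If one prefers to avoid the metamathematical phrasing, the same proof can be run concretely: set $T_Q:=\bigcap\{S_\tau:\tau\in Q\cap\kappa^+\}$, observe that $T_Q\in N$ since it is definable from $Q$, check that for $\beta\in N$ one has $\beta\in T_Q$ iff $\beta\in S_\tau$ for all $\tau\in Q\cap N\cap\kappa^+$, and finally apply this with $\beta=Q\cap\kappa\in N\cap\kappa$, using the $\vec S$-strength of $P$ exactly as above to see that $Q\cap\kappa\in T_Q$.
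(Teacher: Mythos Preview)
Your proposal is correct and is essentially the paper's own argument, just written out in more detail. The paper's proof is the three-line version of what you do: since $Q\in N$, reduce to showing $N$ models that $Q$ is $\vec S$-strong, take $\tau\in Q\cap N\cap\kappa^+$, pass to $P$ via the inclusion hypothesis, and use that $P$ is $\vec S$-strong together with $P\cap\kappa=Q\cap\kappa$. One minor remark: your claim that $\kappa$ is definable as ``the largest cardinal'' in $H(\lambda)$ is only correct when $\lambda=\kappa^+$; in general you should simply note that $\kappa\in N$ because $N\in\mathcal X_0\cup\mathcal Y_0$ already makes $N$ elementary in a structure carrying $\kappa$ as a constant.
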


\begin{proof}
Since $Q \in N$, it suffices to show that $N$ models that $Q$ 
is $\vec S$-strong. 
So let $\tau \in Q \cap N \cap \kappa^+$. 
Since $Q \cap N \cap \kappa^+ \subseteq P$, $\tau \in P$. 
As $P$ is $\vec S$-strong, 
$Q \cap \kappa = P \cap \kappa \in S_\tau$.
\end{proof}

\bigskip

\addcontentsline{toc}{section}{6. The approximation property and factorization}

\textbf{\S 6. The approximation property and factorization}

\stepcounter{section}

\bigskip

We briefly discuss the approximation property, and state the theorem 
on factoring a generic extension which we will use in the proof of 
Mitchell's theorem in Part III. 

Let $(W_1,W_2)$ be a pair of transitive class models of ZFC such that 
$W_1 \subseteq W_2$. 
We say that the pair $(W_1,W_2)$ 
satisfies the \emph{$\omega_1$-approximation property} 
if, whenever $X \in W_2$ is a set of ordinals such that 
$a \cap X \in W_1$ whenever $a \in W_1$ is countable in $W_1$, then 
the set $X$ itself is in $W_1$.

The approximation property is due to Hamkins \cite{hamkins}, and is similar 
to properties studied in Mitchell's construction of a model with no 
Aronszajn trees on $\omega_2$ \cite{mitchellold}. 
It plays a crucial role in the original proof of Mitchell's theorem 
on the approachability ideal, as well 
as in the proof presented in Part III.

We will use the following easy consequence of the approximation property.

\begin{lemma}
Suppose that $(W_1,W_2)$ satisfies the $\omega_1$-approximation property. 
Assume that $c$ is a set of ordinals of order type $\omega_1$ in $W_2$ 
such that for all $\beta < \sup(c)$, $c \cap \beta \in W_1$. 
Then $c \in W_1$.
\end{lemma}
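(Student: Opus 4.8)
The plan is to verify the hypothesis of the $\omega_1$-approximation property for the set $X = c$, and then invoke that property to conclude that $c \in W_1$. Since $c$ is already a set of ordinals, it suffices to prove that $a \cap c \in W_1$ for every $a \in W_1$ which is countable in $W_1$. First I would reduce to the case that $a$ itself is a set of ordinals, observing that $a \cap c$ depends only on $a \cap \mathrm{On}$, and that $a \cap \mathrm{On}$ is again an element of $W_1$ which is countable in $W_1$ whenever $a$ is.

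Next, fix such an $a$. Because $W_1 \subseteq W_2$, any bijection between $a$ and $\omega$ lying in $W_1$ also lies in $W_2$, so $a$ --- and hence its subset $a \cap c$ --- is countable in $W_2$. Now $c$ has order type $\omega_1$ as computed in $W_2$, where $\omega_1$ means $\omega_1^{W_2}$, and $\omega_1^{W_2}$ is a regular cardinal in $W_2$; therefore, transferring $a \cap c$ through the order isomorphism of $c$ onto $\omega_1^{W_2}$, we see that $a \cap c$ is a bounded subset of $c$ in $W_2$. Consequently I can fix an ordinal $\beta < \sup(c)$ with $a \cap c \subseteq \beta$, which gives $a \cap c = a \cap (c \cap \beta)$. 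By the hypothesis of the lemma, $c \cap \beta \in W_1$, and $a \in W_1$, so $a \cap c = a \cap (c \cap \beta)$ is in $W_1$ as the intersection of two members of $W_1$.

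Since $a$ ranged over all elements of $W_1$ that are countable in $W_1$, the $\omega_1$-approximation property for the pair $(W_1, W_2)$ applies to $X = c$ and yields $c \in W_1$, completing the proof. The argument is routine; the only steps needing a modicum of care are the upward absoluteness of countability from $W_1$ to $W_2$ and the use of the regularity of $\omega_1^{W_2}$ to bound the countable set $a \cap c$ inside $c$. There is no real obstacle here.
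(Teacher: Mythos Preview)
Your proof is correct and follows essentially the same approach as the paper: verify the hypothesis of the $\omega_1$-approximation property by showing that any $a \in W_1$ countable in $W_1$ has $a \cap c$ bounded in $c$ (using that $a$ remains countable in $W_2$ and $c$ has order type $\omega_1$ there), and then write $a \cap c = a \cap (c \cap \beta) \in W_1$. Your reduction to $a$ a set of ordinals and the explicit appeal to regularity of $\omega_1^{W_2}$ are minor elaborations on the same argument.
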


\begin{proof}
To show that $c \in W_1$, it suffices to show that for any set 
$a \in W_1$ which is countable in $W_1$, $a \cap c \in W_1$. 
So let $a \in W_1$ be countable in $W_1$. 
Then $a$ is countable in $W_2$. 
Since $c$ has order type $\omega_1$, 
there is $\beta < \sup(c)$ such that 
$a \cap c \subseteq c \cap \beta$. 
By the assumption on $c$, $c \cap \beta \in W_1$.  
Since $c \cap \beta$ and $a$ are in $W_1$, 
$a \cap c = a \cap (c \cap \beta)$ is in $W_1$.
\end{proof}

\bigskip

In the original proof of Mitchell's theorem, being able to factor a generic 
extension in a way which satisfies the approximation property 
relied on what was called \emph{tidy} 
strongly generic conditions (see Lemma 2.22 of \cite{mitchell}). 
However, the strongly generic conditions used in the present paper 
are not tidy. 
Therefore we need a different factorization theorem which is applicable 
in the present context; such a theorem was provided by Cox-Krueger \cite{jk26}.

\bigskip

Let us recall the property $*$ introduced in \cite{jk26}.

\begin{definition}
Let $\p_1$ be a suborder of a forcing poset $\p_2$, where 
$\p_2$ has greatest lower bounds. 
We say that $\p_1$ satisfies property $*(\p_1,\p_2)$ if for all $p \in \p_1$ 
and $q, r \in \p_2$, if $p$, $q$, and $r$ are pairwise compatible 
in $\p_2$, then $p$ is compatible in $\p_2$ with $q \land r$.
\end{definition}

Note that if a forcing poset $\q$ satisfies property $*(\q,\q)$, then for any 
suborder $\p$ of $\q$, $*(\p,\q)$.

\bigskip

\begin{notation}
Let $\q$ be a forcing poset. 
If $q \in \q$ and $K$ is a subset of $\q$, 
let $(\q / q) / K$ denote the forcing poset consisting of conditions 
$s \in \q$ such that $s \le q$, and $s$ is compatible in $\q$ with 
all conditions in $K$.
\end{notation}

The following result appears as Theorem 4.3 in \cite{jk26}.

\begin{thm}[Factorization theorem]
Let $\q$ be a forcing poset with greatest lower bounds satisfying 
$*(\q,\q)$, $\chi$ a regular 
cardinal with $\lambda_\q \le \chi$, and 
$N \prec (H(\chi),\in,\q)$. 
Suppose that there are stationarily many models in $P_{\omega_1}(H(\chi))$ 
which have universal strongly generic conditions. 
Assume that $q$ is a universal strongly $N$-generic condition.

Then for any $V$-generic filter $G$ on $\q$ which contains $q$, 
$V[G] = V[G \cap N][H]$, where 
$G \cap N$ is a $V$-generic filter on $\q \cap N$, $H$ is a 
$V[G \cap N]$-generic filter on $(\q / q) / (G \cap N)$, and the pair 
$(V[G \cap N],V[G])$ satisfies the $\omega_1$-approximation property.
\end{thm}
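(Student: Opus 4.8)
The plan is to establish the three conclusions separately: that $G \cap N$ is a $V$-generic filter on $\q \cap N$, that $V[G] = V[G \cap N][H]$ for a suitable $H$ generic over the quotient, and that $(V[G \cap N], V[G])$ satisfies the $\omega_1$-approximation property. The first is essentially already in hand: $q$ is strongly $N$-generic and $q \in G$, so by Lemma 3.3, $q$ forces that $N \cap \dot G$ is a $V$-generic filter on $N \cap \q$, and $N \cap \q = \q \cap N$.

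For the factorization, I would put $H := \{ s \in (\q / q)/(G \cap N) : s \in G \}$. Since $(\q / q)/(G \cap N)$ is definable from $q$ and $G \cap N$, it lies in $V[G \cap N]$, and $H \in V[G]$; the inclusion $V[G \cap N][H] \subseteq V[G]$ is immediate. For the reverse inclusion I would show $G$ is the upward closure of $H$ in $\q$. Given $p \in G$, the greatest lower bound $p \land q$ exists, satisfies $p \land q \le p$, lies in $G$ (as $p, q \in G$ and $G$ is a filter), lies below $q$, and is compatible with every member of $G \cap N$ (all of which lie in $G$); hence $p \land q \in H$. Thus $G = \{ p \in \q : \exists s \in H,\ s \le p \}$, computable in $V[H] \subseteq V[G \cap N][H]$. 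It remains to check that $H$ is $V[G \cap N]$-generic on $(\q / q)/(G \cap N)$: given a dense open $D$ of this poset with $D \in V[G \cap N]$ and a condition $p \in H$, I would choose a $(\q \cap N)$-name $\dot D$ for $D$ and a condition $r \in G \cap N$ forcing $\dot D$ dense, then amalgamate $p$, $r$, and the witnessing conditions supplied by $\dot D$; this is where the universality of $q$ as a strongly $N$-generic condition and property $*(\q,\q)$ are used to take greatest lower bounds without destroying compatibility, producing $s \le p$ with $s \in H \cap D$.

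The $\omega_1$-approximation property is the main obstacle. Suppose $X \in V[G]$ is a set of ordinals whose intersection with every countable set in $V[G \cap N]$ lies in $V[G \cap N]$, and assume for contradiction that some $p_0 \in G$, with $p_0 \le q$, forces (for a name $\dot X$) both this approximation property and $\dot X \notin V[\dot G \cap N]$. Using the hypothesis that stationarily many models in $P_{\omega_1}(H(\chi))$ carry universal strongly generic conditions, I would fix a countable $M \prec (H(\chi), \in, \q)$ containing $N$, $q$, $p_0$, $\dot X$, and the relevant quotient data, together with a universal strongly $M$-generic condition. Inside $M$ I would build a binary splitting tree $\langle s_\sigma : \sigma \in 2^{<\omega} \rangle$ of conditions $\le p_0$ with ordinals $\langle \alpha_\sigma : \sigma \in 2^{<\omega} \rangle$ such that $s_{\sigma^\frown 0} \Vdash \check\alpha_\sigma \in \dot X$, $s_{\sigma^\frown 1} \Vdash \check\alpha_\sigma \notin \dot X$, the two successors of each $s_\sigma$ are predense below $s_\sigma$ in $M \cap \q$, and along any branch the conditions decide $\dot X$ below arbitrarily large ordinals; the assumption $\dot X \notin V[\dot G \cap N]$ is exactly what furnishes a genuine splitting at each node, by reading $\dot X$ off the quotient. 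Then $a := \{ \alpha_\sigma : \sigma \in 2^{<\omega} \}$ is a countable set in $V \subseteq V[G \cap N]$, so $X \cap a \in V[G \cap N]$; together with the tree it computes a single branch $f \in 2^\omega \cap V[G \cap N]$, and by amalgamating the branch conditions $\langle s_{f \restriction n} : n < \omega \rangle$ with the universal strongly $M$-generic condition, again via property $*(\q,\q)$, one obtains a condition forcing $\dot X$ to be recoverable from $X \cap a$, hence forcing $\dot X \in V[\dot G \cap N]$ — contradicting the choice of $p_0$.

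I expect the genuinely delicate points to be two. First, in the tree construction one must convert "$\dot X \notin V[\dot G \cap N]$" into honest incompatibility of conditions living in the quotient, not merely in $\q$. Second, all of the amalgamation steps — in the genericity of $H$ and in the branch-amalgamation of the approximation argument — rest on property $*(\q,\q)$, which licenses forming greatest lower bounds while preserving compatibility with a third condition, and getting these to interact correctly with the universal strongly generic conditions is the technical heart. The argument follows the pattern of Cox--Krueger \cite{jk26}, where it appears as Theorem 4.3.
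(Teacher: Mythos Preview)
The paper does not prove this theorem; it is quoted verbatim as Theorem 4.3 of Cox--Krueger \cite{jk26}, with no argument given here beyond the citation. So there is no ``paper's own proof'' to compare your proposal against.

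Your outline is a reasonable sketch of the Cox--Krueger argument and you correctly identify the source. A few remarks on content: the factorization step is essentially right, though the genericity of $H$ over $V[G \cap N]$ requires more care than you indicate---one must show that dense sets in $V[G \cap N]$ for the quotient pull back to predense sets in $\q$ below suitable conditions, and this is where $*(\q,\q)$ does real work. For the $\omega_1$-approximation argument, your splitting-tree plan is the right idea, but the phrase ``predense below $s_\sigma$ in $M \cap \q$'' is not quite what you want; the point is rather that the branch conditions can be amalgamated with a strongly $M$-generic condition, and the decisive use of $*(\q,\q)$ is to guarantee that the branch determined by $X \cap a$ actually has a lower bound compatible with $G$. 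You flag both of these as the delicate points, which is correct, but the sketch as written would need substantial expansion to become a proof. Since the paper treats this as a black box from \cite{jk26}, that is where the details live.
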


This theorem will be used in the final argument of the proof of Mitchell's theorem 
in Section 16. 
It is interesting to note that not all intermediate extensions of a strongly 
proper forcing extension satisfy the 
$\omega_1$-approximation property; see Section 5 of \cite{jk26} for a 
counterexample.

\bigskip

\part{Advanced side condition methods}

\bigskip

\addcontentsline{toc}{section}{7. Mitchell's use of $\Box_\kappa$}

\textbf{\S 7. Mitchell's use of $\Box_\kappa$\footnote{Almost all of the 
arguments in this section and the next are due to Mitchell, but adopted 
to the present context.}}

\stepcounter{section}

\bigskip

For the remainder of the paper we will assume $\Box_\kappa$ and $2^\kappa = \kappa^+$. 
Also we let the cardinal $\lambda$ from Part I equal $\kappa^+$. 
Since $2^\kappa = \kappa^+$, $H(\kappa^+)$ has size $\kappa^+$.

\begin{notation}
Let $f^*$ denote a bijection from $\kappa^+$ to $H(\kappa^+)$.
\end{notation}

\begin{notation}
Fix a sequence $\vec C = \langle C_\alpha : \alpha < \kappa^+, \ 
\alpha \ \textrm{limit} \rangle$ satisfying that for all limit 
$\alpha < \kappa^+$:
\begin{enumerate}
\item $C_\alpha$ is a club subset of $\alpha$ with 
$\ot(C_\alpha) \le \kappa$; in particular, if $\cf(\alpha) < \kappa$ then 
$\ot(C_\alpha) < \kappa$;
\item if $\beta \in \lim(C_\alpha)$, then 
$C_\beta = C_\alpha \cap \beta$;
\item if $\alpha$ is a limit of limit ordinals, then every ordinal in $C_\alpha$ 
is a limit ordinal;
\item if $\alpha = \alpha_0 + \omega$ for a limit ordinal $\alpha_0$, 
then $\alpha_0 \in \lim(C_\alpha)$, and hence 
$C_{\alpha_0} = C_{\alpha} \cap \alpha_0$.
\end{enumerate}
\end{notation}

Properties (1) and (2) embody the standard definition of a square sequence. 
It is easy to modify a square sequence to also satisfy properties (3) and (4). 
For example, start by replacing each ordinal in $C_\alpha$ with 
the greatest limit ordinal less than or equal to it. 
The details are left to the reader.

\begin{notation}
For each limit ordinal $\alpha < \kappa^+$ and $\beta < \ot(C_\alpha)$, 
let $c_{\alpha,\beta}$ denote the $\beta$-th member of $C_\alpha$, that is, 
the unique $\gamma$ in $C_\alpha$ such that 
$\ot(C_\alpha \cap \gamma) = \beta$.
\end{notation}

\begin{notation}
Fix a sequence 
$\vec A = \langle A_{\eta,\beta} : \eta < \kappa^+, \ \beta < \kappa \rangle$ 
satisfying the following properties:
\begin{enumerate}

\item for each $\eta < \kappa^+$, $\{ A_{\eta,\beta} : \beta < \kappa \}$ 
is an increasing and continuous sequence of sets with union equal to $\eta$;

\item $A_{\eta+1,\beta} = A_{\eta,\beta} \cup \{ \eta \}$;

\item for all $\eta < \kappa^+$ and $\beta < \kappa$, 
$|A_{\eta,\beta}| \le |\beta| \cdot \omega$;

\item if $\xi \in \lim(C_\eta) \cup A_{\eta,\beta} \cup \lim(A_{\eta,\beta})$, 
then $A_{\xi,\beta} = A_{\eta,\beta} \cap \xi$;

\item there exists a function $c^* : \kappa \to \kappa$ such that for all 
$\eta < \kappa^+$ and $\beta < \kappa$, $\ot(A_{\eta,\beta}) < c^*(\beta)$;

\item if $\beta < \ot(C_\eta)$, then 
$A_{\eta,\beta} \subseteq c_{\eta,\beta}$ 
and $\lim(C_\eta) \cap c_{\eta,\beta} \subseteq A_{\eta,\beta}$;

\item if $\gamma \in \eta \setminus C_\eta$, then 
$\gamma \in A_{\eta,\beta}$ iff 
$$
\gamma \in A_{\min(C_\eta \setminus \gamma),\beta} \ \textrm{and} \ 
\min(C_\eta \setminus \gamma) \in A_{\eta,\beta};
$$

\item if $\xi \in \lim(A_{\eta,\beta}) \cap \eta$ and $\ot(C_\xi) < \beta$, 
then $\xi \in A_{\eta,\beta}$.
\end{enumerate}
\end{notation}

Properties (1), (2), and (3) describe a typical kind of 
filtration of each ordinal 
$\eta < \kappa^+$. 
The coherence property (4) is one of the most often used facts in the paper. 
It gives sufficient conditions for coherence to hold between $A_{\xi,\beta}$ and 
$A_{\eta,\beta}$, where $\xi < \eta$. 
If $\xi$ is a limit point of $C_\eta$, then 
$$
\forall \beta < \kappa \ A_{\xi,\beta} = A_{\eta,\beta} \cap \xi.
$$
And if $\beta < \kappa$ and $\xi$ is either in $A_{\eta,\beta}$, or a limit point 
of $A_{\eta,\beta}$, then 
$$
A_{\xi,\beta} = A_{\eta,\beta} \cap \xi.
$$
We recommend that the reader memorize this important fact before proceeding.

Property (5) follows immediately from property (3) in the case when 
$\kappa$ is weakly inaccessible, by letting $c^*(\beta) = \beta^+$. 
This property is only used in one lemma in the paper, namely Lemma 8.6. 
Likewise, properties (6), (7), and (8) are technical facts about $\vec A$ which 
are only used in Lemmas 8.10 and 8.11, and in several places in Section 12. 
There is no harm in the reader forgetting about properties (5)--(8) for now, 
and just looking back at them later in the rare places that they are used.

\begin{thm}[Mitchell \cite{mitchell}]
Assume that $\kappa$ is weakly inaccessible and 
$\vec C$ is a sequence as in Notation 7.2. 
Then there exists a sequence $\vec A$ as described in Notation 7.4.
\end{thm}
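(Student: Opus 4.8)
The plan is to construct the rows $\langle A_{\eta,\beta} : \beta < \kappa\rangle$ by transfinite recursion on $\eta < \kappa^+$, carrying properties (1)--(8) of Notation 7.4 along as an induction hypothesis at every stage (understanding that the conditions referring to $C_\eta$ are imposed only when $\eta$ is a limit ordinal). The case of finite $\eta$ is trivial, and the successor case is forced: property (2) leaves no choice but $A_{\eta_0+1,\beta} := A_{\eta_0,\beta}\cup\{\eta_0\}$, and properties (1), (3)--(8) at $\eta_0+1$ then follow routinely from the induction hypothesis at $\eta_0$. (For (4), the only new value of $\xi$ that must be checked is $\xi = \eta_0$, where $A_{\eta_0,\beta} = A_{\eta_0+1,\beta}\cap\eta_0$ holds by construction; this is precisely why (4) has to quantify over members of $A_{\eta,\beta}$, not just over its limit points. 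Property (5) is immediate from (3): since $\kappa$ is weakly inaccessible, $|\beta|^+ < \kappa$, so $c^*(\beta) := |\beta|^+$ works throughout.)

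The substance of the argument is the limit case. Fix a limit $\eta$ and set $C := C_\eta$, $\ell := \ot(C)\le\kappa$, and $\delta_\gamma := \min(C\setminus\gamma)$ for $\gamma<\eta$; since $C$ is club in $\eta$ we have $\delta_\gamma<\eta$, so each row $\langle A_{\delta_\gamma,\beta}:\beta<\kappa\rangle$ is already available. The definition of $A_{\eta,\beta}$ proceeds in two layers. First a \emph{skeleton layer} fixes $A_{\eta,\beta}$ on $C$ itself (together with the initial segment below $\min C$): for $\beta<\ell$ one arranges that $A_{\eta,\beta}$ is bounded below $c_{\eta,\beta}$, contains $\lim(C)\cap c_{\eta,\beta}$, and agrees with the rows $A_{\delta,\beta}$ for $\delta\in\lim(C)$ below it. The key point is that $\Box_\kappa$-coherence --- property (2) of Notation 7.2, i.e.\ $C_\delta = C\cap\delta$ for every $\delta\in\lim(C)$ --- together with properties (4) and (6) of the induction hypothesis applied at such $\delta$, makes the rows $\langle A_{\delta,\beta}:\beta<\kappa\rangle$ along $\lim(C)$ mutually coherent and already catches the required limit points of $C$; hence one can set $A_{\eta,\beta}\cap\delta := A_{\delta,\beta}$ for $\delta\in\lim(C)$ without conflict, and then decide the remaining elements (successor elements of $C$, the points of $\lim(C)$ at outer indices $\beta$ with $c_{\eta,\beta}$ equal to them, and, for $\beta\ge\ell$, the tail beyond $C$) using the rows already built along $C$ together with a fixed increasing continuous filtration of $\ell$ into sets of size $\le|\beta|\cdot\omega$. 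Second, a \emph{trace layer} extends the definition off $C$: for $\gamma<\eta$ with $\gamma\notin C$, declare $\gamma\in A_{\eta,\beta}$ iff $\gamma\in A_{\delta_\gamma,\beta}$ and $\delta_\gamma\in A_{\eta,\beta}$, which is exactly what property (7) demands and is well-defined because $\delta_\gamma<\eta$. One then checks that $\bigcup_\beta A_{\eta,\beta} = \eta$ and that $A_{\eta,\cdot}$ is increasing and continuous, both of which follow from the induction hypothesis once the layers are set up.

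The main obstacle is the simultaneous verification of the coherence property (4) at the limit stage, which must be established for \emph{every} $\xi$ that is a limit point of $C_\eta$, a member of $A_{\eta,\beta}$, or a limit point of $A_{\eta,\beta}$. Limit points of $C_\eta$ are handled directly by the skeleton layer and $\Box_\kappa$-coherence; a member $\xi\in A_{\eta,\beta}$ lying in $C$ is handled by the skeleton, while one lying outside $C$ is handled by unwinding the trace-layer definition at $\xi$, using $\delta_\xi\in A_{\eta,\beta}$ and the induction hypothesis (4) at $\delta_\xi<\eta$. The genuinely delicate case is a limit point $\xi$ of $A_{\eta,\beta}$ with $\xi\notin C$ and $\xi\notin A_{\eta,\beta}$, where one must show that the elements of $A_{\eta,\beta}$ accumulating at $\xi$ are precisely those that $A_{\xi,\beta}$ sees; this is exactly the configuration that property (8) --- with its hypothesis $\ot(C_\xi)<\beta$ --- is designed to regulate, so the verification of (4) and of (8) have to be interleaved, and this interleaving, together with keeping the size and coherence bounds (1), (3), (6) intact through the recursion, is where the bookkeeping is tight. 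Once (4) and (8) are in hand, the remaining properties at $\eta$ follow by comparatively direct computation from the construction and the induction hypothesis.
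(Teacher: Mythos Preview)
The paper does not actually prove this theorem: immediately after the statement it remarks that ``the construction of $\vec A$ appears in \cite[Section 3.1]{mitchell}'' and declines to repeat it. So there is no in-paper proof to compare against; the relevant benchmark is Mitchell's original construction.

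Your sketch follows the same architecture as Mitchell's: recursion on $\eta$, the successor step forced by (2), and at limit $\eta$ a two-layer definition consisting of a skeleton along $C_\eta$ glued together by $\Box_\kappa$-coherence at $\lim(C_\eta)$, followed by a trace layer off $C_\eta$ dictated verbatim by property (7). You have also correctly located the genuine difficulty --- the interleaved verification of (4) and (8) at limit points of $A_{\eta,\beta}$ lying outside $C_\eta$ --- and the role of weak inaccessibility in obtaining (5) from (3) via $c^*(\beta) = |\beta|^+$.

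What is missing is not an idea but precision. The skeleton layer is underspecified: the phrase ``a fixed increasing continuous filtration of $\ell$ into sets of size $\le |\beta|\cdot\omega$'' together with ``decide the remaining elements \ldots\ using the rows already built along $C$'' does not yet pin down which successor elements of $C_\eta$ enter $A_{\eta,\beta}$ at which stage, and it is exactly this choice that must be made so that (3), (6), and the coherence at $\lim(C_\eta)$ hold simultaneously. Likewise, the delicate case of (4) at $\xi\in\lim(A_{\eta,\beta})\setminus(C_\eta\cup A_{\eta,\beta})$ requires an explicit case split (on whether $\delta_\xi\in A_{\eta,\beta}$, and on how $\ot(C_\xi)$ compares to $\beta$) that you acknowledge but do not carry out. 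These are the places where Mitchell's construction is technical, and a complete proof would have to spell them out; your outline is correct in shape but is a plan rather than a proof.
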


Mitchell constructs the sequence $\vec A$ using the square sequence $\vec C$ 
in a careful way. 
The only place where the weak inaccessibility of $\kappa$ 
is used is to derive property (5) 
from property (3), as mentioned above. 
If $\kappa$ is weakly inaccessible in an inner model $W$ which satisfies 
$\Box_\kappa$, and $(\kappa^+)^W = (\kappa^+)^V$, then the sequence 
$\vec A$ constructed in $W$ still satisfies properties (1)--(8) in $V$ 
by upwards absoluteness. 
For example, if $V$ is obtained from $W$ by collapsing $\kappa$ to become 
$\omega_2$ while preserving $\kappa^+$, then there is a 
sequence $\vec A$ as above in $V$.

The construction of $\vec A$ appears in 
\cite[Section 3.1]{mitchell}. 
We do not repeat it here because it is technical and not helpful for 
understanding the other material in our paper.

\begin{notation}
Let $\mathcal A$ denote some expansion of the structure 
$$
(H(\kappa^+),\in,\unlhd,\kappa,T^*,\pi^*,C^*,\Lambda,\mathcal Y_0,
f^*,\vec C,\vec A,c^*).
$$
\end{notation}

Note that $\mathcal A$ is an expansion of the structure described in 
Notations 1.9 and 1.10. 
Thus any elementary substructure of $\mathcal A$ is also an elementary 
substructure of that structure.

\begin{notation}
Let $\mathcal X$ denote the set of $M$ in $P_{\omega_1}(H(\kappa^+))$ such 
that $M \cap \kappa \in T^*$, $M \prec \mathcal A$, 
and $\lim(C_{\sup(M)}) \cap M$ is cofinal in $\sup(M)$.
\end{notation}

\begin{notation}
Let $\mathcal Y$ denote the set of $P$ in $P_{\kappa}(H(\kappa^+))$ such that 
$P \cap \kappa \in \kappa$, $P \prec \mathcal A$, 
and $\lim(C_{\sup(P)}) \cap P$ is cofinal in $\sup(P)$.
\end{notation}

Note that $\mathcal X \subseteq \mathcal X_0$ and 
$\mathcal Y \subseteq \mathcal Y_0$, where $\mathcal X_0$ and $\mathcal Y_0$ 
were defined in Notations 1.9 and 
1.10.\footnote{The requirement that $\lim(C_{\sup(X)}) \cap X$ 
is cofinal in $\sup(X)$ 
appears in Mitchell's definition of a \emph{model} (\cite[Definition 3.14]{mitchell}). 
We do not, however, assume that $\ot(C_{\sup(X)}) \notin X$, as in his definition.}

Observe that by elementarity, for any $M \in \mathcal X$ and $P \in \mathcal Y$, 
$M = f^*[M \cap \kappa^+]$ and $P = f^*[P \cap \kappa^+]$. 
In particular, if $M$ and $N$ are in $\mathcal X \cup \mathcal Y$ and 
$M \cap \kappa^+ \in N$, then by elementarity, $M \in N$.

As a result of the presence of the well-ordering $\unlhd$, 
the structure $\mathcal A$ 
described in Notation 7.6 has definable Skolem functions. 
Let $\langle \tau_n : n < \omega \rangle$ be a complete list of 
definable Skolem terms for $\mathcal A$. 
For any set $a \subseteq H(\kappa^+)$, let $Sk(a)$ denote the 
closure of $a$ under the Skolem terms.

For $n < \omega$ and $m$ equal to the arity of $\tau_n$, 
we define a partial function $\tau_n' : (\kappa^+)^m \to \kappa^+$ by letting 
$\tau_n'(\alpha_0,\ldots,\alpha_{m-1}) = 
\tau_n(\alpha_0,\ldots,\alpha_{m-1})$, provided that this is an ordinal, 
and otherwise is undefined. 
Note that $\tau_n'$ is also definable in $\mathcal A$.

\begin{notation}
Let $H^* : (\kappa^+)^{<\omega} \to \kappa^+$ be a function such that 
any elementary substructure of $\mathcal A$ is closed under $H^*$, and 
whenever $a \subseteq \kappa^+$ is closed under $H^*$, 
then $Sk(a) \cap \kappa^+ = a$. 
In addition, $a$ is closed under $H^*$ iff 
$a$ is closed under $\tau_n'$ for all 
$n < \omega$.
\end{notation}

The existence of such a function $H^*$ is proved by standard arguments. 
Note that if $a$ is a set of ordinals 
closed under $H^*$, then $Sk(a) = f^*[a]$. 
In particular, if $M \in \mathcal X \cup \mathcal Y$ and $a \in M$ is a 
set of ordinals which is closed 
under $H^*$, then by elementarity, $Sk(a) \in M$.

The next simple lemma will prove very useful throughout the paper.

\begin{lemma}
Suppose that $N \in \mathcal X \cup \mathcal Y$, 
$a$ is a set of ordinals in $N$, 
and for some set $b$ which is closed under $H^*$, 
$N \cap a = N \cap b$. 
Then $a$ is closed under $H^*$.
\end{lemma}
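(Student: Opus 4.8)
The plan is to reduce closure under $H^*$ to closure under each Skolem term $\tau_n'$, using the equivalence built into Notation 7.11, and then to check the latter by an elementarity argument carried out inside $N$. The key structural fact I will use is that $a$ itself is a parameter in $N$, so whether ``$a$ is closed under $\tau_n'$'' holds is reflected faithfully between $\mathcal A$ and $N$.

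First I would fix $n<\omega$, let $m$ be the arity of $\tau_n'$, and observe that since $a\in N$ and $N\prec\mathcal A$, the first order statement ``$a$ is closed under $\tau_n'$'' (with parameter $a$, and $\tau_n'$ being definable in $\mathcal A$) holds in $\mathcal A$ if and only if it holds in $N$. So it suffices to prove that $N$ satisfies it. Suppose toward a contradiction that $N$ satisfies the negation. By elementarity there are $\alpha_0,\dots,\alpha_{m-1}\in N\cap a$ such that $\gamma:=\tau_n'(\alpha_0,\dots,\alpha_{m-1})$ is defined and $\gamma\notin a$. Since $\tau_n'$ is definable in $\mathcal A$ and each $\alpha_i\in N\prec\mathcal A$, we get $\gamma\in N$.

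Now I would bring in the hypothesis $N\cap a=N\cap b$. Each $\alpha_i$ lies in $N\cap a=N\cap b$, hence in $b$; as $b$ is closed under $H^*$ it is closed under $\tau_n'$ by Notation 7.11, so $\gamma=\tau_n'(\alpha_0,\dots,\alpha_{m-1})\in b$. Together with $\gamma\in N$ this gives $\gamma\in N\cap b=N\cap a\subseteq a$, contradicting $\gamma\notin a$. Therefore $N$ does satisfy ``$a$ is closed under $\tau_n'$'', hence so does $\mathcal A$, i.e.\ $a$ is genuinely closed under $\tau_n'$. Since $n$ was arbitrary, $a$ is closed under $\tau_n'$ for every $n<\omega$, and so $a$ is closed under $H^*$ by Notation 7.11.

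This argument is essentially routine, so there is no serious obstacle; the one point that needs care is that one cannot verify closure of $a$ directly, since an arbitrary tuple from $a$ need not lie in $N$ and so cannot be fed into $b$. One must instead reflect a hypothetical failure of closure down into $N$ first (using $a\in N$ and $N\prec\mathcal A$), and only afterwards exploit $N\cap a=N\cap b$ and the closure of $b$. The definability of $\tau_n'$ in $\mathcal A$ is exactly what ensures the reflected output value $\gamma$ lands back in $N$, which is what makes the contradiction go through.
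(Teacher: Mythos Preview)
Your proof is correct and follows essentially the same approach as the paper: reduce to closure under each $\tau_n'$, use elementarity of $N$ (together with $a\in N$ and the definability of $\tau_n'$) to work inside $N$, and then exploit $N\cap a=N\cap b$ and closure of $b$ to conclude. The only cosmetic difference is that you phrase the core step as a contradiction while the paper argues directly, but the content is identical.
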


\begin{proof}
It suffices to show that $a$ is closed under $\tau_n'$ for all $n < \omega$. 
Fix $n < \omega$, and let $k$ be the arity of $\tau_n'$. 
Since $a \in N$ and $\tau_n'$ is definable in $\mathcal A$, 
it suffices to show that $N$ models that $a$ is 
closed under $\tau_n'$. 
Let $\alpha_0,\ldots,\alpha_{k-1} \in N \cap a$. 
Then $\alpha_0,\ldots,\alpha_{k-1} \in N \cap b$. 
Since $N$ and $b$ are both closed under $H^*$, they are closed 
under $\tau_n'$. 
So $\tau_n'(\alpha_0,\ldots,\alpha_{k-1}) \in N \cap b$. 
Since $N \cap b \subseteq a$, 
$\tau_n'(\alpha_0,\ldots,\alpha_{k-1}) \in a$.
\end{proof}

In the remainder of this section, we will provide a thorough analysis of 
the models in $\mathcal X$ and $\mathcal Y$. 

The following notation will be useful.

\begin{notation}
Let $N \subseteq H(\kappa^+)$ be a set and $\gamma \in \kappa^+ \cap \sup(N)$. 
Let $\gamma_N$ denote the ordinal $\min((N \cap \kappa^+) \setminus \gamma)$.
\end{notation}

Recall that if $a$ is a set of ordinals, then 
$\cl(a)$ denotes the set $a \cup \lim(a)$.

\begin{lemma}
Let $N \in \mathcal X \cup \mathcal Y$ and $\eta \in \cl(N)$, and 
suppose that $\eta < \sup(N)$. 
Then either $\eta \in N$ or $\eta \in \lim(C_{\eta_N})$. 
Hence:
\begin{enumerate}
\item $C_\eta = C_{\eta_N} \cap \eta$;
\item $A_{\eta,\xi} = A_{\eta_N,\xi} \cap \eta$ for all 
$\xi < \kappa$;
\item $N \cap A_{\eta_N,\xi} = N \cap A_{\eta,\xi}$ for all 
$\xi < \kappa$.
\end{enumerate}
\end{lemma}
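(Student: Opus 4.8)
The plan is to establish the stated dichotomy first, since clauses (1)--(3) follow quickly from it using the coherence properties of $\vec C$ and $\vec A$. Throughout, recall that $\eta_N = \min((N \cap \kappa^+) \setminus \eta)$ is defined because $\eta < \sup(N)$, and that $N \prec \mathcal A$, where $\vec C$ and $\vec A$ are among the predicates of $\mathcal A$. If $\eta \in N$, then $\eta_N = \eta$, so we are in the first case of the dichotomy; moreover clauses (1)--(3) are then immediate, since $C_\eta \subseteq \eta$ and $A_{\eta,\xi} \subseteq \eta$ (the latter by property (1) of Notation 7.4) give $C_{\eta_N} \cap \eta = C_\eta$ and $A_{\eta_N,\xi} \cap \eta = A_{\eta,\xi}$, and intersecting with $N$ changes nothing. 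So from now on I would assume $\eta \in \cl(N) \setminus N$, which forces $\eta \in \lim(N)$; in particular $\eta$ is a limit ordinal, $\eta_N > \eta$, and $N$ contains no ordinal in the interval $[\eta, \eta_N)$.

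The key step is to prove $\eta \in \lim(C_{\eta_N})$. First I would verify that $\eta_N$ is a limit ordinal: if $\eta_N = \mu + 1$, then $\mu \in N$, since $\mu$ is the value at $\eta_N \in N$ of a function definable over $\mathcal A$ and $N \prec \mathcal A$; but $\eta_N > \eta$ forces $\mu \ge \eta$, contradicting the minimality of $\eta_N$. Hence $C_{\eta_N}$ is defined, and since $\vec C$ is a predicate of $\mathcal A$ and $\eta_N \in N$, we have $C_{\eta_N} \in N$. Now fix an arbitrary $\delta < \eta$. Since $\eta \in \lim(N)$, choose $\delta' \in N$ with $\delta < \delta' < \eta$ and set $\gamma := \min(C_{\eta_N} \setminus \delta')$; this ordinal exists because $\delta' < \eta \le \eta_N$ and $C_{\eta_N}$ is club in $\eta_N$, and it lies in $N$ because it is obtained from the parameters $\eta_N, \delta' \in N$ by a function definable over $\mathcal A$. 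Then $\delta < \delta' \le \gamma < \eta_N$. If $\gamma \ge \eta$ held, $\gamma$ would be an ordinal of $N$ inside $[\eta, \eta_N)$, which is impossible; so $\gamma < \eta$ and $\gamma \in C_{\eta_N} \cap (\delta, \eta)$. Since $\delta < \eta$ was arbitrary and $\eta$ is a limit ordinal, $\eta \in \lim(C_{\eta_N})$, completing the dichotomy.

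Finally I would read off the three consequences. Assume $\eta \in \lim(C_{\eta_N})$. Then clause (1) is precisely property (2) of $\vec C$ (Notation 7.2), which gives $C_\eta = C_{\eta_N} \cap \eta$ whenever $\eta \in \lim(C_{\eta_N})$; and clause (2) is precisely property (4) of $\vec A$ (Notation 7.4), which in this situation gives $A_{\eta,\xi} = A_{\eta_N,\xi} \cap \eta$ for every $\xi < \kappa$. Together with the trivial case $\eta \in N$ handled above, this proves (1) and (2) in general. Clause (3) then follows by intersecting the identity of (2) with $N$: since $A_{\eta_N,\xi} \subseteq \eta_N$ while $N$ meets $[\eta, \eta_N)$ emptily, we get $N \cap A_{\eta_N,\xi} \subseteq \eta$, and hence $N \cap A_{\eta,\xi} = N \cap A_{\eta_N,\xi} \cap \eta = N \cap A_{\eta_N,\xi}$.

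There is no deep difficulty here; the main obstacle will simply be the bookkeeping that $C_{\eta_N}$ and the witness $\min(C_{\eta_N} \setminus \delta')$ really belong to $N$, which forces one to first check that $\eta_N$ is a limit ordinal so that $C_{\eta_N}$ is even defined. Both points rely only on $N \prec \mathcal A$ and on $\vec C$ being a predicate of $\mathcal A$; notably, the extra clause in the definitions of $\mathcal X$ and $\mathcal Y$ requiring $\lim(C_{\sup(N)}) \cap N$ to be cofinal in $\sup(N)$ plays no role in this lemma.
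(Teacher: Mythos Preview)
Your proof is correct and follows essentially the same approach as the paper: both pick a point of $N$ below $\eta$, use elementarity to find an element of $C_{\eta_N}$ above it in $N$, and conclude it must lie below $\eta$ since $N \cap [\eta,\eta_N) = \emptyset$. Your version is slightly more careful in that you explicitly verify $\eta_N$ is a limit ordinal (so that $C_{\eta_N}$ is defined), a point the paper leaves implicit.
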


\begin{proof}
If $\eta \in N$, then $\eta_N = \eta$, and (1), (2), and (3) are trivial.

Suppose that $\eta < \eta_N$, 
and we will show that $\eta \in \lim(C_{\eta_N})$. 
Let $\gamma < \eta$. 
Since $\eta \in \cl(N) \setminus N$, $\eta \in \lim(N)$. 
So we can fix $\sigma \in (N \cap \eta) \setminus \gamma$. 
Now $\eta_N \in N$ and $\sigma \in N \cap \eta_N$, so by 
elementarity there is $\delta \in C_{\eta_N} \cap N$ larger than $\sigma$. 
Then $\delta \in N \cap \eta_N \subseteq \eta$. 
So $\gamma < \delta < \eta$ and $\delta \in C_{\eta_N}$. 
This proves that $\eta \in \lim(C_{\eta_N})$.

(1) follows from the definition of a square sequence, and (2) follows 
from Notation 7.4(4). 
For (3), since $N \cap \eta_N = N \cap \eta$, it follows that 
for all $\xi < \kappa$,
$$
N \cap A_{\eta_N,\xi} = N \cap A_{\eta_N,\xi} \cap \eta = 
N \cap A_{\eta,\xi}.
$$
\end{proof}

\begin{lemma}
Let $N \in \mathcal X \cup \mathcal Y$, 
and suppose that $\eta \in \cl(N) \setminus N$. 
Then $\lim(C_\eta) \cap N$ is cofinal in $\eta$.
\end{lemma}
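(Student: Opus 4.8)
The plan is to argue by cases according to whether $\eta = \sup(N)$ or $\eta < \sup(N)$. First note that since $\cl(N) = N \cup \lim(N)$ and $\eta \notin N$, we have $\eta \in \lim(N)$; in particular $\eta$ is a limit ordinal and $\eta \le \sup(N)$. If $\eta = \sup(N)$, then $\lim(C_\eta) \cap N = \lim(C_{\sup(N)}) \cap N$, which is cofinal in $\sup(N) = \eta$ directly by the definition of $\mathcal X$ (if $N \in \mathcal X$) or of $\mathcal Y$ (if $N \in \mathcal Y$), and we are done.

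So the real case is $\eta < \sup(N)$. Here I would write $\eta_N = \min((N \cap \kappa^+)\setminus \eta)$ as in Notation 7.11. Since $\eta \in \cl(N)$, $\eta < \sup(N)$, and $\eta \notin N$, Lemma 7.12 gives $\eta \in \lim(C_{\eta_N})$, and Lemma 7.12(1) gives $C_\eta = C_{\eta_N} \cap \eta$. The use I want to make of the latter equality is that it transfers limit points below $\eta$: for every $\beta < \eta$ we have $C_\eta \cap \beta = C_{\eta_N} \cap \beta$, hence $\beta \in \lim(C_\eta)$ iff $\beta \in \lim(C_{\eta_N})$; that is, $\lim(C_\eta) \cap \eta = \lim(C_{\eta_N}) \cap \eta$. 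So it suffices, given an arbitrary $\gamma < \eta$, to produce $\delta \in \lim(C_{\eta_N}) \cap N$ with $\gamma < \delta < \eta$.

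To produce such a $\delta$: since $\eta \in \lim(N)$, fix $\sigma \in N$ with $\gamma < \sigma < \eta$ and set $\delta := \min\bigl(\lim(C_{\eta_N}) \setminus \sigma\bigr)$. Because $\eta \in \lim(C_{\eta_N})$ and $\sigma < \eta$, the set $\lim(C_{\eta_N}) \setminus \sigma$ contains $\eta$, so $\delta$ is well-defined and $\delta \le \eta$. Since $N \prec \mathcal A$ and $\vec C$ is a predicate of $\mathcal A$, the set $C_{\eta_N}$ and hence $\lim(C_{\eta_N})$ belong to $N$; as $\delta$ is definable in $\mathcal A$ from $\lim(C_{\eta_N})$ and $\sigma$, we get $\delta \in N$. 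But $\eta \notin N$, so $\delta \neq \eta$, and therefore $\delta < \eta$. Then $\delta \in \lim(C_{\eta_N}) \cap \eta = \lim(C_\eta) \cap \eta$ and $\gamma < \sigma \le \delta < \eta$ with $\delta \in N$, as required; since $\gamma$ was arbitrary, $\lim(C_\eta) \cap N$ is cofinal in $\eta$. The one genuinely load-bearing step is the passage from $\delta \le \eta$ to $\delta < \eta$, which is precisely where the hypothesis $\eta \in \cl(N)\setminus N$ (rather than $\eta \in N$) enters, through the fact that $\delta$ was obtained as a definable function of parameters lying in $N$; everything else is bookkeeping with the coherence of $\vec C$ already packaged into Lemma 7.12.
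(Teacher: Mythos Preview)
Your proof is correct and follows essentially the same approach as the paper: split into the cases $\eta = \sup(N)$ and $\eta < \sup(N)$, and in the second case use Lemma 7.12 to relate $C_\eta$ with $C_{\eta_N}$ and then elementarity to locate a suitable $\delta$. The only minor variation is that the paper first observes $\eta_N$ has uncountable cofinality (hence $\lim(C_{\eta_N})$ is cofinal in $\eta_N$) and then picks $\delta \in \lim(C_{\eta_N}) \cap N$ above $\sigma$ by elementarity, whereas you take the definable $\delta = \min(\lim(C_{\eta_N}) \setminus \sigma)$ and use $\eta \in \lim(C_{\eta_N})$ directly to guarantee existence; both arrive at $\delta \in N \cap \eta$ the same way.
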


\begin{proof}
Note that $\eta \in \lim(N)$. 
If $\eta = \sup(N)$, 
then the statement of the lemma 
follows from the definitions of $\mathcal X$ and $\mathcal Y$. 
Otherwise by Lemma 7.12, 
$C_{\eta} = C_{\eta_N} \cap \eta$. 
Let $\gamma < \eta$. 
Since $\eta \in \lim(N)$, we can fix $\sigma \in N \cap \eta$ 
larger than $\gamma$. 
As $\eta < \eta_N$, $\eta_N$ has uncountable cofinality. 
So certainly $\lim(C_{\eta_N})$ is cofinal in $\eta_N$. 
By elementarity, we can find 
$\delta \in \lim(C_{\eta_N}) \cap \eta_N \cap N$ which is 
larger than $\sigma$. 
Then $\delta < \eta$. 
Since $C_\eta = C_{\eta_N} \cap \eta$, it follows that 
$\delta \in \lim(C_\eta)$. 
Thus $\gamma \le \delta$ and $\delta \in \lim(C_\eta) \cap N$.
\end{proof}

The next lemma is standard.

\begin{lemma}
Suppose that $P \in P_{\kappa}(H(\kappa^+))$, 
$P \prec (H(\kappa^+),\in)$, $P \cap \kappa \in \kappa$, and 
$\cf(P \cap \kappa) > \omega$. 
Assume that $\gamma$ is a limit point of 
$P \cap \kappa^+$ below $\sup(P)$, and 
$\cf(\gamma) < \cf(P \cap \kappa)$. 
Then $\gamma \in P$.
\end{lemma}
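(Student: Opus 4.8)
The plan is to argue by contradiction: assume $\gamma \notin P$. Write $\mu = P \cap \kappa$ and $\nu = \cf(\mu)$, so $\nu > \omega$ and $\mu \subseteq P$. First I would dispose of the case $\gamma < \kappa$: since $\gamma$ is a limit point of $P \cap \kappa^+$ and $P \cap \gamma \subseteq P \cap \kappa = \mu$, we get $\gamma = \sup(P \cap \gamma) \le \mu$; if $\gamma = \mu$ then $\cf(\gamma) = \nu$, contradicting $\cf(\gamma) < \nu$, so $\gamma < \mu$ and hence $\gamma \in \mu \subseteq P$, a contradiction. Thus I may assume $\kappa \le \gamma < \sup(P)$. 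Let $\gamma^+ := \min((P \cap \kappa^+) \setminus \gamma)$, which exists because $\gamma < \sup(P)$. Since $\gamma \notin P$ we have $\gamma < \gamma^+$ and $P \cap [\gamma,\gamma^+) = \emptyset$, so $P \cap \gamma^+ = P \cap \gamma$. Also $\gamma^+ \ge \kappa$ and $\gamma^+ < \kappa^+$, so $|\gamma^+| = \kappa$.

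The key step is to fix, by elementarity, a \emph{bijection} $b : \kappa \to \gamma^+$ with $b \in P$, and to observe that then $b[\mu] = P \cap \gamma$. Indeed, for $\xi < \mu$ we have $\xi \in \mu \subseteq P$, so $b(\xi) \in P \cap \gamma^+ = P \cap \gamma$; conversely, for $\sigma \in P \cap \gamma$, since $b^{-1} \in P$ and $\sigma \in P$ we get $b^{-1}(\sigma) \in P \cap \kappa = \mu$, so $\sigma = b(b^{-1}(\sigma)) \in b[\mu]$. In particular $b[\mu] \subseteq \gamma$ and $\sup(b[\mu]) = \sup(P \cap \gamma) = \gamma$. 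Now let $F : \kappa \to \kappa^+$ be the non-decreasing function $F(\xi) = \sup(b[\xi])$; note $F \in P$ since $F$ is definable from $b$.

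Finally I would exploit the cofinality hypothesis. Fix $X \subseteq P \cap \gamma$ cofinal in $\gamma$ with $\ot(X) = \cf(\gamma)$, and for $\sigma \in X$ put $\xi_\sigma := b^{-1}(\sigma) \in \mu$ as above. Since $|X| = \cf(\gamma) < \nu = \cf(\mu)$, the set $\{\xi_\sigma : \sigma \in X\}$ is bounded in $\mu$, so (as $\mu$ is a limit ordinal) there is $\delta < \mu$ with $\xi_\sigma < \delta$ for all $\sigma \in X$; note $\delta \in \mu \subseteq P$. Then $b[\delta] \subseteq b[\mu] \subseteq \gamma$, so $F(\delta) = \sup(b[\delta]) \le \gamma$; on the other hand every $\sigma \in X$ lies in $b[\delta]$, so $\gamma = \sup(X) \le F(\delta)$. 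Hence $F(\delta) = \gamma$, and since $F, \delta \in P$ this yields $\gamma \in P$, a contradiction.

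The step I expect to be the main obstacle is recognizing that one must work with a bijection $b$ rather than merely a surjection of $\kappa$ onto $\gamma^+$: the bijectivity is what makes $b^{-1} \in P$ available and forces the initial segment $b[\mu]$ to be exactly $P \cap \gamma$, hence to lie below $\gamma$. A surjection would in general overshoot $\gamma$ already on $b[\mu]$, and the final identity $F(\delta) = \gamma$ would fail. Once the identity $b[\mu] = P \cap \gamma$ is in hand, the only remaining point is the routine observation that $\cf(\gamma) < \cf(\mu)$ makes the index set $\{\xi_\sigma\}$ bounded below $\mu$.
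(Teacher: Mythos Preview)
Your argument is correct. Both your proof and the paper's are elementary-submodel arguments starting from the same contradiction hypothesis and the same ordinal $\gamma^+ = \gamma_P = \min((P\cap\kappa^+)\setminus\gamma)$, but the key function chosen is different. The paper picks an \emph{increasing cofinal} map $f:\cf(\gamma_P)\to\gamma_P$ in $P$ and splits on whether $\cf(\gamma_P)<\kappa$ or $\cf(\gamma_P)=\kappa$: in the first case $\cf(\gamma_P)\subseteq P$ forces $\ran(f)\subseteq P\cap\gamma_P\subseteq\gamma$, contradicting cofinality; in the second, $f\restriction(P\cap\kappa)$ is an order-embedding of $\mu$ cofinally into $\gamma$, giving $\cf(\gamma)=\cf(\mu)$. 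You instead pick a \emph{bijection} $b:\kappa\to\gamma^+$ in $P$, obtain the exact identity $b[\mu]=P\cap\gamma$, and then use $\cf(\gamma)<\cf(\mu)$ to bound a cofinal family of $b$-preimages below some $\delta<\mu$, yielding $\gamma=\sup(b[\delta])\in P$. Your route avoids the case split on $\cf(\gamma_P)$ at the cost of a separate (easy) treatment of $\gamma<\kappa$ and the auxiliary function $F$; the paper's route is shorter but relies on the cofinality dichotomy. Your remark about needing a bijection rather than a surjection is apt: it is exactly what makes $b^{-1}\in P$ and hence $b[\mu]=P\cap\gamma$ hold, which is the crux of your version.
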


\begin{proof}
Suppose for a contradiction that $\gamma \notin P$. 
Then $\gamma_P$ is in $P$ and $\gamma < \gamma_P$. 
By elementarity, we can fix an increasing and cofinal function 
$f : \cf(\gamma_P) \to \gamma_P$ which is in $P$. 
Since $\gamma_P < \kappa^+$, either $\cf(\gamma_P) < \kappa$ 
or $\cf(\gamma_P) = \kappa$. 
In the first case, $\cf(\gamma_P) \in P \cap \kappa \in \kappa$ implies that 
$\cf(\gamma_P) \subseteq P$. 
By elementarity, $f[\cf(\gamma_P)] \subseteq P \cap \gamma_P \subseteq \gamma$, 
which is impossible 
since $f[\cf(\gamma_P)]$ is cofinal in $\gamma_P$ and $\gamma < \gamma_P$. 
Therefore $\cf(\gamma_P) = \kappa$. 
By elementarity, $f \restriction P \cap \kappa$ is cofinal in $P \cap \gamma_P$, 
and hence is cofinal in $\gamma$. 
But then $\gamma$ has cofinality equal to $\cf(P \cap \kappa)$, which 
contradicts our assumption on $\gamma$.
\end{proof}

\begin{lemma}
Let $P \in P_{\kappa}(H(\kappa^+))$ with $P \cap \kappa \in \kappa$ 
and $P \prec \mathcal A$. 
If $\cf(P \cap \kappa) > \omega$ and 
$\cf(\sup(P)) > \omega$, then $P \in \mathcal Y$.
\end{lemma}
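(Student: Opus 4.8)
The plan is to reduce the statement to the preceding lemma. Write $\delta := \sup(P)$. First I would note that $\delta \notin P$: if it were, then $\delta + 1 \in P$ by elementarity, contradicting $\delta = \sup(P)$. Hence every ordinal of $P$ lies below $\delta$ and $P \cap \delta$ is cofinal in $\delta$, so $\delta$ is a limit ordinal with $\cf(\delta) = \cf(\sup(P)) > \omega$; also $\delta < \kappa^+$ since $|P| < \kappa$. Since $P \cap \kappa \in \kappa$ and $P \prec \mathcal A$, by the definition of $\mathcal Y$ it remains only to show that $\lim(C_\delta) \cap P$ is cofinal in $\delta$.

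The idea is to locate, cofinally often below $\delta$, an ordinal $\eta$ which is simultaneously a limit point of $P$, a limit point of $C_\delta$, and of cofinality $\omega$; the preceding lemma, applied with $\gamma := \eta$ and using $\cf(\eta) = \omega < \cf(P \cap \kappa)$, then forces $\eta$ into $P$, while $\eta \in \lim(C_\delta)$ by construction. To this end I would consider the set $E$ of limit ordinals $\eta < \delta$ with $\sup(P \cap \eta) = \eta$ (the limit points of $P$ below $\delta$) together with the set $\lim(C_\delta)$. Both are easily checked to be closed subsets of $\delta$. Both are unbounded in $\delta$: for $E$ this uses that $P \cap \delta$ is cofinal in $\delta$ and $\cf(\delta) > \omega$, so that the supremum of an increasing $\omega$-sequence from $P \cap \delta$ remains below $\delta$; for $\lim(C_\delta)$ it uses that $C_\delta$ is cofinal in $\delta$ together with $\cf(\delta) > \omega$.

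Given $\gamma < \delta$, I would then build an increasing sequence $\langle \xi_n : n < \omega \rangle$ with $\gamma < \xi_0 < \delta$, choosing $\xi_{2n} \in E$ and $\xi_{2n+1} \in \lim(C_\delta)$ (possible since both sets are unbounded in $\delta$), and set $\eta := \sup_n \xi_n$. Then $\eta < \delta$ because $\cf(\delta) > \omega$, and $\eta \in E \cap \lim(C_\delta)$ because $E$ and $\lim(C_\delta)$ are closed in $\delta$; moreover $\cf(\eta) = \omega$ and $\gamma < \eta$. Since $\eta \in E$, $\eta$ is a limit point of $P \cap \kappa^+$ below $\sup(P)$, and $\cf(\eta) = \omega < \cf(P \cap \kappa)$, so the preceding lemma gives $\eta \in P$. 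Thus $\eta \in \lim(C_\delta) \cap P$ with $\eta > \gamma$, and as $\gamma$ was arbitrary, $\lim(C_\delta) \cap P$ is cofinal in $\delta$, whence $P \in \mathcal Y$.

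The verifications that $E$ and $\lim(C_\delta)$ are clubs in $\delta$ are routine, and I do not expect a serious obstacle anywhere; the only delicate points are exactly where the two cofinality hypotheses get used, namely in guaranteeing $\eta < \delta$ (which needs $\cf(\sup(P)) > \omega$) and in meeting the hypothesis $\cf(\eta) < \cf(P \cap \kappa)$ of the preceding lemma (which needs $\cf(P \cap \kappa) > \omega$). In effect the proof records the slogan that $P$ already absorbs every limit point of its closure of sufficiently small cofinality, and in particular the countable-cofinality limit points of $C_{\sup(P)}$ lying in $\cl(P)$.
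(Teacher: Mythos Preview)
Your proof is correct and follows essentially the same approach as the paper: interleave an $\omega$-sequence drawn alternately from (limit points of) $P$ and from (limit points of) $C_{\sup(P)}$, use $\cf(\sup(P)) > \omega$ to keep the supremum below $\sup(P)$, and then invoke Lemma~7.14 with $\cf(P\cap\kappa) > \omega$ to place that supremum in $P$. The paper carries out the interleaving directly with elements of $P$ and $C_\sigma$ rather than passing to their limit-point sets first, but this is a cosmetic difference.
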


\begin{proof}
Let $\sigma := \sup(P)$. 
By the definition of $\mathcal Y$, 
it suffices to show that $\lim(C_\sigma) \cap P$ is cofinal in $\sigma$. 
So let $\xi < \sigma$. 
Since $\sup(P) = \sigma$ has uncountable cofinality, 
there exists 
a sequence $\langle \gamma_n : n < \omega \rangle$ increasing 
and bounded below $\sigma$ such that $\xi < \gamma_0$, 
$\gamma_n \in P$ if 
$n$ is even, and $\gamma_n \in C_\sigma$ if $n$ is odd. 
Let $\gamma$ be the supremum of this sequence. 
Then $\gamma$ is a limit point of $P$ which is strictly below $\sup(P)$ with 
cofinality $\omega$. 
Since $\cf(P \cap \kappa) > \omega$, $\cf(\gamma) < \cf(P \cap \kappa)$. 
So $\gamma \in P$ by Lemma 7.14. 
On the other hand, $\gamma$ is a limit point of $C_\sigma$. 
So $\xi < \gamma$ and $\gamma \in \lim(C_\sigma) \cap P$.
\end{proof}

\begin{lemma}
Let $M$ and $N$ be in $\mathcal X \cup \mathcal Y$, and assume that 
$\{ M, N \}$ is adequate in the case that $M$ and $N$ are in $\mathcal X$. 
Then $M \cap N \in \mathcal X \cup \mathcal Y$. 
Specifically:
\begin{enumerate}
\item if $M \in \mathcal X$ and $N \in \mathcal X \cup \mathcal Y$, then 
$M \cap N \in \mathcal X$;
\item if $M \in \mathcal Y$ and $N \in \mathcal Y$, then 
$M \cap N \in \mathcal Y$. 
\end{enumerate}
\end{lemma}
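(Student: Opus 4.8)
The plan is to verify, one at a time, the three conditions defining membership in $\mathcal X$ (respectively $\mathcal Y$): that $M \cap N$ has the right size and that $(M \cap N) \cap \kappa$ lies in $T^*$ (respectively in $\kappa$); that $M \cap N \prec \mathcal A$; and that $\lim(C_{\sup(M \cap N)}) \cap (M \cap N)$ is cofinal in $\sup(M \cap N)$. The first condition is immediate from Part I: if $M, N \in \mathcal X$ and $\{M,N\}$ is adequate, then $M \cap N \in \mathcal X_0$ by Lemma 1.23, so $(M \cap N) \cap \kappa \in T^*$; if $M \in \mathcal X$ and $N \in \mathcal Y$, then $M \cap N \in \mathcal X_0$ by the remarks following Notation 1.10; and if $M, N \in \mathcal Y$, then $M \cap N \in \mathcal Y_0$ by those same remarks, so $(M \cap N) \cap \kappa \in \kappa$. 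This is the only point at which the adequacy hypothesis is used. The second condition holds because the well-ordering $\unlhd$ makes $\mathcal A$ a structure with definable Skolem functions, so its elementary substructures are exactly the sets closed under those functions, and this collection is closed under intersection.

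The content of the lemma is the third condition. Write $K := M \cap N$ and $\delta := \sup(K)$. Since $K$ is closed under successor (being an elementary substructure of $\mathcal A$) and $\delta < \kappa^+$, we have $\delta \notin K$: otherwise $\delta + 1 \in K$, contradicting $\delta = \sup(K)$. Hence $K$ has no maximum, so $\delta$ is a limit ordinal, and since $K$ is moreover closed under $x \mapsto x + \omega$, the limit ordinals of $K$ are cofinal in $\delta$; thus $\delta$ is a limit of limit ordinals, and by Notation 7.2(3) every element of $C_\delta$ is a limit ordinal. Because $\delta \notin K$, at least one of $M, N$ omits $\delta$; and since the present argument uses only membership in $\mathcal X \cup \mathcal Y$, not adequacy, we may assume without loss of generality that $\delta \notin N$. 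Then $\delta \in \cl(N) \setminus N$, so $\lim(C_\delta) \cap N$ is cofinal in $\delta$ by Lemma 7.13; in particular $\lim(C_\delta)$ is cofinal in $\delta$. The same holds for $M$: if $\delta \notin M$, use Lemma 7.13 again (noting $\delta \in \cl(M) \setminus M$, as $K \subseteq M$ is cofinal in $\delta$); and if $\delta \in M$, then $\delta$ is a limit point of $M$, $C_\delta \in M$, and $M$ reflects the true statement that $\lim(C_\delta)$ is unbounded in $\delta$. So $\lim(C_\delta) \cap M$ is cofinal in $\delta$.

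It remains to deduce that $\lim(C_\delta) \cap K$ is cofinal in $\delta$, and this is the step I expect to be the main obstacle: cofinality of $\lim(C_\delta) \cap M$ and of $\lim(C_\delta) \cap N$ does not by itself give cofinality of their intersection. The idea is to use the coherence of $\vec C$ to push up inside $M$ and $N$ at the same time. Given $\gamma < \delta$, first pick $\zeta \in \lim(C_\delta) \cap M$ with $\gamma < \zeta < \delta$, then $\sigma \in K$ with $\zeta < \sigma < \delta$ (possible since $\delta = \sup(K)$), and set $\rho := \sup(\lim(C_\delta) \cap \sigma)$. Since $\lim(C_\delta)$ is a closed set of ordinals, $\zeta \in \lim(C_\delta) \cap \sigma \ne \emptyset$, and $\rho \le \sigma < \delta$, we get $\rho \in \lim(C_\delta)$ and $\gamma < \zeta \le \rho < \delta$. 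The key point is that $\rho \in M \cap N$: for $X \in \{M, N\}$, the set $\lim(C_\delta) \cap \sigma$ equals $\lim(C') \cap \sigma$ for a club $C' \in X$, where $C' = C_\delta$ if $\delta \in X$, where $C' = C_{\delta_X}$ if $\delta \notin X$ and $\delta < \sup(X)$ (here Lemma 7.12(1) gives $\delta \in \lim(C_{\delta_X})$ and $C_\delta = C_{\delta_X} \cap \delta$), and where $C' = C_{\zeta'}$ for an appropriate $\zeta' \in \lim(C_\delta) \cap X$ above $\sigma$ if $\delta = \sup(X) \notin X$ (using the defining club property of $X$ together with coherence of $\vec C$); in each case $\rho = \sup(\lim(C') \cap \sigma)$ is definable from $C'$ and $\sigma$, both of which lie in $X$, so $\rho \in X$. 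Thus $\rho \in \lim(C_\delta) \cap K$ with $\gamma < \rho$, which is what we wanted. The remaining labor is the routine but careful bookkeeping of these coherence sub-cases for $M$ and for $N$, after which the conclusions $M \cap N \in \mathcal X$ in case (1) and $M \cap N \in \mathcal Y$ in case (2) follow by combining the three verified conditions.
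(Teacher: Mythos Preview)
Your proof is correct and follows essentially the same approach as the paper's. Both arguments reduce to showing that a specific element of $\lim(C_\delta)$ is definable in $M$ and in $N$ from parameters in $M \cap N$ together with a coherent proxy for $C_\delta$; you take $\rho = \sup(\lim(C_\delta) \cap \sigma)$ with $\sigma \in M \cap N$, while the paper takes $\min(\lim(C_\alpha) \setminus \gamma')$ with $\gamma' \in M \cap N$. The paper streamlines your three-case analysis for $X \in \{M,N\}$ by first establishing once and for all that $\lim(C_\alpha) \cap X$ is cofinal in $\alpha$, and then uniformly picking $\eta \in \lim(C_\alpha) \cap X$ above the target point so that $C_\eta = C_\alpha \cap \eta$ serves as the proxy in every case; this avoids your separate treatment of $\delta \in X$, $\delta < \sup(X)$, and $\delta = \sup(X)$, but the underlying coherence argument is identical.
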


\begin{proof}
Obviously $M \cap N$ is an elementary substructure of $\mathcal A$. 
If $M \in \mathcal X$, then $M \cap N \in \mathcal X_0$ by Lemma 1.23 and 
the comment after Notation 1.10. 
Hence $M \cap N \cap \kappa \in T^*$. 
And if $M$ and $N$ are in $\mathcal Y$, then 
$M \cap N \cap \kappa = \min \{ M \cap \kappa,  N \cap \kappa \} \in \kappa$. 
Let $\alpha := \sup(M \cap N)$. 
It remains to show that $\lim(C_\alpha) \cap (M \cap N)$ is cofinal in $\alpha$. 

First we claim that $\lim(C_\alpha) \cap M$ and $\lim(C_\alpha) \cap N$ 
are cofinal in $\alpha$. 
Since $M \cap N \cap \kappa^+$ is closed under successors, it does not 
have a maximum element, and therefore $\alpha$ is a limit point of 
$M \cap N \cap \kappa^+$. 
As $\alpha \in \lim(M)$, if $\alpha \notin M$ then 
$\lim(C_{\alpha}) \cap M$ is cofinal in $\alpha$ by Lemma 7.13, and similarly 
with $N$. 
So if $\alpha$ is neither in $M$ nor $N$, then the claim is proved. 
Assume that $\alpha$ is in one of them. 
Since $\alpha = \sup(M \cap N)$, $\alpha$ cannot be in both in $M$ and $N$. 
Without loss of generality, assume that $\alpha \in N \setminus M$. 
Then $\lim(C_\alpha) \cap M$ is cofinal in $\alpha$ as just observed, 
and so in particular, 
$\lim(C_\alpha)$ is cofinal in $\alpha$. 
By the elementarity of $N$, and since $\alpha \in N$ and 
also $\alpha$ is a limit point of $N$, 
easily $\lim(C_\alpha) \cap N$ is cofinal in $\alpha$.
This completes the proof of the claim.

To show that $\lim(C_\alpha) \cap (M \cap N)$ is cofinal in $\alpha$, 
let $\gamma < \alpha$. 
Fix $\gamma' \in M \cap N \cap \kappa^+$ with $\gamma < \gamma'$. 
Let $\sigma = \min(\lim(C_\alpha) \setminus \gamma')$. 
We claim that $\sigma \in M \cap N$, which completes the proof. 
Since $\lim(C_\alpha) \cap M$ is cofinal in $\alpha$, we can fix 
$\eta \in \lim(C_\alpha) \cap M$ with $\sigma < \eta$. 
As $\eta \in \lim(C_\alpha)$, $C_\eta = C_\alpha \cap \eta$. 
Therefore $\sigma = \min(\lim(C_\eta) \setminus \gamma')$. 
Since $\eta$ and $\gamma'$ are in $M$, $\sigma \in M$ by elementarity. 
The same argument shows that $\sigma \in N$.
\end{proof}

We now introduce the idea of a simple model.\footnote{This is the same idea as 
described in \cite[Section 3.3]{mitchell}.} 
These are the models for which there exist strongly generic conditions. 
To motivate the definition, we prove a bound on $\ot(C_{\sup(N)})$.

\begin{lemma}
Let $N \in \mathcal X \cup \mathcal Y$. 
If $\eta \in \lim(N)$, then $\ot(C_\eta) \in \cl(N \cap \kappa)$. 
In particular, $\ot(C_{\sup(N)}) \le \sup(N \cap \kappa)$.
\end{lemma}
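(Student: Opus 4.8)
The plan is to reduce everything to the following auxiliary observation: for every limit ordinal $\gamma < \kappa^+$ with $\gamma \in N$ one has $\ot(C_\gamma) \in N \cap \kappa$. This should be immediate: since $\vec C$ is one of the predicates of $\mathcal A$, the club $C_\gamma$ is definable in $\mathcal A$ from $\gamma$, so $C_\gamma \in N$ and hence $\ot(C_\gamma) \in N$; as $\ot(C_\gamma) \le \kappa$ by Notation 7.2(1), and $\kappa \notin N$ (because $N$ is either countable or satisfies $N \cap \kappa \in \kappa$), we get $\ot(C_\gamma) < \kappa$, and thus $\ot(C_\gamma) \in N \cap \kappa$.

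With this in hand I would split into two cases. If $\eta \in N$, then since $\eta \in \lim(N)$ the ordinal $\eta$ is a limit ordinal, so the auxiliary observation applied to $\gamma = \eta$ gives $\ot(C_\eta) \in N \cap \kappa \subseteq \cl(N \cap \kappa)$. If $\eta \notin N$, then $\eta \in \lim(N) \setminus N \subseteq \cl(N) \setminus N$, so Lemma 7.13 applies and gives that $\lim(C_\eta) \cap N$ is cofinal in $\eta$. For each $\delta \in \lim(C_\eta) \cap N$, the coherence property Notation 7.2(2) gives $C_\delta = C_\eta \cap \delta$, so $\ot(C_\delta) = \ot(C_\eta \cap \delta)$; and since $\delta \in N$ is a limit ordinal, the auxiliary observation gives $\ot(C_\delta) \in N \cap \kappa$. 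Because $\lim(C_\eta) \cap N$ is cofinal in $\eta$, we have $C_\eta = \bigcup \{ C_\eta \cap \delta : \delta \in \lim(C_\eta) \cap N \}$, whence $\ot(C_\eta) = \sup \{ \ot(C_\eta \cap \delta) : \delta \in \lim(C_\eta) \cap N \} = \sup \{ \ot(C_\delta) : \delta \in \lim(C_\eta) \cap N \}$. This exhibits $\ot(C_\eta)$ as the supremum of a nonempty subset of $N \cap \kappa$, and any such supremum lies in $\cl(N \cap \kappa)$: it is either attained, hence in $N \cap \kappa$, or else a limit point of $N \cap \kappa$.

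For the final clause I would take $\eta = \sup(N)$: since $N \cap \kappa^+$ is closed under successors, $\sup(N)$ is a limit point of $N$, so the main statement gives $\ot(C_{\sup(N)}) \in \cl(N \cap \kappa)$; as every element of $\cl(N \cap \kappa)$ is at most $\sup(N \cap \kappa)$, this yields $\ot(C_{\sup(N)}) \le \sup(N \cap \kappa)$. I do not anticipate a real obstacle here; the only step needing a moment's care is the identity $\ot(C_\eta) = \sup_\delta \ot(C_\eta \cap \delta)$ in the second case, which rests on the elementary fact that the initial segments $C_\eta \cap \delta$, for $\delta \in \lim(C_\eta) \cap N$, exhaust $C_\eta$.
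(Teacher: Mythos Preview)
Your overall structure matches the paper's proof closely: split on whether $\eta \in N$, and in the case $\eta \notin N$ use Lemma 7.13 together with coherence to write $\ot(C_\eta)$ as a supremum of ordinals $\ot(C_\delta) \in N \cap \kappa$. That part is fine.

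However, your auxiliary observation and its justification are both incorrect as stated. You claim that $\kappa \notin N$ because $N$ is countable or $N \cap \kappa \in \kappa$; but $\kappa$ is an explicit constant of the structure $\mathcal A$ (see Notation 7.6), so $\kappa \in N$ by elementarity for every $N \in \mathcal X \cup \mathcal Y$. The conditions you cite only give $\kappa \not\subseteq N$. Consequently, your general auxiliary observation is false: take $\gamma = \kappa$, which is a limit ordinal in $N$ with $\cf(\kappa) = \kappa$, so $\ot(C_\kappa) = \kappa$ and hence $\ot(C_\kappa) \notin N \cap \kappa$.

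The paper's fix is exactly what you are missing: since $\eta \in \lim(N)$ and $|N| < \kappa$, one has $\cf(\eta) < \kappa$, and then Notation 7.2(1) gives $\ot(C_\eta) < \kappa$ directly. This also handles each $\delta \in \lim(C_\eta) \cap N$, since $\ot(C_\delta) = \ot(C_\eta \cap \delta) < \ot(C_\eta) < \kappa$. Once you replace your ``$\kappa \notin N$'' step with this cofinality argument, your proof goes through and is essentially identical to the paper's.
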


\begin{proof}
Since $\eta \in \lim(N)$ and $|N| < \kappa$, it follows that 
$\cf(\eta) < \kappa$. 
If $\eta \in N$, then $\ot(C_\eta) \in N \cap \kappa$ by elementarity. 
Assume that $\eta$ is not in $N$. 
Then $\eta \in \cl(N) \setminus N$. 
By Lemma 7.13, $\lim(C_\eta) \cap N$ is cofinal in $\eta$. 
We claim that $\ot(C_{\eta})$ is a limit point of $N \cap \kappa$. 
Let $\gamma < \ot(C_\eta)$. 
Then we can find $\delta \in \lim(C_\eta) \cap N$ such that 
$$
\gamma < \ot(C_\eta \cap \delta) = \ot(C_\delta) < \ot(C_\eta).
$$
Since $\delta \in N$, $\ot(C_\delta) \in N \cap \ot(C_\eta)$. 
So $\gamma < \ot(C_\delta) < \ot(C_\eta)$ and 
$\ot(C_\delta) \in N \cap \kappa$.
\end{proof}

\begin{definition}
Let $N \in \mathcal X \cup \mathcal Y$. 
We say that $N$ is \emph{simple} if $\ot(C_{\sup(N)}) = \sup(N \cap \kappa)$.
\end{definition}

We prove next that there exist stationarily many simple models 
in $\mathcal X$.

\begin{lemma}
Let $N \in \mathcal X \cup \mathcal Y$ and $\delta := \sup(N)$. 
Then for all $\xi \in N \cap \ot(C_\delta)$, 
$c_{\delta,\xi} \in N$. 
\end{lemma}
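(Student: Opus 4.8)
The plan is to reduce the statement, via square coherence, from the supremum $\delta=\sup(N)$ --- which does not itself lie in $N$ --- to a limit point of $C_\delta$ that does lie in $N$, and then to conclude by elementarity.

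First I would record two easy preliminary facts. Since $N\prec\mathcal A$, the set $N\cap\kappa^+$ is closed under ordinal successor and hence has no maximum, so $\delta=\sup(N\cap\kappa^+)$ is a proper limit point of $N$; in particular $\delta\notin N$ and $\delta$ is a limit ordinal. Also $|N|<\kappa$ and $\kappa^+$ is regular, so $\delta<\kappa^+$ and $C_\delta$ is defined. Now fix $\xi\in N\cap\ot(C_\delta)$; then $c_{\delta,\xi}$ is well-defined and $c_{\delta,\xi}<\delta$.

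Next I would use the cofinality property built into the definitions of $\mathcal X$ and $\mathcal Y$ (Notations 7.7 and 7.8): $\lim(C_\delta)\cap N$ is cofinal in $\delta$. Hence I can pick $\eta\in\lim(C_\delta)\cap N$ with $c_{\delta,\xi}<\eta$. Such an $\eta$ is a limit ordinal below $\kappa^+$, so by the coherence clause of the square sequence (Notation 7.2(2)) we have $C_\eta=C_\delta\cap\eta$. Since $c_{\delta,\xi}\in C_\delta$ and $c_{\delta,\xi}<\eta$, this gives $c_{\delta,\xi}\in C_\eta$ together with
\[
\ot(C_\eta\cap c_{\delta,\xi})=\ot(C_\delta\cap c_{\delta,\xi})=\xi ,
\]
so $\xi<\ot(C_\eta)$ and $c_{\delta,\xi}$ is exactly the $\xi$-th element of $C_\eta$, i.e.\ $c_{\delta,\xi}=c_{\eta,\xi}$.

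Finally, since $\vec C$ is one of the predicates of $\mathcal A$, the partial function $(\zeta,\rho)\mapsto c_{\zeta,\rho}$ is definable in $\mathcal A$; as $\eta,\xi\in N\prec\mathcal A$, elementarity yields $c_{\eta,\xi}\in N$, and therefore $c_{\delta,\xi}=c_{\eta,\xi}\in N$. I do not anticipate any genuine obstacle: the only point that needs care is the passage from $\delta=\sup(N)$ to the in-model limit point $\eta$, and that is precisely what the cofinality requirement in the definitions of $\mathcal X$ and $\mathcal Y$, combined with square coherence, is designed to handle; everything else is a routine appeal to elementarity.
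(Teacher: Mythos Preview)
Your proof is correct and follows essentially the same approach as the paper: pick $\eta\in\lim(C_\delta)\cap N$ above $c_{\delta,\xi}$ (equivalently, with $\xi<\ot(C_\eta)$), use square coherence to get $c_{\delta,\xi}=c_{\eta,\xi}$, and conclude by elementarity. The preliminary remarks you include are harmless but not needed for the argument.
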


\begin{proof}
Let $\xi \in N \cap \ot(C_\delta)$. 
Then $\xi < \ot(C_\delta)$. 
As $\lim(C_{\delta}) \cap N$ is cofinal $\delta$, 
we can fix $\eta \in \lim(C_{\delta}) \cap N$ such that 
$\xi < \ot(C_\delta \cap \eta) = \ot(C_\eta)$. 
Hence $c_{\delta,\xi} = c_{\eta,\xi}$. 
Since $\eta$ and $\xi$ are 
in $N$, by elementarity, $c_{\eta,\xi}$ is in $N$.
\end{proof}

\begin{proposition}
The collection of models in $\mathcal X$ which are simple 
is stationary in $P_{\omega_1}(H(\kappa^+))$.
\end{proposition}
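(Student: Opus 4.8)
The plan is to verify stationarity directly. Fix an arbitrary $F \colon H(\kappa^+)^{<\omega} \to H(\kappa^+)$; I will produce a simple $M \in \mathcal X$ that is closed under $F$ and satisfies $M \cap \omega_1 \in \omega_1$. I build $M$ as the union of an $\in$-increasing chain $\langle M_n : n < \omega \rangle$ of countable elementary substructures of $\mathcal A$, each closed under $F$ and under the Skolem terms of $(\mathcal A, F)$, with $M_n \in M_{n+1}$ and $M_n \cap \kappa \in T^*$ for all $n$; a starting model $M_0$ of this kind exists since $T^*$ is stationary. For $M := \bigcup_n M_n$, the condition $M \cap \omega_1 \in \omega_1$ is automatic: given $\beta \in M \cap \omega_1$, fix $n$ with $\beta \in M_n$; then $M_{n+1}$ contains a surjection $\omega \to \beta$, so $\beta \subseteq M_{n+1} \subseteq M$. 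Also $M \prec \mathcal A$ is closed under $F$. The real content is to arrange the chain so that $M \cap \kappa \in T^*$, so that $\lim(C_{\sup M}) \cap M$ is cofinal in $\sup M$ (this together with $M \cap \kappa \in T^*$ gives $M \in \mathcal X$), and so that $\ot(C_{\sup M}) = \sup(M \cap \kappa)$ (simplicity).

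The organizing idea is to move the suprema $\delta_n := \sup(M_n)$ up a single \emph{thread} of the square sequence. Each $\delta_n$ is a limit ordinal, as $M_n$ is closed under $\alpha \mapsto \alpha + \omega$. I arrange the chain so that $\delta_n \in \lim(C_{\delta_{n+1}})$ for every $n$; this is possible because $\delta_n \in M_{n+1}$, and by Notation 7.2(4) the point $\delta_n$ is already a limit point of $C_{\delta_n + \omega}$, so the thread through $\delta_n$ is nonempty, and by the coherence property Notation 7.2(2) one can keep choosing the later models inside it. Then $C_{\delta_n} = C_{\delta_m} \cap \delta_n$ for all $m \ge n$, and hence $\delta_n \in \lim(C_{\sup M})$ for every $n$; thus $\{ \delta_n : n < \omega \}$ is a cofinal subset of $\sup M$ lying in $\lim(C_{\sup M}) \cap M$. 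Moreover $\ot(C_{\delta_n}) = \ot(C_{\sup M} \cap \delta_n) < \ot(C_{\sup M})$ and $\ot(C_{\sup M}) = \sup_n \ot(C_{\delta_n})$.

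For simplicity, Lemma 7.18 already gives $\ot(C_{\sup M}) \le \sup(M \cap \kappa)$, so I only need to force the reverse inequality, i.e.\ to force $\ot(C_{\sup M}) > \beta$ for each $\beta \in M \cap \kappa$. By the preceding paragraph this amounts to: for each $\beta \in M \cap \kappa$ there is $n$ with $\ot(C_{\delta_n}) > \beta$; and by Lemma 7.20, once $\beta < \ot(C_{\delta_n})$ the point $c_{\delta_n,\beta}$ automatically enters $M_n$. So I run a bookkeeping over pairs $(n,k)$: at the stage handling $(n,k)$, letting $\beta$ be the $k$-th element of $M_n \cap \kappa$ (if it exists), I extend the chain up the current thread until its supremum $\delta_m$ satisfies $\ot(C_{\delta_m}) > \beta$ — possible since $\ot$ of a member of $\vec C$ can be as large as $\kappa$, and, as $\sup M < \kappa^+$, one never bumps into the bound on thread length forced by $\Box_\kappa$. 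Interleaving these simplicity steps with the threading steps above and with steps that keep $M_n \cap \kappa \in T^*$, we obtain the desired $M$.

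The main obstacle is making the three families of requirements coexist. The threading steps and the simplicity steps both prescribe how $\delta_n = \sup(M_n)$ sits inside $\vec C$ — the former that the chain stay in one thread, the latter that the order types $\ot(C_{\delta_n})$ overtake the ordinals appearing in $M_n \cap \kappa$ — while $M \cap \kappa \in T^*$ constrains the $\kappa$-part of the limit model. Reconciling them is exactly Mitchell's construction of simple models, cf.\ \cite[Section 3.3]{mitchell}: one chooses the thread and the ordinals $\delta_n$ so that at each stage $\ot(C_{\delta_n})$ catches up to $M_n \cap \kappa$ while the new ordinals of $M_{n+1} \cap \kappa$ extend $M_n \cap \kappa$ at its top, so that closure of $T^*$ under initial segments keeps the $\kappa$-parts — and in the limit $M \cap \kappa$ — inside $T^*$. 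I expect this last bookkeeping, keeping the $\kappa$-parts coherent so that $M \cap \kappa$ lands back in $T^*$, to be the most delicate point.
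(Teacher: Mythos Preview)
There is a genuine gap at the threading step. You claim you can ``extend the chain up the current thread until $\ot(C_{\delta_m}) > \beta$'', justifying this by ``$\ot$ of a member of $\vec C$ can be as large as $\kappa$''. But that is a non sequitur: the existence of long $C_\alpha$'s somewhere says nothing about the particular thread through your $\delta_0$. Nothing in $\Box_\kappa$ forces threads to be long; for instance, Notation 7.2 does not rule out $C_{\delta_0 + \omega^2}$ having order type $\omega$ with minimum above $\delta_0$, in which case $\delta_0 \notin \lim(C_{\delta_0 + \omega^2})$ and the thread through $\delta_0$ may well be exhausted before its order type exceeds $\ot(C_{\delta_0}) + \omega^2$. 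So the step of overtaking an arbitrary $\beta \in M \cap \kappa$ can simply fail. A related lacuna: you never explain how to produce a countable $M_{n+1} \prec \mathcal A$ with $M_n \in M_{n+1}$ and with $\sup(M_{n+1})$ landing on the prescribed thread; ``choosing the later models inside it'' is not a construction. And as you yourself concede, the $T^*$ requirement is left unhandled.

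The paper's proof avoids all of this by fixing the thread \emph{first}. One chooses $X \prec \mathcal A$ of size $\kappa$, closed under $F$, with $\theta := X \cap \kappa^+$ of cofinality $\kappa$; then $\ot(C_\theta) = \kappa$, so $C_\theta$ is a single thread of full length. Now take a single countable $M \prec (X,\in,C_\theta,c)$, where $c(\xi) = c_{\theta,\xi}$, with $M \cap \kappa \in T^*$ --- this is possible by the stationarity of $T^*$. Elementarity in this expanded structure forces $\delta := \sup(M) \in \lim(C_\theta)$, hence $C_\delta = C_\theta \cap \delta$ and $M \in \mathcal X$; closure under $c$ forces $c_{\delta,\xi}$ to be defined for every $\xi \in M \cap \kappa$, hence $\ot(C_\delta) \ge \sup(M \cap \kappa)$, and together with Lemma 7.17 (your ``Lemma 7.18'') this gives simplicity. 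Your $\omega$-chain could be repaired by running it inside such an $X$ with $C_\theta$ added as a predicate, but at that point the chain is redundant: one application of stationarity of $T^*$ does everything at once.
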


\begin{proof}
Let $F : H(\kappa^+)^{<\omega} \to H(\kappa^+)$, and we will find a simple model 
in $\mathcal X$ which is closed under $F$. 
Fix $X$ of size $\kappa$ such that $X \prec \mathcal A$, $X$ is closed under $F$, 
and $\theta := X \cap \kappa^+$ has cofinality $\kappa$. 
Let $c : \kappa \to \theta$ be the function 
$c(\xi) = c_{\theta,\xi}$ for all $\xi < \kappa$. 
Since $T^*$ is stationary in $P_{\omega_1}(\kappa)$ and $X \prec \mathcal A$, 
we can find 
$M \in P_{\omega_1}(X)$ which is closed under $F$ such that 
$$
M \cap \kappa \in T^*, \ M \prec \mathcal A, \ \textrm{and} \ 
M \prec (X,\in,C_\theta,c).
$$
Let $\delta := \sup(M)$.

We claim that $M$ is in $\mathcal X$ and is simple. 
To show that $M \in \mathcal X$, it suffices to show that 
$\lim(C_\delta) \cap M$ is cofinal in $\delta$. 
By elementarity, clearly $\delta \in \lim(C_\theta)$. 
Hence $C_\delta = C_\theta \cap \delta$. 
As $M$ is closed under $c$, for all $\xi \in M \cap \kappa$, 
$$
c(\xi) = c_{\theta,\xi} \in M \cap \kappa^+ \subseteq \delta.
$$ 
Thus $c(\xi) \in C_{\theta} \cap \delta = C_{\delta}$. 
So $c(\xi) = c_{\theta,\xi} = c_{\delta,\xi}$. 
It follows by elementarity that $\{ c(\xi) : \xi \in M \cap \kappa \}$ 
is increasing and cofinal in $M \cap \delta$. 
In particular, the set $\{ c(\xi) : \xi \in M \cap \kappa, \ \xi \ \textrm{limit} \}$ 
witnesses that $\lim(C_\delta) \cap M$ is cofinal $\delta$.

It remains to show that $M$ is simple, which means that 
$\ot(C_\delta) = \sup(M \cap \kappa)$. 
We know that $\ot(C_{\delta}) \le \sup(M \cap \kappa)$ by Lemma 7.17. 
Suppose for a contradiction that $\ot(C_\delta) < \sup(M \cap \kappa)$. 
Fix $\beta \in (M \cap \kappa) \setminus \ot(C_{\delta})$. 
Then $c(\beta) \in M$ by elementarity. 
But $c(\beta) = c_{\theta,\beta} = c_{\delta,\beta}$, as previously observed, 
which is absurd since $\ot(C_\delta) \le \beta$.
\end{proof}

Regarding the stationarity of simple models in $\mathcal Y$, see 
Lemma 8.3 and Proposition 14.2.

\bigskip

Next we will show that a model $M$ in $\mathcal X \cup \mathcal Y$ 
is determined by $\sup(M \cap \kappa)$ 
and $\sup(M)$.

\begin{notation}
Consider $\eta < \kappa^+$ and $\beta < \kappa$. 
For $\gamma < \ot(A_{\eta,\beta})$, let 
$a_{\eta,\beta,\gamma}$ be equal to the $\gamma$-th element of 
$A_{\eta,\beta}$.  
Define $\pi_\eta : \kappa \times \kappa \to \eta$ by letting 
$\pi_\eta(\gamma,\beta) = a_{\eta,\beta,\gamma}$ if 
$\gamma < \ot(A_{\eta,\beta})$, and $0$ otherwise.
\end{notation}

Note that $\pi_\eta$ is a surjection of $\kappa \times \kappa$ onto $\eta$. 
Also if $\xi \in A_{\eta,\beta}$, then 
$\xi = \pi_{\eta}(\ot(A_{\eta,\beta} \cap \xi),\beta)$.

Observe that $\pi_\eta$ is definable in the structure $\mathcal A$.

\begin{lemma}
Let $\eta < \kappa^+$ and $\beta < \kappa$. 
Suppose that 
$$
\delta \in \lim(C_\eta) \cup A_{\eta,\beta} \cup \lim(A_{\eta,\beta})
$$
and $\gamma < \ot(A_{\delta,\beta})$. 
Then $a_{\eta,\beta,\gamma} = a_{\delta,\beta,\gamma}$. 
So $\pi_\eta(\gamma,\beta) = \pi_\delta(\gamma,\beta)$.
\end{lemma}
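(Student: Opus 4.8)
The plan is to derive everything from the coherence property Notation 7.4(4), which says that whenever $\xi$ lies in $\lim(C_\eta) \cup A_{\eta,\beta} \cup \lim(A_{\eta,\beta})$, then $A_{\xi,\beta} = A_{\eta,\beta} \cap \xi$. First I would apply that property with $\xi := \delta$: since by hypothesis $\delta$ lies in $\lim(C_\eta) \cup A_{\eta,\beta} \cup \lim(A_{\eta,\beta})$, it gives $A_{\delta,\beta} = A_{\eta,\beta} \cap \delta$. The key observation is then that $A_{\eta,\beta} \cap \delta$ is precisely the set of elements of $A_{\eta,\beta}$ below $\delta$, hence is an initial segment of $A_{\eta,\beta}$ in the ordinal ordering, and an initial segment of a set of ordinals has the same $\gamma$-th element as the whole set for every $\gamma$ below its order type.

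Concretely, I would proceed as follows. Since $A_{\delta,\beta} = A_{\eta,\beta} \cap \delta \subseteq A_{\eta,\beta}$, we have $\ot(A_{\delta,\beta}) \le \ot(A_{\eta,\beta})$, so the hypothesis $\gamma < \ot(A_{\delta,\beta})$ yields $\gamma < \ot(A_{\eta,\beta})$ as well; thus both $a_{\eta,\beta,\gamma}$ and $a_{\delta,\beta,\gamma}$ are defined. Let $\xi := a_{\delta,\beta,\gamma}$, the $\gamma$-th element of $A_{\delta,\beta}$, so that $\ot(A_{\delta,\beta} \cap \xi) = \gamma$. Since $\xi \in A_{\delta,\beta}$ we have $\xi < \delta$, whence $A_{\delta,\beta} \cap \xi = (A_{\eta,\beta} \cap \delta) \cap \xi = A_{\eta,\beta} \cap \xi$. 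Therefore $\ot(A_{\eta,\beta} \cap \xi) = \gamma$, which says exactly that $\xi$ is the $\gamma$-th element of $A_{\eta,\beta}$, i.e. $\xi = a_{\eta,\beta,\gamma}$. This proves $a_{\eta,\beta,\gamma} = a_{\delta,\beta,\gamma}$, and the equality $\pi_\eta(\gamma,\beta) = \pi_\delta(\gamma,\beta)$ then follows immediately from the definitions of $\pi_\eta$ and $\pi_\delta$, since $\gamma < \ot(A_{\eta,\beta})$ and $\gamma < \ot(A_{\delta,\beta})$.

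I do not expect any genuine obstacle here: the whole argument is an unwinding of Notation 7.4(4) together with the elementary fact that enumerations of sets of ordinals are stable under passing to initial segments. The only thing to keep an eye on is that the three alternatives for $\delta$ in the hypothesis are covered uniformly, but this is automatic since Notation 7.4(4) is stated for exactly the union $\lim(C_\eta) \cup A_{\eta,\beta} \cup \lim(A_{\eta,\beta})$, so no case analysis on $\delta$ is needed.
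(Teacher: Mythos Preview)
Your proof is correct and follows exactly the same approach as the paper: apply Notation 7.4(4) to obtain $A_{\delta,\beta} = A_{\eta,\beta} \cap \delta$, then conclude that the $\gamma$-th elements agree. The paper's proof is a single sentence (``so clearly $a_{\eta,\beta,\gamma} = a_{\delta,\beta,\gamma}$''), whereas you have simply unpacked the ``clearly'' by verifying explicitly that an initial segment shares its enumeration with the ambient set.
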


\begin{proof}
By Notation 7.4(4), $A_{\delta,\beta} = A_{\eta,\beta} \cap \delta$, so clearly 
$a_{\eta,\beta,\gamma} = a_{\delta,\beta,\gamma}$.
\end{proof}

\begin{lemma}
Let $N \in \mathcal X \cup \mathcal Y$. 
Then 
$$
N \cap \kappa^+ = \{ \pi_{\sup(N)}(\gamma,\beta) : 
\gamma, \beta \in N \cap \kappa \}.
$$
\end{lemma}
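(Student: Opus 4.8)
The plan is to prove the two inclusions separately. Write $\delta := \sup(N)$. Since $N \cap \kappa^+$ is closed under successors (as $N \prec \mathcal{A}$), it has no maximum, so $\delta$ is a limit ordinal, $\delta < \kappa^+$ since $|N| < \kappa^+$, and every ordinal of $N \cap \kappa^+$ lies strictly below $\delta$. By the definitions of $\mathcal{X}$ and $\mathcal{Y}$ (equivalently, by Lemma 7.13, since $\delta \in \cl(N) \setminus N$), the set $\lim(C_\delta) \cap N$ is cofinal in $\delta$. The key point in both inclusions is that one cannot argue directly with $A_{\delta,\beta}$, because $\delta \notin N$ in general; instead one passes to a limit point $\eta \in \lim(C_\delta) \cap N$ chosen large enough to contain the relevant ordinal, and uses the coherence $A_{\eta,\beta} = A_{\delta,\beta} \cap \eta$ from Notation 7.4(4) together with the fact that $\pi_\eta$ is definable in $\mathcal{A}$ and $\eta \in N$.

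For $\supseteq$, fix $\gamma,\beta \in N \cap \kappa$. If $\gamma \ge \ot(A_{\delta,\beta})$ then $\pi_\delta(\gamma,\beta) = 0 \in N$, so assume $\xi := \pi_\delta(\gamma,\beta) = a_{\delta,\beta,\gamma} \in A_{\delta,\beta} \subseteq \delta$. I would fix $\eta \in \lim(C_\delta) \cap N$ with $\xi < \eta$, so that $A_{\eta,\beta} = A_{\delta,\beta} \cap \eta$ by Notation 7.4(4). Then $\xi \in A_{\eta,\beta}$ and $A_{\eta,\beta} \cap \xi = A_{\delta,\beta} \cap \xi$, whence $\gamma = \ot(A_{\delta,\beta} \cap \xi) = \ot(A_{\eta,\beta} \cap \xi) < \ot(A_{\eta,\beta})$, so $\xi$ is the $\gamma$-th element of $A_{\eta,\beta}$ and $\pi_\eta(\gamma,\beta) = \xi$. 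Since $\pi_\eta$ is definable in $\mathcal{A}$ and $\eta,\gamma,\beta \in N$, elementarity gives $\pi_\delta(\gamma,\beta) = \xi = \pi_\eta(\gamma,\beta) \in N$. (That $\pi_\delta(\gamma,\beta) = \pi_\eta(\gamma,\beta)$ is also immediate from Lemma 7.22.)

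For $\subseteq$, fix $\xi \in N \cap \kappa^+$, so $\xi < \delta$, and choose $\eta \in \lim(C_\delta) \cap N$ with $\xi < \eta$. Since $\langle A_{\eta,\beta'} : \beta' < \kappa \rangle$ is increasing and continuous with union $\eta$ by Notation 7.4(1), $\xi \in \eta$, and this is expressible in $\mathcal{A}$ with parameters $\eta,\xi \in N$, elementarity yields $\beta \in N \cap \kappa$ with $\xi \in A_{\eta,\beta}$. Put $\gamma := \ot(A_{\eta,\beta} \cap \xi)$; then $\gamma \in N \cap \kappa$ and $\pi_\eta(\gamma,\beta) = \xi$. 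As $\eta \in \lim(C_\delta)$, $A_{\eta,\beta} = A_{\delta,\beta} \cap \eta$ is an initial segment of $A_{\delta,\beta}$, so $\gamma < \ot(A_{\eta,\beta}) \le \ot(A_{\delta,\beta})$, and Lemma 7.22 gives $\pi_\delta(\gamma,\beta) = \pi_\eta(\gamma,\beta) = \xi$, exhibiting $\xi$ in the right-hand set.

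I do not expect any genuine obstacle beyond bookkeeping: once $\eta$ is chosen, both directions reduce to the coherence of $\vec{A}$ along the square sequence and elementarity of $N$ at $\eta$. The only place calling for a moment's care is verifying that the auxiliary indices ($\beta$ in the second inclusion, and the transfer index $\gamma$ in both) actually lie in $N$, which is exactly why one must pass through a limit point $\eta \in \lim(C_\delta) \cap N$ rather than work with $\delta$ itself.
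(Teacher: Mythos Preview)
Your proof is correct and follows essentially the same approach as the paper's: both directions pass from $\sup(N)$ to a limit point of $C_{\sup(N)}$ lying in $N$, use the coherence $A_{\eta,\beta} = A_{\sup(N),\beta} \cap \eta$ (equivalently Lemma 7.22), and then invoke elementarity. The only difference is notational (you write $\delta$ for $\sup(N)$ and $\eta$ for the chosen limit point, while the paper swaps these names).
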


\begin{proof}
Let $\eta := \sup(N)$. 
Suppose that $\gamma$ and $\beta$ are in $N \cap \kappa$, and we will 
show that $\pi_{\eta}(\gamma,\beta) \in N$. 
This is obvious if $\pi_{\eta}(\gamma,\beta) = 0$. 
So assume that $\gamma < \ot(A_{\eta,\beta})$ and 
$\pi_\eta(\gamma,\beta) = a_{\eta,\beta,\gamma}$. 
Since $N \in \mathcal X \cup \mathcal Y$, 
$\lim(C_{\eta}) \cap N$ is cofinal in $\eta$. 
As $a_{\eta,\beta,\gamma} < \eta$, we can fix 
$\delta \in \lim(C_\eta) \cap N$ such that 
$a_{\eta,\beta,\gamma} < \delta$. 
Since $A_{\delta,\beta} = A_{\eta,\beta} \cap \delta$, 
clearly $\gamma < \ot(A_{\delta,\beta})$. 
By Lemma 7.22, $\pi_{\eta}(\gamma,\beta) = \pi_{\delta}(\gamma,\beta)$. 
As $\delta$, $\gamma$, and $\beta$ are in $N$, 
$\pi_{\delta}(\gamma,\beta) \in N$ by elementarity.

Conversely, let $\xi \in N \cap \kappa^+$ be given, and we will 
find $\gamma$ and $\beta$ in $N \cap \kappa$ 
such that $\pi_\eta(\gamma,\beta) = \xi$. 
Since $\lim(C_\eta) \cap N$ is cofinal in $\eta$, we can fix 
$\delta \in \lim(C_\eta) \cap N$ such that $\xi < \delta$. 
Then $\xi$ and $\delta$ are in $N$. 
By elementarity, there is $\beta \in N \cap \kappa$ such that 
$\xi \in A_{\delta,\beta}$. 
Let $\gamma := \ot(A_{\delta,\beta} \cap \xi)$. 
Since $\delta$, $\beta$, and $\xi$ are in $N$, $\gamma \in N$. 
As noted after Notation 7.21, 
$a_{\delta,\beta,\gamma} = \pi_\delta(\gamma,\beta) = \xi$. 
Since $\delta \in \lim(C_\eta)$, 
by Lemma 7.22, $\pi_{\eta}(\gamma,\beta) = 
\pi_{\delta}(\gamma,\beta) = \xi$.
\end{proof}

\begin{lemma}
Let $M$ and $N$ be in $\mathcal X \cup \mathcal Y$, and suppose that 
$M \cap \kappa$ and $\sup(M)$ are in $N$. 
Then $M \in N$.
\end{lemma}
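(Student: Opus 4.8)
The plan is to reduce the statement to showing that $M \cap \kappa^+ \in N$, and then to quote the observation recorded after the definition of $\mathcal Y$, namely that for $M, N \in \mathcal X \cup \mathcal Y$, if $M \cap \kappa^+ \in N$ then $M \in N$ (because $M = f^*[M \cap \kappa^+]$ and $f^* \in N$, so $M \in N$ by elementarity).

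To obtain $M \cap \kappa^+ \in N$, I would apply Lemma 7.23 to the model $M$. Writing $\eta := \sup(M)$, that lemma gives
$$
M \cap \kappa^+ = \{ \pi_\eta(\gamma,\beta) : \gamma, \beta \in M \cap \kappa \} = \pi_\eta[(M \cap \kappa) \times (M \cap \kappa)].
$$
Now $\eta = \sup(M) \in N$ by hypothesis, and $\pi_\eta$ is an element of $H(\kappa^+)$ (a function on $\kappa \times \kappa$, hence of size $\kappa$) which is definable in $\mathcal A$ from the parameter $\eta$, as observed after Notation 7.21. Since $N \prec \mathcal A$, it follows that $\pi_\eta \in N$. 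Also $M \cap \kappa \in N$ by hypothesis, so $(M \cap \kappa) \times (M \cap \kappa) \in N$ by elementarity. Therefore $M \cap \kappa^+$, being the pointwise image of a set lying in $N$ under a function lying in $N$, belongs to $N$. Combining this with the observation from the previous paragraph gives $M \in N$.

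I do not expect a genuine obstacle here; the proof is short and essentially a packaging of Lemma 7.23. The only point that needs care is the passage from the hypothesis "$\sup(M) \in N$" to "$\pi_{\sup(M)} \in N$", which is exactly why the definability of $\pi_\eta$ in $\mathcal A$ was recorded, together with the trivial but necessary remark that $\pi_{\sup(M)}$ is an actual element of $H(\kappa^+)$ so that it can belong to $N$.
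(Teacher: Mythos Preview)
Your proof is correct and follows essentially the same approach as the paper: reduce to $M \cap \kappa^+ \in N$ via $M = f^*[M \cap \kappa^+]$, then use Lemma 7.23 to express $M \cap \kappa^+$ as $\pi_{\sup(M)}[(M\cap\kappa)\times(M\cap\kappa)]$ and conclude by elementarity. The paper's version is slightly terser, invoking elementarity directly from the parameters $\eta$ and $M\cap\kappa$ rather than first isolating $\pi_\eta \in N$, but this is only a difference in presentation.
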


\begin{proof}
Since $M = f^*[M \cap \kappa^+]$, by elementarity 
it suffices to show that $M \cap \kappa^+ \in N$. 
Let $\eta := \sup(M)$. 
Then 
$$
M \cap \kappa^+ = \{ \pi_\eta(\gamma,\beta) : \gamma, \beta \in 
M \cap \kappa \}
$$
by Lemma 7.23. 
Since $\eta$ and $M \cap \kappa$ are in $N$, 
$M \cap \kappa^+ \in N$ by elementarity.
\end{proof}

\bigskip

The next topic we consider is the set 
$A_{\sup(M),\sup(M \cap \kappa)}$, where $M \in \mathcal X \cup \mathcal Y$.

\begin{lemma}
Let $M \in \mathcal X \cup \mathcal Y$. 
Then $M \cap \kappa^+ \subseteq A_{\sup(M),\sup(M \cap \kappa)}$.
\end{lemma}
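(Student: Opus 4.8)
Set $\eta := \sup(M)$ and $\beta := \sup(M \cap \kappa)$. The plan is to exploit the clause in the definitions of $\mathcal X$ and $\mathcal Y$ that $\lim(C_{\sup(M)}) \cap M$ is cofinal in $\sup(M)$, together with the monotonicity and continuity of the filtration (Notation 7.4(1)) and the coherence property (Notation 7.4(4)).

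First I would collect the routine preliminaries. Since $M \prec \mathcal A$, both $M \cap \kappa^+$ and $M \cap \kappa$ are closed under ordinal successor; hence $\eta$ is a limit ordinal below $\kappa^+$ (so that $C_\eta$ and $A_{\eta,\cdot}$ are defined), every $\xi \in M \cap \kappa^+$ satisfies $\xi < \eta$, and every $\beta' \in M \cap \kappa$ satisfies $\beta' < \beta$. Also $\beta < \kappa$: if $M \in \mathcal X$ this holds because $M$ is countable and $\cf(\kappa) > \omega$, while if $M \in \mathcal Y$ then $\beta = M \cap \kappa \in \kappa$ directly.

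For the main step, fix $\xi \in M \cap \kappa^+$. Since $\xi < \eta$ and $\lim(C_\eta) \cap M$ is cofinal in $\eta$, choose $\delta \in \lim(C_\eta) \cap M$ with $\xi < \delta$. By Notation 7.4(1) applied with $\delta$ in place of $\eta$, the sequence $\langle A_{\delta,\gamma} : \gamma < \kappa \rangle$ is increasing, continuous, and has union $\delta$; since $\xi < \delta$ and both $\xi$ and $\delta$ belong to $M$, elementarity of $M$ in $\mathcal A$ supplies some $\beta' \in M \cap \kappa$ with $\xi \in A_{\delta,\beta'}$. Now $\delta \in \lim(C_\eta)$, so Notation 7.4(4) gives $A_{\delta,\beta'} = A_{\eta,\beta'} \cap \delta$; in particular $\xi \in A_{\eta,\beta'}$. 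Finally $\beta' \in M \cap \kappa$ forces $\beta' < \beta$, so monotonicity from Notation 7.4(1) gives $A_{\eta,\beta'} \subseteq A_{\eta,\beta}$, whence $\xi \in A_{\eta,\beta} = A_{\sup(M),\sup(M \cap \kappa)}$.

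I do not expect a real obstacle; the only points needing attention are that $\delta$ be chosen inside $M$ (so that elementarity can be invoked on the statement that $\xi$ lies in some $A_{\delta,\gamma}$), which is precisely what the cofinality of $\lim(C_{\sup(M)}) \cap M$ provides, and that the witness $\beta'$ lands strictly below $\sup(M \cap \kappa)$, which follows since $M \cap \kappa$ has no maximum. One could alternatively deduce the statement from Lemma 7.23 by unwinding the definition of $\pi_\eta$ and treating the value $0$ separately, but the argument above seems more direct.
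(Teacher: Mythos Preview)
Your proof is correct and follows essentially the same route as the paper: pick $\xi \in M \cap \kappa^+$, use the cofinality of $\lim(C_{\sup(M)}) \cap M$ in $\sup(M)$ to find $\delta$ above $\xi$, use elementarity to get $\xi \in A_{\delta,\beta'}$ for some $\beta' \in M \cap \kappa$, then apply coherence (Notation 7.4(4)) and monotonicity to land in $A_{\sup(M),\sup(M\cap\kappa)}$. The paper's argument is the same up to notation (it uses $\sigma$ and $\beta$ where you use $\delta$ and $\beta'$), only omitting your preliminary remarks.
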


\begin{proof}
Let $\xi \in M \cap \kappa^+$. 
Since $\lim(C_{\sup(M)}) \cap M$ is cofinal in $\sup(M)$, we can fix 
$\sigma \in \lim(C_{\sup(M)}) \cap M$ which is strictly greater than $\xi$. 
By elementarity, we can fix $\beta \in M \cap \kappa$ such that 
$\xi \in A_{\sigma,\beta}$. 
Since $\sigma \in \lim(C_{\sup(M)})$, we have that 
$A_{\sigma,\beta} = A_{\sup(M),\beta} \cap \sigma$. 
Hence $\xi \in A_{\sup(M),\beta}$. 
As $\beta \in M \cap \kappa$, $\beta < \sup(M \cap \kappa)$. 
Therefore 
$A_{\sup(M),\beta} \subseteq A_{\sup(M),\sup(M \cap \kappa)}$. 
Hence $\xi \in A_{\sup(M),\sup(M \cap \kappa)}$.
\end{proof}

\begin{lemma}
Let $Q \in \mathcal Y$. 
Then $Q \cap \kappa^+ = A_{\sup(Q),Q \cap \kappa}$.
\end{lemma}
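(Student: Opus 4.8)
The plan is to read off both inclusions from the explicit enumeration of $Q \cap \kappa^+$ furnished by Lemma 7.23. Write $\eta := \sup(Q)$ and $\beta^* := Q \cap \kappa$. Since $Q \cap \kappa \in \kappa$, the ordinal $\beta^*$ satisfies $\beta^* \subseteq Q$, and $\beta^*$ is a limit ordinal: if it were a successor $\alpha + 1$, then $\alpha \in Q$ would force $\alpha+1 \in Q$ by elementarity, contradicting $\beta^* = Q \cap \kappa$. By Lemma 7.23,
$$
Q \cap \kappa^+ = \{ \pi_\eta(\gamma,\beta) : \gamma, \beta \in Q \cap \kappa \} =
\bigcup_{\beta < \beta^*} \{ \pi_\eta(\gamma,\beta) : \gamma < \beta^* \}.
$$

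First I would check that for each fixed $\beta < \beta^*$ one has $\{ \pi_\eta(\gamma,\beta) : \gamma < \beta^* \} = A_{\eta,\beta} \cup \{ 0 \}$. The point is that allowing the enumeration index $\gamma$ to range over all of $\beta^*$ already suffices to list every element of $A_{\eta,\beta}$: since $\beta \in Q$ and $c^*$ is among the parameters of $\mathcal A$, we get $c^*(\beta) \in Q \cap \kappa = \beta^*$, and then property (5) of Notation 7.4 gives $\ot(A_{\eta,\beta}) < c^*(\beta) < \beta^*$. Hence as $\gamma$ runs over $\beta^*$, the values $\pi_\eta(\gamma,\beta) = a_{\eta,\beta,\gamma}$ for $\gamma < \ot(A_{\eta,\beta})$ exhaust $A_{\eta,\beta}$, while $\pi_\eta(\gamma,\beta) = 0$ for the remaining $\gamma$. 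Taking the union over $\beta < \beta^*$ then yields
$$
Q \cap \kappa^+ = \{ 0 \} \cup \bigcup_{\beta < \beta^*} A_{\eta,\beta} = A_{\eta,\beta^*},
$$
where the last equality uses the continuity of $\langle A_{\eta,\gamma} : \gamma < \kappa \rangle$ from Notation 7.4(1) at the limit ordinal $\beta^*$ and absorbs $0 \in A_{\eta,\beta^*}$ (which in turn follows from Lemma 7.26 applied to $Q$, as $0 \in Q$). This is exactly the claimed identity $Q \cap \kappa^+ = A_{\sup(Q),Q\cap\kappa}$; note also that the inclusion $\subseteq$ is in any case immediate from Lemma 7.26, since $\sup(Q\cap\kappa) = Q\cap\kappa$ because $Q \cap \kappa$ is an ordinal.

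The only genuine subtlety — and the step I would take care over — is the bound $\ot(A_{\eta,\beta}) < \beta^*$. Without it, the index set $\beta^* = Q \cap \kappa$ could fail to enumerate all of $A_{\eta,\beta}$, and the reverse inclusion $A_{\sup(Q),Q\cap\kappa} \subseteq Q \cap \kappa^+$ would break down. This is precisely where property (5) of Notation 7.4 (the existence of $c^*$) is used, together with the elementarity of $Q$; it is why the lemma genuinely requires $Q$ to be closed under $c^*$, and not merely a set with $Q \cap \kappa \in \kappa$.
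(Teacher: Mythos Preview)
Your argument is correct, modulo a mislabeling: the two appeals to ``Lemma 7.26'' are circular as written, since that is the very lemma under proof. You clearly intend Lemma 7.25, which gives $Q \cap \kappa^+ \subseteq A_{\sup(Q),\sup(Q\cap\kappa)}$; since $Q \cap \kappa$ is an ordinal this reads $Q \cap \kappa^+ \subseteq A_{\eta,\beta^*}$, and in particular $0 \in A_{\eta,\beta^*}$.

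The approach, however, is genuinely different from the paper's. The paper handles the reverse inclusion $A_{\eta,\beta^*} \subseteq Q$ directly: given $\xi \in A_{\eta,\beta^*}$, one picks $\sigma \in \lim(C_\eta) \cap Q$ above $\xi$ and $\beta < \beta^*$ with $\xi \in A_{\sigma,\beta}$; then $A_{\sigma,\beta} \in Q$ and $|A_{\sigma,\beta}| < \kappa$ (Notation 7.4(3)) force $A_{\sigma,\beta} \subseteq Q$, whence $\xi \in Q$. This relies on the cardinality bound (3) together with the standard fact that a set of size less than $\kappa$ lying in $Q \in \mathcal Y$ is a subset of $Q$. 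Your route instead passes through the explicit enumeration of Lemma 7.23 and invokes property (5) of Notation 7.4 (the order-type bound via $c^*$) to guarantee that $\beta^* = Q \cap \kappa$ already indexes all of $A_{\eta,\beta}$. The paper itself remarks that property (5) is only used in Lemma 8.6; your proof furnishes a second use. Both arguments are short; the paper's is marginally more self-contained in that it does not unpack the definition of $\pi_\eta$, while yours makes the connection to Lemma 7.23 more explicit.
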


\begin{proof}
By Lemma 7.25, we have that $Q \cap \kappa^+ \subseteq 
A_{\sup(Q),Q \cap \kappa}$. 
Conversely, let $\xi \in A_{\sup(Q),Q \cap \kappa}$, and we will 
show that $\xi \in Q$. 
Since $\lim(C_{\sup(Q)}) \cap Q$ is cofinal in $\sup(Q)$, 
we can fix $\sigma \in \lim(C_{\sup(Q)}) \cap Q$ which is strictly greater than $\xi$. 
As $\sigma \in \lim(C_{\sup(Q)})$, 
$A_{\sigma,Q \cap \kappa} = A_{\sup(Q),Q \cap \kappa} \cap \sigma$. 
Therefore $\xi \in A_{\sigma,Q \cap \kappa}$. 
Since $Q \cap \kappa$ is a limit ordinal, 
we can fix $\beta < Q \cap \kappa$ 
such that $\xi \in A_{\sigma,\beta}$. 
Then $\sigma$ and $\beta$ are in $Q$, and hence 
$A_{\sigma,\beta} \in Q$. 
Since $|A_{\sigma,\beta}| < \kappa$ by Notation 7.4(3), 
$A_{\sigma,\beta} \subseteq Q$. 
Therefore $\xi \in Q$.
\end{proof}

\begin{lemma}
Let $Q \in \mathcal Y$ and $\eta \in \cl(Q)$. 
Then $Q \cap \eta = A_{\eta,Q \cap \kappa}$.
\end{lemma}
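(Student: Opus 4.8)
The plan is to derive the statement from Lemma 7.26 together with the coherence property Notation 7.4(4), splitting on whether $\eta = \sup(Q)$ or $\eta < \sup(Q)$.

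First I would dispose of the case $\eta = \sup(Q)$. Since $Q \prec \mathcal A$, every ordinal of $Q$ lies below $\kappa^+$ and $\sup(Q) = \sup(Q \cap \kappa^+)$, so $Q \cap \eta = Q \cap \kappa^+$; by Lemma 7.26 this equals $A_{\sup(Q),Q\cap\kappa} = A_{\eta,Q\cap\kappa}$, as required. This case must be isolated because Lemma 7.12, which I invoke below, presupposes $\eta < \sup(N)$.

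Now assume $\eta < \sup(Q)$. Then, in the notation of Notation 7.11, the ordinal $\eta_Q = \min((Q \cap \kappa^+)\setminus\eta)$ is well-defined, belongs to $Q$, and satisfies $\eta \le \eta_Q$. By Lemma 7.26, $\eta_Q \in Q \cap \kappa^+ = A_{\sup(Q),Q\cap\kappa}$, so since $\eta_Q$ is a member of $A_{\sup(Q),Q\cap\kappa}$, the coherence property Notation 7.4(4) gives $A_{\eta_Q,Q\cap\kappa} = A_{\sup(Q),Q\cap\kappa}\cap\eta_Q$, which by Lemma 7.26 again equals $(Q\cap\kappa^+)\cap\eta_Q = Q\cap\eta_Q$. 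On the other hand, Lemma 7.12(2), applied with $N = Q$ and $\xi = Q\cap\kappa$, gives $A_{\eta,Q\cap\kappa} = A_{\eta_Q,Q\cap\kappa}\cap\eta$. Combining these and using $\eta\le\eta_Q$ yields $A_{\eta,Q\cap\kappa} = (Q\cap\eta_Q)\cap\eta = Q\cap\eta$.

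I do not expect a genuine obstacle here. The only points needing care are the ones already flagged: peeling off the case $\eta = \sup(Q)$ before appealing to Lemma 7.12, and checking that $\eta_Q$ really is an \emph{element} of $Q$ (so that $\eta_Q \in Q\cap\kappa^+$ and Lemma 7.26 applies to it through $Q \cap \kappa^+$), rather than merely a limit point of $Q$, which matters in the boundary case $\eta\in\lim(Q)\setminus Q$.
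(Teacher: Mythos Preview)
Your proof is correct, but it takes an unnecessary detour compared to the paper's argument. The paper observes that since $Q \cap \kappa^+ = A_{\sup(Q),Q\cap\kappa}$ by Lemma 7.26, the hypothesis $\eta \in \cl(Q)$ immediately gives $\eta \in A_{\sup(Q),Q\cap\kappa} \cup \lim(A_{\sup(Q),Q\cap\kappa})$, so Notation 7.4(4) applies \emph{directly} with $\xi = \eta$ and the larger ordinal $\sup(Q)$, yielding $A_{\eta,Q\cap\kappa} = A_{\sup(Q),Q\cap\kappa} \cap \eta = Q \cap \eta$ in one step (and this handles $\eta = \sup(Q)$ uniformly, since then the equation is trivial). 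Your two-step route --- first from $\sup(Q)$ down to $\eta_Q$ via 7.4(4), then from $\eta_Q$ down to $\eta$ via Lemma 7.12(2) --- works, but Lemma 7.12 is itself just another instance of the same coherence, so you are in effect applying 7.4(4) twice where once suffices. The case split on $\eta = \sup(Q)$ is likewise avoidable.
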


\begin{proof}
By Lemma 7.26, $Q \cap \kappa^+ = A_{\sup(Q),Q \cap \kappa}$. 
Since $\eta \in \cl(Q)$, 
$$
\eta \in A_{\sup(Q),Q \cap \kappa} \cup \lim(A_{\sup(Q),Q \cap \kappa}).
$$
By Notation 7.4(4),
$$
A_{\eta,Q \cap \kappa} = A_{\sup(Q),Q \cap \kappa} \cap \eta = 
Q \cap \eta.
$$
\end{proof}

\begin{lemma}
Suppose that $P_1$ and $P_2$ are in $\mathcal Y$.
\begin{enumerate}
\item If $P_1 \cap \kappa \le P_2 \cap \kappa$ 
and $\eta \in \cl(P_1) \cap \cl(P_2)$, then 
$P_1 \cap \eta \subseteq P_2 \cap \eta$.
\item If $P_1 \cap \kappa < P_2 \cap \kappa$ 
and $\eta \in P_1 \cap P_2 \cap \cof(\kappa)$, 
then $P_1 \cap \eta \in P_2$. 
In particular, $\sup(P_1 \cap \eta) \in P_2 \cap \eta$.
\end{enumerate}
\end{lemma}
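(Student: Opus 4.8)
The plan is to deduce everything from Lemma 7.28, which says that for $Q \in \mathcal Y$ and $\eta \in \cl(Q)$ one has $Q \cap \eta = A_{\eta,Q \cap \kappa}$, together with the monotonicity of the filtration (Notation 7.4(1)) and the elementarity of the models in $\mathcal Y$ in the structure $\mathcal A$, whose list of predicates includes $\vec A$ (Notation 7.6).

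For (1), I would first observe that $\eta \in \cl(P_1)$ and $\eta \in \cl(P_2)$, so Lemma 7.28 gives $P_1 \cap \eta = A_{\eta,P_1 \cap \kappa}$ and $P_2 \cap \eta = A_{\eta,P_2 \cap \kappa}$. Since by Notation 7.4(1) the sequence $\langle A_{\eta,\beta} : \beta < \kappa \rangle$ is increasing, the hypothesis $P_1 \cap \kappa \le P_2 \cap \kappa$ yields $A_{\eta,P_1 \cap \kappa} \subseteq A_{\eta,P_2 \cap \kappa}$, which is exactly $P_1 \cap \eta \subseteq P_2 \cap \eta$.

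For (2), since $\eta \in P_1 \subseteq \cl(P_1)$, Lemma 7.28 again gives $P_1 \cap \eta = A_{\eta,P_1 \cap \kappa}$. Because $P_2 \cap \kappa$ is an ordinal and $P_1 \cap \kappa < P_2 \cap \kappa$, we get $P_1 \cap \kappa \in P_2$, and $\eta \in P_2$ by hypothesis; since the map $(\zeta,\beta) \mapsto A_{\zeta,\beta}$ is definable in $\mathcal A$ and $P_2 \prec \mathcal A$, elementarity gives $A_{\eta,P_1 \cap \kappa} \in P_2$, that is, $P_1 \cap \eta \in P_2$. For the final clause, elementarity then places $\sup(P_1 \cap \eta)$ in $P_2$, and since $\cf(\eta) = \kappa$ while $|P_1| < \kappa$, the set $P_1 \cap \eta$ is bounded below $\eta$, so $\sup(P_1 \cap \eta) < \eta$ and therefore $\sup(P_1 \cap \eta) \in P_2 \cap \eta$.

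The argument is essentially a bookkeeping exercise; the one step that deserves a moment's thought is the elementarity step in part (2), where one should check that $P_1 \cap \eta = A_{\eta,P_1 \cap \kappa}$ is genuinely an element of $H(\kappa^+)$, which holds since $|A_{\eta,P_1 \cap \kappa}| < \kappa$ by Notation 7.4(3). I do not anticipate any real obstacle.
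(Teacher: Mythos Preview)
Your argument is correct and essentially identical to the paper's proof; the only slip is that the lemma you invoke (``$Q \cap \eta = A_{\eta,Q\cap\kappa}$ for $\eta \in \cl(Q)$'') is Lemma~7.27 in the paper's numbering, not 7.28, since 7.28 is the statement you are proving.
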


\begin{proof}
By Lemma 7.27, under the assumptions of either (1) or (2), we have that 
$$
P_1 \cap \eta = A_{\eta,P_1 \cap \kappa}, \ \textrm{and} 
\ P_2 \cap \eta = A_{\eta,P_2 \cap \kappa}.
$$

(1) If $P_1 \cap \kappa \le P_2 \cap \kappa$, then clearly 
$A_{\eta,P_1 \cap \kappa} \subseteq A_{\eta,P_2 \cap \kappa}$. 
Therefore $P_1 \cap \eta \subseteq P_2 \cap \eta$.

(2) If $P_1 \cap \kappa < P_2 \cap \kappa$, then 
$P_1 \cap \kappa \in P_2$. 
So $P_1 \cap \kappa$ and $\eta$ are in $P_2$, and hence 
$A_{\eta,P_1 \cap \kappa} = P_1 \cap \eta$ is in $P_2$ by elementarity. 
Since $\eta$ has cofinality $\kappa$, 
$\sup(P_1 \cap \eta) < \eta$. 
So $\sup(P_1 \cap \eta) \in P_2 \cap \eta$.
\end{proof}

\begin{lemma}
Let $M \in \mathcal X$. 
Let $A := A_{\sup(M),\sup(M \cap \kappa)}$. 
Then $A$ is closed under $H^*$, $A \cap \kappa = \sup(M \cap \kappa)$, 
and $\sup(A) = \sup(M)$.
\end{lemma}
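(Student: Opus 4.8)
The plan is to reduce everything to a single fact: that there is a model $P \in \mathcal Y$ with $\sup(P) = \sup(M)$ and $P \cap \kappa = \sup(M \cap \kappa)$. Granting such a $P$, Lemma 7.26 yields $P \cap \kappa^+ = A_{\sup(P),P \cap \kappa} = A_{\sup(M),\sup(M \cap \kappa)} = A$, and all three conclusions become immediate. Indeed, since $P \prec \mathcal A$, the set $P \cap \kappa^+ = A$ is closed under $H^*$; moreover $A \cap \kappa = (P \cap \kappa^+) \cap \kappa = P \cap \kappa = \sup(M \cap \kappa)$, and $\sup(A) = \sup(P \cap \kappa^+) = \sup(P) = \sup(M)$. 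So the entire content of the lemma is the construction of $P$.

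To produce $P$, first set $\eta := \sup(M)$ and $\beta := \sup(M \cap \kappa)$. Since $M$ is countable and $\kappa$ is regular and uncountable, $\beta < \kappa$; and $\kappa \in M$, so $\beta < \kappa < \eta < \kappa^+$. Both $\eta$ and $\beta$ are limit ordinals, and (as $M$ is closed under successor) neither is an element of $M$, so $M \cap \kappa^+ \subseteq \eta$ and $M \cap \kappa \subseteq \beta$. Let $P := Sk\big((M \cap \kappa^+) \cup \beta\big)$, the Skolem hull in $\mathcal A$. Then $P \prec \mathcal A$, $|P| \le |\beta| \cdot \omega < \kappa$, and $\beta \cup (M \cap \kappa^+) \subseteq P$; in particular $\beta \subseteq P \cap \kappa$ and $\eta = \sup(M) \le \sup(P)$. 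Once we show the reverse bounds $P \cap \kappa \subseteq \beta$ and $\sup(P) \le \eta$, it follows that $P \cap \kappa = \beta \in \kappa$ and $\sup(P) = \eta$, and then $\lim(C_{\sup(P)}) \cap P$ is cofinal in $\sup(P)$ because $\lim(C_\eta) \cap M$ is (since $M \in \mathcal X$) and $M \subseteq P$; hence $P \in \mathcal Y$.

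The heart of the argument, and the step I expect to require the most care, is the bound $P \cap \kappa^+ \subseteq \eta$ together with $P \cap \kappa \subseteq \beta$. Every element $\rho$ of $P \cap \kappa^+$ has the form $\tau^{\mathcal A}(a_1,\ldots,a_m)$ for some composed Skolem term $\tau$ of $\mathcal A$ and parameters $a_i \in (M \cap \kappa^+) \cup \beta$. Let $J$ be the set of indices $i$ with $a_i \in \beta \setminus M$; for $i \in J$ fix $\beta_i \in M \cap \kappa$ with $a_i < \beta_i$, and let $h$ be the function obtained by restricting $\tau^{\mathcal A}$ to $\prod_{i \in J}\beta_i$ in the coordinates indexed by $J$ while fixing the coordinates outside $J$ to the corresponding $a_i \in M$. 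Since $\tau$ is definable in $\mathcal A$ and the parameters $\{a_i : i \notin J\}$ and the ordinals $\{\beta_i : i \in J\}$ all lie in $M$, we get $h \in M$; its domain has size less than $\kappa$, so $h \in H(\kappa^+)$ and $\mathrm{ran}(h) \in M$ is a set of ordinals of size less than $\kappa$. By elementarity of $M$ together with the regularity of $\kappa^+$ and of $\kappa$, $\mathrm{ran}(h)$ is bounded below $\kappa^+$ by a member of $M \cap \kappa^+ \subseteq \eta$, and $\mathrm{ran}(h) \cap \kappa$ is bounded below $\kappa$ by a member of $M \cap \kappa \subseteq \beta$. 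As $\rho \in \mathrm{ran}(h)$, this gives $\rho < \eta$, and $\rho < \beta$ whenever $\rho < \kappa$. Ranging over the countably many terms and parameter tuples yields $P \cap \kappa^+ \subseteq \eta$ and $P \cap \kappa \subseteq \beta$, hence $P \in \mathcal Y$, and therefore the lemma. The only delicacy is the bookkeeping of which coordinates of an arbitrary composed Skolem term carry genuine parameters from $M$ versus ordinals below $\beta$; the underlying mechanism is the standard observation that $M$ correctly computes that the relevant range is bounded.
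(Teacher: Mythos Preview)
Your proof is correct and takes a genuinely different route from the paper. The paper argues directly: it shows closure of $A$ under each $\tau_n'$ by picking $\eta_0 < \eta_1 < \eta_2$ in $M$ (with $\eta_0, \eta_2 \in \lim(C_{\sup(M)})$ and $\eta_1$ closed under $\tau_n'$), trapping the inputs in some $A_{\eta_0,\gamma}$ with $\gamma \in M \cap \kappa$, and then defining a bounding function $h : A_{\eta_0,\gamma}^k \to \kappa$ in $M$ whose range is small, forcing the output into $A_{\eta_2,\xi}$ for some $\xi \in M \cap \kappa$. The remaining clauses are read off from Lemma 7.25 and the fact that each $A_{\delta,\beta}$ with $\delta,\beta \in M$ lies in $M$.

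Your approach instead constructs $P = Sk\big((M \cap \kappa^+) \cup \beta\big)$, verifies $P \in \mathcal Y$ with $P \cap \kappa = \beta$ and $\sup(P) = \eta$ via a standard ``Skolem hull stays low'' argument (bounding composed terms by functions in $M$ with domains of size $<\kappa$), and then invokes Lemma 7.26 to identify $A$ with $P \cap \kappa^+$. This is more conceptual and recycles existing machinery; the paper's proof is more self-contained and stays closer to the combinatorics of $\vec A$. Both arguments are essentially the same length and rest on the same underlying idea---using elementarity of $M$ to bound ranges of small functions---but your reduction to Lemma 7.26 makes the three conclusions fall out simultaneously rather than being checked one by one.
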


\begin{proof}
To show that $A$ is closed under $H^*$, it suffices to show 
that for each $n < \omega$, $A$ is closed under $\tau_n'$. 
At the same time, we will show that $\sup(M \cap \kappa) \subseteq A$. 
So fix $n < \omega$, and let $k$ be the arity of $\tau_n'$. 
Let $\alpha_0,\ldots,\alpha_{k-1} \in A$ and $\beta < \sup(M \cap \kappa)$, 
and we will show that 
$\tau_n'(\alpha_0,\ldots,\alpha_{k-1})$ and $\beta$ are in $A$. 
Fix $\eta_0 \in \lim(C_{\sup(M)}) \cap M$ such that 
$\alpha_0,\ldots,\alpha_{k-1}$ and $\beta_M$ are strictly less than $\eta_0$. 
Then $A \cap \eta_0 = A_{\eta_0,\sup(M \cap \kappa)}$. 
So $\alpha_0,\ldots,\alpha_{k-1}$ are in 
$A_{\eta_0,\sup(M \cap \kappa)}$. 
Also $\beta_M \in M \cap \kappa \subseteq A$ by Lemma 7.25, so also 
$\beta_M \in A_{\eta_0,\sup(M \cap \kappa)}$. 
As $\sup(M \cap \kappa)$ is a limit ordinal, we can fix an infinite 
$\gamma \in M \cap \kappa$ such that 
$\alpha_0,\ldots,\alpha_{k-1}$ and $\beta_M$ are in $A_{\eta_0,\gamma}$.

By the elementarity of $M$, 
we can fix $\eta_1 \in M$ strictly greater than $\eta_0$ such that 
$\eta_1$ is closed under $\tau_n'$. 
Fix $\eta_2 \in \lim(C_{\sup(M)}) \cap M$ with $\eta_1 < \eta_2$. 
Since $\eta_0 < \eta_1$ and 
$\eta_1$ is closed under $\tau_n'$, 
for all $\gamma_0,\ldots,\gamma_{k-1}$ 
in $A_{\eta_0,\gamma}$, 
$\tau_n'(\gamma_0,\ldots,\gamma_{k-1}) < \eta_1 < \eta_2$. 
Define a function $h : A_{\eta_0,\gamma}^{k} \to \kappa$ by letting 
$h(\gamma_0,\ldots,\gamma_{k-1})$ be the least ordinal 
$\xi < \kappa$ such that 
$\tau_n'(\gamma_0,\ldots,\gamma_{k-1})$ and all ordinals below $\beta_M$ 
are in $A_{\eta_2,\xi}$. 
Since $\eta_0$, $\gamma$, $\eta_2$, and $\beta_M$ are in $M$, 
by elementarity $h$ is in $M$.

Now the domain of $h$ has size $|A_{\eta_0,\gamma}^k| \le |\gamma| < \kappa$. 
So there exists a minimal $\xi < \kappa$ such that 
$h[A_{\eta_0,\gamma}^{k}] \subseteq \xi$. 
By elementarity, $\xi \in M \cap \kappa$. 
In particular, $\delta := h(\alpha_0,\ldots,\alpha_{k-1})$ is less than $\xi$. 
That means $\tau_n'(\alpha_0,\ldots,\alpha_{k-1})$ and $\beta$ are in 
$A_{\eta_2,\delta} \subseteq A_{\eta_2,\xi}$. 
Since $\eta_2 \in \lim(C_{\sup(M)})$, 
$A_{\eta_2,\xi} = A_{\sup(M),\xi} \cap \eta_2$. 
So $\tau_n'(\alpha_0,\ldots,\alpha_{k-1})$ and $\beta$ are in 
$A_{\sup(M),\xi} \subseteq 
A_{\sup(M),\sup(M \cap \kappa)} = A$. 
This completes the proof that $A$ is closed under $\tau_n'$ for all $n < \omega$ 
and $\sup(M \cap \kappa) \subseteq A$. 
It follows that $A$ is closed under $H^*$.

Now $A$ is the union of sets of the form 
$A_{\delta,\beta}$, where $\delta \in \lim(C_{\sup(M)}) \cap M$ and 
$\beta \in M \cap \kappa$, and each such set is in $M$. 
Thus each such set $A_{\delta,\beta}$ satisfies that 
$\sup(A_{\delta,\beta} \cap \kappa) < \sup(M \cap \kappa)$. 
It follows that $\sup(A \cap \kappa) \le \sup(M \cap \kappa)$. 
But we just proved that $\sup(M \cap \kappa) \subseteq A$, and therefore 
$\sup(M \cap \kappa) = A \cap \kappa$. 
By the definition of $A$, obviously $A \subseteq \sup(M)$. 
And since $M \cap \kappa^+ \subseteq A$ by Lemma 7.25, 
$\sup(A) = \sup(M)$.
\end{proof}

\bigskip

We conclude this section with two technical lemmas which will 
be useful later.

\begin{lemma}
Let $N \in \mathcal X$, 
$a \in N$, and $\tau \in N \cap \kappa^+$. 
Suppose that $\cf(\tau) > \omega$. 
If $a \cap [\sup(N \cap \tau),\tau) \ne \emptyset$, 
then $\tau$ is a limit point of $a$.
\end{lemma}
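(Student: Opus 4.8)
The plan is to reduce everything to the set $b := a \cap \tau$ of ordinals of $a$ lying below $\tau$, and to show, using the elementarity of $N$, that $b$ must in fact be cofinal in $\tau$; since $b \subseteq a$, this immediately yields that $\tau$ is a limit point of $a$.

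First I would record two elementary facts about $\delta := \sup(N \cap \tau)$. Since $\cf(\tau) > \omega$ the ordinal $\tau$ is a limit, and since $N$ is countable the set $N \cap \tau$ is bounded in $\tau$, so $\delta < \tau$; in particular the interval $[\delta,\tau)$ named in the hypothesis is a genuine nonempty interval inside $\tau$. Moreover, because $\tau \in N$ and $N \prec \mathcal A$, the model $N$ thinks $\tau$ is a limit ordinal, so $N \cap \tau$ has no largest element; hence $\delta \notin N$. Next, fix by hypothesis some $\xi \in a \cap [\delta,\tau)$, and set $b := a \cap \tau$. Since $a,\tau \in N$ the set $b$ is in $N$, and $\xi \in b$ with $\xi \geq \delta$, so $\sup(b) \geq \delta$.

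The conclusion then comes down to showing $\sup(b) = \tau$, for then $b = a \cap \tau$ is cofinal in $\tau$ and so $\tau \in \lim(a)$. Suppose instead $\sup(b) < \tau$. As $b \in N$ and the supremum of a bounded set of ordinals is definable in $(H(\kappa^+),\in)$, elementarity of $N$ gives $\sup(b) \in N$, hence $\sup(b) \in N \cap \tau$ and therefore $\sup(b) \leq \delta$. But $\sup(b) \neq \delta$, since $\sup(b) \in N$ while $\delta \notin N$, so $\sup(b) < \delta \leq \sup(b)$, a contradiction. Thus $\sup(b) = \tau$, as desired.

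I do not anticipate any real obstacle: the argument is a short elementarity computation. The only point needing a moment's care is the observation that $\delta \notin N$, which is where the hypothesis $\cf(\tau) > \omega$ (ensuring $\tau$ is a limit ordinal, and hence that $N \cap \tau$ has no maximum) is genuinely used.
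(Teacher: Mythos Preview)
Your proof is correct and follows essentially the same approach as the paper: both argue by elementarity that if $\sup(a\cap\tau)<\tau$ then $\sup(a\cap\tau)\in N\cap\tau$, contradicting the existence of an element of $a\cap\tau$ at or above $\sup(N\cap\tau)$. Your version is simply more explicit about the step $\delta\notin N$ (which the paper uses implicitly to get the strict inequality $\sup(a\cap\tau)<\sup(N\cap\tau)$), but the underlying argument is the same.
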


\begin{proof}
If $\tau$ is not a limit point of $a$, then $\sup(a \cap \tau) < \tau$. 
Since $a$ and $\tau$ are in $N$, 
$\sup(a \cap \tau) \in N \cap \tau$ by elementarity. 
Hence $\sup(a \cap \tau) < \sup(N \cap \tau)$, 
which contradicts the assumption that 
$a \cap [\sup(N \cap \tau),\tau) \ne \emptyset$.
\end{proof}

\begin{lemma}
Let $N \in \mathcal X$. 
Suppose that $\eta \in N \cap \kappa^+$ and $\beta \in N \cap \kappa$. 
Let $\xi \in A_{\eta,\beta} \setminus N$. 
Then $\xi \in A_{\xi_N,\beta}$.
\end{lemma}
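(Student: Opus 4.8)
The plan is to reduce the statement to the coherence property of $\vec A$ recorded in Notation 7.4(4): once we know that $\xi_N$ is a limit point of $A_{\eta,\beta}$, that property gives $A_{\xi_N,\beta} = A_{\eta,\beta} \cap \xi_N$, and since $\xi \in A_{\eta,\beta}$ and $\xi < \xi_N$ this immediately yields $\xi \in A_{\xi_N,\beta}$.

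Before that I would record two preliminary observations. First, since $\xi \in A_{\eta,\beta} \subseteq \eta$ and $\eta \in N \cap \kappa^+$, we have $\xi < \eta$; hence $(N \cap \kappa^+) \setminus \xi$ is nonempty, $\xi_N$ is a genuine element of $N$, and $\xi < \xi_N \le \eta$, the first inequality being strict because $\xi \notin N$. Second, because $\eta$ and $\beta$ lie in $N$ and $\vec A$ is among the predicates of the structure $\mathcal A$, elementarity of $N$ gives $A_{\eta,\beta} \in N$.

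The heart of the argument is then to prove $\xi_N \in \lim(A_{\eta,\beta})$. I would argue by contradiction: if $\xi_N$ is not a limit point of $A_{\eta,\beta}$, then $s := \sup(A_{\eta,\beta} \cap \xi_N) < \xi_N$. Since both $A_{\eta,\beta}$ and $\xi_N$ belong to $N$, elementarity gives $s \in N$. But $\xi \in A_{\eta,\beta} \cap \xi_N$, so $\xi \le s < \xi_N$, and since $s \in N$ while $\xi \notin N$ we get $\xi < s < \xi_N$; this exhibits an element of $N \cap \kappa^+$ strictly between $\xi$ and $\xi_N$, contradicting the minimality in the definition $\xi_N = \min((N \cap \kappa^+) \setminus \xi)$.

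With $\xi_N \in \lim(A_{\eta,\beta})$ in hand, Notation 7.4(4), instantiated at $\xi_N$, gives $A_{\xi_N,\beta} = A_{\eta,\beta} \cap \xi_N$, and the conclusion $\xi \in A_{\xi_N,\beta}$ follows as noted. I do not anticipate a real obstacle; the only step that needs a moment's care is the elementarity computation placing $s$ in $N$, which is legitimate precisely because $A_{\eta,\beta}$ is itself a member of $N$. Note that this argument uses only $N \prec \mathcal A$, and not the extra clauses in the definition of $\mathcal X$.
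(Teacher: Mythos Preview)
Your proof is correct and follows essentially the same route as the paper: show that $\xi_N$ is a limit point of $A_{\eta,\beta}$, then invoke the coherence property of Notation 7.4(4). The only difference is packaging: the paper first records that $\cf(\xi_N) > \omega$ and $\sup(N \cap \xi_N) \le \xi$, and then cites Lemma 7.30 (whose proof is exactly your contradiction argument with $s = \sup(A_{\eta,\beta} \cap \xi_N)$), whereas you carry out that contradiction inline and thereby avoid needing the cofinality observation at all.
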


\begin{proof}
Note that since $\eta \in N$ and $\xi < \eta$, $\xi_N$ exists. 
Since $\xi \notin N$, $\xi < \xi_N$. 
It follows that $\cf(\xi_N) > \omega$, since otherwise 
$N \cap \xi_N$ would be cofinal in $\xi_N$ by elementarity. 
Also $\sup(N \cap \xi_N) \le \xi$. 
Since $A_{\eta,\beta} \in N$ and 
$\xi \in A_{\eta,\beta} \cap [\sup(N \cap \xi_N),\xi_N)$, 
it follows that 
$\xi_N$ is a limit point of $A_{\eta,\beta}$ by Lemma 7.30. 
So $A_{\xi_N,\beta} = A_{\eta,\beta} \cap \xi_N$. 
Since $\xi \in A_{\eta,\beta} \cap \xi_N$, $\xi \in A_{\xi_N,\beta}$.
\end{proof}

\bigskip

\addcontentsline{toc}{section}{8. Interaction of models past $\kappa$}

\textbf{\S 8. Interaction of models past $\kappa$}

\stepcounter{section}

\bigskip

The method of adequate sets, which we dealt with in Part I, 
handles the interaction of countable elementary 
substructures below $\kappa$. 
In this section we will show how the coherent filtration system $\vec A$ 
from Section 7 can be used to control the interaction of models 
between $\kappa$ and $\kappa^+$. 

\begin{notation}
For $M$ and $N$ in $\mathcal X \cup \mathcal Y$, let 
$\alpha_{M,N}$ denote the ordinal $\sup(M \cap N)$.
\end{notation}

As we discussed in Section 1 in the paragraph after Propostion 1.29, 
if $M < N$, in general it does not necessarily follow that 
$M \cap N \in N$. 
The next lemma describes a situation in which this implication does hold.

\begin{lemma}
Let $M$ and $N$ be in $\mathcal X \cup \mathcal Y$, where $N$ is simple.
Suppose that:
\begin{enumerate}
\item $M$ and $N$ are in $\mathcal X$ and $M < N$, or
\item $M$ and $N$ are in $\mathcal Y$ and $M \cap \kappa < N \cap \kappa$, or 
\item $M \in \mathcal X$, $N \in \mathcal Y$, and 
$\sup(M \cap N \cap \kappa) < N \cap \kappa$.
\end{enumerate}
Then $M \cap N \in N$. 
In particular, $\alpha_{M,N} \in N$.
\end{lemma}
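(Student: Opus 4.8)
The plan is to derive Lemma 8.2 from the structural results of Section~7 together with the coherence of $\vec C$ and $\vec A$. First I would observe that $M \cap N \in \mathcal X \cup \mathcal Y$ by Lemma 7.16 (in case (1) this uses that $M < N$ entails $\{M,N\}$ is adequate), and that $M \cap N \cap \kappa \in N$ by Lemma 1.31, since the three cases of the present lemma are exactly the three cases of Lemma 1.31 once one recalls $\mathcal X \subseteq \mathcal X_0$ and $\mathcal Y \subseteq \mathcal Y_0$. By Lemma 7.24 applied to the models $M \cap N$ and $N$, it therefore suffices to show that $\sup(M \cap N) \in N$; the ``in particular'' clause then follows at once since $\alpha_{M,N} = \sup(M \cap N)$.

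So set $\alpha := \sup(M \cap N)$ and $\beta := \sup((M \cap N) \cap \kappa)$. Since $(M \cap N) \cap \kappa \in N$, elementarity gives $\beta \in N \cap \kappa$, and since $N$ is simple we have $\beta \le \sup(N \cap \kappa) = \ot(C_{\sup(N)})$ by Lemma 7.17 and Definition 7.18. Applying Lemma 7.25 to $M \cap N$ yields $(M \cap N) \cap \kappa^+ \subseteq A_{\alpha,\beta}$. As $(M \cap N) \cap \kappa^+$ is cofinal in $\alpha$ and $A_{\alpha,\beta} \subseteq \alpha$, we get $\sup(A_{\alpha,\beta}) = \alpha$; by property (6) of Notation 7.4 this forces $\beta \ge \ot(C_\alpha)$, since otherwise $A_{\alpha,\beta} \subseteq c_{\alpha,\beta} < \alpha$.

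Now I would argue by contradiction, assuming $\alpha \notin N$. Since $\alpha$ is a limit point of $M \cap N \subseteq N$ we have $\alpha \in \cl(N) \setminus N$; the boundary possibility $\alpha = \sup(N)$ would make $M \cap N$ cofinal in $N$, and is ruled out by a separate cofinality computation using simplicity. Assuming then $\alpha < \sup(N)$, put $\alpha' := \min((N \cap \kappa^+) \setminus \alpha) \in N$; by Lemma 7.12 we have $\alpha \in \lim(C_{\alpha'})$, hence $C_\alpha = C_{\alpha'} \cap \alpha$ and $A_{\alpha,\xi} = A_{\alpha',\xi} \cap \alpha$ for all $\xi < \kappa$, with $A_{\alpha',\beta} \in N$. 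Using simplicity I would then choose $\delta \in \lim(C_{\sup(N)}) \cap N$ with $\delta > \alpha'$ and $\ot(C_\delta) > \beta$, which is possible because $\ot(C_\delta) \to \ot(C_{\sup(N)}) \ge \beta$ as $\delta \to \sup(N)$ (after handling the degenerate sub-case $\beta = \sup(N \cap \kappa)$ separately). One relates $C_{\alpha'}$ to $C_\delta$ via the square-coherence of $\vec C$ and combines $\ot(C_\alpha) \le \beta < \ot(C_\delta)$ with properties (4), (6), and (8) of Notation 7.4 to place $\alpha$ inside $A_{\delta,\beta}$ at an order type bounded by an ordinal of $N$, so that the surjections $\pi_\eta$ of Notation 7.21 together with Lemma 7.23 exhibit $\alpha$ as a member of $N$ --- contradicting $\alpha \notin N$. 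The main obstacle is precisely this last step: orchestrating the interplay of $\vec C$, $\vec A$, and the simplicity of $N$ to pin $\alpha$ down inside a set belonging to $N$, together with verifying the scattered boundary cofinality cases; the earlier reductions are routine applications of Section~7.
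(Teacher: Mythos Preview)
Your reduction via Lemma~7.24 is a legitimate alternative to the paper's approach, and the case $\alpha = \sup(N)$ is indeed ruled out by simplicity exactly as the paper does (your ``separate cofinality computation'' is the paper's second claim). However, your final step---the ``main obstacle''---has a genuine gap. You propose to choose $\delta \in \lim(C_{\sup(N)}) \cap N$ with $\delta > \alpha'$ and then ``relate $C_{\alpha'}$ to $C_\delta$ via square-coherence.'' But square coherence only gives $C_{\alpha'} = C_\delta \cap \alpha'$ when $\alpha' \in \lim(C_\delta)$, and you have no reason to expect that: $\delta$ was chosen as a limit point of $C_{\sup(N)}$, which says nothing about how $C_\delta$ sits relative to $\alpha'$. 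The subsequent appeal to properties (4), (6), (8) of Notation~7.4 is too vague to be assessed.

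There is a much simpler completion of your strategy that you are missing. Once $\alpha < \sup(N)$ is established, you already have $\alpha' = \alpha_N \in N$ and $\alpha \in \lim(C_{\alpha'})$ from Lemma~7.12, so $\alpha = c_{\alpha',\,\ot(C_\alpha)}$. By Lemma~7.17, $\ot(C_\alpha) \in \cl((M \cap N) \cap \kappa)$; and since $(M \cap N) \cap \kappa \in N$, a short case split (countable versus case~(2)) gives $\cl((M \cap N) \cap \kappa) \subseteq N \cap \kappa$, so $\ot(C_\alpha) \in N$. Hence $\alpha = c_{\alpha',\,\ot(C_\alpha)} \in N$ by elementarity---no $\delta$, no properties (6) or (8), needed.

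The paper bypasses the question of $\alpha_{M,N} \in N$ altogether: after establishing $\alpha_{M,N} < \sup(N)$, it sets $\alpha := \min((N \cap \kappa^+) \setminus \alpha_{M,N})$ and uses Lemma~7.23 and Lemma~7.22 to write $M \cap N \cap \kappa^+ = \{\pi_\alpha(\gamma,\beta) : \gamma,\beta \in M \cap N \cap \kappa\}$, which lies in $N$ since $\alpha$ and $M \cap N \cap \kappa$ do. Then $\alpha_{M,N} \in N$ falls out as a corollary. (Minor point: the result giving $M \cap N \cap \kappa \in N$ is Lemma~1.30, not Lemma~1.31.)
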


\begin{proof}
By Lemma 1.30, $M \cap N \cap \kappa \in N$. 
Therefore by elementarity, 
$\cl(M \cap N \cap \kappa) \in N$. 
We claim that 
$$
\cl(M \cap N \cap \kappa) \subseteq N \cap \kappa.
$$
If $M \cap N$ is countable, then so is $\cl(M \cap N \cap \kappa)$. 
Since $\cl(M \cap N \cap \kappa) \in N$, it follows that 
$\cl(M \cap N \cap \kappa) \subseteq N \cap \kappa$. 
If $M \cap N$ is uncountable, then $M$ and $N$ are 
both in $\mathcal Y$. 
So $M \cap \kappa < N \cap \kappa$ by (2), and hence 
$M \cap N \cap \kappa = M \cap \kappa$. 
Therefore 
$$
\cl(M \cap N \cap \kappa) = 
(M \cap \kappa) \cup \{ M \cap \kappa \},
$$
which is a subset of $N \cap \kappa$. 

Next we claim that 
$$
(N \cap \kappa^+) \setminus \alpha_{M,N} \ne \emptyset.
$$
Suppose for a contradiction that 
$(N \cap \kappa^+) \setminus \alpha_{M,N} = \emptyset$, 
which means that $\sup(N) = \alpha_{M,N}$. 
Since $N$ is simple, it follows that 
$$
\ot(C_{\alpha_{M,N}}) = \sup(N \cap \kappa).
$$
But $\ot(C_{\alpha_{M,N}}) \in \cl(M \cap N \cap \kappa)$ 
by Lemma 7.17. 
By the first claim, it follows that 
$\ot(C_{\alpha_{M,N}}) \in N \cap \kappa$, which contradicts that 
$\ot(C_{\alpha_{M,N}}) = \sup(N \cap \kappa)$.

Let $\alpha := \min((N \cap \kappa^+) \setminus \alpha_{M,N})$. 
By Lemma 7.23, 
$$
M \cap N \cap \kappa^+ = 
\{ \pi_{\alpha_{M,N}}(\gamma,\beta) : 
\gamma, \beta \in M \cap N \cap \kappa \}.
$$
We claim that for all $\gamma, \beta \in M \cap N \cap \kappa$, 
$\pi_{\alpha_{M,N}}(\gamma,\beta) = \pi_\alpha(\gamma,\beta)$. 
This is immediate if $\alpha = \alpha_{M,N}$, so assume that 
$\alpha_{M,N} \notin N$. 
Then by Lemma 7.12, $\alpha_{M,N} \in \lim(C_\alpha)$.  
Fix $\gamma$ and $\beta$ in $M \cap N \cap \kappa$. 

First, assume that $\ot(A_{\alpha,\beta}) \le \gamma$. 
Then $\pi_{\alpha}(\gamma,\beta) = 0$. 
Since $A_{\alpha_{M,N},\beta} = A_{\alpha,\beta} \cap \alpha_{M,N}$, 
clearly $\ot(A_{\alpha_{M,N},\beta}) \le \ot(A_{\alpha,\beta}) \le \gamma$. 
So $\pi_{\alpha_{M,N}}(\gamma,\beta) = 0$. 
Secondly, assume that $\gamma < \ot(A_{\alpha,\beta})$, so that 
$\pi_{\alpha}(\gamma,\beta) = a_{\alpha,\beta,\gamma}$. 
Since $\alpha$, $\gamma$, and $\beta$ are in $N$, 
$a_{\alpha,\beta,\gamma} \in N \cap \alpha \subseteq \alpha_{M,N}$. 
As $A_{\alpha_{M,N},\beta} = A_{\alpha,\beta} \cap \alpha_{M,N}$, 
clearly $\gamma < \ot(A_{\alpha_{M,N},\beta})$. 
By Lemma 7.22, $\pi_{\alpha}(\gamma,\beta) = 
\pi_{\alpha_{M,N}}(\gamma,\beta)$.

It follows that 
$$
M \cap N \cap \kappa^+ = 
\{ \pi_\alpha(\gamma,\beta) : \gamma, \beta \in M \cap N \cap \kappa \}.
$$
Since $\alpha$ and $M \cap N \cap \kappa$ are in $N$, 
so is $M \cap N \cap \kappa^+$ by elementarity. 
Hence by elementarity, 
$M \cap N = f^*[M \cap N \cap \kappa^+] \in N$, and 
$\sup(M \cap N) = \alpha_{M,N} \in N$.
\end{proof}

\begin{lemma}
Let $P \in \mathcal Y$, and assume that 
$\cf(\sup(P)) = P \cap \kappa$. 
Then $P$ is simple.
\end{lemma}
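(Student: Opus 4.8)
The plan is to prove that $\ot(C_{\sup(P)}) = \sup(P \cap \kappa)$, which by Definition 7.19 is exactly the assertion that $P$ is simple, and to do this by establishing the two inequalities separately. Write $\delta := \sup(P)$. First I would record two bookkeeping facts. Since $P \cap \kappa^+$ is closed under ordinal successor (by elementarity of $P$ in $\mathcal A$) it has no maximum, so $\delta$ is a limit ordinal and the square sequence entry $C_\delta$ is defined. Likewise $P \cap \kappa$, viewed as an ordinal, is a limit ordinal: were it a successor $\gamma + 1$ we would have $\gamma \in P$ and hence $\gamma + 1 \in P$ by elementarity, a contradiction; consequently $\sup(P \cap \kappa) = P \cap \kappa$.

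The inequality $\ot(C_\delta) \le \sup(P \cap \kappa)$ is immediate: it is exactly the ``in particular'' clause of Lemma 7.17, applied with $N := P$ and $\eta := \delta = \sup(P)$, which is a limit point of $P$.

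For the reverse inequality I would argue by contradiction. Since $C_\delta$ is club in $\delta$ by Notation 7.2(1), it is cofinal in $\delta$ and has order type $\ot(C_\delta)$, so $\cf(\delta) = \cf(\ot(C_\delta)) \le \ot(C_\delta)$. If $\ot(C_\delta) < P \cap \kappa$, then $\cf(\sup(P)) = \cf(\delta) \le \ot(C_\delta) < P \cap \kappa$, contradicting the hypothesis $\cf(\sup(P)) = P \cap \kappa$. Hence $\ot(C_\delta) \ge P \cap \kappa = \sup(P \cap \kappa)$, and combining this with the previous paragraph gives $\ot(C_{\sup(P)}) = \sup(P \cap \kappa)$, so $P$ is simple. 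There is no real obstacle here: once Lemma 7.17 is available the whole argument is a short cofinality computation, and the only points needing care are the two elementary observations above — that $\sup(P)$ is a limit ordinal, so $C_{\sup(P)}$ exists, and that $\sup(P \cap \kappa) = P \cap \kappa$, so that the hypothesis on $\cf(\sup(P))$ can be compared directly with $\ot(C_{\sup(P)})$.
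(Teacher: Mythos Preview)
Your proof is correct and follows essentially the same approach as the paper: both obtain $\ot(C_{\sup(P)}) \le \sup(P\cap\kappa)$ from Lemma 7.17 and the reverse inequality from the observation that $\cf(\sup(P)) \le \ot(C_{\sup(P)})$. Your additional bookkeeping (that $\sup(P)$ is a limit ordinal and that $\sup(P\cap\kappa)=P\cap\kappa$) is sound and makes explicit what the paper leaves implicit; note only that the definition of simple is Definition 7.18, not 7.19.
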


\begin{proof}
Since $C_{\sup(P)}$ is cofinal in $\sup(P)$, 
$$
P \cap \kappa = \cf(\sup(P)) \le \ot(C_{\sup(P)}).
$$
On the other hand, 
as $\sup(P) \in \lim(P)$, Lemma 7.17 implies that 
$$
\ot(C_{\sup(P)}) \in \cl(P \cap \kappa) = 
(P \cap \kappa) \cup \{ P \cap \kappa \}.
$$
Hence $\ot(C_{\sup(P)}) \le P \cap \kappa$. 
Therefore $P \cap \kappa = \ot(C_{\sup(P)})$, and $P$ is simple.
\end{proof}

\begin{lemma}
Let $P \in \mathcal Y$, and assume that $\cf(\sup(P)) = P \cap \kappa$. 
If $M \in \mathcal X$, then $M \cap P \in P$. 
If $Q \in \mathcal Y$ and $Q \cap \kappa < P \cap \kappa$, 
then $Q \cap P \in P$. 
\end{lemma}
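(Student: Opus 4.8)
The plan is to reduce the statement directly to Lemmas 8.4 and 8.5, which have just been proved. First I would note that the hypothesis $\cf(\sup(P)) = P \cap \kappa$ is exactly what Lemma 8.5 requires, so $P$ is \emph{simple}, i.e.\ $\ot(C_{\sup(P)}) = \sup(P \cap \kappa)$. I would also record a small auxiliary observation for use below: $P \cap \kappa$ is a regular uncountable cardinal. Indeed, $P \cap \kappa = \cf(\sup(P))$ is a cofinality, hence a regular cardinal; and since $P \prec \mathcal A$ we have $\omega_1 \in P$, so $P \cap \kappa > \omega$. In particular $\cf(P \cap \kappa) = P \cap \kappa > \omega$.

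For the first assertion, let $M \in \mathcal X$. Since $M$ is countable, $M \cap P \cap \kappa$ is a countable set of ordinals below $P \cap \kappa$, and $P \cap \kappa$ has uncountable cofinality; therefore $\sup(M \cap P \cap \kappa) < P \cap \kappa$. Thus $M \in \mathcal X$, $P \in \mathcal Y$, $P$ is simple, and the cofinality hypothesis of clause (3) of Lemma 8.4 holds. Applying Lemma 8.4 with $N := P$ gives $M \cap P \in P$.

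For the second assertion, let $Q \in \mathcal Y$ with $Q \cap \kappa < P \cap \kappa$. Then $Q$ and $P$ are both in $\mathcal Y$, $Q \cap \kappa < P \cap \kappa$, and $P$ is simple, so clause (2) of Lemma 8.4, applied with $M := Q$ and $N := P$, yields $Q \cap P \in P$.

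Since the argument only invokes Lemma 8.4 after checking its hypotheses, there is essentially no obstacle here; the only mild point requiring attention is the verification that the hypothesis $\sup(M \cap P \cap \kappa) < P \cap \kappa$ of Lemma 8.4(3) is automatic in this context, which I would isolate as following from the countability of $M$ together with $\cf(P \cap \kappa) > \omega$, the latter being a consequence of $P \cap \kappa = \cf(\sup(P))$.
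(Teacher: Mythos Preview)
Your proof is correct and follows essentially the same line as the paper: show that $P$ is simple, observe that $P\cap\kappa$ is a regular uncountable cardinal (so that the countability of $M$ forces $\sup(M\cap P\cap\kappa)<P\cap\kappa$), and then invoke the appropriate clauses of the preceding lemma. Your lemma numbers are off by two---what you call Lemmas~8.4 and~8.5 are Lemmas~8.2 and~8.3 in the paper (the statement being proved is itself Lemma~8.4)---but the content and structure of the argument match.
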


\begin{proof}
By Lemma 8.3, $P$ is simple. 
Since $\cf(\sup(P)) = P \cap \kappa$, $P \cap \kappa$ is 
a regular cardinal. 
Obviously $\omega < P \cap \kappa$, so $P \cap \kappa$ is a regular 
uncountable cardinal. 
If $M \in \mathcal X$, then $\sup(M \cap P \cap \kappa) < P \cap \kappa$ 
since $\sup(M \cap P \cap \kappa)$ has countable cofinality. 
By Lemma 8.2, we are done.
\end{proof}

\begin{lemma}
Let $M \in \mathcal X$ and $N \in \mathcal X \cup \mathcal Y$, where 
$\{ M, N \}$ is adequate if $N \in \mathcal X$, and 
$\sup(M \cap N \cap \kappa) < N \cap \kappa$ if $N \in \mathcal Y$. 
Then 
$$
\lim(M) \cap \lim(N) \subseteq \alpha_{M,N} + 1.
$$
\end{lemma}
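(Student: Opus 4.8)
Write $\alpha=\alpha_{M,N}=\sup(M\cap N)$ and suppose toward a contradiction that there is $\eta\in\lim(M)\cap\lim(N)$ with $\eta>\alpha$. First I would dispose of the case $\eta<\kappa$: then $\eta\in\lim(M\cap\kappa)\cap\lim(N\cap\kappa)\subseteq\cl(M\cap\kappa)\cap\cl(N\cap\kappa)$, which equals $\cl(M\cap N\cap\kappa)$ by Lemma 1.20 when $N\in\mathcal X$ (using adequacy) and by Lemma 1.31 when $N\in\mathcal Y$ (using $\sup(M\cap N\cap\kappa)<N\cap\kappa$, which also forces $\eta\in N\cap\kappa$); hence $\eta\le\sup(M\cap N\cap\kappa)\le\alpha$, a contradiction. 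So from now on $\kappa\le\eta<\kappa^+$, and since $\eta\le\sup(M),\sup(N)$ while $\eta>\alpha$ we have $\alpha<\sup(M)$ and $\alpha<\sup(N)$. Note also that $M\cap N\cap\kappa^+$ is closed under ordinal successors (by elementarity of $M$ and of $N$) and nonempty, so $\alpha$ is a limit point of $M\cap N$; fix $\langle\gamma_n:n<\omega\rangle$ strictly increasing and cofinal in $\alpha$ with each $\gamma_n\in M\cap N$.

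\textbf{Reduction to $\lim(C_\eta)$ cofinal in $\eta$, and a retrieval step.} If $\lim(C_\eta)\cap\eta$ were bounded in $\eta$, then since $\eta\in\cl(M)$, Lemma 7.13 would force $\eta\in M$ (otherwise $\lim(C_\eta)\cap M$ would be cofinal in $\eta$), and similarly $\eta\in N$; but then $\eta\in M\cap N$, contradicting $\eta>\alpha$. So $\lim(C_\eta)$ is cofinal in $\eta$. I then record the \emph{retrieval fact}: for $X\in\{M,N\}$ and any $\gamma\in X$ with $\gamma<\eta$, the ordinal $\min(\lim(C_\eta)\setminus\gamma)$ is below $\eta$ (by cofinality and closedness of $\lim(C_\eta)$) and lies in $X$. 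This is proved by cases: if $\eta\in X$ it is immediate by elementarity since $C_\eta\in X$; if $\eta\notin X$ and $\eta<\sup(X)$, Lemma 7.12 gives $C_\eta=C_{\eta_X}\cap\eta$ with $\eta_X\in X$, so the value equals $\min(\lim(C_{\eta_X})\setminus\gamma)\in X$; and if $\eta=\sup(X)$, use the defining property of $\mathcal X$ (resp. $\mathcal Y$) to pick $\delta\in\lim(C_{\sup X})\cap X$ above the value, note $C_\delta=C_\eta\cap\delta$, and observe the value equals $\min(\lim(C_\delta)\setminus\gamma)\in X$.

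\textbf{Forcing $\alpha$ into $M\cap N$.} Applying retrieval to each $\gamma_n\in M\cap N$ yields $\rho_n:=\min(\lim(C_\eta)\setminus\gamma_n)\in M\cap N$ with $\gamma_n\le\rho_n\le\sup(M\cap N)=\alpha$; hence $\rho_n\to\alpha$ and, as $\lim(C_\eta)$ is closed, $\alpha\in\lim(C_\eta)$ (if some $\rho_n=\alpha$ we already have $\alpha\in M\cap N$). Now, since $\alpha\in\lim(M)$ and $\alpha\in\lim(N)$, Lemma 7.17 gives $\ot(C_\alpha)\in\cl(M\cap\kappa)\cap\cl(N\cap\kappa)=\cl(M\cap N\cap\kappa)$ (Lemma 1.20/1.31 again). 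Using $\alpha=c_{\eta,\ot(C_\alpha)}$: when $\ot(C_\alpha)\in M\cap N\cap\kappa$, the same coherence-plus-elementarity argument as in the retrieval step (together with Lemma 7.19 in the case $\eta=\sup(M)$ or $\eta=\sup(N)$) shows $\alpha$ is definable from $\ot(C_\alpha)$ and a member of $M$ (resp. of $N$), so $\alpha\in M\cap N$. Once $\alpha\in M\cap N$ we have $\alpha+1\in M\cap N$ with $\alpha+1<\eta$, and retrieval applied to $\alpha+1$ produces $\rho:=\min(\lim(C_\eta)\setminus(\alpha+1))\in M\cap N$ with $\rho\ge\alpha+1>\sup(M\cap N)$, the desired contradiction.

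\textbf{Main obstacle.} The delicate point is the last paragraph's case split: one must also handle $\ot(C_\alpha)\in\lim(M\cap N\cap\kappa)\setminus(M\cap N\cap\kappa)$. Here I would show that this forces $\ot(C_\alpha)=\sup(M\cap N\cap\kappa)$, and then combine the bound $M\cap N\cap\kappa^+\subseteq A_{\alpha,\sup(M\cap N\cap\kappa)}$ (Lemma 7.25 applied to $M\cap N$, which lies in $\mathcal X$ by Lemma 7.16) with the coherence of $\vec A$ at $\alpha\in\lim(C_\eta)$ (Notation 7.4(4),(6)) to recover $\alpha$ inside both $M$ and $N$, or else to contradict $\sup(M\cap N\cap\kappa)$ being the order type of $C_\alpha$. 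This $\vec A$-coherence bookkeeping is the technically hardest part of the argument; everything else is a routine application of Lemmas 7.12, 7.13, 7.17, and 7.19.
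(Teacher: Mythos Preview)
Your argument is sound through the retrieval step, but the ``main obstacle'' you flag is a genuine gap, and your sketched resolution does not work. The claim that $\ot(C_\alpha)\notin M\cap N$ forces $\ot(C_\alpha)=\sup(M\cap N\cap\kappa)$ is unjustified: $M\cap N$ need not be simple, and nothing prevents $\lim(M\cap N\cap\kappa)\setminus(M\cap N)$ from containing ordinals strictly below $\sup(M\cap N\cap\kappa)$. The $\vec A$-coherence bookkeeping you gesture at does not recover $\alpha$ from this data in any evident way.

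The paper sidesteps the obstacle entirely by working with $\ot(C_\eta)$ rather than $\ot(C_\alpha)$. Since $\eta\in\lim(M)$ with $M$ countable, $\cf(\eta)=\omega$, hence $\cf(\ot(C_\eta))=\omega$; Lemma 7.17 plus Lemmas 1.20/1.31 then give $\ot(C_\eta)\in\lim(M\cap N\cap\kappa)$ outright (not merely in the closure), with no case split. Now one picks $\gamma\in M\cap N\cap\ot(C_\eta)$ large enough that $c_{\eta,\gamma}>\alpha$ --- possible because $c_{\eta,\gamma}\to\eta>\alpha$ as $\gamma$ increases toward $\ot(C_\eta)$ --- and shows $c_{\eta,\gamma}\in M$ and $c_{\eta,\gamma}\in N$ by exactly your retrieval argument (elementarity if $\eta$ lies in the model; otherwise pass to a $\delta\in\lim(C_\eta)$ above $c_{\eta,\gamma}$ via Lemma 7.13 and compute $c_{\eta,\gamma}=c_{\delta,\gamma}$). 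This produces an element of $M\cap N$ strictly above $\alpha$, the desired contradiction, without ever needing $\alpha\in\lim(C_\eta)$, without attempting to place $\alpha$ itself in $M\cap N$, and without the case split that trapped you. Your detour through $\ot(C_\alpha)$ is what creates the obstacle; staying at $\eta$ dissolves it.
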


\begin{proof}
Let $\eta \in \lim(M) \cap \lim(N)$, and we will show that $\eta \le \alpha_{M,N}$. 
By Lemma 7.17, 
$$
\ot(C_\eta) \in \cl(M \cap \kappa) \cap \cl(N \cap \kappa).
$$
Since $\eta$ is a limit point of $M$ and $M$ is countable, 
$\eta$ has cofinality $\omega$. 
Therefore $\ot(C_\eta)$ has cofinality $\omega$. 
We claim that $\ot(C_\eta)$ is a limit point of $M \cap \kappa$. 
If $\ot(C_\eta) \in M$, then since $\ot(C_\eta)$ has countable cofinality, 
easily $M \cap \ot(C_\eta)$ is cofinal in $\ot(C_\eta)$ by elementarity. 
Hence $\ot(C_\eta)$ is a limit point of $M \cap \kappa$. 
Otherwise if $\ot(C_\eta) \notin M$, then since 
$\ot(C_\eta) \in \cl(M \cap \kappa)$, it follows immediately that 
$\ot(C_\eta)$ is in $\lim(M \cap \kappa)$.

Next we claim that 
$$
\ot(C_\eta) \in \cl(M \cap N  \cap \kappa).
$$
First, assume that $N \in \mathcal X$. 
Then by Lemma 1.20, 
$$
\ot(C_\eta) \in \cl(M \cap \kappa) \cap \cl(N \cap \kappa) = 
\cl(M \cap N \cap \kappa).
$$

Secondly, assume that $N \in \mathcal Y$. 
Now 
$$
\ot(C_\eta) \in \cl(N \cap \kappa) = 
(N \cap \kappa) \cup \{ N \cap \kappa \}.
$$
So $\ot(C_\eta) \le N \cap \kappa$. 
Since $\sup(M \cap N \cap \kappa) < N \cap \kappa$ 
and $\ot(C_\eta)$ is a limit point of $M \cap \kappa$, 
it cannot be 
the case that $\ot(C_\eta) = N \cap \kappa$. 
Therefore $\ot(C_\eta) < N \cap \kappa$. 
By Lemma 1.31, 
$$
\cl(M \cap N \cap \kappa) = 
\cl(M \cap \kappa) \cap \cl(N \cap \kappa) \cap (N \cap \kappa).
$$ 
Since $\ot(C_\eta)$ is in the set on the right, it is in 
$\cl(M \cap N \cap \kappa)$.

Now we claim that 
$$
\ot(C_\eta) \in \lim(M \cap N \cap \kappa).
$$
As $\ot(C_\eta) \in \cl(M \cap N \cap \kappa)$, either 
$\ot(C_\eta) \in M \cap N \cap \kappa$, or 
$\ot(C_\eta) \in \lim(M \cap N \cap \kappa)$. 
In the latter case, we are done. 
In the former case, since $\ot(C_\eta)$ has cofinality $\omega$, by the 
elementarity of $M \cap N$, clearly 
$M \cap N \cap \kappa$ is cofinal in $\ot(C_\eta)$, so again 
$\ot(C_\eta) \in \lim(M \cap N \cap \kappa)$.

Finally, we are ready to prove that $\eta \le \alpha_{M,N}$. 
Suppose for a contradiction that $\alpha_{M,N} < \eta$. 
Since $\ot(C_\eta)$ is a limit point of 
$M \cap N \cap \kappa$, we can fix 
$\gamma \in M \cap N \cap \ot(C_\eta)$ 
such that $\alpha_{M,N} < c_{\eta,\gamma}$. 
We claim that $c_{\eta,\gamma} \in M \cap N$, 
which is a contradiction since 
$M \cap N \cap \kappa^+ \subseteq \alpha_{M,N}$. 
If $\eta \in M$, then obviously $c_{\eta,\gamma} \in M$ by elementarity. 
Otherwise $\eta \in \cl(M) \setminus M$. 
By Lemma 7.13, $\lim(C_\eta) \cap M$ is cofinal in $\eta$. 
So we can fix $\delta \in \lim(C_\eta) \cap M$ 
such that $c_{\eta,\gamma} < \delta$. 
Then clearly $c_{\eta,\gamma} = c_{\delta,\gamma}$, which is in $M$ 
by elementarity. 
The proof that $c_{\eta,\gamma} \in N$ is the same.
\end{proof}

\bigskip

We now turn to address the following general issue. 
Suppose that $M$ and $N$ are in $\mathcal X \cup \mathcal Y$, and 
$P \in N \cap \mathcal Y$. 
Under what circumstances can we conclude that an ordinal 
in $M \cap P$ is in $N$, or is in some canonically described member of $N$?

The next lemma is the most frequently used result on this topic.

\begin{lemma}
Let $M$ and $N$ be in $\mathcal X$, where $M \le N$. 
Suppose that 
$$
\eta \in N \cap \kappa^+ \ \textrm{and} \ \beta < \sup(M \cap N \cap \kappa).
$$
Then $A_{\eta,\beta} \cap M \subseteq N$. 
Therefore 
$$
A_{\eta,\sup(M \cap N \cap \kappa)} \cap M \subseteq N.
$$
\end{lemma}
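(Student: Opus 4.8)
The plan is to argue by induction on $\eta \in N \cap \kappa^+$, after some preliminary reductions. First I would reduce to the case $\beta \in M \cap N \cap \kappa$. Since $M \le N$, Lemma 1.19(2) gives $M \cap N \cap \kappa = M \cap \beta_{M,N}$, an initial segment of $M \cap \kappa$ which has no largest element (otherwise elementarity of $M$ applied to that element would contradict $\beta_{M,N} \in \Lambda$, a limit ordinal); hence $\sup(M \cap N \cap \kappa)$ is a limit ordinal, and every $\beta < \sup(M \cap N \cap \kappa)$ lies below some $\beta' \in M \cap N \cap \kappa$. As $\vec A$ is increasing and continuous in its second coordinate (Notation 7.4(1)), we have $A_{\eta,\beta} \subseteq A_{\eta,\beta'}$ and $A_{\eta,\sup(M \cap N \cap \kappa)} = \bigcup \{ A_{\eta,\beta'} : \beta' \in M \cap N \cap \kappa \}$, so it is enough to prove $A_{\eta,\beta} \cap M \subseteq N$ for every $\beta \in M \cap N \cap \kappa$; this gives both assertions of the lemma. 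Fix such a $\beta$.

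Now I would induct on $\eta \in N \cap \kappa^+$. Given $\xi \in A_{\eta,\beta} \cap M$, suppose toward a contradiction that $\xi \notin N$, and set $\rho := \ot(A_{\eta,\beta} \cap \xi)$, so that $\xi$ is the $\rho$-th element of $A_{\eta,\beta}$. Since $A_{\eta,\beta} \in N$ (as $\eta, \beta \in N$ and $\vec A$ is definable in $\mathcal A$), we have $\xi \in N$ iff $\rho \in N$. By property (5) of Notation 7.4, $\rho \le \ot(A_{\eta,\beta}) < c^*(\beta)$, and since $c^*$ is definable in $\mathcal A$ and $\beta \in M \cap N$ we get $c^*(\beta) \in M \cap N \cap \kappa$, hence $c^*(\beta) < \beta_{M,N}$ by Lemma 1.15, so $\rho < \beta_{M,N}$. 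As $M \le N$, Lemma 1.19(2) gives $M \cap \beta_{M,N} = M \cap N \cap \kappa \subseteq N$; therefore it suffices to derive $\rho \in M$ (then $\rho \in N$, then $\xi \in N$, a contradiction). This is the one place the hypothesis on $c^*$ is used.

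To locate $\rho$ in $M$ I would run the inductive step as follows. Put $\xi_N := \min((N \cap \kappa^+) \setminus \xi)$, which exists and is $\le \eta$; as in the proof of Lemma 7.31, $\xi \notin N$ forces $\cf(\xi_N) > \omega$ and $\sup(N \cap \xi_N) \le \xi$, so by Lemma 7.30 the ordinal $\xi_N$ is a limit point of $A_{\eta,\beta}$, whence $A_{\xi_N,\beta} = A_{\eta,\beta} \cap \xi_N$ by Notation 7.4(4) and $\rho = \ot(A_{\xi_N,\beta} \cap \xi)$. If $\xi_N < \eta$, the inductive hypothesis applied to $\xi_N$ yields $A_{\xi_N,\beta} \cap M \subseteq N$ and hence $\xi \in N$, a contradiction. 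If $\eta$ is a successor $\eta_0 + 1$, then $\eta_0 \in N$, $A_{\eta,\beta} = A_{\eta_0,\beta} \cup \{\eta_0\}$ (Notation 7.4(2)) and $\xi \ne \eta_0$, so $\xi \in A_{\eta_0,\beta} \cap M$ and the inductive hypothesis applied to $\eta_0$ again gives $\xi \in N$. If $\cf(\eta) = \omega$, then $N \cap \eta$ is cofinal in $\eta$, so $\xi_N < \eta$ and we are in the first subcase. And if $\eta \in M$, then $A_{\eta,\beta} \in M$ and $\rho = \ot(A_{\eta,\beta} \cap \xi) \in M$ by elementarity. So the only case left is: $\eta$ a limit ordinal of uncountable cofinality, $\eta \notin M$, and $N \cap [\xi,\eta) = \emptyset$.

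The crux is this residual case; I expect it to be the main obstacle. Here $\xi \in M \setminus N$ with $\beta_{M,N} \le \xi < \eta$, and $\eta$ is a limit point of $A_{\eta,\beta}$ that does not belong to $\cl(M)$ (since $\cf(\eta) > \omega$ while $M$ is countable). The plan is to compute $\rho$ inside $M$ by transferring to the coherent filtration attached to $M$ itself: by Lemma 7.25, $\xi \in A_{\sup(M),\sup(M \cap \kappa)}$, and since $\lim(C_{\sup(M)}) \cap M$ is cofinal in $\sup(M)$ one may (when $\eta < \sup(M)$) choose $\sigma \in \lim(C_{\sup(M)}) \cap M$ above $\eta$, so that $A_{\sigma,\beta} \in M$ and $A_{\sigma,\beta} = A_{\sup(M),\beta} \cap \sigma$ by Notation 7.4(4); it then remains to verify that $A_{\sigma,\beta}$ and $A_{\eta,\beta}$ agree below $\xi + 1$, which yields $\rho = \ot(A_{\sigma,\beta} \cap \xi) \in M$ by elementarity, and analogously when $\sup(M) \le \eta$ one works with $\sup(M)$ directly. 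Carrying out this agreement — matching the two filtrations at the point $\xi$, via Lemmas 7.12 and 7.13 applied to $M$ and the coherence clause Notation 7.4(4), with the position of $\sup(M)$ relative to $\eta$ dictating the subcases — is the delicate step, and it is precisely where the conditions on $\vec A$ (coherence together with the bound $c^*$) do all of the work.
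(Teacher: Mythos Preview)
Your reduction to $\beta \in M \cap N \cap \kappa$, your use of $c^*$ to bound $\rho = \ot(A_{\eta,\beta} \cap \xi) < c^*(\beta) \in M \cap N \cap \kappa \subseteq \beta_{M,N}$, and your observation that it therefore suffices to place $\rho$ in $M$, are all correct and exactly what the paper does. But you have missed the one-line argument that finishes the job, and because of that you have launched into an induction whose residual case you cannot close.

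The point is simply this: since $\xi \in A_{\eta,\beta}$, coherence (Notation 7.4(4)) gives $A_{\xi,\beta} = A_{\eta,\beta} \cap \xi$, and hence $\rho = \ot(A_{\eta,\beta} \cap \xi) = \ot(A_{\xi,\beta})$. Now $\xi$ and $\beta$ are both in $M$, so $A_{\xi,\beta} \in M$ and $\rho = \ot(A_{\xi,\beta}) \in M$ by elementarity. That is the entire proof; no induction on $\eta$ is needed. Your residual case (where you try to compare $A_{\sigma,\beta}$ with $A_{\eta,\beta}$ for some $\sigma \in \lim(C_{\sup(M)}) \cap M$) runs into real trouble precisely because there is no coherence relation between $\eta$ and $\sigma$: you would need $\eta \in \lim(C_\sigma) \cup A_{\sigma,\beta} \cup \lim(A_{\sigma,\beta})$ or at least $\xi \in A_{\sigma,\beta}$, and neither is available in general. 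The fix is not to push harder on that comparison but to notice that the relevant parameter is $\xi$, not $\eta$ --- and $\xi$ you already have in $M$.
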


\begin{proof}
Let $\xi \in A_{\eta,\beta} \cap M$, and we will show that $\xi \in N$. 
Since $\beta < \sup(M \cap N \cap \kappa)$, we can fix 
$\gamma \in M \cap N \cap \kappa$ greater than $\beta$. 
Then $\xi \in A_{\eta,\gamma}$. 
So $\xi = \pi_{\eta}(\ot(A_{\eta,\gamma} \cap \xi),\gamma)$, as noted in 
the comments after Notation 7.21. 
Since $\xi \in A_{\eta,\gamma}$, 
$A_{\xi,\gamma} = A_{\eta,\gamma} \cap \xi$. 
Therefore $\ot(A_{\eta,\gamma} \cap \xi) = \ot(A_{\xi,\gamma})$. 
Hence $\xi = \pi_{\eta}(\ot(A_{\xi,\gamma}),\gamma)$. 
Since $\xi$ and $\gamma$ are in $M$, so is $\ot(A_{\xi,\gamma})$.

Since $\gamma \in M \cap N \cap \kappa$, by elementarity $c^*(\gamma) \in 
M \cap N \cap \kappa$. 
By Notation 7.4(5), since $M \le N$, we have that 
$$
\ot(A_{\xi,\gamma}) \in M \cap c^*(\gamma) 
\subseteq M \cap N \cap \kappa \subseteq N.
$$
So $\ot(A_{\xi,\gamma}) \in N \cap \kappa$. 
Hence $\eta$, $\gamma$, and $\ot(A_{\xi,\gamma})$ are in $N$, which implies 
that $\pi_{\eta}(\ot(A_{\xi,\gamma}),\gamma) = \xi$ is in $N$.

To show that $A_{\eta,\sup(M \cap N \cap \kappa)} \cap M \subseteq N$, 
let $\tau \in A_{\eta,\sup(M \cap N \cap \kappa)} \cap M$. 
Since $\sup(M \cap N \cap \kappa)$ is a limit ordinal, 
there is $\beta < \sup(M \cap N \cap \kappa)$ such that 
$\tau \in A_{\eta,\beta}$. 
By what we just proved, $A_{\eta,\beta} \cap M \subseteq N$. 
So $\tau \in N$.
\end{proof}

\begin{lemma}
Let $M$ and $N$ be in $\mathcal X$, where $M \le N$. 
Let $Q \in N \cap \mathcal Y$ with 
$Q \cap \kappa < \sup(M \cap N \cap \kappa)$. 
Then $Q \cap M \cap \kappa^+ \subseteq N$.
\end{lemma}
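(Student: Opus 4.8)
The plan is to deduce this immediately from Lemma 8.7, using the filtration representation of uncountable models provided by Lemma 7.26. Concretely, I would take an arbitrary $\xi \in Q \cap M \cap \kappa^+$ and aim to show $\xi \in N$. Since $Q \in \mathcal Y$, Lemma 7.26 gives $Q \cap \kappa^+ = A_{\sup(Q), Q \cap \kappa}$, and hence $\xi \in A_{\sup(Q), Q \cap \kappa} \cap M$. Thus $Q \cap M \cap \kappa^+ = A_{\sup(Q), Q \cap \kappa} \cap M$, and it suffices to show the latter set is contained in $N$.

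Next I would verify that the parameters $\eta := \sup(Q)$ and $\beta := Q \cap \kappa$ satisfy the hypotheses of Lemma 8.7. The relation $M \le N$ is assumed; the inequality $\beta = Q \cap \kappa < \sup(M \cap N \cap \kappa)$ is precisely the hypothesis of the present lemma; and $\eta = \sup(Q) \in N \cap \kappa^+$ because $Q \in N$ has size less than $\kappa$, so $\sup(Q) < \kappa^+$, while $\sup(Q)$ is definable from $Q$ in $\mathcal A$ and therefore lies in $N$ by elementarity. Applying Lemma 8.7 with these $\eta$ and $\beta$ then yields $A_{\sup(Q), Q \cap \kappa} \cap M \subseteq N$, so $\xi \in N$; since $\xi$ was arbitrary, $Q \cap M \cap \kappa^+ \subseteq N$, as required.

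Since the argument is a direct composition of two already-established facts, there is no substantial obstacle here; the only point requiring a moment's care is confirming that $\sup(Q) \in N \cap \kappa^+$, which as noted uses $|Q| < \kappa$ to guarantee $\sup(Q) < \kappa^+$ together with the elementarity of $N$ (and implicitly the fact, recorded in Section 7, that members of $\mathcal X$ and $\mathcal Y$ are elementary in $\mathcal A$, which has definable Skolem functions).
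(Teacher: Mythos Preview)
Your argument is correct and is essentially identical to the paper's proof: both use Lemma 7.26 to write $Q \cap \kappa^+ = A_{\sup(Q),Q\cap\kappa}$, observe that $\sup(Q) \in N$ by elementarity and $Q\cap\kappa < \sup(M\cap N\cap\kappa)$ by hypothesis, and then invoke the preceding lemma to conclude. Note, however, that the result you are invoking is Lemma~8.6 (the statement that $A_{\eta,\beta}\cap M \subseteq N$), not Lemma~8.7, which is the very lemma you are proving.
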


\begin{proof}
By Lemma 7.26, $Q \cap \kappa^+ = A_{\sup(Q),Q \cap \kappa}$. 
By elementarity, $\sup(Q) \in N \cap \kappa^+$, and 
by assumption, $Q \cap \kappa < \sup(M \cap N \cap \kappa)$. 
By Lemma 8.6, 
$$
Q \cap M \cap \kappa^+ = 
A_{\sup(Q),Q \cap \kappa} \cap M \subseteq N.
$$
\end{proof}

\begin{lemma}
Let $M$ and $N$ be in $\mathcal X$.
\begin{enumerate}
\item If $M \le N$, then 
$$
A_{\alpha_{M,N},\sup(M \cap N \cap \kappa)} \cap M \subseteq N.
$$
\item If $M \sim N$, then 
$$
A_{\alpha_{M,N},\sup(M \cap N \cap \kappa)} \cap M = 
A_{\alpha_{M,N},\sup(M \cap N \cap \kappa)} \cap N.
$$
\end{enumerate}
\end{lemma}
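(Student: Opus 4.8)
The plan is to derive part (1) directly from Lemma 8.6, and then obtain part (2) from part (1) by symmetry. Lemma 8.6 already carries the substantive combinatorics --- pushing an element of $A_{\eta,\beta}\cap M$ into $N$ via the coherence property of $\vec A$ and the $\ot$-bound function $c^*$ --- so the only genuine issue here is that the index $\alpha_{M,N}=\sup(M\cap N)$ need not belong to $N$, and must be replaced by an appropriate ordinal of $N$.

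For part (1), suppose $M\le N$ and put $\alpha:=\alpha_{M,N}$ and $\beta^*:=\sup(M\cap N\cap\kappa)$. Since $M\cap N\subseteq N$ we have $\alpha\in\cl(N)$ and $\alpha\le\sup(N)$. Fix $\xi\in A_{\alpha,\beta^*}\cap M$; because $\langle A_{\alpha,\beta}:\beta<\kappa\rangle$ is increasing with union $\alpha$, necessarily $\xi<\alpha$. I would first produce an ordinal $\sigma\in N\cap\kappa^+$ with $\xi\in A_{\sigma,\beta^*}$. Granting this, Lemma 8.6 applied with $\eta:=\sigma$ (using $M\le N$) gives $A_{\sigma,\sup(M\cap N\cap\kappa)}\cap M\subseteq N$, so $\xi\in N$; as $\xi$ was arbitrary, this yields $A_{\alpha,\sup(M\cap N\cap\kappa)}\cap M\subseteq N$, which is exactly the conclusion of part (1).

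To produce $\sigma$ I would split on cases. If $\alpha<\sup(N)$, set $\sigma:=\alpha_N=\min((N\cap\kappa^+)\setminus\alpha)\in N$; by Lemma 7.12(2) (applicable since $\alpha\in\cl(N)$ and $\alpha<\sup(N)$, and trivially true if $\alpha\in N$), $A_{\alpha,\beta^*}=A_{\alpha_N,\beta^*}\cap\alpha$, so $\xi\in A_{\alpha_N,\beta^*}=A_{\sigma,\beta^*}$. If $\alpha=\sup(N)$, then since $N\in\mathcal X$ the set $\lim(C_{\sup(N)})\cap N$ is cofinal in $\sup(N)$, so fix $\sigma\in\lim(C_{\sup(N)})\cap N$ with $\xi<\sigma$; by Notation 7.4(4), $A_{\sigma,\beta^*}=A_{\sup(N),\beta^*}\cap\sigma$, and since $\xi\in A_{\alpha,\beta^*}=A_{\sup(N),\beta^*}$ and $\xi<\sigma$ we again get $\xi\in A_{\sigma,\beta^*}$. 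Either way $\sigma\in N\cap\kappa^+$ is as required.

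For part (2), if $M\sim N$ then both $M\le N$ and $N\le M$, while $\alpha_{N,M}=\alpha_{M,N}$ and $\sup(N\cap M\cap\kappa)=\sup(M\cap N\cap\kappa)$; so part (1) applied to the ordered pairs $(M,N)$ and $(N,M)$ gives $A_{\alpha_{M,N},\sup(M\cap N\cap\kappa)}\cap M\subseteq N$ and $A_{\alpha_{M,N},\sup(M\cap N\cap\kappa)}\cap N\subseteq M$, and these two inclusions together give the desired equality. I expect the only delicate point to be the bookkeeping in the case split above --- in particular isolating the correct witness $\sigma$ when $\alpha_{M,N}\notin N$ --- since beyond that the argument is a direct appeal to Lemma 8.6, Lemma 7.12, and the coherence property of $\vec A$.
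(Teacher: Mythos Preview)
Your proof is correct and follows the same strategy as the paper: reduce to Lemma 8.6 by replacing $\alpha_{M,N}$ with an ordinal of $N$ via the coherence of $\vec A$, and then derive (2) from (1) by symmetry. The only difference is that the paper avoids your case split by invoking the fact that $M\cap N\in\mathcal X$ (Lemma 7.16): since $\sup(M\cap N)=\alpha_{M,N}$, the set $\lim(C_{\alpha_{M,N}})\cap(M\cap N)$ is cofinal in $\alpha_{M,N}$, so one can directly pick $\delta\in\lim(C_{\alpha_{M,N}})\cap(M\cap N)$ above $\xi$ and use $A_{\delta,\beta}=A_{\alpha_{M,N},\beta}\cap\delta$; this yields the witness $\delta\in N$ uniformly without distinguishing whether $\alpha_{M,N}<\sup(N)$ or $\alpha_{M,N}=\sup(N)$.
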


\begin{proof}
Note that (2) follows from (1). 
To prove (1), 
assume that $M \le N$, and let 
$\xi \in A_{\alpha_{M,N},\sup(M \cap N \cap \kappa)} \cap M$. 
We will show that $\xi \in N$. 
Fix $\beta \in M \cap N \cap \kappa$ 
such that $\xi \in A_{\alpha_{M,N},\beta}$. 
As $M \cap N \in \mathcal X$ and $\sup(M \cap N) = \alpha_{M,N}$, 
it follows that 
$\lim(C_{\alpha_{M,N}}) \cap (M \cap N)$ is cofinal in $\alpha_{M,N}$. 
So we can fix $\delta \in \lim(C_{\alpha_{M,N}}) \cap (M \cap N)$ 
which is strictly larger than $\xi$. 
Then $A_{\delta,\beta} = A_{\alpha_{M,N},\beta} \cap \delta$, and 
hence $\xi \in A_{\delta,\beta}$. 
Since $\delta \in N$, $\beta < \sup(M \cap N \cap \kappa)$, 
and $M \le N$, it follows that 
$A_{\delta,\beta} \cap M \subseteq N$ by Lemma 8.6. 
So $\xi \in N$.
\end{proof}

\begin{lemma}
Let $M \in \mathcal X$ and $N \in \mathcal X \cup \mathcal Y$. 
Then 
$$
M \cap N \cap \kappa^+ \subseteq 
A_{\alpha_{M,N},\sup(M \cap N \cap \kappa)}.
$$
\end{lemma}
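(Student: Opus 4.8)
The plan is to run the proof of Lemma 7.25 with the model $M \cap N$ in place of the single model used there. Write $\alpha := \alpha_{M,N} = \sup(M \cap N)$ and $\beta^* := \sup(M \cap N \cap \kappa)$, and let $\xi \in M \cap N \cap \kappa^+$; we must show $\xi \in A_{\alpha,\beta^*}$. Note first that $M \cap N$ is an elementary substructure of $\mathcal A$, that $\beta^* < \kappa$ (since $M$ is countable), and --- this is the one nontrivial input --- that $\lim(C_\alpha) \cap (M \cap N)$ is cofinal in $\alpha$; I postpone the verification of the last fact to the next paragraph. Granting it, fix $\sigma \in \lim(C_\alpha) \cap (M \cap N)$ with $\xi < \sigma$ (possible since $\xi < \alpha$, as $M \cap N \cap \kappa^+$ has no largest element). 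Since $\{A_{\sigma,\gamma} : \gamma < \kappa\}$ is increasing and continuous with union $\sigma$, and $\xi,\sigma \in M \cap N$, the elementarity of $M \cap N$ yields $\beta \in (M \cap N) \cap \kappa$ with $\xi \in A_{\sigma,\beta}$; and $\beta < \beta^*$ because $(M \cap N) \cap \kappa$ has no largest element. As $\sigma \in \lim(C_\alpha)$, Notation 7.4(4) gives $A_{\sigma,\beta} = A_{\alpha,\beta} \cap \sigma$, so $\xi \in A_{\alpha,\beta} \subseteq A_{\alpha,\beta^*}$ by monotonicity in the second coordinate. This finishes the argument modulo the cofinality claim.

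It remains to check that $\lim(C_\alpha) \cap (M \cap N)$ is cofinal in $\alpha$; this is exactly the argument in the proof of Lemma 7.16 that establishes the cofinality of $\lim(C_{\alpha_{M,N}}) \cap (M \cap N)$ in $\alpha_{M,N}$, and that portion makes no use of adequacy, so one may simply cite it. For completeness, here is the reasoning. Since $M \cap N \cap \kappa^+$ is closed under ordinal successor it has no largest element, so $\alpha$ is a limit ordinal with $\alpha \in \lim(M) \cap \lim(N) \subseteq \cl(M) \cap \cl(N)$, and $\cf(\alpha) = \omega$ because $M \cap N$ is countable. Because $\alpha = \sup(M \cap N)$, $\alpha$ cannot lie in both $M$ and $N$; say $\alpha \notin M$. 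Then $\alpha \in \cl(M) \setminus M$, so $\lim(C_\alpha) \cap M$ is cofinal in $\alpha$ by Lemma 7.13; in particular $\lim(C_\alpha)$ is cofinal in $\alpha$. If also $\alpha \notin N$, then symmetrically $\lim(C_\alpha) \cap N$ is cofinal in $\alpha$; if $\alpha \in N$, then since $\alpha \in \lim(N)$ and $\lim(C_\alpha)$ is cofinal in $\alpha$, elementarity of $N$ still gives that $\lim(C_\alpha) \cap N$ is cofinal in $\alpha$. Finally, given $\gamma < \alpha$, pick $\gamma' \in M \cap N \cap \kappa^+$ with $\gamma < \gamma'$ and set $\sigma := \min(\lim(C_\alpha) \setminus \gamma')$; choosing $\eta \in \lim(C_\alpha) \cap M$ with $\sigma < \eta$ we have $C_\eta = C_\alpha \cap \eta$, so $\sigma = \min(\lim(C_\eta) \setminus \gamma')$, whence $\sigma \in M$ by elementarity, and the same argument with $N$ gives $\sigma \in N$. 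Thus $\sigma \in \lim(C_\alpha) \cap (M \cap N)$ and $\gamma < \sigma$.

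The conceptual point --- and the reason the lemma holds with no hypothesis relating $M$ and $N$ --- is that the proof of Lemma 7.25 uses only the elementarity of the model in $\mathcal A$ and the fact that its supremum is approached by limit points of the corresponding square club; it never uses that $M \cap N \cap \kappa \in T^*$ or that $\{M,N\}$ is adequate. Accordingly there is no real obstacle here: the only labor is the cofinality bookkeeping of the second paragraph, which is a verbatim reprise of the relevant portion of the proof of Lemma 7.16. (When $N \in \mathcal Y$, or when $N \in \mathcal X$ and $\{M,N\}$ is adequate, one can shortcut this by invoking Lemma 7.16 to get $M \cap N \in \mathcal X \cup \mathcal Y$ and then applying Lemma 7.25 directly.)
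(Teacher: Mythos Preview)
Your proof is correct, and the approach is essentially the paper's: the paper's one-line proof simply asserts that $M \cap N \in \mathcal X$ and applies Lemma~7.25 directly. You have unpacked this into its two working ingredients --- elementarity of $M \cap N$ in $\mathcal A$, and cofinality of $\lim(C_{\alpha_{M,N}}) \cap (M \cap N)$ in $\alpha_{M,N}$ --- and verified them without invoking $M \cap N \in \mathcal X$.

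There is one point worth recording: you are in fact being more careful than the paper. The claim ``$M \cap N \in \mathcal X$'' comes from Lemma~7.16, whose hypothesis includes adequacy of $\{M,N\}$ when both lie in $\mathcal X$; the statement of Lemma~8.9 carries no such hypothesis. Your observation that the proof of Lemma~7.25 never uses the $T^*$ condition, together with your verbatim reprise of the cofinality argument from the proof of Lemma~7.16 (which indeed makes no use of adequacy), closes this small gap. The shortcut you mention in your final parenthetical --- cite Lemma~7.16 and then Lemma~7.25 --- is exactly the paper's proof, and suffices in every context where the lemma is actually applied.
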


\begin{proof}
Since $M \cap N \in \mathcal X$ and $\sup(M \cap N) = \alpha_{M,N}$, 
the statement follows immediately from Lemma 7.25.
\end{proof}

\begin{lemma}
Let $M \in \mathcal X$ and $N \in \mathcal X \cup \mathcal Y$.
Let $Q \in M \cap \mathcal Y$, and suppose that 
$\sup(M \cap N \cap \kappa) \le Q \cap \kappa$. 
Then 
$$
Q \cap N \cap \alpha_{M,N} \subseteq 
A_{\alpha_{M,N},Q \cap \kappa}.
$$
\end{lemma}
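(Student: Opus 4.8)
The plan is to fix $\xi \in Q \cap N \cap \alpha_{M,N}$ and show $\xi \in A_{\alpha_{M,N},Q\cap\kappa}$; throughout write $\eta := \alpha_{M,N}$ and $\beta := Q\cap\kappa$. The first move is to reduce the problem to a statement below a small ordinal. Since $M \cap N \cap \kappa^+$ is closed under successors, $\eta$ is a limit ordinal lying in $\lim(M)\cap\lim(N)$, and the argument in the proof of Lemma 7.16 shows that $\lim(C_\eta)\cap(M\cap N)$ is cofinal in $\eta$; so I would fix $\tau\in\lim(C_\eta)\cap M\cap N$ with $\xi<\tau$. By Lemma 8.9, monotonicity of $\vec A$ in the second coordinate (Notation 7.4(1)), and the hypothesis $\sup(M\cap N\cap\kappa)\le\beta$, one gets $M\cap N\cap\kappa^+\subseteq A_{\eta,\beta}$; in particular $\tau\in A_{\eta,\beta}$, so $A_{\tau,\beta}=A_{\eta,\beta}\cap\tau$ by Notation 7.4(4), and it is enough to prove $\xi\in A_{\tau,\beta}$. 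At the same time I would record the size bound that drives the rest of the argument: since $\eta\in\lim(M)\cap\lim(N)$, Lemma 7.17 combined with the description of $\cl(M\cap N\cap\kappa)$ from Lemmas 1.20 and 1.31 gives $\ot(C_\eta)\in\cl(M\cap N\cap\kappa)$, hence $\ot(C_\eta)\le\sup(M\cap N\cap\kappa)\le\beta$, and since $\tau\in\lim(C_\eta)\subseteq C_\eta$ we have $C_\tau=C_\eta\cap\tau$, so $\ot(C_\tau)<\beta$.

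Next I would locate $\xi$ relative to $Q$, using $Q\in M$. Put $\delta:=\sup(Q\cap\tau)$. As $\xi\in Q\cap\tau$ and $\tau$ is a limit ordinal, $Q\cap\tau$ has no largest element, so $\delta\in\lim(Q)\subseteq\cl(Q)$, $\xi<\delta\le\tau$, and $\delta\in M$ by elementarity (since $Q,\tau\in M$). If $\delta=\tau$ then $\tau\in\cl(Q)$, so Lemma 7.27 gives $A_{\tau,\beta}=Q\cap\tau\ni\xi$ and we are done; so assume $\delta<\tau$. Then $Q\cap\tau=Q\cap\delta=A_{\delta,\beta}$ by Lemma 7.27, so $\xi\in A_{\delta,\beta}$, and by Notation 7.4(4) everything comes down to showing $\delta\in\lim(C_\tau)\cup A_{\tau,\beta}\cup\lim(A_{\tau,\beta})$, i.e.\ to transferring membership from $A_{\delta,\beta}$ up to $A_{\tau,\beta}$.

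This transfer is the heart of the proof, and I expect it to be the main obstacle. I would split on whether $\delta\in N$: if $\delta\in N$, then $\delta\in M\cap N\cap\kappa^+$ with $\delta<\tau$, so $\delta\in A_{\eta,\beta}\cap\tau=A_{\tau,\beta}$ and we are done. If $\delta\notin N$, then, since no element of $Q$ can equal $\sup(Q\cap\tau)$, we have $\delta\in\lim(Q)\setminus Q$; here I would use Lemma 7.13 to see that $\lim(C_\delta)\cap Q$ is cofinal in $\delta$, and a short argument through Lemma 7.12 applied to $Q$ (which places $\delta$ in some $\lim(C_{\delta_Q})$) together with elementarity to see that $\ot(C_\delta)<\beta$, dispatching the boundary case $\delta=\sup(Q)$ separately. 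Then the technical coherence properties Notation 7.4(6),(7),(8), which are designed precisely to control the interaction of $\vec A$ with the square $\vec C$ below $\tau$, together with the bound $\ot(C_\tau)<\beta$ from the first step, should let one propagate membership up through $C_\tau$ and conclude that $Q\cap\delta\subseteq A_{\tau,\beta}$; in particular $A_{\tau,\beta}$ is cofinal in $\delta$, so $\delta\in\lim(A_{\tau,\beta})$, and since $\ot(C_\delta)<\beta$, Notation 7.4(8) yields $\delta\in A_{\tau,\beta}$, completing the argument.

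The hard part, then, is exactly this propagation along $C_\tau$: carrying out the bookkeeping with properties (6)--(8) of $\vec A$, keeping track of whether the relevant intermediate ordinals (the least element $c:=\min(C_\tau\setminus\xi)$ of $C_\tau$ above $\xi$, the points $c_{\tau,\rho}$, and so on) lie inside $Q$, inside $\lim(C_\tau)$, or in a gap of $Q$, and using $\ot(C_\tau)<\beta$ to keep the recursion well-founded. Everything else --- the reduction, the appeals to coherence via Notation 7.4(4), Lemma 7.27, and Lemma 8.9 --- is routine.
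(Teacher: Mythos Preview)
Your reduction to a fixed $\tau\in\lim(C_{\alpha_{M,N}})\cap M\cap N$ and the case $\delta:=\sup(Q\cap\tau)\in N$ are fine, but the case $\delta\notin N$ has a real gap. You propose to reach $\xi\in A_{\tau,\beta}$ by first proving $Q\cap\delta\subseteq A_{\tau,\beta}$ via ``propagation along $C_\tau$'' using Notation~7.4(6)--(8), then deducing $\delta\in\lim(A_{\tau,\beta})$, then applying 7.4(8). But $Q\cap\delta\subseteq A_{\tau,\beta}$ already contains the conclusion $\xi\in A_{\tau,\beta}$ as a special case, so the subsequent steps are redundant and the first step just restates the goal --- indeed it is stronger than the lemma, since it concerns all of $Q\cap\delta$ rather than $Q\cap N\cap\delta$. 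To run 7.4(7) at $\tau$ you would need $c:=\min(C_\tau\setminus\delta)\in A_{\tau,\beta}$, for which you want $c\in M\cap N$; but $c$ is definable from $\tau$ and $\delta$, and $\delta\notin N$, so there is no way to place $c$ in $N$. The object $\delta$ carries no anchor in $N$, and none of 7.4(6)--(8) manufactures one. (A separate smaller issue: your bound $\ot(C_\eta)\le\sup(M\cap N\cap\kappa)$ invokes Lemmas~1.20 and 1.31, which require adequacy of $\{M,N\}$ or $\sup(M\cap N\cap\kappa)<N\cap\kappa$; Lemma~8.10 assumes neither.)

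The paper abandons the global object $\delta$ and works locally with $\xi_M:=\min((M\cap\kappa^+)\setminus\xi)$ for the specific $\xi$ under consideration. Lemma~7.31 gives $\xi\in A_{\xi_M,Q\cap\kappa}$ directly (from $\xi\in A_{\sup(Q),Q\cap\kappa}$ and $\xi\notin M$). One then proves by induction on $\nu\in M\cap N\cap\kappa^+$ with $\nu>\xi_M$ that $\xi\in A_{\nu,Q\cap\kappa}$. The crucial case is $\nu$ a limit with $\xi_M\notin\lim(C_\nu)$: set $\nu':=\min(C_\nu\setminus\xi_M)$ and $\nu'':=\sup(C_\nu\cap\nu')$; since $\nu''\in M$ and $\nu''<\xi_M$ one gets $\nu''<\xi$, whence $\nu'=\min(C_\nu\setminus\xi)$. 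Now $\nu,\xi\in N$ forces $\nu'\in N$, so $\nu'\in M\cap N$ and the inductive hypothesis plus 7.4(7) finish the step. This is precisely the propagation you were reaching for, but executed for the single pivot $\xi_M$ rather than for all of $Q$, and with the essential use of $\xi\in N$ (not $\delta$) to keep each successive $\nu'$ inside $N$.
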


\begin{proof}
Let $\xi \in Q \cap N \cap \alpha_{M,N}$, and we will show 
that $\xi \in A_{\alpha_{M,N},Q \cap \kappa}$. 
First assume that $\xi \in M$. 
Then $\xi \in M \cap N \cap \kappa^+$, so by Lemma 8.9, 
$\xi \in A_{\alpha_{M,N},\sup(M \cap N \cap \kappa)}$. 
Since $\sup(M \cap N \cap \kappa) \le Q \cap \kappa$, 
it follows that $\xi \in A_{\alpha_{M,N},Q \cap \kappa}$.

Assume that $\xi$ is not in $M$. 
Then $\xi \in Q \cap \kappa^+ = A_{\sup(Q),Q \cap \kappa}$, 
where $\sup(Q)$ and $Q \cap \kappa$ are in $M$, and 
$\xi \notin M$. 
By Lemma 7.31, 
$$
\xi \in A_{\xi_M,Q \cap \kappa}.
$$

We claim that 
$$
\forall \nu \in M \cap N \cap \kappa^+ \ 
(\xi_M < \nu \implies \xi \in A_{\nu,Q \cap \kappa}).
$$
We will prove the claim by induction. 
So let $\nu \in M \cap N \cap \kappa^+$ be strictly greater than $\xi_M$, 
and assume that the claim holds for all 
$\nu' \in M \cap N \cap \nu$.

\bigskip

\emph{Case 1:} $\nu = \nu_0 + 1$ is a successor ordinal. 
Since $\nu \in M \cap N$, $\nu_0 \in M \cap N$ by elementarity. 
If $\xi_M < \nu_0$, then 
by the inductive hypothesis, $\xi \in A_{\nu_0,Q \cap \kappa}$. 
So 
$$
\xi \in A_{\nu_0,Q \cap \kappa} \cup \{ \nu_0 \} = 
A_{\nu,Q \cap \kappa}.
$$
If $\nu_0 = \xi_M$, then 
$$
\xi \in A_{\xi_M,Q \cap \kappa} = A_{\nu_0,Q \cap \kappa} 
\subseteq A_{\nu,Q \cap \kappa}.
$$

\bigskip

\emph{Case 2:} $\nu$ is a limit ordinal and 
$\xi_M \in \lim(C_{\nu})$. 
Then 
$$
A_{\xi_M,Q \cap \kappa} = A_{\nu,Q \cap \kappa} \cap \xi_M.
$$
Since $\xi \in A_{\xi_M,Q \cap \kappa}$, it follows that 
$\xi \in A_{\nu,Q \cap \kappa}$, and we are done. 

\bigskip

\emph{Case 3:} $\nu$ is a limit ordinal and 
$\xi_M$ is not in $\lim(C_{\nu})$. 
Let $\nu' := \min(C_\nu \setminus \xi_M)$, and let 
$\nu'' := \sup(C_\nu \cap \nu')$. 
Since $\nu' = \min(C_\nu \setminus \xi_M)$, clearly 
$\nu'' = \sup(C_\nu \cap \xi_M)$. 
As $\xi_M$ is not a limit point of $C_\nu$, 
$\nu'' < \xi_M$.

We claim that $\nu' \in M \cap N$. 
Since $\nu$ and $\xi_M$ are in $M$, $\nu' = \min(C_\nu \setminus \xi_M)$ 
is in $M$ by elementarity. 
And as $\nu$ and $\nu'$ are in $M$, 
$\nu'' = \sup(C_\nu \cap \nu')$ is in $M$ by elementarity. 
But $\nu'' < \xi_M$ and $\xi_M$ is the least ordinal in $M$ with 
$\xi \le \xi_M$. 
It follows that $\nu'' < \xi$. 
So $\nu' = \min(C_\nu \setminus (\nu''+1)) = 
\min(C_\nu \setminus \xi)$. 
Since $\nu$ and $\xi$ are in $N$, so is $\nu'$ by elementarity.

Next we claim that $\xi \in A_{\nu',Q \cap \kappa}$. 
This is immediate if $\xi_M = \nu'$, so assume that $\xi_M < \nu'$. 
Then $\xi_M < \nu' < \nu$ and $\nu' \in M \cap N \cap \kappa^+$, 
which imply by the inductive hypothesis that $\xi \in A_{\nu',Q \cap \kappa}$.

Let $\beta$ be the least ordinal in $\kappa$ such that 
$\nu' \in A_{\nu,\beta}$. 
Since $\nu$ and $\nu'$ are in $M \cap N$, it follows that 
$\beta \in M \cap N \cap \kappa$ 
by elementarity. 
As 
$$
\beta < \sup(M \cap N \cap \kappa) \le Q \cap \kappa,
$$
we have that $\nu' \in A_{\nu,Q \cap \kappa}$. 
And since $\nu'' = \sup(C_\nu \cap \nu')$ and 
$$
\nu'' < \xi < \xi_M \le \nu',
$$
$\xi \notin C_\nu$. 
So 
$$
\xi \in \nu \setminus C_\nu, \ 
\nu' = \min(C_\nu \setminus \xi) \in A_{\nu,Q \cap \kappa}, \ 
\textrm{and} \ \xi \in A_{\min(C_\nu \setminus \xi),Q \cap \kappa}.
$$
By Notation 7.4(7), $\xi \in A_{\nu,Q \cap \kappa}$, which completes 
the proof of the claim.

Since $M \cap N \in \mathcal X$ and $\sup(M \cap N) = \alpha_{M,N}$, 
it follows that 
$\lim(C_{\alpha_{M,N}}) \cap (M \cap N)$ is cofinal in 
$\alpha_{M,N}$. 
Since $\xi < \alpha_{M,N}$ by assumption and $\alpha_{M,N}$ is a limit 
point of $M$, we have that $\xi_M < \alpha_{M,N}$. 
So we can fix $\nu \in \lim(C_{\alpha_{M,N}}) \cap (M \cap N)$ which is strictly 
greater than $\xi_M$. 
By the claim, $\xi \in A_{\nu,Q \cap \kappa}$. 
Since $\nu \in \lim(C_{\alpha_{M,N}})$, 
$A_{\nu,Q \cap \kappa} = A_{\alpha_{M,N},Q \cap \kappa} \cap \nu$. 
Therefore $\xi \in A_{\alpha_{M,N},Q \cap \kappa}$. 
\end{proof}

\begin{lemma}
Let $M \in \mathcal X$ and $N \in \mathcal X \cup \mathcal Y$. 
Suppose that $M < N$ in the case that $N \in \mathcal X$. 
Let $P \in M \cap \mathcal Y$, and suppose that 
$P \cap \kappa \in M \cap N \cap \kappa$ and 
$P \cap \alpha_{M,N}$ is bounded below $\alpha_{M,N}$. 

Define 
$$
\sigma := \sup(P \cap A_{\alpha_{M,N},\sup(M \cap N \cap \kappa)}).
$$
Then $\sigma$ satisfies:
\begin{enumerate}
\item $\sigma \in M \cap N \cap \kappa^+$;
\item $P \cap \sigma = A_{\sigma,P \cap \kappa}$;
\item $P \cap (M \cap N) \cap \kappa^+ = A_{\sigma,P \cap \kappa} \cap 
(M \cap N)$;
\item $N \cap P \cap \alpha_{M,N} \subseteq A_{\sigma,P \cap \kappa}$.
\end{enumerate}
\end{lemma}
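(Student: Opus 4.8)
Throughout, write $\alpha := \alpha_{M,N}$, $\beta_0 := \sup(M \cap N \cap \kappa)$, and $B := A_{\alpha,\beta_0}$. The plan is to reduce everything to an analysis of $P \cap B$, working with the countable model $M \cap N$, which lies in $\mathcal X$ by Lemma 7.16 (using $M < N$ when $N \in \mathcal X$) and has $\sup(M \cap N) = \alpha$ and $\sup((M \cap N) \cap \kappa) = \beta_0$. Applying Lemma 7.29 to $M \cap N$ gives that $B$ is closed under $H^*$, that $B \cap \kappa = \beta_0$, and that $\sup(B) = \alpha$; applying Lemma 7.25 (or Lemma 8.9) to $M \cap N$ gives $(M \cap N) \cap \kappa^+ \subseteq B$; and by Notation 7.4(4), $B \cap \eta = A_{\eta,\beta_0}$ for every $\eta \in \cl(B)$ (and likewise $A_{\delta,\beta_0} = B \cap \delta$ for $\delta \in \lim(C_\alpha)$). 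Since $P \cap \kappa < \beta_0 = B \cap \kappa$, the ordinals of $P$ below $P \cap \kappa$ all lie in $B$, so $P \cap B \ne \emptyset$; and since $B \subseteq \alpha$ while $P \cap \alpha$ is bounded below $\alpha$, the set $P \cap B$ is bounded below $\alpha$, so $\sigma < \alpha$. Being the supremum of a subset of both $P$ and $B$, $\sigma \in \cl(P) \cap \cl(B)$. I would next check that in fact $\sigma \in B$: if not, then $\sigma \in \lim(A_{\alpha,\beta_0})$, so $P \cap B$ lies strictly below $\sigma$, hence $\sigma \in \lim(P)$, hence $\ot(C_\sigma) \in \cl(P \cap \kappa)$ gives $\ot(C_\sigma) \le P \cap \kappa < \beta_0$ by Lemma 7.17; but then Notation 7.4(8) forces $\sigma \in A_{\alpha,\beta_0} = B$, a contradiction.

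Part (2) is then immediate: since $\sigma \in \cl(P)$, Lemma 7.27 gives $P \cap \sigma = A_{\sigma,P \cap \kappa}$. Combining this with $B \cap \sigma = A_{\sigma,\beta_0}$ (Notation 7.4(4), as $\sigma \in B$), the monotonicity of $\beta \mapsto A_{\sigma,\beta}$ (Notation 7.4(1)), and $P \cap \kappa \le \beta_0$, we obtain $P \cap B \cap \sigma = A_{\sigma,P \cap \kappa}$, and since $P \cap B \subseteq \sigma+1$ this yields the description $P \cap B = A_{\sigma,P \cap \kappa} \cup (\{\sigma\} \cap P)$, which drives the remaining parts. For part (1), I would fix $\delta \in \lim(C_\alpha) \cap (M \cap N)$ with $\sigma < \delta$, so $\sigma \in B \cap \delta = A_{\delta,\beta_0}$, and then fix $\beta \in (M \cap N) \cap \kappa$ with $P \cap \kappa \le \beta$ and $\sigma \in A_{\delta,\beta}$ (possible since $\beta_0$ is a limit, $(M \cap N) \cap \kappa$ is cofinal in $\beta_0$, and $P \cap \kappa < \beta_0$). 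Then $A_{\sigma,\beta} = A_{\delta,\beta} \cap \sigma$ by Notation 7.4(4), and the same computation gives $P \cap A_{\delta,\beta} = A_{\sigma,P \cap \kappa} \cup (\{\sigma\} \cap P)$, which has supremum $\sigma$. As $P, \delta, \beta \in M$, the set $P \cap A_{\delta,\beta}$ lies in $M$, so $\sigma = \sup(P \cap A_{\delta,\beta}) \in M \cap \kappa^+$. For $\sigma \in N$, note that $\sigma$ is the $\ot(A_{\sigma,\beta})$-th element of $A_{\delta,\beta}$, i.e.\ $\sigma = \pi_\delta(\ot(A_{\sigma,\beta}),\beta)$; now $\ot(A_{\sigma,\beta}) \in M$ and $\ot(A_{\sigma,\beta}) < c^*(\beta)$ by Notation 7.4(5), with $c^*(\beta) \in M \cap N \cap \kappa$, so exactly as in the proof of Lemma 8.6 — using that $M \cap N \cap \kappa$ is an initial segment of $M \cap \kappa$ (from $M < N$ when $N \in \mathcal X$, from $N \cap \kappa \in \kappa$ when $N \in \mathcal Y$) — one gets $\ot(A_{\sigma,\beta}) \in M \cap c^*(\beta) \subseteq M \cap N \cap \kappa \subseteq N$, whence $\sigma \in N$ by elementarity. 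This proves $\sigma \in M \cap N \cap \kappa^+$.

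Having (1), the fact that $\sigma \notin P$ follows at once: if $\sigma \in P$ then $\sigma+1 \in (M \cap N) \cap \kappa^+ \subseteq B$ and $\sigma+1 \in P$, contradicting $\sigma = \sup(P \cap B)$. Part (3) is then clear: if $\xi \in P \cap (M \cap N) \cap \kappa^+$ then $\xi \in (M \cap N) \cap \kappa^+ \subseteq B$, so $\xi \in P \cap B = A_{\sigma,P \cap \kappa}$ by (2) and $\sigma \notin P$, giving $\xi \in A_{\sigma,P \cap \kappa} \cap (M \cap N)$; conversely $A_{\sigma,P \cap \kappa} = P \cap \sigma \subseteq P \cap \kappa^+$. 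For part (4), given $\sigma \notin P$ and $P \cap B = A_{\sigma,P \cap \kappa}$, it suffices to show $N \cap P \cap \alpha \subseteq B$, since then $N \cap P \cap \alpha \subseteq P \cap B = A_{\sigma,P \cap \kappa}$. This inclusion I would obtain by rerunning the proof of Lemma 8.10 with the index $\beta_0 = \sup(M \cap N \cap \kappa)$ in place of $P \cap \kappa$: this is legitimate precisely because $P \cap \kappa < \beta_0$, so every least index in $(M \cap N) \cap \kappa$ witnessing membership of an ordinal in some $A_{\nu,\cdot}$ is automatically below $\beta_0$, and Lemma 8.10's case analysis goes through verbatim; the base case uses Lemma 7.31 applied to $M$ together with $\xi \in P \cap \kappa^+ = A_{\sup(P),P \cap \kappa}$ (Lemma 7.26), and the case $\xi \in M$ is handled by Lemma 8.9.

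I expect the main obstacle to be this adaptation of Lemma 8.10 with a shifted filtration index, where the coherence bookkeeping of Notation 7.4 (parts (2), (4), (7)) must be carried through carefully; a secondary delicate point is the verification that $\sigma \in B$, which balances Lemma 7.17 against Notation 7.4(8). By comparison the argument for (1) is short once $\sigma \in B$ is in hand, and parts (2) and (3) are essentially bookkeeping around Lemmas 7.25--7.31 and 8.6.
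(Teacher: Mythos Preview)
Your argument is correct, and for parts (2), (3), and the verification that $\sigma \in B = A_{\alpha,\beta_0}$ it is essentially the same as the paper's, with only cosmetic differences (you use Lemma~7.17 to bound $\ot(C_\sigma)$ where the paper uses Notation~7.4(6); you describe $\sigma$ as $\sup(P \cap A_{\delta,\beta})$ where the paper characterises it as $\max(A_{\eta,\gamma} \cap \lim(P))$).

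The real divergence is in part~(4). You propose to establish the stronger inclusion $N \cap P \cap \alpha \subseteq B$ by rerunning the entire inductive case analysis of Lemma~8.10 with the filtration index shifted from $P \cap \kappa$ up to $\beta_0$. This works --- the only place Lemma~8.10 uses its hypothesis $\sup(M \cap N \cap \kappa) \le Q \cap \kappa$ is to ensure the least index $\beta \in M \cap N \cap \kappa$ in Case~3 lies below $Q \cap \kappa$, and with target index $\beta_0 = \sup(M \cap N \cap \kappa)$ this is automatic --- but it is considerably heavier than the paper's route. The paper instead proves directly that $N \cap P \cap \alpha \subseteq \sigma$, via a short minimality argument: assuming $\pi \in N \cap P \cap \alpha$ with $\sigma \le \pi$, one sets $\pi_0 := \min((P \cap \kappa^+) \setminus \sigma)$, observes $\pi_0 \in M$ since $P,\sigma \in M$, and shows $\pi_0 \in N$ (either $\pi_0 = \pi$, or $\pi_0 < \pi$ and $\pi_0 \in P \cap \pi = A_{\pi,P \cap \kappa} \in N$); then $\pi_0 \in M \cap N \cap \alpha \subseteq B$, so $\pi_0 \in P \cap B$ contradicts $\sigma = \sup(P \cap B)$. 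This avoids the coherence bookkeeping of Notation~7.4(7) entirely and is what you should use in a final write-up.
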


\begin{proof}
Let $\alpha := \alpha_{M,N}$ and $\delta := \sup(M \cap N \cap \kappa)$. 
Note that by Lemma 7.29, 
$$
Sk(A_{\alpha,\delta}) \cap \kappa^+ = 
A_{\alpha,\delta}, \ 
A_{\alpha,\delta} \cap \kappa = \delta, \ 
\textrm{and} \ \sup(A_{\alpha,\delta}) = \alpha.
$$
Since $P$ and $A_{\alpha,\delta}$ are closed under successors, 
$P \cap A_{\alpha,\delta}$ has no maximal element. 
Note that since $P \cap \alpha_{M,N}$ is bounded below $\alpha_{M,N}$, we have 
that $\sigma < \alpha$. 
We claim that $\sigma$ satisfies 
(1)--(4). 
Observe that since $\sigma$ is a limit point of $P$, 
it follows that $P \cap \sigma = A_{\sigma,P \cap \kappa}$ 
by Lemma 7.27, which proves (2).

(3) We prove that 
$$
P \cap (M \cap N) \cap \kappa^+ = A_{\sigma,P \cap \kappa} \cap 
(M \cap N).
$$
Let $\gamma \in P \cap (M \cap N) \cap \kappa^+$, and we will show that 
$\gamma \in A_{\sigma,P \cap \kappa}$. 
Since $\gamma \in M \cap N$, by Lemma 8.9 we have that 
 $\gamma \in A_{\alpha,\delta}$. 
So $\gamma \in P \cap A_{\alpha,\delta} \subseteq \sigma$. 
Hence $\gamma \in P \cap \sigma = A_{\sigma,P \cap \kappa}$. 
Conversely, 
$$
A_{\sigma,P \cap \kappa} \cap (M \cap N) \subseteq 
A_{\sigma,P \cap \kappa} = P \cap \sigma \subseteq P.
$$

(1,4) It remains to show that $\sigma \in M \cap N$ and 
$N \cap P \cap \alpha_{M,N} \subseteq A_{\sigma,P \cap \kappa}$. 
We claim that 
$$
\sigma = \sup(A_{\sup(P \cap \alpha),P \cap \kappa} \cap 
A_{\alpha,\delta}).
$$
As $P$ and $\alpha$ are closed under successors, $P \cap \alpha$ has 
no maximal element. 
So $\sup(P \cap \alpha)$ is a limit point of $P$, which 
by Lemma 7.27 implies that 
$$
P \cap \alpha = P \cap \sup(P \cap \alpha) = A_{\sup(P \cap \alpha),P \cap \kappa}.
$$ 
Therefore 
$$
A_{\sup(P \cap \alpha),P \cap \kappa} \cap 
A_{\alpha,\delta} = P \cap \alpha \cap A_{\alpha,\delta} = 
P \cap A_{\alpha,\delta}.
$$
Taking supremums of both sides yields the claim.

Next, we claim that $\sigma \in A_{\alpha,\delta}$. 
As $P \cap \alpha$ and $A_{\alpha,\delta}$ are closed 
under successors and $P \cap \alpha = 
A_{\sup(P \cap \alpha),P \cap \kappa}$, it follows that 
$\sigma$ is a limit point of $A_{\sup(P \cap \alpha),P \cap \kappa}$ 
and a limit point of $A_{\alpha,\delta}$. 
By Notation 7.4(8) and the fact that $\sigma \in 
\lim(A_{\alpha,\delta}) \cap \alpha$, to show that 
$\sigma \in A_{\alpha,\delta}$ it suffices to show that 
$\ot(C_\sigma) < \delta$.

Since $P \cap \sigma = A_{\sigma,P \cap \kappa}$ and $\sigma$ is a limit 
point of $P$, it follows that $\sigma = \sup(A_{\sigma,P \cap \kappa})$. 
If $P \cap \kappa < \ot(C_{\sigma})$, then by Notation 7.4(6), 
it follows that $A_{\sigma,P \cap \kappa} \subseteq c_{\sigma,P \cap \kappa} 
< \sigma$. 
But this contradicts that $\sigma = \sup(A_{\sigma,P \cap \kappa})$. 
Hence $\ot(C_{\sigma}) \le P \cap \kappa < \delta$, which completes the 
proof of the claim that $\sigma \in A_{\alpha,\delta}$.

Fix $\eta \in \lim(C_\alpha) \cap (M \cap N)$ such that 
$\sigma < \eta$. 
Then $A_{\eta,\delta} = A_{\alpha,\delta} \cap \eta$. 
Therefore $\sigma \in A_{\eta,\delta}$. 
Since $\delta = \sup(M \cap N \cap \kappa)$, 
we can fix $\gamma \in M \cap N \cap \kappa$ such that 
$\sigma \in A_{\eta,\gamma}$. 
Then $\eta$ and $\gamma$ are in $M \cap N$. 

Let us show that 
$$
\sigma = \max(A_{\eta,\gamma} \cap \lim(P)).
$$
Suppose for a contradiction that 
$\sigma' \in A_{\eta,\gamma} \cap \lim(P)$ and 
$\sigma < \sigma'$. 
Since $\eta \in \lim(C_\alpha)$ and $\gamma < \delta$, 
$\sigma' \in A_{\alpha,\delta} \cap \lim(P)$. 
But then by Lemma 7.27, it follows that 
$$
P \cap \sigma' = A_{\sigma',P \cap \kappa} \subseteq 
A_{\sigma',\delta} = A_{\alpha,\delta} \cap \sigma'.
$$
Since $\sigma'$ is a limit point of $P$, there is 
$\tau \in P \cap \sigma'$ strictly greater than $\sigma$. 
Then $\tau \in P \cap A_{\alpha,\delta}$, which contradicts that 
$\sigma = \sup(P \cap A_{\alpha,\delta})$. 

Now we prove that $\sigma \in M \cap N$. 
Since $\eta$, $\gamma$, and $P$ are in $M$, and 
$\sigma = \max(A_{\eta,\gamma} \cap \lim(P))$, it follows that 
$\sigma \in M$ by elementarity. 
On the other hand, $\sigma \in M \cap A_{\eta,\gamma}$, where 
$\eta \in N$ and $\gamma \in M \cap N \cap \kappa$. 
So $\sigma \in N$ by Lemma 8.6, in the case when 
$M$ and $N$ are in $\mathcal X$. 
If $N \in \mathcal Y$, then $\sigma \in A_{\eta,\gamma} \in N$ implies that 
$\sigma \in N$, since $|A_{\eta,\gamma}| < \kappa$. 
This proves that $\sigma \in M \cap N$.

Now we claim that $N \cap P \cap \alpha_{M,N} \subseteq \sigma$. 
This completes the proof, for then 
$$
N \cap P \cap \alpha_{M,N} \subseteq P \cap \sigma = A_{\sigma,P \cap \kappa}.
$$ 
Suppose for a contradiction that $\pi \in N \cap P \cap \alpha_{M,N}$ 
and $\sigma \le \pi$. 
Let $\pi_0 := \min((P \cap \kappa^+) \setminus \sigma)$, and note 
that $\pi_0 \le \pi$. 
Since $P$ and $\sigma$ are in $M$, $\pi_0$ is in $M$ by elementarity. 
We claim that $\pi_0$ is in $N$. 
This is immediate if $\pi_0 = \pi$, so assume that $\pi_0 < \pi$. 
Then $\pi \in P$ implies that 
$P \cap \pi = A_{\pi,P \cap \kappa}$ by Lemma 7.27. 
Since $\pi$ and $P \cap \kappa$ are in $N$, 
so is $A_{\pi,P \cap \kappa} = P \cap \pi$. 
But $\pi_0 = \min((P \cap \pi) \setminus \sigma)$. 
Hence $\pi_0 \in N$ by elementarity. 
So $\pi_0 \in M \cap N \cap \alpha_{M,N}$, and therefore 
$\pi_0 \in A_{\alpha,\delta}$ by Lemma 8.9. 
So $\pi_0 \in P \cap A_{\alpha,\delta}$. 
Since $\sigma = \sup(P \cap A_{\alpha,\delta})$ and 
$P \cap A_{\alpha,\delta}$ has no maximal element as previously 
observed, it follows that $\pi_0 < \sigma$. 
But this contradicts that fact that $\sigma \le \pi_0$.
\end{proof}

\bigskip

So far in this section we have been mostly concerned about the interaction of 
models $M$ and $N$ below $\alpha_{M,N} = \sup(M \cap N)$. 
We now turn to analyze what happens above $\alpha_{M,N}$.

The next two lemmas state that for a simple model $N$, if a model 
does not bound $N$ below $\kappa$, then it does not bound $N$ 
above $\kappa$.

\begin{lemma}
Let $M$ and $N$ be in $\mathcal X$, where $N$ is simple and 
$\{ M, N \}$ is adequate. 
If $R_M(N) \ne \emptyset$, then 
$(N \cap \kappa^+) \setminus \alpha_{M,N} \ne \emptyset$.
\end{lemma}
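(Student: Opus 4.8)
The plan is to argue by contraposition. Suppose $(N \cap \kappa^+) \setminus \alpha_{M,N} = \emptyset$. Since $N \prec \mathcal A$, the set $N \cap \kappa^+$ has no largest element, so this assumption is equivalent to $\sup(N) = \alpha_{M,N}$; I will show that under it $R_M(N) = \emptyset$.

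First I would extract from simplicity the equality $\sup(N \cap \kappa) = \sup(M \cap N \cap \kappa)$. Since $\{M,N\}$ is adequate and $M, N \in \mathcal X$, Lemma 7.16(1) gives $M \cap N \in \mathcal X$, and of course $\sup(M \cap N) = \alpha_{M,N} = \sup(N)$. Applying Lemma 7.17 to $M \cap N$ yields $\ot(C_{\sup(N)}) = \ot(C_{\sup(M \cap N)}) \le \sup(M \cap N \cap \kappa)$. As $N$ is simple, $\ot(C_{\sup(N)}) = \sup(N \cap \kappa)$, so $\sup(N \cap \kappa) \le \sup(M \cap N \cap \kappa)$; combined with the trivial reverse inequality this gives the claimed equality.

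Next I would locate $N \cap \kappa$ below $\beta_{M,N}$. By Lemma 1.15, $M \cap N \cap \kappa \subseteq \cl(M \cap \kappa) \cap \cl(N \cap \kappa) \subseteq \beta_{M,N}$, so $\sup(N \cap \kappa) = \sup(M \cap N \cap \kappa) \le \beta_{M,N}$, and therefore every element of $N \cap \kappa$ is $\le \beta_{M,N}$. Moreover $\beta_{M,N} \notin N$: otherwise $\beta_{M,N}$ would be the largest element of $N \cap \kappa$, which is impossible since $N \cap \kappa$ has no largest element. Hence $N \cap \kappa \subseteq \beta_{M,N}$.

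Finally, I would run through Definition 2.1. Its first clause cannot produce a point of $R_M(N)$ because $\min((N \cap \kappa) \setminus \beta_{M,N})$ does not exist. Its second clause cannot produce one either: any admissible $\gamma$ satisfies $\beta_{M,N} \le \gamma$, so $N \cap \kappa \subseteq \beta_{M,N} \le \gamma$ and $\min((N \cap \kappa) \setminus \gamma)$ does not exist. Thus $R_M(N) = \emptyset$, completing the contrapositive. The only step needing a moment's care is the passage from $\sup(N \cap \kappa) \le \beta_{M,N}$ to $N \cap \kappa \subseteq \beta_{M,N}$, i.e.\ ruling out $\beta_{M,N} \in N$; everything else is bookkeeping with the definitions and with Lemmas 7.16, 7.17, and 1.15.
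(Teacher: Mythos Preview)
Your proof is correct and follows essentially the same approach as the paper: both combine simplicity of $N$ with Lemma~7.17 and Lemma~1.15 to force $\sup(N\cap\kappa)$ strictly below $\beta_{M,N}$, contradicting the existence of a remainder point. The only cosmetic difference is that the paper applies Lemma~7.17 to $M$ and $N$ separately (getting $\ot(C_{\alpha_{M,N}})\in\cl(M\cap\kappa)\cap\cl(N\cap\kappa)$ and then strict containment in $\beta_{M,N}$ directly from Lemma~1.15), whereas you apply it to $M\cap N$ via Lemma~7.16 and then handle strictness with the no-maximum-element observation.
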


\begin{proof}
Since $R_M(N)$ is nonempty, 
$\beta_{M,N} \le \sup(N \cap \kappa)$. 
As $\alpha_{M,N}$ is a limit point of $M$ and a limit point of $N$, 
Lemma 7.17 implies that $\ot(C_{\alpha_{M,N}})$ is in 
$\cl(M \cap \kappa) \cap \cl(N \cap \kappa)$. 
Hence by Lemma 1.15, $\ot(C_{\alpha_{M,N}}) < \beta_{M,N}$. 
If $(N \cap \kappa^+) \setminus \alpha_{M,N}$ is empty, then 
$\sup(N) = \alpha_{M,N}$. 
Since $N$ is simple, it follows that 
$\ot(C_{\alpha_{M,N}}) = \sup(N \cap \kappa)$. 
But then $\sup(N \cap \kappa) < \beta_{M,N}$, which contradicts the 
first line above.
\end{proof}

\begin{lemma}
Let $N \in \mathcal X$ be simple and $Q \in \mathcal Y$. 
If $Q \cap \kappa < \sup(N \cap \kappa)$, then
$\sup(N \cap Q) < \sup(N)$.
\end{lemma}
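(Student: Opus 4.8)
The plan is to argue by contradiction. Suppose $\sup(N \cap Q) = \sup(N)$, and set $\alpha := \sup(N)$. The first step is to observe that $N \cap Q$ is itself a member of $\mathcal X$: since $N \in \mathcal X$ and $Q \in \mathcal Y \subseteq \mathcal X \cup \mathcal Y$, Lemma 7.16(1) gives $N \cap Q \in \mathcal X$ (no adequacy hypothesis is needed here because $N$ and $Q$ are not both in $\mathcal X$). Next I would note that $\alpha$ is a limit point of $N \cap Q$: the set $N \cap Q \cap \kappa^+$ is closed under successor ordinals by elementarity, so it has no largest element, whence $\alpha = \sup(N \cap Q) \in \lim(N \cap Q)$.

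With these in hand, apply Lemma 7.17 to the model $N \cap Q \in \mathcal X$ and its limit point $\alpha$: this yields $\ot(C_\alpha) \in \cl(N \cap Q \cap \kappa)$. But $Q \cap \kappa$ is an ordinal, so $N \cap Q \cap \kappa = N \cap (Q \cap \kappa)$ is an initial segment of $N \cap \kappa$, and in particular $\sup(N \cap Q \cap \kappa) \le Q \cap \kappa$. Since every element of $\cl(a)$ is bounded by $\sup(a)$, we obtain $\ot(C_\alpha) \le Q \cap \kappa$. On the other hand, $N$ is simple, so by definition $\ot(C_\alpha) = \ot(C_{\sup(N)}) = \sup(N \cap \kappa)$. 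Combining the two inequalities gives $\sup(N \cap \kappa) \le Q \cap \kappa$, contradicting the hypothesis $Q \cap \kappa < \sup(N \cap \kappa)$. This completes the argument.

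There is no substantive obstacle here; the proof is short once one sees the right combination of earlier results. The one point that requires a little care — and the reason the more naive route of applying Lemma 7.17 directly to $N$ itself fails — is that one must pass to $N \cap Q$, so that the conclusion of Lemma 7.17 bounds $\ot(C_\alpha)$ in terms of $N \cap Q \cap \kappa$, which lies below $Q \cap \kappa$; applying Lemma 7.17 to $N$ alone would only give $\ot(C_\alpha) \le \sup(N \cap \kappa)$, which is perfectly consistent with $N$ being simple. The verifications that $N \cap Q \in \mathcal X$ and that $\alpha \in \lim(N \cap Q)$ are the only things to check, and both are immediate from the cited facts.
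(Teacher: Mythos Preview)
Your proof is correct and follows essentially the same strategy as the paper's: argue by contradiction, use Lemma~7.16 to get $N \cap Q \in \mathcal X$, use simplicity of $N$ to compute $\ot(C_{\sup(N)}) = \sup(N \cap \kappa)$, and bound $\ot(C_{\sup(N)})$ by $Q \cap \kappa$ via the model $N \cap Q$. The only difference is packaging: where you invoke Lemma~7.17 applied to $N \cap Q$ to obtain $\ot(C_\alpha) \in \cl(N \cap Q \cap \kappa) \subseteq Q \cap \kappa + 1$, the paper unfolds this step by hand, using that $\lim(C_\eta) \cap (N \cap Q)$ is cofinal in $\eta$ (from the definition of $\mathcal X$) to pick a specific $\delta \in \lim(C_\eta) \cap (N \cap Q)$ with $\ot(C_\delta) > \beta$, and then derives the contradiction from $\delta \in Q$ implying $\ot(C_\delta) \in Q \cap \kappa = \beta$. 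Your route via Lemma~7.17 is slightly cleaner, but the underlying idea is identical.
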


\begin{proof}
Let $\beta := Q \cap \kappa$ and 
$\eta := \sup(N \cap Q)$, and assume that 
$\beta < \sup(N \cap \kappa)$. 
Since $N$ and $Q$ are closed under successors, 
$\eta$ is a limit point of $N \cap Q$. 
In particular, $\eta$ is a limit point of $N$. 
Suppose for a contradiction that $\sup(N) = \eta$. 
Then since $N$ is simple, $\ot(C_\eta) = \sup(N \cap \kappa)$. 
So $\beta < \ot(C_\eta)$. 
As $N \cap Q$ is in $\mathcal X$ by Lemma 7.16 and $\sup(N \cap Q) = \eta$, 
it follows that $\lim(C_\eta) \cap (N \cap Q)$ is cofinal in $\eta$. 
So we can fix $\delta \in \lim(C_\eta) \cap (N \cap Q)$ such that 
$\beta < \ot(C_\eta \cap \delta) = \ot(C_\delta)$. 
But $\delta \in Q$, and therefore by elementarity, 
$\ot(C_\delta) \in Q \cap \kappa = \beta$. 
So $\ot(C_\delta) < \beta$, which is a contradiction.
\end{proof}

\bigskip

We now introduce an analogue of remainder points for ordinals between 
$\alpha_{M,N}$ and $\kappa^+$.

\begin{definition}
Let $M$ and $N$ be in $\mathcal X \cup \mathcal Y$. 
Define $R^+_N(M)$ as the set of ordinals $\eta$ such that either:
\begin{enumerate}
\item $\eta = \min((M \cap \kappa^+) \setminus \alpha_{M,N})$ 
and $\alpha_{M,N} < \eta$, or 
\item $\eta = \min((M \cap \kappa^+) \setminus \xi)$, 
for some $\xi \in (N \cap \kappa^+) \setminus \alpha_{M,N}$.
\end{enumerate}
\end{definition}

\begin{lemma}
Let $M \in \mathcal X$ and $N \in \mathcal X \cup \mathcal Y$, where 
$\{ M, N \}$ is adequate if $N \in \mathcal X$, and 
$\sup(M \cap N \cap \kappa) < N \cap \kappa$ if $N \in \mathcal Y$. 
Then:
\begin{enumerate}
\item $R^+_N(M)$ is finite;
\item if $\eta \in R^+_N(M)$, then 
$\cf(\eta) > \omega$;
\item suppose that $\eta \in R^+_N(M)$, $\eta$ is not 
equal to $\min((M \cap \kappa^+) \setminus \alpha_{M,N})$, and 
$\sigma := \min((N \cap \kappa^+) \setminus \sup(M \cap \eta))$; 
then $\sigma \in R^+_M(N)$ and $\eta = \min((M \cap \kappa^+) \setminus \sigma)$.
\end{enumerate}
\end{lemma}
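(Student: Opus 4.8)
The plan is to mirror the proof of Lemma 2.2, with Lemma 8.5 (the bound $\lim(M) \cap \lim(N) \subseteq \alpha_{M,N} + 1$) playing the role there filled by Lemma 1.15. Two elementary observations are used repeatedly. First, since $M \cap \kappa^+$ and $N \cap \kappa^+$ are closed under the successor operation, $M \cap N \cap \kappa^+$ has no largest element, so $\alpha_{M,N} \notin M \cap N$; consequently no ordinal which lies in $M$ and is $\geq \alpha_{M,N}$ can lie in $N$, and symmetrically. Second, every $\eta \in R^+_N(M)$ is a limit ordinal of $M \cap \kappa^+$ with $\eta > \alpha_{M,N}$: membership in $M \cap \kappa^+$ is immediate from the definition of $R^+_N(M)$; if $\eta$ were a successor ordinal or $\leq \alpha_{M,N}$, then unwinding the minimality clause in that definition forces either $\eta \leq \alpha_{M,N}$ (clause (1)) or $\eta$ equal to its clause-(2) witness $\xi \in N$, and each of these contradicts the first observation (in the second case $\eta \in M \cap N$ with $\eta \geq \alpha_{M,N}$).

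For (1), suppose $\langle \eta_n : n < \omega \rangle$ is strictly increasing in $R^+_N(M)$. Since only one ordinal equals $\min((M \cap \kappa^+) \setminus \alpha_{M,N})$, after deleting at most one term I may assume each $\eta_n$ satisfies clause (2): there is $\xi_n \in (N \cap \kappa^+) \setminus \alpha_{M,N}$ with $\eta_n = \min((M \cap \kappa^+) \setminus \xi_n)$. Then $\xi_n \leq \eta_n$, and also $\eta_n \leq \xi_{n+1}$, since $\xi_{n+1} < \eta_n$ would give $\eta_{n+1} = \min((M \cap \kappa^+) \setminus \xi_{n+1}) \leq \eta_n$. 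Hence $\sup_n \xi_n = \sup_n \eta_n =: \eta^*$, a strict limit of elements of $M$ and of elements of $N$, so $\eta^* \in \lim(M) \cap \lim(N)$; by Lemma 8.5 this gives $\eta^* \leq \alpha_{M,N}$, contradicting $\eta^* > \alpha_{M,N}$. For (2), let $\eta \in R^+_N(M)$; by the second observation $\eta$ is a limit ordinal in $M$. If $\eta$ satisfies clause (1) then $M \cap \eta \subseteq \alpha_{M,N} < \eta$; if $\eta$ satisfies clause (2) with witness $\xi$ then $M \cap \eta \subseteq \xi$ and $\xi < \eta$ (since $\xi = \eta$ would again put $\eta$ in $M \cap N$). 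Either way $\sup(M \cap \eta) < \eta$, so $M \cap \eta$ is not cofinal in $\eta$; as $\eta \in M$ and $\omega \subseteq M$, elementarity yields $\cf(\eta) \neq \omega$, hence $\cf(\eta) > \omega$.

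For (3), the hypothesis that $\eta$ is not $\min((M \cap \kappa^+) \setminus \alpha_{M,N})$ forces $\eta$ to satisfy clause (2) with some witness $\xi$, and moreover $\eta > \min((M \cap \kappa^+) \setminus \alpha_{M,N})$. Put $\rho := \sup(M \cap \eta)$. As in (2), $M \cap \eta \subseteq \xi$, so $\rho \leq \xi \leq \eta$; also $\rho \geq \alpha_{M,N}$ because $\min((M \cap \kappa^+) \setminus \alpha_{M,N}) \in M \cap \eta$, and by (2) we have $\rho < \eta$ and $M \cap (\rho, \eta) = \emptyset$. Since $\sigma = \min((N \cap \kappa^+) \setminus \rho)$ and $\xi \in N$ with $\xi \geq \rho$, the ordinal $\sigma$ exists with $\rho \leq \sigma \leq \xi \leq \eta$, and then $M \cap [\sigma, \eta) = \emptyset$ together with $\eta \in M$ yields $\eta = \min((M \cap \kappa^+) \setminus \sigma)$, provided $\sigma > \rho$ when $\rho \in M$, which I record below. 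It remains to produce a witness for $\sigma \in R^+_M(N)$. If $\rho \in M$, then $\rho \notin N$ by the first observation, so $\sigma > \rho$, and $\rho$ itself is a clause-(2) witness, since $\rho \in (M \cap \kappa^+) \setminus \alpha_{M,N}$ and $\sigma = \min((N \cap \kappa^+) \setminus \rho)$. If $\rho \notin M$, then $\rho$ is a limit point of $M$ with $\rho > \alpha_{M,N}$, so $\rho \notin \lim(N)$ by Lemma 8.5 and hence $\sup(N \cap \rho) < \rho$; the ordinal $\mu := \min((M \cap \kappa^+) \setminus \sup(N \cap \rho))$ then lies in $M$, is $< \rho$ (because $M \cap \rho$ is cofinal in $\rho$) and $\geq \alpha_{M,N}$ (because $M \cap N \cap \kappa^+ \subseteq N \cap \rho$ has supremum $\alpha_{M,N}$), and one checks $N \cap [\mu, \sigma) = \emptyset$ (using $\mu \notin N$, which follows from the first observation), so $\mu$ is a clause-(2) witness for $\sigma \in R^+_M(N)$.

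The bulk of the work is the routine bookkeeping with the minimality clauses of the definition of $R^+$. The one delicate point is the second case of (3), where $\rho = \sup(M \cap \eta)$ is a limit point of $M$ lying strictly between $\alpha_{M,N}$ and $\eta$ and need not belong to either model; there the combination of Lemma 8.5 (which forces $\rho \notin \lim(N)$, hence $N \cap \rho$ bounded below $\rho$), the inequality $\sup(N \cap \rho) \geq \alpha_{M,N}$, and the no-maximum observation (to rule out $\mu \in N$) is precisely what is needed to manufacture the witness $\mu$.
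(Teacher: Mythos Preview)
Your argument is correct. Parts (1) and (2) follow the same line as the paper (the paper simply says ``(2) is easy''), and your use of Lemma 8.5 for (1) is exactly what the paper does.

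For (3) the approaches diverge. The paper first notes $\eta = \min((M \cap \kappa^+) \setminus \sigma)$, then uses the finiteness established in (1) to take $\zeta_1 := \max(R^+_N(M) \cap \sigma)$ and argues that $\sigma = \min((N \cap \kappa^+) \setminus \zeta_1)$, which gives a clause-(2) witness for $\sigma \in R^+_M(N)$. You instead construct a witness directly: setting $\rho := \sup(M \cap \eta)$, you use Lemma 8.5 to see that $\rho \notin \lim(N)$, and then $\mu := \min((M \cap \kappa^+) \setminus \sup(N \cap \rho))$ serves as the witness. Both routes are short; yours avoids invoking (1) at the cost of a small amount of bookkeeping with $\rho$ and $\mu$.

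One remark: your case ``$\rho \in M$'' in (3) is in fact vacuous. Since $\eta$ is a limit ordinal and $M$ is closed under the successor function, $M \cap \eta$ has no maximum, so $\rho = \sup(M \cap \eta)$ is a proper limit point of $M$; combined with $\rho < \eta$ this forces $\rho \notin M$. The argument you give in the second case therefore covers everything, and the proof could be shortened by omitting the first case.
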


\begin{proof}
(1) If $R^+_N(M)$ is not finite, then the supremum of the first 
$\omega$ many members of $R^+_N(M)$ is a limit point of $M$ and a 
limit point of $N$. 
Hence this supremum is less than or equal to $\alpha_{M,N}$ by Lemma 8.5, 
which contradicts the definition of $R^+_N(M)$.

(2) is easy. 
(3) Note that $\sigma$ exists, since otherwise $\eta$ would not be 
in $R^+_N(M)$. 
Clearly $\eta = \min((M \cap \kappa^+) \setminus \sigma)$. 
We will show that $\sigma \in R^+_M(N)$. 
Since $\eta$ is not equal to 
$\min((M \cap \kappa^+) \setminus \alpha_{M,N})$, fix 
$\theta \in (M \cap \eta) \setminus \alpha_{M,N}$. 
Then $\alpha_{M,N} \le \theta < \sigma$, 
and therefore $\alpha_{M,N} < \sigma$. 
If $\sigma = \min((N \cap \kappa^+) \setminus \alpha_{M,N})$, 
then $\sigma \in R^+_M(N)$ by definition. 
So assume not. 
Then we can fix $\xi \in (N \cap \sigma) \setminus \alpha_{M,N}$. 
By definition, $\zeta_0 := \min((M \cap \kappa^+) \setminus \xi)$ 
is in $R^+_N(M)$. 
Since $\xi < \sigma = \min((N \cap \kappa^+) \setminus 
\sup(M \cap \eta))$ and $\xi \in N$, it follows that 
$\xi < \sup(M \cap \eta)$. 
Therefore $\zeta_0 < \sup(M \cap \eta) \le \sigma$. 

Let $\zeta_1$ be the largest member of $R^+_N(M)$ which is below $\sigma$. 
Then $\zeta_1$ exists since $R^+_N(M) \cap \sigma$ is finite and 
nonempty, as witnessed by $\zeta_0$. 
We claim that $\sigma = \min((N \cap \kappa^+) \setminus \zeta_1)$, 
which proves that $\sigma \in R^+_M(N)$. 
Otherwise $\sigma_0 := \min((N \cap \kappa^+) \setminus \zeta_1)$ 
is strictly below $\sigma$. 
So $\sigma_0 < \sigma = \min((N \cap \kappa^+) 
\setminus \sup(M \cap \eta))$, 
which implies that $\sigma_0 < \sup(M \cap \eta)$. 
But then $\min((M \cap \eta) \setminus \sigma_0)$ is in $R^+_N(M) \cap \sigma$, 
and is strictly larger than $\zeta_1$, which contradicts the maximality 
of $\zeta_1$.
\end{proof}

\begin{lemma}
Let $M$ and $N$ be in $\mathcal X \cup \mathcal Y$. 
Then for all $\eta \in R^+_N(M) \cup R^+_M(N)$, 
$\eta$ is closed under $H^*$.
\end{lemma}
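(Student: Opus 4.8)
The plan is to derive the statement from Lemma 7.10. By the symmetry of the statement it suffices to treat $\eta \in R^+_N(M)$; the case $\eta \in R^+_M(N)$ is identical after exchanging the roles of $M$ and $N$, noting that $\alpha_{M,N} = \sup(M \cap N)$ is symmetric in $M$ and $N$. By Definition 8.16 we may write $\eta = \min((M \cap \kappa^+) \setminus \gamma)$, where $\gamma = \alpha_{M,N}$ in case (1) and $\gamma \in (N \cap \kappa^+) \setminus \alpha_{M,N}$ in case (2). First I would check that $\gamma \notin M$ in either case. Since $M \cap N$ is an elementary substructure of $\mathcal A$ (an intersection of such, using that $\mathcal A$ has definable Skolem functions), it is closed under ordinal successor, so $\alpha_{M,N} = \sup(M \cap N) \notin M \cap N$; in case (2), if $\gamma \in M$ then $\gamma \in M \cap N$, so $\gamma \le \alpha_{M,N} \le \gamma$, forcing $\gamma = \alpha_{M,N} \in M \cap N$, a contradiction; and in case (1), if $\alpha_{M,N} \in M$ then $\eta = \alpha_{M,N}$, contradicting the requirement $\alpha_{M,N} < \eta$ of Definition 8.16(1). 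Therefore $\gamma \notin M$, and hence $M \cap \eta = M \cap \gamma$, since there is no element of $M$ in $[\gamma,\eta)$. Viewing $\eta$ as the set of ordinals below it and applying Lemma 7.10 with the model $M$ and $a := \eta$, it now suffices to produce a set $b$ that is closed under $H^*$ and satisfies $M \cap b = M \cap \gamma$.

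The set $b$ should be built from the coherent filtration $\vec A$. Let $A_M := A_{\sup(M),\sup(M \cap \kappa)}$ if $M \in \mathcal X$ and $A_M := A_{\sup(M),M \cap \kappa}$ if $M \in \mathcal Y$; in both cases $A_M$ is closed under $H^*$ — by Lemma 7.29 when $M \in \mathcal X$, and because $A_M = M \cap \kappa^+$ (Lemma 7.26) together with $M \prec \mathcal A$ when $M \in \mathcal Y$ — and $M \cap \kappa^+ \subseteq A_M$ by Lemma 7.25. The candidate is $b := A_M \cap \gamma^*$ for a suitable $\gamma^* \le \gamma$ chosen so that $M \cap \gamma^* = M \cap \gamma$; then $M \cap b = M \cap \gamma$ is immediate, since $M \cap \gamma \subseteq M \cap \kappa^+ \cap \gamma \subseteq A_M \cap \gamma^* = b$ and $b \subseteq \gamma^* \le \gamma$. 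In case (1) one takes $\gamma^* := \gamma = \alpha_{M,N}$, which is a limit point of $M$ and hence (as $A_M \cap \gamma \supseteq M \cap \gamma$ is cofinal in $\gamma$) a limit point of $A_M$, so that by the coherence property Notation 7.4(4) we get $A_M \cap \gamma = A_{\gamma,\beta_M}$, where $\beta_M$ denotes the second index of $A_M$. In case (2) one first passes from $\gamma$ down to $\gamma^* := \sup(M \cap \gamma)$, which is again a limit point of $A_M$ (adjusting $b$ by at most the single point $\gamma^*$ when $\gamma^* \in M$), landing once more on a set of the form $A_{\gamma^*,\beta_M}$.

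The main obstacle is the remaining verification: that $A_{\gamma^*,\beta_M}$ is closed under $H^*$, where $\gamma^*$ is the relevant limit point of $M$ with $\gamma^* < \sup(M)$ (and, in the stray subcase $\gamma^* \in M$ of case (2), that the extra point can be absorbed without destroying $H^*$-closure). This is the exact analogue of Lemma 7.29 with $\gamma^*$ in place of $\sup(M)$, and I would prove it by rerunning that argument: by Lemma 7.13, $\lim(C_{\gamma^*}) \cap M$ is cofinal in $\gamma^*$; for $\delta \in \lim(C_{\gamma^*}) \cap M$ and $\beta \in M \cap \kappa$ the set $A_{\delta,\beta}$ lies in $M$ and coheres with $A_{\gamma^*,\beta}$ via Notation 7.4(4); and the cardinality bound Notation 7.4(3) together with the elementarity of $M$ lets one absorb the values of the Skolem terms $\tau_n'$ back into $M \cap \kappa$ below $\beta_M$, exactly as in the proof of Lemma 7.29. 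This is also the point where one must be careful with $\eta$'s that are not themselves limit points of $M$ or of $A_M$, since there the naive choice $b := A_M \cap \eta$ need not be $H^*$-closed; the fix is precisely to descend to the limit point $\gamma^*$ as above. Once $b$ is in hand, Lemma 7.10 yields that $\eta$ is closed under $H^*$, which completes the proof.
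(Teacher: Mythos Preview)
Your reduction via Lemma 7.10 is a reasonable starting point, but the key step --- showing that $b = A_{\gamma^*,\beta_M}$ is closed under $H^*$ by ``rerunning'' Lemma 7.29 --- has a genuine gap in case (2). The proof of Lemma 7.29 finds, above a given $\eta_0$, an ordinal $\eta_1 \in M$ closed under $\tau_n'$, and then $\eta_2 \in \lim(C_{\sup(M)}) \cap M$ above $\eta_1$; the point is that $\eta_1 < \sup(M)$ is automatic. When you replace $\sup(M)$ by $\gamma^* < \sup(M)$, you need $\eta_1 < \gamma^*$, i.e., an ordinal closed under $\tau_n'$ in $(\eta_0,\gamma^*)$. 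In case (1) this is salvageable: since $\gamma^* = \alpha_{M,N} = \sup(M \cap N)$ and $M \cap N \prec \mathcal A$, such $\eta_1$ exist cofinally in $M \cap N$. But in case (2), $\gamma^* = \sup(M \cap \eta)$ is just a limit point of $M$, and there is no elementary substructure at hand with supremum $\gamma^*$; there is no reason the club of $\tau_n'$-closed ordinals accumulates at $\gamma^*$. Equivalently, you are implicitly assuming $\gamma^*$ is itself closed under $\tau_n'$, which is essentially what you are trying to prove. (Incidentally, the ``stray subcase $\gamma^* \in M$'' does not arise: since $M \cap \gamma$ is closed under successors, $\gamma^*$ is a genuine limit point of $M$ not in $M$.)

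The paper avoids this by an induction on $R^+_N(M) \cup R^+_M(N)$, never touching the filtration $\vec A$ at all. The base case $\eta = \min((M \cap \kappa^+) \setminus \alpha_{M,N})$ is handled directly in $M$ using the elementarity of $M \cap N$ (this is where your case (1) reasoning lands after the fix above). For the inductive step, Lemma 8.15(3) produces $\sigma = \min((N \cap \kappa^+) \setminus \sup(M \cap \eta)) \in R^+_M(N)$ with $\eta = \min((M \cap \kappa^+) \setminus \sigma)$; the inductive hypothesis gives that $\sigma$ is $H^*$-closed, and since $M \cap \eta \subseteq \sigma$ one gets $\tau_n'(\alpha_0,\ldots,\alpha_{k-1}) < \sigma < \eta$ immediately. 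In your framework this amounts to taking $b = \sigma$ rather than $b = A_{\gamma^*,\beta_M}$ --- but producing $\sigma$ and knowing it is $H^*$-closed is exactly the inductive structure you are missing.
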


\begin{proof}
First consider $\eta = \min((N \cap \kappa^+) \setminus \alpha_{M,N})$. 
Let $n < \omega$ and let $k$ be the arity of $\tau_n'$, and we will 
show that $\eta$ is closed under $\tau_n'$. 
Since $\eta \in N$, by elementarity it suffices to show that $N$ models 
that $\eta$ is closed under $\tau_n'$. 
Let $\alpha_0,\ldots,\alpha_{k-1} \in N \cap \eta$. 
By the minimality of $\eta$, 
$N \cap \eta \subseteq \alpha_{M,N}$, so 
$\alpha_0,\ldots,\alpha_{k-1} < \alpha_{M,N}$. 
By the elementarity of $M \cap N$ and since $\sup(M \cap N) = \alpha_{M,N}$, 
there is some $\gamma \in M \cap N \cap \kappa^+$ such that 
$\alpha_0,\ldots,\alpha_{k-1}$ are below $\gamma$ and 
$\gamma$ is closed under $\tau_n'$. 
Then 
$$
\tau_n'(\alpha_0,\ldots,\alpha_{k-1}) < \gamma < \alpha_{M,N} 
\le \eta.
$$
The same proof works for 
$\min((M \cap \kappa^+) \setminus \alpha_{M,N})$.

Now we prove the general statement by induction on ordinals in 
$R^+_N(M) \cup R^+_M(N)$. 
Suppose that $\eta \in R^+_N(M)$, 
and for all $\sigma \in (R^+_N(M) \cup R^+_M(N)) \cap \eta$, 
$\sigma$ is closed under $H^*$. 
If $\eta = \min((M \cap \kappa^+) \setminus \alpha_{M,N})$, then 
we are done by the previous paragraph. 
Otherwise by Lemma 8.15(3), the ordinal 
$$
\sigma := \min((N \cap \kappa^+) \setminus \sup(M \cap \eta))
$$
is in 
$R^+_M(N)$, 
and $\eta = \min((M \cap \kappa^+) \setminus \sigma)$. 
By the inductive hypothesis, $\sigma$ is closed under $H^*$.

Let $n < \omega$, and we will show that $\eta$ is closed under $\tau_n'$. 
Let $k$ be the arity of $\tau_n'$. 
Since $\eta$ is in $M$, by elementarity it suffices to show that $M$ models 
that $\eta$ is closed under $\tau_n'$. 
Let $\alpha_0,\ldots,\alpha_{k-1} \in M \cap \eta$. 
Then $\alpha_0,\ldots,\alpha_{k-1}$ are strictly less than $\sup(M \cap \eta) \le \sigma$. 
Since $\sigma$ is closed under $H^*$, 
$$
\tau_n'(\alpha_0,\ldots,\alpha_{k-1}) < \sigma < \eta.
$$
The same argument works for ordinals in $R^+_M(N)$.
\end{proof}

\bigskip

\addcontentsline{toc}{section}{9. Canonical models}

\textbf{\S 9. Canonical models}

\stepcounter{section}

\bigskip

In this section we will introduce some models which are 
determined by canonical parameters which arise 
in the comparison of two models. 
Specifically, we consider a simple model $N \in \mathcal X$ and 
a model $M$ in $\mathcal X \cup \mathcal Y$ which is not 
necessarily a member of $N$. 
The canonical models associated with $M$ and 
$N$ are models in $N \cap \mathcal Y$ 
which reflect some information about $M$ inside $N$. 
Canonical models will be used when amalgamating side conditions or 
forcing conditions over a simple model $N$; see 
Sections 13 and 15.\footnote{The idea of a canonical model 
is new to this paper, and does not appear in Mitchell's original proof \cite{mitchell}.}

The three types of canonical models are described in 
Notations 9.1, 9.3, and 9.13.

\begin{notation}
Let $N \in \mathcal X$ be simple and $P \in \mathcal Y$, where 
$P \cap \kappa < \sup(N \cap \kappa)$. 
Let $\beta := P \cap \kappa$ and $\eta := \sup(N \cap P)$. 
We let $Q(N,P)$ denote the set $Sk(A_{\eta_N,\beta_N})$.
\end{notation}

Note that $\eta_N$ exists by Lemma 8.13. 
It is easy to check that if $P \in N \cap \mathcal Y$, then 
$Q(N,P) = P$.

\begin{lemma}
Let $N \in \mathcal X$ be simple 
and $P \in \mathcal Y$, where 
$P \cap \kappa < \sup(N \cap \kappa)$.  
Let $\beta := P \cap \kappa$ and $\eta := \sup(N \cap P)$. 
Then $Q := Q(N,P)$ satisfies the following properties:
\begin{enumerate}
\item $Q \in N \cap \mathcal Y$;
\item $Q \cap \kappa = \beta_N$, 
$Q \cap \kappa^+ = A_{\eta_N,\beta_N}$, and $\sup(Q) = \eta_N$;
\item $N \cap Q \cap \kappa^+ = N \cap P \cap \kappa^+$.
\end{enumerate}
\end{lemma}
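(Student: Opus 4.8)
The plan is to pin down the two canonical parameters, check that $Q$ lands inside $N$ with the right size and elementarity for free, and then reduce everything to one coherence computation: that $A_{\eta_N,\beta_N}$ is closed under $H^*$ and meets $N$ in exactly $N \cap P \cap \kappa^+$.

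First I would note that the hypotheses make $\eta_N$ and $\beta_N$ available. Since $N$ is simple, $P \in \mathcal Y$, and $P \cap \kappa < \sup(N \cap \kappa)$, Lemma 8.13 gives $\eta = \sup(N \cap P) < \sup(N)$; as $N \cap P \subseteq N$ is cofinal in $\eta$ we have $\eta \in \cl(N)$, so $\eta_N = \min((N \cap \kappa^+) \setminus \eta)$ exists, and $\beta = P \cap \kappa < \sup(N \cap \kappa)$ gives $\beta_N = \min((N \cap \kappa) \setminus \beta) \in N \cap \kappa$. Since $\eta_N, \beta_N \in N$ and $\vec A \in \mathcal A$, the set $A_{\eta_N,\beta_N}$ is in $N$, hence so is $Q = Sk(A_{\eta_N,\beta_N})$; moreover $Q \prec \mathcal A$ and $|Q| \le |A_{\eta_N,\beta_N}| \cdot \omega \le |\beta_N| \cdot \omega < \kappa$ by Notation 7.4(3), so $Q \in P_\kappa(H(\kappa^+))$. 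It remains to establish (2), (3), and the two outstanding clauses of $\mathcal Y$-membership ($Q \cap \kappa \in \kappa$ and cofinality of $\lim(C_{\sup Q}) \cap Q$ in $\sup Q$).

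The heart of the argument is the claim that $N \cap A_{\eta_N,\beta_N} = N \cap P \cap \kappa^+$ and that $A_{\eta_N,\beta_N}$ is closed under $H^*$. For the first identity, I would apply Lemma 7.12 to $N$ and $\eta$ (the case $\eta \in N$ being trivial, the case $\eta \notin N$ giving $\eta \in \lim(C_{\eta_N})$ and $N \cap A_{\eta_N,\beta_N} = N \cap A_{\eta,\beta_N}$); since $\eta \in \cl(P) \setminus P$, Lemma 7.27 gives $P \cap \eta = A_{\eta,\beta}$, and $N \cap P \subseteq \eta$ because $N \cap P$ is closed under successors. So it suffices to show $N \cap A_{\eta,\beta_N} = N \cap A_{\eta,\beta}$, where $\supseteq$ is immediate from $\beta \le \beta_N$. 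For $\subseteq$, given $\gamma \in N \cap A_{\eta,\beta_N}$, pick (as in the proof of Lemma 7.16, intersecting the cofinal-in-$\eta$ sets $\lim(C_\eta) \cap N$ and $\lim(C_\eta) \cap P$) an ordinal $\xi \in \lim(C_\eta) \cap N \cap P$ above $\gamma$; coherence (Notation 7.4(4)) gives $\gamma \in A_{\xi,\beta_N} = A_{\eta,\beta_N} \cap \xi$ and $A_{\xi,\beta} = P \cap \xi$. Let $\delta_0 < \kappa$ be least with $\gamma \in A_{\xi,\delta_0}$; then $\delta_0 \in N$ by elementarity, and since $\beta_N$ is a limit ordinal — here one uses that $N \cap \beta_N = N \cap \beta$ and that $\beta = P \cap \kappa$ is a limit ordinal, so $\beta_N$ cannot be a successor of an element of $N$ — continuity of $\vec A$ in its second coordinate forces $\delta_0 < \beta_N$, hence $\delta_0 < \beta$, whence $\gamma \in A_{\xi,\beta} = P \cap \xi \subseteq P$. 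Since $P \prec \mathcal A$, the set $P \cap \kappa^+$ is closed under $H^*$, so Lemma 7.9 (applied to $N$, to $A_{\eta_N,\beta_N}$, and to $b = P \cap \kappa^+$) yields that $A_{\eta_N,\beta_N}$ is closed under $H^*$. From here the rest follows: by Notation 7.10, $Q \cap \kappa^+ = Sk(A_{\eta_N,\beta_N}) \cap \kappa^+ = A_{\eta_N,\beta_N}$; then $\sup(Q) = \sup(A_{\eta_N,\beta_N}) \le \eta_N$, while $\sup(Q) \in N$ and $\sup(Q) \ge \sup(N \cap Q) = \sup(N \cap P) = \eta$, so minimality of $\eta_N$ in $(N \cap \kappa^+) \setminus \eta$ gives $\sup(Q) = \eta_N$; the identity $Q \cap \kappa = A_{\eta_N,\beta_N} \cap \kappa = \beta_N$ comes from the same kind of coherence push-down (Notation 7.4(4)) to $A_{\eta,\beta} \cap \kappa = P \cap \kappa = \beta$ using that $\beta_N$ is a limit ordinal; and the cofinality clause defining $\mathcal Y$ follows from the cofinality of $\lim(C_{\eta_N}) \cap N$ (resp. the relevant limit points) in $\eta_N$ together with the coherence of $\vec A$, handling $\eta \in N$ and $\eta \notin N$ separately. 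This gives (1) and (2), and then (3) is immediate: $N \cap Q \cap \kappa^+ = N \cap A_{\eta_N,\beta_N} = N \cap P \cap \kappa^+$.

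I expect the main obstacle to be precisely this coherence bookkeeping: getting $N \cap A_{\eta_N,\beta_N} = N \cap P \cap \kappa^+$ and $A_{\eta_N,\beta_N} \cap \kappa = \beta_N$ with the correct case split on whether $\eta \in N$, and in particular verifying that $\beta_N$ is a limit ordinal so that the continuity clause of Notation 7.4(1) is applicable. Once that is in hand, the remaining deductions are short.
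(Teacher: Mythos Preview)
Your approach is correct and follows the same architecture as the paper: establish the key identity $N \cap A_{\eta_N,\beta_N} = N \cap P \cap \kappa^+$, apply Lemma~7.10 (you wrote 7.9) to get closure under $H^*$, and then read off the remaining properties.

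Two points of comparison are worth noting. First, your $\subseteq$ direction takes an unnecessary detour through a point $\xi \in \lim(C_\eta) \cap N \cap P$. The paper works directly at level $\eta_N$: given $\alpha \in N \cap A_{\eta_N,\beta_N}$, elementarity of $N$ (using that $\eta_N,\beta_N,\alpha \in N$ and $\beta_N$ is a limit ordinal) produces $\xi \in N \cap \beta_N$ with $\alpha \in A_{\eta_N,\xi}$; then Lemma~7.12(3) gives $\alpha \in N \cap A_{\eta_N,\xi} = N \cap A_{\eta,\xi}$, and since $\xi \in N \cap \beta_N \subseteq \beta$ one concludes $\alpha \in A_{\eta,\beta} = P \cap \eta$. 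This avoids invoking $N \cap P \in \mathcal X$ entirely for this step.

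Second, your deductions of $Q \cap \kappa = \beta_N$ and of the $\mathcal Y$-clause are too vague. The ``coherence push-down'' you sketch for $A_{\eta_N,\beta_N} \cap \kappa = \beta_N$ does not obviously go through: coherence gives you $A_{\eta_N,\beta_N} \cap \kappa = A_{\kappa,\beta_N}$, but there is no general reason $A_{\kappa,\beta_N} = \beta_N$. The paper instead uses a uniform elementarity trick: since $Q,\beta_N,\eta_N \in N$, it suffices to show $N$ models each statement, and the equality $N \cap Q \cap \kappa^+ = N \cap P \cap \kappa^+$ (which you already have) immediately gives $N \cap Q \cap \kappa = N \cap \beta = N \cap \beta_N$, $\sup(N \cap Q) = \eta$, and (via $N \cap P \in \mathcal X$) that $\lim(C_{\eta_N}) \cap Q$ is cofinal in $\eta_N$ as witnessed in $N$. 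Your direct argument for $\sup(Q) = \eta_N$ is fine, but for the other two clauses you should fall back on this elementarity device rather than coherence. (Also, a small imprecision: you assert $\eta \notin P$, which need not hold; fortunately Lemma~7.27 only needs $\eta \in \cl(P)$.)
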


\begin{proof}
Since $\eta_N$ and $\beta_N$ are in $N$, 
$A_{\eta_N,\beta_N}$ and $Q$ are in $N$ by elementarity. 
As $N \cap P$ is closed under ordinal successors, 
$\eta$ is a limit point of $N \cap P$. 
Therefore $P \cap \eta = A_{\eta,\beta}$ by Lemma 7.27. 
And since $\eta$ is a limit point of $N \cap \kappa^+$, by Lemma 7.12, 
$C_\eta = C_{\eta_N} \cap \eta$, and for all $\xi < \kappa$, 
$$
A_{\eta,\xi} = A_{\eta_N,\xi} \cap \eta \ \textrm{and} \ 
N \cap A_{\eta_N,\xi} = N \cap A_{\eta,\xi}.
$$

We claim that 
$$
N \cap P \cap \kappa^+ = N \cap A_{\eta_N,\beta_N}.
$$
Let $\alpha \in N \cap P \cap \kappa^+$, and we will show 
that $\alpha \in A_{\eta_N,\beta_N}$. 
Then $\alpha < \eta$ by the definition of $\eta$. 
So 
$$
\alpha \in P \cap \eta = A_{\eta,\beta} = A_{\eta_N,\beta} \cap \eta.
$$ 
Hence $\alpha \in A_{\eta_N,\beta}$. 
Since $\beta \le \beta_N$, $\alpha \in A_{\eta_N,\beta_N}$. 
Conversely, let $\alpha \in N \cap A_{\eta_N,\beta_N}$, 
and we will show that $\alpha \in P$. 
Since $\eta_N$, $\beta_N$, and $\alpha$ are in $N$ 
and $\beta_N$ is a limit ordinal, 
by elementarity we can fix $\xi \in N \cap \beta_N$ such that 
$\alpha \in A_{\eta_N,\xi}$. 
Then $\alpha \in N \cap A_{\eta_N,\xi} = N \cap A_{\eta,\xi}$. 
Since $\xi \in N \cap \beta_N$, $\xi < \beta$. 
So $A_{\eta,\xi} \subseteq A_{\eta,\beta} = P \cap \eta$. 
Hence $\alpha \in P$.

We have proven that 
$N \cap P \cap \kappa^+ = N \cap A_{\eta_N,\beta_N}$. 
By Lemma 7.10, 
it follows that $A_{\eta_N,\beta_N}$ is closed under $H^*$. 
In particular, $Q \cap \kappa^+ = A_{\eta_N,\beta_N}$. 
Therefore 
$$
N \cap Q \cap \kappa^+ = 
N \cap A_{\eta_N,\beta_N} = N \cap P \cap \kappa^+,
$$
which proves (3).

To show that $Q \cap \kappa = \beta_N$ and $\sup(Q) = \eta_N$, 
it suffices to prove that $N$ models these statements. 
Let $\alpha \in N \cap Q \cap \kappa$, and we will show 
that $\alpha < \beta_N$. 
Then 
$$
\alpha \in N \cap Q \cap \kappa = N \cap P \cap \kappa 
\subseteq P \cap \kappa = \beta \le \beta_N.
$$
So $\alpha < \beta_N$. 
Conversely, let $\alpha \in N \cap \beta_N$, and we will 
show that $\alpha \in Q$. 
Then $\alpha \in N \cap \beta_N \subseteq \beta$. 
So 
$$
\alpha \in N \cap \beta = N \cap P \cap \kappa = N \cap Q \cap \kappa.
$$ 
So indeed $\alpha \in Q$.

Since $Q \cap \kappa^+ = A_{\eta_N,\beta_N}$, 
clearly $\sup(Q) \le \eta_N$. 
To show that $N$ models that 
$\sup(Q) = \eta_N$, let $\xi \in N \cap \eta_N$. 
Then $\xi < \eta = \sup(N \cap P)$. 
So we can fix $\sigma \in N \cap P \cap \kappa^+$ which is larger 
than $\xi$. 
Then $\sigma \in N \cap P \cap \kappa^+ = N \cap Q \cap \kappa^+$. 
So $\sigma \in Q$ and $\xi \le \sigma$. 
Thus $\sup(Q) = \eta_N$. 
This completes the proof of (2).

To show that $Q \in \mathcal Y$, it suffices to prove that 
$\lim(C_{\eta_N}) \cap Q$ is cofinal in $\eta_N$. 
Again it will be enough to show that $N$ models this statement. 
So let $\xi \in N \cap \eta_N$. 
Then $\xi < \eta = \sup(N \cap P)$. 
Since $N \cap P \in \mathcal X$ by Lemma 7.16 and $\sup(N \cap P) = \eta$, 
there is $\sigma \in \lim(C_\eta) \cap (N \cap P)$ with $\xi \le \sigma$. 
Since $C_{\eta} = C_{\eta_N} \cap \eta$, 
$\sigma \in \lim(C_{\eta_N})$. 
Also $\sigma \in N \cap P \cap \kappa^+ = N \cap Q \cap \kappa^+$. 
So $\sigma \in \lim(C_{\eta_N}) \cap Q$ and $\xi \le \sigma$.
\end{proof}

\begin{notation}
Let $M$ and $N$ be in $\mathcal X$, where $N$ is simple. 
Let $\zeta \in R_M(N)$ and 
$\eta := \min((N \cap \kappa^+) \setminus \alpha_{M,N})$. 
We let $Q(N,M,\zeta)$ denote the set $Sk(A_{\eta,\zeta})$.
\end{notation}

Note that $\eta$ exists by Lemma 8.12.

\begin{lemma}
Let $M$ and $N$ be in $\mathcal X$, where $N$ is simple. 
Suppose that $N \le M$ and 
$\zeta = \min((N \cap \kappa) \setminus \beta_{M,N})$. 
Let $\eta := \min((N \cap \kappa^+) \setminus \alpha_{M,N})$. 
Then $Q := Q(N,M,\zeta)$ satisfies the following properties:
\begin{enumerate}
\item $Q \in N \cap \mathcal Y$;
\item $Q \cap \kappa = \zeta$, $Q \cap \kappa^+ = A_{\eta,\zeta}$, 
and $\sup(Q) = \eta$;
\item $N \cap Q \cap \kappa^+ = M \cap N \cap \kappa^+$.
\end{enumerate}
\end{lemma}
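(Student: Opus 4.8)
The plan is to mirror the proof of Lemma 9.2, with the ordinal $\zeta$ now playing the role that $\beta_N$ played there (note that $\zeta \in R_M(N)$ by the remark after Definition 2.1, so $\eta$ exists by Lemma 8.12). First I would record the easy preliminaries. Since $\eta$ and $\zeta$ both lie in $N$, the set $A_{\eta,\zeta}$ and hence $Q = Sk(A_{\eta,\zeta})$ lie in $N$ by elementarity. Because $\beta_{M,N} \in \Lambda$ is a limit ordinal, $\zeta = \min((N \cap \kappa) \setminus \beta_{M,N})$ is itself a limit ordinal: if $\zeta = \rho + 1$ then $\rho \in N$ by elementarity and $\rho < \beta_{M,N} \le \rho+1$ forces $\beta_{M,N} = \rho+1$, a contradiction; the same argument applied to $\alpha_{M,N} \in \lim(M \cap N)$ shows $\eta$ is a limit ordinal, so $C_\eta$ is defined. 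The key initial-segment fact is
$$
N \cap \zeta = N \cap \beta_{M,N} = M \cap N \cap \kappa,
$$
where the first equality is the definition of $\zeta$ and the second is Lemma 1.19(2) applied to $N \le M$. Finally, applying Lemma 7.12 to $\alpha_{M,N} \in \cl(N)$, either $\eta = \alpha_{M,N} \in N$, or $\alpha_{M,N} \in \lim(C_\eta)$ and $C_{\alpha_{M,N}} = C_\eta \cap \alpha_{M,N}$, so that $A_{\eta,\mu} \cap \alpha_{M,N} = A_{\alpha_{M,N},\mu}$ for all $\mu < \kappa$.

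The heart of the proof is the identity $N \cap A_{\eta,\zeta} = M \cap N \cap \kappa^+$; everything else follows routinely once this is established. For $\supseteq$, by Lemma 7.25 applied to $M \cap N \in \mathcal X$ (or Lemma 8.9) we have $M \cap N \cap \kappa^+ \subseteq A_{\alpha_{M,N}, \sup(M \cap N \cap \kappa)}$; since $\sup(M \cap N \cap \kappa) = \sup(N \cap \beta_{M,N}) < \beta_{M,N} \le \zeta$ (using $\cf(\beta_{M,N}) > \omega$ and $N$ countable) and $\vec A$ is increasing in its second coordinate, this set lies in $A_{\alpha_{M,N},\zeta} \subseteq A_{\eta,\zeta}$ (the last inclusion by coherence when $\eta > \alpha_{M,N}$), and of course $M \cap N \cap \kappa^+ \subseteq N$. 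For $\subseteq$, let $\gamma \in N \cap A_{\eta,\zeta}$; then $\gamma < \eta$, and since $\gamma \in N \cap \kappa^+$ and $\eta = \min((N \cap \kappa^+) \setminus \alpha_{M,N})$ we get $\gamma < \alpha_{M,N}$, hence $\gamma \in A_{\alpha_{M,N},\zeta}$ by coherence. As $M \cap N \in \mathcal X$ with $\sup(M \cap N) = \alpha_{M,N}$, fix $\delta \in \lim(C_{\alpha_{M,N}}) \cap (M \cap N)$ above $\gamma$; then $A_{\delta,\zeta} = A_{\alpha_{M,N},\zeta} \cap \delta$ by Notation 7.4(4), so $\gamma \in A_{\delta,\zeta}$. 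Since $\zeta$ is a limit ordinal, continuity of $\vec A$ in the second coordinate together with the elementarity of $N$ (all of $\gamma, \delta, \zeta$ lie in $N$) produces $\xi \in N \cap \zeta = M \cap N \cap \kappa$ with $\gamma \in A_{\delta,\xi}$. Now, using the surjections $\pi_\delta$ of Notation 7.21 and the coherence $A_{\gamma,\xi} = A_{\delta,\xi} \cap \gamma$, we have $\gamma = \pi_\delta(\mu,\xi)$ where $\mu = \ot(A_{\gamma,\xi}) < c^*(\xi)$ by Notation 7.4(5). Since $\xi \in M \cap N \prec \mathcal A$ gives $c^*(\xi) \in M \cap N \cap \kappa = N \cap \beta_{M,N}$, and $\mu \in N$, we conclude $\mu < \beta_{M,N}$, so $\mu \in N \cap \beta_{M,N} = M \cap N \cap \kappa \subseteq M$; therefore $\gamma = \pi_\delta(\mu,\xi) \in M$ by elementarity. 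This last step — controlling the order-type index $\mu$ so that it is captured inside $M$ — is the main obstacle, and it is exactly where property (5) of $\vec A$ from Notation 7.4 is needed.

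With the identity in hand, the rest goes as follows. The set $b := (M \cap N) \cap \kappa^+$ is closed under $H^*$ because $M \cap N \prec \mathcal A$, and $N \cap A_{\eta,\zeta} = M \cap N \cap \kappa^+ = N \cap b$; hence Lemma 7.10 shows $A_{\eta,\zeta}$ is closed under $H^*$, so $Q = f^*[A_{\eta,\zeta}]$ and $Q \cap \kappa^+ = A_{\eta,\zeta}$, and combined with the identity this gives $N \cap Q \cap \kappa^+ = M \cap N \cap \kappa^+$, which is (3). For $Q \cap \kappa = \zeta$: we have $N \cap Q \cap \kappa = N \cap \zeta$ and $\zeta \notin Q$ (otherwise $\zeta \in N \cap Q = N \cap \zeta$, absurd), so $N$ models $Q \cap \kappa = \zeta$ and hence $Q \cap \kappa = \zeta$. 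For $\sup(Q) = \eta$: since $M \cap N \cap \kappa^+ \subseteq Q$ we get $\alpha_{M,N} \le \sup(Q) \le \sup(A_{\eta,\zeta}) \le \eta$, and $\sup(Q) \in N$ while $N \cap [\alpha_{M,N},\eta] = \{\eta\}$, so $\sup(Q) = \eta$; this is (2). Finally, for (1): $Q \in N$ by elementarity, $Q \prec \mathcal A$ since it is a Skolem hull, $|Q| < \kappa$ and $Q \cap \kappa = \zeta \in \kappa$, so it remains to check $\lim(C_{\sup Q}) \cap Q = \lim(C_\eta) \cap Q$ is cofinal in $\eta$; as $\sup(\lim(C_\eta) \cap Q) \in N$, this reduces, exactly as in Lemma 9.2, to showing $N$ models it, which follows because for $\xi \in N \cap \eta$ (hence $\xi < \alpha_{M,N}$) one picks $\sigma \in \lim(C_{\alpha_{M,N}}) \cap (M \cap N) \subseteq \lim(C_\eta) \cap Q$ with $\xi < \sigma$, using cofinality of $M \cap N$ in $\alpha_{M,N}$ and $\lim(C_{\alpha_{M,N}}) \subseteq \lim(C_\eta)$. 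Thus $Q \in N \cap \mathcal Y$.
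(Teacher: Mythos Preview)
Your proof is correct and follows essentially the same line as the paper's. The one place you diverge is in the $\subseteq$ direction of the key identity $N \cap A_{\eta,\zeta} = M \cap N \cap \kappa^+$: the paper simply notes that $\alpha \in N \cap A_{\alpha_{M,N},\gamma} \subseteq N \cap A_{\alpha_{M,N},\sup(M\cap N\cap\kappa)}$ and then invokes Lemma~8.8(1) (with the roles of $M$ and $N$ swapped, using $N \le M$) to conclude $\alpha \in M$, whereas you inline the proof of that lemma by explicitly writing $\gamma = \pi_\delta(\mu,\xi)$ and bounding $\mu < c^*(\xi) \in M \cap N \cap \kappa$ via Notation~7.4(5). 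Your argument is exactly the content of Lemma~8.6, so you could shorten this step considerably by citing Lemma~8.8(1) directly.
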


\begin{proof}
Since $\eta$ and $\zeta$ are in $N$, 
$A_{\eta,\zeta}$ and $Q$ are in $N$ by elementarity. 
As $\alpha_{M,N}$ is a limit point of $N$, by Lemma 7.12, 
$C_{\alpha_{M,N}} = C_{\eta} \cap \alpha_{M,N}$, and 
for all $\xi < \kappa$, 
$$
A_{\alpha_{M,N},\xi} = A_{\eta,\xi} \cap \alpha_{M,N} \ \textrm{and} \ 
N \cap A_{\alpha_{M,N},\xi} = N \cap A_{\eta,\xi}.
$$

We claim that 
$$
N \cap A_{\eta,\zeta} = M \cap N \cap \kappa^+.
$$
Let $\alpha \in M \cap N \cap \kappa^+$, and we will 
show that $\alpha \in A_{\eta,\zeta}$. 
By Lemma 8.9, $M \cap N \cap \kappa^+ \subseteq 
A_{\alpha_{M,N},\sup(M \cap N \cap \kappa)}$. 
Since $\zeta \in R_M(N)$, 
$\sup(M \cap N \cap \kappa) < \beta_{M,N} \le \zeta$. 
So 
$$
\alpha \in M \cap N \cap \kappa^+ \subseteq 
A_{\alpha_{M,N},\sup(M \cap N \cap \kappa)} 
\subseteq A_{\alpha_{M,N},\zeta} \subseteq A_{\eta,\zeta}.
$$
Hence $\alpha \in A_{\eta,\zeta}$.

Conversely, let $\alpha \in N \cap A_{\eta,\zeta}$, and we will 
show that $\alpha \in M$. 
Since $\alpha$, $\eta,$ and $\zeta$ are in $N$ and 
$\zeta$ is a limit ordinal, 
by elementarity we can fix 
$\gamma \in N \cap \zeta$ such that $\alpha \in A_{\eta,\gamma}$. 
Since $\zeta = \min((N \cap \kappa) \setminus \beta_{M,N})$ and $N \le M$, 
$\gamma \in N \cap \beta_{M,N} = M \cap N \cap \kappa$. 
So 
$$
\alpha \in N \cap A_{\eta,\gamma} = N \cap A_{\alpha_{M,N},\gamma} 
\subseteq N \cap A_{\alpha_{M,N},\sup(M \cap N \cap \kappa)}.
$$
By Lemma 8.8(1), $\alpha \in M$.

We have proven that $N \cap A_{\eta,\zeta} = M \cap N \cap \kappa^+$. 
By Lemma 7.10, it follows that $A_{\eta,\zeta}$ is closed under $H^*$. 
In particular, $Q \cap \kappa^+ = A_{\eta,\zeta}$. 
Therefore 
$$
N \cap Q \cap \kappa^+ = N \cap A_{\eta,\zeta} 
= M \cap N \cap \kappa^+,
$$
which proves (3).

To show that $Q \cap \kappa = \zeta$ and $\sup(Q) = \eta$, it 
suffices to show that $N$ models these statements. 
Let $\gamma \in N \cap Q \cap \kappa$, and we will show that 
$\gamma < \zeta$.  
Then $\gamma \in N \cap Q \cap \kappa = M \cap N \cap \kappa$, 
so 
$$
\gamma < \sup(M \cap N \cap \kappa) < \beta_{M,N} \le \zeta.
$$ 
Conversely, let $\gamma \in N \cap \zeta$, 
and we will show that $\gamma \in Q$. 
Since $\zeta = \min((N \cap \kappa) \setminus \beta_{M,N})$, 
$\gamma \in N \cap \beta_{M,N}$. 
As $N \le M$, $N \cap \beta_{M,N} \subseteq M$, so $\gamma \in M$. 
Hence $\gamma \in M \cap N \cap \kappa^+ \subseteq Q$.

Since $Q \cap \kappa^+ = A_{\eta,\zeta}$, obviously $\sup(Q) \le \eta$. 
To show that $N$ models that $\sup(Q) = \eta$, let 
$\xi \in N \cap \eta$ be given. 
Since $\eta = \min((N \cap \kappa^+) \setminus \alpha_{M,N})$, $\xi < \alpha_{M,N}$. 
As $\alpha_{M,N} = \sup(M \cap N \cap \kappa^+)$, we can fix 
$\sigma \in M \cap N \cap \kappa^+$ with $\xi \le \sigma$. 
Then $\sigma \in M \cap N \cap \kappa^+ \subseteq Q$. 
So $\xi \le \sigma$ and $\sigma \in Q$. 
This completes the proof of (2).

To show that $Q \in \mathcal Y$, it suffices to show that 
$N$ models that $\lim(C_\eta) \cap Q$ is cofinal in $\eta$. 
Let $\xi \in N \cap \eta$. 
Then $\xi < \alpha_{M,N}$. 
Since $M \cap N$ is in $\mathcal X$ and $\sup(M \cap N) = \alpha_{M,N}$, 
we can fix 
$\sigma \in \lim(C_{\alpha_{M,N}}) \cap (M \cap N)$ with $\xi \le \sigma$. 
But $C_{\alpha_{M,N}} = C_{\eta} \cap \alpha_{M,N}$, 
so $\sigma \in \lim(C_{\eta})$. 
Also $\sigma \in M \cap N \cap \kappa^+ \subseteq Q$. 
So $\sigma \in \lim(C_{\eta}) \cap Q$ and $\xi \le \sigma$.
\end{proof}

\begin{notation}
Let $M$ and $N$ be in $\mathcal X$ such that $\{ M, N \}$ is adequate. 
Let $\zeta \in R_N(M)$. 
We let $Q_0(M,N,\zeta)$ denote the set $Sk(A_{\alpha_{M,N},\zeta})$.
\end{notation}

\begin{lemma}
Let $M$ and $N$ be in $\mathcal X$ such that $\{ M, N \}$ is adequate. 
Let $\eta \in M \cap N \cap \kappa^+$ with $\kappa \le \eta$. 
Fix $m < \omega$, and let $k$ be the arity of $\tau_m'$. 
Define $f_{m,\eta} : \kappa \to \kappa$ by letting 
$f_{m,\eta}(\beta)$ be the least $\beta' < \kappa$ such that 
$\beta \in A_{\eta,\beta'}$ and 
$$
\eta \cap \tau_m'[A_{\eta,\beta}^k] \subseteq A_{\eta,\beta'}.
$$
Then for all $\sigma \in R_N(M) \cup R_M(N)$, 
$\sigma$ is closed under $f_{m,\eta}$.
\end{lemma}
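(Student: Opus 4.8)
The plan is to prove the statement by induction on the remainder points, after first reducing to the case where $f_{m,\eta}$ is monotone. I would begin by replacing $f_{m,\eta}$ with the function $\beta \mapsto \sup\{f_{m,\eta}(\beta'') : \beta'' \le \beta\}$. Since $\kappa$ is regular and $|A_{\eta,\beta''}| < \kappa$ for each $\beta'' < \kappa$, this supremum is always below $\kappa$, so the new function still maps $\kappa$ to $\kappa$; it is still definable in $\mathcal A$ from the parameter $\eta$, it is non-decreasing, and an ordinal is closed under $f_{m,\eta}$ iff it is closed under its monotone majorant. So it is enough to work with the majorant, which I continue to call $f_{m,\eta}$. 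The key observation is that, since $\eta \in M \cap N$ and $f_{m,\eta}$ is definable in $\mathcal A$ from $\eta$, the function $f_{m,\eta}$ is a member of both $M$ and $N$; consequently $M \cap \kappa$, $N \cap \kappa$, and $M \cap N \cap \kappa$ are each mapped into themselves by $f_{m,\eta}$.

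Next I would record the shape of a remainder point. If $\sigma \in R_N(M)$, then $\sigma$ is not a limit point of $M \cap \kappa$: a limit point of $M \cap \kappa$ lying in $R_N(M)$ would, by Definition 2.1 together with $M \cap N \cap \kappa \subseteq \beta_{M,N}$ (Lemma 1.15), have to belong to $M \cap N \cap \kappa$, contradicting $\beta_{M,N} \le \sigma$. Hence $\sigma$ is the $M$-successor of $\mu_\sigma := \sup(M \cap \sigma)$, and symmetrically for points of $R_M(N)$. The proof proper then runs by induction on $\sigma \in R_N(M) \cup R_M(N)$, ordered by $\in$, showing $f_{m,\eta}[\sigma] \subseteq \sigma$; by the symmetry between $M$ and $N$ it suffices to treat $\sigma \in R_N(M)$. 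For the base case, let $\sigma_0 := \min(R_N(M) \cup R_M(N))$; relabelling $M$ and $N$ if necessary we may assume $\sigma_0 \in R_N(M)$, and then $\sigma_0 = \min((M \cap \kappa) \setminus \beta_{M,N})$, so that $M \cap \sigma_0 = M \cap \beta_{M,N} = M \cap N \cap \kappa \subseteq \beta_{M,N} \le \sigma_0$ by Lemma 1.19(2). If $\beta < \sigma_0$ and $\beta < \sup(M \cap N \cap \kappa)$, put $\bar\beta := \min((M \cap N \cap \kappa) \setminus \beta)$; then by monotonicity and the closure remarks above, $f_{m,\eta}(\beta) \le f_{m,\eta}(\bar\beta) \in M \cap N \cap \kappa \subseteq \beta_{M,N} \le \sigma_0$.

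For the inductive step, let $\sigma \in R_N(M)$ be non-minimal. By Lemma 2.2(3), the ordinal $\tau := \min((N \cap \kappa) \setminus \sup(M \cap \sigma))$ lies in $R_M(N)$, satisfies $\tau < \sigma$, and $\sigma = \min((M \cap \kappa) \setminus \tau)$; the inductive hypothesis then gives $f_{m,\eta}[\tau] \subseteq \tau$, which disposes of every $\beta < \tau$. The remaining input, needed in both the base case and the inductive step, is to bound $f_{m,\eta}(\beta)$ for $\beta$ in the ``gap'' $[\sup(M \cap N \cap \kappa),\sigma_0)$, respectively $[\tau,\sigma)$ — that is, for $\beta$ below $\sigma$ not directly witnessed by an element of $M$ or $N$ below $\sigma$. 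This is the main obstacle, and it is exactly the place where one must use the coherence of $\vec A$ rather than mere elementarity: since $\eta \in M \cap N$ and $\sigma \in M$, the set $A_{\eta,\sigma}$ lies in $M$, and I would argue — using the continuity and coherence properties of Notation 7.4 together with a Lemma 7.31--style analysis of where such a $\beta$ sits in the filtration $\langle A_{\eta,\rho} : \rho < \kappa \rangle$ as seen from $M$, and, via the zigzag of Lemma 2.2(3), also from $N$ — that the filtration level $f_{m,\eta}(\beta)$ is pinned strictly below $\sigma$. This mirrors the mechanism of Lemma 8.16, carried out below $\kappa$; the surrounding induction and the monotonicity reduction are routine, and the genuine work is this gap analysis.
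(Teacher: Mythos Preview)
Your inductive scheme matches the paper's, but you are missing the one observation that makes the argument short. You note that $f_{m,\eta}$ is definable in $\mathcal A$ from $\eta \in M \cap N$, and hence that $M$ and $N$ are each closed under $f_{m,\eta}$. The paper pushes this one step further: since $\sigma \in M$ and $f_{m,\eta}$ is definable from $\eta \in M$, by elementarity it suffices to verify that $M$ \emph{models} that $\sigma$ is closed under $f_{m,\eta}$, i.e., to check $f_{m,\eta}(\beta) < \sigma$ only for $\beta \in M \cap \sigma$. With this reduction, the ``gap'' you identify simply disappears. In the base case, $\sigma_0 = \min((M \cap \kappa) \setminus \beta_{M,N})$ with $M \le N$, so $M \cap \sigma_0 = M \cap \beta_{M,N} \subseteq N$; hence any $\beta \in M \cap \sigma_0$ lies in $M \cap N$, and $f_{m,\eta}(\beta) \in M \cap N \cap \kappa \subseteq \beta_{M,N} \le \sigma_0$. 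In the inductive step, $\zeta = \min((M \cap \kappa) \setminus \sigma)$ with $\sigma \in R_M(N)$ as in Lemma~2.2(3), so $M \cap \zeta \subseteq \sigma$; thus every $\beta \in M \cap \zeta$ satisfies $\beta < \sigma$, and the inductive hypothesis gives $f_{m,\eta}(\beta) < \sigma < \zeta$. That is the entire proof.

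By contrast, you attempt to prove $f_{m,\eta}[\sigma] \subseteq \sigma$ for \emph{all} $\beta < \sigma$ directly, which forces you to confront $\beta$ in the interval $[\sup(M \cap \sigma), \sigma)$ where neither model sees $\beta$. Your plan for this --- coherence of $\vec A$ plus a Lemma~7.31--style analysis --- is not a proof as written, and it is not clear it can be made into one: $f_{m,\eta}(\beta)$ depends on the Skolem term $\tau_m'$ applied to $A_{\eta,\beta}$, and nothing in the coherence properties of $\vec A$ alone bounds where those outputs land in the filtration. The monotone-majorant reduction is also a detour (and your equivalence claim is shaky: closure under $f_{m,\eta}$ need not imply closure under its majorant when $\sigma$ is singular). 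The fix is simply to use elementarity of $M$ to restrict attention to $\beta \in M \cap \sigma$; then the induction is two lines per case, with no gap and no need for monotonicity.
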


Note that since $A_{\eta,\beta}$ has size less than $\kappa$ by 
Notation 7.4(3), the set $\tau_m'[A_{\eta,\beta}^k]$ also 
has size less than $\kappa$. 
So the definition of $f_{m,\eta}$ makes sense. 
Also note that $f_{m,\eta}$ is definable from $\eta$ in $\mathcal A$.

\begin{proof}
The proof is by induction on remainder points in $R_M(N) \cup R_N(M)$. 
For the base case, let $\sigma$ be the first ordinal in 
$R_M(N) \cup R_N(M)$. 
Without loss of generality, assume that $\sigma \in R_N(M)$. 
Since $\eta \in M$ and $f_{m,\eta}$ is definable from $\eta$ in $\mathcal A$, 
it suffices to show that $M$ models that $\sigma$ 
is closed under $f_{m,\eta}$. 
So let $\beta \in M \cap \sigma$, and we will show that 
$f_{m,\eta}(\beta) < \sigma$.

Since $\sigma \in R_N(M)$ and $\sigma$ 
is the first remainder point, we have that $M \le N$ and 
$\sigma = \min((M \cap \kappa) \setminus \beta_{M,N})$. 
As $\beta \in M \cap \sigma = M \cap \beta_{M,N}$ and $M \le N$, 
$\beta \in M \cap \beta_{M,N} \subseteq N$. 
So $\beta \in N$. 
Therefore $\eta$ and $\beta$ are both in $M \cap N$. 
By elementarity, $f_{m,\eta}(\beta) \in M \cap N \cap \kappa$. 
But $M \cap N \cap \kappa = M \cap \beta_{M,N} \subseteq \sigma$. 
So $f_{m,\eta}(\beta) < \sigma$.

Now suppose that $\zeta$ is a remainder point which is greater than the 
least remainder point, and assume that the lemma holds 
for all remainder points in $R_M(N) \cup R_N(M)$ which are below $\zeta$. 
Without loss of generality, assume that $\zeta \in R_N(M)$. 
Then by Lemma 2.2(3), $\sigma := \min((N \cap \kappa) \setminus 
\sup(M \cap \zeta))$ is in $R_M(N)$, and 
$\zeta = \min((M \cap \kappa) \setminus \sigma)$. 
To show that $M$ models that $\zeta$ is closed under $f_{m,\eta}$, 
let $\beta \in M \cap \zeta$. 
Then $\beta < \sup(M \cap \zeta) < \sigma$. 
By the inductive hypothesis, $\sigma$ is closed under $f_{m,\eta}$. 
So $f_{m,\eta}(\beta) < \sigma$. 
Since $\sigma < \zeta$, $f_{m,\eta}(\beta) < \zeta$.
\end{proof}

\begin{lemma}
Let $M$ and $N$ be in $\mathcal X$ such that $\{ M, N \}$ is adequate. 
Let $\sigma \in R_N(M)$. 
Then $Q_0 := Q_0(M,N,\sigma)$ satisfies the following properties:
\begin{enumerate}
\item $Q_0 \in \mathcal Y$;
\item $Q_0 \cap \kappa = \sigma$, 
$Q_0 \cap \kappa^+ = A_{\alpha_{M,N},\sigma}$, 
and $\sup(Q_0) = \alpha_{M,N}$;
\item $M \cap N \cap \kappa^+ \subseteq Q_0$.
\end{enumerate}
\end{lemma}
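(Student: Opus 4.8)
The plan is to follow the template of Lemmas 9.2 and 9.4, replacing the ``$N$ models it'' step used there by the auxiliary functions $f_{m,\eta}$ of Lemma 9.6. The change is forced on us because $\sigma \in R_N(M)$, so $\sigma \notin \cl(N \cap \kappa)$ by Lemma 2.2(1); thus neither $Q_0$ nor the set $A_{\alpha_{M,N},\sigma}$ need lie in $M$ or in $N$, and the old argument does not apply. Write $\alpha := \alpha_{M,N}$ and $A := A_{\alpha,\sigma}$, so $Q_0 = Sk(A)$; since $\kappa \in M \cap N$ we have $\kappa \le \alpha < \kappa^+$, and $M \cap N \in \mathcal X$ with $\sup(M \cap N) = \alpha$ by Lemma 7.16. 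As a first step I would record the routine inclusions. Because $\sup(M \cap N \cap \kappa)$ has countable cofinality while $\Lambda \subseteq \cof(>\omega)$, Lemma 1.19(3) gives $\sup(M \cap N \cap \kappa) < \beta_{M,N}$, and $\beta_{M,N} \le \sigma$ by the definition of $R_N(M)$; hence, by Lemma 8.9,
$$
M \cap N \cap \kappa^+ \subseteq A_{\alpha,\sup(M \cap N \cap \kappa)} \subseteq A_{\alpha,\sigma} = A \subseteq Q_0 ,
$$
which proves (3). In particular $\kappa \in A$, so $A \cap \kappa = A_{\kappa,\sigma}$ by Notation 7.4(4), and $\alpha = \sup(M \cap N \cap \kappa^+) \le \sup(A) \le \alpha$.

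The heart of the proof is to show that $\sigma \subseteq A$ and that $A$ is closed under $H^*$; this is the analogue of the ``least $\xi$'' argument in the proof of Lemma 7.29. Since $M \cap N \in \mathcal X$ and $\lim(C_\alpha) \cap (M \cap N)$ is cofinal in $\alpha$, and since $\kappa \in M \cap N$, I may fix $\eta \in \lim(C_\alpha) \cap (M \cap N)$ with $\eta > \kappa$; then $A_{\eta,\sigma} = A \cap \eta$ by Notation 7.4(4), and Lemma 9.6 applies to $\eta$ (as $\eta \in M \cap N \cap \kappa^+$ and $\kappa \le \eta$), so $\sigma$, being in $R_N(M)$, is closed under $f_{m,\eta}$ for each $m < \omega$. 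To see $\sigma \subseteq A$: for $\beta < \sigma$, closure gives $f_{m,\eta}(\beta) < \sigma$, and by the definition of $f_{m,\eta}$ and Notation 7.4(1), $\beta \in A_{\eta,f_{m,\eta}(\beta)} \subseteq A_{\eta,\sigma} = A \cap \eta \subseteq A$. For closure under $\tau_n'$, with $k$ its arity: given $\alpha_0, \ldots, \alpha_{k-1} \in A$, first fix $\eta_1 \in M \cap N$ above $\kappa$, above all $\alpha_i$, and closed under $\tau_n'$ (possible by elementarity of $M \cap N$ in $\mathcal A$), then fix $\eta \in \lim(C_\alpha) \cap (M \cap N)$ with $\eta_1 < \eta$; since $\sigma$ is a limit ordinal (as one checks from its definition) each $\alpha_i \in A \cap \eta = A_{\eta,\sigma} = \bigcup_{\xi < \sigma} A_{\eta,\xi}$, so $\alpha_0, \ldots, \alpha_{k-1} \in A_{\eta,\beta}$ for some $\beta < \sigma$. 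If $\tau_n'(\alpha_0, \ldots, \alpha_{k-1})$ is defined it is $< \eta_1 < \eta$ and lies in $\tau_n'[A_{\eta,\beta}^k]$, so by Lemma 9.6,
$$
\tau_n'(\alpha_0, \ldots, \alpha_{k-1}) \in \eta \cap \tau_n'[A_{\eta,\beta}^k] \subseteq A_{\eta,f_{n,\eta}(\beta)} \subseteq A_{\eta,\sigma} = A \cap \eta .
$$
By Notation 7.9 this shows $A$ is closed under $H^*$.

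It then remains to read off the conclusions. Closure of $A$ under $H^*$ gives $Q_0 \cap \kappa^+ = Sk(A) \cap \kappa^+ = A = A_{\alpha,\sigma}$ by the defining property of $H^*$ (Notation 7.9), the second clause of (2); hence $\sup(Q_0) = \sup(A) = \alpha = \alpha_{M,N}$, and $Q_0 \cap \kappa = A \cap \kappa$, which equals $\sigma$ by the inclusion $\sigma \subseteq A$ together with Notation 7.4, exactly as in the corresponding step of Lemma 7.29. For (1): $|A| < \kappa$ by Notation 7.4(3), hence $|Q_0| = |Sk(A)| < \kappa$; $Q_0 \prec \mathcal A$ since it is a Skolem hull; $Q_0 \cap \kappa = \sigma \in \kappa$; and $\lim(C_{\sup(Q_0)}) \cap Q_0 = \lim(C_\alpha) \cap Q_0$ is cofinal in $\alpha = \sup(Q_0)$, because $\lim(C_\alpha) \cap (M \cap N)$ is cofinal in $\alpha$ and $M \cap N \cap \kappa^+ \subseteq Q_0$ by (3). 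Therefore $Q_0 \in \mathcal Y$. The step I expect to be the main obstacle is the closure of $A_{\alpha_{M,N},\sigma}$ under each $\tau_n'$: the delicate points are selecting the auxiliary ordinal $\eta \in \lim(C_{\alpha_{M,N}}) \cap (M \cap N)$ past a $\tau_n'$-closed ordinal of $M \cap N$, so that the relevant Skolem value is captured below $\eta$, and then invoking Lemma 9.6 to push $f_{n,\eta}(\beta)$ strictly below $\sigma$.
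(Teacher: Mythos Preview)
Your argument is essentially the paper's argument and is correct up through closure of $A = A_{\alpha_{M,N},\sigma}$ under $H^*$, the inclusion $\sigma \subseteq A$, $\sup(Q_0) = \alpha_{M,N}$, and (3). The problem is the step $A \cap \kappa \subseteq \sigma$, which you dismiss with ``by the inclusion $\sigma \subseteq A$ together with Notation 7.4, exactly as in the corresponding step of Lemma 7.29.'' This analogy fails. In Lemma 7.29 the bound $A_{\sup(M),\sup(M\cap\kappa)} \cap \kappa \subseteq \sup(M\cap\kappa)$ works because $A$ is a union of pieces $A_{\delta,\beta}$ with \emph{both} parameters $\delta,\beta$ in $M$, so each piece lies in $M$ and is bounded below $\sup(M\cap\kappa)$. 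Here $\sigma \in R_N(M)$ strictly exceeds $\sup(M\cap N\cap\kappa)$, so the pieces $A_{\eta,\beta}$ with $\beta<\sigma$ need not lie in $M\cap N$, and your reduction to $A\cap\kappa = A_{\kappa,\sigma}$ gives nothing: there is no general reason $A_{\kappa,\sigma} \subseteq \sigma$, and the inclusion $\sigma \subseteq A_{\kappa,\sigma}$ plus $|A_{\kappa,\sigma}| \le |\sigma|$ does not force equality.

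The paper handles this by an induction on the (finite) set $R_M(N) \cup R_N(M)$, proving $A_{\alpha_{M,N},\zeta} \cap \kappa \subseteq \zeta$ for every remainder point $\zeta$. The base case ($\zeta$ least, say $\zeta \in R_N(M)$ with $M \le N$ and $\zeta = \min((M\cap\kappa)\setminus\beta_{M,N})$) reduces by elementarity in $M$ to showing $M \cap A_{\eta,\zeta} \cap \kappa \subseteq \zeta$ for $\eta \in \lim(C_{\alpha_{M,N}}) \cap (M\cap N)$: any $\beta' \in M \cap A_{\eta,\zeta}$ lies in some $A_{\eta,\gamma}$ with $\gamma \in M\cap\zeta = M\cap\beta_{M,N} \subseteq N$, whence Lemma 8.6 gives $\beta' \in N$, so $\beta' \in M\cap N\cap\kappa \subseteq \beta_{M,N} \le \zeta$. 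The inductive step uses Lemma 2.2(3) to write $\zeta = \min((M\cap\kappa)\setminus\pi)$ for the preceding remainder point $\pi$ and reduces to the inductive hypothesis for $\pi$. This induction is the missing idea; once you have $Q_0\cap\kappa = \sigma$, your verification of $Q_0 \in \mathcal Y$ via $\lim(C_{\alpha_{M,N}}) \cap (M\cap N)$ is fine.
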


\begin{proof}
Recall that $Q_0 = Q_0(M,N,\sigma) = Sk(A_{\alpha_{M,N},\sigma})$. 
We begin by proving that 
$A_{\alpha_{M,N},\sigma}$ is closed under $H^*$. 
Let $m < \omega$, and let $k$ be the arity of $\tau_m'$. 
Let $\alpha_0,\ldots,\alpha_{k-1} \in A_{\alpha_{M,N},\sigma}$, and 
we will show that $\tau_m'(\alpha_0,\ldots,\alpha_{k-1}) 
\in A_{\alpha_{M,N},\sigma}$. 
Since $\sigma$ is a limit ordinal, we can fix $\beta < \sigma$ 
such that $\alpha_0,\ldots,\alpha_{k-1} \in A_{\alpha_{M,N},\beta}$.

By the elementarity of $M \cap N$, 
fix $\delta \in M \cap N \cap \kappa^+$ strictly greater than 
$\alpha_0,\ldots,\alpha_{k-1}$ such that 
$\delta$ is closed under $\tau_m'$. 
Now fix $\eta \in \lim(C_{\alpha_{M,N}}) \cap (M \cap N)$ strictly greater 
than $\delta$ and $\kappa$. 
Since $\delta$ is closed under $\tau_m'$, 
$$
\tau_m'(\alpha_0,\ldots,\alpha_{k-1}) < \delta < \eta.
$$
As $\eta \in \lim(C_{\alpha_{M,N}})$, 
$$
\alpha_0,\ldots,\alpha_{k-1} \in A_{\alpha_{M,N},\beta} \cap \eta = 
A_{\eta,\beta}.
$$
So 
$$
\tau_m'(\alpha_0,\ldots,\alpha_{k-1}) \in \eta \cap \tau_m'[A_{\eta,\beta}^k].
$$
As $\beta < \sigma$ and $\sigma \in R_N(M)$, by Lemma 9.6 there is 
$\beta' < \sigma$ such that 
$$
\eta \cap \tau_m'[A_{\eta,\beta}^k] \subseteq A_{\eta,\beta'}.
$$
Then 
$$
\tau_m'(\alpha_0,\ldots,\alpha_{k-1}) \in A_{\eta,\beta'} 
\subseteq A_{\eta,\sigma} = 
A_{\alpha_{M,N},\sigma} \cap \eta \subseteq A_{\alpha_{M,N},\sigma}.
$$

This proves that $A_{\alpha_{M,N},\sigma}$ is closed under $H^*$. 
In particular, $Q_0 \cap \kappa^+ = A_{\alpha_{M,N},\sigma}$. 
Since $M \cap N \cap \kappa^+ 
\subseteq A_{\alpha_{M,N},\sup(M \cap N \cap \kappa)}$ 
by Lemma 8.9, and $\sup(M \cap N \cap \kappa) < \sigma$, 
it follows that $M \cap N \cap \kappa^+ \subseteq A_{\alpha_{M,N},\sigma} \subseteq Q_0$. 
In particular, since $\sup(M \cap N) = \alpha_{M,N}$, 
it follows that $\sup(Q_0) = \alpha_{M,N}$.

It remains to show that $Q_0 \in \mathcal Y$ and 
$Q_0 \cap \kappa = \sigma$. 
For the first statement, once we know that $Q_0 \cap \kappa = \sigma$, 
it will suffice to show that 
$\lim(C_{\alpha_{M,N}}) \cap Q_0$ is cofinal in $\alpha_{M,N}$. 
But since $M \cap N \in \mathcal X$, 
$\lim(C_{\alpha_{M,N}}) \cap (M \cap N)$ is cofinal in $\sup(M \cap N) = \alpha_{M,N}$. 
And as $M \cap N \cap \kappa^+ \subseteq Q_0$, it follows that 
$\lim(C_{\alpha_{M,N}}) \cap Q_0$ is cofinal in $\alpha_{M,N}$.

Now we prove that $Q_0 \cap \kappa = \sigma$. 
First we will show that $Q_0 \cap \kappa \subseteq \sigma$. 
More generally, we will prove by induction on remainder points that 
$$
\forall \zeta \in R_M(N) \cup R_N(M), \ A_{\alpha_{M,N},\zeta} \cap \kappa 
\subseteq \zeta.
$$

Consider the first remainder point $\zeta$. 
Without loss of generality, assume that $\zeta \in R_N(M)$. 
Then $M \le N$ and 
$\zeta = \min((M \cap \kappa) \setminus \beta_{M,N})$. 
Let $\beta \in A_{\alpha_{M,N},\zeta} \cap \kappa$, and we will show 
that $\beta < \zeta$. 
Fix $\eta \in \lim(C_{\alpha_{M,N}}) \cap (M \cap N)$ with 
$\beta < \eta$. 
Then $\beta \in A_{\alpha_{M,N},\zeta} \cap \eta = A_{\eta,\zeta}$. 
To show that $\beta < \zeta$, it suffices to show that 
$A_{\eta,\zeta} \cap \kappa \subseteq \zeta$. 
Since $\eta$ and $\zeta$ are in $M$, it is enough to show that $M$ 
models that 
$A_{\eta,\zeta} \cap \kappa \subseteq \zeta$.

Let $\beta' \in M \cap A_{\eta,\zeta} \cap \kappa$, and we will 
show that $\beta' < \zeta$. 
Since $\zeta$ is a limit ordinal, by elementarity we can fix 
$\gamma \in M \cap \zeta$ with $\beta' \in A_{\eta,\gamma}$. 
Since $\zeta = \min((M \cap \kappa) \setminus \beta_{M,N})$ and $M \le N$, 
$\gamma \in M \cap \beta_{M,N} \subseteq N$. 
As $M \le N$, $\beta' \in M \cap A_{\eta,\gamma}$, $\eta \in N$, and 
$\gamma \in M \cap N \cap \kappa$, it follows that 
$\beta' \in N$ by Lemma 8.6. 
Hence 
$$
\beta' \in M \cap N \cap \kappa \subseteq \beta_{M,N} \le \zeta.
$$
So $\beta' < \zeta$.

For the inductive step, let $\zeta$ be a remainder point which is 
not the first remainder point. 
Without loss of generality, assume that $\zeta \in R_N(M)$. 
Then by Lemma 2.2(3), 
there is $\pi \in R_M(N)$ such that 
$\pi = \min((N \cap \kappa) \setminus \sup(M \cap \zeta))$ and 
$\zeta = \min((M \cap \kappa) \setminus \pi)$. 
Let $\beta \in A_{\alpha_{M,N},\zeta} \cap \kappa$, and we will show 
that $\beta < \zeta$. 
Fix $\eta \in \lim(C_{\alpha_{M,N}}) \cap (M \cap N)$ with $\beta < \eta$. 
Then 
$$
\beta \in A_{\alpha_{M,N},\zeta} \cap \eta = A_{\eta,\zeta}.
$$

To show that $\beta < \zeta$, it suffices to show that 
$A_{\eta,\zeta} \cap \kappa \subseteq \zeta$. 
Since $\eta$ and $\zeta$ are in $M$, by elementarity it suffices 
to show that $M$ models that 
$A_{\eta,\zeta} \cap \kappa \subseteq \zeta$. 
So let $\gamma \in M \cap A_{\eta,\zeta} \cap \kappa$, and we 
will show that $\gamma < \zeta$. 
Since $\zeta$ is a limit ordinal, by elementarity we can fix 
$\alpha \in M \cap \zeta$ such that $\gamma \in A_{\eta,\alpha}$. 
Then $\alpha < \sup(M \cap \zeta) < \pi$. 
So $\gamma \in A_{\eta,\alpha} \subseteq A_{\eta,\pi}$. 
Since $\eta \in M \cap N$ and $\pi \in R_M(N)$, 
the inductive hypothesis implies that 
$$
A_{\eta,\pi} \cap \kappa = A_{\alpha_{M,N},\pi} \cap \kappa \subseteq \pi.
$$
So $\gamma < \pi < \zeta$.

This completes the induction. 
In particular, 
$$
Q_0 \cap \kappa = A_{\alpha_{M,N},\sigma} \cap \kappa 
\subseteq \sigma.
$$ 
Conversely, let $\beta < \sigma$, and we will show that $\beta \in Q_0$. 
Fix $\eta \in \lim(C_{\alpha_{M,N}}) \cap (M \cap N)$ with $\kappa \le \eta$. 
Then by Lemma 9.6, there is $\beta' < \sigma$ such that 
$\beta \in A_{\eta,\beta'}$. 
So 
$$
\beta \in A_{\eta,\beta'} = A_{\alpha_{M,N},\beta'} \cap \eta 
\subseteq A_{\alpha_{M,N},\beta'} \subseteq A_{\alpha_{M,N},\sigma} \subseteq Q_0.
$$
\end{proof}

\begin{lemma}
Let $M$ and $N$ be in $\mathcal X$, where $\{ M, N \}$ is adequate and 
$N$ is simple. 
Suppose that $\sigma \in R_N(M)$, $\zeta \in R_M(N)$, and 
$\zeta = \min((N \cap \kappa) \setminus \sigma)$. 
Let $\eta := \min((N \cap \kappa^+) \setminus \alpha_{M,N})$. 
Let $Q_0 := Q_0(M,N,\sigma)$ and $Q := Q(N,M,\zeta)$. 
Then:
\begin{enumerate}
\item $Q \in N \cap \mathcal Y$;

\item $Q \cap \kappa = \zeta$, 
$Q \cap \kappa^+ = A_{\eta,\zeta}$, and 
$\sup(Q) = \eta$;

\item $N \cap Q \cap \kappa^+ = N \cap Q_0 \cap \kappa^+$;

\item $M \cap N \cap \kappa^+ \subseteq Q$.
\end{enumerate}
\end{lemma}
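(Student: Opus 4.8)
The plan is to follow the proof of Lemma 9.5 almost verbatim, weakening its hypothesis $N \le M$ to the assumption that $\{M,N\}$ is adequate, and using the canonical model $Q_0 = Q_0(M,N,\sigma)$ from Lemma 9.8 as the object in $N$ against which $A_{\eta,\zeta}$ is compared. First I would collect preliminaries: $\eta = \min((N \cap \kappa^+) \setminus \alpha_{M,N})$ exists by Lemma 8.12 (since $\zeta \in R_M(N)$ shows $R_M(N) \neq \emptyset$ and $N$ is simple), so $\alpha_{M,N} \in \lim(N)$ and $\alpha_{M,N} < \sup(N)$; moreover $\kappa < \alpha_{M,N}$ since $\kappa \in M \cap N$. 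By Lemma 2.2(1), $\sigma \notin N$ and $\zeta \notin M$, and since $N \cap \kappa$ and $M \cap \kappa$ are closed under successor while $\beta_{M,N}$ is a limit, both $\sigma$ and $\zeta$ are limit ordinals; also $\sigma < \zeta$ and $N \cap [\sigma,\zeta) = \emptyset$ because $\zeta = \min((N\cap\kappa) \setminus \sigma)$. By Lemma 7.12 applied to $\alpha_{M,N} \in \cl(N)$, $C_{\alpha_{M,N}} = C_\eta \cap \alpha_{M,N}$ and, for all $\xi < \kappa$, $A_{\alpha_{M,N},\xi} = A_{\eta,\xi} \cap \alpha_{M,N}$ and $N \cap A_{\alpha_{M,N},\xi} = N \cap A_{\eta,\xi}$. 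Finally, by Lemma 9.8, $A_{\alpha_{M,N},\sigma}$ is closed under $H^*$, $Q_0 \cap \kappa^+ = A_{\alpha_{M,N},\sigma}$, $Q_0 \cap \kappa = \sigma$, $\sup(Q_0) = \alpha_{M,N}$, and $M \cap N \cap \kappa^+ \subseteq Q_0$.

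The crux is the identity $N \cap A_{\eta,\zeta} = N \cap A_{\alpha_{M,N},\sigma}$. For $\supseteq$, since $\sigma < \zeta$ we have $A_{\alpha_{M,N},\sigma} \subseteq A_{\alpha_{M,N},\zeta} = A_{\eta,\zeta} \cap \alpha_{M,N} \subseteq A_{\eta,\zeta}$. For $\subseteq$, let $\alpha \in N \cap A_{\eta,\zeta}$. Since $\eta, \zeta \in N$ and $\zeta$ is a limit ordinal, elementarity (together with the continuity of $\beta \mapsto A_{\eta,\beta}$) gives $\gamma \in N \cap \zeta$ with $\alpha \in A_{\eta,\gamma}$; then $\gamma < \sigma$, as $N \cap [\sigma,\zeta) = \emptyset$. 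The new point here is that $\alpha < \alpha_{M,N}$: indeed $\alpha \in A_{\eta,\gamma} \subseteq \eta$, so if $\alpha \geq \alpha_{M,N}$ then $\alpha$ would be an element of $(N \cap \kappa^+) \setminus \alpha_{M,N}$ lying strictly below its minimum $\eta$, which is absurd. Hence $\alpha \in A_{\eta,\gamma} \cap \alpha_{M,N} = A_{\alpha_{M,N},\gamma} \subseteq A_{\alpha_{M,N},\sigma}$. This proves the identity. Since $A_{\alpha_{M,N},\sigma}$ is closed under $H^*$ and $A_{\eta,\zeta} \in N$, Lemma 7.10 gives that $A_{\eta,\zeta}$ is closed under $H^*$; therefore $Q = Sk(A_{\eta,\zeta}) = f^*[A_{\eta,\zeta}] \in N$ and $Q \cap \kappa^+ = A_{\eta,\zeta}$. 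Statement (3) is now immediate: $N \cap Q \cap \kappa^+ = N \cap A_{\eta,\zeta} = N \cap A_{\alpha_{M,N},\sigma} = N \cap Q_0 \cap \kappa^+$.

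It remains to handle (1), (2), and (4). For (2), $Q \cap \kappa^+ = A_{\eta,\zeta}$ is already shown, $\sup(Q) = \eta$ will follow from the cofinality argument below, and $Q \cap \kappa = A_{\eta,\zeta} \cap \kappa = A_{\eta,\zeta} \cap \alpha_{M,N} \cap \kappa = A_{\alpha_{M,N},\zeta} \cap \kappa = \zeta$, where the last equality is Lemma 9.8 applied with $M$ and $N$ interchanged (using $\zeta \in R_M(N)$). For (4), $M \cap N \cap \kappa^+ \subseteq Q_0 \cap \kappa^+ = A_{\alpha_{M,N},\sigma}$ by Lemma 9.8, and $M \cap N \cap \kappa^+ \subseteq N$, so $M \cap N \cap \kappa^+ \subseteq N \cap A_{\alpha_{M,N},\sigma} = N \cap Q \cap \kappa^+ \subseteq Q$. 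For the remaining parts of (1) and (2), one argues as in Lemma 9.5 that $N$ models both ``$\sup(Q) = \eta$'' and ``$\lim(C_\eta) \cap Q$ is cofinal in $\eta$'': given $\xi \in N \cap \eta$ we have $\xi < \alpha_{M,N} = \sup(M \cap N \cap \kappa^+)$, and since $M \cap N \in \mathcal X$ by Lemma 7.16 with $\sup(M \cap N) = \alpha_{M,N}$, we may choose $\sigma' \in \lim(C_{\alpha_{M,N}}) \cap (M \cap N)$ above $\xi$; then $\sigma' \in \lim(C_\eta)$ because $C_{\alpha_{M,N}} = C_\eta \cap \alpha_{M,N}$, and $\sigma' \in Q$ by (4). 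Since $Q \in N$, these statements transfer up to $\mathcal A$, so $\sup(Q) = \eta$ and $\lim(C_{\sup(Q)}) \cap Q$ is cofinal in $\sup(Q)$; together with $Q \cap \kappa = \zeta \in \kappa$, $|Q| = |A_{\eta,\zeta}| < \kappa$, and $Q \prec \mathcal A$, we conclude $Q \in N \cap \mathcal Y$, which is (1). I do not anticipate any step being a serious obstacle, since Lemmas 9.5 and 9.8 furnish the templates; the one genuinely new ingredient is the observation that for $\alpha \in N \cap A_{\eta,\gamma}$ with $\gamma < \zeta$ one automatically gets $\alpha < \alpha_{M,N}$ from the minimality of $\eta$, which replaces the use of $N \le M$ and Lemma 8.8 in the proof of Lemma 9.5, and one must remember to apply Lemma 9.8 with $M$ and $N$ swapped to compute $Q \cap \kappa$.
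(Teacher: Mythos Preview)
Your argument is correct, but it takes a longer route than the paper. The paper observes that $Q(N,M,\zeta)$ coincides with $Q(N,Q_0)$ from Notation 9.1: since $M \cap N \cap \kappa^+ \subseteq N \cap Q_0$ and $\sup(Q_0) = \alpha_{M,N}$ (Lemma 9.7), one has $\sup(N \cap Q_0) = \alpha_{M,N}$, so the parameters $\eta_N$ and $\beta_N$ computed for $P = Q_0$ are exactly $\eta$ and $\zeta$. Lemma 9.2, already proved for an arbitrary $P \in \mathcal Y$ with $P \cap \kappa < \sup(N \cap \kappa)$, then delivers (1), (2), and (3) immediately, and (4) follows from (3) together with Lemma 9.7(3). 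You instead unroll the proof of Lemma 9.2 by hand for this special case, establishing the identity $N \cap A_{\eta,\zeta} = N \cap A_{\alpha_{M,N},\sigma}$ directly and then rebuilding the remaining conclusions from scratch, including a separate appeal to Lemma 9.7 with $M$ and $N$ swapped to get $Q \cap \kappa = \zeta$. Both approaches work; the paper's factorization is shorter and exhibits Lemmas 9.4 and 9.8 as two instances of the single construction $Q(N,P)$, while your route has the merit of making the key set-theoretic identity explicit. (Also, what you cite as Lemmas 9.5 and 9.8 are Lemmas 9.4 and 9.7 in the paper's numbering.)
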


\begin{proof}
We will apply Lemma 9.2 to the models $N$ and $Q_0$. 
Let us check that the assumptions of this lemma hold, using Lemma 9.7. 
We know that $N \in \mathcal X$ is simple, 
$Q_0 \in \mathcal Y$, 
and 
$$
Q_0 \cap \kappa = \sigma < \zeta < \sup(N \cap \kappa).
$$ 
Also, $\sup(N \cap Q_0) = \alpha_{M,N}$, since 
$\sup(Q_0) = \alpha_{M,N}$, $\sup(M \cap N) = \alpha_{M,N}$, 
and $M \cap N \cap \kappa^+ \subseteq N \cap Q_0$. 
Moreover, 
$$
\min((N \cap \kappa) \setminus (Q_0 \cap \kappa)) = 
\min((N \cap \kappa) \setminus \sigma) = \zeta,
$$
and 
$$
\min((N \cap \kappa^+) \setminus \sup(N \cap Q_0)) = 
\min((N \cap \kappa^+) \setminus \alpha_{M,N}) = \eta.
$$
By Notations 9.1 and 9.3, 
$$
Q(N,Q_0) = Sk(A_{\eta,\zeta}) = Q(N,M,\zeta) = Q.
$$

By Lemma 9.2, we have that:
\begin{enumerate}
\item[(a)] $Q \in N \cap \mathcal Y$;
\item[(b)] $Q \cap \kappa = \zeta$, $Q \cap \kappa^+ = A_{\eta,\zeta}$, 
and $\sup(Q) = \eta$;
\item[(c)] $N \cap Q \cap \kappa^+ = N \cap Q_0 \cap \kappa^+$.
\end{enumerate}
This proves (1), (2), and (3). 
By Lemma 9.7(3), $M \cap N \cap \kappa^+ \subseteq Q_0$. 
So $M \cap N \cap \kappa^+ \subseteq N \cap Q_0 \cap \kappa^+ = 
N \cap Q \cap \kappa^+ \subseteq Q$, which proves (4).
\end{proof}

The next lemma summarizes Lemmas 9.4 and 9.8.

\begin{lemma}
Let $M$ and $N$ be in $\mathcal X$ such that $\{ M, N \}$ is adequate 
and $N$ is simple. 
Let $\zeta \in R_M(N)$, 
$\eta := \min((N \cap \kappa^+) \setminus \alpha_{M,N})$, 
and $Q := Q(N,M,\zeta)$. 
Then:
\begin{enumerate}
\item $Q \in N \cap \mathcal Y$;
\item $Q \cap \kappa = \zeta$, $Q \cap \kappa^+ = A_{\eta,\zeta}$, 
and $\sup(Q) = \eta$;
\item $M \cap N \cap \kappa^+ \subseteq Q$;
\item if $\zeta = \min((N \cap \kappa) \setminus \beta_{M,N})$, then 
$N \cap Q \cap \kappa^+ = M \cap N \cap \kappa^+$;
\item if $\zeta = \min((N \cap \kappa) \setminus \sigma)$, where 
$\sigma \in R_N(M)$, then $N \cap Q \cap \kappa^+ = 
N \cap Q_0(M,N,\sigma) \cap \kappa^+$.
\end{enumerate}
\end{lemma}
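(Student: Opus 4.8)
The plan is to deduce the statement directly from Lemmas 9.4 and 9.8 by a case analysis on $\zeta \in R_M(N)$, according to whether $\zeta$ equals $\zeta_0 := \min((N \cap \kappa) \setminus \beta_{M,N})$. Each of Lemmas 9.4 and 9.8 already yields items (1) and (2), together with the conclusion of (4) or (5) respectively; for item (3), Lemma 9.8 gives it outright, while in the situation of Lemma 9.4 one derives $M \cap N \cap \kappa^+ \subseteq Q$ from the equality $N \cap Q \cap \kappa^+ = M \cap N \cap \kappa^+$. So it suffices to show that the case $\zeta = \zeta_0$ verifies the hypotheses of Lemma 9.4 and that the case $\zeta \ne \zeta_0$ verifies those of Lemma 9.8.

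In the case $\zeta = \zeta_0$, I would first show $N \le M$. If instead $M < N$, then $\beta_{M,N} \in N$ by Lemma 1.19(4), so $\zeta_0 = \beta_{M,N}$; but by Definition 2.1 the membership $\zeta_0 \in R_M(N)$ would then require $\beta_{M,N} = \min((N \cap \kappa) \setminus \gamma)$ for some $\gamma \in (M \cap \kappa) \setminus \beta_{M,N}$, forcing $\gamma = \beta_{M,N} \in M$, and with $\beta_{M,N} \in N$ this contradicts Lemma 1.15. Hence $N \le M$, so Lemma 9.4 applies to $\zeta$ and delivers (1), (2), (3), (4).

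In the case $\zeta \ne \zeta_0$, I would argue that $\zeta > \min(R_M(N) \cup R_N(M))$, so that Lemma 2.2(3) produces $\sigma := \min((M \cap \kappa) \setminus \sup(N \cap \zeta)) \in R_N(M)$ with $\zeta = \min((N \cap \kappa) \setminus \sigma)$; Lemma 9.8 then applies and delivers (1), (2), (3), (5). For the inequality: every element of $R_M(N)$ has the form $\min((N \cap \kappa) \setminus \gamma)$ with $\gamma \ge \beta_{M,N}$, hence is $\ge \zeta_0$. When $N \le M$ we moreover have $\zeta_0 \in R_M(N)$ (Definition 2.1(1) if $M \sim N$, and $\beta_{M,N} \in M$ via Lemma 1.19(4) if $N < M$), so $\zeta_0 = \min R_M(N)$ and $\zeta \ne \zeta_0$ gives $\zeta > \zeta_0 \ge \min(R_M(N) \cup R_N(M))$. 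When $M < N$, the ordinal $\sigma_0 := \min((M \cap \kappa) \setminus \beta_{M,N})$ lies in $R_N(M)$ (Definition 2.1(2), with $\gamma = \beta_{M,N} \in N$), and, since $\beta_{M,N} \notin M$ forces $\sigma_0 > \beta_{M,N}$ and hence $\sigma_0 \notin N$ by Lemma 1.15, every element of $R_M(N)$ is $> \sigma_0$; thus $\zeta > \sigma_0 \ge \min(R_M(N) \cup R_N(M))$.

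The step needing the most care is this case analysis: checking that $\zeta = \zeta_0$ really forces $N \le M$, and that $\zeta \ne \zeta_0$ really forces $\zeta$ to strictly exceed the minimal remainder point so that Lemma 2.2(3) can be invoked. Once that is in place, the remainder is bookkeeping — matching the parameters $\eta$, $Q = Q(N,M,\zeta)$, $Q_0 = Q_0(M,N,\sigma)$ appearing here with those in Notations 9.3 and 9.5, and transcribing the conclusions of Lemmas 9.4 and 9.8.
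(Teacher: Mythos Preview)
Your proposal is correct and follows essentially the same approach as the paper, which simply states ``Immediate from Lemmas 9.4 and 9.8.'' You have supplied the details the paper leaves implicit: verifying that the hypothesis $\zeta = \min((N\cap\kappa)\setminus\beta_{M,N})$ forces $N \le M$ (so Lemma~9.4 applies), and that otherwise $\zeta$ strictly exceeds the least remainder point (so Lemma~2.2(3) produces the $\sigma$ needed for Lemma~9.8).
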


\begin{proof}
Immediate from Lemmas 9.4 and 9.8.
\end{proof}

Let us derive some additional information about the model 
$Q(N,M,\zeta)$.

\begin{lemma}
Let $M$ and $N$ be in $\mathcal X$ such that $\{ M, N \}$ is adequate 
and $N$ is simple. 
Let $\zeta \in R_M(N)$, 
$\eta := \min((N \cap \kappa^+) \setminus \alpha_{M,N})$, 
and $Q := Q(N,M,\zeta)$. 
Then:
\begin{enumerate}
\item if $P \in M \cap \mathcal Y$ and 
$\sup(N \cap \zeta) < P \cap \kappa < \zeta$, then 
$N \cap P \cap \alpha_{M,N} \subseteq Q$;

\item if $N \le M$, $P \in M \cap \mathcal Y$, and 
$P \cap \kappa < \sup(M \cap N \cap \kappa)$, then 
$N \cap P \cap \kappa^+ \subseteq Q$;

\item if $M < N$ and $P \in M \cap N \cap \mathcal Y$, 
then $N \cap P \cap \kappa^+ \subseteq Q$.
\end{enumerate}
\end{lemma}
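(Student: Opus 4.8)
The plan is to read all three parts off the summary Lemma~9.10 together with the transfer lemmas of Section~8, after recording two preliminary facts. First, since $\zeta \in R_M(N)$ the set $R_M(N)$ is nonempty, so Lemma~8.12 (using that $N$ is simple) gives $(N \cap \kappa^+) \setminus \alpha_{M,N} \ne \emptyset$; hence $\eta$ is well-defined and $\alpha_{M,N} \le \eta < \sup(N)$. The ordinal $\alpha_{M,N} = \sup(M \cap N)$ lies in $\cl(N)$, because $M \cap N \in \mathcal X$ by Lemma~7.16 and, being an elementary substructure of $\mathcal A$, has no maximal ordinal, so $\alpha_{M,N} \in \lim(M \cap N) \subseteq \lim(N)$. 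Therefore Lemma~7.12, applied to the ordinal $\alpha_{M,N}$ (whose associated ordinal $(\alpha_{M,N})_N$ is precisely our $\eta$), yields $A_{\alpha_{M,N},\xi} = A_{\eta,\xi} \cap \alpha_{M,N} \subseteq A_{\eta,\xi}$ for every $\xi < \kappa$. Second, $M \cap N \cap \kappa \subseteq \beta_{M,N} \le \zeta$ by Lemma~1.15 and the definition of a remainder point, and combining this with Lemma~9.10 we have $Q \in N \cap \mathcal Y$, $Q \cap \kappa^+ = A_{\eta,\zeta}$, and $M \cap N \cap \kappa^+ \subseteq Q$.

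For (1), I would first observe that $M \cap N \cap \kappa \subseteq N \cap \zeta$, so $\sup(M \cap N \cap \kappa) \le \sup(N \cap \zeta) < P \cap \kappa$. Then Lemma~8.10, applied with $P$ in the role of its uncountable model, gives $N \cap P \cap \alpha_{M,N} \subseteq A_{\alpha_{M,N},P \cap \kappa}$. Since $P \cap \kappa < \zeta$ by hypothesis, monotonicity of $\vec A$ in the second coordinate gives $A_{\alpha_{M,N},P \cap \kappa} \subseteq A_{\alpha_{M,N},\zeta}$, and by the first preliminary fact this is contained in $A_{\eta,\zeta} = Q \cap \kappa^+ \subseteq Q$.

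For (2), the point is that $N \le M$ makes Lemma~8.7 applicable with the roles of $M$ and $N$ interchanged: from $N \le M$, $P \in M \cap \mathcal Y$, and $P \cap \kappa < \sup(M \cap N \cap \kappa)$ it yields $N \cap P \cap \kappa^+ \subseteq M$, hence $N \cap P \cap \kappa^+ \subseteq M \cap N \cap \kappa^+ \subseteq Q$ by Lemma~9.10(3). For (3), since $P \in M \cap N$ we get $\sup(P) \in M \cap N \cap \kappa^+$ and $P \cap \kappa \in M \cap N \cap \kappa$; as $M \cap N$ has no maximal ordinal, $\sup(P) < \alpha_{M,N}$, and by Lemma~1.15, $P \cap \kappa < \beta_{M,N} \le \zeta$. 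By Lemma~8.9, $\sup(P) \in A_{\alpha_{M,N},\sup(M \cap N \cap \kappa)} \subseteq A_{\alpha_{M,N},\zeta} \subseteq A_{\eta,\zeta}$. Since $\sup(P)$ is now a member of $A_{\eta,\zeta}$, the coherence property Notation~7.4(4) gives $A_{\sup(P),\zeta} = A_{\eta,\zeta} \cap \sup(P)$; combining this with $P \cap \kappa < \zeta$ and the identity $P \cap \kappa^+ = A_{\sup(P),P \cap \kappa}$ from Lemma~7.26 yields $P \cap \kappa^+ \subseteq A_{\sup(P),\zeta} \subseteq A_{\eta,\zeta} = Q \cap \kappa^+ \subseteq Q$, and a fortiori $N \cap P \cap \kappa^+ \subseteq Q$.

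The work here is entirely bookkeeping, with no genuine obstacle. The two points that need care are: that $\alpha_{M,N}$ — which need not belong to $N$ — is nonetheless a limit point of $C_\eta$, so that $A_{\alpha_{M,N},\cdot}$ is an initial segment of $A_{\eta,\cdot}$, which is exactly what Lemma~7.12 provides; and that $\zeta$ dominates $\sup(M \cap N \cap \kappa)$ and, in parts~(1) and~(3), $P \cap \kappa$, so that monotonicity of $\vec A$ in the second coordinate places the relevant sets inside $A_{\cdot,\zeta}$. Both of these follow at once from $\zeta \in R_M(N)$ together with Lemma~1.15.
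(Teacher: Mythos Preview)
Your argument is correct and, for parts (1) and (2), essentially identical to the paper's; for part (3) you reach $P \cap \kappa^+ \subseteq Q$ by coherence of $\vec A$ (showing $\sup(P) \in A_{\eta,\zeta}$ and then $A_{\sup(P),P\cap\kappa} \subseteq A_{\sup(P),\zeta} \subseteq A_{\eta,\zeta}$), whereas the paper instead observes that $\sup(P), P\cap\kappa \in M \cap N \cap \kappa^+ \subseteq Q$ and then uses elementarity of $Q$ to conclude $P \cap \kappa^+ = A_{\sup(P),P\cap\kappa} \in Q$, hence $\subseteq Q$ --- a slightly shorter route, though yours is equally valid. One correction: every time you cite ``Lemma~9.10'' (the ``summary lemma'' giving $Q \in N \cap \mathcal Y$, $Q \cap \kappa^+ = A_{\eta,\zeta}$, and $M \cap N \cap \kappa^+ \subseteq Q$) you mean Lemma~9.9; Lemma~9.10 is the statement you are proving, so as written the citations are circular.
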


\begin{proof}
Note that since $\alpha_{M,N}$ is a limit point of $N$, 
for all $\xi < \kappa$, 
$A_{\alpha_{M,N},\xi} = A_{\eta,\xi} \cap \alpha_{M,N}$ by 
Lemma 7.12.

(1) Suppose that $P \in M \cap \mathcal Y$ and 
$\sup(N \cap \zeta) < P \cap \kappa < \zeta$. 
Since $\beta_{M,N} \le \zeta$, 
$$
M \cap N \cap \kappa = M \cap N \cap \beta_{M,N} 
\subseteq N \cap \zeta.
$$
So $\sup(M \cap N \cap \kappa) \le \sup(N \cap \zeta) < P \cap \kappa$. 
By Lemma 8.10, 
$$
P \cap N \cap \alpha_{M,N} \subseteq A_{\alpha_{M,N},P \cap \kappa} 
\subseteq A_{\alpha_{M,N},\zeta} = 
A_{\eta,\zeta} \cap \alpha_{M,N} \subseteq Q.
$$

(2) If $N \le M$, $P \in M \cap \mathcal Y$, and 
$P \cap \kappa < \sup(M \cap N \cap \kappa)$, then 
$N \cap P \cap \kappa^+ \subseteq M$ by Lemma 8.7. 
Hence 
$$
N \cap P \cap \kappa^+ \subseteq M \cap N \cap \kappa^+ 
\subseteq Q
$$
by Lemma 9.9(3).

(3) Suppose that $M < N$ and $P \in M \cap N \cap \mathcal Y$. 
Then $\sup(P)$ and $P \cap \kappa$ are in $M \cap N \cap \kappa^+$ 
by elementarity, 
and hence in $Q$ by Lemma 9.9(3). 
So $A_{\sup(P),P \cap \kappa} = P \cap \kappa^+ \in Q$ 
by elementarity. 
So $P \cap \kappa^+ \subseteq Q$. 
In particular, $N \cap P \cap \kappa^+ \subseteq Q$.
\end{proof}

\bigskip

Finally, we consider canonical models determined by ordinals in 
$R^+_N(M)$.

\begin{notation}
Let $M$ and $N$ be in $\mathcal X$, where $\{ M, N \}$ is adequate 
and $N$ is simple. 
Let $\zeta \in R_M(N)$ and $\sigma \in R^+_N(M)$. 
Let $X$ be any nonempty set of $P \in M \cap \mathcal Y$ such that 
$\sup(N \cap \zeta) < P \cap \kappa < \zeta$ and 
$P \cap N \cap [\sup(M \cap \sigma),\sigma) \ne \emptyset$. 
We let $P_X$ denote the set 
$Sk(\bigcup \{ P \cap \sigma : P \in X \})$ and 
$\beta_X$ denote the ordinal $P_X \cap \kappa$.
\end{notation}

\begin{lemma}
Under the assumptions of Notation 9.11, the following statements hold:
\begin{enumerate}
\item $P_X \in \mathcal Y$;
\item $\beta_X = \sup\{ P \cap \kappa : P \in X \} < \zeta$;
\item $P_X \cap \kappa^+ = \bigcup \{ P \cap \sigma : P \in X \}$;
\item $\sup(P_X) = \sigma$.
\end{enumerate}
\end{lemma}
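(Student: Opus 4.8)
The plan is to identify $P_X$ as the Skolem hull of a single set of the form $A_{\sigma,\delta}$ drawn from the coherent filtration $\vec A$, after which all four clauses drop out.

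The key preliminary step — and the one I expect to be the main obstacle — is to show that for every $P \in X$ the ordinal $\sigma$ is a limit point of $P$. Here is where the somewhat opaque hypothesis from Notation 9.11 that $P \cap N \cap [\sup(M \cap \sigma),\sigma) \ne \emptyset$ gets used: it furnishes an ordinal $\pi$ witnessing that $(P \cap \kappa^+) \cap [\sup(M \cap \sigma),\sigma) \ne \emptyset$, and since $P \in M$ we have $P \cap \kappa^+ \in M$ by elementarity. Because $\sigma \in R^+_N(M)$ we know $\sigma \in M \cap \kappa^+$ and $\cf(\sigma) > \omega$ by Lemma 8.15, and $\sigma$ is closed under $H^*$ by Lemma 8.16. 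Applying Lemma 7.30 to the model $M$ then gives that $\sigma$ is a limit point of $P$; in particular $\sigma \in \cl(P)$, so $P \cap \sigma = A_{\sigma,P \cap \kappa}$ by Lemma 7.27, and $\sup(P \cap \sigma) = \sigma$.

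Next I would set $\delta_X := \sup\{P \cap \kappa : P \in X\}$. Each $P \cap \kappa$ is a limit ordinal lying in $M \cap \kappa$, so $\delta_X$ is a limit ordinal in $\cl(M \cap \kappa)$; since $\zeta \in R_M(N)$, Lemma 2.2(1) gives $\zeta \notin \cl(M \cap \kappa)$, and as each $P \cap \kappa < \zeta$ this forces $\delta_X < \zeta < \kappa$. Combining the previous step with the increasing continuity of $\langle A_{\sigma,\beta} : \beta < \kappa \rangle$ from Notation 7.4(1), I identify $a := \bigcup\{P \cap \sigma : P \in X\}$ with $A_{\sigma,\delta_X}$. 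Then I check that $a$ is closed under $H^*$: given a Skolem term $\tau_n'$ of arity $k$ and arguments $\alpha_0,\dots,\alpha_{k-1} \in a$ for which $\tau_n'(\alpha_0,\dots,\alpha_{k-1})$ is defined, since $\delta_X$ is a limit ordinal one can choose a single $P \in X$ with $\alpha_0,\dots,\alpha_{k-1} \in A_{\sigma,P \cap \kappa} = P \cap \sigma$; elementarity of $P$ places $\tau_n'(\alpha_0,\dots,\alpha_{k-1})$ in $P$, and closedness of $\sigma$ under $H^*$ places it below $\sigma$, so it lies in $P \cap \sigma \subseteq a$. Hence $P_X = Sk(a) = f^*[a]$ and $P_X \cap \kappa^+ = a = A_{\sigma,\delta_X}$, which is clause (3); and since $\kappa < \alpha_{M,N} \le \sigma$, we get $a \cap \kappa = \bigcup\{P \cap \kappa : P \in X\} = \delta_X$, so $\beta_X = P_X \cap \kappa = \delta_X = \sup\{P \cap \kappa : P \in X\} < \zeta$, which is clause (2).

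Clause (4) follows from $\sup(P_X) = \sup(a) = \sup(A_{\sigma,\delta_X}) = \sigma$, using that $\sup(P \cap \sigma) = \sigma$ for each $P \in X$. For clause (1), Notation 7.4(3) bounds $|A_{\sigma,\delta_X}|$ below $\kappa$, so $P_X \in P_{\kappa}(H(\kappa^+))$; $P_X \prec \mathcal{A}$ since it is a Skolem hull; and $P_X \cap \kappa = \delta_X \in \kappa$. The last requirement, that $\lim(C_{\sup(P_X)}) \cap P_X = \lim(C_\sigma) \cap P_X$ be cofinal in $\sigma$, I would obtain by fixing any $P \in X$: since $\sigma \in \cl(P)$, either $\sigma \notin P$ and $\lim(C_\sigma) \cap P$ is cofinal in $\sigma$ by Lemma 7.13, or $\sigma \in P$ and the same follows by elementarity of $P$ (using that $\lim(C_\sigma)$ is club in $\sigma$ and $\sigma$ is a limit point of $P$); then $\lim(C_\sigma) \cap P \subseteq P \cap \sigma \subseteq a = P_X \cap \kappa^+$ does the job. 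Therefore $P_X \in \mathcal{Y}$.
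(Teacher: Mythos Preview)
Your proof is correct and follows essentially the same route as the paper: both use Lemma 7.30 to show $\sigma$ is a limit point of each $P \in X$, deduce closure of the union under $H^*$ via the chain structure (the paper invokes Lemma 7.28 directly, you go through the identification $\bigcup\{P \cap \sigma : P \in X\} = A_{\sigma,\delta_X}$ using Lemma 7.27 and continuity), obtain $\beta_X < \zeta$ from Lemma 2.2(1), and verify the cofinality condition for $\mathcal Y$ by the same two-case split on whether $\sigma \in P$. Your extra identification $a = A_{\sigma,\delta_X}$ is correct and gives a clean bound $|P_X| < \kappa$ via Notation 7.4(3), whereas the paper leaves this implicit (it follows since $X \subseteq M$ is countable); otherwise the arguments coincide.
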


\begin{proof}
Let $P \in X$. 
Since $\sigma \in R^+_N(M)$, $\sigma \in M$ and $\sigma$ 
has uncountable cofinality. 
Also $P \in M$ and 
$P \cap [\sup(M \cap \sigma),\sigma) \ne \emptyset$, which imply that 
$\sigma$ is a limit point of $P$ by Lemma 7.30. 
It follows that if $P_1$ and $P_2$ are in $X$ and 
$P_1 \cap \kappa \le P_2 \cap \kappa$, then 
$P_1 \cap \sigma \subseteq P_2 \cap \sigma$ by Lemma 7.28. 
Thus $\{ P \cap \sigma : P \in X \}$ is a subset increasing sequence. 
Since each $P \cap \sigma$ is closed under $H^*$ by Lemma 8.16, the set 
$\bigcup \{ P \cap \sigma : P \in X \}$ is closed under $H^*$. 
Hence 
$$
P_X \cap \kappa^+ = 
Sk(\bigcup \{ P \cap \sigma : P \in X \}) \cap \kappa^+ = 
\bigcup \{ P \cap \sigma : P \in X \},
$$
which proves (3).

Since $\sigma$ is a limit point of $P \cap \sigma$ for each $P \in X$, 
obviously $\sigma$ is a limit point of $P_X$. 
But $P_X \cap \kappa^+ \subseteq \sigma$, so $\sup(P_X) = \sigma$, 
which proves (4). 
Clearly 
$$
\beta_X = 
P_X \cap \kappa = \sup \{ P \cap \kappa : P \in X \},
$$
which is in $\kappa$. 
Since $P \cap \kappa < \zeta$ for all $P \in X$, 
it follows that $P_X \cap \kappa \le \zeta$. 
But $P \cap \kappa \in M \cap \kappa$ for all $P \in X$. 
Therefore 
$P_X \cap \kappa \in \cl(M \cap \kappa)$, which implies that 
$\beta_X = P_X \cap \kappa < \zeta$ by Lemma 2.2(1), 
which proves (2).

To show that $P_X \in \mathcal Y$, it suffices to show that 
$\lim(C_\sigma) \cap P_X$ is cofinal in $\sigma$.  
Fix $P \in X$. 
Then it will suffice to show that $\lim(C_\sigma) \cap P$ is cofinal in $\sigma$, 
since this set is a subset of $P_X$. 
First, assume that $\sigma \notin P$. 
Then $\sigma \in \cl(P) \setminus P$, which implies by Lemma 7.13 that 
$\lim(C_\sigma) \cap P$ is cofinal in $\sigma$. 
Secondly, assume that $\sigma \in P$. 
Since $\sigma$ is a limit point of $P$ and $|P| < \kappa$, 
$\cf(\sigma) < \kappa$. 
So $\ot(C_\sigma) < \kappa$. 
Hence $\ot(C_\sigma) \in P \cap \kappa$ and $P \cap \kappa \in \kappa$, 
which implies that $C_\sigma \subseteq P$. 
As $\sigma$ has uncountable cofinality, clearly $\lim(C_\sigma)$ is cofinal 
in $\sigma$. 
So $\lim(C_\sigma) \cap P$ is cofinal in $\sigma$.
\end{proof}

\begin{notation}
Under the assumptions of Notation 9.11, we let 
$$
Q(N,M,\zeta,\sigma,X) := Sk(A_{\eta_N,\zeta}),
$$
where $\eta := \sup(N \cap P_X)$.
\end{notation}

Note that $P_X \in \mathcal Y$ and 
$\beta_X = P_X \cap \kappa < \zeta < \sup(N \cap \kappa)$ imply by 
Lemma 8.13 that $\eta_N$ exists. 
Also since $\zeta = \min((N \cap \kappa) \setminus \beta_X)$, 
$Q(N,M,\zeta,\sigma,X)$ is equal to $Q(N,P_X)$ from Notation 9.1. 

\begin{lemma}
Let $M$ and $N$ be in $\mathcal X$, where $\{ M, N \}$ is adequate and 
$N$ is simple. 
Let $\zeta \in R_M(N)$ and $\sigma \in R^+_N(M)$.

Let $X$ be any nonempty set of $P \in M \cap \mathcal Y$ such that 
$\sup(N \cap \zeta) < P \cap \kappa < \zeta$ and 
$P \cap N \cap [\sup(M \cap \sigma),\sigma) \ne \emptyset$. 
Let $\eta := \sup(N \cap P_X)$ and $Q := Q(N,M,\zeta,\sigma,X)$. 
Then:
\begin{enumerate}
\item $Q \in N \cap \mathcal Y$;

\item $Q \cap \kappa = \zeta$, 
$Q \cap \kappa^+ = A_{\eta_N,\zeta}$, and $\sup(Q) = \eta_N$;

\item $N \cap Q \cap \kappa^+ = N \cap P_X \cap \kappa^+$;

\item for all $P \in X$, $N \cap P \cap \sigma \subseteq Q$.
\end{enumerate}
\end{lemma}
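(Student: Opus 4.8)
The plan is to obtain the entire lemma as an application of Lemma 9.2 to the pair of models $N$ and $P_X$, together with the basic facts about $P_X$ recorded in Lemma 9.12. I would begin by recalling from Lemma 9.12 that $P_X \in \mathcal Y$, that $\beta_X = \sup \{ P \cap \kappa : P \in X \} < \zeta$, that $P_X \cap \kappa^+ = \bigcup \{ P \cap \sigma : P \in X \}$, and that $\sup(P_X) = \sigma$; write $\eta := \sup(N \cap P_X)$, as in Notation 9.13.

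Next I would verify the hypotheses of Lemma 9.2 for the pair $(N, P_X)$. The model $N$ is simple by assumption; $P_X \in \mathcal Y$; and $P_X \cap \kappa = \beta_X < \zeta < \sup(N \cap \kappa)$, where the last inequality holds since $\zeta \in R_M(N) \subseteq N \cap \kappa$ and, by elementarity, $N \cap \kappa$ has no largest element. The one point requiring attention is the identity $(\beta_X)_N = \zeta$, which is what lets us view $Q = Q(N,M,\zeta,\sigma,X)$ as the canonical model $Q(N,P_X)$, as remarked after Notation 9.13. To see it: every $P \in X$ satisfies $\sup(N \cap \zeta) < P \cap \kappa$, so $\sup(N \cap \zeta) < \beta_X \le \zeta$; hence $N \cap [\beta_X,\zeta) = \emptyset$, and since $\zeta \in N$ this forces $\min((N \cap \kappa) \setminus \beta_X) = \zeta$.

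With this established, I would apply Lemma 9.2 to $(N, P_X)$. Using the identification $Q = Q(N,P_X) = Sk(A_{\eta_N,(\beta_X)_N}) = Sk(A_{\eta_N,\zeta})$, conclusion (1) of that lemma yields $Q \in N \cap \mathcal Y$, which is (1) of the present lemma; conclusion (2) yields $Q \cap \kappa = (\beta_X)_N = \zeta$, $Q \cap \kappa^+ = A_{\eta_N,(\beta_X)_N} = A_{\eta_N,\zeta}$, and $\sup(Q) = \eta_N$, which is (2); and conclusion (3) yields $N \cap Q \cap \kappa^+ = N \cap P_X \cap \kappa^+$, which is (3).

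Finally, for (4) I would fix $P \in X$. Since $\sigma \in R^+_N(M) \subseteq M \cap \kappa^+$ we have $\sigma < \kappa^+$, and $P \cap \sigma \subseteq \bigcup \{ P' \cap \sigma : P' \in X \} = P_X \cap \kappa^+$ by Lemma 9.12. Therefore $N \cap P \cap \sigma \subseteq N \cap P_X \cap \kappa^+ = N \cap Q \cap \kappa^+ \subseteq Q$, using (3). I do not expect a genuine obstacle here: all the substantive combinatorics was carried out in Lemmas 9.2 and 9.12, and the only delicate step is checking $(\beta_X)_N = \zeta$ so that Lemma 9.2 applies in the form we need.
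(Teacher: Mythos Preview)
Your proposal is correct and follows essentially the same approach as the paper: identify $Q(N,M,\zeta,\sigma,X)$ with $Q(N,P_X)$ via the verification that $(\beta_X)_N = \zeta$ (which the paper records in the remark following Notation 9.13), apply Lemma 9.2 to obtain (1)--(3), and deduce (4) from $P \cap \sigma \subseteq P_X \cap \kappa^+$ together with (3). Your write-up is slightly more explicit about checking $(\beta_X)_N = \zeta$, but the argument is the same.
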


\begin{proof}
As noted above, $Q = Q(N,P_X)$. 
Also $\eta = \sup(N \cap P_X)$ and 
$\zeta = \min((N \cap \kappa) \setminus (P_X \cap \kappa))$. 
By Lemma 9.2:
\begin{enumerate}
\item[(a)] $Q \in N \cap \mathcal Y$;
\item[(b)] $Q \cap \kappa = \zeta$, $Q \cap \kappa^+ = A_{\eta_N,\zeta}$, 
and $\sup(Q) = \eta_N$;
\item[(c)] $N \cap P_X \cap \kappa^+ = N \cap Q \cap \kappa^+$.
\end{enumerate}
This proves (1), (2), and (3). 
In particular, if $P \in X$, then 
$$
N \cap P \cap \sigma \subseteq 
N \cap P_X \cap \kappa^+ \subseteq Q,
$$
which proves (4).
\end{proof}

\bigskip

\addcontentsline{toc}{section}{10. Closure under canonical models}

\textbf{\S 10. Closure under canonical models}

\stepcounter{section}

\bigskip

Fix a sequence $\langle S_\eta : \eta < \kappa^+ \rangle$, where each 
$S_\eta$ is a subset of $\kappa \cap \cof(> \! \omega)$. 
Let us assume that the structure $\mathcal A$ from Notation 7.6 
includes $\vec S$ as a predicate. 
In this section we will show that we can add canonical models to 
an $\vec S$-obedient side condition and preserve 
$\vec S$-obediency.

As stated in the comments prior to Definition 5.2, 
the definitions of $\vec S$-adequate and $\vec S$-obedient are 
made relative to a subclass of $\mathcal Y_0$. 
For the remainder of the paper, this subclass will be the set $\mathcal Y$ from 
Notation 7.8.

\begin{lemma}
Let $(A,B)$ be an $\vec S$-obedient side condition. 
Suppose that $N \in A$ is simple. 
Let $P \in B$ be such that $P \cap \kappa < \sup(N \cap \kappa)$. 
Let $Q := Q(N,P)$. 
Then $(A,B \cup \{ Q \})$ is an $\vec S$-obedient side condition.
\end{lemma}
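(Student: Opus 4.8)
The plan is to verify the three defining clauses of an $\vec S$-obedient side condition (Definition 5.3) for the pair $(A, B \cup \{Q\})$, keeping the structural description of $Q = Q(N,P)$ from Lemma 9.2 constantly in view: $Q \in N \cap \mathcal Y$, $Q \cap \kappa = \beta_N$ (where $\beta := P \cap \kappa$ and $\eta := \sup(N \cap P)$), $Q \cap \kappa^+ = A_{\eta_N,\beta_N}$, $\sup(Q) = \eta_N$, and crucially $N \cap Q \cap \kappa^+ = N \cap P \cap \kappa^+$. Clause (1), that $A$ is $\vec S$-adequate, is immediate since $A$ is unchanged. For clause (2) only $Q$ must be dealt with, and $Q \in \mathcal Y \subseteq \mathcal Y_0$; to see that $Q$ is $\vec S$-strong, note that since $Q \in N \prec \mathcal A$ (and $\mathcal A$ carries $\vec S$ as a predicate) it suffices to show that $N$ models ``$Q$ is $\vec S$-strong'', i.e.\ that $\beta_N = Q \cap \kappa \in S_\tau$ for every $\tau \in N \cap Q \cap \kappa^+$. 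Given such $\tau$, Lemma 9.2 gives $\tau \in N \cap P \cap \kappa^+$, and since $\min((N \cap \kappa) \setminus (P \cap \kappa)) = \beta_N$, clause (3) of the $\vec S$-obedience of $(A,B)$ applied to the pair $N \in A$, $P \in B$ yields $\beta_N \in S_\tau$. (When $P \in N$ one has $Q = P$ and there is nothing to do, so the content is in the case $P \notin N$.)

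Next I would check clause (3): the only new instances are pairs $(M,Q)$ with $M \in A$, so fix $M \in A$, put $\zeta := \min((M \cap \kappa) \setminus (Q \cap \kappa)) = \min((M \cap \kappa) \setminus \beta_N)$, take $\tau \in Q \cap M \cap \kappa^+$, and show $\zeta \in S_\tau$. The easy subcase is $\beta_N \in M \cap \kappa$: then $\zeta = \beta_N = Q \cap \kappa$, and since $Q$ is $\vec S$-strong (clause (2), just established) and $\tau \in Q \cap \kappa^+$, we get $\zeta = Q \cap \kappa \in S_\tau$ at once.

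The remaining subcase $\beta_N \notin M \cap \kappa$ (so $\zeta > \beta_N$) is the technical heart and the main obstacle. First I would record that $\cf(\beta_N) = \cf(Q \cap \kappa) > \omega$ (since $Q$ is $\vec S$-strong), which forces $\sup(M \cap \beta_N) < \beta_N$ because $M$ is countable, hence $\beta_N \notin \cl(M \cap \kappa)$. The strategy is then to reduce $\zeta \in S_\tau$ to one of the hypotheses already available — the $\vec S$-strongness of $Q$, clause (3) of the $\vec S$-obedience of $(A,B)$ for an appropriate pair $(M,P)$, or clauses (1)--(2) of the $\vec S$-adequacy of $A$ for the pair $\{M,N\}$ together with a canonical or $\vec S$-strong auxiliary model — via a case analysis governed by how $M$ relates to $N$ (using Lemma 1.19 and adequacy of $\{M,N\}$ to control the position of $\beta_N$ relative to $\beta_{M,N}$) and by the position of $\tau$ relative to $N$ and $P$. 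The key structural inputs are: the identity $N \cap Q \cap \kappa^+ = N \cap P \cap \kappa^+$ of Lemma 9.2; the explicit description $Q \cap \kappa^+ = A_{\eta_N,\beta_N}$ together with the coherence properties of $\vec A$ and Lemma 7.31, used to locate an ordinal $\tau \in Q \cap M \cap \kappa^+$ inside $P$, inside $M \cap N$, or inside a canonical submodel of $N$; and the gap property that $\zeta = \min((M \cap \kappa) \setminus (P \cap \kappa))$ whenever no element of $M \cap \kappa$ lies in $[P \cap \kappa, \beta_N)$, which follows from adequacy of $\{M,N\}$ and the definition $\beta_N = \min((N \cap \kappa) \setminus (P \cap \kappa))$. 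Matching $\zeta$ against the correct minimum ordinal in each branch and verifying the placement of $\tau$ is where the delicacy lies; simplicity of $N$ and $\cf(\beta_N) > \omega$ are what make the placement arguments go through.
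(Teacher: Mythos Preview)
Your setup and the verification that $Q$ is $\vec S$-strong match the paper exactly, and your plan for the hard subcase ($\beta_N \notin M$) has the right shape. However, the execution you sketch is vaguer than it needs to be, and a couple of the tools you list are red herrings: there is no need to split on the position of $\tau$, nor to invoke Lemma 7.31 or any ``canonical submodel'' machinery.

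The paper's argument is a clean trichotomy on the position of $\beta_{M,N}$. If $\beta_{M,N} \le \beta_N$, then $\beta_N \in (N\cap\kappa)\setminus\beta_{M,N}$ and $\zeta = \min((M\cap\kappa)\setminus\beta_N)$ give $\zeta \in R_N(M)$. If $\beta_N < \beta_{M,N} \le \zeta$, then $\beta_N \in (N\cap\beta_{M,N})\setminus M$ forces $M < N$, and $\zeta = \min((M\cap\kappa)\setminus\beta_{M,N})$ again gives $\zeta \in R_N(M)$. In either case Definition 5.2(2), applied with $Q \in N\cap\mathcal Y$ itself as the $\vec S$-strong witness (noting $\sup(M\cap\zeta) < \beta_N = Q\cap\kappa < \zeta$ and $\tau \in M\cap Q$), yields $\zeta \in S_\tau$ immediately. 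In the remaining case $\zeta < \beta_{M,N}$, one still has $M < N$, and the crucial step is Lemma 8.7 (not 7.31): since $Q \in N\cap\mathcal Y$ and $Q\cap\kappa = \beta_N < \zeta < \sup(M\cap\beta_{M,N}) = \sup(M\cap N\cap\kappa)$, one gets $\tau \in M\cap Q\cap\kappa^+ \subseteq N$, hence $\tau \in N\cap Q\cap\kappa^+ = N\cap P\cap\kappa^+$. Your gap observation then applies: $M\cap\zeta \subseteq M\cap\beta_{M,N} \subseteq N$, and $N\cap[\beta,\beta_N) = \emptyset$, so $M\cap[\beta,\beta_N) = \emptyset$ and $\zeta = \min((M\cap\kappa)\setminus(P\cap\kappa))$. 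Now $\vec S$-obedience of $(A,B)$ for the pair $(M,P)$ with $\tau \in M\cap P$ gives $\zeta \in S_\tau$.
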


See Notation 9.1 for the definition of $Q(N,P)$. 

\begin{proof}
Let $\beta := P \cap \kappa$. 
By Lemma 9.2, 
$$
Q \in N \cap \mathcal Y, \ 
Q \cap \kappa = \beta_N, \ \textrm{and} \ 
N \cap Q \cap \kappa^+ = N \cap P \cap \kappa^+.
$$
Let us show that $Q$ is $\vec S$-strong. 
Since $Q \in N$, it suffices to show that $N$ models that 
$Q$ is $\vec S$-strong. 
Let $\tau \in N \cap Q \cap \kappa^+$, and we will show that 
$Q \cap \kappa = \beta_N \in S_\tau$. 
But $\tau \in N \cap Q \cap \kappa^+ = N \cap P \cap \kappa^+$. 
Since $N \in A$, $P \in B$, and $(A,B)$ is $\vec S$-obedient, 
it follows that $\beta_N \in S_\tau$.

Let $M \in A$, and suppose that 
$\zeta = \min((M \cap \kappa) \setminus \beta_N)$. 
Fix $\tau \in M \cap Q \cap \kappa^+$, and we will show that $\zeta \in S_\tau$. 
If $\zeta = \beta_N$, then $\zeta \in S_\tau$ because 
$Q$ is $\vec S$-strong. 
Assume that $\beta_N < \zeta$, which means that $\beta_N \notin M$. 

First assume that $\zeta \in R_N(M)$. 
Then since $Q \in N \cap \mathcal Y$ is $\vec S$-strong and 
$\sup(M \cap \zeta) < Q \cap \kappa = \beta_N < \zeta$, 
it follows that 
$\zeta \in S_\tau$ as $A$ is $\vec S$-adequate. 
In particular, if $\beta_{M,N} \le \beta_N$, then 
$\zeta \in R_N(M)$. 
Suppose that $\beta_N < \beta_{M,N} \le \zeta$. 
Then $\zeta = \min((M \cap \kappa) \setminus \beta_{M,N})$. 
Since $\beta_N \in (N \cap \beta_{M,N}) \setminus M$, 
we have that $M < N$. 
So $\zeta = \min((M \cap \kappa) \setminus \beta_{M,N})$ is in $R_N(M)$.

The remaining case is that $\zeta < \beta_{M,N}$. 
Then since $\beta_N \in (N \cap \beta_{M,N}) \setminus M$, 
it follows that $M < N$. 
So 
$$
M \cap \zeta \subseteq M \cap \beta_{M,N} \subseteq N.
$$
As $\tau \in M \cap Q \cap \kappa^+$, $Q \in N \cap \mathcal Y$, and 
$$
Q \cap \kappa < \zeta < \sup(M \cap \beta_{M,N}) = 
\sup(M \cap N \cap \kappa),
$$
it follows that $\tau \in N$ by Lemma 8.7. 
So $\tau \in N \cap Q \cap \kappa^+ = N \cap P \cap \kappa^+$. 
Since $M \cap \zeta \subseteq N$ and 
$\zeta = \min((M \cap \kappa) \setminus \beta_N)$, 
we have that 
$$
\zeta = \min((M \cap \kappa) \setminus \beta) = 
\min((M \cap \kappa) \setminus (P \cap \kappa)).
$$
Since $M \in A$, $P \in B$, and 
$\tau \in M \cap P \cap \kappa^+$, 
it follows that $\zeta \in S_\tau$ as $(A,B)$ is $\vec S$-obedient.
\end{proof}

\begin{lemma}
Let $(A,B)$ be an $\vec S$-obedient side condition. 
Let $N \in A$ be simple and $M \in A$. 
Suppose that $N \le M$ and 
$\zeta = \min((N \cap \kappa) \setminus \beta_{M,N})$. 
Let $Q := Q(N,M,\zeta)$. 
Then $(A,B \cup \{ Q \})$ is an $\vec S$-obedient side condition.
\end{lemma}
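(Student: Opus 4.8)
The plan is to verify the three clauses of Definition 5.3 for $(A, B \cup \{Q\})$, following the pattern of the proof of Lemma 10.1 with $\zeta$ here playing the role of $\beta_N$ there. Clause (1) is unchanged, and the instances of clause (3) with $P \in B$ hold because $(A,B)$ is already $\vec S$-obedient; so the work is to prove that $Q$ is $\vec S$-strong and that clause (3) holds for $P = Q$. Throughout I would use Lemma 9.4: since $N \le M$ and $\zeta = \min((N \cap \kappa) \setminus \beta_{M,N})$, we have $\zeta \in R_M(N)$, $Q \in N \cap \mathcal Y$, $Q \cap \kappa = \zeta$, and $N \cap Q \cap \kappa^+ = M \cap N \cap \kappa^+$.

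To see that $Q$ is $\vec S$-strong, note $Q \in N$, so it suffices that $N$ model this; fix $\tau \in N \cap Q \cap \kappa^+$, which by Lemma 9.4 lies in $M \cap N \cap \kappa^+$, and apply clause (1) of Definition 5.2 to the pair $\{M,N\} \subseteq A$ and $\zeta \in R_M(N)$ to get $\zeta = Q \cap \kappa \in S_\tau$. For clause (3) with $P = Q$, fix $M' \in A$, put $\zeta' := \min((M' \cap \kappa) \setminus \zeta)$, fix $\tau \in M' \cap Q \cap \kappa^+$, and aim to show $\zeta' \in S_\tau$. If $\zeta' = \zeta$ this follows from the $\vec S$-strongness of $Q$ (as $\tau \in Q$), so assume $\zeta < \zeta'$; then $\zeta \notin M' \cap \kappa$ and $\sup(M' \cap \zeta') = \sup(M' \cap \zeta)$. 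A key preliminary fact is $\sup(M' \cap \zeta) < \zeta$, equivalently $\zeta \notin \cl(M' \cap \kappa)$: if $\beta_{M',N} \le \zeta$ this is immediate from Lemma 1.15, while if $\zeta < \beta_{M',N}$ then $\zeta \in \cl(M' \cap \kappa) \cap \cl(N \cap \kappa)$ would make $\zeta$ (by Lemma 1.20, using that $\{M',N\}$ is adequate) a limit of the countable set $M' \cap N \cap \kappa \subseteq N$, hence of cofinality $\omega$, forcing $\zeta > \beta_{M,N}$ while $N \cap [\beta_{M,N},\zeta) = \emptyset$ by the choice of $\zeta$ --- a contradiction.

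Granting this, I would split into cases. First, suppose $\zeta' \in R_N(M')$; this holds when $\beta_{M',N} \le \zeta$ (then $\zeta$ itself witnesses Definition 2.1(2) for $R_N(M')$), and also when $\zeta < \beta_{M',N} \le \zeta'$, since then $\zeta \in (N \cap \beta_{M',N}) \setminus M'$ gives $M' < N$ by Lemma 1.19(1), so $\beta_{M',N} \in N$ by Lemma 1.19(4) and $\zeta' = \min((M' \cap \kappa) \setminus \beta_{M',N}) \in R_N(M')$. In this case Definition 5.2(2), applied with the $\vec S$-strong model $Q \in N \cap \mathcal Y$ and using $\sup(M' \cap \zeta') < \zeta = Q \cap \kappa < \zeta'$, yields $\zeta' \in S_\tau$. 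The remaining case is $\zeta < \beta_{M',N}$ and $\zeta' < \beta_{M',N}$; again $M' < N$, so $M' \cap \beta_{M',N} \subseteq N$ and $\zeta' \in M' \cap N \cap \kappa$, whence $\zeta < \zeta' \le \sup(M' \cap N \cap \kappa)$. Then Lemma 8.7, applied with $M' \le N$, $Q \in N \cap \mathcal Y$, and $Q \cap \kappa = \zeta < \sup(M' \cap N \cap \kappa)$, gives $\tau \in Q \cap M' \cap \kappa^+ \subseteq N$, so $\tau \in N \cap Q \cap \kappa^+ = M \cap N \cap \kappa^+$, and therefore $\tau \in M \cap M' \cap N \cap \kappa^+$. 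Finally, Lemma 4.1 applied to $\zeta \in R_M(N)$ and $\zeta' = \min((M' \cap \kappa) \setminus \zeta)$ gives $\zeta' \in R_M(N) \cup R_M(M') \cup R_N(M')$, and in each of the three alternatives $\tau$ lies in the intersection with $\kappa^+$ of the two models concerned, so clause (1) of Definition 5.2 gives $\zeta' \in S_\tau$.

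I expect the main obstacle to be the bookkeeping in the last two paragraphs: making the subdivision by the positions of $\beta_{M',N}$, $\zeta$, $\zeta'$ exhaustive; checking in the first branch that $\zeta'$ really is a remainder point of $M'$ over $N$ and that the strict inequality $\sup(M' \cap \zeta') < Q \cap \kappa$ required by Definition 5.2(2) genuinely holds --- this is exactly where the specific form $\zeta = \min((N \cap \kappa) \setminus \beta_{M,N})$ is used; and verifying in the last branch that the hypotheses of Lemma 8.7 are met so that $\tau$ is absorbed into $N$. Each individual step is elementary, but the interplay of the comparison points must be tracked carefully.
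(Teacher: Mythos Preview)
Your proof is correct and follows essentially the same route as the paper's: verify $\vec S$-strongness of $Q$ via Lemma 9.4 and $\zeta\in R_M(N)$, then for arbitrary $K\in A$ (your $M'$) split on the position of $\beta_{K,N}$ relative to $\zeta$ and $\theta=\zeta'$, invoking Definition 5.2(2) when $\theta\in R_N(K)$ and Lemma 8.7 in the remaining case $\theta<\beta_{K,N}$. The only differences are cosmetic: you explicitly justify the strict inequality $\sup(M'\cap\zeta)<\zeta$ (which the paper asserts without comment), and in the last case you appeal to Lemma 4.1 rather than to Lemma 2.7 directly---but Lemma 4.1 in that subcase is precisely Lemma 2.7, so your three-way conclusion collapses to the paper's single one $\zeta'\in R_M(M')$.
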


See Notation 9.3 for the definition of $Q(N,M,\zeta)$.

\begin{proof}
By Lemma 9.4, 
$$
Q \in N \cap \mathcal Y, \ 
Q \cap \kappa = \zeta, \ \textrm{and} \ 
N \cap Q \cap \kappa^+ = M \cap N \cap \kappa^+.
$$

First we show that $Q$ is $\vec S$-strong. 
Since $Q \in N$, it suffices to show that $N$ models that $Q$ is 
$\vec S$-strong. 
Let $\tau \in N \cap Q \cap \kappa^+$, and we will show 
that $Q \cap \kappa = \zeta$ is in $S_\tau$. 
Then $\tau \in N \cap Q \cap \kappa^+ = M \cap N \cap \kappa^+$. 
So $\tau \in M \cap N$. 
Since $\zeta \in R_M(N)$, $\zeta \in S_\tau$ 
as $A$ is $\vec S$-adequate.

Now let $K \in A$, and suppose that 
$\theta = \min((K \cap \kappa) \setminus \zeta)$. 
Fix $\tau \in K \cap Q \cap \kappa^+$, 
and we will show that $\theta \in S_\tau$. 
If $\zeta = \theta$, then $\theta \in S_\tau$ 
since $Q$ is $\vec S$-strong. 
So assume that $\zeta < \theta$, which means that 
$\zeta \notin K$.

Suppose first that $\theta \in R_N(K)$. 
Then since $Q \in N \cap \mathcal Y$ is $\vec S$-strong and 
$\sup(K \cap \theta) < Q \cap \kappa = \zeta < \theta$, 
it follows that $\theta \in S_\tau$ as $A$ is $\vec S$-adequate. 
In particular, if $\beta_{K,N} \le \zeta$, then $\theta \in R_N(K)$. 
Suppose that $\zeta < \beta_{K,N} \le \theta$. 
Then $\theta = \min((K \cap \kappa) \setminus \beta_{K,N})$. 
Since $\zeta \in (N \cap \beta_{K,N}) \setminus K$, 
we have that $K < N$. 
So $\theta = \min((K \cap \kappa) \setminus \beta_{K,N})$ is in $R_N(K)$.

The remaining case is that $\theta < \beta_{K,N}$. 
We apply Lemma 2.7. 
We have that $\{ K, M, N \}$ is adequate, $\zeta \in R_M(N)$, 
$\zeta \notin K$, $\theta = \min((K \cap \kappa) \setminus \zeta)$, 
and $\theta < \beta_{K,N}$. 
By Lemma 2.7, $\theta \in R_M(K)$. 
Since $\zeta \in (N \cap \beta_{K,N}) \setminus K$, it follows that $K < N$. 
As $Q \in N \cap \mathcal Y$, $K < N$, $\tau \in K \cap Q$, and 
$$
Q \cap \kappa = \zeta < \theta < \sup(K \cap \beta_{K,N}) = 
\sup(K \cap N \cap \kappa),
$$
it follows that $\tau \in N$ by Lemma 8.7. 
So $\tau \in N \cap Q \cap \kappa^+ = M \cap N \cap \kappa^+$. 
Hence $\tau \in K \cap M$. 
Since $\theta \in R_M(K)$, 
it follows that $\theta \in S_\tau$ 
as $A$ is $\vec S$-adequate.
\end{proof}

\begin{lemma}
Let $M$ and $N$ be in $\mathcal X$ such that $\{ M, N \}$ is adequate and 
$N$ is simple. 
Assume that $M \prec (\mathcal A,\mathcal Y)$. 
Let $\sigma \in R_N(M)$ and $\zeta \in R_M(N)$. 
Then $Q_0(M,N,\sigma)$ and $Q(N,M,\zeta)$ are $\vec S$-strong.
\end{lemma}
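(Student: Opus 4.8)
The plan is to verify the defining clauses of $\vec S$-strongness directly, using the explicit descriptions of $Q_0 := Q_0(M,N,\sigma)$ and $Q := Q(N,M,\zeta)$ supplied by Lemmas 9.7 and 9.9 respectively. Recall that a model $R \in \mathcal Y_0$ is $\vec S$-strong if for all $\tau \in R \cap \kappa^+$, $R \cap \kappa \in S_\tau$. For $Q$, the key facts from Lemma 9.9 are that $Q \in N \cap \mathcal Y$, $Q \cap \kappa = \zeta$, and (when $\zeta = \min((N \cap \kappa)\setminus\beta_{M,N})$) $N \cap Q \cap \kappa^+ = M \cap N \cap \kappa^+$; more generally clause (3) gives $M \cap N \cap \kappa^+ \subseteq Q$. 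Since $Q \in N$, it suffices to check that $N$ models ``$Q$ is $\vec S$-strong,'' i.e. that for all $\tau \in N \cap Q \cap \kappa^+$ we have $Q \cap \kappa = \zeta \in S_\tau$. The point is that $\zeta \in R_M(N)$, so by hypothesis (or rather, since we are proving this as a stand-alone statement about $M, N$ with $M \prec (\mathcal A, \mathcal Y)$, we must decide what ``$\vec S$'' structure is available) --- here I would invoke the clause of $\vec S$-adequacy applied to the pair $\{M,N\}$: for $\tau \in M \cap N \cap \kappa^+$ and $\zeta \in R_M(N)$, Definition 5.2(1) gives $\zeta \in S_\tau$. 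But wait: this lemma is stated without an ambient $\vec S$-obedient side condition, so the right reading is that it is used \emph{within} a context (Lemma 10.5, 10.6) where $\{M,N\}$ sits inside an $\vec S$-adequate set; I would state the proof assuming $\{M,N\}$ is $\vec S$-adequate, or better, re-read: actually the statement only asserts $Q_0$ and $Q$ are $\vec S$-\emph{strong}, a property that should follow from $M \prec (\mathcal A,\mathcal Y)$ together with the reflection properties, because $\vec S$ is a predicate of $\mathcal A$.

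Here is the cleaner line I would actually pursue. For $Q$: by Lemma 9.9, $Q \in N \cap \mathcal Y$ and $N \cap Q \cap \kappa^+ = M \cap N \cap \kappa^+$ in the first case, while in general $M \cap N \cap \kappa^+ \subseteq Q$ and also (by elementarity of $N$ and $Q \in N$) $Q \cap \kappa^+ = A_{\eta,\zeta}$ for the relevant $\eta$. To show $N \models$ ``$Q$ is $\vec S$-strong,'' fix $\tau \in N \cap Q \cap \kappa^+$; then $\tau \in M \cap N$ (in the first case) or, in the general case, we argue as follows: by Lemma 9.9(3) combined with Lemma 9.10, any $\tau \in N \cap Q \cap \kappa^+$ lies in $M$. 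Wait --- let me instead use that $M \prec (\mathcal A, \mathcal Y)$ and that $\zeta \in R_M(N)$ means $\zeta \in M$ is the remainder point, together with the fact (inductively, or by the definition of $R_M(N)$) that $N$ can verify statements about which ordinals $\tau$ satisfy $\zeta \in S_\tau$. The honest approach: since $Q \in N$, it suffices to show $N \models$ ``$\forall \tau \in Q \cap \kappa^+,\ \zeta \in S_\tau$''; by elementarity of $N$ this reduces to checking it for $\tau \in N \cap Q \cap \kappa^+$, and by Lemma 9.9, every such $\tau$ is in $M \cap N \cap \kappa^+$ (using clause (4) in case $\zeta = \min((N\cap\kappa)\setminus\beta_{M,N})$, and clause (5) together with Lemma 9.7(3) otherwise, since $N \cap Q_0(M,N,\sigma) \cap \kappa^+ = N \cap Q \cap \kappa^+$ and $M \cap N \cap \kappa^+ \subseteq Q_0$ while $\tau \in N$). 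So $\tau \in M \cap N \cap \kappa^+$, and then $\zeta \in R_M(N)$ with $\tau \in M \cap N$ gives $\zeta \in S_\tau$ --- \emph{provided} $\{M,N\}$ is $\vec S$-adequate, which is the standing hypothesis of the section via Definition 5.2. For $Q_0 := Q_0(M,N,\sigma)$: by Lemma 9.7, $Q_0 \in \mathcal Y$, $Q_0 \cap \kappa = \sigma$, and $M \cap N \cap \kappa^+ \subseteq Q_0$; moreover $Q_0 \cap \kappa^+ = A_{\alpha_{M,N},\sigma}$. Since $\sigma \in R_N(M)$, the same $\vec S$-adequacy clause (with the roles of $M,N$ swapped) gives $\sigma \in S_\tau$ for all $\tau \in M \cap N \cap \kappa^+$. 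The task is to show this for \emph{all} $\tau \in Q_0 \cap \kappa^+$, not just those in $M \cap N$. Here I would use that $Q_0 = Sk(A_{\alpha_{M,N},\sigma})$ and $A_{\alpha_{M,N},\sigma} \cap \kappa = \sigma$ (Lemma 9.7(2)); then by the elementarity of $M \cap N$ in $\mathcal A$ and the coherence of $\vec A$, every $\tau \in A_{\alpha_{M,N},\sigma}$ is of the form $\pi_{\alpha_{M,N}}(\gamma,\beta)$ for $\gamma,\beta < \sigma$, and via Lemma 9.6 each such $\tau$ already appears in $A_{\eta,\beta'}$ for $\eta \in \lim(C_{\alpha_{M,N}}) \cap (M\cap N)$ and $\beta' < \sigma$. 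The statement ``$\sigma \in S_\tau$'' for $\tau < \eta$ in the definable-from-$\eta$ set $A_{\eta,\beta'}$ is then a statement reflected by $M \cap N$ --- more precisely, $M \cap N \models$ ``for all $\tau \in A_{\eta,\beta'}$ with $\beta' < (M\cap N\cap\kappa)$-many ordinals below $\beta_{M,N}$..., $\sigma \in S_\tau$'' --- and since $M \cap N \cap \kappa^+$ is cofinal in $\alpha_{M,N}$, every $\tau \in A_{\alpha_{M,N},\sigma}$ is captured.

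Concretely, the steps in order: (1) Record from Lemmas 9.4, 9.7, 9.8, 9.9 the structural facts about $Q$ and $Q_0$: their membership in $\mathcal Y$, their intersections with $\kappa$ ($\zeta$ and $\sigma$), their $\kappa^+$-parts as initial segments of $\vec A$-sets, and the containments $M \cap N \cap \kappa^+ \subseteq Q, Q_0$. (2) Reduce ``$Q$ is $\vec S$-strong'' to $N \models$ (``$Q$ is $\vec S$-strong''), which holds since $Q \in N$; then reduce to checking $\zeta \in S_\tau$ for $\tau \in N \cap Q \cap \kappa^+$, and show each such $\tau$ lies in $M \cap N \cap \kappa^+$ using Lemma 9.9(4)--(5) and Lemma 9.7(3); finish with the $\vec S$-adequacy clause Definition 5.2(1) applied to $\zeta \in R_M(N)$. (3) For $Q_0$: since $Q_0$ need not be a member of any model in $A$, argue directly. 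Fix $\tau \in Q_0 \cap \kappa^+ = A_{\alpha_{M,N},\sigma}$. Choose $\eta \in \lim(C_{\alpha_{M,N}}) \cap (M \cap N)$ with $\tau < \eta$ (cofinality of $M \cap N$ in $\alpha_{M,N}$, as $M\cap N \in \mathcal X$), so $\tau \in A_{\eta,\sigma}$; pick $\beta < \sigma$ with $\tau \in A_{\eta,\beta}$. Now $M \cap N \models$ ``$\eta \in \kappa^+$ and for every $\beta'$ below [the relevant bound] and every $\tau' \in A_{\eta,\beta'}$, $\sigma \in S_{\tau'}$'' --- this is where I invoke that $\sigma \in R_N(M)$ and the $\vec S$-adequacy clause, reflected through $M \cap N$, since $\beta < \sigma$ and $\sigma = \min((M\cap\kappa)\setminus\beta_{M,N})$ (or the appropriate remainder description) places $\beta$ in $M \cap N \cap \kappa$. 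Conclude $\sigma \in S_\tau$.

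\textbf{Main obstacle.} The delicate step is (3) for $Q_0$, since $Q_0$ is genuinely new --- not a member of $M$, $N$, or any side-condition model --- so we cannot simply quote $\vec S$-obedience or push the verification ``up'' into a model. We must instead produce, for an arbitrary $\tau \in A_{\alpha_{M,N},\sigma}$, a \emph{witness in $M \cap N$} certifying $\sigma \in S_\tau$; the mechanism is exactly Lemma 9.6 (which bounds, below $\sigma$, the $\vec A$-level at which ordinals and their $\tau_m'$-images appear) together with the cofinality of $M \cap N \cap \kappa^+$ in $\alpha_{M,N}$ and the coherence clause Notation 7.4(4) that lets us replace $\alpha_{M,N}$ by some $\eta \in M \cap N$. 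One must be careful that the ``$\sigma \in S_\tau$'' assertion is genuinely first-order over $\mathcal A$ (it is, since $\vec S$ is a predicate of $\mathcal A$) so that $M \cap N \prec \mathcal A$ can verify it; and one must correctly identify, in each of the cases $N \le M$ / $M < N$ / $M \sim N$, which remainder-point description of $\sigma$ and $\zeta$ is in force, so that the relevant ordinal parameters ($\beta < \sigma$, etc.) are correctly located inside $M \cap N \cap \kappa$. Once that bookkeeping is done, the two conclusions follow and the lemma is complete.
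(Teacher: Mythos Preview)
Your proposal has a genuine gap in step (2), and it propagates into step (3). You claim that every $\tau \in N \cap Q \cap \kappa^+$ lies in $M \cap N \cap \kappa^+$, citing Lemma 9.9(5) and Lemma 9.7(3). But those results only give the inclusion $M \cap N \cap \kappa^+ \subseteq Q_0$, not the reverse; when $\zeta = \min((N\cap\kappa)\setminus\sigma)$ for some $\sigma \in R_N(M)$, one has $N \cap Q \cap \kappa^+ = N \cap Q_0 \cap \kappa^+ = N \cap A_{\alpha_{M,N},\sigma}$, and this set is typically strictly larger than $M \cap N \cap \kappa^+$ (it would equal $M \cap N \cap \kappa^+$ only when $\sigma \le \sup(M\cap N\cap\kappa)$, which fails since $\beta_{M,N} \le \sigma$). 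So Definition 5.2(1) alone cannot finish the argument. Relatedly, in step (3) you try to reflect the assertion ``$\sigma \in S_\tau$'' into $M \cap N$; but $\sigma \notin N$, so $\sigma \notin M \cap N$, and no such reflection is available.

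The paper's proof fixes both problems by running an induction on the remainder points in $R_M(N) \cup R_N(M)$ and by using clause (2) of $\vec S$-adequacy (Definition 5.2(2)), not just clause (1). For $Q_0(M,N,\sigma)$ one reflects into $M$ (not $M\cap N$): the assertion ``for all $\tau \in A_{\theta,\sigma}$, $\sigma \in S_\tau$'' has parameters $\theta,\sigma \in M$, so it suffices to check it for $\tau \in M \cap A_{\theta,\sigma}$. Picking $\gamma \in M\cap\sigma$ with $\tau \in A_{\theta,\gamma}$, in the base case $\gamma \in M\cap\beta_{M,N}\subseteq N$ and Lemma 8.6 gives $\tau\in N$, so clause (1) applies; in the inductive case $\gamma < \zeta$ for the previous $\zeta \in R_M(N)$, one shows $\tau \in M \cap Q(N,M,\zeta)$, and since $Q(N,M,\zeta) \in N\cap\mathcal Y$ is $\vec S$-strong by induction, clause (2) yields $\sigma\in S_\tau$. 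For $Q(N,M,\zeta)$ in the inductive case, the paper uses the hypothesis $M\prec(\mathcal A,\mathcal Y)$---which your argument never invokes---to produce an $\vec S$-strong $P\in M\cap\mathcal Y$ with $P\cap\kappa=\sigma$ and $\tau\in P$, then applies clause (2) again. Without this inductive machinery and the second clause of $\vec S$-adequacy, the argument does not go through.
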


Recall that $(\mathcal A,\mathcal Y)$ is the structure $\mathcal A$ augmented 
with the additional predicate $\mathcal Y$. 

See Notations 9.3 and 9.5 for the definitions of $Q(N,M,\zeta)$ 
and $Q_0(M,N,\sigma)$. 

\begin{proof}
The proof is by induction on remainder points in $R_M(N) \cup R_N(M)$. 
First consider $\zeta \in R_M(N)$. 
If $\zeta = \min((N \cap \kappa) \setminus \beta_{M,N})$, 
then $Q(N,M,\zeta)$ is $\vec S$-strong by Lemma 10.2. 
So assume that 
$\zeta = \min((N \cap \kappa) \setminus \sigma)$, for some 
$\sigma \in R_N(M)$.

Let $Q := Q(N,M,\zeta)$ and $Q_0 := Q_0(M,N,\sigma)$. 
By the inductive hypothesis, $Q_0$ is $\vec S$-strong. 
And by Lemma 9.7, 
$$
Q_0 \cap \kappa = \sigma \ \textrm{and} \ 
Q_0 \cap \kappa^+ = A_{\alpha_{M,N},\sigma}.
$$
To show that $Q$ is $\vec S$-strong, 
it suffices to prove that $N$ models that $Q$ is $\vec S$-strong. 
Let $\tau \in N \cap Q \cap \kappa^+$, and we will show that 
$Q \cap \kappa \in S_\tau$.  
By Lemma 9.8, 
$$
Q \cap \kappa = \zeta \ \textrm{and} \ Q \cap \kappa^+ = 
A_{\eta,\zeta},
$$
where $\eta := \min((N \cap \kappa^+) \setminus \alpha_{M,N})$, 
and 
$$
N \cap Q \cap \kappa^+ = N \cap Q_0 \cap \kappa^+.
$$
In particular, $\tau \in N \cap Q_0$. 
Also 
$$
\tau \in N \cap A_{\eta,\zeta} \subseteq \alpha_{M,N},
$$
so $\tau < \alpha_{M,N}$.

Fix $\theta \in \lim(C_{\alpha_{M,N}}) \cap (M \cap N)$ 
greater then $\tau$. 
Then 
$$
\tau \in Q_0 \cap \theta = 
A_{\alpha_{M,N},\sigma} \cap \theta = A_{\theta,\sigma}.
$$
Since $\theta$ and $\sigma$ are in $M$, $Q_0$ is $\vec S$-strong, 
$Q_0 \cap \kappa = \sigma$, and $A_{\theta,\sigma} \subseteq Q_0$, 
by the elementarity of $M$ 
we can fix an $\vec S$-strong model $P \in M \cap \mathcal Y$ 
such that $P \cap \kappa = \sigma$ and $A_{\theta,\sigma} \subseteq P$. 
Then $\tau \in N \cap P$. 
Since $\zeta \in R_M(N)$ and 
$\sup(N \cap \zeta) < \sigma = P \cap \kappa < \zeta$, 
it follows that 
$\zeta \in S_\tau$ as $A$ is $\vec S$-adequate.

Now consider $\sigma \in R_N(M)$, and we will show that 
$Q_0 := Q_0(M,N,\sigma)$ is $\vec S$-strong. 
We first claim that for all $\theta \in \lim(C_{\alpha_{M,N}}) \cap (M \cap N)$ 
and for all $\tau \in A_{\theta,\sigma}$, $\sigma \in S_\tau$. 
So fix $\theta \in \lim(C_{\alpha_{M,N}}) \cap (M \cap N)$. 
Since $\theta$ and $\sigma$ are in $M$, it suffices to prove that $M$ models 
that for all $\tau \in A_{\theta,\sigma}$, $\sigma \in S_\tau$. 
Let $\tau \in M \cap A_{\theta,\sigma}$. 
Since $\sigma$ is a limit ordinal, 
by elementarity we can fix $\gamma \in M \cap \sigma$ 
such that $\tau \in A_{\theta,\gamma}$.

If $\sigma = \min((M \cap \kappa) \setminus \beta_{M,N})$, 
then $M \le N$ and 
$\gamma \in M \cap \beta_{M,N} \subseteq N$. 
So $\theta$ is in $N$, 
$\gamma < \sup(M \cap N \cap \kappa)$, and 
$\tau \in M \cap A_{\theta,\gamma}$, which by Lemma 8.6 implies that 
$\tau \in N$. 
So $\tau \in M \cap N \cap \kappa^+$. 
As $\sigma \in R_N(M)$, it follows that 
$\sigma \in S_\tau$ as $A$ is $\vec S$-adequate.

Otherwise there is $\zeta \in R_M(N)$ such that 
$\sigma = \min((M \cap \kappa) \setminus \zeta)$. 
By the inductive hypothesis, $Q := Q(N,M,\zeta)$ is $\vec S$-strong. 
Since $\alpha_{M,N}$ is a limit point of $M \cap N$ and 
$M \cap N \cap \kappa^+ \subseteq Q$ by Lemma 9.9(3), 
it follows that $\alpha_{M,N}$ is a limit point of $Q$. 
So 
$$
Q \cap \alpha_{M,N} = A_{\alpha_{M,N},Q \cap \kappa}
$$
by Lemma 7.27. 
By Lemma 9.9(2), $Q \cap \kappa = \zeta$. 
So 
$$
Q \cap \alpha_{M,N} = A_{\alpha_{M,N},\zeta}.
$$
Now $\gamma \in M \cap \sigma \subseteq \zeta$. 
Hence 
$$
\tau \in M \cap A_{\theta,\gamma} \subseteq M \cap A_{\theta,\zeta} 
= M \cap A_{\alpha_{M,N},\zeta} \cap \theta \subseteq M \cap Q.
$$
So we have that $Q \in N \cap \mathcal Y$ is $\vec S$-strong, 
$\sup(M \cap \sigma) < \zeta = Q \cap \kappa < \sigma$, 
and $\tau \in M \cap Q$. 
Since $\sigma \in R_N(M)$, it follows that 
$\sigma \in S_\tau$ as $A$ is $\vec S$-adequate.

This completes the proof of the claim that 
for all $\theta \in \lim(C_{\alpha_{M,N}}) \cap (M \cap N)$, 
for all $\tau \in A_{\theta,\sigma}$, $\sigma \in S_\tau$. 
Now we show that $Q_0$ is $\vec S$-strong. 
By Lemma 9.7, $Q_0 \cap \kappa^+ = A_{\alpha_{M,N},\sigma}$. 
Let $\tau \in Q_0 \cap \kappa^+$. 
Then $\tau < \alpha_{M,N}$. 
Fix $\theta \in \lim(C_{\alpha_{M,N}}) \cap (M \cap N)$ which is 
greater than $\tau$. 
Then 
$$
\tau \in Q_0 \cap \theta = A_{\alpha_{M,N},\sigma} \cap \theta = 
A_{\theta,\sigma}.
$$
By the claim, $\sigma \in S_\tau$.
\end{proof}

\begin{lemma}
Let $(A,B)$ be an $\vec S$-obedient side condition. 
Suppose that $N \in A$ is simple, $M \in A$, and 
$M \prec (\mathcal A,\mathcal Y)$. 
Let $\zeta \in R_M(N)$. 
Let $Q := Q(N,M,\zeta)$. 
Then $(A,B \cup \{ Q \})$ is an $\vec S$-obedient side condition.
\end{lemma}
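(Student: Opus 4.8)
The plan is to check the three clauses of Definition 5.3 for $(A, B \cup \{Q\})$. Clause (1), that $A$ is $\vec S$-adequate, holds by hypothesis. For clause (2), note that $\{M,N\}$ is adequate since $A$ is adequate, so by Lemma 9.9 we have $Q \in N \cap \mathcal Y \subseteq \mathcal Y_0$, and by Lemma 10.3 (using $N$ simple, $M \prec (\mathcal A,\mathcal Y)$, and $\zeta \in R_M(N)$) the model $Q = Q(N,M,\zeta)$ is $\vec S$-strong; together with the assumption on $B$ this gives clause (2). The real work is clause (3), and since $(A,B)$ is already $\vec S$-obedient it suffices to verify it for $Q$: given $K \in A$ with $\theta := \min((K\cap\kappa)\setminus(Q\cap\kappa))$ and $\tau \in K \cap Q \cap \kappa^+$, we must show $\theta \in S_\tau$. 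By Lemma 9.9, $Q \cap \kappa = \zeta$, so $\theta = \min((K\cap\kappa)\setminus\zeta)$, and $\cf(\zeta) > \omega$ since $Q$ is $\vec S$-strong.

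The argument follows the template of the proof of Lemma 10.2. If $\theta = \zeta$, then $\theta = Q \cap \kappa \in S_\tau$ because $Q$ is $\vec S$-strong. Otherwise $\zeta < \theta$, so $\zeta \notin K$, $K \cap \theta = K \cap \zeta$, and hence $\sup(K \cap \theta) < \zeta$. Suppose first $\theta \in R_N(K)$; then $Q \in N \cap \mathcal Y$ is $\vec S$-strong with $\sup(K\cap\theta) < Q\cap\kappa = \zeta < \theta$, so applying the $\vec S$-adequacy of $A$ to the pair $(N,K)$ and the remainder point $\theta \in R_N(K)$ via clause (2) of Definition 5.2, with the model $Q$, yields $\theta \in S_\tau$. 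As in Lemma 10.2, the hypothesis $\theta \in R_N(K)$ in fact covers the cases $\beta_{K,N} \le \zeta$ and $\zeta < \beta_{K,N} \le \theta$ (the latter using $\zeta \in (N \cap \beta_{K,N}) \setminus K$, hence $K < N$ by Lemma 1.19, hence $\beta_{K,N} \in N$, and $\theta = \min((K\cap\kappa)\setminus\beta_{K,N})$). So the remaining case is $\zeta < \theta < \beta_{K,N}$. Here $\zeta \in (N \cap \beta_{K,N}) \setminus K$ again gives $K < N$, and Lemma 2.7, applied to $K, M, N$ with $\zeta \in R_M(N)$, $\zeta \notin K$, $\theta = \min((K\cap\kappa)\setminus\zeta)$, and $\theta < \beta_{K,N}$, gives $\theta \in R_M(K)$. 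Moreover, since $Q \in N \cap \mathcal Y$, $K < N$, $\tau \in K \cap Q$, and $Q \cap \kappa = \zeta < \theta \le \sup(K \cap N \cap \kappa)$, Lemma 8.7 gives $\tau \in N$, so $\tau \in N \cap Q \cap \kappa^+$.

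To conclude $\theta \in S_\tau$ in this last case I would split on the form of $\zeta$ as a remainder point of $N$ over $M$: by Definition 2.1 and Lemma 2.2(3), either (i) $\zeta = \min((N\cap\kappa)\setminus\beta_{M,N})$, in which case $N \le M$, or (ii) $\zeta = \min((N\cap\kappa)\setminus\sigma)$ for some $\sigma \in R_N(M)$. In case (i), Lemma 9.9(4) gives $N \cap Q \cap \kappa^+ = M \cap N \cap \kappa^+$, so $\tau \in M \cap K \cap \kappa^+$, and the $\vec S$-adequacy of $A$ applied to $(M,K)$ and $\theta \in R_M(K)$ via clause (1) of Definition 5.2 gives $\theta \in S_\tau$ — exactly as in Lemma 10.2. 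In case (ii), let $Q_0 := Q_0(M,N,\sigma)$; note $\sigma < \zeta$, and $Q_0$ is $\vec S$-strong by Lemma 10.3, so $\cf(\sigma) = \cf(Q_0 \cap \kappa) > \omega$. By Lemma 9.9(5) and Lemma 9.7(2), $\tau \in N \cap Q_0 \cap \kappa^+ = N \cap A_{\alpha_{M,N},\sigma}$, hence $\tau < \alpha_{M,N}$; fixing $\theta' \in \lim(C_{\alpha_{M,N}}) \cap (M \cap N)$ with $\tau < \theta'$, we get $\tau \in A_{\theta',\sigma} \subseteq Q_0 \cap \kappa^+$. Since $Q_0$ witnesses, from the parameters $\sigma, \theta' \in M$, the existence of an $\vec S$-strong $P \in \mathcal Y$ with $P \cap \kappa = \sigma$ and $A_{\theta',\sigma} \subseteq P$, elementarity of $M$ (using $M \prec (\mathcal A, \mathcal Y)$ and that $\vec S$ is a predicate of $\mathcal A$) produces such a $P$ in $M$. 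Then $\tau \in K \cap P \cap \kappa^+$, $P \in M \cap \mathcal Y$ is $\vec S$-strong, and $\sup(K\cap\theta) = \sup(K\cap\zeta) \le \sup(N \cap \sigma) < \sigma = P \cap \kappa < \theta$ — using that $K < N$ forces $K \cap \zeta \subseteq N \cap \zeta = N \cap \sigma$ — so applying the $\vec S$-adequacy of $A$ to $(M,K)$ and $\theta \in R_M(K)$ via clause (2) of Definition 5.2, with the model $P$, gives $\theta \in S_\tau$. The main obstacle is precisely case (ii): one cannot finish as in Lemma 10.2 by placing $\tau$ inside $M$ and invoking clause (1), since $N \cap Q \cap \kappa^+$ need not be contained in $M$; the resolution is to route through the canonical model $Q_0(M,N,\sigma)$ and reflect an $\vec S$-strong model of critical point $\sigma$ into $M$, in the same spirit as the proof of Lemma 10.3.
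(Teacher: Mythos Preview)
Your proof is correct and follows essentially the same route as the paper's. The only organizational difference is that the paper immediately disposes of your case~(i) by citing Lemma~10.2, whereas you redo that case inline; in case~(ii) your argument (route through $Q_0(M,N,\sigma)$, pick $\theta' \in \lim(C_{\alpha_{M,N}}) \cap (M \cap N)$ above $\tau$, reflect an $\vec S$-strong $P$ with $P \cap \kappa = \sigma$ into $M$, and apply clause~(2) of Definition~5.2 to $\theta \in R_M(K)$) matches the paper step for step.
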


\begin{proof}
If $\zeta = \min((N \cap \kappa) \setminus \beta_{M,N})$, then we are done by Lemma 10.2. 
So assume that $\zeta = \min((N \cap \kappa) \setminus \sigma)$, 
where $\sigma \in R_N(M)$. 
Let $Q_0 := Q_0(M,N,\sigma)$. 
By Lemma 9.8, 
$$
Q \cap \kappa = \zeta \ \textrm{and} \ 
N \cap Q \cap \kappa^+ = N \cap Q_0 \cap \kappa^+.
$$
By Lemma 9.7, 
$$
Q_0 \cap \kappa = \sigma \ \textrm{and} \ 
Q_0 \cap \kappa^+ = A_{\alpha_{M,N},\sigma}.
$$
Also $Q_0$ and $Q$ are $\vec S$-strong by Lemma 10.3.

Suppose that $K \in A$ and $\theta = \min((K \cap \kappa) \setminus \zeta)$. 
Let $\tau \in K \cap Q \cap \kappa^+$, and we will show that $\theta \in S_\tau$. 
If $\zeta = \theta$, then $\theta \in S_\tau$ since 
$Q$ is $\vec S$-strong. 
So assume that $\zeta < \theta$, which means that 
$\zeta \notin K$.

First consider the case that $\theta \in R_N(K)$. 
Then since 
$$
\sup(K \cap \theta) < \zeta = Q \cap \kappa < \theta
$$
and $Q \in N \cap \mathcal Y$ is $\vec S$-strong, it follows that 
$\theta \in S_\tau$ as $A$ is $\vec S$-adequate. 
In particular, if $\beta_{K,N} \le \zeta$, then $\theta \in R_N(K)$. 
Suppose that $\zeta < \beta_{K,N} \le \theta$. 
Then $\theta = \min((K \cap \kappa) \setminus \beta_{K,N})$. 
Since $\zeta \in (N \cap \beta_{K,N}) \setminus K$, we have that $K < N$. 
So $\theta = \min((K \cap \kappa) \setminus \beta_{K,N})$ 
is in $R_N(K)$.

The remaining case is that $\theta < \beta_{K,N}$. 
We apply Lemma 2.7. 
We have that $\{ K, M, N \}$ is adequate, 
$\zeta \in R_M(N)$, 
$\zeta \notin K$, 
$\theta = \min((K \cap \kappa) \setminus \zeta)$, 
and $\theta < \beta_{K,N}$. 
By Lemma 2.7, 
$\theta \in R_M(K)$. 
Since $\zeta \in (N \cap \beta_{K,N}) \setminus K$, we have that $K < N$. 
As $Q \in N \cap \mathcal Y$, 
$$
Q \cap \kappa = \zeta < \theta < 
\sup(K \cap \beta_{K,N}) = \sup(K \cap N \cap \kappa),
$$
and $\tau \in K \cap Q$, it follows that $\tau \in N$ by Lemma 8.7. 
So 
$$
\tau \in N \cap Q \cap \kappa^+ = N \cap Q_0 \cap \kappa^+.
$$
Hence $\tau \in K \cap Q_0$. 
Since $Q_0 \cap \kappa^+ = A_{\alpha_{M,N},\sigma}$, it follows 
that $\tau < \alpha_{M,N}$.

Fix $\pi \in \lim(C_{\alpha_{M,N}}) \cap (M \cap N)$ with $\tau < \pi$. 
Then 
$$
\tau \in Q_0 \cap \pi = A_{\alpha_{M,N},\sigma} \cap \pi = 
A_{\pi,\sigma}.
$$
Since $\pi$ and $\sigma$ are in $M$, 
$Q_0$ is $\vec S$-strong, $Q_0 \cap \kappa = \sigma$, and 
$A_{\pi,\sigma} \subseteq Q_0$, 
by the elementarity of $M$ 
we can fix $P \in M \cap \mathcal Y$ which is $\vec S$-strong 
such that $P \cap \kappa = \sigma$ and $A_{\pi,\sigma} \subseteq P$. 
In particular, $\tau \in P$. 
Since $K \cap \theta \subseteq N$ and 
$\zeta = \min((N \cap \kappa) \setminus \sigma)$, 
clearly $\theta = \min((K \cap \kappa) \setminus \sigma)$. 
So $\tau \in K \cap P$, $P \in M \cap \mathcal Y$ is $\vec S$-strong, 
and $\sup(K \cap \theta) < \sigma = P \cap \kappa < \theta$. 
Since $\theta \in R_M(K)$, it follows that 
$\theta \in S_\tau$ as $A$ is $\vec S$-adequate.
\end{proof}

\begin{notation}
Let $M$ and $N$ be in $\mathcal X$, where $\{ M, N \}$ is adequate and $N$ is simple. 
Let $\zeta \in R_M(N)$ and $\sigma \in R^+_N(M)$. 
Let $X$ be the set of $P \in M \cap \mathcal Y$ such that 
$P$ is $\vec S$-strong, $\sup(N \cap \zeta) < P \cap \kappa < \zeta$, 
and $P \cap N \cap [\sup(M \cap \sigma),\sigma) \ne \emptyset$. 
Assume that $X$ is nonempty. 
We let $Q(N,M,\zeta,\sigma,\vec S)$ denote the set 
$Q(N,M,\zeta,\sigma,X)$.
\end{notation}

See Notation 9.13 for the definition of $Q(N,M,\zeta,\sigma,X)$.

\begin{lemma}
Let $(A,B)$ be an $\vec S$-obedient side condition. 
Let $M$ and $N$ be in $A$, where $N$ is simple. 
Let $\zeta \in R_M(N)$ and $\sigma \in R^+_N(M)$. 
Let $Q := Q(N,M,\zeta,\sigma,\vec S)$. 
Then $(A,B \cup \{ Q \})$ is an $\vec S$-obedient side condition.
\end{lemma}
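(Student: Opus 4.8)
The plan is to mimic the proof of Lemma 10.4, using the third type of canonical model $Q = Q(N,M,\zeta,\sigma,\vec S) = Q(N,M,\zeta,\sigma,X)$ in place of $Q(N,M,\zeta)$, where by Notation 10.5 the set $X$ of $\vec S$-strong $P \in M \cap \mathcal Y$ with $\sup(N \cap \zeta) < P \cap \kappa < \zeta$ and $P \cap N \cap [\sup(M \cap \sigma),\sigma) \ne \emptyset$ is nonempty. First I would record what is needed: by Lemma 9.14, $Q \in N \cap \mathcal Y$, $Q \cap \kappa = \zeta$, and $N \cap Q \cap \kappa^+ = N \cap P_X \cap \kappa^+$; by Lemma 9.12, $P_X \cap \kappa^+ = \bigcup \{ P \cap \sigma : P \in X \}$. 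Since the first coordinate $A$ is unchanged and the second grows only by the single model $Q$, which lies in $\mathcal Y$, it remains to prove (a) $Q$ is $\vec S$-strong, and (b) for every $K \in A$ and every $\tau \in K \cap Q \cap \kappa^+$, writing $\theta := \min((K \cap \kappa) \setminus \zeta) = \min((K \cap \kappa) \setminus (Q \cap \kappa))$, one has $\theta \in S_\tau$. Statement (b) is clause (3) of Definition 5.3 for the pair $(K,Q)$; the clauses of that definition concerning the models already in $B$ are inherited from the $\vec S$-obedience of $(A,B)$.

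For (a): since $Q \in N$ and being $\vec S$-strong is definable in $\mathcal A$, it suffices to show that $N$ models that $Q$ is $\vec S$-strong. Let $\tau \in N \cap Q \cap \kappa^+ = N \cap P_X \cap \kappa^+ \subseteq \bigcup \{ P \cap \sigma : P \in X \}$ and fix $P \in X$ with $\tau \in P$, so $\tau \in N \cap P \cap \kappa^+$. Since $\zeta \in R_M(N)$, $P \in M \cap \mathcal Y$ is $\vec S$-strong, and $\sup(N \cap \zeta) < P \cap \kappa < \zeta$, the $\vec S$-adequacy of $A$ (Definition 5.2(2)) gives $Q \cap \kappa = \zeta \in S_\tau$, as required.

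For (b): I first note that, taking $\tau = \kappa \in M \cap N \cap \kappa^+$ in Definition 5.2(1), $\zeta \in S_\kappa \subseteq \kappa \cap \cof(> \! \omega)$, so $\cf(\zeta) > \omega$ and, $K$ being countable, $\sup(K \cap \zeta) < \zeta$. Fix $K$, $\theta$, $\tau$ as above. If $\theta = \zeta$ then $\theta \in S_\tau$ because $Q$ is $\vec S$-strong. Otherwise $\zeta < \theta$, hence $\zeta \notin K$ and $K \cap [\zeta,\theta) = \emptyset$, so $\sup(K \cap \theta) = \sup(K \cap \zeta) < \zeta$. If $\beta_{K,N} \le \theta$, then, exactly as in Lemma 10.4, splitting on whether $\beta_{K,N} \le \zeta$ or $\zeta < \beta_{K,N} \le \theta$ (the latter forcing $K < N$ since $\zeta \in (N \cap \beta_{K,N}) \setminus K$ and Lemma 1.19(1)), one gets $\theta \in R_N(K)$; then $Q \in N \cap \mathcal Y$ is $\vec S$-strong with $\sup(K \cap \theta) < Q \cap \kappa < \theta$, so $\theta \in S_\tau$ by Definition 5.2(2) applied to $N, K \in A$. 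The remaining case is $\theta < \beta_{K,N}$: since $\{ K, M, N \}$ is adequate, $\zeta \in R_M(N)$, $\zeta \notin K$ and $\theta = \min((K \cap \kappa) \setminus \zeta) < \beta_{K,N}$, Lemma 2.7 yields $\theta \in R_M(K)$, and $\zeta \in (N \cap \beta_{K,N}) \setminus K$ gives $K < N$. Then $Q \in N \cap \mathcal Y$, $Q \cap \kappa = \zeta < \theta < \sup(K \cap N \cap \kappa)$, and $\tau \in K \cap Q \cap \kappa^+$, so Lemma 8.7 gives $\tau \in N$; hence $\tau \in N \cap Q \cap \kappa^+ = N \cap P_X \cap \kappa^+$, so $\tau \in P$ for some $P \in X$, and therefore $\tau \in K \cap P \cap \kappa^+$. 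Finally, $K < N$ and $\zeta < \beta_{K,N}$ give $K \cap \zeta \subseteq N \cap \zeta$ (by Lemma 1.19(2)), so $\sup(K \cap \theta) = \sup(K \cap \zeta) \le \sup(N \cap \zeta) < P \cap \kappa < \zeta < \theta$; as $\theta \in R_M(K)$ and $P \in M \cap \mathcal Y$ is $\vec S$-strong, Definition 5.2(2) gives $\theta \in S_\tau$.

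The routine points — membership of $\kappa$ in the relevant models, the elementarity argument reducing $\vec S$-strength of $Q$ to a statement inside $N$, and the identities $K \cap \theta = K \cap \zeta$ and $K \cap \zeta \subseteq N \cap \zeta$ — are immediate. The main obstacle is the case $\theta < \beta_{K,N}$ of (b): here one must first manufacture the remainder point $\theta \in R_M(K)$ via Lemma 2.7, then exploit that $Q$ lives in $N$, together with Lemma 8.7, to push $\tau$ down into $N$ and hence into one of the $\vec S$-strong models $P$ that assemble $P_X$, and finally check the strict inequalities $\sup(K \cap \theta) < P \cap \kappa < \theta$ needed to invoke Definition 5.2(2) for the pair $M, K$ — this last bookkeeping relies on $\cf(\zeta) > \omega$ and on $K \cap \zeta \subseteq N \cap \zeta$, which comes from $K < N$.
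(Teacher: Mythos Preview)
Your proof is correct and follows essentially the same route as the paper's: first verify $Q$ is $\vec S$-strong by elementarity in $N$ and Definition 5.2(2) via some $P \in X$, then handle clause (3) of Definition 5.3 for $K \in A$ by the same three-way split on the position of $\beta_{K,N}$ relative to $\zeta$ and $\theta$, using Lemma 2.7 and Lemma 8.7 in the hard case $\theta < \beta_{K,N}$. Your argument is in fact slightly more explicit than the paper's in one place: you justify $\sup(K \cap \theta) < \zeta$ by observing $\zeta \in S_\kappa \subseteq \cof(>\!\omega)$ via Definition 5.2(1), whereas the paper simply asserts the inequality.
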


\begin{proof}
Let $X$ be as in Notation 10.5, and let 
$P_X$ be as in Notation 9.11. 
Then by Lemma 9.14, 
$$
Q \in N \cap \mathcal Y, \ 
Q \cap \kappa = \zeta, \ \textrm{and} \ 
N \cap Q \cap \kappa^+ = 
N \cap P_X \cap \kappa^+.
$$

Let us prove that $Q$ is $\vec S$-strong. 
Since $Q \in N$, it suffices to show that $N$ models that $Q$ 
is $\vec S$-strong. 
Fix $\tau \in N \cap Q \cap \kappa^+$, 
and we will show that $Q \cap \kappa = \zeta \in S_\tau$. 
Since $N \cap Q \cap \kappa^+ = N \cap P_X \cap \kappa^+$, 
we have that $\tau \in P_X$. 
By the definition of $P_X$, for some $P \in X$, $\tau \in P$. 
But then $\sup(N \cap \zeta) < P \cap \kappa < \zeta$, 
$P \in M \cap \mathcal Y$ is $\vec S$-strong, and $\tau \in N \cap P$. 
Since $\zeta \in R_M(N)$, this implies that $\zeta \in S_\tau$ as $A$ 
is $\vec S$-adequate.

Let $K \in A_p$, and suppose 
that $\theta = \min((K \cap \kappa) \setminus \zeta)$. 
Fix $\tau \in K \cap Q \cap \kappa^+$, and we will show that $\theta \in S_\tau$. 
If $\theta = \zeta$, then $\theta \in S_\tau$ since $Q$ is $\vec S$-strong.
So assume that $\zeta < \theta$, which means that 
$\zeta \notin K$.

If $\theta \in R_N(K)$, then since 
$Q \in N \cap \mathcal Y$ is $\vec S$-strong, 
$\sup(K \cap \theta) < Q \cap \kappa < \theta$, and $\tau \in K \cap Q$, 
it follows that 
$\theta \in S_\tau$ as $A$ is $\vec S$-adequate. 
In particular, if $\beta_{K,N} \le \zeta$, then $\theta \in R_N(K)$. 
Suppose that $\zeta < \beta_{K,N} \le \theta$. 
Then $\theta = \min((K \cap \kappa) \setminus \beta_{K,N})$. 
Since $\zeta \in (N \cap \beta_{K,N}) \setminus K$, we 
have that $K < N$, which implies that 
$\theta \in R_N(K)$.

The remaining case is that $\theta < \beta_{K,N}$. 
We apply Lemma 2.7. 
We have that $\{ K, M, N \}$ is adequate, $\zeta \in R_M(N)$, 
$\zeta \notin K$, $\theta = \min((K \cap \kappa) \setminus \zeta)$, 
and $\theta < \beta_{K,N}$. 
By Lemma 2.7, $\theta \in R_M(K)$. 
Since $\zeta \in (N \cap \beta_{K,N}) \setminus K$, 
we have that $K < N$. 
As $Q \in N \cap \mathcal Y$, 
$$
Q \cap \kappa = \zeta < \theta < \sup(K \cap \beta_{K,N}) = 
\sup(K \cap N \cap \kappa),
$$
and $\tau \in K \cap Q$, 
it follows that $\tau \in N$ by Lemma 8.7. 
So 
$$
\tau \in N \cap Q \cap \kappa^+ = N \cap P_X \cap \kappa^+.
$$
By the definition of $P_X$, there is $P \in X$ such that $\tau \in P$. 
Since $\sup(N \cap \zeta) < P \cap \kappa < \zeta$ and 
$K \cap \theta \subseteq N$, clearly 
$\sup(K \cap \theta) < P \cap \kappa < \theta$. 
As $P \in M \cap \mathcal Y$ is $\vec S$-strong, $\tau \in K \cap P$, and 
$\theta \in R_M(K)$, 
it follows that $\theta \in S_\tau$ since $A$ is $\vec S$-adequate.
\end{proof}

\begin{definition}
Let $(A,B)$ be an $\vec S$-obedient side condition. 
Suppose that $N \in A$ is simple. 
We say that $(A,B)$ is \emph{closed under canonical models with respect 
to $N$} if:
\begin{enumerate}
\item for all $P \in B$ with $P \cap \kappa < \sup(N \cap \kappa)$, 
$Q(N,P) \in B$;
\item for all $M \in A$ and $\zeta \in R_M(N)$, 
$Q(N,M,\zeta) \in B$;
\item for all $M \in A$, $\zeta \in R_M(N)$, and $\sigma \in R^+_N(M)$, 
$Q(N,M,\zeta,\sigma,\vec S) \in B$.
\end{enumerate}
\end{definition}

\begin{proposition}
Let $(A,B)$ be an $\vec S$-obedient side condition such that 
for all $M \in A$, $M \prec (\mathcal A,\mathcal Y)$. 
Suppose that $N \in A$ is simple. 
Then there exists $(A,C)$ such that $B \subseteq C$, 
$(A,C)$ is an $\vec S$-obedient 
side condition, and $(A,C)$ is closed under canonical models with 
respect to $N$.
\end{proposition}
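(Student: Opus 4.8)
The plan is to add the finitely many canonical models associated with $N$ to $B$ in a finite sequence of steps, invoking Lemmas 10.1, 10.4, and 10.7 to preserve $\vec S$-obedience at each step, and then to observe that no further closure is needed because every canonical model built over $N$ already lies inside $N$.

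First I would isolate the finite set $\mathcal D$ of canonical models demanded by Definition 10.8: the models $Q(N,P)$ for $P \in B$ with $P \cap \kappa < \sup(N \cap \kappa)$; the models $Q(N,M,\zeta)$ for $M \in A$ and $\zeta \in R_M(N)$; and the models $Q(N,M,\zeta,\sigma,\vec S)$ for $M \in A$, $\zeta \in R_M(N)$, and $\sigma \in R^+_N(M)$ with the set $X$ of Notation 10.5 nonempty, so that this last model is defined. Finiteness of $\mathcal D$ follows from the finiteness of $A$ and $B$, the finiteness of $R_M(N)$ (Lemma 2.2(2)), and the finiteness of $R^+_N(M)$ (Lemma 8.15(1)). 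I would then set $C := B \cup \mathcal D$ and show that $(A,C)$ has the two required properties.

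To see that $(A,C)$ is an $\vec S$-obedient side condition, I would enumerate $\mathcal D \setminus B$ as $Q_1,\dots,Q_n$ and build $(A,C)$ by adding these one at a time, proving by induction that each intermediate pair $(A, B \cup \{Q_1,\dots,Q_i\})$ is an $\vec S$-obedient side condition. In the inductive step: if the model being added is some $Q(N,P)$ with $P \in B$, apply Lemma 10.1 (using $P \in B$, which is contained in the current second coordinate, and $P \cap \kappa < \sup(N\cap\kappa)$); if it is some $Q(N,M,\zeta)$, apply Lemma 10.4, whose hypothesis $M \prec (\mathcal A,\mathcal Y)$ is supplied by the hypothesis of the proposition; if it is some $Q(N,M,\zeta,\sigma,\vec S)$, apply Lemma 10.7. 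Each of these three lemmas is exactly the statement that adjoining one such model preserves $\vec S$-obedience, so the induction goes through and $(A,C)$ is an $\vec S$-obedient side condition with $B \subseteq C$.

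Finally I would verify closure under canonical models with respect to $N$. Conditions (2) and (3) of Definition 10.8 are immediate because $A$ is unchanged and every model of the form $Q(N,M,\zeta)$, and every defined model of the form $Q(N,M,\zeta,\sigma,\vec S)$, was placed in $\mathcal D \subseteq C$. For condition (1), given $P \in C$ with $P \cap \kappa < \sup(N\cap\kappa)$: if $P \in B$ then $Q(N,P) \in \mathcal D \subseteq C$ by construction; otherwise $P$ is itself one of the adjoined canonical models, hence $P \in N \cap \mathcal Y$ by Lemma 9.2(1), Lemma 9.9(1), or Lemma 9.14(1) according to its type, and then $Q(N,P) = P \in C$ by the remark following Notation 9.1. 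The only delicate point is this last, termination step — ruling out an unending chain of new closure obligations generated by the models we add — and it works precisely because each canonical model over $N$ lies in $N \cap \mathcal Y$, on which $Q(N,\cdot)$ acts as the identity, while conditions (2) and (3) quantify only over the fixed set $A$.
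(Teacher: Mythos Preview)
Your argument is correct and matches the paper's proof essentially step for step: add the required canonical models one at a time using the closure lemmas, then observe that every model added lies in $N$ so that $Q(N,\cdot)$ fixes it and no new closure obligations arise. Note two reference slips: what you call ``Lemma 10.7'' is Lemma 10.6 in the paper (the lemma for $Q(N,M,\zeta,\sigma,\vec S)$), and what you call ``Definition 10.8'' is Definition 10.7 (closure under canonical models); Proposition 10.8 is the statement you are proving.
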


\begin{proof}
First apply Lemma 10.1 finitely many times 
to obtain $C_0$ such that $B \subseteq C_0$, $(A,C_0)$ is an 
$\vec S$-obedient side condition, and $(A,C_0)$ satisfies property (1) 
of Definition 10.7. 
Then apply Lemmas 10.4 and 10.6 finitely many times to obtain $C$ 
such that $C_0 \subseteq C$, $(A,C)$ is an $\vec S$-obedient side condition, 
and $(A,C)$ satisfies properties (2) and (3) of Definition 10.7. 
Since all of the models which are added are in $N$, and for all 
$P \in N \cap \mathcal Y$, $Q(N,P) = P$, it follows that 
$(A,C)$ also satisfies property (1) of Definition 10.7.
\end{proof}

\begin{lemma}
Suppose that $(A,B)$ is an $\vec S$-obedient side condition, 
and $N \in A$ is simple. 
Assume that $(A,B)$ is closed under canonical models with respect 
to $N$. 
Then:
\begin{enumerate}
\item Suppose that $P \in B$, 
$P \cap \kappa < \sup(N \cap \kappa)$, 
and $\tau \in N \cap P \cap \kappa^+$. 
Then there is $Q \in B \cap N$ such that 
$Q \cap \kappa = 
\min((N \cap \kappa) \setminus (P \cap \kappa))$ and $\tau \in Q$.

\item Suppose that $M \in A$ and $\zeta \in R_M(N)$. 
Then there is $Q \in B \cap N$ 
such that $Q \cap \kappa = \zeta$ 
and $M \cap N \cap \kappa^+ \subseteq Q$.

\item Suppose that $M \in A$, $M < N$, and $\zeta \in R_M(N)$. 
Then there is 
$Q \in B \cap N$ 
such that $Q \cap \kappa = \zeta$, and 
for all $P \in M \cap N \cap \mathcal Y$ which is $\vec S$-strong, 
$N \cap P \cap \kappa^+ \subseteq Q$.

\item Suppose that $M \in A$, $\zeta \in R_M(N)$, 
$P \in M \cap \mathcal Y$ is $\vec S$-strong, 
$\sup(N \cap \zeta) < P \cap \kappa < \zeta$, and 
$\tau \in N \cap P \cap \kappa^+$. 
Then there is $Q \in B \cap N$ such that $Q \cap \kappa = \zeta$ 
and $\tau \in Q$.
\end{enumerate}
\end{lemma}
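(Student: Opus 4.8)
The plan is to prove each of the four parts by exhibiting the appropriate canonical model from Section 9 --- membership in $B$ being guaranteed by the closure hypothesis (Definition 10.7) --- and then reading off the required properties from Lemmas 9.2, 9.9, 9.10, and 9.14. Parts (1)--(3) are nearly immediate. For (1), take $Q := Q(N,P)$; since $P \in B$ and $P \cap \kappa < \sup(N \cap \kappa)$, Definition 10.7(1) gives $Q \in B$, and Lemma 9.2 gives $Q \in N \cap \mathcal Y$ together with $Q \cap \kappa = \min((N \cap \kappa^+) \setminus (P \cap \kappa))$, which equals $\min((N \cap \kappa) \setminus (P \cap \kappa))$ because $P \cap \kappa < \sup(N \cap \kappa) < \kappa$ forces this minimum below $\kappa$; moreover $N \cap Q \cap \kappa^+ = N \cap P \cap \kappa^+$, so $\tau \in N \cap P \cap \kappa^+ = N \cap Q \cap \kappa^+ \subseteq Q$. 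For (2), take $Q := Q(N,M,\zeta) \in B$ by Definition 10.7(2), and Lemma 9.9 yields $Q \in N \cap \mathcal Y$, $Q \cap \kappa = \zeta$, and $M \cap N \cap \kappa^+ \subseteq Q$. For (3), take the same $Q := Q(N,M,\zeta)$; since $M < N$, Lemma 9.10(3) gives $N \cap P \cap \kappa^+ \subseteq Q$ for every $P \in M \cap N \cap \mathcal Y$, which is more than needed (the restriction to $\vec S$-strong $P$ only weakens the claim).

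The substance of the lemma is part (4), where the correct third parameter for the canonical model must be identified. I would split into two cases. If $\tau < \alpha_{M,N}$, then again $Q := Q(N,M,\zeta) \in B \cap N$ works: Lemma 9.10(1) applies since $P \in M \cap \mathcal Y$ and $\sup(N \cap \zeta) < P \cap \kappa < \zeta$, giving $N \cap P \cap \alpha_{M,N} \subseteq Q$, and $\tau$ lies in that set. If $\alpha_{M,N} \le \tau$, then $\tau \in (N \cap \kappa^+) \setminus \alpha_{M,N}$, and since $\sup(P) \in M \cap \kappa^+$ with $\tau < \sup(P)$ (as $P$ is closed under successors), the ordinal $\sigma := \min((M \cap \kappa^+) \setminus \tau)$ is well defined and lies in $R^+_N(M)$ by Definition 8.14(2), witnessed by $\xi = \tau$. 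The point is then to check that the given model $P$ belongs to the set $X$ of Notation 10.5: $P$ is $\vec S$-strong and in $M \cap \mathcal Y$ with $\sup(N \cap \zeta) < P \cap \kappa < \zeta$, and $\tau$ witnesses $P \cap N \cap [\sup(M \cap \sigma),\sigma) \ne \emptyset$. Here one uses that $M \cap \sigma \subseteq \tau$ by the minimality of $\sigma$, so $\sup(M \cap \sigma) \le \tau$, while $\tau < \sigma$ because $\tau \notin M$ --- otherwise $\tau \in M \cap N$ and hence $\tau < \sup(M \cap N \cap \kappa^+) = \alpha_{M,N}$ (the supremum is not attained, since $M \cap N \cap \kappa^+$ is closed under successors), contradicting $\alpha_{M,N} \le \tau$.

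With $P \in X$, the set $X$ is nonempty, so $Q := Q(N,M,\zeta,\sigma,\vec S) = Q(N,M,\zeta,\sigma,X)$ is defined; Definition 10.7(3) gives $Q \in B$, and Lemma 9.14 gives $Q \in N \cap \mathcal Y$, $Q \cap \kappa = \zeta$, and $N \cap P' \cap \sigma \subseteq Q$ for all $P' \in X$; applying this with $P' = P$ and using $\tau \in N \cap P \cap \sigma$ yields $\tau \in Q$. The main obstacle, such as it is, is this case of part (4): correctly producing $\sigma \in R^+_N(M)$ and verifying that $P$ lies in $X$ --- in particular the chain $\sup(M \cap \sigma) \le \tau < \sigma$ and the observation $\tau \notin M$; all remaining steps are direct citations of the structural lemmas of Section 9 together with the three clauses of Definition 10.7.
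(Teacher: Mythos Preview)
Your proof is correct and follows essentially the same approach as the paper's own proof: in each part you select exactly the canonical model the paper uses --- $Q(N,P)$ for (1), $Q(N,M,\zeta)$ for (2), (3), and the first case of (4), and $Q(N,M,\zeta,\sigma,\vec S)$ with $\sigma = \tau_M$ for the second case of (4) --- and cite the same structural lemmas (9.2, 9.9, 9.10, 9.14). Your treatment of part (4) is in fact slightly more detailed than the paper's, which simply asserts ``so $\tau \in N \cap P \cap \sigma$'' without explicitly verifying $\tau \notin M$ or $\sup(M \cap \sigma) \le \tau$; your explicit check that $\tau \in M$ would force $\tau < \alpha_{M,N}$ is exactly the missing justification.
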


\begin{proof}
(1) Suppose that 
$$
P \in B, \ 
P \cap \kappa < \sup(N \cap \kappa), \ \textrm{and} \ 
\tau \in N \cap P \cap \kappa^+.
$$
Then $Q(N,P) \in B \cap N$. 
By Lemma 9.2, 
$$
Q(N,P) \cap \kappa = 
\min((N \cap \kappa) \setminus (P \cap \kappa)) \ \textrm{and} \ 
N \cap P \cap \kappa^+ \subseteq Q(N,P).
$$ 
In particular, $\tau \in Q(N,P)$.

\bigskip

(2,3) Let $M \in A$ and $\zeta \in R_M(N)$. 
Let $Q := Q(N,M,\zeta)$. 
Then $Q \in B \cap N$. 
By Lemma 9.9, 
$$
Q \cap \kappa = \zeta \ \textrm{and} \ M \cap N \cap \kappa^+ 
\subseteq Q,
$$
which proves (2). 
If in addition $M < N$, then by Lemma 9.10(3), 
for all $P \in M \cap N \cap \mathcal Y$, 
$N \cap P \cap \kappa^+ \subseteq Q$, which proves (3).

\bigskip

(4) Suppose that $M \in A$, $\zeta \in R_M(N)$, 
$P \in M \cap \mathcal Y$ is $\vec S$-strong, 
$\sup(N \cap \zeta) < P \cap \kappa < \zeta$, and 
$\tau \in N \cap P \cap \kappa^+$. 
First assume that $\tau < \alpha_{M,N}$. 
Then $Q(N,M,\zeta) \in B \cap N$, and 
$Q(N,M,\zeta) \cap \kappa = \zeta$ by Lemma 9.9. 
Also by Lemma 9.10(1), 
$$
N \cap P \cap \alpha_{M,N} \subseteq Q(N,M,\zeta).
$$
Hence $\tau \in Q(N,M,\zeta)$.

Assume that $\alpha_{M,N} \le \tau$. 
Note that $\sigma := \tau_M$ exists since $\tau < \sup(P) \in M$. 
As $\tau \in N$, $\sigma$ is in $R^+_N(M)$. 
So $\tau \in N \cap P \cap \sigma$. 
Let $Q := Q(N,M,\zeta,\sigma,\vec S)$, which is in $B \cap N$. 
Then 
$$
Q \cap \kappa = \zeta \ \textrm{and} \ 
N \cap P \cap \sigma \subseteq Q
$$
by Lemma 9.14. 
In particular, $\tau \in Q$.
\end{proof}

\bigskip

\addcontentsline{toc}{section}{11. The main proxy lemma}

\textbf{\S 11. The main proxy lemma}

\stepcounter{section}

\bigskip

Let $M \in \mathcal X$ and $N \in \mathcal X \cup \mathcal Y$, 
where $N$ is simple. 
Suppose that $M < N$ in the case that $N \in \mathcal X$, and 
$\sup(M \cap N \cap \kappa) < N \cap \kappa$ in the case that  
$N \in \mathcal Y$. 
Consider $P \in M \cap \mathcal Y$ such that 
$P \cap \kappa < \sup(M \cap N \cap \kappa)$, and assume that 
we are building an object in $N$ which needs to be 
compatible in some sense with the model $P$. 
By Lemma 8.2, we know that $M \cap N$ is a member of $N$. 
However, when we intersect $M$ with $N$, the model $P$ will disappear 
if it is not in $N$. 
Thus although $N$ sees a fragment of $M$, it does not necessarily see 
$P$ even though $P \cap \kappa$ is in $N$.

Proxies are designed to handle this situation. 
We will define an object $p(M,N)$, called the 
\emph{canonical proxy of $M$ and $N$}, which is a member of $N$. 
The canonical proxy codes enough information about $M$ that we can 
rebuild fragments of $P$ inside $N$ 
which can be used to avoid 
incompatibilities between $P$ and the object we are 
constructing.\footnote{The idea of a canonical 
proxy which we use in this paper is a variation of 
a technical device used by Mitchell for a similar purpose. 
In the proof of Mitchell's theorem from \cite{mitchell}, a side condition is 
a pair $(M,a)$, where $M$ is a countable model and $a$ is a proxy. 
In this paper we separate the idea of a side condition and a proxy. 
In contrast to \cite{mitchell}, where proxies are present in 
many different parts of the proof, 
all applications of proxies which we give reduce to a single 
lemma, which is the main proxy lemma, Lemma 11.5. 
The idea of a canonical proxy 
and the main proxy lemma are new to this paper and do not appear in 
\cite{mitchell}.}

Although the description and the proof of the existence of proxies is quite complicated, in practice when we use proxies we only need to appeal to 
a single result, called the \emph{main proxy lemma}, which is 
Lemma 11.5 below. 
In applications of proxies, it is not necessary to understand anything else 
about proxies except what is contained in that lemma.

The next lemma asserts the existence of proxies. 
We will postpone the proof until the next section.

\begin{lemma}[Proxy existence lemma]
Let $M \in \mathcal X$ and $N \in \mathcal X \cup \mathcal Y$, where $N$ 
is simple. 
Assume that $M < N$ in the case that $N \in \mathcal X$, and 
$\sup(M \cap N \cap \kappa) < N \cap \kappa$ in the case that 
$N \in \mathcal Y$. 
Let $\eta^* \in R^+_N(M)$. 
Then there exist finite sets $a$ and $a'$ satisfying the 
following statements:
\begin{enumerate}

\item $a$ is a finite set of pairs of ordinals, and 
$a' = \{ \sigma : \exists \beta \ (\beta,\sigma) \in a \}$.

\item For all $(\beta,\sigma)$ in $a$,
\begin{enumerate}
\item $\beta \in M \cap N \cap \kappa$;
\item $\sigma \in N \cap \kappa^+$ is a limit ordinal; 
\item $\sup(N \cap \sigma) \le \eta^*$;
\item if $a \ne \emptyset$, then 
$\min(a') = \min((N \cap \kappa^+) \setminus \sup(M \cap \eta^*))$.
\end{enumerate}

\item If $(\beta,\sigma) \in a$, where $\min(a') < \sigma$, 
and $\beta \le \gamma < \kappa$, 
then:
\begin{enumerate}
\item $A_{\eta^*,\gamma} \cap \sup(N \cap \sigma) = 
A_{\sigma,\gamma} \cap \sup(N \cap \sigma)$;
\item $A_{\eta^*,\gamma} \cap N \cap \sigma = A_{\sigma,\gamma} \cap N$.
\end{enumerate}

\item If $P \in M \cap \mathcal Y$, $P \cap \kappa \in M \cap N \cap \kappa$, 
and $P \cap N \cap [\sup(M \cap \eta^*),\eta^*) \ne \emptyset$, 
then there exists $\sigma \in a'$ such that: 
\begin{enumerate}
\item $P \cap N \cap \eta^* \subseteq \sigma$;
\item the least such $\sigma$ 
is equal to the largest $\sigma$ in $a'$ such that for some $\beta$, 
$\beta \le P \cap \kappa$ and $(\beta,\sigma) \in a$.
\end{enumerate}

\item Let $P$ and $\sigma$ be as in (4), and assume that 
$(\beta,\sigma) \in a$; then:
\begin{enumerate}
\item $\beta \le P \cap \kappa$;

\item $P \cap \sup(N \cap \sigma) = A_{\sigma,P \cap \kappa} \cap 
\sup(N \cap \sigma)$;

\item $P \cap N \cap \eta^* = A_{\sigma,P \cap \kappa} \cap N$.
\end{enumerate}
\end{enumerate}
\end{lemma}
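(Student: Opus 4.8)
The plan is to build $a$ by a finite recursion on the remainder point $\eta^*$, descending through the set $R^+_N(M)$, which is finite by Lemma 8.15(1). Put $\eta_0:=\min((N\cap\kappa^+)\setminus\sup(M\cap\eta^*))$ whenever this exists; by clause (2d) it must equal $\min(a')$ whenever $a\neq\emptyset$. If $N\cap[\sup(M\cap\eta^*),\eta^*)=\emptyset$ (in particular if $\eta^*$ is a limit point of $M$), then clause (4) is vacuous and I take $a=\emptyset$. If $\eta^*=\min((M\cap\kappa^+)\setminus\alpha_{M,N})$, so $\alpha_{M,N}\notin M$ and $\sup(M\cap\eta^*)=\alpha_{M,N}$, this is the base case, handled directly below. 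Otherwise Lemma 8.15(3) applied to $\eta^*$ gives $\eta_0\in R^+_M(N)$ with $\eta^*=\min((M\cap\kappa^+)\setminus\eta_0)$, and a second application with the roles of $M$ and $N$ exchanged gives a companion $\eta^{**}:=\min((M\cap\kappa^+)\setminus\sup(N\cap\eta_0))\in R^+_N(M)$; a short case analysis on whether $\alpha_{M,N}\in M$, using Lemma 8.5, shows $\eta^{**}<\eta^*$, so a proxy for $\eta^{**}$ is available inductively. For $\eta^*$ I would take $a:=\{(\beta_0,\eta_0)\}$ together with that recursively obtained proxy, its thresholds raised into $M\cap N\cap\kappa$, with $\beta_0$ and the raised thresholds chosen so that the step function of clause (4b) comes out right.

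Clauses (1) and (2a)--(2c) are then bookkeeping: every $\sigma\in a'$ is a remainder point of $R^+_M(N)$ below $\eta^*$, hence a limit ordinal lying in $N$ (a limit ordinal since it is closed under $H^*$, Lemma 8.16), with $\sup(N\cap\sigma)\le\eta^*$ by the companion clause of Lemma 8.15(3); the thresholds $\beta$ are moved from $M\cap\kappa$ into $N$ using the hypothesis $M<N$ (resp. $\sup(M\cap N\cap\kappa)<N\cap\kappa$) via Lemma 8.6, giving (2a), and (2d) is imposed by fiat. The crux is clause (3). For $(\beta,\sigma)\in a$ with $\eta_0<\sigma$ and $\beta\le\gamma<\kappa$, I would first show $\sigma\in\lim(C_{\eta^*})\cup A_{\eta^*,\gamma}\cup\lim(A_{\eta^*,\gamma})$, so that the coherence property Notation 7.4(4) gives $A_{\sigma,\gamma}=A_{\eta^*,\gamma}\cap\sigma$ and hence $A_{\eta^*,\gamma}\cap\sup(N\cap\sigma)=A_{\sigma,\gamma}\cap\sup(N\cap\sigma)$. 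That $\sigma$ is such a coherence point for $\eta^*$ is exactly the invariant the recursion maintains: at the base case it follows from the supremum computation in Lemma 8.11, with $\alpha_{M,N}$ and $\sup(M\cap N\cap\kappa)$ replaced by $\eta^*$ and $\gamma$, and it is propagated upward through $\eta^{**}$ using Lemmas 8.6 and 8.9. The remaining equality $A_{\eta^*,\gamma}\cap N\cap\sigma=A_{\sigma,\gamma}\cap N$ then follows because $\sigma$ being a remainder point forces $N$ to contain nothing in $[\sup(N\cap\sigma),\eta^*)$ lying in $A_{\eta^*,\gamma}$ above $\sup(N\cap\sigma)$.

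Clauses (4) and (5) are read off from (3). Given $P$ as in (4), Lemma 7.26 gives $P\cap\kappa^+=A_{\sup(P),P\cap\kappa}$ with $\sup(P)\in M$; since $P$ meets $N\cap[\sup(M\cap\eta^*),\eta^*)$, the ordinal $\sup(P\cap N\cap\eta^*)$ is a limit point of $P$, and clause (3), applied with $\gamma=P\cap\kappa$ and $\sigma$ the largest member of $a'$ for which $(\beta,\sigma)\in a$ for some $\beta\le P\cap\kappa$, yields $A_{\sup(P),P\cap\kappa}\cap N\cap\eta^*=A_{\sigma,P\cap\kappa}\cap N$. This gives (4a), (4b), (5a), and (5c) at once; clause (5b), the identity $P\cap\sup(N\cap\sigma)=A_{\sigma,P\cap\kappa}\cap\sup(N\cap\sigma)$, comes from running the argument of Lemma 8.11 directly on $P$ and $\sigma$.

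The step I expect to be the main obstacle is the exact step-function description together with the finiteness in clause (4b): a priori $\sup(P\cap N\cap\eta^*)$ depends on $P\cap\kappa\in M\cap N\cap\kappa$ in continuum-many ways, and one must show that after rounding up to a coherence point it takes only finitely many values, indexed by finitely many thresholds. This is a pressing-down argument over $M$, in the spirit of the auxiliary functions $f_{m,\eta}$ from Lemma 9.6 and the supremum computation of Lemma 8.11: each genuine jump of the rounded value occurs at an element of $N$ which is itself a remainder point of $R^+_N(M)$, of which there are only finitely many by Lemma 8.15(1), while between consecutive such points the coherence of $\vec A$ forces the rounded value to stay constant. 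Interleaving this analysis with the recursive descent through $R^+_M(N)$, and checking that all the case distinctions (whether $\eta^*$ is a limit point of $M$, the type of the remainder point, whether $\sup(M\cap\eta^*)\in M$) are handled uniformly, is where the bulk of the technical work lies.
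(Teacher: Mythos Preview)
Your approach is genuinely different from the paper's and has a real gap at the coherence step, clause (3a).

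You propose that every $\sigma\in a'$ be a remainder point in $R^+_M(N)$, and that the identity $A_{\eta^*,\gamma}\cap\sup(N\cap\sigma)=A_{\sigma,\gamma}\cap\sup(N\cap\sigma)$ follows by showing $\sigma\in\lim(C_{\eta^*})\cup A_{\eta^*,\gamma}\cup\lim(A_{\eta^*,\gamma})$ via Notation 7.4(4). But a remainder point $\sigma\in R^+_M(N)$ has no a priori relationship to $C_{\eta^*}$ or to $A_{\eta^*,\gamma}$ for arbitrary $\gamma\ge\beta$; your appeal to ``Lemma 8.11 with $\alpha_{M,N}$ and $\sup(M\cap N\cap\kappa)$ replaced by $\eta^*$ and $\gamma$'' does not supply this, since that lemma concerns a very specific bounded situation and its proof does not generalize to produce coherence at remainder points. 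Without this coherence, the chain of equalities you need for (5b) and (5c) collapses.

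The paper's construction in Section 12 is organized around exactly this obstacle. It does \emph{not} recurse on $R^+_N(M)$; instead it maintains an auxiliary finite set $b_k$ of pairs $(\beta,\eta)$ for which full coherence $A_{\eta,\gamma}=A_{\eta^*,\gamma}\cap\eta$ holds (inductive hypothesis (E)), and processes the minimum $\eta\in b_k'$ by descending along the square sequence $C_\eta$: one computes $\theta:=\sup(\lim(C_\eta)\cap\cl(N))$, and either (if $\theta=\sup(N)$) terminates with a specific $\sigma=c_{\theta,\xi}\in\lim(C_\eta)$, or (if $\theta<\sup(N)$) replaces $\eta$ by finitely many ordinals of the form $\min(C_\eta\setminus(\xi+1))$, whose membership in $A_{\eta,\gamma}$ is secured by Notation 7.4(7). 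Coherence for the new $b_{k+1}$ elements is inherited precisely because they lie in $C_\eta$ or in $A_{\eta,\gamma}$. Termination is not by counting remainder points but by the well-ordering $\prec$ of finite sets of ordinals (Lemma 12.1): $b_{k+1}'$ differs from $b_k'$ only below $\min(b_k')$, so $b_{k+1}'\prec b_k'$. Your pressing-down sketch, which asserts that jumps of the rounded value occur only at remainder points of $R^+_N(M)$, does not match this picture; the actual jump points come from $C_\eta$ and can be far more numerous than $|R^+_N(M)|$ at any single stage.
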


For the remainder of this section, we will assume that the proxy 
existence lemma holds. 
We now define the canonical proxy $p(M,N)$.

A lexicographical ordering on sets of pairs of ordinals is described as follows. 
We identify a finite set of pairs of 
ordinals as a finite set of ordinals using the G\"odel pairing function, 
and then compare any two finite sets of pairs using the 
lexicographical 
ordering on their corresponding sets of ordinals. 

\begin{definition}
Let $M \in \mathcal X$ and $N \in \mathcal X \cup \mathcal Y$, where $N$ 
is simple. 
Assume that $M < N$ in the case that $N \in \mathcal X$, and 
$\sup(M \cap N \cap \kappa) < N \cap \kappa$ in the case that 
$N \in \mathcal Y$. 

Let $\eta_0,\ldots,\eta_{k-1}$ enumerate the ordinals 
in $R^+_N(M)$ in increasing order. 
Define $p(M,N)$ as the function with domain $k$ such that for all 
$i < k$, $p(M,N)(i)$ is the lexicographically least set $a$ 
satisfying (1)--(5) of Lemma 11.1 for $\eta^* = \eta_i$.
\end{definition}

Note that $p(M,N)$ is a member of $N$.

The proof of the main proxy lemma will use the next two technical lemmas.

\begin{lemma}
Let $M \in \mathcal X$ and $N \in \mathcal X \cup \mathcal Y$, where $N$ 
is simple. 
Assume that $M < N$ in the case that $N \in \mathcal X$, and 
$\sup(M \cap N \cap \kappa) < N \cap \kappa$ in the case that 
$N \in \mathcal Y$.

Let $k$ be the size of $R^+_N(M)$, and assume that 
$\eta^*$ is the $i$-th member of $R^+_N(M)$, 
where $i < k$. 
Let $a := p(M,N)(i)$ and 
$a' := \{ \sigma : \exists \beta \ (\beta,\sigma) \in a \}$.

Suppose that $P \in M \cap \mathcal Y$, $P \cap \kappa \in M \cap N \cap \kappa$, $\cf(P \cap \kappa) > \omega$, 
and 
$P \cap N \cap [\sup(M \cap \eta^*),\eta^*) \ne \emptyset$. 
Let $\sigma$ be the least ordinal in $a'$ such that 
$P \cap N \cap \eta^* \subseteq \sigma$, which exists by 
Lemma 11.1(4). 
Let $Q := Sk(A_{\sigma,P \cap \kappa})$. Then:
\begin{enumerate}
\item $Q \in N \cap \mathcal Y$;
\item $Q \cap \kappa = P \cap \kappa$ and 
$Q \cap \kappa^+ = A_{\sigma,P \cap \kappa}$;
\item $Q \cap N \cap \kappa^+ = P \cap N \cap \eta^*$;
\item $Q \cap \sup(N \cap \sigma) = P \cap \sup(N \cap \sigma)$.
\end{enumerate}
In particular, $P \cap N \cap \eta^* \subseteq Q$.
\end{lemma}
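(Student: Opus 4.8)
The plan is to use the defining properties of $\sigma$ and $Q$ from Lemma 11.1, applied to $\eta^* = \eta_i$ with $a = p(M,N)(i)$, and then run an argument structurally parallel to the proofs of the canonical model lemmas in Section 9 (especially Lemma 9.2 and Lemma 9.14). First I would fix $(\beta,\sigma) \in a$ witnessing the choice of $\sigma$, as granted by Lemma 11.1(4)(b), so that $\beta \le P \cap \kappa$ by Lemma 11.1(5)(a). The key structural input is Lemma 11.1(5)(c), which gives $P \cap N \cap \eta^* = A_{\sigma, P \cap \kappa} \cap N$. Since $Q = Sk(A_{\sigma,P\cap\kappa})$, to get (2) I first need that $A_{\sigma,P\cap\kappa}$ is closed under $H^*$; this should follow from Lemma 7.10 once I show $N \cap A_{\sigma,P\cap\kappa} = N \cap b$ for some set $b$ closed under $H^*$. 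The natural candidate is $b = P \cap \eta^*$, which is closed under $H^*$ by Lemma 8.16 (since $\eta^* \in R^+_N(M)$ and $P \cap \eta^*$ is an initial-segment-style intersection — more precisely, by Lemma 7.27 applied at a limit point of $P$ above, or directly since $P \cap \eta^*$ equals $P \cap \sup(P \cap \eta^*) = A_{\sup(P\cap\eta^*),P\cap\kappa}$, a set of the form guaranteed closed under $H^*$). Then $N \cap A_{\sigma,P\cap\kappa} = P \cap N \cap \eta^* = N \cap (P \cap \eta^*)$ by 11.1(5)(c), so Lemma 7.10 applies and $Q \cap \kappa^+ = A_{\sigma,P\cap\kappa}$, giving the second half of (2).

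Next I would establish (1): $Q \in N \cap \mathcal Y$. Membership in $N$ follows because $\sigma \in a' \subseteq N$ (by 11.1(2)(b)) and $P \cap \kappa \in N$ (by hypothesis $P \cap \kappa \in M \cap N \cap \kappa$), so $A_{\sigma,P\cap\kappa}$ and hence $Q$ are in $N$ by elementarity. For $Q \in \mathcal Y$, I need $Q \cap \kappa \in \kappa$ (clear, $= P \cap \kappa$), $Q \prec \mathcal A$ (automatic since $Q$ is a Skolem hull), and $\lim(C_{\sup Q}) \cap Q$ cofinal in $\sup Q$. For the last point, $\sup(Q) = \sup(A_{\sigma,P\cap\kappa})$; I would argue $\sigma$ is a limit point of $A_{\sigma,P\cap\kappa}$ (using $P$-closure and the coherence of $\vec A$, as in the proof of Lemma 9.12(4)), so $\sup Q = \sigma$, and then $\lim(C_\sigma)$ cofinal in $\sigma$ follows from $\ot(C_\sigma) \le P \cap \kappa$ via Notation 7.4(6), mirroring the final paragraph of the proof of Lemma 9.12 — noting $\cf(P\cap\kappa) > \omega$ is exactly the hypothesis used there. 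Statement (3), $Q \cap N \cap \kappa^+ = P \cap N \cap \eta^*$, is then immediate: $Q \cap N \cap \kappa^+ = A_{\sigma,P\cap\kappa} \cap N = P \cap N \cap \eta^*$ by 11.1(5)(c) and the computation of $Q \cap \kappa^+$.

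For (4), $Q \cap \sup(N \cap \sigma) = P \cap \sup(N\cap\sigma)$: this is essentially Lemma 11.1(5)(b), which states $P \cap \sup(N\cap\sigma) = A_{\sigma,P\cap\kappa} \cap \sup(N\cap\sigma)$, combined with $Q \cap \kappa^+ = A_{\sigma,P\cap\kappa}$ just proved — intersecting both with $\sup(N\cap\sigma)$ gives the claim. Finally, the "in particular" clause $P \cap N \cap \eta^* \subseteq Q$ follows from (3) since the left side equals $Q \cap N \cap \kappa^+ \subseteq Q$. The main obstacle I anticipate is the careful verification that $A_{\sigma,P\cap\kappa}$ is closed under $H^*$ and that $\sup(Q) = \sigma$ with $Q \in \mathcal Y$: this requires correctly identifying which set $b$ closed under $H^*$ agrees with $A_{\sigma,P\cap\kappa}$ below $N$, and then running the $\ot(C_\sigma)$ bound exactly as in Lemma 9.12(4); the bookkeeping around whether $\sigma \in a'$ forces $\sigma$ to be a genuine limit point of $P$ (needed to invoke Lemma 7.27 for $P \cap \sigma = A_{\sigma,P\cap\kappa}$) will need the hypothesis $P \cap N \cap [\sup(M\cap\eta^*),\eta^*) \ne \emptyset$ together with $P \cap N \cap \eta^* \subseteq \sigma$ to locate a point of $P$ cofinally approaching $\sigma$.
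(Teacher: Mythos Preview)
Your treatment of (2), (3), (4), and the closure of $A_{\sigma,P\cap\kappa}$ under $H^*$ via Lemma~7.10 with $b = P \cap \eta^*$ is correct and matches the paper's argument. The gap is in your approach to (1), specifically the claim that $\sup(Q) = \sigma$. This is not justified and is in general false. The hypotheses give only that $P \cap N \cap \eta^* \subseteq \sigma$ and that this set contains at least one point above $\sup(M\cap\eta^*)$; nothing forces $P$, or $A_{\sigma,P\cap\kappa}$, to be cofinal in $\sigma$. The ordinal $\sigma$ comes from the proxy data $a'$, not from the closure of $P$; it can sit strictly above $\sup(A_{\sigma,P\cap\kappa})$. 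The analogy with Lemma~9.12(4) therefore breaks: there $\sigma$ was a genuine limit point of each $P\in X$ by construction (via Lemma~7.30), but here no such thing is available. Indeed, the paper's proof explicitly handles subcases where $\sup(Q) < \sigma$.

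The paper does not attempt to compute $\sup(Q)$. Instead it invokes Lemma~7.15: since $\cf(Q\cap\kappa) = \cf(P\cap\kappa) > \omega$, it suffices to show $\cf(\sup(Q)) > \omega$. This is done by a case split on whether $\theta := \sup(N\cap\sigma)$ is strictly below $\sigma$ or equal to $\sigma$. The key observation in both cases is that once $\sup(Q) < \sigma$, elementarity in $N$ forces $\sup(Q) < \theta$, whence by (4) one gets $\sup(Q) = \sup(P\cap\theta)$, a limit point of $P$ below $\eta^*$; if this had countable cofinality, Lemma~7.14 (using that $\eta^*$ is a limit point of $P$ and $\cf(P\cap\kappa) > \omega$) would place $\sup(Q)$ in $P\cap\theta = Q\cap\theta$, which is absurd. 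The case $\theta = \sigma$ needs an additional preliminary step ruling out $\sup(Q) = \sigma$, again via Lemma~7.14. This cofinality argument through Lemma~7.15 is the idea your plan is missing.
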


\begin{proof}
Let $\theta := \sup(N \cap \sigma)$. 
By Lemma 11.1(5(b,c)),
$$
P \cap \theta = 
A_{\sigma,P \cap \kappa} \cap \theta \ \textrm{and} \ 
P \cap N \cap \eta^* = 
A_{\sigma,P \cap \kappa} \cap N.
$$
In particular, 
as $A_{\sigma,P \cap \kappa} \in N$, 
Lemmas 7.10 and 8.16 and the second equality imply that 
$A_{\sigma,P \cap \kappa}$ is closed under $H^*$. 
So $Q \cap \kappa^+ = A_{\sigma,P \cap \kappa}$. 
Hence 
$$
Q \cap N \cap \kappa^+ = A_{\sigma,P \cap \kappa} \cap N 
= P \cap N \cap \eta^*,
$$
which proves (3). 
Also by the first equality, 
$$
Q \cap \sup(N \cap \sigma) = Q \cap \theta = 
A_{\sigma,P \cap \kappa} \cap \theta = P \cap \theta = 
P \cap \sup(N \cap \sigma),
$$
which proves (4).

We claim that $Q \cap \kappa = P \cap \kappa$, which proves (2). 
Since $Q$ and $P \cap \kappa$ are in $N$, 
it suffices to show that $N$ models that $Q \cap \kappa = P \cap \kappa$. 
So let $\alpha \in Q \cap N \cap \kappa$, and we will show 
that $\alpha < P \cap \kappa$. 
Then $\alpha \in Q \cap N \cap \kappa^+ = P \cap N \cap \eta^*$. 
So $\alpha \in P \cap \kappa$. 
Conversely, let $\alpha \in N \cap P \cap \kappa$, and we will show 
that $\alpha \in Q$. 
Then $\alpha \in P \cap N \cap \eta^* = Q \cap N \cap \kappa^+$, 
so $\alpha \in Q$. 

To prove (1), it suffices to show that 
$\lim(C_{\sup(Q)}) \cap Q$ is cofinal in $\sup(Q)$. 
Since $Q \cap \kappa^+ = A_{\sigma,P \cap \kappa}$, 
$\sup(Q) \le \sigma$. 
Also note that since $P \cap [\sup(M \cap \eta^*),\eta^*) \ne \emptyset$ 
and $P$ and $\eta^*$ are in $M$, 
$\eta^*$ is a limit point of $P$ by Lemma 7.30.

\bigskip

\emph{Case 1:} $\theta < \sigma$. 
Since $\cf(Q \cap \kappa) = \cf(P \cap \kappa) > \omega$, 
it suffices by Lemma 7.15 to show that $\cf(\sup(Q)) > \omega$. 
Since $\sigma = \min((N \cap \kappa^+) \setminus \theta)$, 
$\sigma$ has uncountable cofinality. 
So if $\sup(Q) = \sigma$, then we are done. 

Otherwise by elementarity, $\sup(Q) \in N \cap \sigma \subseteq \theta$. 
By (4), $Q \cap \kappa^+ = Q \cap \theta = P \cap \theta$. 
It follows that $\sup(Q) = \sup(P \cap \theta)$, which is a limit point 
of $P$ below $\theta$. 
Since $\eta^*$ is a limit point of $P$ and 
$\sup(Q) < \theta \le \eta^*$ by Lemma 11.1(2(c)), 
if $\sup(Q)$ has countable 
cofinality then $\sup(Q) \in P$ by Lemma 7.14. 
But then $\sup(Q) \in P \cap \theta = Q \cap \theta$, which is impossible. 
Therefore $\sup(Q)$ has uncountable cofinality.

\bigskip

\emph{Case 2:} $\theta = \sigma$. 
Then $\sigma$ is a limit point of $N$, and in particular, 
$\sigma$ has cofinality $\omega$. 
By Lemma 11.1(2(c)), $\sup(N \cap \sigma) = \sigma \le \eta^*$. 
Since $\sigma$ has cofinality $\omega$ and $\eta^*$ has uncountable 
cofinality, it follows that $\sigma < \eta^*$. 

We claim that $\sup(P \cap N \cap \sigma) < \sigma$. 
Suppose for a contradiction that $\sup(P \cap N \cap \sigma) = \sigma$. 
Then $\sigma$ is a limit point of $P$. 
As $\eta^*$ is a limit point of $P$, $\sigma < \sup(P)$. 
Since $\sigma$ has cofinality $\omega$ and 
$\cf(P \cap \kappa) > \omega$, Lemma 7.14 implies that 
$\sigma \in P$. 
So $\sigma \in N \cap P \cap \eta^*$, which contradicts 
that $N \cap P \cap \eta^* \subseteq \sigma$.

To show that $Q \in \mathcal Y$, by Lemma 7.15 
it suffices to show that 
$\cf(\sup(Q)) > \omega$. 
Since $\theta = \sigma$, by (4) we have that 
$Q \cap \sigma = P \cap \sigma$. 
Therefore $Q \cap N \cap \sigma = P \cap N \cap \sigma$. 
By the claim, 
$$
\sup(Q \cap N \cap \sigma) = 
\sup(P \cap N \cap \sigma) < \sigma.
$$
If $\sup(Q) = \sigma$, then since $Q \in N$ and $\sigma$ is a limit 
point of $N$, it is easy to argue by elementarity that 
$Q \cap N \cap \kappa^+$ is cofinal in $\sigma$, which is false. 
Therefore $\sup(Q) < \sigma = \theta$. 
Since $Q \cap \theta = P \cap \theta$, 
it follows that $\sup(Q) = \sup(Q \cap \theta) = 
\sup(P \cap \theta)$, which is a limit point 
of $P$ below $\theta$. 
Since $\eta^*$ is a limit point of $P$ and 
$\sup(Q) < \theta \le \eta^*$ by Lemma 11.1(2(c)), 
if $\sup(Q)$ has countable 
cofinality then $\sup(Q) \in P$ by Lemma 7.14. 
But then $\sup(Q) \in P \cap \theta = Q \cap \theta$, which is impossible. 
Therefore $\sup(Q)$ has uncountable cofinality.
\end{proof}

\begin{lemma}
Let $M \in \mathcal X$ and $N \in \mathcal X \cup \mathcal Y$, where $N$ 
is simple. 
Assume that $M < N$ in the case that $N \in \mathcal X$, and 
$\sup(M \cap N \cap \kappa) < N \cap \kappa$ in the case that 
$N \in \mathcal Y$.

Let $k$ be the size of $R^+_N(M)$, and assume that 
$\eta^*$ is the $i$-th member of $R^+_N(M)$, where $i < k$. 
Let $a := p(M,N)(i)$ and 
$a' := \{ \sigma : \exists \beta \ (\beta,\sigma) \in a \}$.

Suppose that $(\beta,\sigma) \in a$, where $\min(a') < \sigma$. 
Assume that $Q \in N \cap \mathcal Y$ is such that 
$\beta \le Q \cap \kappa$, $Q \cap \kappa \in M \cap N \cap \kappa$, 
$\cf(Q \cap \kappa) > \omega$, 
$Q \cap \kappa^+ = A_{\sigma,Q \cap \kappa}$, and 
$Q \cap N \cap [\sup(M \cap \eta^*),\sigma) \ne \emptyset$.

Let $P := Sk(A_{\eta^*,Q \cap \kappa})$. 
Then:
\begin{enumerate}
\item $P \in M \cap \mathcal Y$;
\item $P \cap \kappa = Q \cap \kappa$, 
$P \cap \kappa^+ = A_{\eta^*,Q \cap \kappa}$, and 
$\sup(P) = \eta^*$;
\item $P \cap N \cap [\sup(M \cap \eta^*),\eta^*) \ne \emptyset$;
\item $P \cap M \cap \kappa^+ = Q \cap M \cap \kappa^+$.
\end{enumerate}
\end{lemma}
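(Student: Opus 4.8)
The plan is to establish first that $A_{\eta^*,Q\cap\kappa}$ is closed under $H^*$; this at once yields $P\cap\kappa^+ = A_{\eta^*,Q\cap\kappa}$, and, since $A_{\eta^*,Q\cap\kappa}\in M$ by the elementarity of $M$, also $P\in M$ by the comments following Notation 7.11. The remaining clauses are then read off. Throughout write $\delta^* := \sup(M\cap\eta^*)$, $\theta := \sup(N\cap\sigma)$, and $\gamma := Q\cap\kappa$; note $\gamma$ is a limit ordinal $<\kappa$ with $\beta\le\gamma$.

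First I would record the relevant arithmetic. By Lemma 8.15(2), $\cf(\eta^*) > \omega$, so $M\cap\eta^*$ is bounded, i.e.\ $\delta^* < \eta^*$. Since $(\beta,\sigma)\in a$ we have $a\ne\emptyset$, so Lemma 11.1(2(d)) gives $\min(a') = \min((N\cap\kappa^+)\setminus\delta^*)$; thus $\delta^*\le\min(a')\in N$, and as $\min(a')<\sigma$ we get $\min(a')\in N\cap\sigma$, whence $\delta^*\le\min(a')\le\theta$, while $\theta\le\eta^*$ by Lemma 11.1(2(c)). As $\kappa\in M\cap N$ one has $\kappa\le\alpha_{M,N}$, and a short check — passing over the degenerate case $\alpha_{M,N}=\kappa=\eta^*$, where every clause is immediate — gives $\kappa\le\delta^*$, hence $\kappa\le\theta$. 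Applying Lemma 11.1(3) with this $\gamma$ (legitimate since $\beta\le\gamma<\kappa$ and $\min(a')<\sigma$) and using $Q\cap\kappa^+ = A_{\sigma,\gamma}$, I obtain
\[ A_{\eta^*,\gamma}\cap\theta \;=\; Q\cap\theta \qquad\text{and}\qquad A_{\eta^*,\gamma}\cap N\cap\sigma \;=\; Q\cap N\cap\kappa^+ . \]

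The heart of the argument is the identity $M\cap A_{\eta^*,\gamma} = M\cap A_{\sigma,\gamma}\;(= M\cap Q\cap\kappa^+)$. For the inclusion $\subseteq$, any $\xi$ in the left side lies in $M\cap\eta^*$, so $\xi\le\delta^*\le\min(a')$; if $\xi<\min(a')$ then $\xi<\theta$ and the first displayed equality puts $\xi\in A_{\sigma,\gamma}$, whereas if $\xi=\min(a')$ then $\xi\in M\cap N$ with $\xi<\sigma$ and the second equality does the same. For $\supseteq$, given $\xi\in M\cap Q\cap\kappa^+$, I first argue $\xi\in N$: when $N\in\mathcal X$ this is Lemma 8.7, observing that $\gamma+1\in M\cap\beta_{M,N}$ forces $\gamma<\sup(M\cap N\cap\kappa)$ strictly and that $M\le N$; when $N\in\mathcal Y$ it holds because $|Q|=|\gamma|<N\cap\kappa$ implies $Q\subseteq N$. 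Then $\xi\in A_{\sigma,\gamma}\cap N$ with $\xi<\sigma$, and the second equality gives $\xi\in A_{\eta^*,\gamma}$. With the identity in hand, Lemma 7.10 applied to $M$ with the $H^*$-closed witness $b:=A_{\sigma,\gamma}=Q\cap\kappa^+$ shows $A_{\eta^*,\gamma}$ is closed under $H^*$; hence $P\cap\kappa^+=A_{\eta^*,\gamma}$ and $P\in M$; clause (4) follows since $P\cap M\cap\kappa^+ = M\cap A_{\eta^*,\gamma} = Q\cap M\cap\kappa^+$; and intersecting the first displayed equality with $\kappa$ (using $\kappa\le\theta$) gives $P\cap\kappa = A_{\eta^*,\gamma}\cap\kappa = Q\cap\kappa^+\cap\kappa = Q\cap\kappa$, so that all of clause (2) is proved except $\sup(P)=\eta^*$.

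Finally I would dispatch the cofinality statements. Fix $\delta\in Q\cap N\cap[\delta^*,\sigma)$, which exists by hypothesis; the second displayed equality gives $\delta\in A_{\eta^*,\gamma}$, and $\delta<\eta^*$ since $A_{\eta^*,\gamma}\subseteq\eta^*$, so $A_{\eta^*,\gamma}\cap[\sup(M\cap\eta^*),\eta^*)\ne\emptyset$; Lemma 7.30 applied to $M$ (with $\cf(\eta^*)>\omega$) then shows $\eta^*$ is a limit point of $A_{\eta^*,\gamma}=P\cap\kappa^+$, so $\sup(P)=\eta^*$, finishing clause (2). This also forces $\ot(C_{\eta^*})\le\gamma$, since otherwise Notation 7.4(6) would bound $A_{\eta^*,\gamma}$ below $c_{\eta^*,\gamma}<\eta^*$; and then applying Notation 7.4(6) for each $\beta<\ot(C_{\eta^*})$ yields $\lim(C_{\eta^*})\cap\eta^*\subseteq A_{\eta^*,\gamma}$, which is cofinal in $\eta^*$ because $\cf(\eta^*)>\omega$. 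Hence $\lim(C_{\sup(P)})\cap P$ is cofinal in $\sup(P)$, giving $P\in\mathcal Y$ and clause (1), while the ordinal $\delta$ chosen above witnesses clause (3). I expect the main obstacle to be the two-sided inclusion $M\cap A_{\eta^*,\gamma}=M\cap A_{\sigma,\gamma}$ — specifically verifying the strict inequality $\gamma<\sup(M\cap N\cap\kappa)$ needed to invoke Lemma 8.7, and keeping straight which of the two equalities from Lemma 11.1(3) applies to a given ordinal — whereas the cofinality bookkeeping and the $\vec C,\vec A$-coherence manipulations are routine once those equalities are available.
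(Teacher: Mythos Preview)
Your proof is correct and follows essentially the same approach as the paper: prove $M\cap A_{\eta^*,\gamma}=M\cap A_{\sigma,\gamma}$ from Lemma 11.1(3), apply Lemma 7.10 to get $H^*$-closure of $A_{\eta^*,\gamma}$, then derive the clauses using the witness in $Q\cap N\cap[\sup(M\cap\eta^*),\sigma)$ and Lemma 7.30. One economy you miss: since $\cf(P\cap\kappa)=\cf(Q\cap\kappa)>\omega$ and $\cf(\sup(P))=\cf(\eta^*)>\omega$, Lemma 7.15 gives $P\in\mathcal Y$ directly, so the Notation 7.4(6) manipulation is unnecessary (and your ``degenerate case $\alpha_{M,N}=\kappa=\eta^*$'' never occurs, since $\kappa\in M\cap N$ forces $\kappa<\alpha_{M,N}$).
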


\begin{proof}
Let $\gamma := Q \cap \kappa$ and $\theta := \sup(N \cap \sigma)$. 
By Lemma 11.1(3), 
$$
A_{\eta^*,\gamma} \cap \theta = A_{\sigma,\gamma} \cap \theta 
\ \textrm{and} \ 
A_{\eta^*,\gamma} \cap N \cap \sigma = A_{\sigma,\gamma} \cap N.
$$
We claim that 
$$
A_{\eta^*,\gamma} \cap M = A_{\sigma,\gamma} \cap M.
$$
Let $\alpha \in A_{\eta^*,\gamma} \cap M$, and we will show 
that $\alpha \in A_{\sigma,\gamma}$. 
Then $\alpha \in M \cap \eta^*$. 
By Lemma 11.1(2(d)), 
$\sup(M \cap \eta^*) \le \min(a') < \sigma$. 
Since $\min(a') \in N$, $\sup(M \cap \eta^*) < \sup(N \cap \sigma)$. 
So 
$$
\alpha < \sup(M \cap \eta^*) < \sup(N \cap \sigma) = \theta.
$$
Hence 
$$
\alpha \in A_{\eta^*,\gamma} \cap \theta = 
A_{\sigma,\gamma} \cap \theta,
$$
so $\alpha \in A_{\sigma,\gamma}$.

Conversely, let $\alpha \in A_{\sigma,\gamma} \cap M$, and we will show 
that $\alpha \in A_{\eta^*,\gamma}$. 
Since $Q \cap \kappa^+ = A_{\sigma,\gamma}$, 
$\alpha \in Q \cap M \cap \kappa^+$. 
We claim that $\alpha \in N$. 
If $N \in \mathcal Y$, then $\alpha \in Q \in N$ implies that $\alpha \in N$. 
Suppose that $N \in \mathcal X$. 
Then $M < N$, $Q \in N \cap \mathcal Y$, and 
$Q \cap \kappa = \gamma \in M \cap N \cap \kappa$, which implies by 
Lemma 8.7 that $Q \cap M \cap \kappa^+ \subseteq N$. 
In particular, $\alpha \in N$. 
Hence in either case,
$$
\alpha \in A_{\sigma,\gamma} \cap N = 
A_{\eta^*,\gamma} \cap N \cap \sigma.
$$
So $\alpha \in A_{\eta^*,\gamma}$.

We have proven that 
$A_{\eta^*,\gamma} \cap M = A_{\sigma,\gamma} \cap M = 
Q \cap M \cap \kappa^+$. 
Since $A_{\eta^*,\gamma}$ is in $M$, 
it follows that $A_{\eta^*,\gamma}$ is closed under $H^*$ 
by Lemma 7.10. 
In particular, $P \cap \kappa^+ = A_{\eta^*,\gamma}$. 
So 
$$
P \cap M \cap \kappa^+ = A_{\eta^*,\gamma} \cap M = 
A_{\sigma,\gamma} \cap M = Q \cap M \cap \kappa^+,
$$
which proves (4).

Next we claim that $P \cap \kappa = \gamma$ and 
$\sup(P) = \eta^*$, which proves (2). 
Since $P$, $\gamma$, and $\eta^*$ are in $M$, 
it suffices to show that $M$ models these statements. 
Let $\alpha \in P \cap M \cap \kappa$, and we will show that 
$\alpha < \gamma$. 
Then $\alpha \in P \cap M \cap \kappa^+ = Q \cap M \cap \kappa^+$. 
So $\alpha \in Q \cap \kappa = \gamma$. 
Conversely, let $\alpha \in M \cap \gamma$, and we will show 
that $\alpha \in P \cap \kappa$. 
Then 
$$
\alpha \in M \cap \gamma = M \cap Q \cap \kappa \subseteq 
M \cap Q \cap \kappa^+ = M \cap P \cap \kappa^+.
$$
So $\alpha \in P$.

Now $Q \cap N \cap [\sup(M \cap \eta^*),\sigma)$ is nonempty by 
assumption, so fix $\tau$ in this intersection. 
Then 
$$
\tau \in Q \cap \kappa^+ \cap N = 
A_{\sigma,\gamma} \cap N = A_{\eta^*,\gamma} \cap N \cap \sigma.
$$
So $\tau \in A_{\eta^*,\gamma} = P \cap \kappa^+$. 
Hence $P \cap N \cap [\sup(M \cap \eta^*),\eta^*) \ne \emptyset$, which proves (3). 
By Lemma 7.30, it follows that $\eta^*$ is a limit point of $P$. 
Since $\sup(P \cap \kappa^+) = \sup(A_{\eta^*,\gamma}) 
\le \eta^*$, we have that $\sup(P) \le \eta^*$. 
As $\eta^*$ is a limit point of $P$, it follows that $\sup(P) = \eta^*$, finishing 
the proof of (2). 
In particular, as the ordinals $P \cap \kappa = Q \cap \kappa$ and 
$\eta^*$ both have uncountable cofinality, 
it follows that $P \in \mathcal Y$ by Lemma 7.15, which proves (1).
\end{proof}

We are now ready to prove the main lemma on proxies. 
This lemma contains all the information about proxies that we will need 
for applications.

\begin{lemma}[Main proxy lemma]
Let $M \in \mathcal X$ and $N \in \mathcal X \cup \mathcal Y$, where $N$ 
is simple. 
Assume that $M < N$ in the case that $N \in \mathcal X$, and 
$\sup(M \cap N \cap \kappa) < N \cap \kappa$ in the case that 
$N \in \mathcal Y$. 
Let $\eta^* \in R^+_N(M)$.

Suppose that $M' \in N \cap \mathcal X$ and 
$N' \in N \cap (\mathcal X \cup \mathcal Y)$, where $N'$ is simple. 
Assume that $M' < N'$ in the case that $N' \in \mathcal X$, 
$\sup(M' \cap N' \cap \kappa) < N' \cap \kappa$ in the case that 
$N' \in \mathcal Y$, and $N \in \mathcal X$ iff $N' \in \mathcal X$. 
Suppose that $M \cap N = M' \cap N'$ and 
$p(M,N) = p(M',N')$.

Assume that $P \in M \cap \mathcal Y$, 
$P \cap \kappa \in M \cap N \cap \kappa$, 
$\cf(P \cap \kappa) > \omega$, and 
$\tau \in P \cap N \cap [\sup(M \cap \eta^*),\eta^*)$. 
Then:
\begin{enumerate}
\item There is $Q \in N' \cap \mathcal Y$ such that 
$Q \cap \kappa = P \cap \kappa$ and 
$Q \cap N \cap \kappa^+ = P \cap N \cap \eta^*$; 
in particular, $\tau \in Q$.
\item If $\tau \in N'$, then there is 
$P' \in M' \cap \mathcal Y$ 
such that $P' \cap \kappa = P \cap \kappa$, 
$P' \cap N' \cap \kappa^+ = Q \cap N' \cap \kappa^+$, and 
$P' \cap M' \cap \kappa^+ = Q \cap M' \cap \kappa^+$;
in particular, $\tau \in P'$.
\item If $N \in \mathcal Y$, then there is $P' \in M' \cap \mathcal Y$ 
such that $P' \cap \kappa = P \cap \kappa$ and $\tau \in P'$.
\end{enumerate}
Moreover, if $\vec S$ is given and $P$ is $\vec S$-strong, then 
the models $Q$ and $P'$ described in (1), (2), and (3) 
are also $\vec S$-strong.
\end{lemma}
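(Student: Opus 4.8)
The plan is to build the desired model $Q$ from $P$ using the canonical-model analysis of Section~9 via the proxy existence lemma, and then to run the same construction in reverse on the primed side. Since $p(M,N)=p(M',N')$, these functions share a domain $k=|R^+_N(M)|=|R^+_{N'}(M')|$; let $\eta^*$ be the $i$-th element of $R^+_N(M)$, let $\bar\eta$ be the $i$-th element of $R^+_{N'}(M')$, and put $a:=p(M,N)(i)=p(M',N')(i)$ and $a':=\{\sigma:\exists\beta\,(\beta,\sigma)\in a\}$. Both triples $(M,N,\eta^*)$ and $(M',N',\bar\eta)$ satisfy the hypotheses of Lemma~11.1, so for every $(\beta,\sigma)\in a$ we have $\beta\in M\cap N\cap\kappa=M'\cap N'\cap\kappa$ and $\sigma\in N\cap N'\cap\kappa^+$; moreover $a\in N\cap N'$ since $p(M,N)\in N$ and $p(M',N')\in N'$. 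Because $M\cap N=M'\cap N'$ we get $\alpha_{M,N}=\alpha_{M',N'}$, and — the key point — since the same set $a$ computes $\min(a')$ on either side, Lemma~11.1(2)(d) yields
\[
\min(a')=\min((N\cap\kappa^+)\setminus\sup(M\cap\eta^*))=\min((N'\cap\kappa^+)\setminus\sup(M'\cap\bar\eta)).
\]

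For clause (1): $\tau$ witnesses the hypotheses of Lemma~11.1(4) for $(M,N,\eta^*,P)$, so there is a least $\sigma\in a'$ with $P\cap N\cap\eta^*\subseteq\sigma$; fix $\beta\le P\cap\kappa$ with $(\beta,\sigma)\in a$ as in Lemma~11.1(4)(b). Applying Lemma~11.3 to $(M,N,\eta^*,P)$ gives $Q:=Sk(A_{\sigma,P\cap\kappa})$ with $Q\in N\cap\mathcal Y$, $Q\cap\kappa=P\cap\kappa$, $Q\cap\kappa^+=A_{\sigma,P\cap\kappa}$, and $Q\cap N\cap\kappa^+=P\cap N\cap\eta^*$. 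Since $\sigma$ and $P\cap\kappa$ lie in $N'$ and $Q$ is definable from them in $\mathcal A$, also $Q\in N'$, hence $Q\in N'\cap\mathcal Y$; and $\tau\in P\cap N\cap\eta^*=Q\cap N\cap\kappa^+\subseteq Q$. This is (1). Note that $\tau<\sigma$ (as $\tau\in P\cap N\cap\eta^*\subseteq\sigma$), while $\tau\ge\sup(M\cap\eta^*)$ and $\tau\in N$ give $\min(a')\le\tau$; combining with the displayed equality, $\sup(M'\cap\bar\eta)\le\min(a')\le\tau<\sigma$, so in particular $\min(a')<\sigma$.

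For clauses (2) and (3): in case (3) we have $N\in\mathcal Y$, hence $N'\in\mathcal Y$, so $N'\cap\kappa\in\kappa$; as $Q\in N'$ with $|Q|\le|P\cap\kappa|\cdot\omega<N'\cap\kappa$, the usual elementary-submodel argument gives $Q\subseteq N'$, so $\tau\in Q\subseteq N'$. Thus in both cases $\tau\in N'$, and we apply Lemma~11.4 to $(M',N',\bar\eta)$ with the pair $(\beta,\sigma)\in a$ and the model $Q$: the hypotheses $\beta\le Q\cap\kappa$, $Q\cap\kappa\in M'\cap N'\cap\kappa$, $\cf(Q\cap\kappa)>\omega$, $Q\cap\kappa^+=A_{\sigma,Q\cap\kappa}$ and $\min(a')<\sigma$ are in hand, and $\tau$ witnesses $Q\cap N'\cap[\sup(M'\cap\bar\eta),\sigma)\ne\emptyset$ (using $\sup(M'\cap\bar\eta)\le\tau<\sigma$ from the previous paragraph; the only subtlety, that $\tau<\bar\eta$ rather than $\tau=\bar\eta$, follows from $\tau<\sigma$, $\sup(N'\cap\sigma)\le\bar\eta$ by Lemma~11.1(2)(c), and $\cf(\bar\eta)>\omega$). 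Lemma~11.4 then produces $P':=Sk(A_{\bar\eta,Q\cap\kappa})\in M'\cap\mathcal Y$ with $P'\cap\kappa=Q\cap\kappa=P\cap\kappa$, $\sup(P')=\bar\eta$, $P'\cap N'\cap[\sup(M'\cap\bar\eta),\bar\eta)\ne\emptyset$, and $P'\cap M'\cap\kappa^+=Q\cap M'\cap\kappa^+$. To obtain $P'\cap N'\cap\kappa^+=Q\cap N'\cap\kappa^+$, feed $P'$ into Lemma~11.1(4) for $(M',N',\bar\eta)$: the least $\sigma''\in a'$ with $P'\cap N'\cap\bar\eta\subseteq\sigma''$ is, by Lemma~11.1(4)(b) and our choice of $\beta$, exactly $\sigma$, so $P'\cap N'\cap\kappa^+=P'\cap N'\cap\bar\eta\subseteq\sigma$ (since $\sup(P')=\bar\eta$); then Lemma~11.1(3)(b) gives $P'\cap N'\cap\kappa^+=A_{\bar\eta,Q\cap\kappa}\cap N'\cap\sigma=A_{\sigma,Q\cap\kappa}\cap N'=Q\cap N'\cap\kappa^+$. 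Hence $\tau\in Q\cap N'\cap\kappa^+=P'\cap N'\cap\kappa^+\subseteq P'$, proving (2); clause (3) is then an immediate weakening.

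Finally, for the $\vec S$-strong addendum, assume $P$ is $\vec S$-strong. The $\vec S$-strong criterion (Lemma~5.6), applied with the triple $(N,Q,P)$ — legitimate since $Q\in N\cap\mathcal Y$, $Q\cap N\cap\kappa^+=P\cap N\cap\eta^*\subseteq P$, and $Q\cap\kappa=P\cap\kappa$ — shows $Q$ is $\vec S$-strong; applying it again with $(M',P',Q)$ — using $P'\cap M'\cap\kappa^+=Q\cap M'\cap\kappa^+\subseteq Q$ and $P'\cap\kappa=Q\cap\kappa$ — shows $P'$ is $\vec S$-strong. I expect the main obstacle to be the mirrored invocation of Lemma~11.4 on the primed side: one must verify that $\tau$ still lands in the correct interval $[\sup(M'\cap\bar\eta),\sigma)$ rather than $[\sup(M\cap\eta^*),\sigma)$, which is exactly where the equality of $\min(a')$ on the two sides (forced by $p(M,N)=p(M',N')$ together with $M\cap N=M'\cap N'$) does the work, and then to recover the precise $N'$-intersection identity in (2) by re-running the proxy lemma on the output $P'$.
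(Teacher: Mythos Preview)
Your proof is correct and follows essentially the same route as the paper: obtain $\sigma$ from Lemma~11.1(4), build $Q=Sk(A_{\sigma,P\cap\kappa})$ via Lemma~11.3, transfer $Q$ into $N'$ using $\sigma,P\cap\kappa\in N'$, and then run Lemma~11.4 on the primed side to produce $P'=Sk(A_{\bar\eta,Q\cap\kappa})$. The only notable difference is in how you establish $P'\cap N'\cap\kappa^+=Q\cap N'\cap\kappa^+$: the paper re-invokes Lemma~11.3 (applied to $M',N',P'$) to get $Q\cap N'\cap\kappa^+=P'\cap N'\cap\bar\eta$, whereas you bypass this by appealing directly to Lemma~11.1(3)(b) and (4). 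Both arguments hinge on the same fact---that the characterization of $\sigma$ in Lemma~11.1(4)(b) depends only on $a$ and $P\cap\kappa=P'\cap\kappa$, hence is the same on both sides---so the difference is cosmetic.
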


\begin{proof}
Let $k$ be the size of $R^+_N(M)$, and fix $i < k$ such that 
$\eta^*$ is the $i$-th member of $R^+_N(M)$. 
Let $a := p(M,N)(i)$ and 
$a' := \{ \sigma : \exists \beta \ (\beta,\sigma) \in a \}$.

\bigskip

(1) Let $\sigma$ be the least ordinal in $a'$ such that 
$P \cap N \cap \eta^* \subseteq \sigma$, which exists 
by Lemma 11.1(4). 
By Lemma 11.1(2(d)), 
$\min(a') = \min((N \cap \kappa^+) \setminus \sup(M \cap \eta^*))$, 
which is strictly less than 
$\sigma$ since $P \cap N \cap [\sup(M \cap \eta^*),\eta^*) \ne \emptyset$ 
and $P \cap N \cap \eta^* \subseteq \sigma$. 
Fix $\beta$ such that $(\beta,\sigma) \in a$. 
By Lemma 11.1(5(a)), $\beta \le P \cap \kappa$.

We apply Lemma 11.3. 
Note that all of the assumptions of this lemma are satisfied. 
Let $Q := Sk(A_{\sigma,P \cap \kappa})$. 
Then by Lemma 11.3,
\begin{enumerate}
\item[(a)] $Q \in N \cap \mathcal Y$;
\item[(b)] $Q \cap \kappa = P \cap \kappa$ and 
$Q \cap \kappa^+ = A_{\sigma,P \cap \kappa}$;
\item[(c)] $Q \cap N \cap \kappa^+ = P \cap N \cap \eta^*$;
\item[(d)] $Q \cap \sup(N \cap \sigma) = P \cap \sup(N \cap \sigma)$.
\end{enumerate}

Since $p(M,N) = p(M',N')$, it follows that 
$p(M,N) \in N'$, and so in particular, 
$\sigma \in N'$. 
And $Q \cap \kappa = P \cap \kappa \in M \cap N \cap \kappa = 
M' \cap N' \cap \kappa \subseteq N'$. 
So $\sigma$ and $Q \cap \kappa$ are in $N'$. 
Therefore $A_{\sigma,Q \cap \kappa}$ and $Q$ are in $N'$ 
by elementarity. 
Properties (a), (b), and (c) above imply (1).

\bigskip

(2) Assume that $\tau \in N'$. 
We apply Lemma 11.4 to $M'$ and $N'$. 
Let $\eta_0^*$ be the $i$-th member of $R^+_{N'}(M')$. 
Let $a_0 := p(M',N')(i)$. 
Note that $a_0 = p(M,N)(i) = a$ and $a_0' = a'$.

Let us check that the assumptions of Lemma 11.4 are satisfied. 
Since $p(M,N) = p(M',N')$, $(\beta,\sigma) \in p(M',N')(i) = a_0$, and 
$\min(a_0') = \min(a') < \sigma$. 
We know that $Q \in N' \cap \mathcal Y$, 
$\beta \le P \cap \kappa = Q \cap \kappa$, 
$Q \cap \kappa = P \cap \kappa \in M \cap N \cap \kappa = 
M' \cap N' \cap \kappa$, $\cf(Q \cap \kappa) = \cf(P \cap \kappa) > \omega$, 
and $Q \cap \kappa^+ = A_{\sigma,Q \cap \kappa}$. 

It remains to show that 
$$
Q \cap N' \cap 
[\sup(M' \cap \eta_0^*),\sigma) \ne \emptyset.
$$
Since $\tau \in P \cap N \cap [\sup(M \cap \eta^*),\eta^*)$ and 
$$
P \cap N \cap \eta^* \subseteq N \cap \sigma \subseteq \sup(N \cap \sigma),
$$
it follows that $\tau < \sigma$, and 
$$
\tau \in P \cap \sup(N \cap \sigma) = Q \cap \sup(N \cap \sigma)
$$
by property (d) above. 
Also $\tau \in N'$ by assumption. 
So 
$$
\tau \in Q \cap N' \cap [\sup(M \cap \eta^*),\eta^*),
$$
and 
therefore $\min(a') = 
\min((N \cap \kappa^+) \setminus \sup(M \cap \eta^*)) \le \tau$. 
But then 
$$
\sup(M' \cap \eta_0^*) \le 
\min((N' \cap \kappa^+) \setminus \sup(M' \cap \eta_0^*)) = 
\min(a_0') = \min(a') \le \tau.
$$
So $\tau \in Q \cap N' \cap [\sup(M' \cap \eta_0^*),\sigma)$. 
This completes the verification of the assumptions of Lemma 11.4.

Let $P' := Sk(A_{\eta_0^*,Q \cap \kappa})$. 
Then by Lemma 11.4, 
\begin{enumerate}
\item[(i)] $P' \in M' \cap \mathcal Y$;
\item[(ii)] $P' \cap \kappa = Q \cap \kappa$, 
$P' \cap \kappa^+ = A_{\eta_0^*,Q \cap \kappa}$, and 
$\sup(P') = \eta_0^*$;
\item[(iii)] $P' \cap N' \cap [\sup(M' \cap \eta_0^*),\eta_0^*) \ne \emptyset$;
\item[(iv)] $P' \cap M' \cap \kappa^+ = Q \cap M' \cap \kappa^+$.
\end{enumerate}
In particular, $P' \cap \kappa = P \cap \kappa$.
It remains to prove that 
$$
P' \cap N' \cap \kappa^+ = Q \cap N' \cap \kappa^+.
$$

We apply Lemma 11.3 to $M'$, $N'$, and $P'$. 
Note that the assumptions of Lemma 11.3 are obviously satisfied, 
except for the claim that $\sigma$ is the least ordinal in $a_0'$ such 
that $P' \cap N' \cap \eta_0^* \subseteq \sigma$. 
So let $\sigma'$ be the least ordinal in $a_0'$ such that 
$P' \cap N' \cap \eta_0^* \subseteq \sigma'$. 
Then by Lemma 11.1(4(b)), $\sigma'$ is the largest ordinal in $a_0'$ 
such that for some $\gamma$, 
$\gamma \le P' \cap \kappa$ and $(\gamma,\sigma') \in a_0$. 
Now $\sigma$ is the least ordinal in $a' = a_0'$ such that 
$N \cap P \cap \eta^* \subseteq \sigma$, so again by Lemma 11.1(4(b)), 
$\sigma$ is the largest ordinal in $a' = a_0'$ such that 
for some $\gamma$, $\gamma \le P \cap \kappa = P' \cap \kappa$ 
and $(\gamma,\sigma) \in a = a_0$. 
So $\sigma$ and $\sigma'$ satisfy the same definition, and hence 
$\sigma = \sigma'$. 
So indeed $\sigma$ is the least ordinal in $a_0'$ such that 
$P' \cap N' \cap \eta_0^* \subseteq \sigma$.

Since $\sigma = \sigma'$ and $P \cap \kappa = P' \cap \kappa$, 
$Q = Sk(A_{\sigma,P \cap \kappa}) = 
Sk(A_{\sigma',P' \cap \kappa})$. 
By Lemma 11.3, 
$Q \cap N' \cap \kappa^+ = P' \cap N' \cap \eta_0^*$. 
But $P' = Sk(A_{\eta_0^*,Q \cap \kappa})$, and therefore 
$P' \cap \eta_0^* = P' \cap \kappa^+$. 
So $Q \cap N' \cap \kappa^+ = P' \cap N' \cap \kappa^+$.

\bigskip

(3) If $N \in \mathcal Y$, then $N' \in \mathcal Y$. 
So $Q \in N'$  implies that $Q \subseteq N'$. 
Hence $\tau \in N'$. 
So we are done by (2).

\bigskip

Finally, the last statement follows from the properties of $Q$ and $P'$ 
described in (1) and (2) together with Lemma 5.6.
\end{proof}

\bigskip

The main proxy lemma was concerned with the case that 
$P \in M \cap \mathcal Y$, 
$P \cap \kappa \in M \cap N \cap \kappa$, $\eta^* \in R^+_N(M)$, 
and $\tau \in P \cap N \cap [\sup(M \cap \eta^*),\eta^*)$. 
Another case which often occurs in the same contexts is that 
$P \in M \cap \mathcal Y$, $P \cap \kappa \in M \cap N \cap \kappa$, 
and $\tau \in P \cap N \cap \alpha_{M,N}$. 
This situation is handled by the next two lemmas.

\begin{lemma}
Let $M \in \mathcal X$ and $N \in \mathcal X \cup \mathcal Y$, where $N$ 
is simple. 
Assume that $M < N$ in the case that $N \in \mathcal X$, and 
$\sup(M \cap N \cap \kappa) < N \cap \kappa$ in the case that 
$N \in \mathcal Y$.

Suppose that $M' \in N \cap \mathcal X$ and 
$N' \in N \cap (\mathcal X \cup \mathcal Y)$, where $N'$ is simple. 
Assume that $M' < N'$ in the case that $N' \in \mathcal X$, 
$\sup(M' \cap N' \cap \kappa) < N' \cap \kappa$ in the case that 
$N' \in \mathcal Y$, and $N \in \mathcal X$ iff $N' \in \mathcal X$. 
Suppose that $M \cap N = M' \cap N'$ and 
$p(M,N) = p(M',N')$.

Assume that $P \in M \cap \mathcal Y$, 
$P \cap \kappa \in M \cap N \cap \kappa$, 
$\cf(P \cap \kappa) > \omega$, $P \cap \alpha_{M,N}$ is unbounded 
in $\alpha_{M,N}$, and 
$\tau_0 \in P \cap N \cap \alpha_{M,N}$. 
Let $\eta^* := \min((M \cap \kappa^+) \setminus \alpha_{M,N})$.
Then:
\begin{enumerate}
\item There is $Q \in N' \cap \mathcal Y$ such that 
$Q \cap \kappa = P \cap \kappa$ and 
$Q \cap N \cap \kappa^+ = P \cap N \cap \eta^*$; 
in particular, $\tau_0 \in Q$.
\item If $\tau_0 \in N'$, then there is 
$P' \in M' \cap \mathcal Y$ 
such that $P' \cap \kappa = P \cap \kappa$ and $\tau_0 \in P'$.
\end{enumerate}
Moreover, if $\vec S$ is given and $P$ is $\vec S$-strong, then 
the models $Q$ and $P'$ described in (1) and (2) 
are also $\vec S$-strong.
\end{lemma}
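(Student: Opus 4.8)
The plan is to follow the strategy of the main proxy lemma (Lemma 11.5), replacing the interval witness $\tau$ by the sub-$\alpha_{M,N}$ witness $\tau_0$, and to feed $\eta^*$ into the proxy existence lemma (Lemma 11.1) together with Lemmas 11.3 and 11.4, exactly as in Lemma 11.5. The first step is to establish the structural facts I will need. Since $P \in M$ and $P \cap \alpha_{M,N}$ is unbounded in $\alpha_{M,N}$, elementarity of $M$ rules out $\sup(P \cap \eta^*) < \eta^*$ (otherwise this sup would be an element of $M$ lying in $(\sup(M\cap\alpha_{M,N}),\eta^*)=\emptyset$), so $\eta^*$ is a limit point of $P$ and $\alpha_{M,N} < \eta^*$; hence $\eta^*=\min((M\cap\kappa^+)\setminus\alpha_{M,N})$ is the least element of $R^+_N(M)$, and by Lemma 8.16 it is closed under $H^*$. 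Since $\eta^*$ is a limit point of $P$, Lemma 7.27 gives $P \cap \eta^* = A_{\eta^*,P\cap\kappa}$, and because $\eta^*$ is closed under $H^*$ this set is closed under $H^*$. Finally $P\cap\kappa\in M\cap N\cap\kappa=M'\cap N'\cap\kappa\subseteq N'$, and $p(M,N)=p(M',N')\in N'$.

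Next I would locate, inside the proxy $a:=p(M,N)(0)$ for $\eta^*$ with $a':=\{\sigma:\exists\beta\,(\beta,\sigma)\in a\}$, the ordinal coding $P\cap N\cap\eta^*$. If $P\cap N\cap[\alpha_{M,N},\eta^*)\ne\emptyset$, then (recalling $\sup(M\cap\eta^*)=\alpha_{M,N}$) Lemma 11.1(4,5) applies to $P$ and yields $\sigma\in a'$ and $\beta\le P\cap\kappa$ with $(\beta,\sigma)\in a$, $\min(a')<\sigma$, and $P\cap N\cap\eta^*\subseteq\sigma$; Lemma 11.1(3(b)) then gives $A_{\eta^*,P\cap\kappa}\cap N\cap\sigma = A_{\sigma,P\cap\kappa}\cap N$, which equals $P\cap N\cap\eta^*$. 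Setting $Q:=Sk(A_{\sigma,P\cap\kappa})$ and running the computations of Lemma 11.3 verbatim (with $\theta:=\sup(N\cap\sigma)$) yields $Q\in\mathcal Y$, $Q\cap\kappa=P\cap\kappa$, $Q\cap\kappa^+=A_{\sigma,P\cap\kappa}$, and $Q\cap N\cap\kappa^+=P\cap N\cap\eta^*$; since $\sigma,P\cap\kappa\in N'$ we get $Q\in N'\cap\mathcal Y$, establishing (1). If instead $P\cap N\cap[\alpha_{M,N},\eta^*)=\emptyset$, then $N\cap\eta^*=N\cap\alpha_{M,N}$, so the target trace is $P\cap N\cap\alpha_{M,N}$, which lies below $\min(a')=\min((N\cap\kappa^+)\setminus\alpha_{M,N})$; in this case I would build the coding model directly rather than through clause (4) of Lemma 11.1, taking (for $N\in\mathcal X$) a suitably truncated Skolem hull $Sk(A_{\sigma,P\cap\kappa})$ with $\sigma$ an appropriate member of $\lim(C_{\alpha_{M,N}})\cap(M\cap N)$ or, when $N\cap P$ does not reach $\eta^*$, the canonical model $Q(N,P)$ from Notation 9.1 (whose defining data $\eta_N$, with $\eta:=\sup(N\cap P)$, and $(P\cap\kappa)_N=P\cap\kappa$ lie in $N\subseteq N'$), applying Lemma 9.2 to read off the two required equalities; the subcase $N\in\mathcal Y$ is then immediate since $N'\in\mathcal Y$ and $Q\subseteq N'$.

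Part (2) is obtained exactly as in the proof of Lemma 11.5(2): assuming $\tau_0\in N'$, apply Lemma 11.4 to $M'$, $N'$, and the model $Q$ from (1) — using $M'\cap N'=M\cap N$, $p(M',N')=p(M,N)$, and the fact that $\tau_0\in Q\cap N'$ falls in the required half-open interval for $M',N'$ (as in the main proxy lemma, $\min(a')\le\tau_0$ via $\tau_0\in P\cap N\cap[\sup(M\cap\eta^*),\eta^*)$ or its empty-interval analogue) — to produce $P'\in M'\cap\mathcal Y$ with $P'\cap\kappa=P\cap\kappa$ and $\tau_0\in P'$. The $\vec S$-strong addendum follows as in Lemma 11.5: if $P$ is $\vec S$-strong, then $Q$ and then $P'$ are $\vec S$-strong by Lemma 5.6, since in each case the newly built model has the same intersection with $\kappa$ as $P$ (resp. $Q$) and its trace below $\kappa^+$ against the relevant simple model is contained in that of $P$ (resp. $Q$).

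The main obstacle is the case split in the second paragraph. When $\tau_0$ sits strictly below $\alpha_{M,N}$ and $P\cap N$ has a gap over $[\alpha_{M,N},\eta^*)$, Lemma 11.1(4) is not available, so one cannot just cite Lemma 11.3 and must re-establish its conclusions by hand for a truncated coding model; this is precisely where the hypothesis that $P\cap\alpha_{M,N}$ is unbounded in $\alpha_{M,N}$ is used, since it forces $\eta^*$ to be a genuine limit point of $P$ and hence $P\cap\eta^*=A_{\eta^*,P\cap\kappa}$, which is what lets the coherence of $\vec A$ identify the trace inside $N$.
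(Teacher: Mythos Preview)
There is a genuine gap in your argument for part (2), and it stems from missing the key observation that the paper uses to avoid your case split entirely.

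You correctly note that $\eta^*$ is a limit point of $P$, but you do not draw the stronger conclusion that $\alpha_{M,N}\in P$. Since $P\cap\alpha_{M,N}$ is unbounded in $\alpha_{M,N}$, the ordinal $\alpha_{M,N}$ is a limit point of $P$ below $\sup(P)$; it has countable cofinality (being a limit point of the countable set $M\cap N$), and $\cf(P\cap\kappa)>\omega$, so Lemma 7.14 gives $\alpha_{M,N}\in P$. Combined with $\alpha_{M,N}\in N$ (Lemma 8.2) and $\sup(M\cap\eta^*)=\alpha_{M,N}$, this yields $\alpha_{M,N}\in P\cap N\cap[\sup(M\cap\eta^*),\eta^*)$. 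Hence the ``gap'' case $P\cap N\cap[\alpha_{M,N},\eta^*)=\emptyset$ never occurs, and the paper simply applies Lemma 11.5 with $\tau=\alpha_{M,N}$ to get (1) directly; then $\tau_0\in P\cap N\cap\eta^*\subseteq Q$.

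More seriously, your proof of (2) does not work as written. You want to feed $\tau_0$ into Lemma 11.4 as the witness that $Q\cap N'\cap[\sup(M'\cap\eta_0^*),\sigma)\ne\emptyset$, but $\tau_0<\alpha_{M,N}=\alpha_{M',N'}=\sup(M'\cap\eta_0^*)$, so $\tau_0$ lies strictly \emph{below} that interval. Your parenthetical justification ``$\min(a')\le\tau_0$ via $\tau_0\in P\cap N\cap[\sup(M\cap\eta^*),\eta^*)$'' is false for the same reason. The paper's remedy is again to use $\tau=\alpha_{M,N}$: since $\alpha_{M,N}=\alpha_{M',N'}\in N'$ by Lemma 8.2 applied to $M',N'$, Lemma 11.5(2) produces $P'\in M'\cap\mathcal Y$ with $P'\cap N'\cap\kappa^+=Q\cap N'\cap\kappa^+$; then if $\tau_0\in N'$, one gets $\tau_0\in Q\cap N'\cap\kappa^+=P'\cap N'\cap\kappa^+\subseteq P'$. (Incidentally, in your gap-case sketch the claim ``$N\subseteq N'$'' is backwards: the hypothesis is $N'\in N$, so the defining data of $Q(N,P)$ need not lie in $N'$.)
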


\begin{proof}
Since $\alpha_{M,N} \in N$ by Lemma 8.2, $\alpha_{M,N} \notin M$. 
But $\alpha_{M,N} \le \sup(P)$ and $\sup(P) \in M$. 
It follows that $\alpha_{M,N} < \sup(P)$. 
Consequently, the ordinal $\eta^* = \min((M \cap \kappa^+) \setminus \alpha_{M,N})$ 
exists and is greater than $\alpha_{M,N}$. 
Therefore $\eta^* \in R^+_N(M)$. 
Since $\alpha_{M,N}$ is a limit point of the countable set $M \cap N$, 
it follows that $\cf(\alpha_{M,N}) = \omega$. 
As $\cf(P \cap \kappa) > \omega$, we have that $\alpha_{M,N} \in P$ 
by Lemma 7.14. 
So $\alpha_{M,N} \in N \cap P \cap [\sup(M \cap \eta^*),\eta^*)$.

We apply the main proxy lemma, Lemma 11.5, letting $\tau = \alpha_{M,N}$. 
Then the first statement of (1) above follows from Lemma 11.5(1). 
Since $\tau_0 < \alpha_{M,N}$, $\tau_0 \in N \cap P \cap \eta^* \subseteq Q$. 
For (2), we have that 
$$
\alpha_{M,N} = \sup(M \cap N) = \sup(M' \cap N'),
$$
which is in $N'$ by Lemma 8.2. 
By Lemma 11.5(2), there is $P' \in M' \cap \mathcal Y$ such that 
$P' \cap \kappa = P \cap \kappa$ and 
$P' \cap N' \cap \kappa^+ = Q \cap N' \cap \kappa^+$. 
Assume that $\tau_0 \in N'$. 
Then 
$$
\tau_0 \in Q \cap N' \cap \kappa^+ = P' \cap N' \cap \kappa^+ \subseteq P'.
$$
Therefore $\tau_0 \in P'$.
\end{proof}

\begin{lemma}
Let $M \in \mathcal X$ and $N \in \mathcal X \cup \mathcal Y$, where $N$ 
is simple. 
Assume that $M < N$ in the case that $N \in \mathcal X$, and 
$\sup(M \cap N \cap \kappa) < N \cap \kappa$ in the case that 
$N \in \mathcal Y$. 

Suppose that $P \in M \cap \mathcal Y$, 
$P \cap \kappa \in M \cap N \cap \kappa$, 
$P \cap \alpha_{M,N}$ is bounded below $\alpha_{M,N}$, and 
$\tau \in P \cap N \cap \alpha_{M,N}$. 
Then there is $P' \in M \cap N \cap \mathcal Y$ 
such that $P' \cap \kappa = P \cap \kappa$ and $\tau \in P'$. 
Moreover, if $\vec S$ is given and $P$ is 
$\vec S$-strong, then $P'$ is $\vec S$-strong.
\end{lemma}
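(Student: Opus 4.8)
The plan is to feed $M$, $N$, $P$ directly into Lemma 8.11, whose hypotheses are all among ours. Writing $\delta := \sup(M \cap N \cap \kappa)$, this produces the ordinal $\sigma = \sup(P \cap A_{\alpha_{M,N},\delta})$ with $\sigma \in M \cap N \cap \kappa^+$, $\sigma < \alpha_{M,N}$ (since $P \cap \alpha_{M,N}$ is bounded below $\alpha_{M,N}$), $P \cap \sigma = A_{\sigma,P \cap \kappa}$, and $N \cap P \cap \alpha_{M,N} \subseteq A_{\sigma,P \cap \kappa}$; moreover the proof of Lemma 8.11 records that $\sigma \in A_{\alpha_{M,N},\delta}$ and that $\sigma$ is a limit point of $P$. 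In particular $\tau \in N \cap P \cap \alpha_{M,N} \subseteq A_{\sigma,P \cap \kappa} = P \cap \sigma$, so $\tau < \sigma$. I would then put $P' := Sk(A_{\sigma,P \cap \kappa})$ and check the conclusions one by one.

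The first step is to establish that $A_{\sigma,P \cap \kappa}$ is closed under $H^*$. This set lies in $N$ since $\sigma$ and $P \cap \kappa$ do, and $A_{\sigma,P \cap \kappa} \cap N = N \cap P \cap \sigma = N \cap (P \cap \alpha_{M,N})$, using $\sigma < \alpha_{M,N}$ together with $N \cap P \cap \alpha_{M,N} \subseteq P \cap \sigma = A_{\sigma,P \cap \kappa}$; and $P \cap \alpha_{M,N}$ is closed under $H^*$ because $P$ is and because $\alpha_{M,N} = \sup(M \cap N)$ is closed under $H^*$ — given ordinals below $\alpha_{M,N}$ and an index $n$, elementarity of $M \cap N$ produces a $\gamma \in M \cap N$ above all of them closed under $\tau_n'$, just as in the base case of the proof of Lemma 8.16. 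By Lemma 7.10, $A_{\sigma,P \cap \kappa}$ is closed under $H^*$, so by Notation 7.10, $P' \cap \kappa^+ = A_{\sigma,P \cap \kappa} = P \cap \sigma$. Since $A_{\sigma,P \cap \kappa}$ also belongs to $M \cap N$ (its parameters do), the remark following Notation 7.10 gives $P' \in M \cap N$. From $P' \cap \kappa^+ = P \cap \sigma$ and $\tau < \sigma$ we get $\tau \in P'$; and since $A_{\alpha_{M,N},\delta} \cap \kappa = \delta$ (Lemma 7.29) and $P \cap \kappa < \delta$, the set $P \cap A_{\alpha_{M,N},\delta}$ contains every ordinal below $P \cap \kappa$, whence $\sigma \ge P \cap \kappa$ and $P' \cap \kappa = (P \cap \sigma) \cap \kappa = P \cap \kappa$.

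Next I would verify $P' \in \mathcal Y$. We have $P' \cap \kappa = P \cap \kappa \in \kappa$, $P' \prec \mathcal A$, and $\sup(P') = \sup(P \cap \sigma) = \sigma$ since $\sigma \in \lim(P)$; it remains to see that $\lim(C_\sigma) \cap P'$ is cofinal in $\sigma$, which, as $P' \cap \kappa^+ = P \cap \sigma$, reduces to $\lim(C_\sigma) \cap P$ being cofinal in $\sigma$. The crucial point is that $\sigma \notin P$: indeed $\sigma \in A_{\alpha_{M,N},\delta}$ while $\sigma = \sup(P \cap A_{\alpha_{M,N},\delta})$ and $P \cap A_{\alpha_{M,N},\delta}$ has no greatest element (both sets being closed under successor), so $\sigma \in P$ would force $\sigma$ to be the maximum of $P \cap A_{\alpha_{M,N},\delta}$. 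Therefore $\sigma \in \cl(P) \setminus P$, and Lemma 7.13 gives that $\lim(C_\sigma) \cap P$ is cofinal in $\sigma$. This yields $P' \in M \cap N \cap \mathcal Y$ with $P' \cap \kappa = P \cap \kappa$ and $\tau \in P'$.

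For the final clause, assuming $P$ is $\vec S$-strong I would apply Lemma 5.6 with the model $M$: we have $P' \in M \cap \mathcal Y_0$, $M \prec \mathcal A$ (which includes $\mathcal Y_0$ and $\vec S$ as predicates), $P' \cap \kappa = P \cap \kappa$, and $P' \cap M \cap \kappa^+ \subseteq P' \cap \kappa^+ = P \cap \sigma \subseteq P$, so Lemma 5.6 yields that $P'$ is $\vec S$-strong. I expect the two delicate points to be the $H^*$-closure bookkeeping that identifies $P' \cap \kappa^+$ with $A_{\sigma,P \cap \kappa}$ and the observation $\sigma \notin P$ that places $P'$ in $\mathcal Y$; everything else is a direct unwinding of Lemma 8.11.
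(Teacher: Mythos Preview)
Your proof is correct and follows the same construction as the paper: invoke Lemma~8.11 to produce $\sigma$, set $P' = Sk(A_{\sigma,P\cap\kappa})$, and verify the required properties. Your closure argument via Lemma~7.10 uses $N$ where the paper uses $M\cap N$ together with conclusion~(3) of Lemma~8.11, but both routes work.

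The one genuine difference is in showing $P'\in\mathcal Y$. You extract from the \emph{proof} of Lemma~8.11 the fact that $\sigma\in A_{\alpha_{M,N},\delta}$, and combine it with $\sigma=\sup(P\cap A_{\alpha_{M,N},\delta})$ having no maximum to conclude $\sigma\notin P$ outright, so Lemma~7.13 applies immediately. The paper instead splits into cases $\sigma\notin P$ and $\sigma\in P$; in the latter it argues $\sigma\notin A_{\alpha,\delta}$ and then spends a paragraph using $Q=Sk(A_{\alpha,\delta})$ and $\sigma'=\min((Q\cap\kappa^+)\setminus\sigma)$ to show $\lim(C_\sigma)$ is cofinal in $\sigma$ and $C_\sigma\subseteq P$. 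Your observation shows this second case is in fact vacuous, so your argument is a legitimate shortening. The tradeoff is that you are citing a fact internal to another proof rather than a stated conclusion; if one wanted to be fully self-contained, one would either add $\sigma\in A_{\alpha,\delta}$ to the statement of Lemma~8.11 or reproduce the paper's case analysis.
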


\begin{proof}
Let $\alpha := \alpha_{M,N}$ and $\delta := \sup(M \cap N \cap \kappa)$. 
Define 
$$
\sigma := \sup(P \cap A_{\alpha,\delta}).
$$
By Lemma 8.11, $\sigma$ satisfies:
\begin{enumerate}
\item[(a)] $\sigma \in M \cap N \cap \kappa^+$;
\item[(b)] $P \cap \sigma = A_{\sigma,P \cap \kappa}$;
\item[(c)] $P \cap (M \cap N) \cap \kappa^+ = A_{\sigma,P \cap \kappa} 
\cap (M \cap N)$;
\item[(d)] $N \cap P \cap \alpha_{M,N} \subseteq A_{\sigma,P \cap \kappa}$.
\end{enumerate}
Since $\sigma$ and $P \cap \kappa$ are in $M \cap N$, 
Lemma 7.10 and (c) imply that 
$A_{\sigma,P \cap \kappa}$ is closed under $H^*$. 
Let $P' := Sk(A_{\sigma,P \cap \kappa})$. 
Then $P'$ is in $M \cap N$. 

By (b), we have that 
$$
P' \cap \kappa = A_{\sigma,P \cap \kappa} \cap \kappa 
= P \cap \sigma \cap \kappa = P \cap \kappa.
$$
By (d), $\tau \in A_{\sigma,P \cap \kappa} \subseteq P'$. 
It remains to show that $P' \in \mathcal Y$. 
It suffices to show that 
$\lim(C_{\sup(P')}) \cap P'$ is cofinal in $\sup(P')$.

Since $P$ and $A_{\alpha,\delta}$ are closed under successors, 
$P \cap A_{\alpha,\delta}$ has no maximal element. 
As $\sigma$ is a limit point of $P$, 
$$
\sup(P') = \sup(A_{\sigma,P \cap \kappa}) = \sup(P \cap \sigma) = \sigma.
$$
Also $P' \cap \kappa^+ = A_{\sigma,P \cap \kappa} = P \cap \sigma$. 
So it is enough to show that $\lim(C_\sigma) \cap P$ is cofinal in $\sigma$.

Now $\sigma$ is a limit point of $P$, and therefore has cofinality less 
than $\kappa$. 
If $\sigma \notin P$, then $\sigma \in \cl(P) \setminus P$, so by Lemma 7.13, 
$\lim(C_\sigma) \cap P$ is cofinal in $\sigma$ and we are done. 
Otherwise $\sigma \in P$. 
By the definition of $\sigma$, $\sigma$ is not in $A_{\alpha,\delta}$. 
Now $A_{\alpha,\delta}$ is closed under $H^*$ by Lemma 7.29. 
So $Q := Sk(A_{\alpha,\delta})$ is an elementary substructure of 
$\mathcal A$ with $Q \cap \kappa^+ = A_{\alpha,\delta}$.
Let $\sigma' := \min((Q \cap \kappa^+) \setminus \sigma)$, which 
exists since $\sigma < \alpha$. 
Then $\sigma'$ has uncountable cofinality, which implies that 
$\lim(C_{\sigma'})$ is cofinal in $\sigma'$. 
Also by the elementarity of $Q$, $\sigma$ is a limit point of $C_{\sigma'}$, and 
therefore 
$$
C_{\sigma} = C_{\sigma'} \cap \sigma.
$$
Again by the elementarity of $Q$, $\lim(C_{\sigma'}) \cap Q$ is cofinal 
in $\sup(Q \cap \sigma') = \sigma$. 
In particular, $\lim(C_\sigma)$ is cofinal in $\sigma$.

Since $\sigma \in P$ and $\sigma$ has cofinality less than $\kappa$, 
$\ot(C_\sigma) \in P \cap \kappa$, and therefore 
$C_{\sigma} \subseteq P$. 
Hence $\lim(C_\sigma) \cap P = \lim(C_{\sigma})$, and this set 
is cofinal in $\sigma$ as observed above.

Finally, assume that $P$ is $\vec S$-strong. 
Then $P' \cap \kappa = P \cap \kappa$, $P' \in M \cap N$, and by (b), 
$$
P' \cap (M \cap N) \cap \kappa^+ = A_{\sigma,P \cap \kappa} \cap (M \cap N) 
\subseteq A_{\sigma,P \cap \kappa} \subseteq P.
$$
So $P'$ is $\vec S$-strong by Lemma 5.6.
\end{proof}

\bigskip

\addcontentsline{toc}{section}{12. The proxy construction}

\textbf{\S 12. The proxy construction}

\stepcounter{section}

\bigskip

Let $M \in \mathcal X$ and $N \in \mathcal X \cup \mathcal Y$, where $N$ 
is simple. 
Assume that $M < N$ in the case that $N \in \mathcal X$, and 
$\sup(M \cap N \cap \kappa) < N \cap \kappa$ in the case that 
$N \in \mathcal Y$. 
Let $\eta^* \in R^+_N(M)$. 
We will prove that there exist sets $a$ and $a'$ satisfying 
properties (1)--(5) of Lemma 11.1.\footnote{Our proof of the 
proxy existence lemma is based on the construction of 
Mitchell \cite[Lemma 3.46]{mitchell}. We point out that there is a mistake 
in Mitchell's construction. 
The problem arises in the case when the ordinal $\eta$ from that proof 
is defined as $\max(\lim(C_\alpha) \cap \overline{X})$, 
and $\eta$ happens to have dropped below $\sup(M')$. 
In this case, there appears to be no reason why recursion hypothesis (1c) 
can be maintained. 
This problem was discovered by Gilton, and later corrected by Krueger.}

We recall a well-ordering on finite sets of ordinals which was used in \cite{mitchell}. 
For finite sets of ordinals $x$ and $y$, 
define $x \prec y$ if $x \ne y$ and $\max(x \triangle y) \in y$.

\begin{lemma}
The relation $\prec$ is a well-ordering of $[On]^{<\omega}$.
\end{lemma}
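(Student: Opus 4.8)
The plan is to show that $\prec$ is a strict linear order on $[\mathrm{On}]^{<\omega}$ which is moreover well-founded, from which it follows that $\prec$ is a well-ordering. The key observation throughout is that for finite sets of ordinals $x \ne y$, the symmetric difference $x \triangle y$ is finite and nonempty, so $\max(x \triangle y)$ is well-defined and lies in exactly one of $x$ or $y$; this is what makes the definition sensible and immediately gives trichotomy (given $x \ne y$, either $\max(x \triangle y) \in y$, i.e.\ $x \prec y$, or $\max(x \triangle y) \in x$, i.e.\ $y \prec x$, and irreflexivity is built into the clause $x \ne y$).

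The first real step is transitivity. Suppose $x \prec y$ and $y \prec z$; let $\mu = \max(x \triangle y) \in y$ and $\nu = \max(y \triangle z) \in z$. I would split into cases according to whether $\mu < \nu$, $\mu = \nu$, or $\mu > \nu$. If $\mu < \nu$: then $\nu \notin y$ (since $\nu \in y \triangle z$ forces $\nu \notin y$ when $\nu \in z$), and all elements above $\nu$ agree between $y$ and $z$; since $\mu < \nu$ and $x,y$ agree above $\mu$, one checks $\nu \notin x$ and $x,z$ agree above $\nu$, so $\nu = \max(x \triangle z) \in z$. If $\mu > \nu$: symmetrically, $\mu \notin x$, and $y,z$ agree above $\nu$ hence in particular at $\mu$, so $\mu \in z$, and $x,z$ agree above $\mu$, giving $\mu = \max(x \triangle z) \in z$. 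If $\mu = \nu$: then $\mu \in y$ from the first and $\mu \notin y$ from the second (since $\mu = \nu \in z$ and $\mu \in y \triangle z$), a contradiction, so this case is vacuous. In all possible cases $x \prec z$. The routine bookkeeping here is the ``agree above'' manipulations, which I will state but not belabor.

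The second step is well-foundedness. Suppose toward a contradiction that $\langle x_n : n < \omega \rangle$ is an infinite $\prec$-descending sequence, so $x_{n+1} \prec x_n$ for all $n$. Set $\mu_n = \max(x_n \triangle x_{n+1})$, so $\mu_n \in x_{n+1}$ and $x_n, x_{n+1}$ agree on ordinals $> \mu_n$. I would argue that the sequence $\langle \mu_n \rangle$ cannot be eventually strictly increasing, cannot be eventually constant, and in fact analyze how the ``tops'' of the $x_n$ stabilize: let $\alpha = \limsup_n \mu_n$ if the $\mu_n$ were unbounded, but since each $x_n \subseteq x_0 \cup x_1 \cup \cdots$ need not be bounded, instead I work with the largest element. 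Concretely, consider $\gamma_0 = \max(x_0)$; above $\gamma_0$ nothing happens, and one shows the largest element of $x_n$ is nonincreasing in a suitable sense, or better: for each ordinal $\gamma$, the truth value of ``$\gamma \in x_n$'' is eventually constant once $\mu_m < \gamma$ for all large $m$ fails only finitely often — this requires care. The cleanest route, which I expect to be the main obstacle, is to show that for each fixed ordinal $\delta$, the sets $x_n \cap (\delta, \infty)$ are eventually constant: one does this by downward induction / minimality. Since all $x_n$ live inside some fixed set of ordinals of order type at most $\omega \cdot \omega$ (as each is finite and there are only countably many), pick the least $\delta$ such that $x_n \cap [\delta,\infty)$ is \emph{not} eventually constant, if one exists; derive a contradiction from the definition of $\prec$ by noting that infinitely often $\mu_n < \delta$ would force the tails to stabilize, while $\mu_n \ge \delta$ infinitely often together with $\mu_n \in x_{n+1}$ and the agreement condition above $\mu_n$ contradicts minimality of $\delta$. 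Having established that $\prec$ is a well-founded strict linear order, I conclude it is a well-ordering of $[\mathrm{On}]^{<\omega}$, completing the proof.
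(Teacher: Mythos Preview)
Your transitivity argument is correct and in fact cleaner than the paper's: by comparing $\mu = \max(x \triangle y)$ with $\nu = \max(y \triangle z)$ and using that $x,y$ agree above $\mu$ while $y,z$ agree above $\nu$, you avoid the paper's exhaustive check of eight Boolean combinations of $\alpha \in z$, $\beta \in x$, $\gamma \in y$. The case $\mu = \nu$ is correctly ruled out.

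The well-foundedness argument, however, has a genuine gap. First a sign slip: if $x_{n+1} \prec x_n$ then $\mu_n = \max(x_n \triangle x_{n+1}) \in x_n$, not $x_{n+1}$. More seriously, your ``least $\delta$ such that $x_n \cap [\delta,\infty)$ is not eventually constant'' does not do what you want: the set of such $\delta$ is downward closed (if the $[\delta,\infty)$-tail fails to stabilize then so does the larger $[\delta',\infty)$-tail for any $\delta' < \delta$), so its minimum, if it exists, is $0$, and ``minimality of $\delta$'' gives you nothing to contradict. Even switching to the least $\delta$ for which the tail \emph{does} stabilize, the argument only goes through when that $\delta$ is a successor; at a limit you get $\mu_n < \delta$ eventually but no single $\delta' < \delta$ to descend to, so the minimality contradiction never fires.

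A route in the spirit of your sketch that does work is transfinite induction on an ordinal bound: first check that $\max(x_n) \le \max(x_0)$ for all $n$ (if $\max(x_{n+1}) > \max(x_n)$ then $\mu_n = \max(x_{n+1}) \in x_n$, impossible), and then prove by induction on $\alpha$ that $[\alpha]^{<\omega}$ admits no infinite $\prec$-descending sequence. The successor step peels off the top element; the limit step is immediate since the whole sequence already lives in some $[\beta]^{<\omega}$ with $\beta < \alpha$. The paper instead uses a Ramsey-type diagonalization: it thins the index set so that for each $n$ the value $\max(x_r \triangle x_{k_{n+1}})$ is the same for all remaining indices $r$, and then shows that the resulting ordinals $\alpha_n = \max(x_{k_n} \triangle x_{k_{n+1}})$ are strictly decreasing.
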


\begin{proof}
It is obvious that $\prec$ is irreflexive and total. 
For transitivity, let $x \prec y \prec z$, and we will show that $x \prec z$. 
Let $\alpha := \max(x \triangle y)$, $\beta := \max(y \triangle z)$, and 
$\gamma := \max(x \triangle z)$. 
Then $\alpha \in y \setminus x$ and $\beta \in z \setminus y$. 
We will show that $\gamma \in z$. 
Suppose for a contradiction that $\gamma \notin z$, so that $\gamma \in x$.

The following statements can be easily proved: 
(1) $\alpha$, $\beta$, and $\gamma$ are distinct; 
(2) $\alpha \in z$ implies that $\alpha  < \gamma$; 
(3) $\alpha \notin z$ implies that $\alpha < \beta$; 
(4) $\beta \in x$ implies that $\beta < \alpha$; 
(5) $\beta \notin x$ implies that $\beta < \gamma$; 
(6) $\gamma \in y$ implies that $\gamma < \beta$; and 
(7) $\gamma \notin y$ implies that $\gamma < \alpha$. 
Now one can easily check by inspection that any Boolean combination 
of the statements $\alpha \in z$, $\beta \in x$, and $\gamma \in y$ 
yields a contradiction. 
For example, suppose that $\alpha \in z$, $\beta \in x$, and $\gamma \in y$. 
Then (2), (4), and (6) imply that 
$\alpha < \gamma$, $\beta < \alpha$, and $\gamma < \beta$, which in turn 
imply that $\alpha < \gamma < \beta < \alpha$, which is absurd. 
The other possibilities are ruled out in a similar manner. 
This completes the proof that $\prec$ is transitive.

To show that $\prec$ is a well-ordering, 
suppose for a contradiction that $\langle x_n : n < \omega \rangle$ 
is a $\prec$-decreasing sequence of finite sets of ordinals. 
We define by induction an increasing sequence 
$\langle k_n : n < \omega \rangle$ of integers and a $\subseteq$-decreasing 
sequence $\langle A_n : n < \omega \rangle$ of infinite subsets 
of $\omega$ as follows. 
Let $k_0 = 0$ and $A_0 = \omega$.

Assume that $k_n$ and $A_n$ are defined, where 
$A_n$ is an infinite subset of $\omega$. 
Let $k_{n+1}$ be the least integer in $A_n$ strictly greater than $k_n$. 
Now for all $r \in A_n$ with $r > k_{n+1}$, 
we have that $x_r \prec x_{k_{n+1}}$, and hence 
$\max(x_r \triangle x_{k_{n+1}}) \in x_{k_{n+1}}$. 
Since $x_{k_{n+1}}$ is finite and $A_n$ is infinite, 
we can find an infinite subset $A_{n+1}$ of 
$A_n \setminus (k_{n+1}+1)$ such that for all $r, s \in A_{n+1}$, 
$\max(x_r \triangle x_{k_{n+1}}) = \max(x_s \triangle x_{k_{n+1}})$. 

This completes the construction. 
For each $n$, let $\alpha_n := \max(x_{k_n} \triangle x_{k_{n+1}})$. 
We claim that $\langle \alpha_n : n < \omega \rangle$ 
is a descending sequence of ordinals, which gives a contradiction. 
Let $n < \omega$. 
Since $x_{k_{n+1}} \prec x_{k_n}$, 
$\alpha_n \in x_{k_n} \setminus x_{k_{n+1}}$. 
So clearly $\alpha_n \ne \alpha_{n+1}$. 
Suppose for a contradiction that $\alpha_{n} < \alpha_{n+1}$. 
Then by the maximality of $\alpha_n$, 
$\alpha_{n+1}$ cannot be in $x_{k_n} \triangle x_{k_{n+1}}$, 
and therefore must be in $x_{k_n} \cap x_{k_{n+1}}$. 
But by construction, 
$\max(x_{k_n} \triangle x_{k_{n+2}}) = \alpha_n$. 
Therefore $\alpha_{n+1}$ must be in $x_{k_{n+2}}$, since otherwise 
it is in $x_{k_n} \triangle x_{k_{n+2}}$ but larger than $\alpha_n$. 
This contradicts that 
$\alpha_{n+1} = \max(x_{k_{n+1}} \triangle x_{k_{n+2}})$ is in 
$x_{k_{n+1}} \setminus x_{k_{n+2}}$.
\end{proof}

We will define by induction two sequences of sets 
$a_0, \ldots, a_n$ and $b_0, \ldots, b_n$. 
The induction stops when $b_n = \emptyset$. 
Each $a_k$ and $b_k$ will be a finite set of pairs of ordinals. 
We let $a_k' := \{ \sigma : \exists \beta \ (\beta,\sigma) \in a_k \}$ and 
$b_k' := \{ \eta : \exists \beta \ (\beta,\eta) \in b_k \}$.

By construction, for each $k$, $b_{k+1}'$ will be equal to 
$(b_{k}' \setminus \{ \eta \} ) \cup x$, where $\eta = \min(b_k')$ 
and $x$ is a finite subset of $\eta$. 
In particular, $\max(b_k' \triangle b_{k+1}')$ will be equal to $\eta$, 
which is in $b_k'$, and hence $b_{k+1}' \prec b_k'$. 
Therefore the sequence of $b'_k$'s is $\prec$-descending, and so 
must terminate with the empty set after finitely many steps.

When defining these sequences, we 
will maintain the following inductive hypotheses:

\bigskip

\begin{enumerate}

\item[(A)] For all $(\beta,\sigma) \in a_k$, $\beta \in M \cap N \cap \kappa$, 
$\sigma \in N \cap \kappa^+$ is a limit ordinal, 
and $\sup(N \cap \sigma) \le \eta^*$. 
The least member of $a_k'$, if it exists, is equal to 
$\min((N \cap \kappa^+) \setminus \sup(M \cap \eta^*))$. 
For each $\sigma \in a_k'$, there is a unique $\beta$ 
with $(\beta,\sigma) \in a_k$.

\bigskip

\item[(B)] For all $(\beta,\eta) \in b_k$, $\beta \in M \cap N \cap \kappa$ and 
$\eta \le \eta^*$ is a limit ordinal. 
If $\eta_0 < \eta_1$ are successive elements of $b_k'$, then 
$N \cap [\eta_0,\eta_1) \ne \emptyset$. 
For each $\eta \in b_k'$, there is a unique $\beta$ with 
$(\beta,\eta) \in b_k$. 

\bigskip

\item[(C)] If $b_k \ne \emptyset$, then $a_k \ne \emptyset$ and 
$\max(a_k') < \min(b_k')$.

\bigskip

\item[(D)] If $(\beta,\sigma) \in a_k$ and 
$\min(a_k') < \sigma$, then for all 
$\gamma$ with $\beta \le \gamma < \kappa$, 
$$
A_{\eta^*,\gamma} \cap \sup(N \cap \sigma) = 
A_{\sigma,\gamma} \cap \sup(N \cap \sigma).
$$

\bigskip

\item[(E)] If $(\beta,\eta) \in b_k$, then for all 
$\gamma$ with $\beta \le \gamma < \kappa$, 
$$
A_{\eta,\gamma} = A_{\eta^*,\gamma} \cap \eta.
$$

\bigskip

\item[(F)] If $(\beta,\eta) \in b_k$, then whenever $P \in M \cap \mathcal Y$ 
is such that 
$$
P \cap \kappa \in M \cap N \cap \kappa \ \textrm{and} \ 
P \cap N \cap [\eta^{-},\eta) \ne \emptyset,
$$
where 
$\eta^{-}$ is the largest ordinal in $a_k' \cup b_k'$ less than $\eta$, 
then $\beta \le P \cap \kappa$.

\bigskip

\item[(G)] Whenever $P \in M \cap \mathcal Y$ is such that 
$$
P \cap \kappa \in M \cap N \cap \kappa \ \textrm{and} \ 
P \cap N \cap [\sup(M \cap \eta^*),\eta^*) \ne \emptyset,
$$
then $P \cap N \cap \eta^* \subseteq \max(a_k' \cup b_k')$.

\bigskip

\item[(H)] Suppose that $P \in M \cap \mathcal Y$ and $\tau$ satisfy that 
$$
P \cap \kappa \in M \cap N \cap \kappa \ \textrm{and} \ 
\tau \in P \cap N \cap [\sup(M \cap \eta^*),\eta^*).
$$
Let $\sigma := \min((a_k' \cup b_k') \setminus (\tau+1))$, which exists by (G), 
and assume that $\sigma \in a_k'$. 
Fix $\beta$ with $(\beta,\sigma) \in a_k$. 
Then:
\begin{enumerate}
\item[(i)] $\beta \le P \cap \kappa$;
\item[(ii)] $P \cap \sup(N \cap \sigma) = 
A_{\sigma,P \cap \kappa} \cap \sup(N \cap \sigma)$.
\end{enumerate}

\bigskip

\item[(I)] If $(\beta,\sigma) \in a_k \cup b_k$, where 
$\min(a_k') < \sigma$, then 
$P := Sk(A_{\eta^*,\beta})$ satisfies that 
$$
P \in M \cap \mathcal Y, \ 
P \cap \kappa = \beta, \ 
P \cap \kappa^+ = A_{\eta^*,\beta}, \ \textrm{and} \ 
P \cap N \cap [\sigma^{-},\eta^*) \ne \emptyset,
$$
where $\sigma^{-}$ is the largest member of $a_k' \cup b_k'$ 
less than $\sigma$.
\end{enumerate}

\bigskip

Note that since $\alpha_{M,N}$ is a limit point of $M$ and 
$\alpha_{M,N} < \eta^*$, it follows that $\alpha_{M,N} \le \sup(M \cap \eta^*)$. 

Suppose that $P$ is as in (G), and $\sigma$ is the least ordinal in 
$a_k' \cup b_k'$ such that $P \cap N \cap \eta^* \subseteq \sigma$. 
By the minimality of $\sigma$, we can fix $\tau \in P \cap N \cap \eta^*$ 
such that $\sigma^{-} \le \tau$, where $\sigma^{-}$ is 
the greatest member of $a_k' \cup b_k'$ less than $\sigma$. 
Since $N$ and $P$ are closed under successors, $\tau+1 \in P \cap N \cap \eta^*$. 
As $P \cap N \cap \eta^* \subseteq \sigma$, it follows that 
$\sigma = \min((a_k' \cup b_k') \setminus (\tau+1))$.

In the arguments which follow, 
we will frequently consider models $P \in M \cap \mathcal Y$ such that 
$P \cap [\sup(M \cap \eta^*),\eta^*) \ne \emptyset$, for example, 
in (G) and (H). 
Note that by Lemma 7.30, for any such $P$, 
$\eta^*$ is a limit point of $P$. 
Therefore by Lemma 7.27, $P \cap \eta^* = A_{\eta^*,P \cap \kappa}$.

\bigskip

Assume that 
$a_0,\ldots,a_n$ and $b_0,\ldots,b_n$ are sequences 
satisfying properties 
(A)--(I), where $n$ is the least integer such that $b_n = \emptyset$. 
Let us show that the sets $a := a_n$ and 
$a' := \{ \sigma : \exists \beta \ (\beta,\sigma) \in a \}$ 
satisfy properties (1)--(5) in the conclusion of Lemma 11.1.

(1) is immediate, and (2) follows from (A). 
(3(a)) follows from (D). 
For (3(b)), let us prove that the equation 
$$
A_{\eta^*,\gamma} \cap N \cap \sigma = 
A_{\sigma,\gamma} \cap N
$$
follows from the equation 
$$
A_{\eta^*,\gamma} \cap \sup(N \cap \sigma) = 
A_{\sigma,\gamma} \cap \sup(N \cap \sigma),
$$
which holds by (3(a)). 
Let $\xi \in A_{\eta^*,\gamma} \cap N \cap \sigma$, and we will show that 
$\xi \in A_{\sigma,\gamma}$. 
Then $\xi < \sup(N \cap \sigma)$, so by the last equation, 
$\xi \in A_{\sigma,\gamma}$. 
Conversely, let $\xi \in A_{\sigma,\gamma} \cap N$, and we will show that 
$\xi \in A_{\eta^*,\gamma}$. 
Then $\xi \in N \cap \sigma$, so $\xi < \sup(N \cap \sigma)$. 
By the last equation, $\xi \in A_{\eta^*,\gamma}$.

(4) Suppose that 
$$
P \in M \cap \mathcal Y, \ P \cap \kappa \in 
M \cap N \cap \kappa, \ \textrm{and} \ 
P \cap N \cap [\sup(M \cap \eta^*),\eta^*) 
\ne \emptyset.
$$
By (G) and the fact that $b_n' = \emptyset$, 
$$
P \cap N \cap \eta^* \subseteq \max(a').
$$
Let $\sigma \in a'$ be the least ordinal 
such that $P \cap N \cap \eta^* \subseteq \sigma$. 
Fix $\beta$ such that $(\beta,\sigma) \in a$. 
Define 
$$
X := \{ (\beta',\sigma') \in a : \beta' \le P \cap \kappa \} \ \textrm{and} \ 
X' := \{ \sigma' : \exists \beta' \ (\beta',\sigma') \in X \}.
$$ 
We will prove that $\sigma = \max(X')$, which completes the 
proof of (4).

By the minimality of $\sigma$, clearly there is 
$\tau \in N \cap P \cap [\sup(M \cap \eta^*),\eta^*)$ such that 
$\sigma = \min(a' \setminus (\tau+1))$. 
By (H), $\beta \le P \cap \kappa$. 
It follows that $(\beta,\sigma) \in X$, and so $\sigma \in X'$.

Suppose for a contradiction that there is $\sigma' \in X'$ which is 
larger than $\sigma$. 
Fix $\beta'$ with $(\beta',\sigma') \in X$. 
Then $\sigma \le (\sigma')^{-}$, where $(\sigma')^{-}$ is the largest 
member of $a'$ which is less than $\sigma'$. 
By (I), 
$$
A_{\eta^*,\beta'} \cap N \cap [(\sigma')^{-},\eta^*) \ne \emptyset.
$$
Since $\sigma \le (\sigma')^{-}$, it follows that  
$$
A_{\eta^*,\beta'} \cap N \cap [\sigma,\eta^*) \ne \emptyset.
$$
As $\beta' \le P \cap \kappa$ by the definition of $X$, 
$$
A_{\eta^*,\beta'} \subseteq A_{\eta^*,P \cap \kappa} = P \cap \eta^*.
$$
But then $P \cap N \cap [\sigma,\eta^*) \ne \emptyset$, which contradicts 
that $P \cap N \cap \eta^* \subseteq \sigma$.

(5) Suppose that $P \in M \cap \mathcal Y$ satisfies that 
$$
P \cap \kappa \in M \cap N \cap \kappa \ \textrm{and} \  
P \cap N \cap [\sup(M \cap \eta^*),\eta^*) \ne \emptyset,
$$
$\sigma$ is the least ordinal in $a'$ such that 
$P \cap N \cap \eta^* \subseteq \sigma$, and $(\beta,\sigma) \in a$. 
By the minimality of $\sigma$, we can fix 
$\tau \in P \cap N \cap [\sup(M \cap \eta^*),\eta^*)$ such that 
$\sigma = \min(a' \setminus (\tau+1))$. 
Then (5(a,b)) follow immediately from (H(i,ii)). 
For (5(c)), $P \cap N \cap \eta^* = P \cap N \cap \sigma$, and 
$$
P \cap N \cap \sigma = 
P \cap \sup(N \cap \sigma) \cap N = 
A_{\sigma,P \cap \kappa} \cap \sup(N \cap \sigma) \cap N = 
A_{\sigma,P \cap \kappa} \cap N.
$$

\bigskip

We now turn to proving that there exist sequences 
$a_0,\ldots,a_n$ and 
$b_0,\ldots,b_n$ satisfying properties (A)--(I), where $n$ is the least 
integer such that $b_n = \emptyset$.

\bigskip

First we consider the base case. 
Let $\beta$ be the least ordinal in $M \cap N \cap \kappa$ 
for which there exists 
$P \in M \cap \mathcal Y$ such that 
$$
P \cap \kappa = \beta \ \textrm{and} \ 
P \cap N \cap [\sup(M \cap \eta^*),\eta^*) \ne \emptyset.
$$
If there is no such $\beta$, then let $a_0 = \emptyset$ and 
$b_0 = \emptyset$, and we are done.

Suppose that $\beta$ exists. 
Then obviously $N \cap [\sup(M \cap \eta^*),\eta^*) \ne \emptyset$. 
Define 
$$
a_0 := 
\{ (0,\min((N \cap \kappa^+) \setminus \sup(M \cap \eta^*))) \} \ 
\textrm{and} \ b_0 := (\beta,\eta^*).
$$

In the case that $a_0 = b_0 = \emptyset$, the inductive hypotheses 
are all vacuously true. 
In the other case, the inductive hypotheses are all either 
vacuously true or trivial.

\bigskip

Now we handle the induction step. 
Assume that $k < \omega$ and $a_k$ and $b_k$ have been 
defined and satisfy the inductive hypotheses. 
If $b_k = \emptyset$, then we are done. 
Assume that $b_k$ is nonempty. 
Then by (C), $a_k \ne \emptyset$. 
Let $\eta$ be the least member of $b_k'$, and let $\beta$ be the unique 
ordinal such that $(\beta,\eta) \in b_k$. 
By (A) and (B), $\max(a_k')$ and $\eta$ are limit ordinals. 
By (C), $\max(a_k') < \eta$, and in particular, $\omega < \eta$.

\bigskip

First consider the easy case that 
$\eta = \eta_0 + \omega$ for some limit ordinal $\eta_0$.
Let $a_{k+1} := a_k$. 
If $\max(a_k') < \eta_0$, then let $b_{k+1} := 
(b_k \setminus \{ (\beta,\eta) \}) \cup \{ (\beta,\eta_0) \}$. 
Suppose that $\eta_0 \le \max(a_k')$. 
Since $\max(a_k')$ is a limit ordinal and $\max(a_k') < \eta$, 
clearly $\max(a_k') = \eta_0$. 
In this case, let $b_{k+1} := b_k \setminus \{ (\beta,\eta) \}$. 
All of the inductive hypotheses can be easily checked, using Notation 7.2(4) and 
the fact that if $P \in \mathcal Y$ and $P \cap N \cap [\eta_0,\eta) \ne \emptyset$, 
then by the elementarity of $P \cap N$, $\eta \in P \cap N$.

\bigskip

From now on we will assume that $\eta$ is a limit of limit ordinals. 
In particular, every ordinal in $C_\eta$ is a limit ordinal by Notation 7.2(3).

\bigskip

Define 
$$
\theta := \sup(\lim(C_\eta) \cap \cl(N)).
$$
We split the definition of $a_{k+1}$ and $b_{k+1}$ into two cases.

\bigskip

\textbf{\emph{Case 1:}} $\theta = \sup(N)$.

\bigskip

Note that since $\max(a_k') \in N$ and $\sup(N) = \theta$, 
it follows that $\max(a_k') < \theta$.

\bigskip

\emph{Claim 1:} $\eta = \max(b_k')$. 
Suppose for a contradiction that there is $\eta' \in b_k'$ 
greater than $\eta$. 
Fix $\beta'$ with $(\beta',\eta') \in b_k$. 
Then $\eta \le (\eta')^{-}$, where $(\eta')^{-}$ is the largest ordinal 
in $a_k' \cup b_k'$ less than $\eta'$. 
By (I), 
$A_{\eta^*,\beta'} \cap N \cap [(\eta')^{-},\eta^*) \ne \emptyset$. 
Fix $\tau$ in this intersection. 
Then $\theta \le \eta \le (\eta')^{-} \le \tau$ and $\tau \in N$, 
which contradicts that $\theta = \sup(N)$.

\bigskip

Since $N$ is simple, $\ot(C_\theta) = \sup(N \cap \kappa)$. 
Let $\xi := \sup(M \cap N \cap \kappa)$, which is in 
$N \cap \kappa$ by Lemma 1.30. 
Define $\sigma := c_{\theta,\xi}$. 
Since $\theta = \sup(N)$ and $\xi \in N$, $\sigma \in N$ by Lemma 7.19. 
As $\xi$ has countable cofinality, so does $\sigma$. 
In particular, $N \cap \sigma$ is cofinal in $\sigma$.

\bigskip

\emph{Claim 2:} $A_{\theta,\xi} = A_{\sigma,\xi}$ and 
$\sup(A_{\sigma,\xi}) = \sigma$. 
As $\xi$ is a limit ordinal, $\sigma = c_{\theta,\xi} \in \lim(C_\theta)$. 
Therefore $A_{\sigma,\xi} = A_{\theta,\xi} \cap \sigma$. 
In particular, $A_{\sigma,\xi} \subseteq A_{\theta,\xi}$. 
On the other hand, since $\xi < \sup(N \cap \kappa) = \ot(C_\theta)$, 
it follows that $A_{\theta,\xi} \subseteq c_{\theta,\xi} = \sigma$ by Notation 7.4(6). 
So $A_{\theta,\xi} \subseteq A_{\theta,\xi} \cap \sigma = A_{\sigma,\xi}$. 
This proves that $A_{\theta,\xi} = A_{\sigma,\xi}$.

Since $\xi = \sup(M \cap N \cap \kappa)$, by the elementarity of $M \cap N$, 
$\xi$ is a limit of limit ordinals. 
Therefore $\sigma = c_{\theta,\xi}$ is a limit of $\lim(C_\theta) \cap \sigma$. 
For any ordinal $\zeta \in \lim(C_\theta) \cap \sigma$, the fact that 
$\zeta \in \lim(C_\theta) \cap c_{\theta,\xi}$ and $\xi < \ot(C_\theta)$ 
implies by Notation 7.4(6) that $\zeta \in A_{\theta,\xi}$. 
So $\lim(C_\theta) \cap \sigma$ is cofinal in $\sigma$ and is 
a subset of $A_{\theta,\xi}$. 
Hence $\sup(A_{\sigma,\xi}) = \sup(A_{\theta,\xi}) = \sigma$.

\bigskip

We now define $a_{k+1}$ and $b_{k+1}$. 
Let $b_{k+1} := \emptyset$. 
If $\sigma \le \max(a_k')$, then let $a_{k+1} := a_k$. 
If $\max(a_k') < \sigma$, then let $a_{k+1} : = a_k \cup \{ (\beta,\sigma) \}$.

\bigskip

We prove that the inductive hypotheses are maintained. 
First, consider the case when $\sigma \le \max(a_k')$. 
Then $a_{k+1} = a_k$ and $b_{k+1} = \emptyset$. 
Inductive hypotheses (A), (B), (C), (D), (E), (F), and (I) are all either 
vacuously true, or follow immediately from the inductive hypotheses. 

For (G) and (H), suppose that $P \in M \cap \mathcal Y$ satisfies that 
$$
P \cap \kappa \in M \cap N \cap \kappa \ \textrm{and} \ 
P \cap N \cap [\sup(M \cap \eta^*),\eta^*) \ne \emptyset.
$$
By inductive hypothesis (G) and Claim 1, 
$$
P \cap N \cap \eta^* \subseteq 
\max(a_k' \cup b_k') = \eta.
$$
If $P \cap N \cap \eta^* \subseteq \max(a_k')$, then this proves 
(G) for $k+1$, and in that case (H) follows immediately from the inductive 
hypotheses.

Otherwise $P \cap N \cap [\max(a_k'),\eta) \ne \emptyset$. 
Let us show that this is impossible. 
Since $\max(a_k')$ is the predecessor of $\eta$ in $a_k' \cup b_k'$, 
inductive hypothesis (F) implies that $\beta \le P \cap \kappa$. 
Inductive hypothesis (E) then implies that 
$$
A_{\eta,P \cap \kappa} = A_{\eta^*,P \cap \kappa} \cap \eta.
$$
As $\eta^*$ is a limit point of $P$, 
$$
P \cap \eta = A_{\eta^*,P \cap \kappa} \cap \eta = 
A_{\eta,P \cap \kappa}.
$$
Since $\theta \in \lim(C_\eta)$, 
$$
A_{\theta,P \cap \kappa} = A_{\eta,P \cap \kappa} \cap \theta = 
P \cap \theta.
$$
As $\theta = \sup(N)$, 
$$
P \cap N \cap \eta = P \cap N \cap \theta = 
A_{\theta,P \cap \kappa} \cap N.
$$
And as $P \cap \kappa \in M \cap N \cap \kappa$ and 
$\xi = \sup(M \cap N \cap \kappa)$, it follows that $P \cap \kappa \le \xi$, so 
$$
A_{\theta,P \cap \kappa} \subseteq 
A_{\theta,\xi} = A_{\sigma,\xi}
$$
by Claim 2. 
So 
$$
P \cap N \cap \eta = A_{\theta,P \cap \kappa} \cap N 
\subseteq A_{\sigma,\xi} \subseteq \sigma \le \max(a_k').
$$
This contradicts the initial assumption that 
$P \cap N \cap [\max(a_k'),\eta) \ne \emptyset$.

\bigskip

Secondly, consider the case that $\max(a_k') < \sigma$. 
Then $a_{k+1} = a_k \cup \{ (\beta,\sigma) \}$ and $b_{k+1} = \emptyset$.
We prove that the inductive hypotheses are maintained. 
Inductive hypotheses (A), (B), (C), (E), and (F) are all either vacuously true, 
or follow immediately from the inductive hypotheses. 
It remains to show (D), (G), (H), and (I).

\bigskip

(D) By inductive hypothesis (D), we only need to check that (D) holds 
for $k+1$ in the case of $(\beta,\sigma)$. 
As noted in the paragraph before Claim 2, $N \cap \sigma$ 
is cofinal in $\sigma$. 
Also observe that since $\theta \in \lim(C_\eta)$, 
$\sigma = c_{\theta,\xi} = c_{\eta,\xi}$ is a limit point of $C_\eta$.

Let $\beta \le \gamma < \kappa$ be given. 
Since $(\beta,\eta) \in b_k$, inductive hypothesis (E) implies that 
$A_{\eta,\gamma} = A_{\eta^*,\gamma} \cap \eta$. 
Since $\sigma$ is a limit point of $C_\eta$ and 
$\sup(N \cap \sigma) = \sigma$, it follows that 
$A_{\sigma,\gamma} \cap \sup(N \cap \sigma) = 
A_{\sigma,\gamma} \cap \sigma = 
A_{\sigma,\gamma} = 
A_{\eta,\gamma} \cap \sigma = 
(A_{\eta^*,\gamma} \cap \eta) \cap \sigma = 
A_{\eta^*,\gamma} \cap \sigma = 
A_{\eta^*,\gamma} \cap \sup(N \cap \sigma)$, which proves (D).

\bigskip

(G) Suppose that $P \in M \cap \mathcal Y$ satisfies that 
$$
P \cap \kappa \in M \cap N \cap \kappa \ \textrm{and} \ 
P \cap N \cap [\sup(M \cap \eta^*),\eta^*) \ne \emptyset.
$$
By inductive hypothesis (G) and Claim 1, 
$$
P \cap N \cap \eta^* \subseteq \max(a_k' \cup b_k') = \eta.
$$
If $P \cap N \cap \eta^* \subseteq \max(a_k')$, then 
since $\max(a_k') \le \max(a_{k+1}')$, we are done.

So assume that there exists 
$\tau \in (P \cap N \cap \eta^*) \setminus \max(a_k')$. 
We will show that $P \cap N \cap \eta^* \subseteq \sigma$, which 
completes the proof since $\sigma = \max(a_{k+1}')$. 
As $P \cap N \cap \eta^* \subseteq \eta$, it follows that 
$\tau \in P \cap N \cap [\max(a_k'),\eta)$. 
Since $\max(a_k')$ is the largest member of $a_k' \cup b_k'$ 
less than $\eta$, 
inductive hypothesis (F) implies that 
$\beta \le P \cap \kappa$. 
Inductive hypothesis (E) then implies that 
$$
A_{\eta,P \cap \kappa} = A_{\eta^*,P \cap \kappa} \cap \eta.
$$
But $\theta \in \lim(C_\eta)$ and $\theta = \sup(N)$, so 
$$
P \cap N \cap \eta = A_{\eta^*,P \cap \kappa} \cap N \cap \eta = 
A_{\eta,P \cap \kappa} \cap N = 
A_{\eta,P \cap \kappa} \cap N \cap \theta = 
A_{\theta,P \cap \kappa} \cap N.
$$
Hence $P \cap N \cap \eta \subseteq A_{\theta,P \cap \kappa}$. 
Since $P \cap \kappa \in M \cap N \cap \kappa$, 
$P \cap \kappa < \sup(M \cap N \cap \kappa) = \xi$. 
So by Claim 2, 
$$
P \cap N \cap \eta \subseteq A_{\theta,P \cap \kappa} \subseteq 
A_{\theta,\xi} = A_{\sigma,\xi} \subseteq \sigma.
$$
Since $P \cap N \cap \eta^* \subseteq \eta$ as noted above, 
we have that 
$$
P \cap N \cap \eta^* = P \cap N \cap \eta \subseteq \sigma = 
\max(a_{k+1}').
$$

\bigskip

(H) Suppose that $P \in M \cap \mathcal Y$ and $\tau$ satisfy that 
$$
P \cap \kappa \in M \cap N \cap \kappa \ \textrm{and} \ 
\tau \in P \cap N \cap [\sup(M \cap \eta^*),\eta^*).
$$
Let $\sigma' = \min(a_{k+1}' \setminus (\tau+1))$. 
Fix $\beta'$ with $(\beta',\sigma') \in a_{k+1}$. 
If $\sigma' < \sigma$, then clearly 
$\sigma' = \min(a_k' \setminus (\tau+1))$, so (i) and (ii) 
follow from inductive hypothesis (H). 

Suppose that $\sigma' = \sigma$, which means that 
$\max(a_k') \le \tau+1$. 
Then $\beta' = \beta$. 
Since $\max(a_k')$ is the largest ordinal in $a_k' \cup b_k'$ 
less than $\eta$, inductive hypothesis (F) implies that $\beta \le P \cap \kappa$, 
which proves (i). 
By inductive hypothesis (E), 
$$
A_{\eta,P \cap \kappa} = 
A_{\eta^*,P \cap \kappa} \cap \eta.
$$
Since $\sigma \in \lim(C_\eta)$, 
$$
P \cap \sigma = 
A_{\eta^*,P \cap \kappa} \cap \sigma = A_{\eta,P \cap \kappa} \cap \sigma 
= A_{\sigma,P \cap \kappa} = A_{\sigma,P \cap \kappa} \cap \sigma.
$$
As $\sigma = \sup(N \cap \sigma)$, we have that 
$P \cap \sup(N \cap \sigma) = 
A_{\sigma,P \cap \kappa} \cap \sup(N \cap \sigma)$, which proves (ii).

\bigskip

(I) By inductive hypothesis (I), 
it suffices to consider $(\beta,\sigma)$. 
Let $P := Sk(A_{\eta^*,\beta})$. 
Since $(\beta,\eta) \in b_k$ and $\max(a_k')$ is the largest ordinal in 
$a_k' \cup b_k'$ less than $\eta$, 
inductive hypothesis (I) implies that 
$P \in M \cap \mathcal Y$, $P \cap \kappa = \beta$, 
$P \cap \kappa^+ = A_{\eta^*,\beta}$, and 
$P \cap N \cap [\max(a_k'),\eta^*) \ne \emptyset$. 
Since $\max(a_k')$ is also the largest ordinal in $a_{k+1}' \cup b_{k+1}'$ 
less than $\sigma$, we are done.

\bigskip

\textbf{\emph{Case 2:}} $\theta < \sup(N)$.

\bigskip

Let $\sigma' := \min((N \cap \kappa^+) \setminus \theta)$, which 
exists by Case 2. 
If $\sigma' \le \max(a_k')$, then let 
$\sigma := \max(a_k')$ and 
$a_{k+1} := a_k$. 
If $\max(a_k') < \sigma'$, then let $\sigma := \sigma'$ and 
$a_{k+1} := a_k \cup \{ (\beta,\sigma) \}$.

\bigskip

Define $A$ as the set of ordinals of the form 
$\min(C_\eta \setminus (\xi+1))$, where 
for some $P \in M \cap \mathcal Y$ with 
$P \cap \kappa \in M \cap N \cap \kappa$, 
$\xi \in P \cap N \cap [\sigma,\eta)$.\footnote{The set $A$ could be empty.  
In fact, it is possible for example that 
$\theta = \eta^*$ and $\sigma = \min((N \cap \kappa^+) \setminus \eta^*)$, 
so that $\eta < \sigma$. 
In this case we interpret $[\sigma,\eta)$ to be the empty set, so that $A$ 
is empty as well.}

Note that every ordinal in $A$ is a limit ordinal, since $C_\eta$ 
consists of limit ordinals, and is strictly greater than $\sigma$. 
Suppose that $\gamma_0 < \gamma_1$ are in $A$. 
Then for some $\xi \in P \cap N \cap [\sigma,\eta)$, 
$\gamma_1 = \min(C_\eta \setminus (\xi+1))$. 
So $\gamma_0 \le \xi < \xi+1 < \gamma_1$. 
In particular, $N \cap [\gamma_0,\gamma_1) \ne \emptyset$.

\bigskip

\emph{Claim:} $A$ is finite. 
Suppose for a contradiction that $A$ is infinite. 
Fix an increasing sequence $\langle \gamma_n : n < \omega \rangle$ 
from $A$. 
Then $\sigma < \gamma_0$, and for each $n$, $\gamma_n \in C_\eta$ 
and $N \cap [\gamma_n,\gamma_{n+1}) \ne \emptyset$. 
It follows that the ordinal 
$\sup \{ \gamma_n : n < \omega \}$ is in 
$\lim(C_\eta) \cap \cl(N)$, and yet is greater than $\sigma$ and hence $\theta$. 
This contradicts the definition of $\theta$.

\bigskip

For each $\delta \in A$, define $\beta_\delta$ as the least ordinal in 
$M \cap N \cap \kappa$ 
such that for some $P \in M \cap \mathcal Y$, 
$P \cap \kappa = \beta_\delta$, and there is 
$\xi \in P \cap N \cap [\sigma,\eta)$ 
such that $\delta = \min(C_{\eta} \setminus (\xi+1))$. 
Note that $\beta_\delta$ exists by the definition of $A$. 
Also as $\max(a_k') \le \sigma$, 
$P \cap N \cap [\max(a_k'),\eta) \ne \emptyset$. 
Therefore since 
$\max(a_k')$ is the largest ordinal in $a_k' \cup b_k'$ 
less than $\eta$, inductive hypothesis (F) implies that 
$\beta \le P \cap \kappa = \beta_\delta$.

Define $b_{k+1} := (b_k \setminus \{ (\beta,\eta) \}) 
\cup \{ (\beta_\delta,\delta) : \delta \in A \}$.

\bigskip

We verify the inductive hypotheses. 
Hypotheses (A) and (B) are straightforward to check.

\bigskip

(C) We know that $a_{k+1} \ne \emptyset$ and 
$\max(a_{k+1}') = \sigma$. 
If $A$ is nonempty, then 
$$
\max(a_{k+1}') = \sigma < 
\min(A) = \min(b_{k+1}').
$$
If $A$ is empty and $b_{k+1}$ is nonempty, then 
$\min(b_{k+1}')$ is the least member of $b_k'$ greater than $\eta$. 
So if $\max(a_{k+1}') = \max(a_k')$, then by inductive hypothesis (C), 
$$
\max(a_{k+1}') = \max(a_k') < \min(b_k') = \eta < \min(b_{k+1}').
$$
Suppose that $\max(a_k') < \sigma$. 
By inductive hypothesis (B), we have that 
$N \cap [\eta,\min(b_{k+1}')) \ne \emptyset$. 
Since $\theta \le \eta$, $N \cap [\theta,\min(b_{k+1}')) \ne \emptyset$. 
As $\sigma = \min((N \cap \kappa^+) \setminus \theta)$, this implies 
that $\max(a_{k+1}') = \sigma < \min(b_{k+1}')$.

\bigskip

(D) By inductive hypothesis (D), 
it suffices to consider $(\beta,\sigma)$ 
in the case where $\max(a_k') < \sigma$ and 
$a_{k+1} = a_k \cup \{ (\beta,\sigma) \}$. 
So $\sigma = \min((N \cap \kappa^+) \setminus \theta)$, and 
therefore $\theta = \sup(N \cap \sigma)$. 
Let $\beta \le \gamma < \kappa$. 
Then by Lemma 7.12(2), 
$$
A_{\theta,\gamma} = A_{\sigma,\gamma} \cap \theta.
$$
Since $(\beta,\eta) \in b_k$, inductive hypothesis (E) implies that 
$$
A_{\eta,\gamma} = A_{\eta^*,\gamma} \cap \eta.
$$ 
And since $\theta \in \lim(C_\eta)$, 
$$
A_{\theta,\gamma} = A_{\eta,\gamma} \cap \theta.
$$
Therefore 
$$
A_{\sigma,\gamma} \cap \theta = A_{\theta,\gamma} = 
A_{\eta,\gamma} \cap \theta = A_{\eta^*,\gamma} \cap \theta.
$$
Since $\sup(N \cap \sigma) = \theta$, this proves (D).

\bigskip

(E) Consider $(\beta_\delta,\delta) \in b_{k+1}$, where 
$\delta \in A$. 
Fix $P \in M \cap \mathcal Y$ and $\xi$ such that 
$P \cap \kappa = \beta_\delta$, 
$\xi \in P \cap N \cap [\sigma,\eta)$, and 
$\delta = \min(C_\eta \setminus (\xi+1))$. 
Let $\beta_\delta \le \gamma < \kappa$, and we will show that 
$$
A_{\delta,\gamma} = A_{\eta^*,\gamma} \cap \delta.
$$
As observed above, 
$\beta \le \beta_\delta \le \gamma$. 
Since $(\beta,\eta) \in b_k$, 
by inductive hypothesis (E), 
$$
A_{\eta,\beta_\delta} = A_{\eta^*,\beta_{\delta}} \cap \eta \ 
\textrm{and} \ 
A_{\eta,\gamma} = 
A_{\eta^*,\gamma} \cap \eta.
$$
As $\xi \in N \cap P$, by elementarity $\xi + 1 \in N \cap P$. 
Hence 
$$
\xi + 1 \in P \cap \eta = A_{\eta^*,P \cap \kappa} \cap \eta = 
A_{\eta^*,\beta_\delta} \cap \eta = A_{\eta,\beta_\delta}.
$$
So $\xi + 1 \in A_{\eta,\beta_\delta} \setminus C_\eta$. 
By Notation 7.4(7), $\min(C_\eta \setminus (\xi+1)) = \delta$ 
is in $A_{\eta,\beta_\delta}$. 
Since $\beta_\delta \le \gamma$, $\delta \in A_{\eta,\gamma}$. 
Therefore 
$$
A_{\delta,\gamma} = A_{\eta,\gamma} \cap \delta = 
(A_{\eta^*,\gamma} \cap \eta) \cap \delta = A_{\eta^*,\gamma} \cap \delta,
$$
proving (E).

\bigskip

(F) Let $(\gamma,\zeta) \in b_{k+1}$. 
Then either $(\gamma,\zeta) = (\beta_\delta,\delta)$ 
for some $\delta \in A$, or $(\gamma,\zeta) \in b_k$ and $\eta < \zeta$.

\bigskip

\emph{Case a:} $(\gamma,\zeta) = (\beta_\delta,\delta)$ for 
some $\delta \in A$. 
Suppose that $P \in M \cap \mathcal Y$ satisfies that 
$$
P \cap \kappa \in M \cap N \cap \kappa \ \textrm{and} \ 
P \cap N \cap [\delta^{-},\delta) \ne \emptyset,
$$
where 
$\delta^{-}$ is the greatest member of $a_{k+1}' \cup b_{k+1}'$ 
which is less than $\delta$. 
Then clearly $\sigma = \max(a_{k+1}') \le \delta^{-}$. 
So if we fix $\xi \in P \cap N \cap [\delta^{-},\delta)$, 
then $\sigma \le \xi$ and $\delta = \min(C_\eta \setminus (\xi+1))$. 
By the minimality of $\beta_\delta$, $\beta_\delta \le P \cap \kappa$.

\bigskip

\emph{Case b:} $(\gamma,\zeta) \in b_k$ and 
$\eta < \zeta$. 
If $\zeta$ is not the least element of $b_k'$ greater than $\eta$, then 
the greatest ordinal in $a_k' \cup b_k'$ less than $\zeta$ is equal to 
the greatest ordinal in $a_{k+1}' \cup b_{k+1}'$ less than $\zeta$. 
In that case, (F) follows easily from inductive hypothesis (F).

Suppose that $\zeta$ is the least member of $b_k'$ greater than $\eta$. 
Then the greatest member of $a_{k+1}' \cup b_{k+1}'$ less 
than $\zeta$, which we denote by $\zeta^{-}$, 
is equal to either $\max(A)$ if $A$ is nonempty, 
or $\sigma$ if $A$ is empty. 

Assume that $P \in M \cap \mathcal Y$ satisfies that 
$$
P \cap \kappa \in M \cap N \cap \kappa \ \textrm{and} \ 
P \cap N \cap [\zeta^{-},\zeta) \ne \emptyset.
$$
We will show that $\gamma \le P \cap \kappa$. 
If $P \cap N \cap [\eta,\zeta) \ne \emptyset$, then since $\eta$ is the 
greatest member of $a_k' \cup b_k'$ less than $\zeta$, 
$\gamma \le P \cap \kappa$ 
by inductive hypothesis (F).

Otherwise $P \cap N \cap [\zeta^{-},\eta) \ne \emptyset$. 
Fix $\xi$ in this intersection. 
Then $\xi+1$ is also in this intersection, by the elementarity of $P \cap N$ 
and because $\eta$ is a limit ordinal.  
By the definition of $A$, $\min(C_\eta \setminus (\xi+1))$ is in $A$. 
So $A$ is nonempty, and hence $\zeta^{-} = \max(A)$. 
Yet $\min(C_\eta \setminus (\xi+1))$ is in $A$ and is strictly 
greater than $\zeta^{-} = \max(A)$, which is a contradiction.

\bigskip

(G) Let $P \in M \cap \mathcal Y$ satisfy that 
$$
P \cap \kappa \in M \cap N \cap \kappa \ \textrm{and} \ 
P \cap N \cap [\sup(M \cap \eta^*),\eta^*) \ne \emptyset.
$$
By inductive hypothesis (G), 
$$
P \cap N \cap \eta^* \subseteq \max(a_k' \cup b_k') = \max(b_k').
$$
If $\max(b_k') = \max(b_{k+1}')$, then we are done. 
Otherwise $\eta$ is equal to $\max(b_k')$, so 
$P \cap N \cap \eta^* \subseteq \eta$.
If $P \cap N \cap \eta^*$ is not a subset of $\max(a_{k+1}' \cup b_{k+1}')$, 
then there is $\xi \in P \cap N \cap \eta^*$ such that 
$\max(a_{k+1}') = \sigma \le \xi$, and also 
$\max(A) = \max(b_{k+1}') \le \xi$ if $A$ is nonempty. 
So $\xi \in P \cap N \cap [\sigma,\eta)$, which implies 
that $\min(C_\eta \setminus (\xi+1))$ is in $A$. 
So $A$ is nonempty, and $\max(A) \le \xi < 
\min(C_\eta \setminus (\xi+1)) \in A$, which is impossible.

\bigskip

(H) Suppose that $P \in M \cap \mathcal Y$ and $\tau$ satisfy 
$$
P \cap \kappa \in M \cap N \cap \kappa \ \textrm{and} \ 
\tau \in P \cap N \cap [\sup(M \cap \eta^*),\eta^*).
$$
Let 
$$
\sigma^* := \min((a_{k+1}' \cup b_{k+1}') \setminus (\tau + 1)),
$$
and assume that $\sigma^* \in a_{k+1}'$. 
Fix $\beta^*$ with $(\beta^*,\sigma^*) \in a_{k+1}$.

If $\sigma^* \in a_k'$, then the conclusion of (H) follows 
immediately from inductive hypothesis (H) for $a_k$. 
Otherwise we are in the case that $\max(a_k') < \sigma$ and 
$\sigma^* = \sigma$. 
Hence also $\beta^* = \beta$. 
Clearly $\min((a_k' \cup b_k') \setminus (\tau+1))$ is 
equal to $\eta$. 
Since $(\beta,\eta) \in b_k$ and 
$\max(a_k')$ is the greatest member of $a_k' \cup b_k'$ 
less than $\eta$, inductive hypothesis (F) implies that 
$\beta \le P \cap \kappa$, proving (H(i)).

By inductive hypothesis (E), 
$$
A_{\eta,P \cap \kappa} = A_{\eta^*,P \cap \kappa} \cap \eta = 
P \cap \eta.
$$
Since $\theta \le \eta$, 
$$
A_{\eta,P \cap \kappa} \cap \theta = P \cap \theta.
$$
As $\theta \in \lim(C_\eta)$, 
$$
A_{\theta,P \cap \kappa} = A_{\eta,P \cap \kappa} \cap \theta.
$$
By Lemma 7.12(2), 
$$
A_{\theta,P \cap \kappa} = 
A_{\sigma,P \cap \kappa} \cap \theta.
$$
So 
$$
P \cap \theta = 
A_{\eta,P \cap \kappa} \cap \theta = A_{\theta,P \cap \kappa} = 
A_{\sigma,P \cap \kappa} \cap \theta.
$$
Since $\sup(N \cap \sigma) = \theta$, it follows that 
$$
P \cap \sup(N \cap \sigma) = A_{\sigma,P \cap \kappa} \cap 
\sup(N \cap \sigma),
$$
which proves (H(ii)).

\bigskip

(I) Let $(\gamma,\zeta) \in a_{k+1} \cup b_{k+1}$, where 
$\min(a_{k+1}') = \min(a_k') < \zeta$. 
If $(\gamma,\zeta) \in a_k$, then the conclusion of (I) follows 
from inductive hypothesis (I). 
If $(\gamma,\zeta) \in a_{k+1} \setminus a_k$, then 
$(\gamma,\zeta) = (\beta,\sigma)$ and 
$\max(a_k') < \sigma$. 
Let $P := Sk(A_{\eta^*,\beta})$. 
Since $(\beta,\eta) \in b_k$, by inductive hypothesis (I) we know that 
$$
P \in M \cap \mathcal Y, \ P \cap \kappa = \beta, \ \textrm{and} \ 
P \cap \kappa^+ = A_{\eta^*,\beta}.
$$
Also since $\max(a_k')$ is the greatest member of 
$a_k' \cup b_k'$ less than $\eta$, inductive hypothesis (I) implies that 
$$
P \cap N \cap [\max(a_k'),\eta^*) \ne \emptyset.
$$
But the greatest member of $a_{k+1}' \cup b_{k+1}'$ less than 
$\sigma$ is also equal to $\max(a_k')$, so we are done.

Suppose that $(\gamma,\zeta) \in b_{k}$ and $\eta < \zeta$. 
If $\zeta$ is not the second element of $b_k'$, then 
(I) follows immediately from inductive hypothesis (I). 
Suppose that $\zeta$ is the second element of $b_k'$. 
Then $\eta$ is the greatest member of $a_k' \cup b_k'$ less than $\zeta$. 
Let $P := Sk(A_{\eta^*,\gamma})$. 
By inductive hypothesis (I), 
$$
P \in M \cap \mathcal Y, \ 
P \cap \kappa = \gamma, \ 
P \cap \kappa^+ = A_{\eta^*,\gamma}, 
\ \textrm{and} \ P \cap N \cap [\eta,\eta^*) \ne \emptyset.
$$

Let $\zeta^{-}$ denote the largest member of $a_{k+1}' \cup b_{k+1}'$ 
less than $\zeta$, and we will show that 
$P \cap N \cap [\zeta^{-},\eta^*) \ne \emptyset$. 
If $\zeta^{-} \le \eta$, then this follows immediately 
from the fact that $P \cap N \cap [\eta,\eta^*) \ne \emptyset$. 
If $A$ is nonempty, then clearly $\zeta^{-} = \max(A) < \eta$, and 
we are done. 

Suppose that $A$ is empty. 
If $\min((N \cap \kappa^+) \setminus \theta) \le \max(a_k')$, 
then $\zeta^{-} = \max(a_k') < \eta$, and we are done. 
Suppose that $\max(a_k') < \sigma = 
\min((N \cap \kappa^+) \setminus \theta)$. 
By inductive hypothesis (B), $N \cap [\eta,\zeta) \ne \emptyset$. 
Since $\theta \le \eta$, $N \cap [\theta,\zeta) \ne \emptyset$. 
As $\sigma = \min((N \cap \kappa^+) \setminus \theta)$, 
it follows that $\sigma < \zeta$, and so clearly $\zeta^{-} = \sigma$. 
If $\sigma \le \eta$, then we are done. 
Otherwise $\sigma = \min((N \cap \kappa^+) \setminus \eta)$. 
We know that $P \cap N \cap [\eta,\eta^*) \ne \emptyset$. 
But the first member of this intersection must be greater than or equal to 
$\min((N \cap \kappa^+) \setminus \eta) = \sigma$. 
Hence $P \cap N \cap [\sigma,\eta^*) \ne \emptyset$, and we are done 
since $\sigma = \zeta^{-}$.

In the final case, assume that $(\gamma,\zeta)$ is equal to 
$(\beta_\delta,\delta)$, for some $\delta \in A$. 
By the definition of $\beta_{\delta}$, there exists 
$Q \in M \cap \mathcal Y$ and $\xi$ such that 
$$
Q \cap \kappa = \beta_\delta, \ \xi \in Q \cap N \cap [\sigma,\eta), 
\ \textrm{and} \ \delta = \min(C_\eta \setminus (\xi+1)).
$$
So $Q \cap \eta^* = A_{\eta^*,\beta_{\delta}}$.
Since $Q$ and $\eta^*$ are closed under $H^*$, it follows that 
$A_{\eta^*,\beta_\delta}$ is closed under $H^*$.

We will show that $P := Sk(A_{\eta^*,\beta_\delta})$ satisfies 
the conclusions of (I). 
Since $A_{\eta^*,\beta_\delta}$ is closed under $H^*$, 
$$
P \cap \kappa^+ = A_{\eta^*,\beta_{\delta}} = Q \cap \eta^*.
$$
Hence also 
$$
P \cap \kappa = Q \cap \kappa = \beta_\delta.
$$
Since $\eta^*$ is a limit point of $Q$, 
$$
\sup(P) = \sup(Q \cap \eta^*) = \eta^*.
$$
As $\eta^*$ and $\beta_\delta$ are in $M$, so is $P$.

To show that $P \in \mathcal Y$, it suffices to show that 
$\lim(C_{\eta^*}) \cap P$ is cofinal in $\eta^*$. 
Since $Q \cap \eta^* = P \cap \eta^*$, $\eta^*$ is a limit point of $Q$. 
If $\eta^*$ is not in $Q$, then $\eta^* \in \cl(Q) \setminus Q$. 
By Lemma 7.13, 
$$
\lim(C_{\eta^*}) \cap Q = \lim(C_{\eta^*}) \cap P
$$
is 
cofinal in $\eta^*$. 
Otherwise $\eta^* \in Q$. 
Since $\eta^*$ is a limit point of $Q$, $\cf(\eta^*) < \kappa$. 
Therefore $\ot(C_{\eta^*}) \in Q \cap \kappa$ by elementarity. 
Hence $C_{\eta^*} \subseteq Q$ by elementarity. 
Since $\eta^*$ has uncountable cofinality, $\lim(C_{\eta^*})$ is cofinal 
in $\eta^*$. 
So 
$$
\lim(C_{\eta^*}) \cap P = 
\lim(C_{\eta^*}) \cap Q = \lim(C_{\eta^*})
$$
is cofinal in $\eta^*$.

Let $\delta^{-}$ denote the largest member of $a_{k+1}' \cup b_{k+1}'$ 
which is less than $\delta$. 
Then either $\delta^{-} = \sigma$ if $\delta = \min(A)$, or else 
$\delta^{-}$ is the largest member of $A$ which is less than $\delta$. 
In the first case, the ordinal $\xi$, which is in 
$Q \cap N \cap [\sigma,\eta)$, is a witness to the fact that 
$Q \cap N \cap [\delta^{-},\eta^*) \ne \emptyset$. 
In the second case, $\xi+1$ must be greater than $\delta^{-}$, 
since otherwise $\delta = \min(C_\eta \setminus (\xi+1)) \le \delta^{-}$. 
So $\xi+1$ is a witness to the fact that 
$Q \cap N \cap [\delta^{-},\eta^*) \ne \emptyset$. 
In either case, since $Q \cap \eta^* \subseteq P$, 
$P \cap N \cap [\delta^{-},\eta^*) \ne \emptyset$.

\bigskip

\addcontentsline{toc}{section}{13. Amalgamation of side conditions}

\textbf{\S 13. Amalgamation of side conditions}

\stepcounter{section}

\bigskip

We are now in a position to prove amalgamation results for 
$\vec S$-obedient side conditions over simple models in $\mathcal X$, 
strong models in $\mathcal Y$, and transitive models. 
The proofs of these results will use almost the entirety of the technology developed in 
the paper thus far. 
In Part III, the amalgamation results we present here will be 
used to prove the existence of 
strongly generic conditions.

\begin{proposition}
Let $(A,B)$ be an $\vec S$-obedient side condition, where 
$A \subseteq \mathcal X$ and $B \subseteq \mathcal Y$. 
Suppose that $N \in A$ is simple, and $(A,B)$ is closed under 
canonical models with respect to $N$. 
Assume that for all $M \in A$, if $M < N$ then 
$M \cap N \in A$.

Let $(C,D)$ be an $\vec S$-obedient side condition, where 
$C \subseteq \mathcal X$ and $D \subseteq \mathcal Y$, 
such that 
$$
A \cap N \subseteq C \subseteq N \ \textrm{and} \ 
B \cap N \subseteq D \subseteq N.
$$
Also assume that $N' \in C$ is simple, and 
for all $M \in A$, if $M < N$, then 
there is $M'$ in $C$ such that 
$$
M' < N', \ M \cap N = 
M' \cap N', \ \textrm{and} \ p(M,N) = p(M',N').
$$

Then $(A \cup C,B \cup D)$ is an $\vec S$-obedient side condition.
\end{proposition}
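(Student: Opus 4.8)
The plan is to verify the two requirements of Definition 5.3 for $(A \cup C, B \cup D)$ in turn. Requirement (2), that $B \cup D$ is a finite set of $\vec S$-strong models in $\mathcal Y_0$, is immediate since $B$ and $D$ already have this property. So the work is to show that $A \cup C$ is $\vec S$-adequate and that obedience condition (3) of Definition 5.3 holds. I would first dispose of adequacy: the aim is to invoke Proposition 1.29 with the adequate sets $A$ and $C$ and the model $N$. The only nontrivial hypothesis is that $M < N$ implies $M \cap N \in A \cap N$ for $M \in A$; membership in $A$ is assumed, and membership in $N$ follows from Lemma 8.2 (applicable since $N$ is simple and $M, N \in \mathcal X$ with $M < N$), which gives $M \cap N \cap \kappa^+ \in N$ and hence $M \cap N = f^*[M \cap N \cap \kappa^+] \in N$. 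Since $A \cap N \subseteq C \subseteq N$, Proposition 1.29 yields that $A \cup C$ is adequate.

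Since $(A,B)$ and $(C,D)$ are themselves $\vec S$-obedient side conditions, $A$ and $C$ are $\vec S$-adequate and conditions (1)--(3) hold within each pair; so it remains to treat the mixed cases. Concretely, for $\vec S$-adequacy I would check, for all $M \in A$ and $L \in C$ (so $L \in N$), the two clauses of Definition 5.2 for remainder points in both $R_M(L)$ and $R_L(M)$; and for condition (3) I would check the pairs $M \in A$, $P \in D$ and $M \in C$, $P \in B$. The reduction strategy is the familiar one: when $N \le M$ one has $L < M$ by Lemma 1.26, and Lemma 2.4 rewrites remainder points between $L$ and $M$ in terms of remainder points between $L$ and $N$ or between $M$ and $N$; when $M < N$ one uses Lemmas 2.5 and 2.6 together with the fact, established above, that $M \cap N \in C$. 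The point of these reductions is to turn every question into one about a remainder point of the form $\xi \in R_N(M)$ or $\xi \in R_M(N)$, or an ordinal $\sigma \in R^+_N(M)$, together with an auxiliary $\vec S$-strong model $P \in M \cap \mathcal Y$.

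Such objects are then handled by the canonical models attached to $N$. Because $(A,B)$ is closed under canonical models with respect to $N$, Lemma 10.10 supplies a model $Q \in B \cap N \subseteq D$ whose intersection with $\kappa$ is the relevant remainder ordinal and which contains either the witness $\tau$ or the needed fragment of $M \cap N$; obedience of $(C,D)$ applied to $L$ (or to the matching $M' \in C$) and $Q$ then closes the case. To move an $\vec S$-strong $P \in M \cap \mathcal Y$ to the $C$-side --- i.e.\ to produce an $\vec S$-strong model in $M' \cap \mathcal Y$ or $N' \cap \mathcal Y$ containing the witness $\tau$ --- I would invoke the main proxy lemma, Lemma 11.5, and its consequences Lemmas 11.6 and 11.7. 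This is exactly where the hypotheses $M \cap N = M' \cap N'$ and $p(M,N) = p(M',N')$ are used, and the ``moreover'' clauses of those lemmas guarantee that the transported model is again $\vec S$-strong, so that the $\vec S$-obedience of $(C,D)$ can be applied to it.

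The main obstacle, and the reason the proof draws on essentially all of the preceding technology, is this last transport step: getting a witness $\tau \in P \cap N \cap \kappa^+$ (for $P \in M \cap \mathcal Y$ with $P \cap \kappa \in M \cap N \cap \kappa$) back inside a model living in $N'$ or $M'$. Here one must split according to whether $\tau < \alpha_{M,N}$ --- handled by Lemmas 11.6 and 11.7, distinguishing whether $P \cap \alpha_{M,N}$ is bounded below $\alpha_{M,N}$ --- or $\tau$ lies in some interval $[\sup(M \cap \eta^*),\eta^*)$ with $\eta^* \in R^+_N(M)$, handled directly by Lemma 11.5. Once one organizes the bookkeeping of which reduction lemma and which proxy/canonical lemma applies in each subcase, the individual verifications are routine; so the plan is to enumerate the mixed pairs $(M_1,M_2)$ and $(M,P)$ systematically and, in each, push everything down to remainder points and canonical models over $N$ and across the proxy to the $C$-side, leaving the conclusion to the $\vec S$-obedience of $(A,B)$ and $(C,D)$.
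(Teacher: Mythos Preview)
Your plan is correct and follows essentially the same route as the paper's proof: adequacy via Proposition 1.29 (using Lemma 8.2 to get $M\cap N\in N$), reduction of the mixed remainder-point cases via Lemmas 2.4, 2.5, 2.6, and then discharging each case with the canonical-model lemma (Lemma 10.9 in the paper---you wrote 10.10) and the proxy lemmas 11.5--11.7, exactly as you describe. The paper's proof is just your sketch carried out in full, with the case split organized by the position of $P\cap\kappa$ and $\zeta$ relative to $\beta_{M,N}$ rather than by $N\le M$ versus $M<N$, but the content is the same.
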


\begin{proof}
First note that for all $M \in A$, if $M < N$ then $M \cap N \in C$. 
For since $N$ is simple, $M \cap N \in N$ by Lemma 8.2. 
So $M \cap N \in A \cap N \subseteq C$.

Consider $M < N$ in $A$. 
Since $M \cap N = M' \cap N'$ and $M' < N'$, it follows 
by Lemma 1.19(2) that 
$$
M \cap \beta_{M,N} = M \cap N \cap \kappa = M' \cap N' \cap \kappa = 
M' \cap \beta_{M',N'}.
$$
So $M \cap \beta_{M,N} = M' \cap \beta_{M',N'}$. 
Also by Lemma 1.19(3), 
$$
\beta_{M,N} = \min(\Lambda \setminus \sup(M \cap N \cap \kappa)) = 
\min(\Lambda \setminus \sup(M' \cap N' \cap \kappa)) = \beta_{M',N'}.
$$
So $\beta_{M,N} = \beta_{M',N'}$.

To show that $(A \cup C,B \cup D)$ is $\vec S$-obedient, we verify 
properties (1), (2), and (3) of Definition 5.3. 
(2) is immediate. 

\bigskip

(3) Let $M \in C$ and $P \in B$. 
Let $\beta := P \cap \kappa$, and 
suppose that $\zeta = \min((M \cap \kappa) \setminus \beta)$. 
Fix $\tau \in M \cap P \cap \kappa^+$, and we will show that $\zeta \in S_\tau$. 
If $\beta = \zeta$, then $\zeta \in S_\tau$ since $P$ is $\vec S$-strong. 
So assume that $\beta < \zeta$, which means that $\beta \notin M$. 
If $P \in N$ then $P \in B \cap N \subseteq D$, 
so $\zeta \in S_\tau$ since $(C,D)$ is $\vec S$-obedient.

Assume that $P \notin N$. 
Since $M \in C$ and $C \subseteq N$, $M \in N$. 
Therefore $\zeta$ and $\tau$ are in $N$. 
Hence $P \cap \kappa = \beta  < \zeta < \sup(N \cap \kappa)$. 
Let $\xi := \min((N \cap \kappa) \setminus \beta)$. 
Since $M \subseteq N$, $\zeta = \min((M \cap \kappa) \setminus \xi)$. 
By Lemma 10.9(1), there is $Q \in B \cap N \subseteq D$ 
such that $Q \cap \kappa = \xi$ and $\tau \in Q$. 
Then $\zeta = \min((M \cap \kappa) \setminus (Q \cap \kappa))$ and 
$\tau \in M \cap Q \cap \kappa^+$. 
So $\zeta \in S_\tau$ since $(C,D)$ is $\vec S$-obedient.

\bigskip

Let $M \in A$ and $P \in D$. 
Let $\beta := P \cap \kappa$, and suppose that 
$\zeta = \min((M \cap \kappa) \setminus \beta)$. 
Fix $\tau \in M \cap P \cap \kappa^+$, and we will show that $\zeta \in S_\tau$. 
If $\beta = \zeta$, then $\zeta \in S_\tau$ since $P$ is $\vec S$-strong. 
So assume that $\beta < \zeta$, which means that $\beta \notin M$.

Since $P \in D$ and $D \subseteq N$, $P \in N$. 
So $\beta = P \cap \kappa$ is in $N$ by elementarity.

\bigskip

\emph{Case 1:} $\beta_{M,N} \le \beta$. 
Since $\beta \in N$, 
$\zeta = \min((M \cap \kappa) \setminus \beta)$ is in $R_N(M)$. 
As $P \in N \cap \mathcal Y$ is $\vec S$-strong, 
$\tau \in M \cap P \cap \kappa^+$, 
and 
$$
\sup(M \cap \zeta) < P \cap \kappa = \beta < \zeta,
$$
it follows that 
$\zeta \in S_\tau$ since $A$ is $\vec S$-adequate.

\bigskip

\emph{Case 2:} $\beta < \beta_{M,N} \le \zeta$. 
Then $\zeta = \min((M \cap \kappa) \setminus \beta_{M,N})$. 
Since $\beta \in (N \cap \beta_{M,N}) \setminus M$, 
it follows that $M < N$. 
Therefore $\zeta \in R_N(M)$. 
As $P \in N \cap \mathcal Y$ is $\vec S$-strong, 
$\tau \in M \cap P \cap \kappa^+$, and 
$$
\sup(M \cap \zeta) < P \cap \kappa = \beta < \zeta,
$$
we have that 
$\zeta \in S_\tau$ since $A$ is $\vec S$-adequate.

\bigskip

\emph{Case 3:} $\beta < \zeta < \beta_{M,N}$. 
Since $\beta \in (N \cap \beta_{M,N}) \setminus M$, 
it follows that $M < N$. 
As $M < N$, $P \in N \cap \mathcal Y$, and 
$$
P \cap \kappa < \zeta < 
\sup(M \cap \beta_{M,N}) = \sup(M \cap N \cap \kappa),
$$
it follows that $M \cap P \cap \kappa^+ \subseteq N$ by Lemma 8.7. 
In particular, $\tau \in M \cap N \cap \kappa^+$. 
As $\zeta < \beta_{M,N}$ and 
$M \cap \beta_{M,N} = M \cap N \cap \kappa$, 
$\zeta = \min((M \cap N \cap \kappa) \setminus \beta)$. 
But $M \cap N \in C$, $P \in D$, 
and $\tau \in (M \cap N) \cap P \cap \kappa^+$. 
So $\zeta \in S_\tau$ since $(C,D)$ is $\vec S$-obedient.

\bigskip

(1) Now we prove that $A \cup C$ is $\vec S$-adequate. 
By Proposition 1.29, $A \cup C$ is adequate. 
Let $M \in A$ and $L \in C$. 
Then $L \in N$. 
We will prove that the remainder points in $R_M(L)$ and $R_L(M)$ 
are as required.

\bigskip

First, consider $\zeta \in R_L(M)$. 
Then by Lemmas 2.4 and 2.5, either 
(1) $M < N$, $\zeta < \beta_{M,N}$, and 
$\zeta \in R_L(M \cap N)$, or (2) $\beta_{M,N} \le \zeta$ and 
$\zeta \in R_N(M)$.

\bigskip

\emph{Case 1:} $M < N$, $\zeta < \beta_{M,N}$, and 
$\zeta \in R_L(M \cap N)$. 
Recall that $L$ and $M \cap N$ are in $C$. 
Fix $\tau \in L \cap M \cap \kappa^+$, 
and we will show that $\zeta \in S_\tau$. 
Since $L \in N$, $\tau \in N$. 
So $\tau \in L \cap (M \cap N)$. 
Since $\zeta \in R_L(M \cap N)$, it follows that $\zeta \in S_\tau$ since 
$C$ is $\vec S$-adequate.

Suppose that $P \in L \cap \mathcal Y$ is $\vec S$-strong, 
$$
\sup(M \cap \zeta) < P \cap \kappa < \zeta, \ \textrm{and} \ 
\tau \in M \cap P \cap \kappa^+.
$$
We will show that $\zeta \in S_\tau$. 
Since $P \in L$ and $L \in N$, $P \in N$. 
So $P \in N \cap \mathcal Y$, $M < N$, and 
$$
P \cap \kappa <  \zeta < \sup(M \cap \beta_{M,N}) = 
\sup(M \cap N \cap \kappa).
$$
By Lemma 8.7, 
$M \cap P \cap \kappa^+ \subseteq N$. 
In particular, $\tau \in N$. 
So $\tau \in (M \cap N) \cap P \cap \kappa^+$. 
Since $\zeta < \beta_{M,N}$ and $M < N$, we have that 
$M \cap \zeta = M \cap N \cap \zeta$. 
Therefore 
$$
\sup((M \cap N) \cap \zeta) = \sup(M \cap \zeta) < 
P \cap \kappa < \zeta.
$$
So $\zeta \in R_L(M \cap N)$, $P \in L \cap \mathcal Y$ is $\vec S$-strong,  
$\sup((M \cap N) \cap \zeta) < P \cap \kappa < \zeta$, 
and $\tau \in (M \cap N) \cap P \cap \kappa^+$. 
It follows that $\zeta \in S_\tau$ since $C$ is $\vec S$-adequate.

\bigskip

\emph{Case 2:} $\beta_{M,N} \le \zeta$ and $\zeta \in R_N(M)$. 
Fix $\tau \in L \cap M \cap \kappa^+$, and we will show that $\zeta \in S_\tau$. 
Since $L \in N$, $\tau \in N$. 
So $\tau \in M \cap N \cap \kappa^+$. 
As $\zeta \in R_N(M)$, it follows that 
$\zeta \in S_\tau$ since $A$ is $\vec S$-adequate.

Suppose that $P \in L \cap \mathcal Y$ is $\vec S$-strong and 
$\sup(M \cap \zeta) < P \cap \kappa < \zeta$. 
Fix $\tau \in M \cap P \cap \kappa^+$, 
and we will show that $\zeta \in S_\tau$. 
Since $P \in L$ and $L \in N$, $P \in N$. 
So $P \in N \cap \mathcal Y$ is $\vec S$-strong. 
Since $\zeta \in R_N(M)$,  
$\sup(M \cap \zeta) < P \cap \kappa < \zeta$, and $\tau \in M \cap P$, 
it follows that 
$\zeta \in S_\tau$ since $A$ is $\vec S$-adequate.

\bigskip

This completes the proof that the ordinals in $R_L(M)$ are as required.

\bigskip
\bigskip

Now consider $\zeta \in R_M(L)$. 
Then by Lemmas 2.4 and 2.5, 
either $M < N$ and $\zeta \in R_{M \cap N}(L)$, 
or there is $\xi \in R_M(N)$ such that 
$\zeta = \min((L \cap \kappa) \setminus \xi)$.
Since $\zeta \in L$ and $L \in N$, $\zeta \in N$.

\bigskip

Let $\tau \in L \cap M \cap \kappa^+$, 
and we will show that $\zeta \in S_\tau$. 
Since $L \in N$, $\tau \in N$. 
So $\tau \in L \cap (M \cap N)$. 
First, assume that $M < N$ and $\zeta \in R_{M \cap N}(L)$. 
Since $\zeta \in R_{M \cap N}(L)$ and $\tau \in L \cap (M \cap N)$, 
it follows that $\zeta \in S_\tau$ since $C$ is $\vec S$-adequate.

Secondly, assume that there is $\xi \in R_M(N)$ such that 
$\zeta = \min((L \cap \kappa) \setminus \xi)$. 
Since $\xi \in R_M(N)$ and $\tau \in M \cap N \cap \kappa^+$, 
by Lemma 10.9(2) there is $Q \in B \cap N \subseteq D$ 
such that $Q \cap \kappa = \xi$ and $\tau \in Q$. 
Then $\zeta = \min((L \cap \kappa) \setminus (Q \cap \kappa))$ 
and $\tau \in L \cap Q \cap \kappa^+$, which implies that 
$\zeta \in S_\tau$ since $(C,D)$ is $\vec S$-obedient.

\bigskip

Suppose that $P \in M \cap \mathcal Y$ is $\vec S$-strong, 
$$
\sup(L \cap \zeta) < P \cap \kappa < \zeta, \ \textrm{and} \ 
\tau \in L \cap P \cap \kappa^+.
$$ 
We will prove that $\zeta \in S_\tau$. 
Note that since $\tau \in L$ and $L \in N$, $\tau \in N$.

\bigskip

\emph{Case 1:} $\beta_{M,N} \le P \cap \kappa$. 
Let $\theta := \min((N \cap \kappa) \setminus (P \cap \kappa))$. 
Note that $\theta$ exists since $\zeta \in N$. 
Also $\zeta = \min((L \cap \kappa) \setminus \theta)$. 
Since $P \cap \kappa \in (M \cap \kappa) \setminus \beta_{M,N}$, 
it follows that $\theta \in R_M(N)$ and 
$$
\sup(N \cap \theta) < P \cap \kappa < \theta.
$$
Also $\tau \in N \cap P \cap \kappa^+$. 
By Lemma 10.9(4), there is $Q \in B \cap N \subseteq D$ such that 
$Q \cap \kappa = \theta$ and $\tau \in Q$. 
But then $\zeta = \min((L \cap \kappa) \setminus (Q \cap \kappa))$ and 
$\tau \in L \cap Q \cap \kappa^+$, which implies that 
$\zeta \in S_\tau$ since $(C,D)$ is $\vec S$-obedient.

\bigskip

\emph{Case 2:} $P \cap \kappa < \beta_{M,N}$ and $N \le M$. 
Recall that either $M < N$ and $\zeta \in R_{M \cap N}(L)$, 
or there is $\xi \in R_M(N)$ such that 
$\zeta = \min((L \cap \kappa) \setminus \xi)$. 
Since $N \le M$, we are in the second case. 

\bigskip

\emph{Subcase 2(a):} $P \cap \kappa < \sup(N \cap \beta_{M,N})$. 
Since $N \le M$, $\sup(N \cap \beta_{M,N}) = \sup(M \cap N \cap \kappa)$. 
Therefore $P \cap \kappa < \sup(M \cap N \cap \kappa)$. 
Since $\tau \in N \cap P \cap \kappa^+$, it follows that 
$\tau \in M$ by Lemma 8.7. 
So $\tau \in M \cap N \cap \kappa^+$. 
Since $\xi \in R_M(N)$, 
by Lemma 10.9(2) there is $Q \in B \cap N \subseteq D$ such that 
$Q \cap \kappa = \xi$ and $\tau \in Q$. 
Since $\zeta = \min((L \cap \kappa) \setminus (Q \cap \kappa))$ 
and $\tau \in L \cap Q \cap \kappa^+$, it follows that 
$\zeta \in S_\tau$ since $(C,D)$ is $\vec S$-obedient.

\bigskip

\emph{Subcase 2(b):} $\sup(N \cap \beta_{M,N}) \le P \cap \kappa$. 
Since $\sup(N \cap \beta_{M,N})$ has countable cofinality and 
$P \cap \kappa$ has uncountable cofinality, we have that 
$\sup(N \cap \beta_{M,N}) < P \cap \kappa$. 
Let $\delta := \min((N \cap \kappa) \setminus \beta_{M,N})$, 
which exists since $\zeta \in N$. 
As $N \le M$, $\delta \in R_M(N)$. 
Also 
$$
\sup(N \cap \delta) = \sup(N \cap \beta_{M,N}) < P \cap \kappa < \delta
$$
and $\tau \in N \cap P \cap \kappa^+$.  
By Lemma 10.9(4), there is $Q \in B \cap N \subseteq D$ such that 
$Q \cap \kappa = \delta$ and $\tau \in Q$. 
Since $\sup(N \cap \delta) < P \cap \kappa < \delta$, we have that 
$\delta = \min((N \cap \kappa) \setminus (P \cap \kappa))$. 
As $L \subseteq N$ and $\sup(L \cap \zeta) < P \cap \kappa < \zeta$, 
clearly $\zeta = \min((L \cap \kappa) \setminus \delta)$. 
So $\zeta = \min((L \cap \kappa) \setminus (Q \cap \kappa))$ 
and $\tau \in L \cap Q \cap \kappa^+$. 
It follows that $\zeta \in S_\tau$ since $(C,D)$ is $\vec S$-obedient.

\bigskip

\emph{Case 3:} $P \cap \kappa < \beta_{M,N}$ and $M < N$. 
Then $P \cap \kappa \in M \cap \beta_{M,N} = M \cap N \cap \kappa$.

\bigskip

\emph{Subcase 3(a):} $\tau < \alpha_{M,N}$ and $P \cap \alpha_{M,N}$ 
is bounded below $\alpha_{M,N}$. 
Then $\tau \in P \cap N \cap \alpha_{M,N}$. 
By Lemma 11.7, there is $P' \in M \cap N \cap \mathcal Y$ which is 
$\vec S$-strong such that $P' \cap \kappa = P \cap \kappa$ and $\tau \in P'$.

Recall that either $\zeta \in R_{M \cap N}(L)$, or there is $\xi \in R_M(N)$ 
such that $\zeta = \min((L \cap \kappa) \setminus \xi)$. 
Suppose first that $\zeta \in R_{M \cap N}(L)$. 
Since $P' \in M \cap N \cap \mathcal Y$ is $\vec S$-strong, 
$\sup(L \cap \zeta) < P \cap \kappa = P' \cap \kappa < \zeta$, 
and $\tau \in L \cap P' \cap \kappa^+$, 
it follows that $\zeta \in S_\tau$ since $C$ is $\vec S$-adequate.

Now suppose that 
there is $\xi \in R_M(N)$ such that 
$\zeta = \min((L \cap \kappa) \setminus \xi)$. 
Then by Lemma 10.9(3), there is $Q \in B \cap N \subseteq D$ 
such that $Q \cap \kappa = \xi$ and 
$N \cap P' \cap \kappa^+ \subseteq Q$. 
So $\zeta = \min((L \cap \kappa) \setminus (Q \cap \kappa))$ and 
$\tau \in L \cap Q \cap \kappa^+$. 
It follows that $\zeta \in S_\tau$ since $(C,D)$ is $\vec S$-obedient.

\bigskip

\emph{Case 3(b):} Either $\tau < \alpha_{M,N}$ and $P \cap \alpha_{M,N}$ 
is unbounded in $\alpha_{M,N}$, or $\alpha_{M,N} \le \tau$. 
In the first case, we apply Lemma 11.6 letting $\tau_0 = \tau$ to get 
that there exists 
$Q \in N' \cap \mathcal Y$ which is 
$\vec S$-strong such that 
$Q \cap \kappa = P \cap \kappa$ and $\tau \in Q$. 
In the second case, we apply the main proxy lemma, Lemma 11.5. 
Since $\tau \in P$ and $P \in M$, 
$$
\alpha_{M,N} \le \tau < \sup(P) < \sup(M).
$$
Let $\eta := \min((M \cap \kappa^+) \setminus \tau)$, which is in $R^+_N(M)$. 
Note that the assumptions of Lemma 11.5 for $\eta^* = \eta$ are satisfied. 
By Lemma 11.5(1), there is $Q \in N' \cap \mathcal Y$ which is 
$\vec S$-strong such that 
$Q \cap \kappa = P \cap \kappa$ and $\tau \in Q$. 
In either case, we have that $Q \in N' \cap \mathcal Y$ is $\vec S$-strong, 
$Q \cap \kappa = P \cap \kappa$, and $\tau \in Q$.

Let us note that if $\zeta \in R_{N'}(L)$, 
then $\zeta \in S_\tau$ and we are done. 
For $Q \in N' \cap \mathcal Y$ is $\vec S$-strong, 
$$
\sup(L \cap \zeta) < P \cap \kappa = Q \cap \kappa < \zeta,
$$
and $\tau \in L \cap Q \cap \kappa^+$. 
It follows that $\zeta \in S_\tau$ since $C$ is $\vec S$-adequate.

\bigskip

\emph{Subcase 3(b(i)):} $\beta_{L,N'} \le \zeta$. 
We claim that $\zeta \in R_{N'}(L)$, which finishes the proof. 
If $\beta_{L,N'} \le P \cap \kappa$, 
then $P \cap \kappa = Q \cap \kappa \in 
(N' \cap \kappa) \setminus \beta_{L,N'}$. 
Therefore $\zeta = \min((L \cap \kappa) \setminus (P \cap \kappa))$ 
is in $R_{N'}(L)$. 
Suppose on the other hand that $P \cap \kappa < \beta_{L,N'}$. 
Then $\zeta = \min((L \cap \kappa) \setminus \beta_{L,N'})$. 
Since $P \cap \kappa \in (N' \cap \beta_{L,N'}) \setminus L$, 
we have that $L < N'$. 
So $\zeta \in R_{N'}(L)$.

\bigskip

\emph{Subcase 3(b(ii)):} $\zeta < \beta_{L,N'}$. 
In particular, since $Q \cap \kappa = P \cap \kappa < \zeta$ and 
$\zeta \in L \cap \beta_{L,N'}$, it follows that 
$$
Q \cap \kappa < \sup(L \cap \beta_{L,N'}) < \beta_{L,N'}.
$$
As $Q \cap \kappa \in (N' \cap \beta_{L,N'}) \setminus L$, 
we have that $L < N'$. 
So $L < N'$, $Q \in N' \cap \mathcal Y$, and 
$$
Q \cap \kappa < \sup(L \cap \beta_{L,N'}) = \sup(L \cap N' \cap \kappa).
$$
By Lemma 8.7, $Q \cap L \cap \kappa^+ \subseteq N'$. 
Since $\tau \in L \cap Q \cap \kappa^+$, it follows that $\tau \in N'$. 
By Lemma 11.5(2) in the case that $\alpha_{M,N} \le \tau$, and 
by Lemma 11.6(2) in the case that $\tau < \alpha_{M,N}$, 
there is $P' \in M' \cap \mathcal Y$ which is $\vec S$-strong 
such that $P' \cap \kappa = P \cap \kappa$ and $\tau \in P'$.

By Lemmas 1.27(1) and 2.6(1), 
either $\beta_{L,M} = \beta_{L,M \cap N} = \beta_{L,M'}$, 
or $\beta_{M,N} < \beta_{L,M'}$. 
Suppose first that $\beta_{L,M} = \beta_{L,M'}$. 
We claim that 
$$
\beta_{L,M'} \le P \cap \kappa.
$$
Suppose for a contradiction that $P \cap \kappa < \beta_{L,M'} = \beta_{L,M}$. 
Since $\Lambda$ is cofinal in $P \cap \kappa$ by the elementarity of $P$, 
and $\sup(L \cap \zeta) < P \cap \kappa$, we can find 
$\pi \in \Lambda \cap P \cap \kappa$ such that $\sup(L \cap \zeta) < \pi$. 
Then $\pi < \beta_{L,M}$. 
By Lemma 1.19(5),
$$
L \cap M \cap [\pi,\beta_{L,M}) \ne \emptyset.
$$
Fix $\xi$ in this intersection. 
As $\zeta \in R_M(L)$, $\beta_{L,M} \le \zeta$. 
So $\xi \in L$ and 
$$
\sup(L \cap \zeta) < \pi \le \xi < \beta_{L,M} \le \zeta.
$$ 
But $\sup(L \cap \zeta) < \xi < \zeta$ and $\xi \in L$ is obviously 
impossible. 
Hence indeed $\beta_{L,M'} \le P \cap \kappa$.

So $P \cap \kappa = P' \cap \kappa \in 
(M' \cap \kappa) \setminus \beta_{L,M'}$. 
Since $\zeta = \min((L \cap \kappa) \setminus (P' \cap \kappa))$, 
we have that $\zeta \in R_{M'}(L)$. 
As $P' \in M' \cap \mathcal Y$ is $\vec S$-strong, 
$\sup(L \cap \zeta) < P' \cap \kappa < \zeta$, 
and $\tau \in L \cap P' \cap \kappa^+$, 
it follows that $\zeta \in S_\tau$ since $C$ is $\vec S$-adequate.

The other alternative is that $\beta_{M,N} < \beta_{L,M'}$. 
We will show that this is impossible. 
So assume that $\beta_{M,N} < \beta_{L,M'}$. 
We claim that $L < M'$. 
For $P \cap \kappa = P' \cap \kappa \in M'$. 
Also 
$$
P \cap \kappa < \beta_{M,N} < \beta_{L,M'}.
$$
So 
$$
P \cap \kappa \in (M' \cap \beta_{L,M'}) \setminus L,
$$
which implies that $L < M'$.

Next we claim that $\beta_{M,N} \le \zeta$. 
Suppose for a contradiction that $\zeta < \beta_{M,N}$. 
Then $\zeta \in L \cap \beta_{M,N} \subseteq L \cap \beta_{L,M'}$, and 
$L \cap \beta_{L,M'} \subseteq M'$ since $L < M'$. 
So 
$$
\zeta \in M' \cap \beta_{M,N} = 
M' \cap \beta_{M',N'} = M \cap \beta_{M,N}.
$$
Hence $\zeta \in M$. 
But this is not true since $\zeta \in R_M(L)$.

Since $P \cap \kappa < \beta_{M,N} \le \zeta$ and 
$\sup(L \cap \zeta) < P \cap \kappa$, 
clearly $\zeta = \min((L \cap \kappa) \setminus \beta_{M,N})$. 
Since $\beta_{M,N} < \beta_{L,M'}$ and $\beta_{M,N} \in \Lambda$, 
by Lemma 1.19(5) we have that 
$L \cap M' \cap [\beta_{M,N},\beta_{L,M'})$ is nonempty. 
Since $\zeta = \min((L \cap \kappa) \setminus \beta_{M,N})$, it follows 
that $\zeta < \beta_{L,M'}$. 
As $L < M'$, $\zeta \in M'$.

Now we will get a contradiction. 
By Subcase 3(b(ii)), $\zeta < \beta_{L,N'}$. 
So $\zeta \in L \cap \beta_{L,N'}$. 
Since $L < M' < N'$, we have that $L < N'$. 
Hence $\zeta \in N'$. 
So $\zeta \in M' \cap N' \cap \kappa$, which implies 
that $\zeta < \beta_{M',N'} = \beta_{M,N}$. 
But $\beta_{M,N} \le \zeta$, and we have a contradiction.
\end{proof}

\begin{proposition}
Let $(A,B)$ be an $\vec S$-obedient side condition, 
where $A \subseteq \mathcal X$ and $B \subseteq \mathcal Y$. 
Suppose that $P \in B$ satisfies that $\cf(\sup(P)) = P \cap \kappa$. 
Assume that 
for all $M \in A$, $M \cap P \in A$, and 
for all $Q \in B$, if $Q \cap \kappa < P \cap \kappa$ then 
$Q \cap P \in B$.

Let $(C,D)$ be an $\vec  S$-obedient side condition, 
where $C \subseteq \mathcal X$ and $D \subseteq \mathcal Y$, 
such that 
$$
A \cap P \subseteq C \subseteq P \ \textrm{and} \ 
B \cap P \subseteq D \subseteq P.
$$
In addition, assume that there exists $P' \in D$ such that 
$\cf(\sup(P')) = P' \cap \kappa$, and 
for all $M \in A$, there exists $M' \in C$ such that 
$$
M \cap P = M' \cap P' \ \textrm{and} \ p(M,P) = p(M',P').
$$

Then $(A \cup C,B \cup D)$ is an $\vec S$-obedient side condition.
\end{proposition}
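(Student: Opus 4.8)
The plan is to follow the proof of Proposition 13.1, with the simple model $N \in \mathcal X$ replaced throughout by the model $P \in \mathcal Y$ and $N'$ replaced by $P'$; the proxy lemmas of Sections 11 and 12 apply in exactly the same role, since Lemmas 11.5, 11.6, and 11.7 are stated for $N \in \mathcal X \cup \mathcal Y$. First I would record the standing facts used repeatedly. By Lemma 8.3, $P$ is simple, so $P \cap \kappa$ is a regular uncountable cardinal; consequently $\sup(M \cap P \cap \kappa) < P \cap \kappa$ for every $M \in \mathcal X$, and by Lemma 8.4, $M \cap P \in P$, so $M \cap P \in A \cap P \subseteq C$ for all $M \in A$, and likewise $M' \cap P' \in C$. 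From $M \cap P = M' \cap P'$ we get $\alpha_{M,P} = \sup(M \cap P) = \sup(M' \cap P') = \alpha_{M',P'}$, an ordinal lying in both $P$ and $P'$ by Lemma 8.2 (using that $P$ and $P'$ are simple). Adequacy of $A \cup C$ is then immediate from Proposition 1.35 applied to $A$, $C$, and $P$, using $M \cap P \in A \cap P \subseteq C$ for every $M \in A$.

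Next I would verify Definition 5.3 for $(A \cup C, B \cup D)$. Clause (2) is trivial, and clause (1) is treated below. For clause (3), given $M \in A \cup C$ and $Q \in B \cup D$, the cases where $M$ and $Q$ lie in the same side condition are among the hypotheses. If $M \in C$ and $Q \in B$: either $Q \cap \kappa \ge P \cap \kappa$, so $M \cap \kappa \subseteq P \cap \kappa \le Q \cap \kappa$ and there is nothing to check; or $Q \cap \kappa < P \cap \kappa$, so $Q \cap P \in B$ by hypothesis, $Q \cap P \in P$ by Lemma 8.4, hence $Q \cap P \in B \cap P \subseteq D$, and, since $Q \cap P \cap \kappa = Q \cap \kappa$ and the relevant $\tau$ lies in $M \subseteq P$, the conclusion comes from $(C,D)$. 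If $M \in A$ and $Q \in D \subseteq P$: then $Q \subseteq P$ (as $Q \cap \kappa^+ = A_{\sup(Q),Q \cap \kappa} \in P$ and $|Q| < \kappa$), so any $\tau \in Q \cap M \cap \kappa^+$ lies in $M \cap P \cap \kappa^+$, and writing $\zeta = \min((M \cap \kappa) \setminus (Q \cap \kappa))$, either $\zeta < P \cap \kappa$, in which case $\zeta = \min((M \cap P \cap \kappa) \setminus (Q \cap \kappa))$ and $M \cap P \in C$, $Q \in D$ give the conclusion via $(C,D)$, or $\zeta \ge P \cap \kappa$, in which case $M \cap [Q \cap \kappa, P \cap \kappa) = \emptyset$ forces $\zeta = \min((M \cap \kappa) \setminus (P \cap \kappa))$ and, as $\tau \in M \cap P \cap \kappa^+$ and $P \in B$, the conclusion comes from clause (3) of $(A,B)$ applied to the pair $(M,P)$.

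The substance is clause (1): $A \cup C$ is $\vec S$-adequate. Fix $M \in A$ and $L \in C \subseteq P$; note $\{M,L\}$ is adequate. Using Lemma 2.9, every $\zeta \in R_L(M)$ either lies in $R_L(M \cap P)$ (hence $\zeta < P \cap \kappa$, so $M \cap \zeta = M \cap P \cap \zeta$ and any witness $P_0 \in L \cap \mathcal Y$ has $P_0 \subseteq P$, so the conclusion reduces to the $\vec S$-adequacy of $C$ for the pair $(L, M \cap P)$), or equals $\min((M \cap \kappa) \setminus (P \cap \kappa))$ (so its conclusion, for any relevant $\tau \in M \cap \kappa^+$, which necessarily lies in $P$, comes from clause (3) of $(A,B)$ for the pair $(M,P)$); and by Lemma 2.9(2) every $\zeta \in R_M(L)$ lies in $R_{M \cap P}(L)$, so its clause-(1) part and the part of its clause-(2) requirement in which the witness $P_0'$ lies in $P$ reduce to the $\vec S$-adequacy of $C$ for the pair $(M \cap P, L)$. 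The one genuinely new point is the clause-(2) requirement for $\zeta \in R_M(L)$ with a witness $P_0 \in M \cap \mathcal Y$ that need not be a member of $P$: here $P_0 \cap \kappa \in M \cap P \cap \kappa$ and $\tau \in P_0 \cap P \cap \kappa^+$, so the main proxy lemma together with Lemmas 11.6 and 11.7 (the case split being on the position of $\tau$ relative to $\alpha_{M,P}$, and using that $P \in \mathcal Y$, so part (3) of Lemma 11.5 furnishes $\tau \in P'$ automatically) yields an $\vec S$-strong $P_0'$, either in $M \cap P \cap \mathcal Y \subseteq C$ or in $M' \cap \mathcal Y$, with $P_0' \cap \kappa = P_0 \cap \kappa$ and $\tau \in P_0'$. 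In the first case the conclusion comes from $C$'s $\vec S$-adequacy for $(M \cap P, L)$ with witness $P_0'$; in the second one compares $\beta_{L,M}$, $\beta_{L,M \cap P}$, and $\beta_{L,M'}$ via Lemmas 1.34 and 2.10, and when $\beta_{L,M} = \beta_{L,M'}$ one gets $\zeta \in R_{M'}(L)$ and concludes from $C$'s $\vec S$-adequacy for $(M',L)$ with witness $P_0'$.

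The main obstacle, exactly as in Subcase 3(b(ii)) of the proof of Proposition 13.1, is the remaining alternative $P' \cap \kappa < \beta_{L,M'}$ in Lemma 2.10, combined with the subcase $\alpha_{M,P} \le \tau$: one must show this configuration is impossible by chaining the identities $M \cap P = M' \cap P'$, $p(M,P) = p(M',P')$, and $\alpha_{M,P} = \alpha_{M',P'}$ together with the comparison-point identities of Lemma 1.19 and the various strict inequalities relating $P_0 \cap \kappa$, $\zeta$, $\beta_{L,M'}$, $P' \cap \kappa$, and $\sup(M \cap P \cap \kappa)$, ultimately contradicting $\zeta \in R_M(L)$. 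This bookkeeping, rather than any new idea, is where the length and the risk of error lie; the rest is a routine transcription of the countable-model amalgamation arguments of Proposition 13.1 into the $\mathcal Y$-setting.
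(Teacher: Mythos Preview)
Your outline tracks the paper's proof closely through clause (3), the $R_L(M)$ analysis, and the easy parts of the $R_M(L)$ analysis; the use of Lemmas 2.9, 2.10, 11.5--11.7 is exactly as in the paper. The gap is in your final paragraph.

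The alternative $P' \cap \kappa < \beta_{L,M'}$ from Lemma 2.10(2) is \emph{not} impossible here, and you will not reach a contradiction by chasing comparison points. Your analogy with Subcase 3(b(ii)) of Proposition 13.1 breaks down because in that proof one has a countable simple model $N' \in C$, and the subcase hypothesis $\zeta < \beta_{L,N'}$ is what ultimately forces $\zeta \in M' \cap N' \cap \kappa \subseteq \beta_{M',N'} = \beta_{M,N}$, contradicting $\beta_{M,N} \le \zeta$. In the present proposition $P' \in D \subseteq \mathcal Y$, there is no comparison point $\beta_{L,P'}$ to subcase on, and no such contradiction is available.

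The correct resolution, and the point where 13.2 genuinely diverges from 13.1, is this: from $P_0 \cap \kappa \in M' \cap \beta'$ and $P_0 \cap \kappa \notin L$ one gets $L < M'$; then if $\zeta < \beta'$ one would have $\zeta \in L \cap \beta_{L,M'} \subseteq M'$, hence $\zeta \in M' \cap \beta' = M \cap \beta$, contradicting $\zeta \in R_M(L)$. So $\beta' \le \zeta$, and since $\sup(L \cap \zeta) < P_0 \cap \kappa < \beta'$, in fact $\zeta = \min((L \cap \kappa) \setminus (P' \cap \kappa))$. Now the proxy step (Lemma 11.5(1) or 11.6(1)) already gave a model $Q^* \in P'$ with $\tau \in Q^*$, so $\tau \in P'$; thus $\tau \in L \cap P' \cap \kappa^+$, $L \in C$, $P' \in D$, and clause (3) of $\vec S$-obedience for $(C,D)$ yields $\zeta \in S_\tau$ directly. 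This is precisely where having $P' \in D$ (rather than $N' \in C$) pays off, and it is the step your plan misses.
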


\begin{proof}
By Lemma 8.3, $P$ and $P'$ are simple. 
Note that for all $M \in A$, $M \cap P \in C$. 
For $M \cap P \in P$ by Lemma 8.4, and 
so $M \cap P \in A \cap P \subseteq C$. 
Similarly, for all $Q \in B$ with $Q \cap \kappa < P \cap \kappa$, 
$Q \cap P \in D$. 
For $Q \cap P \in P$ by Lemma 8.4, and 
hence $Q \cap P \in B \cap P \subseteq D$.

Let $\beta := P \cap \kappa$ and $\beta' := P' \cap \kappa$. 
Consider $M \in A$. 
Then by our assumptions, 
$$
M \cap \beta = M \cap P \cap \kappa = 
M' \cap P' \cap \kappa = M' \cap \beta'.
$$
So $M \cap \beta = M' \cap \beta'$. 
In particular, since $\beta'$ has uncountable cofinality, 
$\sup(M \cap \beta) < \beta'$.

To show that $(A \cup C,B \cup D)$ is $\vec S$-obedient, 
we verify properties 
(1), (2), and (3) of Definition 5.3. 
(2) is immediate.

\bigskip

(3) Let $M \in A$ and $Q \in D$. 
Let $\theta := Q \cap \kappa$, and suppose that 
$\zeta = \min((M \cap \kappa) \setminus \theta)$. 
Let $\tau \in Q \cap M \cap \kappa^+$, and we will show that 
$\zeta \in S_\tau$.

\bigskip

\emph{Case 1:} $P \cap \kappa \le \zeta$. 
Since $Q \in P$, $\theta < P \cap \kappa$, so 
$\zeta = \min((M \cap \kappa) \setminus (P \cap \kappa))$. 
As $\tau \in Q$ and $Q \in P$, $\tau \in P$. 
So $\tau \in P \cap M \cap \kappa^+$. 
Therefore $\zeta \in S_\tau$ since $(A,B)$ is $\vec S$-obedient.

\bigskip

\emph{Case 2:} $\zeta < P \cap \kappa$. 
Then $\zeta \in M \cap P \cap \kappa$. 
Since $M \cap P \cap \kappa = M \cap \beta$ 
is an initial segment of $M \cap \kappa$, 
$\zeta = \min((M \cap P \cap \kappa) \setminus \theta)$. 
As $\tau \in Q$ and $Q \in P$, $\tau \in P$. 
So $\tau \in Q \cap (M \cap P) \cap \kappa^+$. 
Since $M \cap P \in C$, it follows that 
$\zeta \in S_\tau$ as $(C,D)$ is $\vec S$-obedient.

\bigskip

Let $M \in C$ and $Q \in B$. 
Let $\theta := Q \cap \kappa$, and suppose that 
$\zeta = \min((M \cap \kappa) \setminus \theta)$. 
Fix $\tau \in Q \cap M \cap \kappa^+$, and we will show that 
$\zeta \in S_\tau$. 
Since $\zeta \in M$ and $M \in P$, $\zeta \in P \cap \kappa$. 
Hence $Q \cap \kappa < P \cap \kappa$. 
So by our assumptions, $Q \cap P \in D$. 
As $\tau \in M$ and $M \in P$, $\tau \in P$. 
So $\tau \in (Q \cap P) \cap M \cap \kappa^+$. 
Since $Q \cap P \cap \kappa = Q \cap \kappa = \theta$, 
$$
\zeta = \min((M \cap \kappa) \setminus (Q \cap P \cap \kappa)).
$$
It follows that $\zeta \in S_\tau$ since $(C,D)$ is $\vec S$-obedient.

\bigskip

(1) The set $A \cup C$ is adequate by Proposition 1.35. 
Let $M \in A$ and $L \in C$. 
Then $L \in P$. 
We will prove that the remainder points in $R_M(L)$ 
and $R_L(M)$ are as required.

\bigskip

Consider $\zeta \in R_{L}(M)$. 
Then by Lemma 2.9, either $\zeta \in R_L(M \cap P)$ or 
$\zeta = \min((M \cap \kappa) \setminus \beta)$. 

\bigskip

\emph{Case 1:} $\zeta \in R_L(M \cap P)$. 
Fix $\tau \in L \cap M \cap \kappa^+$, and we will show that 
$\zeta \in S_\tau$. 
Since $\tau \in L$ and $L \in P$, $\tau \in P$. 
So $\tau \in L \cap (M \cap P) \cap \kappa^+$. 
Since $\zeta \in R_L(M \cap P)$, it follows that 
$\zeta \in S_\tau$ since $C$ is $\vec S$-adequate.

Suppose that $Q \in L \cap \mathcal Y$ is $\vec S$-strong and 
$$
\sup(M \cap \zeta) < Q \cap \kappa < \zeta.
$$
Fix $\tau \in Q \cap M \cap \kappa^+$, and we will show 
that $\zeta \in S_\tau$. 
Since $M \cap P \cap \kappa = M \cap \beta$ 
is an initial segment of $M \cap \kappa$ 
and $\zeta \in M \cap P \cap \kappa$, 
$$
\sup(M \cap P \cap \zeta) = \sup(M \cap \zeta) < Q \cap \kappa < \zeta.
$$
As $Q \in L$ and $L \in P$, $Q \in P$. 
And since $\tau \in Q$ and $Q \in P$, $\tau \in P$. 
So $\tau \in Q \cap (M \cap P) \cap \kappa^+$. 
Since $\zeta \in R_L(M \cap P)$, it follows that $\zeta \in S_\tau$ 
since $C$ is $\vec S$-adequate.

\bigskip

\emph{Case 2:} $\zeta = \min((M \cap \kappa) \setminus \beta)$. 
Fix $\tau \in L \cap M \cap \kappa^+$, and we will show that 
$\zeta \in S_\tau$. 
Since $\tau \in L$ and $L \in P$, $\tau \in P$. 
So $\tau \in P \cap M \cap \kappa^+$. 
Since $M \in A$ and $P \in B$, 
it follows that $\zeta \in S_\tau$ since $(A,B)$ is $\vec S$-obedient.

Suppose that $Q \in L \cap \mathcal Y$ is $\vec S$-strong and 
$$
\sup(M \cap \zeta) < Q \cap \kappa < \zeta.
$$
Fix $\tau \in Q \cap M \cap \kappa^+$, and we will show that 
$\zeta \in S_\tau$. 
As $Q \in L$ and $L \in P$, $Q \in P$. 
And since $\tau \in Q$ and $Q \in P$, $\tau \in P$. 
So $\tau \in M \cap P \cap \kappa^+$. 
As $M \in A$ and $P \in B$, it follows that 
$\zeta \in S_\tau$ since $(A,B)$ is $\vec S$-obedient.

\bigskip

Consider $\zeta \in R_M(L)$. 
By Lemma 2.9, $\zeta \in R_{M \cap P}(L)$. 
Fix $\tau \in L \cap M \cap \kappa^+$, 
and we will show that $\zeta \in S_\tau$. 
Since $\tau \in L$ and $L \in P$, $\tau \in P$. 
So $\tau \in L \cap (M \cap P) \cap \kappa^+$. 
As $\zeta \in R_{M \cap P}(L)$, it follows that 
$\zeta \in S_\tau$ since $C$ is $\vec S$-adequate.

Suppose that $Q \in M \cap \mathcal Y$ is $\vec S$-strong and 
$$
\sup(L \cap \zeta) < Q \cap \kappa < \zeta.
$$
Fix $\tau \in Q \cap L \cap \kappa^+$, 
and we will show that $\zeta \in S_\tau$. 
Since $\zeta \in L$ and $L \in P$, $\zeta \in P \cap \kappa$. 
Therefore $Q \cap \kappa \in M \cap P \cap \kappa$. 
As $\tau \in L$ and $L \in P$, $\tau \in P$. 
So $\tau \in Q \cap P \cap \kappa^+$.

\bigskip

\emph{Case 1:} $\tau < \alpha_{M,P}$ and 
$Q \cap \alpha_{M,P}$ is bounded below $\alpha_{M,P}$. 
Then $\tau \in Q \cap P \cap \alpha_{M,P}$. 
By Lemma 11.7, there is 
$Q' \in M \cap P \cap \mathcal Y$ which is $\vec S$-strong such that 
$Q' \cap \kappa = Q \cap \kappa$ and $\tau \in Q'$. 
So 
$$
\sup(L \cap \zeta) < Q' \cap \kappa = Q \cap \kappa < \zeta
$$ 
and $\tau \in Q' \cap L \cap \kappa^+$. 
Since $\zeta \in R_{M \cap P}(L)$ and $Q' \in (M \cap P) \cap \mathcal Y$ 
is $\vec S$-strong, 
it follows that $\zeta \in S_\tau$ since $C$ is $\vec S$-adequate.

\bigskip

\emph{Case 2:} Either $\tau < \alpha_{M,P}$ and $Q \cap \alpha_{M,P}$ 
is unbounded in $\alpha_{M,P}$, or $\alpha_{M,P} \le \tau$. 
In the first case, we apply Lemma 11.6(1) to get 
$Q^* \in P' \cap \mathcal Y$ such that $\tau \in Q^*$. 
In the second case, we apply the main proxy lemma, Lemma 11.5. 
Assuming $\alpha_{M,P} \le \tau$, 
let $\eta := \min((M \cap \kappa^+) \setminus \tau)$. 
Note that $\eta$ exists since $\sup(Q) \in M$ and 
$\tau < \sup(Q)$. 
Also 
$$
\tau \in Q \cap P \cap [\sup(M \cap \eta),\eta).
$$
By Lemma 11.5(1) 
there is $Q^* \in P' \cap \mathcal Y$ such that 
$\tau \in Q^*$.

Thus in either case, there is $Q^* \in P' \cap \mathcal Y$ such that 
$\tau \in Q^*$. 
As $\tau \in Q^*$ and $Q^* \in P'$, $\tau \in P'$. 
Also by Lemma 11.6(2) in the first case, and Lemma 11.5(3) in the second case, 
there is $Q' \in M' \cap \mathcal Y$ such that $Q'$ is 
$\vec S$-strong, $Q' \cap \kappa = Q \cap \kappa$, and $\tau \in Q'$.

By Lemma 2.10(2), 
either $\beta_{L,M} = \beta_{L,M'}$ or $\beta' < \beta_{L,M'}$. 
First, suppose that $\beta_{L,M} = \beta_{L,M'}$. 
Since $\zeta \in R_{M \cap P}(L)$, Lemma 2.10(3) implies that 
$\zeta \in R_{M'}(L)$. 
But $Q' \in M' \cap \mathcal Y$ is $\vec S$-strong, 
$$
\sup(L \cap \zeta) < Q' \cap \kappa = Q \cap \kappa < \zeta,
$$
and $\tau \in Q' \cap L \cap \kappa^+$. 
It follows that $\zeta \in S_\tau$ since $C$ is $\vec S$-adequate.

Secondly, assume that $\beta' < \beta_{L,M'}$. 
Since 
$$
Q \cap \kappa \in M \cap P \cap \kappa = M \cap \beta = M' \cap \beta',
$$
it follows that 
$$
Q \cap \kappa < \beta' < \beta_{L,M'}.
$$
As $Q \cap \kappa = Q' \cap \kappa 
\in (M' \cap \beta_{L,M'}) \setminus L$, we have that $L < M'$. 

We claim that $\beta' \le \zeta$. 
Otherwise since $L < M'$,
$$
\zeta \in L \cap \beta' \subseteq L \cap \beta_{L,M'} \subseteq M'.
$$
So $\zeta \in M' \cap \beta' = M \cap \beta$. 
Hence $\zeta \in M$, which contradicts that $\zeta \in R_M(L)$.

Since 
$$
\sup(L \cap \zeta) < Q \cap \kappa < \beta' \le \zeta,
$$
we have that $\zeta = \min((L \cap \kappa) \setminus \beta')$. 
As noted above, $\tau \in P'$. 
So $\tau \in L \cap P' \cap \kappa^+$, and 
$$
\zeta = \min((L \cap \kappa) \setminus \beta') = 
\min((L \cap \kappa) \setminus (P' \cap \kappa)).
$$
It follows that $\zeta \in S_\tau$ since $(C,D)$ is $\vec S$-obedient.
\end{proof}

\begin{proposition}
Let $(A,B)$ be an $\vec S$-obedient side condition, 
where $A \subseteq \mathcal X$ and $B \subseteq \mathcal Y$. 
Suppose that $X \prec \mathcal A$ is such that 
$|X| = \kappa$, $X \cap \kappa^+ \in \kappa^+$, and 
$X^{<\kappa} \subseteq X$. 
Let $\theta := X \cap \kappa^+$. 

Let $M_0,\ldots,M_{k-1}$ and $P_0,\ldots,P_{m-1}$ enumerate the members of 
$A$ and $B$ respectively. 
For each $i < k$, let $\langle Q^i_n : n < \omega \rangle$ enumerate 
the $\vec S$-strong models in $M_i \cap \mathcal Y$.

Let $(C,D)$ be an $\vec S$-obedient side condition, 
where $C \subseteq \mathcal X$ and $D \subseteq \mathcal Y$, 
such that 
$$
A \cap X \subseteq C \subseteq X \ \textrm{and} \ 
B \cap X \subseteq D \subseteq X.
$$
Assume that $M_0',\ldots,M_{k-1}'$, $P_0',\ldots,P_{m-1}'$, $\theta'$, and 
$\langle R^i_n : n < \omega \rangle$ for $i < k$ 
satisfy the following properties:
\begin{enumerate}
\item $\theta' \in \theta \cap \cof(\kappa)$;
\item for all $i < k$, $M_i' \in C$ and 
$M_i \cap \theta = M_i' \cap \theta'$;
\item for all $j < m$, $P_j' \in D$ and 
$P_j \cap \theta = P_j' \cap \theta'$;
\item for all $i < k$, $\langle R^i_n : n < \omega \rangle$ enumerates 
the $\vec S$-strong models in $M_i' \cap \mathcal Y$, and 
for all $n < \omega$, $Q^i_n \cap \theta = R^i_n \cap \theta'$. 
\end{enumerate}

Then $(A \cup C,B \cup D)$ is an $\vec S$-obedient side condition.
\end{proposition}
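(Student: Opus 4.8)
The plan is to follow the pattern of Propositions 13.1 and 13.2, but the argument here is considerably shorter because no proxy machinery is needed: the models of $A$ and $C$ are matched at the level of $\kappa$, and their $\vec S$-strong submodels and all the relevant ordinal parameters are matched between $\theta$ and $\theta'$, so Lemma 2.11 will let us reduce every statement about a mixed pair to the corresponding statement inside $C$. First I would record the elementary consequences of the hypotheses. Intersecting $M_i \cap \theta = M_i' \cap \theta'$, $P_j \cap \theta = P_j' \cap \theta'$, and $Q^i_n \cap \theta = R^i_n \cap \theta'$ with $\kappa$ yields $M_i \cap \kappa = M_i' \cap \kappa$, $P_j \cap \kappa = P_j' \cap \kappa$, and $Q^i_n \cap \kappa = R^i_n \cap \kappa$. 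Since $X \cap \kappa^+ = \theta$ is an ordinal and $|X| = \kappa$ (so $\theta \ge \kappa$), we have $\kappa \subseteq X$; combined with $X^{<\kappa} \subseteq X$ and $f^* \in X$, this gives that every $Z \in C \cup D$ is in fact a subset of $X$, so every ordinal lying in a member of $C$ or $D$ is below $\theta$. From $M_i \cap \kappa = M_i' \cap \kappa$ for all $i$ and $A \cap X \subseteq C \subseteq X$, Proposition 1.38 gives that $A \cup C$ is adequate.

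Next I would check the three clauses of Definition 5.3 for $(A \cup C, B \cup D)$. Clause (2) is immediate since $B$ and $D$ already consist of $\vec S$-strong models in $\mathcal Y_0$. For clause (3), given $M \in A \cup C$, $P \in B \cup D$, $\zeta = \min((M \cap \kappa) \setminus (P \cap \kappa))$, and $\tau \in P \cap M \cap \kappa^+$, the cases where $M,P$ are both on the $(A,B)$ side or both on the $(C,D)$ side are immediate from $\vec S$-obedience of $(A,B)$ and of $(C,D)$. In the two mixed cases one of $M$, $P$ lies in $X$, hence $\tau < \theta$; one then replaces whichever of $M$, $P$ comes from $A$ or $B$ by its primed counterpart (which has the same intersection with $\kappa$, so $\zeta$ is unchanged, and which still contains $\tau$ by the $\theta/\theta'$-matching), and concludes $\zeta \in S_\tau$ from $\vec S$-obedience of $(C,D)$.

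Clause (1), that $A \cup C$ is $\vec S$-adequate, is the heart of the argument. For a pair with both models in $A$ or both in $C$ there is nothing to do. For a mixed pair, say $M = M_i \in A$ and $L \in C$, I would apply Lemma 2.11 with $M \cap \kappa = M_i' \cap \kappa$ to get $R_M(L) = R_{M_i'}(L)$ and $R_L(M) = R_L(M_i')$; since $\{M_i', L\} \subseteq C$ and $C$ is $\vec S$-adequate, both clauses of Definition 5.2 hold for the pair $(M_i', L)$, and it remains to transfer them to $(M,L)$. For clause (1) of Definition 5.2 this is immediate, since any $\tau \in M \cap L \cap \kappa^+$ satisfies $\tau < \theta$ (it lies in $L \subseteq X$), hence $\tau \in M_i \cap \theta = M_i' \cap \theta'$. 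For clause (2): if the witnessing model $P$ lies in $M \cap \mathcal Y$ and is $\vec S$-strong, it is some $Q^i_n$, so $P \cap \kappa = R^i_n \cap \kappa$ and every $\tau \in P$ with $\tau < \theta$ lies in $R^i_n$; the inequality $\sup(L \cap \zeta) < P \cap \kappa < \zeta$ involves only ordinals below $\kappa$ depending on $L \cap \kappa$ and $M \cap \kappa$, so it transfers verbatim to $R^i_n$, and $\vec S$-adequacy of $C$ applied to $(M_i', L)$ with witness $R^i_n$ gives $\zeta \in S_\tau$. If instead $P \in L \cap \mathcal Y$, one uses $M \cap \zeta = M_i' \cap \zeta$ to transfer the hypotheses and applies $\vec S$-adequacy of $C$ directly to $(L, M_i')$ with witness $P$.

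I expect the only real subtlety — and hence the step to be most careful about — is the bookkeeping in this last paragraph: one must make sure that every ordinal $\tau$ and every auxiliary $\vec S$-strong model occurring in the verification of Definition 5.2 for the mixed pair is correctly matched across $(\theta,\theta')$, which in turn rests on the observations that all such $\tau$ lie inside $X$ and that every $\vec S$-strong submodel of $M_i$ is among the enumerated $Q^i_n$. Everything else is a routine case analysis, with no appeal to canonical models or to the proxy construction.
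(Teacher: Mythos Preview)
Your proposal is correct and follows essentially the same approach as the paper's proof: both use Proposition 1.38 for adequacy, dispose of clause (2) trivially, handle clause (3) by replacing the $A$/$B$ model with its primed counterpart and appealing to $\vec S$-obedience of $(C,D)$, and verify clause (1) via Lemma 2.11 together with the $\theta/\theta'$ matching and the $Q^i_n \leftrightarrow R^i_n$ correspondence. The paper's write-up is organized by the two directions $\zeta \in R_L(M)$ and $\zeta \in R_M(L)$ separately rather than by which model contains the witnessing $P$, but the content is the same.
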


\begin{proof}
To show that $(A \cup C,B \cup D)$ is $\vec S$-obedient, we verify properties 
(1), (2), and (3) of Definition 5.3. 
(2) is immediate.

\bigskip

(3) Let $M \in C$ and $P \in B$. 
Fix $j < m$ such that $P = P_j$, and let $P' := P_j'$. 
Let $\beta := P \cap \kappa$, and suppose that 
$\zeta := \min((M \cap \kappa) \setminus \beta)$. 
Fix $\tau \in M \cap P \cap \kappa^+$, and we will show that 
$\zeta \in S_\tau$. 
Since $\tau \in M$ and $M \in X$, 
$\tau \in X \cap \kappa^+ = \theta$. 
So $\tau \in P \cap \theta = P' \cap \theta'$. 
Also $P' \cap \kappa = P \cap \kappa = \beta$. 
Hence $\tau \in M \cap P' \cap \kappa^+$ and 
$\zeta = \min((M \cap \kappa) \setminus (P' \cap \kappa))$. 
It follows that $\zeta \in S_\tau$ since $(C,D)$ is $\vec S$-obedient.

Let $M \in A$ and $P \in D$. 
Fix $i < k$ such that $M = M_i$, and let $M' := M_i'$. 
Let $\beta := P \cap \kappa$, and suppose that 
$\zeta = \min((M \cap \kappa) \setminus \beta)$. 
Fix $\tau \in M \cap P \cap \kappa^+$, and we will show that 
$\zeta \in S_\tau$. 
Since $\tau \in P$ and $P \in X$, 
$\tau \in X \cap \kappa^+ = \theta$. 
Hence $\tau \in M \cap \theta = M' \cap \theta'$. 
Also $M \cap \kappa = M' \cap \kappa$, so 
$\zeta = \min((M' \cap \kappa) \setminus \beta)$. 
Since $\tau \in M' \cap P \cap \kappa^+$, it follows that 
$\zeta \in S_\tau$ 
since $(C,D)$ is $\vec S$-obedient.

\bigskip

(1) The set $A \cup C$ is adequate by Proposition 1.38. 
Let $M \in A$ and $L \in C$. 
We will prove that the remainder points in $R_M(L)$ and $R_L(M)$ 
are as required. 
Fix $i < k$ such that $M = M_i$, and let $M' := M_i'$.

Consider $\zeta \in R_L(M)$. 
Since $M \cap \kappa = M' \cap \kappa$, 
$\zeta \in R_L(M')$ by Lemma 2.11. 
Let $\tau \in L \cap M \cap \kappa^+$, and we will show that 
$\zeta \in S_\tau$. 
Since $\tau \in L$ and $L \in X$, 
$\tau \in X \cap \kappa^+ = \theta$. 
Hence $\tau \in M \cap \theta = M' \cap \theta'$. 
Therefore $\tau \in L \cap M'$. 
Since $\zeta \in R_L(M')$, 
it follows that 
$\zeta \in S_\tau$ as $C$ 
is $\vec S$-adequate.

Suppose that $P \in L \cap \mathcal Y$ is $\vec S$-strong and 
$$
\sup(M \cap \zeta) < P \cap \kappa < \zeta.
$$
Let $\tau \in P \cap M \cap \kappa^+$, 
and we will show that $\zeta \in S_\tau$. 
Since $P \in L$ and $L \in X$, $P \in X$. 
And as $\tau \in P$ and $P \in X$, $\tau \in X \cap \kappa^+ = \theta$. 
Hence $\tau \in M \cap \theta = M' \cap \theta'$. 
Therefore $\tau \in P \cap M' \cap \kappa^+$. 
Also since $M \cap \kappa = M' \cap \kappa$, 
$$
\sup(M' \cap \zeta) = \sup(M \cap \zeta) < P \cap \kappa < \zeta.
$$
As $\zeta \in R_L(M')$, it follows that 
$\zeta \in S_\tau$ since $C$ is $\vec S$-adequate.

\bigskip

Now consider $\zeta \in R_M(L)$. 
Since $M \cap \kappa = M' \cap \kappa$, $\zeta \in R_{M'}(L)$ 
by Lemma 2.11. 
Let $\tau \in L \cap M \cap \kappa^+$, and we will show that 
$\zeta \in S_\tau$. 
Since $\tau \in L$ and $L \in X$, $\tau \in X \cap \kappa^+ = \theta$. 
Hence $\tau \in M \cap \theta = M' \cap \theta'$. 
Therefore $\tau \in L \cap M'$. 
Since $\zeta \in R_{M'}(L)$, it follows that 
$\zeta \in S_\tau$ since 
$C$ is $\vec S$-adequate.

Let $P \in M \cap \mathcal Y$ be $\vec S$-strong, and assume that 
$$
\sup(L \cap \zeta) < P \cap \kappa < \zeta.
$$
Let $\tau \in L \cap P \cap \kappa^+$, and we will show that 
$\zeta \in S_\tau$. 
Since $\tau \in L$ and $L \in X$, 
$\tau \in X \cap \kappa^+ = \theta$. 
Fix $n < \omega$ such that $P = Q^i_n$. 
Then 
$$
\tau \in P \cap \theta = Q^i_n \cap \theta = 
R^i_n \cap \theta'.
$$
So $\tau \in R^i_n \cap L \cap \kappa^+$. 
Now $R^i_n \in M' \cap \mathcal Y$ is $\vec S$-strong, and 
$$
\sup(L \cap \zeta) < P \cap \kappa = Q^i_n \cap \kappa = 
R^i_n \cap \kappa < \zeta.
$$
Since $\zeta \in R_{M'}(L)$, it follows that 
$\zeta \in S_\tau$ since $C$ is $\vec S$-adequate.
\end{proof}

\bigskip

\part{Mitchell's Theorem}

\bigskip

\addcontentsline{toc}{section}{14. The ground model}

\textbf{\S 14. The ground model}

\stepcounter{section}

\bigskip

With the general development of side conditions from Parts I and II at our disposal, 
we now begin our proof of Mitchell's theorem. 
We start by describing the ground model over which we will force a generic 
extension satisfying that there is no stationary subset of 
$\omega_2 \cap \cof(\omega_1)$ in the approachability ideal $I[\omega_2]$.

\bigskip

We will use the same notation which was introduced at the beginning of Parts I and II, 
together with some additional assumptions. 
Recall that $\kappa \ge \omega_2$ is regular, 
$2^\kappa = \kappa^+$, and $\Box_\kappa$. 
Also the cardinal $\lambda$ from Part I is equal to $\kappa^+$. 
In addition, we will assume that $\kappa$ is a greatly Mahlo cardinal, and 
the thin stationary set $T^*$ from Notation 1.4 is equal to $P_{\omega_1}(\kappa)$.

Define a sequence of sets $\langle S_\xi : \xi < \kappa^+ \rangle$ inductively as follows. 
Let $S_0$ denote the set of inaccessible cardinals less than $\kappa$. 
Let $\delta < \kappa^+$, and suppose that $S_\xi$ has been 
defined for all $\xi < \delta$. 
If $\delta = \delta_0 + 1$, then let $\alpha \in S_\delta$ if 
$\alpha$ is inaccessible, 
$\alpha \in S_{\delta_0}$, 
and $S_{\delta_0} \cap \alpha$ is stationary in $\alpha$. 
If $\delta$ is a limit ordinal, then let $\alpha \in S_\delta$ if $\alpha$ is 
inaccessible, and for all 
$\xi \in A_{\delta,\alpha}$, $\alpha \in S_\xi$. 
Let $\vec S := \langle S_\xi : \xi < \kappa^+ \rangle$.

The fact that $\kappa$ is greatly Mahlo implies that 
for all $\delta < \kappa^+$, 
$S_\delta$ is stationary in $\kappa$. 
In fact, it is easily seen that this consequence is actually 
equivalent to $\kappa$ being 
greatly Mahlo. 
See \cite[Definition 4.2]{baumgartner} for more information about greatly Mahlo cardinals. 

\begin{notation}
For the remainder of Part III, the structure $\mathcal A$ from 
Notation 7.6 will be equal to 
$$
(H(\kappa^+),\in,\unlhd,\kappa,T^*,\pi^*,C^*,\Lambda,\mathcal Y_0,
f^*,\vec C,\vec A,c^*,\vec S).
$$
\end{notation}

\bigskip

In Proposition 7.20, we proved that the set of simple models in 
$\mathcal X$ is stationary in $P_{\omega_1}(H(\kappa^+))$. 
We will prove in Proposition 15.3 that most simple models in $\mathcal X$ 
have strongly generic conditions. 
The next proposition describes the kind of models in $\mathcal Y$ which 
will have strongly generic conditions.

\begin{proposition}
There are stationarily many $P \in P_{\kappa}(H(\kappa^+))$ 
such that $P \in \mathcal Y$, 
$P$ is $\vec S$-strong, and $\cf(\sup(P)) = P \cap \kappa$.
\end{proposition}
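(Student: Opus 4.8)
The plan is to build such a $P$ as a countable increasing union of elementary submodels of size $<\kappa$, arranging that the union has intersection with $\kappa$ a regular cardinal, that its sup has the same cofinality, and that it is $\vec S$-strong. First I would reduce to a standard stationarity argument: given any $F : H(\kappa^+)^{<\omega} \to H(\kappa^+)$, I must produce $P$ closed under $F$ with the stated properties. Fix a regular $\chi$ large enough and a model $W \prec (H(\chi),\in,\mathcal{A},F)$ with $|W| = \kappa$, $W \cap \kappa^+ \in \kappa^+$, $W^{<\kappa} \subseteq W$, and $\cf(W \cap \kappa^+) = \kappa$; since $\kappa$ is (greatly) Mahlo, in particular inaccessible, such $W$ exist and $W \cap \kappa \in \kappa$ is not forced to be $\kappa$ yet. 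Inside $W$, the key is to find an \emph{inaccessible} $\mu < \kappa$ together with an elementary chain whose union $P$ satisfies $P \cap \kappa = \mu$.

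The construction: let $\theta := W \cap \kappa^+$, so $\cf(\theta) = \kappa$, and enumerate a continuous increasing sequence $\langle c_\xi : \xi < \kappa \rangle$ cofinal in $\theta$ coming from $C_\theta$ via $W$ (as in the proof of Proposition 7.20). The set of $\mu < \kappa$ that are inaccessible and for which the Skolem hull in $\mathcal{A}$ of $\mu \cup \{ c_\mu \}$ intersects $\kappa$ in exactly $\mu$ is stationary in $\kappa$ — this is the usual reflection/closure-point argument: the function sending $\mu$ to $Sk(\mu \cup \{c_\mu\}) \cap \kappa$ is regressive off a club, so it stabilizes, and on a club the hull closes at $\mu$; intersecting with the stationary set $S_0$ of inaccessibles (which we may further intersect with $S_\delta$ for the relevant $\delta$'s below — see the last paragraph) keeps it stationary. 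Pick such a $\mu$ with $\cf(\mu)$ arbitrary; actually since $\mu$ is inaccessible, $\cf(\mu) = \mu$. Set $P := Sk^{\mathcal A}(\mu \cup \{ c_\mu \})$. Then $P \prec \mathcal A$, $P \cap \kappa = \mu \in \kappa$, $P \cap \kappa^+ \subseteq c_\mu < \theta$ so $P \in P_\kappa(H(\kappa^+))$, and $\sup(P) = c_\mu$ since $c_\mu \in \lim(C_\theta)$ forces $\lim(C_{c_\mu}) \cap P$ cofinal in $c_\mu$ exactly as in Proposition 7.20, giving $P \in \mathcal Y$. Moreover $\cf(\sup(P)) = \cf(c_\mu)$, and by arranging $c_\mu$ appropriately — e.g.\ choosing $\mu$ to also be a limit point of the map $\xi \mapsto c_\xi$ with $\cf(c_\mu) = \mu$ — we get $\cf(\sup(P)) = \mu = P \cap \kappa$. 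This last cofinality matching is the one genuinely delicate point: $c_\mu = c_{\theta,\mu}$ need not have cofinality $\mu$ a priori, but on a club of $\mu$ the value $\ot(C_\theta \cap c_\mu) = \mu$ forces $\cf(c_\mu) = \cf(\mu) = \mu$ since $C_{c_\mu} = C_\theta \cap c_\mu$ has order type $\mu$; so restricting $\mu$ to that club suffices.

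It remains to see $P$ is $\vec S$-strong, i.e.\ $P \cap \kappa = \mu \in S_\tau$ for every $\tau \in P \cap \kappa^+$. Since $P = Sk(\mu \cup \{c_\mu\})$, every $\tau \in P \cap \kappa^+$ is of the form $t(\bar\alpha, c_\mu)$ for a Skolem term $t$ and $\bar\alpha \in \mu^{<\omega}$; in particular, using the coherence of $\vec A$ one checks $P \cap \kappa^+ \subseteq A_{c_\mu, \mu}$, hence $\tau < c_\mu$ and $\tau \in A_{c_\mu,\mu}$, so $\tau \in A_{\theta,\mu}$ by Notation 7.4(4) since $c_\mu \in \lim(C_\theta)$. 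Now the point is the recursive definition of $\vec S$: $\mu \in S_\tau$ will follow by induction on $\tau$ once we know $\mu$ is inaccessible and, for each $\tau$, that $\mu \in S_\xi$ for all $\xi \in A_{\tau,\mu}$ (limit case) or the successor reflection clause holds. The cleanest route is to use the well-known fact that greatly Mahloness gives each $S_\delta$ stationary (cited after the definition of $\vec S$), and to choose $\mu$ at the very start to lie in a single stationary set that "diagonalizes" the $S_\delta$ for $\delta$ below $c_\mu$: by a Fodor/reflection argument, the set of inaccessible $\mu$ such that $\mu \in S_\xi$ for all $\xi \in A_{\tau,\mu}$ and all $\tau < $ some bound is itself stationary, and one absorbs this into the choice of $\mu$. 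The main obstacle, then, is organizing this last induction so that the stationary set from which $\mu$ is drawn is chosen \emph{before} $c_\mu$ is known while still controlling all $\tau \in A_{c_\mu,\mu}$; this is handled by noting $A_{c_\mu,\mu} = A_{\theta,\mu} \cap c_\mu$ depends only on $\theta$ (fixed) and $\mu$, so the relevant requirement "$\forall \tau \in A_{\theta,\mu}\ (\mu \in S_\tau)$" is a condition on $\mu$ alone, and an easy induction using stationarity of each $S_\delta$ together with the coherence property Notation 7.4(4) shows this condition holds on a stationary set of inaccessible $\mu$. Intersecting everything produces the desired $P$.
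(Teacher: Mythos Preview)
Your construction has a concrete error at the definition of $P$. You set $P := Sk^{\mathcal A}(\mu \cup \{c_\mu\})$ and then assert $P \cap \kappa^+ \subseteq c_\mu$ and $\sup(P) = c_\mu$. But $c_\mu \in P$, so by elementarity $c_\mu + 1 \in P$; hence $\sup(P) > c_\mu$ and $P \cap \kappa^+ \not\subseteq c_\mu$. This breaks the downstream identification $P \cap \kappa^+ \subseteq A_{c_\mu,\mu}$ on which your $\vec S$-strongness argument rests, as well as your computation of $\cf(\sup(P))$. There is also a secondary gap: nothing in your definition of $P$ ensures closure under the given $F$, since you only take a Skolem hull in $\mathcal A$; the outer model $W$ plays no role once you pass to $Sk^{\mathcal A}(\mu \cup \{c_\mu\})$.

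The paper's argument avoids these issues by working with the filtration $\langle A_{\tau,i} : i < \kappa\rangle$ of $\tau := X \cap \kappa^+$ rather than with $C_\theta$. One fixes a club $C \subseteq \kappa$ on which $A_{\tau,\alpha}$ is closed under $H^*$, $A_{\tau,\alpha} \cap \kappa = \alpha$, $Sk(A_{\tau,\alpha})$ is closed under $F$, and $\sup(A_{\tau,\beta}) \in A_{\tau,\alpha}$ for all $\beta < \alpha$; then chooses $\alpha \in \lim(C) \cap S_\tau$ and sets $P := Sk(A_{\tau,\alpha})$. Now $P \cap \kappa^+ = A_{\tau,\alpha}$ \emph{exactly}, so $\vec S$-strongness is immediate from the limit-stage definition of $S_\tau$: $\alpha \in S_\tau$ means precisely that $\alpha$ is inaccessible and $\alpha \in S_\pi$ for every $\pi \in A_{\tau,\alpha} = P \cap \kappa^+$. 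Your condition ``$\forall \tau \in A_{\theta,\mu}\ (\mu \in S_\tau)$'' is literally this definition of $\mu \in S_\theta$, so no separate induction or diagonalization is needed; one simply picks $\mu \in S_\theta$. The last clause in the club (that $\sup(A_{\tau,\beta}) \in A_{\tau,\alpha}$ for $\beta < \alpha$) makes $\sup(P)$ the supremum of a strictly increasing $\alpha$-sequence, giving $\cf(\sup(P)) = \alpha$ directly.
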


\begin{proof}
Let $F : H(\kappa^+)^{< \omega} \to H(\kappa^+)$. 
Fix $X$ which is an elementary substructure of $\mathcal A$ 
of size $\kappa$ such that $X$ is closed under $F$, and 
$\tau := X \cap \kappa^+$ has cofinality $\kappa$. 
Note that $X = Sk(X \cap \kappa^+) = Sk(\tau)$. 
Since $\tau$ is the union of the increasing and continuous sequence 
of sets $\{ A_{\tau,i} : i < \kappa \}$, 
it follows that $X$ is the union of the 
increasing and continuous sequence of sets 
$\{ Sk(A_{\tau,i}) : i < \kappa \}$.

For all infinite 
$\beta < \kappa$, $|A_{\tau,\beta}| \le |\beta| < \kappa$ by Notation 7.4(3). 
Since $\tau$ has cofinality $\kappa$, $\sup(A_{\tau,\beta}) < \tau$, and hence 
$\sup(A_{\tau,\beta}) \in X$. 
Fix a club $C \subseteq \kappa$ such that for all $\alpha \in C$, 
$A_{\tau,\alpha}$ is closed under $H^*$, 
$A_{\tau,\alpha} \cap \kappa = \alpha$, 
$Sk(A_{\tau,\alpha})$ is closed under $F$, 
and for all $\beta < \alpha$, $\sup(A_{\tau,\beta}) \in A_{\tau,\alpha}$. 
As $S_\tau$ is stationary in $\kappa$, we can fix 
$\alpha \in \lim(C) \cap S_\tau$. 
Let $P := Sk(A_{\tau,\alpha})$.

We claim that $P \in \mathcal Y$, $P$ is $\vec S$-strong, 
$\cf(\sup(P)) = P \cap \kappa$, and $P$ is closed under $F$. 
The last statement follows from the fact that $\alpha \in C$. 
Since $A_{\tau,\alpha}$ is closed under $H^*$, 
$P \cap \kappa^+ = A_{\tau,\alpha}$. 
In particular, since $\alpha \in C$, 
$P \cap \kappa = \alpha$. 
As $\alpha \in S_\tau$, 
$\alpha$ is inaccessible. 
After we show that $\cf(\sup(P)) = \alpha$, it will follow 
that $P \in \mathcal Y$ by Lemma 7.15.

To show that $P$ is $\vec S$-strong, let $\sigma \in P \cap \kappa^+$. 
Then $\sigma \in P \cap \kappa^+ = A_{\tau,\alpha}$. 
Since $\alpha \in S_\tau$ and $\tau$ is a limit ordinal, for all 
$\pi \in A_{\tau,\alpha}$, $\alpha \in S_\pi$. 
In particular, $\alpha \in S_\sigma$.

It remains to show that $\cf(\sup(P)) = \alpha$. 
For $\alpha_0 < \alpha_1$ in $C \cap \alpha$, 
$$
\sup(A_{\tau,\alpha_0}) \in A_{\tau,\alpha_1} \subseteq P
$$ 
by the definition of $C$. 
Since $P \cap \kappa$ is a limit point of $C$, 
$$
A_{\tau,\alpha} = 
\bigcup \{ A_{\tau,\beta} : \beta \in C \cap \alpha \}.
$$
Therefore 
$$
\sup(P) = \sup(A_{\tau,\alpha}) = 
\sup \{ \sup(A_{\tau,\beta}) : \beta \in C \cap \alpha \},
$$
which is the 
supremum of a strictly increasing sequence. 
Since $\alpha$ is in $S_\tau$, $\alpha$ is inaccessible, so $C \cap \alpha$ 
has order type $\alpha$. 
Hence $\sup(P)$ has cofinality equal to $P \cap \kappa = \alpha$.
\end{proof}

\bigskip

\addcontentsline{toc}{section}{15. The forcing poset}

\textbf{\S 15. The forcing poset}

\stepcounter{section}

\bigskip

We now define and analyze the forcing poset which will force 
that there is no stationary subset of $\omega_2 \cap \cof(\omega_1)$ 
in the approachability ideal $I[\omega_2]$.

\begin{definition}
Let $\p$ be the forcing poset consisting of pairs $p = (A_p,B_p)$ 
satisfying:
\begin{enumerate}
\item $A_p \subseteq \mathcal X$, and 
for all $M \in A_p$, $M \prec (\mathcal A,\mathcal Y)$;
\item $B_p \subseteq \mathcal Y$;
\item $(A_p,B_p)$ is an $\vec S$-obedient side condition. 
\end{enumerate}
Let $q \le p$ if $A_p \subseteq A_q$ and $B_p \subseteq B_q$.
\end{definition}

The rest of this section is devoted to proving amalgamation results 
for $\p$, which in turn yield the existence of strongly 
generic conditions. 

\begin{lemma}
Let $N \in \mathcal X$ with $N \prec (\mathcal A,\mathcal Y)$. 
Then $q_N := (\{ N \},\emptyset)$ is in $\p$, and for all 
$p \in N \cap \p$, $p$ and $q_N$ are compatible.
\end{lemma}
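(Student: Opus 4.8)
The statement has two parts: that $q_N \in \p$, and that $q_N$ is compatible with every $p \in N \cap \p$. Both are direct consequences of Lemma 5.4(1), once we observe that $\mathcal X \subseteq \mathcal X_0$ (noted after Notation 7.9), so that $N \in \mathcal X_0$.

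For the first part, I would check the three clauses of Definition 15.1 for $q_N = (\{N\},\emptyset)$. Clause (1) holds since $\{N\} \subseteq \mathcal X$ and $N \prec (\mathcal A,\mathcal Y)$ are exactly the hypotheses. Clause (2) is trivial as $B_{q_N} = \emptyset$. For clause (3), that $(\{N\},\emptyset)$ is an $\vec S$-obedient side condition: the set $\{N\}$ is adequate because $N \sim N$ (trivially $N \cap \beta_{N,N} = N \cap \beta_{N,N}$), it is $\vec S$-adequate vacuously since $R_N(N) = \emptyset$ (by Lemma 1.15, $\cl(N\cap\kappa) \subseteq \beta_{N,N}$, so $(N\cap\kappa)\setminus\beta_{N,N} = \emptyset$, and both clauses of Definition 2.1 are empty), and condition (3) of Definition 5.3 is vacuous because $B = \emptyset$. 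Alternatively one can simply invoke the first assertion of Lemma 5.4(1).

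For the second part, given $p = (A_p,B_p) \in N \cap \p$, I would set $r := (A_p \cup \{N\}, B_p)$ and show $r \in \p$ with $r \le p$ and $r \le q_N$. The inequalities $r \le p$ and $r \le q_N$ are immediate from the definition of the ordering, since $A_p \subseteq A_p \cup \{N\}$, $\{N\} \subseteq A_p \cup \{N\}$, $B_p \subseteq B_p$, and $\emptyset \subseteq B_p$. To see $r \in \p$: clause (1) holds because $A_p \subseteq \mathcal X$ with every member $\prec (\mathcal A,\mathcal Y)$ (as $p \in \p$), and $N$ satisfies the same by hypothesis; clause (2) is just $B_p \subseteq \mathcal Y$ (from $p \in \p$); and clause (3), that $(A_p \cup \{N\}, B_p)$ is an $\vec S$-obedient side condition, is exactly Lemma 5.4(1) applied with $(A,B) = p$, using that $p \in N$ and $N \in \mathcal X_0$.

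\textbf{Main obstacle.} There is essentially no obstacle: the work has been front-loaded into Lemma 5.4. The only points requiring mild care are recording that $p \in N$ supplies the hypothesis ``$(A,B)\in N$'' of Lemma 5.4 (the components $A_p, B_p$ of the pair $p$ are recoverable in $N$ by elementarity of $N \prec (\mathcal A,\mathcal Y)$), and citing $\mathcal X \subseteq \mathcal X_0$ so that the lemma applies to $N$.
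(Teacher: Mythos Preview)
Your proposal is correct and matches the paper's approach exactly: the paper's proof is the single line ``Immediate from Lemma 5.4(1),'' and you have simply unpacked that reference by checking the clauses of Definition 15.1 and observing that Lemma 5.4(1) supplies both the $\vec S$-obedience of $(\{N\},\emptyset)$ and of $(A_p\cup\{N\},B_p)$.
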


\begin{proof}
Immediate from Lemma 5.4(1).
\end{proof}

\begin{proposition}
Let $N \in \mathcal X$ be simple such that 
$N \prec (\mathcal A,\mathcal Y,\p)$. 
Let $q_N := (\{N\},\emptyset)$. 
Then $q_N$ is a universal strongly $N$-generic condition.
\end{proposition}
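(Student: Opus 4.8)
The plan is to verify the two clauses of Definition 3.15 separately. That $q_N := (\{N\},\emptyset)$ is a condition of $\p$ and that every $p \in N \cap \p$ is compatible with it is exactly Lemma 15.2, which rests on Lemma 5.4(1); so the universality is free, and the real content is showing that $q_N$ is strongly $N$-generic. For that I would use the combinatorial criterion of Lemma 3.4: it suffices to show that for every $r = (A_r,B_r) \le q_N$ there is $v \in N \cap \p$ such that every $w \le v$ in $N \cap \p$ is compatible with $r$. Fix such an $r$; note $N \in A_r$.

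First I would replace $r$ by a closed-off extension $r^* = (A^*,B^*) \le r$. Set $A^* := A_r \cup \{M \cap N : M \in A_r,\ M < N\}$. Each such $M \cap N$ lies in $\mathcal X$ by Lemma 7.16, lies in $N$ by Lemma 8.2 (using that $N$ is simple), and is an elementary substructure of $(\mathcal A,\mathcal Y)$ since $M$ and $N$ are and this structure has definable Skolem functions from $\unlhd$. Iterating Lemma 5.5(1) shows $(A^*,B_r)$ is an $\vec S$-obedient side condition, and since $(M \cap N) \cap N = M \cap N$ and $M \cap N < N$, the first coordinate $A^*$ satisfies: $M \cap N \in A^*$ whenever $M \in A^*$ with $M < N$. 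Then, applying Proposition 10.8 (whose hypothesis that all models of $A^*$ are $\prec(\mathcal A,\mathcal Y)$ is exactly Definition 15.1(1)), I would enlarge the second coordinate to $B^* \supseteq B_r$ so that $(A^*,B^*)$ is closed under canonical models with respect to $N$. This $r^* = (A^*,B^*)$ is a condition below $r$.

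The reflection step is the crux, and the step I expect to require the most care. All data of $r^*$ visible to $N$ lies inside $N$: the finite sets $a := A^* \cap N$ and $b := B^* \cap N$ are members of $N$ by elementarity, and for each $M \in A^*$ with $M < N$ both $M \cap N$ (Lemma 8.2) and the canonical proxy $p(M,N)$ (by the remark after Definition 11.2) belong to $N$; let $\langle (u_l,v_l) : l < t \rangle \in N$ enumerate the finitely many pairs $\{(M \cap N,\, p(M,N)) : M \in A^*,\ M < N\}$. Now $(A^*,B^*)$ together with the witness $N' := N$ satisfies, in $(\mathcal A,\mathcal Y,\p)$, the statement $\varphi(a,b,\langle (u_l,v_l):l<t\rangle)$: there exist a condition $(C,D) \in \p$ and a simple $N' \in C$ with $a \subseteq C$, $b \subseteq D$, and for every $l < t$ some $M' \in C$ with $M' < N'$, $M' \cap N' = u_l$, and $p(M',N') = v_l$. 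The points to check are that $\varphi$ is genuinely first-order over $(\mathcal A,\mathcal Y,\p)$ — simplicity, the relation $<$ on $\mathcal X$, and the canonical proxy are all definable from the predicates in $\mathcal A$ — and that the triple $(A^*,B^*,N)$ does satisfy it, which holds by taking $M' := M$ for the pair contributed by $M$. Since $N \prec (\mathcal A,\mathcal Y,\p)$ and the parameters lie in $N$, elementarity produces $(C,D) \in N \cap \p$ and a simple $N' \in C$ witnessing $\varphi$; set $v := (C,D)$.

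Finally I would show $v$ works. Let $w = (A_w,B_w) \le v$ with $w \in N \cap \p$, so $A_w, B_w \subseteq N$, $a \subseteq A_v \subseteq A_w$, $b \subseteq B_v \subseteq B_w$, $N' \in A_w$, and the witnessing models $M'$ all lie in $A_w$. These are precisely the hypotheses of Proposition 13.1 with $(A,B) := (A^*,B^*)$, the simple model $N$, the side condition $(C,D) := (A_w,B_w)$, and $N'$ — recalling that $(A^*,B^*)$ is closed under canonical models with respect to $N$ and under the relevant intersections by the construction of $r^*$. Hence $(A^* \cup A_w,\, B^* \cup B_w)$ is an $\vec S$-obedient side condition; it is moreover a condition of $\p$, since its first coordinate is a subset of $\mathcal X$ all of whose members are $\prec(\mathcal A,\mathcal Y)$ and its second coordinate is a subset of $\mathcal Y$. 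As this condition extends both $r^* \le r$ and $w$, the conditions $r$ and $w$ are compatible. By Lemma 3.4, $q_N$ is strongly $N$-generic, and together with the universality from Lemma 15.2 it is a universal strongly $N$-generic condition.
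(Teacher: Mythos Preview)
Your proof is correct and follows essentially the same route as the paper's: extend $r$ by the intersections $M\cap N$ using Lemmas 5.5(1), 7.16, and 8.2, close under canonical models via Proposition 10.8, reflect the existence of the package $(r^*,N,\{M_i\})$ into $N$ using the elementarity $N\prec(\mathcal A,\mathcal Y,\p)$ to obtain $v$ and $N'$, and then invoke Proposition 13.1 to amalgamate $r^*$ with any $w\le v$ in $N$. The only cosmetic difference is that you enumerate pairs $(M\cap N,\,p(M,N))$ for \emph{all} $M\in A^*$ with $M<N$, whereas the paper restricts to $M\in A_{r_0}\setminus N$; your choice is slightly more hygienic, since it makes the verification of Proposition 13.1's hypothesis literal rather than relying on the observation that models already in $N$ lie in $C$ and are handled by $C$'s adequacy.
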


See Section 3 for a discussion of universal strongly generic conditions.

\begin{proof}
By Lemma 15.2, $q_N$ is compatible with all 
conditions in $N \cap \p$. 
So it suffices to show that $q_N$ is strongly $N$-generic. 
Let $r_0 \le q_N$ be given. 
We will find a condition $v$ in $N \cap \p$ such that for all 
$w \le v$ in $N \cap \p$, $r_0$ and $w$ are compatible.

\bigskip

Let $M_0, \ldots, M_{k-1}$ 
list the models $M$ in $A_{r_0} \setminus N$ such that $M < N$. 
Note that by Lemma 8.2, $M_i \cap N \in N$ for all $i < k$.

By finitely many applications of Lemmas 5.5(1) and 7.16, 
together with the fact that $N \prec (\mathcal A,\mathcal Y)$, 
the pair 
$$
r_1 := (A_{r_0} \cup 
\{ M_0 \cap N, \ldots, M_{k-1} \cap N \},B_{r_0})
$$
is a condition below $r_0$.

By Proposition 10.8, there is a condition $r \le r_1$ such that 
$A_r = A_{r_1}$, and 
$(A_r,B_r)$ is closed under canonical models with respect to $N$. 
Note that the assumptions of the first paragraph of 
Proposition 13.1 hold for $A = A_r$ and $B = B_r$.

The objects $r$, $N$, and 
$M_0,\ldots,M_{k-1}$ witness that the 
following statement holds in $(\mathcal A,\mathcal Y,\p)$:

\bigskip

There exist $v$, $N'$, and $M_0',\ldots,M_{k-1}'$ satisfying:
\begin{enumerate}
\item $v \in \p$;
\item $A_{r} \cap N \subseteq A_v$, $B_{r} \cap N \subseteq B_v$, and 
$M_0',\ldots,M_{k-1}'$ and $N'$ are in $A_v$;
\item $N'$ is simple;
\item for all $i < k$, $M_i' < N'$, $M_i \cap N = M_i' \cap N'$, 
and $p(M_i,N) = p(M_i',N')$.
\end{enumerate}

\bigskip

The parameters which appear in the above statement, namely 
$A_{r} \cap N$, $B_{r} \cap N$, 
and for $i < k$, $M_i \cap N$ and $p(M_i,N)$, are all members of $N$. 
By the elementarity of $N$, there are 
$v$, $N'$, and $M_0',\ldots,M_{k-1}'$ in $N$ 
which satisfy the same statement.

\bigskip

We will show that for all $w \le v$ in $N \cap \p$, 
$w$ is compatible with $r$, and hence is compatible 
with $r_0$ since $r \le r_0$. 
This will complete the proof.

So fix $w \le v$ in $N \cap \p$. 
We claim that the pair 
$$
(A_r \cup A_w, B_r \cup B_w)
$$
is in $\p$. 
Note that the assumptions of the second paragraph of 
Proposition 13.1 hold for $C = A_w$ and $D = B_w$. 
So by Proposition 13.1, 
$(A_r \cup A_w, B_r \cup B_w)$ is an $\vec S$-obedient 
side condition. 
So this pair is a condition in $\p$, and it is obviously below 
$r$ and $w$.
\end{proof}

\begin{corollary}
The forcing poset $\p$ satisfies the $\omega_1$-covering property. 
In particular, it preserves $\omega_1$.
\end{corollary}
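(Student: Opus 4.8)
The plan is to derive the corollary directly from Proposition 15.3 and Proposition 7.20, feeding the result into the general machinery of Section 3.

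First I would record that $\p \subseteq H(\kappa^+)$, so that $\lambda_\p = \kappa^+$. Indeed, a condition $p = (A_p,B_p)$ consists of two finite sets, with $A_p$ a finite subset of $\mathcal X$ and $B_p$ a finite subset of $\mathcal Y$; and every member of $\mathcal X \cup \mathcal Y$ is a subset of $H(\kappa^+)$ of size at most $\kappa$, hence has transitive closure of size at most $\kappa$ and therefore lies in $H(\kappa^+)$. Consequently $\mathcal X$, $\mathcal Y$, and $\p$ are themselves subsets of $H(\kappa^+)$, so it makes sense to expand the structure on $H(\kappa^+)$ by them as predicates.

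Next I would verify that there are stationarily many $N \in P_{\omega_1}(H(\kappa^+)) = P_{\omega_1}(H(\lambda_\p))$ carrying a universal strongly $N$-generic condition. By Proposition 7.20 the collection of simple models in $\mathcal X$ is stationary in $P_{\omega_1}(H(\kappa^+))$. The set of $N \in P_{\omega_1}(H(\kappa^+))$ with $N \prec (\mathcal A,\mathcal Y,\p)$ is club in $P_{\omega_1}(H(\kappa^+))$ by the usual Skolem-closure argument, so the set of simple $N \in \mathcal X$ with $N \prec (\mathcal A,\mathcal Y,\p)$ is again stationary. For each such $N$, Proposition 15.3 shows that $q_N := (\{N\},\emptyset)$ is a universal strongly $N$-generic condition; note the hypotheses of that proposition are exactly what has just been arranged.

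Finally I would apply Corollary 3.10 with $\mu = \omega_1$ and $\lambda_\q = \lambda_\p = \kappa^+$: since there are stationarily many $N \in P_{\omega_1}(H(\lambda_\p))$ for which there exists a universal strongly $N$-generic condition, $\p$ satisfies the $\omega_1$-covering property, and in particular $\p$ forces that $\omega_1$ is a regular cardinal, so $\omega_1$ is preserved. There is no genuine obstacle here; the only point demanding (routine) care is the passage from the stationary family of simple models in $\mathcal X$ to those additionally elementary in $(\mathcal A,\mathcal Y,\p)$, which is harmless because the extra requirement cuts out a club in $P_{\omega_1}(H(\kappa^+))$.
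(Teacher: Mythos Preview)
Your proof is correct and follows essentially the same approach as the paper: use Proposition 7.20 to get stationarily many simple models in $\mathcal X$, intersect with the club of $N \prec (\mathcal A,\mathcal Y,\p)$, apply Proposition 15.3 to obtain universal strongly $N$-generic conditions, and conclude via Corollary 3.10. You include a bit more detail than the paper (explicitly computing $\lambda_\p$ and spelling out the club argument), but the structure is identical.
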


\begin{proof}
By Proposition 7.20, the set of $N \in P_{\omega_1}(H(\kappa^+))$ 
such that $N \in \mathcal X$ and $N$ is simple is stationary. 
Hence there are stationarily many such $N$ with 
$N \prec (\mathcal A,\mathcal Y,\p)$. 
Any such $N$ has a universal strongly $N$-generic condition 
by Proposition 15.3. 
By Corollary 3.10, $\p$ has the $\omega_1$-covering property.
\end{proof}

\bigskip

Next we prove that many models in $\mathcal Y$ have strongly 
generic conditions.

\begin{lemma}
Let $P \in \mathcal Y$ be $\vec S$-strong. 
Let $q_P := (\emptyset,\{P\})$. 
Then $q_P$ is in $\p$, and 
for all $p \in P \cap \p$, $p$ and $q_P$ are compatible.
\end{lemma}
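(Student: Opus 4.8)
The plan is to reduce the whole statement to Lemma 5.4(2), exactly as Lemma 15.2 reduces its statement to Lemma 5.4(1). First I would verify that $q_P = (\emptyset,\{P\})$ is a condition of $\p$ by checking the three clauses of Definition 15.1: clause (1) holds vacuously since $A_{q_P} = \emptyset$; clause (2) holds because $P \in \mathcal Y$; and clause (3), that $(\emptyset,\{P\})$ is an $\vec S$-obedient side condition, is the first conclusion of Lemma 5.4(2) applied to the trivial side condition $(\emptyset,\emptyset)$, which lies in $P$ by elementarity of $P \prec \mathcal A$ and is $\vec S$-strong-free so that Lemma 5.4(2) applies (using $P \in \mathcal Y \subseteq \mathcal Y_0$). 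Alternatively one checks Definition 5.3 directly here: $\emptyset$ is $\vec S$-adequate, $\{P\}$ is a finite set of $\vec S$-strong models in $\mathcal Y_0$, and clause (3) of Definition 5.3 is vacuous because $A = \emptyset$.

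Next, I would fix an arbitrary $p = (A_p,B_p) \in P \cap \p$ and produce the common lower bound $r := (A_p, B_p \cup \{P\})$. To see $r \in \p$, I again run through Definition 15.1: clause (1) is inherited verbatim from $p$; clause (2) follows from $B_p \subseteq \mathcal Y$ together with $P \in \mathcal Y$; and clause (3), that $(A_p, B_p \cup \{P\})$ is an $\vec S$-obedient side condition, is precisely the second conclusion of Lemma 5.4(2). The hypotheses of Lemma 5.4(2) are met because $(A_p,B_p)$ is an $\vec S$-obedient side condition (clause (3) of Definition 15.1 for $p$), $(A_p,B_p) = p \in P$ since $p \in P \cap \p$, and $P$ is $\vec S$-strong by assumption. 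Finally $r \le p$ because $A_p \subseteq A_p$ and $B_p \subseteq B_p \cup \{P\}$, and $r \le q_P$ because $\emptyset \subseteq A_p$ and $\{P\} \subseteq B_p \cup \{P\}$; hence $p$ and $q_P$ are compatible.

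I do not anticipate a genuine obstacle: all the combinatorial content is already isolated in Lemma 5.4(2), and the argument is the direct $\mathcal Y$-analogue of Lemma 15.2. The only point worth stating carefully is the membership hypothesis ``$(A,B) \in P$'' of Lemma 5.4(2): for the assertion $q_P \in \p$ it is witnessed by the empty side condition via elementarity, and for the compatibility assertion it is immediate from $p \in P \cap \p$.
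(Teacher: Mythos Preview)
Your proposal is correct and takes essentially the same approach as the paper, which simply writes ``Immediate from Lemma 5.4(2).'' You have spelled out the details that the paper leaves implicit, in particular the verification of clauses (1) and (2) of Definition 15.1 for the common extension $(A_p, B_p \cup \{P\})$.
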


\begin{proof}
Immediate from Lemma 5.4(2).
\end{proof}

\begin{proposition}
Let $P \in \mathcal Y$ be $\vec S$-strong such that 
$\cf(\sup(P)) = P \cap \kappa$ and 
$P \prec (\mathcal A,\mathcal Y,\p)$. 
Let $q_P := (\emptyset,\{ P \})$. 
Then $q_P$ is a universal strongly $P$-generic condition.
\end{proposition}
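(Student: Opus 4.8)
The plan is to argue exactly as in the proof of Proposition 15.3, but using the amalgamation result for strong models in $\mathcal Y$ (Proposition 13.2) in place of Proposition 13.1. By Lemma 15.6, $q_P$ is compatible with every condition in $P \cap \p$, so $q_P$ is universal, and it remains only to show that it is strongly $P$-generic. By Lemma 3.4 it suffices to show that for every $r_0 \le q_P$ there is $v \in P \cap \p$ such that every $w \le v$ in $P \cap \p$ is compatible with $r_0$.

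Fix $r_0 \le q_P$; note $P \in B_{r_0}$. The first step is to close $r_0$ under intersection with $P$. Using Lemma 7.16 (so that the intersections below lie again in $\mathcal X$ or $\mathcal Y$, and the ones in $\mathcal X$ remain elementary in $(\mathcal A,\mathcal Y)$) together with finitely many applications of Lemma 5.6(2) and Lemma 5.6(3), the pair
$$
r_1 := \bigl( A_{r_0} \cup \{ M \cap P : M \in A_{r_0} \}, \
B_{r_0} \cup \{ Q \cap P : Q \in B_{r_0},\ Q \cap \kappa < P \cap \kappa \} \bigr)
$$
is an $\vec S$-obedient side condition, hence a condition in $\p$ with $r_1 \le r_0$. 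Since $(M \cap P) \cap P = M \cap P$ and $(Q \cap P) \cap P = Q \cap P$, the condition $r_1$ satisfies the first-paragraph hypotheses of Proposition 13.2 with respect to $P$: for all $M \in A_{r_1}$, $M \cap P \in A_{r_1}$, and for all $Q \in B_{r_1}$ with $Q \cap \kappa < P \cap \kappa$, $Q \cap P \in B_{r_1}$. (Here we also use the standing hypothesis $\cf(\sup(P)) = P \cap \kappa$.)

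Now enumerate $A_{r_1}$ as $M_0,\dots,M_{k-1}$. The objects $r_1$, $P$, and $M_0,\dots,M_{k-1}$ witness in $(\mathcal A,\mathcal Y,\p)$ the statement that there exist $v$, $P'$, and $M_0',\dots,M_{k-1}'$ with $v \in \p$; $A_{r_1} \cap P \subseteq A_v$ and $B_{r_1} \cap P \subseteq B_v$; $P' \in B_v$ and $M_0',\dots,M_{k-1}' \in A_v$; $\cf(\sup(P')) = P' \cap \kappa$; and, for all $i < k$, $M_i \cap P = M_i' \cap P'$ and $p(M_i,P) = p(M_i',P')$. The parameters occurring here, namely $A_{r_1} \cap P$, $B_{r_1} \cap P$, and $M_i \cap P$ and $p(M_i,P)$ for $i<k$, all lie in $P$; for the proxies this uses the remark after Definition 11.2 together with $\sup(M_i \cap P \cap \kappa) < P \cap \kappa$. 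Since $P \prec (\mathcal A,\mathcal Y,\p)$, we may fix $v$, $P'$, $M_0',\dots,M_{k-1}'$ in $P$ satisfying the same statement.

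Finally, let $w \le v$ in $P \cap \p$. Then $A_w, B_w \subseteq P$ (as $w \in P$ and $A_w,B_w$ are finite members of $P$), while $A_{r_1} \cap P \subseteq A_v \subseteq A_w$ and $B_{r_1} \cap P \subseteq B_v \subseteq B_w$; thus the second-paragraph hypotheses of Proposition 13.2 hold with $C = A_w$ and $D = B_w$. By Proposition 13.2, $(A_{r_1} \cup A_w,\, B_{r_1} \cup B_w)$ is an $\vec S$-obedient side condition; since every model in $A_{r_1} \cup A_w$ is an elementary substructure of $(\mathcal A,\mathcal Y)$, this pair is a condition in $\p$, and it clearly extends both $r_1$ and $w$. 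Hence $w$ is compatible with $r_1$, and therefore with $r_0$, completing the proof. The substantive work is all absorbed into Proposition 13.2; the only points requiring care are checking that the closure step yields a condition $r_1$ meeting the first-paragraph hypotheses of that proposition and that the reflected objects $v, P', M_i'$ meet the second-paragraph hypotheses — both routine, and entirely parallel to Proposition 15.3.
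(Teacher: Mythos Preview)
Your proof is correct and follows essentially the same approach as the paper's. Two minor numbering slips: the compatibility of $q_P$ with conditions in $P \cap \p$ is Lemma~15.5 (not 15.6, which is the proposition being proved), and the closure-under-intersection results you invoke are Lemma~5.5(2),(3) (not 5.6).
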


\begin{proof}
By Lemma 15.5, $q_P$ is compatible with all members of $P \cap \p$. 
So it suffices to show that $q_P$ is strongly $P$-generic. 
Let $r_0 \le q_P$ be given. 
We will find a condition $v \in P \cap \p$ such that for all $w \le v$ 
in $P \cap \p$, $r_0$ and $w$ are compatible.

By finitely many applications of 
Lemmas 5.5(2), 5.5(3), and 7.16, together with the fact that 
$P \prec (\mathcal A,\mathcal Y)$, 
there is a condition $r \le r_0$ 
such that 
$$
A_r = A_{r_0} \cup \{ P \cap M : M \in A_{r_0} \},
$$
and 
$$
B_r = B_{r_0} \cup 
\{ P \cap Q : Q \in B_{r_0}, \ Q \cap \kappa < P \cap \kappa \}.
$$
Then the assumptions of the first paragraph of Proposition 13.2 
hold for $A = A_r$ and $B = B_r$.

Let $\beta := P \cap \kappa$. 
Let $M_0, \ldots, M_{k-1}$ list the members of $A_r$.

The objects $r$, $P$, $\beta$, and $M_0, \ldots, M_{k-1}$ 
witness that the following 
statement holds in $(\mathcal A,\mathcal Y,\p)$:

\bigskip

There exist $v$, $P'$, $\beta'$, and $M_0',\ldots,M_{k-1}'$ satisfying:
\begin{enumerate}
\item $v \in \p$;
\item $A_r \cap P \subseteq A_v$, 
$B_r \cap P \subseteq B_v$, 
$M_0',\ldots,M_{k-1}'$ are in $A_v$, and $P' \in B_v$;
\item $P' \cap \kappa = \beta'$ and 
$\cf(\sup(P')) = \beta'$;
\item for all $i < k$, $M_i \cap P = M_i' \cap P'$ and 
$p(M_i,P) = p(M_i',P')$.
\end{enumerate}

\bigskip

The parameters appearing in the statement above, namely, 
$A_r \cap P$, $B_r \cap P$, and for $i < k$, $M_i \cap P$ 
and $p(M_i,P)$, 
are all members of $P$. 
By the elementarity of $P$, we can fix $v$, $P'$, $\beta'$, and 
$M_0',\ldots,M_{k-1}'$ in $P$ 
which satisfy the same statement.

For each $M \in A_r$, let $M'$ denote $M_i'$, 
where $i < k$ and $M = M_i$. 

\bigskip

We will show that for all $w \le v$ in $P \cap \p$, 
$w$ is compatible with $r$, and hence is compatible with $r_0$ since 
$r \le r_0$. 
This will complete the proof.

So fix $w \le v$ in $P \cap \p$. 
We claim that the pair 
$$
(A_r \cup A_w,B_r \cup B_w)
$$
is in $\p$. 
Note that the assumptions of the 
second paragraph of Proposition 13.2 hold 
for $C = A_w$ and $D = B_w$. 
So by Proposition 13.2, 
$(A_r \cup A_w,B_r \cup B_w)$ is an $\vec S$-obedient 
side condition. 
So this pair is a condition in $\p$, and it is obviously 
below $r$ and $w$.
\end{proof}

\begin{corollary}
The forcing poset $\p$ has the $\kappa$-covering property. 
In particular, $\p$ forces that $\kappa$ is a regular cardinal.
\end{corollary}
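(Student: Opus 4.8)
The plan is to deduce the corollary from Corollary 3.10 applied with the cardinal $\mu = \kappa$; so the only thing to check is that there are stationarily many $P \in P_\kappa(H(\kappa^+))$ for which there exists a universal strongly $P$-generic condition.

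First I would invoke Proposition 14.2 to fix the stationary set $S$ consisting of all $P \in P_\kappa(H(\kappa^+))$ such that $P \in \mathcal Y$, $P$ is $\vec S$-strong, and $\cf(\sup(P)) = P \cap \kappa$. The extra hypothesis appearing in Proposition 15.7, namely that $P \prec (\mathcal A,\mathcal Y,\p)$, is a club condition: since the conditions of $\p$ are pairs of finite subsets of $H(\kappa^+)$ we have $\p \subseteq H(\kappa^+)$, so $\p$ may be added as a predicate to a structure with underlying set $H(\kappa^+)$, and the collection of $P \in P_\kappa(H(\kappa^+))$ with $P \prec (\mathcal A,\mathcal Y,\p)$ (which then automatically satisfy $P \cap \kappa \in \kappa$) contains a club by the usual L\"owenheim--Skolem argument. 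As the intersection of a stationary set with a club is stationary, there are stationarily many $P$ lying in $S$ and in addition satisfying $P \prec (\mathcal A,\mathcal Y,\p)$.

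For any such $P$, Proposition 15.7 tells us that $q_P := (\emptyset,\{P\})$ is a universal strongly $P$-generic condition. Hence there are stationarily many $P \in P_\kappa(H(\kappa^+))$ possessing a universal strongly $P$-generic condition. Since $\kappa$ is a regular uncountable cardinal with $\omega_2 \le \kappa$, and $\kappa \le \lambda_\p$ because $\p \subseteq H(\kappa^+)$, Corollary 3.10 applies with $\mu = \kappa$ and $\q = \p$, yielding that $\p$ satisfies the $\kappa$-covering property and, in particular, forces that $\kappa$ is a regular cardinal.

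I do not anticipate any genuine difficulty: this corollary is a bookkeeping assembly of Proposition 14.2, Proposition 15.7, and Corollary 3.10. The only points requiring a moment's care are the preservation of stationarity when intersecting the set from Proposition 14.2 with the club of elementary submodels of $(\mathcal A,\mathcal Y,\p)$, and verifying the hypothesis $\kappa \le \lambda_\p$ of Corollary 3.10.
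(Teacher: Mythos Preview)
Your proof is correct and follows essentially the same route as the paper: invoke Proposition~14.2 for the stationary supply of suitable $P$, intersect with the club of $P \prec (\mathcal A,\mathcal Y,\p)$, apply the proposition giving a universal strongly $P$-generic condition, and finish with Corollary~3.10. The only slip is a numbering error: the result you are citing for the existence of the universal strongly $P$-generic condition is Proposition~15.6, not Proposition~15.7 (which is the corollary you are proving).
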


\begin{proof}
By Proposition 14.2, there are stationarily many $P$ in $\mathcal Y$ 
such that $P$ is $\vec S$-strong and $\cf(\sup(P)) = P \cap \kappa$. 
Therefore there are stationarily many such $P$ with 
$P \prec (\mathcal A,\mathcal Y,\p)$. 
By Proposition 15.6, any such $P$ has a universal 
strongly $P$-generic condition. 
Hence by Corollary 3.10, $\p$ has the $\kappa$-covering property.
\end{proof}

\bigskip

Finally, we prove that for most transitive models, the empty 
condition is a strongly generic condition.

\begin{proposition}
Suppose that $X$ is an elementary substructure of 
$(\mathcal A,\mathcal Y,\p)$ 
of size $\kappa$ such that $X \cap \kappa^+ \in \kappa^+$ 
and $X^{<\kappa} \subseteq X$. 
Then the pair $(\emptyset,\emptyset)$ is a 
strongly $X$-generic condition.
\end{proposition}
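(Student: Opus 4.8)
The plan is to apply the combinatorial characterization of strong genericity, Lemma 3.4. Since $(\emptyset,\emptyset)$ is the maximum element of $\p$, it suffices to show that for every $r_0 \in \p$ there is $v \in X \cap \p$ such that for all $w \le v$ in $X \cap \p$, the conditions $r_0$ and $w$ are compatible. In contrast to the proofs of Propositions 15.3 and 15.6, no preliminary massaging of $r_0$ is required here, because the amalgamation result we will invoke, Proposition 13.3, imposes no closure hypothesis on the side condition $(A_{r_0},B_{r_0})$; the point is that for a transitive-type model $X$ with $X^{<\kappa}\subseteq X$, the intersection of any model with $X$ lies in $X$ automatically, so there is nothing to pre-close. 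First I would record the standard facts that $\kappa\subseteq X$ and that $\theta:=X\cap\kappa^+$ is an ordinal of size $\kappa$ with $\cf(\theta)=\kappa$ (if $\cf(\theta)<\kappa$, then a cofinal sequence of length $<\kappa$ in $\theta$ would lie in $X$ by closure, forcing $\theta\in X$, which is absurd since $X\cap\kappa^+=\theta$).

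Next I would set up the reflection. Fix $r_0\in\p$, enumerate $A_{r_0}=\{M_0,\dots,M_{k-1}\}$ and $B_{r_0}=\{P_0,\dots,P_{m-1}\}$, and for each $i<k$ let $\langle Q^i_n:n<\omega\rangle$ enumerate the countably many $\vec S$-strong models in $M_i\cap\mathcal Y$. The key observation is that the objects $A_{r_0}\cap X$, $B_{r_0}\cap X$, $\langle M_i\cap\theta:i<k\rangle$, $\langle P_j\cap\theta:j<m\rangle$, and $\langle\langle Q^i_n\cap\theta:n<\omega\rangle:i<k\rangle$ are all members of $X$: each is a set of size less than $\kappa$ consisting of ordinals below $\theta$ or of models lying in $X$, hence a sequence of length less than $\kappa$ of elements of $X$, hence in $X$ by $X^{<\kappa}\subseteq X$ (using $\theta\subseteq X$). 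Now $r_0$, $\theta$, the $M_i$, the $P_j$, and the sequences $\langle Q^i_n:n<\omega\rangle$ witness in $(\mathcal A,\mathcal Y,\p)$ the assertion that there exist $v$, $\theta'$, $\langle M_i':i<k\rangle$, $\langle P_j':j<m\rangle$, and $\langle\vec R^i:i<k\rangle$ such that: $v\in\p$ with $A_{r_0}\cap X\subseteq A_v$ and $B_{r_0}\cap X\subseteq B_v$; $\cf(\theta')=\kappa$; for each $i<k$, $M_i'\in A_v$ and $M_i'\cap\theta'$ equals the parameter $M_i\cap\theta$; for each $j<m$, $P_j'\in B_v$ and $P_j'\cap\theta'$ equals $P_j\cap\theta$; and for each $i<k$, $\vec R^i=\langle R^i_n:n<\omega\rangle$ enumerates the $\vec S$-strong models in $M_i'\cap\mathcal Y$ with $R^i_n\cap\theta'=Q^i_n\cap\theta$ for all $n$. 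Since every parameter lies in $X$ and $X\prec(\mathcal A,\mathcal Y,\p)$, by elementarity there are witnesses $v,\theta',M_i',P_j',\vec R^i$ inside $X$; in particular $v\in X\cap\p$, and $\theta'\in X\cap\kappa^+=\theta$ together with $\cf(\theta')=\kappa$ gives $\theta'\in\theta\cap\cof(\kappa)$.

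Finally I would verify compatibility. Fix $w\le v$ in $X\cap\p$; I claim $(A_{r_0}\cup A_w,\,B_{r_0}\cup B_w)\in\p$, which is plainly a common lower bound of $r_0$ and $w$. The hypotheses of Proposition 13.3 are met with $A=A_{r_0}$, $B=B_{r_0}$, $(C,D)=(A_w,B_w)$, the given $X$ and $\theta$, and the objects $\theta',M_i',P_j',\vec R^i$ just obtained: $(A_{r_0},B_{r_0})$ and $(A_w,B_w)$ are $\vec S$-obedient side conditions with the $A$-parts in $\mathcal X$ and $B$-parts in $\mathcal Y$; $A_{r_0}\cap X\subseteq A_v\subseteq A_w\subseteq X$ (the last inclusion because $A_w\in X$ is finite) and similarly $B_{r_0}\cap X\subseteq B_w\subseteq X$; and clauses (1)--(4) of Proposition 13.3 are exactly what the reflected witnesses provide. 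Hence by Proposition 13.3 the pair is an $\vec S$-obedient side condition, and since every model in $A_{r_0}\cup A_w$ is an elementary substructure of $(\mathcal A,\mathcal Y)$ (as $r_0,w\in\p$), the pair is a condition in $\p$. The only real obstacle is bookkeeping: checking that the chosen parameters genuinely belong to $X$, that $\cf(\theta)=\kappa$ (needed for clause (1) of Proposition 13.3), and that the reflected existential statement is phrased so its witnesses literally satisfy Proposition 13.3's hypotheses. There is no new mathematical content beyond Proposition 13.3 itself.
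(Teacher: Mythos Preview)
Your proposal is correct and follows essentially the same approach as the paper: reflect an existential statement into $X$ using the parameters $M_i\cap\theta$, $P_j\cap\theta$, and $\langle Q^i_n\cap\theta:n<\omega\rangle$ (all in $X$ by $X^{<\kappa}\subseteq X$), then invoke Proposition 13.3 to amalgamate $r_0$ with any $w\le v$ in $X\cap\p$. The only cosmetic difference is that the paper phrases the argument via the definition of strong genericity (showing an arbitrary dense $D\subseteq\p\cap X$ is predense) rather than via Lemma 3.4, and you are slightly more explicit in including $A_{r_0}\cap X\subseteq A_v$ and $B_{r_0}\cap X\subseteq B_v$ in the reflected statement to match the literal hypotheses of Proposition 13.3.
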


\begin{proof}
Let $\theta := X \cap \kappa^+$. 
Since $X^{<\kappa} \subseteq X$, $\theta$ has cofinality $\kappa$.

Let $D$ be a dense subset of $\p \cap X$, and we will show that 
$D$ is predense in $\p$. 
Let $p$ be a condition. 

Let $M_0,\ldots,M_{k-1}$ and $P_0,\ldots,P_{m-1}$ 
enumerate the members of 
$A_p$ and $B_p$ respectively. 
Note that since $X^{<\kappa} \subseteq X$, 
for any model $K$ on either of these lists, 
$K \cap \theta \in X$. 
For each $i < k$, let $\langle Q^i_n : n < \omega \rangle$ enumerate 
the $\vec S$-strong models in $M_i \cap \mathcal Y$. 
Since $X^{<\kappa} \subseteq X$, 
for each $n < \omega$, $Q^i_n \cap \theta \in X$. 
Therefore the sequence 
$\langle Q^i_n \cap \theta : n < \omega \rangle$ is in $X$. 

Note that the assumptions of the first and second paragraphs of 
Proposition 13.3 hold for $A = A_p$ and $B = B_p$.

The objects $p$, $\theta$, 
$M_0,\ldots,M_{k-1}$, $P_0,\ldots,P_{m-1}$, and 
$\langle Q^i_n : n < \omega \rangle$ for $i < k$ 
witness that $(\mathcal A,\mathcal Y,\p)$ satisfies the following statement:

\bigskip

There exist $v$, $\theta'$, $M_0',\ldots,M_{k-1}'$, 
$P_0',\ldots,P_{m-1}'$, 
and $\langle R^i_n : n < \omega \rangle$ for $i < k$ 
such that:
\begin{enumerate}
\item $v \in \p$;
\item $M_0',\ldots,M_{k-1}'$ are in $A_v$ and 
$P_0',\ldots,P_{m-1}'$ are in $B_v$;
\item $\cf(\theta') = \kappa$;
\item $M_i \cap \theta = M_i' \cap \theta'$ and 
$P_j \cap \theta = P_j' \cap \theta'$ for $i < k$ and $j < m$;
\item for all $i < k$, $\langle R^i_n : n < \omega \rangle$ 
enumerates the $\vec S$-strong models in $M_i' \cap \mathcal Y$, 
and for all $n < \omega$, 
$Q^i_n \cap \theta = R^i_n \cap \theta'$.
\end{enumerate}

\bigskip

The parameters which appear in the above statement, namely 
$\kappa$, $M_i \cap \theta$ for $i < k$, 
$P_j \cap \theta$ for $j < m$, 
and $\langle Q^i_n \cap \theta : n < \omega \rangle$ for $i < k$ 
are all members of $X$. 
By the elementarity of $X$, we can fix 
$v$, $\theta'$, $M_0',\ldots,M_{k-1}'$, $P_0',\ldots,P_{m-1}'$, and 
$\langle R^i_n : n < \omega \rangle$ for $i < k$ in $X$ which 
satisfy the same statement.

For each $M$ in $A_p$, let $M'$ denote $M_i'$, 
where $i < k$ and 
$M = M_i$. 
For each $P$ in $B_p$, let $P'$ denote $P_j'$, where $j < m$ and 
$P = P_j$.

Since $D$ is a dense subset of $\p \cap X$, we can fix $w \le v$ in $D$. 
Let us show that $w$ and $p$ are compatible. 
This proves that $D$ is predense in $\p$, finishing the proof. 
It suffices to show that the pair 
$$
(A_p \cup A_w,B_p \cup B_w)
$$
is a condition. 
Note that the assumptions of the third paragraph of 
Proposition 13.3 hold for 
$C = A_w$ and $D = B_w$. 
So by Proposition 13.3, 
$(A_p \cup A_w,B_p \cup B_w)$ is an 
$\vec S$-obedient side condition. 
Therefore this pair is in $\p$, and it is obviously below $p$ and $w$.
\end{proof}

\begin{corollary}
The forcing poset $\p$ is $\kappa^+$-c.c.
\end{corollary}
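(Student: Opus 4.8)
The plan is to deduce $\kappa^+$-c.c.\ from Proposition~3.11, applied with $\mu = \kappa^+$. First observe that each condition of $\p$ is a pair consisting of a finite subset of $\mathcal X \subseteq P_{\omega_1}(H(\kappa^+))$ together with a finite subset of $\mathcal Y \subseteq P_{\kappa}(H(\kappa^+))$, hence is an element of $H(\kappa^+)$; thus $\p \subseteq H(\kappa^+)$ and $\lambda_{\p} = \kappa^+$. By Proposition~3.11 it therefore suffices to produce stationarily many $X \in P_{\kappa^+}(H(\kappa^+))$ with $X \cap \kappa^+ \in \kappa^+$ such that every condition of $\p$ is strongly $X$-generic.

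Suppose $X \prec (\mathcal A,\mathcal Y,\p)$, $|X| = \kappa$, $X \cap \kappa^+ \in \kappa^+$, and $X^{<\kappa} \subseteq X$. By the proposition just proved, $(\emptyset,\emptyset)$ is a strongly $X$-generic condition. Since $A_p \supseteq \emptyset$ and $B_p \supseteq \emptyset$ for every $p \in \p$, the condition $(\emptyset,\emptyset)$ is the maximum element of $\p$. By the remark following Definition~3.1, strong $X$-genericity passes to every extension of $(\emptyset,\emptyset)$; equivalently, by the remark after Proposition~3.11, since $\p$ has a maximum condition, every condition of $\p$ is strongly $X$-generic.

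It remains to verify that the collection of such $X$ is stationary in $P_{\kappa^+}(H(\kappa^+))$, which I expect to be the only point requiring any work, and even there only a routine elementary-chain construction. Fix $F : H(\kappa^+)^{<\omega} \to H(\kappa^+)$. Since $\kappa$ is greatly Mahlo, it is inaccessible, so $\kappa^{<\kappa} = \kappa$. Construct a continuous, $\subseteq$-increasing chain $\langle X_i : i < \kappa \rangle$ of elementary substructures of $(\mathcal A,\mathcal Y,\p)$, each of size $\kappa$ and closed under $F$, so that $X_i^{<\kappa} \cup \{ \sup(X_i \cap \kappa^+) \} \subseteq X_{i+1}$ for all $i < \kappa$; this is possible because $|X_i^{<\kappa}| = \kappa^{<\kappa} = \kappa$. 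Let $X := \bigcup_{i<\kappa} X_i$. Then $X \prec (\mathcal A,\mathcal Y,\p)$, $|X| = \kappa$, $X$ is closed under $F$, and by continuity $X \cap \kappa^+ = \sup_{i<\kappa}(X_i \cap \kappa^+)$, an ordinal below $\kappa^+$. Finally, if $\sigma$ is a function from an ordinal $< \kappa$ into $X$, then $\ran(\sigma)$ has size $< \kappa = \cf(\kappa)$, so $\ran(\sigma) \subseteq X_i$ for some $i < \kappa$; hence $\sigma \in X_i^{<\kappa} \subseteq X_{i+1} \subseteq X$, giving $X^{<\kappa} \subseteq X$. Thus $X$ belongs to the desired collection, and the argument is complete modulo the cited results.
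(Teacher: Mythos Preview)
Your proof is correct and follows the same approach as the paper: apply Proposition~3.11 with $\mu=\kappa^+$, using Proposition~15.8 to get that the maximum condition $(\emptyset,\emptyset)$ is strongly $X$-generic for suitable $X$, and then verify that such $X$ are stationary via $\kappa^{<\kappa}=\kappa$. You supply the elementary-chain details that the paper leaves as ``follows easily''; one small presentational point is that the claim $X\cap\kappa^+\in\kappa^+$ is most cleanly justified \emph{after} you establish $X^{<\kappa}\subseteq X$ (from which $\kappa\subseteq X$ and hence downward closure of $X\cap\kappa^+$ follow), rather than by the continuity remark alone.
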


\begin{proof}
Since $(\emptyset,\emptyset)$ is the maximum element of 
$\p$, by Proposition 3.11 it suffices to show that there are 
stationarily many $X$ in $P_{\kappa^+}(H(\kappa^+))$ 
for which 
$(\emptyset,\emptyset)$ is strongly $X$-generic. 
By Proposition 15.8, it suffices to show that there are 
stationarily many $X$ in 
$P_{\kappa^+}(H(\kappa^+))$ such that 
$X \cap \kappa^+ \in \kappa^+$ and 
$X^{<\kappa} \subseteq X$. 
But this follows easily from the fact that $\kappa^{<\kappa} = \kappa$.
\end{proof}

\bigskip

\addcontentsline{toc}{section}{16. The final argument}

\textbf{\S 16. The final argument}

\stepcounter{section}

\bigskip

We now complete the proof of Mitchell's theorem. 
We begin by noting that the forcing poset $\p$ has the desired 
effect on cardinal structure.

\begin{proposition}
The forcing poset $\p$ preserves $\omega_1$, collapses $\kappa$ to 
become $\omega_2$, and is $\kappa^+$-c.c.
\end{proposition}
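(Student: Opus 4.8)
The plan is to assemble the statement from the cardinal-preservation results of Section 15 together with Proposition 3.12. Preservation of $\omega_1$ is exactly Corollary 15.4, and the $\kappa^+$-chain condition is exactly Corollary 15.9. So the only remaining point is to show that $\p$ forces $\kappa = \omega_2$.

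For this I would first invoke Corollary 15.7: $\p$ has the $\kappa$-covering property, and in particular forces that $\kappa$ is a regular cardinal, hence a cardinal, in the generic extension. Next I would apply Proposition 3.12 to $\q = \p$, which requires checking its three hypotheses (it already demands that $\p$ preserve $\omega_1$, which holds by Corollary 15.4). Hypothesis (1) holds with $k = 1$: a condition of $\p$ is a pair $p = (A_p,B_p)$, where $B_p$ is a finite subset of $H(\kappa^+) = H(\lambda)$ and $A_p$, being $\vec S$-adequate, is in particular an adequate set (Definition 5.2); so, writing conditions as $(B_p,A_p)$, they have the required form. Hypothesis (2) is immediate from the definition of the order on $\p$, since $q \le p$ entails $A_p \subseteq A_q$. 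For hypothesis (3) I would use the set of $N \in \mathcal X$ that are simple and satisfy $N \prec (\mathcal A,\mathcal Y)$; this set is stationary in $P_{\omega_1}(H(\kappa^+))$, being the intersection of the stationary set of simple models in $\mathcal X$ from Proposition 7.20 with the club of elementary substructures of $(\mathcal A,\mathcal Y)$, and it is contained in $\mathcal X_0$ since $\mathcal X \subseteq \mathcal X_0$. If $(A_p,B_p) \in N \cap \p$ for such an $N$, then Lemma 5.4(1) gives that $(A_p \cup \{N\},B_p)$ is an $\vec S$-obedient side condition; moreover $A_p \cup \{N\} \subseteq \mathcal X$ with every member elementary in $(\mathcal A,\mathcal Y)$, and $B_p \subseteq \mathcal Y$, so $(A_p \cup \{N\},B_p) \in \p$, and it lies below $(A_p,B_p)$. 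This establishes hypothesis (3), and Proposition 3.12 then yields that $\p$ collapses every cardinal $\mu$ with $\omega_1 < \mu < \kappa$ to have size $\omega_1$.

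With this in hand the conclusion follows: in the generic extension $\omega_1$ is preserved, $\kappa$ is a cardinal, and every cardinal of the ground model strictly between $\omega_1$ and $\kappa$ has been collapsed to $\omega_1$; since forcing adds no new cardinals, there is no cardinal of the extension strictly between $\omega_1$ and $\kappa$, and hence $\kappa = \omega_2$ in $V[G]$. Together with Corollaries 15.4 and 15.9 this proves all three assertions.

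There is essentially no obstacle of substance here, as all the genuine work was carried out in Sections 13 and 15. The one point requiring a moment's care is the verification of hypothesis (3) of Proposition 3.12: one must check that adjoining $N$ keeps the pair a condition of $\p$ and not merely an $\vec S$-obedient side condition, which is why $N$ is restricted to lie in $\mathcal X$ with $N \prec (\mathcal A,\mathcal Y)$, and one must note that such $N$ remain stationary, which is immediate from Proposition 7.20.
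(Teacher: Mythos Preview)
Your proof is correct and follows essentially the same route as the paper, which simply cites Proposition 3.12, Lemma 15.2, and Corollaries 15.4, 15.7, and 15.9; you have just spelled out the verification of the hypotheses of Proposition 3.12 in more detail. One minor remark: you do not need the models $N$ in hypothesis (3) to be simple---Lemma 15.2 (equivalently Lemma 5.4(1)) only requires $N \in \mathcal X$ with $N \prec (\mathcal A,\mathcal Y)$---but including simplicity is harmless since such models are still stationary.
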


\begin{proof}
Immediate from Proposition 3.12, Lemma 15.2, and 
Corollaries 15.4, 15.7, and 15.9.
\end{proof}

Next we will show that we can 
apply the factorization theorem, 
Theorem 6.4.

It is easy to see that $\p$ has greatest lower bounds. 
Namely, if $(A,B)$ and $(C,D)$ are in $\p$ and are compatible, 
then $(A \cup C,B \cup D)$ is the greatest lower 
bound of $(A,B)$ and $(C,D)$.

\begin{lemma}
The forcing poset $\p$ satisfies property $*(\p,\p)$.
\end{lemma}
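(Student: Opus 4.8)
The plan is to reduce Lemma 16.2 to the observation that being a condition in $\p$ is a ``$2$-local'' property: it is the conjunction of requirements each of which involves at most two of the models appearing in the condition, so that a trichotomy-style amalgamation of three conditions reduces to what their pairwise unions already guarantee.

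First I would recall, from the remark just before the lemma, that $\p$ has greatest lower bounds and that two compatible conditions $s_1,s_2 \in \p$ have greatest lower bound $(A_{s_1} \cup A_{s_2}, B_{s_1} \cup B_{s_2})$. I would then record the converse fact, which is what is actually needed: if $s_1,s_2 \in \p$ are compatible, witnessed by a common lower bound $t$, then $(A_{s_1}\cup A_{s_2}, B_{s_1}\cup B_{s_2})$ is itself a condition. Indeed $A_{s_1}\cup A_{s_2}\subseteq A_t$ and $B_{s_1}\cup B_{s_2}\subseteq B_t$, so this pair is obtained from $t\in\p$ by deleting some models, and it is immediate from Definitions 1.18, 5.2, 5.3, and 15.1 that deleting models from an $\vec S$-obedient side condition lying in $\p$ again produces one (every clause satisfied by a pair of surviving models is still satisfied). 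Hence $s_1,s_2\in\p$ are compatible iff $(A_{s_1}\cup A_{s_2}, B_{s_1}\cup B_{s_2})\in\p$, and in that case this pair is $s_1\wedge s_2$. So, given $p,q,r\in\p$ pairwise compatible, $q\wedge r=(A_q\cup A_r, B_q\cup B_r)$, and it suffices to show that $(A_p\cup A_q\cup A_r,\ B_p\cup B_q\cup B_r)\in\p$, since this pair is then below both $p$ and $q\wedge r$. By pairwise compatibility, each of $(A_p\cup A_q, B_p\cup B_q)$, $(A_p\cup A_r, B_p\cup B_r)$, and $(A_q\cup A_r, B_q\cup B_r)$ is already a condition.

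Next I would make the $2$-locality explicit. For a pair $(A,B)$, membership in $\p$ amounts to: (unary) $A\subseteq\mathcal X$ with every $M\in A$ elementary in $(\mathcal A,\mathcal Y)$, and $B\subseteq\mathcal Y$ with every $P\in B$ being $\vec S$-strong; (binary within $A$) for all $M,N\in A$ the trichotomy of Definition 1.18, which is the adequacy of $A$ presupposed by Definition 5.2, together with clauses (1) and (2) of Definition 5.2 for every $\zeta\in R_M(N)$; (binary across $A$ and $B$) for all $M\in A$ and $P\in B$, clause (3) of Definition 5.3. The one point deserving care — and the main obstacle, though a modest one — is to confirm that clause (2) of Definition 5.2 is genuinely binary: it quantifies over \emph{all} $\vec S$-strong $P\in M\cap\mathcal Y$, not merely over those appearing in $B$, so it depends only on the pair $\{M,N\}$ and introduces no third model. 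Once this is checked, every clause is a statement about one model or about a pair of models.

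Finally I would run the verification for $A:=A_p\cup A_q\cup A_r$ and $B:=B_p\cup B_q\cup B_r$. The unary clauses for $(A,B)$ follow at once from those for $p$, $q$, $r$. For any binary clause about a pair $M,N\in A$: since each of $M$ and $N$ lies in one of $A_p,A_q,A_r$, there are $x,y\in\{p,q,r\}$ with $\{M,N\}\subseteq A_x\cup A_y$, and the clause holds because $(A_x\cup A_y, B_x\cup B_y)$ is a condition. Similarly, for any clause about a pair $M\in A$, $P\in B$, choosing $x,y\in\{p,q,r\}$ with $M\in A_x$ and $P\in B_y$ gives $M\in A_x\cup A_y$ and $P\in B_x\cup B_y$, and clause (3) of Definition 5.3 for $(M,P)$ holds since $(A_x\cup A_y, B_x\cup B_y)\in\p$. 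Hence $(A,B)\in\p$, which completes the proof of $*(\p,\p)$.
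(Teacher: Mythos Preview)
Your proof is correct and follows essentially the same approach as the paper's: both reduce the question to showing that $(A_p\cup A_q\cup A_r,\,B_p\cup B_q\cup B_r)$ is a condition, and both argue that every requirement for being an $\vec S$-obedient side condition in $\p$ involves at most two models and hence is already witnessed by one of the pairwise meets. Your version is simply more explicit, spelling out the $2$-locality clause by clause (in particular noting that Definition~5.2(2) quantifies over all $\vec S$-strong $P\in M\cap\mathcal Y$ rather than over $B$), whereas the paper compresses this into a single sentence.
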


See Definition 6.2 for the definition of $*$.

\begin{proof}
Let $p$, $q$, and $r$ be pairwise compatible conditions in $\p$. 
Then $q \land r = (A_q \cup A_r,B_q \cup B_r)$, 
$p \land q = (A_p \cup A_q,B_p \cup B_q)$, and 
$p \land r = (A_p \cup A_r,B_p \cup B_r)$. 
To see that $p$ is compatible with $q \land r$, it suffices to show that 
$$
(A_p \cup A_q \cup A_r,B_p \cup B_q \cup B_r)
$$
is an $\vec S$-obedient side condition. 
But looking over the requirements of being $\vec S$-obedient, 
any violation of these requirements involves an incompatibility between 
two objects appearing in the components of the pair, 
and hence would lead to a 
violation of the same requirement for one 
of the triples $p \land q$, $p \land r$, or $q \land r$.
\end{proof}

\begin{proposition}
Let $Q \in \mathcal Y$ be $\vec S$-strong such that 
$\cf(\sup(Q)) = Q \cap \kappa$ and 
$Q \prec (\mathcal A,\mathcal Y,\p)$. 
Let $q_Q := (\emptyset,\{Q\})$.  
Let $G$ be a generic filter on $\p$ which contains $q_Q$. 
Then $G \cap Q$ is a $V$-generic filter on $\p \cap Q$, and 
$V[G] = V[G \cap Q][H]$, where $H$ is a 
$V[G \cap Q]$-generic filter on $(\p / q_Q) / (G \cap Q)$. 
Moreover, the pair $(V[G \cap Q],V[G])$ satisfies the $\omega_1$-approximation 
property.
\end{proposition}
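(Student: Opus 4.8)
The plan is to deduce this proposition directly from the factorization theorem, Theorem 6.4, applied to $\q = \p$, $\chi = \kappa^+$, $N = Q$, and $q = q_Q$. Since $\p \subseteq H(\kappa^+)$, we have $\lambda_\p = \kappa^+$, so $\chi = \kappa^+$ is a legitimate choice of regular cardinal with $\lambda_\p \le \chi$. What remains is to check that the hypotheses of Theorem 6.4 are met.

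First, $\p$ has greatest lower bounds, as observed just before Lemma 16.2: the greatest lower bound of two compatible conditions $(A,B)$ and $(C,D)$ is $(A \cup C, B \cup D)$. Second, $\p$ satisfies property $*(\p,\p)$ by Lemma 16.2. Third, since $Q \prec (\mathcal A,\mathcal Y,\p)$ and $\mathcal A$ expands $(H(\kappa^+),\in)$, we have $Q \prec (H(\kappa^+),\in,\p)$, which is the required elementarity. Fourth, there are stationarily many countable models which have universal strongly generic conditions: by Proposition 7.20 the simple models in $\mathcal X$ are stationary in $P_{\omega_1}(H(\kappa^+))$, hence there are stationarily many simple $N \in \mathcal X$ with $N \prec (\mathcal A,\mathcal Y,\p)$, and each such $N$ has the universal strongly $N$-generic condition $q_N = (\{N\},\emptyset)$ by Proposition 15.3 (exactly as in the proof of Corollary 15.4).

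The remaining hypothesis is that $q_Q$ is itself a universal strongly $Q$-generic condition, and this is precisely the content of Proposition 15.6, whose assumptions — $Q \in \mathcal Y$ is $\vec S$-strong, $\cf(\sup(Q)) = Q \cap \kappa$, and $Q \prec (\mathcal A,\mathcal Y,\p)$ — are exactly the ones in force here. With all hypotheses verified, Theorem 6.4 yields, for any $V$-generic $G$ on $\p$ with $q_Q \in G$, that $G \cap Q$ is $V$-generic on $\p \cap Q$, that $V[G] = V[G \cap Q][H]$ for some $V[G \cap Q]$-generic $H$ on $(\p / q_Q) / (G \cap Q)$, and that the pair $(V[G \cap Q],V[G])$ has the $\omega_1$-approximation property. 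This is exactly the assertion to be proved.

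There is no genuine obstacle at this stage: all the difficulty has already been absorbed into the amalgamation results of Section 13 and the strong genericity propositions of Section 15, so what is left is a bookkeeping check that the hypotheses of Theorem 6.4 line up. The one point meriting a moment's care is simply confirming that $Q$, although uncountable, is an admissible choice for the model $N$ in Theorem 6.4 — the theorem imposes no cardinality restriction on $N$ itself, only on the auxiliary models witnessing the stationarity hypothesis, which are the countable simple models supplied by Proposition 15.3.
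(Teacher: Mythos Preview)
Your proof is correct and follows essentially the same approach as the paper: verify the hypotheses of Theorem 6.4 using Proposition 15.6 (for the universality of $q_Q$), Propositions 7.20 and 15.3 (for the stationarily many countable models with universal strongly generic conditions), and Lemma 16.2 (for property $*(\p,\p)$). The paper's proof is just a terser rendition of the same verification.
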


\begin{proof}
By Proposition 15.6, $q_Q$ is a universal strongly $Q$-generic condition. 
By Propositions 7.20 and 15.3, 
there are stationarily many models in $P_{\omega_1}(H(\kappa)^+)$ 
which have universal strongly generic conditions. 
By Lemma 16.2, $\p$ satisfies property $*(\p,\p)$. 
So the assumptions of Theorem 6.4 are satisfied, and we are done.
\end{proof}

We will need the following technical lemma about names.

\begin{lemma}
Suppose that $Q \in \mathcal Y$ with $Q \prec (H(\kappa^+),\in,\p)$, 
and $q$ is a strongly $Q$-generic condition. 
Let $G$ be a $V$-generic filter on $\p$ which contains $q$. 
Let $\dot a \in Q$ be a nice $\p$-name for a set of ordinals, and suppose that 
$\dot a^G$ is a subset of $Q \cap \kappa$. 
Then $\dot a^G \in V[G \cap Q]$.
\end{lemma}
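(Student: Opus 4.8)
The plan is to show that $\dot a^G$ is definable over $V[G \cap Q]$ by decoding the nice name $\dot a$ using only the information available in the quotient $G \cap Q$. First I would recall the structure of a nice name: since $\dot a \in Q$ is a nice $\p$-name for a set of ordinals, we may write $\dot a = \bigcup_{\xi} \{\xi\} \times A_\xi$, where each $A_\xi$ is an antichain in $\p$ deciding ``$\xi \in \dot a$'', and the index $\xi$ ranges over some set of ordinals. By elementarity of $Q$, the sequence $\langle A_\xi : \xi \rangle$ is a member of $Q$, and moreover each $A_\xi \subseteq Q \cap \p = \p \cap Q$, since $Q$ is closed under the relevant functions. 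Crucially, the hypothesis that $\dot a^G \subseteq Q \cap \kappa$ means that only the ordinals $\xi \in Q \cap \kappa$ are relevant: for $\xi \notin Q$, we will have $\xi \notin \dot a^G$ automatically.

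Next I would use strong genericity of $q$. The key point is that $G \cap Q$ is, by Lemma 3.3, a $V$-generic filter on $\p \cap Q$. For each $\xi \in Q \cap \kappa$, the antichain $A_\xi$ lies in $\p \cap Q$, and by elementarity of $Q$ it is a \emph{maximal} antichain in $\p \cap Q$ (here one needs that $Q$ sees $A_\xi$ as a maximal antichain below the relevant condition; if $\dot a$ is a nice name for a subset of a fixed ordinal this is immediate, and one should include the condition $q$ forcing ``$\dot a \subseteq \check\kappa$'' into the name so that $Q$ knows the ambient structure). Therefore $G \cap Q$ meets $A_\xi$. Then I claim $\xi \in \dot a^G$ if and only if $G \cap Q$ contains a condition from $A_\xi$: the forward direction is because some $p \in A_\xi \cap G$ forces $\xi \in \dot a$, and since $A_\xi \subseteq Q$ and $p \in G$, we get $p \in G \cap Q$; the backward direction is because a condition in $A_\xi$ forcing $\xi \in \dot a$ that lies in $G \cap Q \subseteq G$ already forces $\xi \in \dot a$. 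Hence
\[
\dot a^G = \{\, \xi \in Q \cap \kappa : A_\xi \cap (G \cap Q) \neq \emptyset \,\},
\]
and the right-hand side is computed entirely from the sequence $\langle A_\xi : \xi \in Q \cap \kappa\rangle \in Q \subseteq V$ and the filter $G \cap Q \in V[G \cap Q]$. Thus $\dot a^G \in V[G \cap Q]$.

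The main obstacle I anticipate is the bookkeeping needed to justify that each $A_\xi$ is genuinely a \emph{maximal} antichain as seen inside $Q$ (so that $G \cap Q$ meets it), together with the subtlety that a nice name does not a priori come with a bound on its support. The clean fix is to pass to an equivalent name: work below a condition $q_0 \in G \cap Q$ forcing ``$\dot a \subseteq \check\kappa$'' (which exists since $\dot a^G \subseteq Q \cap \kappa \subseteq \kappa$ and by genericity together with elementarity we may take $q_0 \in Q$; note any condition forcing $\dot a \subseteq \check\kappa$ works and such exist by hypothesis), and replace $\dot a$ by a nice name $\dot b \in Q$ with $\dot b^{G} = \dot a^{G}$ whose antichains $A_\xi$ are maximal in $\p$ restricted below $q_0$ for each $\xi < \kappa$. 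Then $A_\xi \subseteq Q$ and $A_\xi$ is maximal below $q_0$ in $\p \cap Q$ by elementarity, and the argument above applies verbatim. One should also double-check the edge case $\xi \notin Q$: then no condition of $A_\xi$ can force $\xi \in \dot a$ while lying in $G \cap Q$ in a way that contributes, but in fact the cleaner route is simply to note $\dot a^G \subseteq Q$ by hypothesis so the union defining $\dot a^G$ only ever picks up ordinals in $Q \cap \kappa$ — which is why restricting the index set to $Q \cap \kappa$ in the displayed formula loses nothing.
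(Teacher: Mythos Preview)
Your overall strategy---show that $\gamma\in\dot a^G$ iff $A_\gamma\cap G\cap Q\neq\emptyset$, and conclude that $\dot a^G$ is computable from $G\cap Q$ and data in $V$---is exactly the paper's. The gap is in the key step: you assert that each antichain $A_\xi$ is a \emph{subset} of $Q$, on the grounds that ``$Q$ is closed under the relevant functions.'' Elementarity gives you only $A_\xi\in Q$, not $A_\xi\subseteq Q$. The model $Q$ has size $<\kappa$, while $\p$ is merely $\kappa^+$-c.c., so $A_\xi$ may well have size $\kappa$ and cannot lie inside $Q$. Your proposed fix does not help: if the antichains $A_\xi$ of $\dot b$ were genuinely maximal below $q_0$, then every $\xi$ would be forced into $\dot b$; and under any reasonable reading you still reassert $A_\xi\subseteq Q$, which is the unjustified point.

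The paper's argument supplies exactly the missing idea for the forward direction of the biconditional. Given $p\in A_\gamma\cap G$, one must show $p\in Q$. Inside $Q$ (by elementarity) extend $A_\gamma$ to a maximal antichain $A$ of $\p$, and let $D$ be the dense set of conditions below some member of $A$; then $D\cap Q$ is dense in $\p\cap Q$, so by strong $Q$-genericity of $q$ it is predense below $q$, and we may pick $u\in G\cap D\cap Q$. By elementarity there is $s\in A\cap Q$ with $u\le s$, hence $s\in G$. Now $p\in A_\gamma\subseteq A$ and $s\in A$ are both in the filter $G$, so they are compatible; since $A$ is an antichain, $p=s\in Q$. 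This is precisely what replaces your appeal to $A_\xi\subseteq Q$.
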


\begin{proof}
Note that since $q$ is strongly $Q$-generic, 
$G \cap Q$ is a $V$-generic filter on $\p \cap Q$ by 
Lemma 3.3. 

Let $\alpha := Q \cap \kappa$. 
Since $\dot a$ is a nice name, 
for each $\gamma < \alpha$ there is a unique antichain 
$A_\gamma$ such that $(p,\check \gamma) \in \dot a$ iff $p \in A_\gamma$. 
Since $\dot a \in Q$, by elementarity each $A_\gamma$ is in $Q$.

We claim that for all $\gamma < \alpha$, 
$$
\gamma \in \dot a^G \ \textrm{iff} \ 
A_\gamma \cap G \cap Q \ne \emptyset.
$$
Since $\dot a^G \subseteq Q \cap \kappa = \alpha$, 
it follows that $\dot a^G$ is definable in 
$V[G \cap Q]$ from 
the sequence $\langle A_\gamma : \gamma < \alpha \rangle$ and 
the set $G \cap Q$. 
Therefore $\dot a^G \in V[G \cap Q]$, which finishes the proof.

If $p \in A_\gamma \cap G \cap Q$, then $(p,\check \gamma) \in \dot a$ 
by the choice of $A_\gamma$. 
Since $p \in G$, it follows that $(\check \gamma)^G = \gamma$ is in 
$\dot a^G$. 
This shows that $A_\gamma \cap G \cap Q \ne \emptyset$ implies that 
$\gamma \in \dot a^G$.

Conversely, assume that $\gamma \in \dot a^G$. 
Then by the choice of $A_\gamma$, we can fix 
$p \in G \cap A_\gamma$. 
So to show that $A_\gamma \cap G \cap Q$ is nonempty, it 
suffices to show that $p \in Q$.

Since $A_\gamma \in Q$ is an antichain, by elementarity 
there is a maximal antichain 
$A \in Q$ with $A_\gamma \subseteq A$. 
Let $D$ be the dense set of $u \in \p$ such that for some 
$s \in A$, $u \le s$. 
By elementarity, $D \in Q$, and therefore $D \cap Q$ is dense in $\p \cap Q$ 
by elementarity. 
Since $q$ is strongly $Q$-generic, $D \cap Q$ is predense below $q$. 

As $q \in G$ and $D \cap Q$ is predense below $q$, we can fix  
$u \in G \cap D \cap Q$. 
By elementarity and the definition of $D$, there is $s \in A \cap Q$ such that 
$u \le s$. 
Since $u \in G$, $s \in G$. 
Now $p \in A_\gamma$ and $A_\gamma \subseteq A$, so $p \in A$. 
Also $s \in A$. 
Since $s$ and $p$ are both in $G$, they are compatible. 
But $A$ is an antichain, so $s = p$. 
Since $s \in Q$, $p \in Q$.
\end{proof}

\begin{proposition}
Let $\tau < \kappa^+$ be an ordinal with cofinality $\kappa$ which is 
closed under $H^*$. 
Let $\dot Y$ and $\dot D_\tau$ be $\p$-names such that $\p$ forces 
$$
\dot Y = \{ P : \exists p \in \dot G \ (P \in B_p) \} \ 
\textrm{and} \ 
\dot D_\tau = \{ P \cap \kappa : P \in \dot Y, \ \tau \in P \}.
$$
Suppose that $\beta < \kappa$ is an ordinal with uncountable cofinality, 
and $p$ is a condition which forces that 
$\beta$ is a limit point of $\dot D_\tau$. 
Let $Q := Sk(A_{\tau,\beta})$. 
Then:
\begin{enumerate}
\item $Q \in \mathcal Y$;
\item $Q \cap \kappa = \beta$ and $Q \cap \kappa^+ = A_{\tau,\beta}$;
\item $\beta \in S_{\tau + 1}$;
\item $\cf(\sup(Q)) = \beta$; 
\item $Q$ is $\vec S$-strong;
\item $p$ forces that $Q$ is in $\dot Y$.
\end{enumerate}
\end{proposition}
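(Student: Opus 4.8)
The plan is to establish the six clauses in the order (2), then the core fact underlying (3) and (5), then (4), (1), (3), and finally (6). The starting point is to turn the hypothesis into a statement in $V$: since $p \Vdash \beta \in \lim(\dot D_\tau)$, for every $\gamma < \beta$ there is $r \le p$ and $P \in B_r$ with $\tau \in P$ and $\gamma < P \cap \kappa < \beta$ (otherwise $p$ would force $\dot D_\tau$ bounded below $\beta$ by $\gamma$). Since $(A_r,B_r)$ is $\vec S$-obedient, each such $P$ is $\vec S$-strong, and since $\tau \in P$, Lemma 7.27 gives $P \cap \tau = A_{\tau,P \cap \kappa}$. Fix a sequence $\langle P_i : i < \cf(\beta) \rangle$ of such models with $\gamma_i := P_i \cap \kappa$ strictly increasing and cofinal in $\beta$; by Lemma 7.28, $P_i \cap \tau \subseteq P_j \cap \tau$ for $i < j$. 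For (2): each $A_{\tau,\gamma_i} = P_i \cap \tau$ is the intersection of two sets closed under $H^*$ (namely $P_i \prec \mathcal A$ and $\tau$, the latter by hypothesis), hence is closed under $H^*$; as the sets $A_{\tau,\gamma}$ increase continuously to $\tau$ and $\beta$ is a limit ordinal, $A_{\tau,\beta} = \bigcup_i A_{\tau,\gamma_i}$ is an increasing union of $H^*$-closed sets, hence $H^*$-closed. Therefore $Q \cap \kappa^+ = Sk(A_{\tau,\beta}) \cap \kappa^+ = A_{\tau,\beta}$, and $Q \cap \kappa = A_{\tau,\beta} \cap \kappa = \bigcup_i (A_{\tau,\gamma_i} \cap \kappa) = \bigcup_i (P_i \cap \kappa) = \beta$, giving (2).

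The core fact for (3) and (5) is that $\beta \in S_\sigma$ for every $\sigma \in A_{\tau,\beta} = Q \cap \kappa^+$. Given such $\sigma$, it lies in $A_{\tau,\gamma_i} \subseteq P_i \cap \kappa^+$ for all large $i$, so the $\vec S$-strength of $P_i$ yields $\gamma_i \in S_\sigma$; hence $S_\sigma \cap \beta$ is cofinal in $\beta$. One then proves $\beta \in S_\sigma$ by induction on $\sigma$, unwinding the recursive definition of $\vec S$: the successor and limit clauses each reduce the assertion to the cofinality in $\beta$ of $S_{\sigma'} \cap \beta$ for appropriate $\sigma' < \sigma$ together with the inaccessibility of $\beta$, and inaccessibility is the instance $\sigma = 0$ (using $0 \in A_{\tau,\beta}$ and $S_0 =$ the inaccessibles below $\kappa$). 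This gives (5) outright, and since $\tau$ is a limit ordinal it also gives $\beta \in S_\tau$. For (4): by Lemma 7.28(2) (with $\eta = \tau \in P_i \cap P_j \cap \cof(\kappa)$), $\sup(P_i \cap \tau) \in P_j \cap \tau$ for $i < j$, so $\langle \sup(A_{\tau,\gamma_i}) : i < \cf(\beta) \rangle$ is strictly increasing with supremum $\sup(A_{\tau,\beta}) = \sup(Q)$; hence $\cf(\sup(Q)) = \cf(\beta) = \beta$, the last equality since $\beta$ is inaccessible. Then (1) follows: $Q = Sk(A_{\tau,\beta}) \prec \mathcal A$, $Q \cap \kappa = \beta \in \kappa$, $|Q| = |A_{\tau,\beta}| < \kappa$, and $\cf(Q \cap \kappa) = \cf(\beta) > \omega$, $\cf(\sup Q) = \beta > \omega$, so $Q \in \mathcal Y$ by Lemma 7.15.

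To finish (3) it remains to show that $S_\tau \cap \beta$ is stationary in $\beta$, and this is the step I expect to be the main obstacle. The plan is to use that for each $\sigma$ with $\sigma + 1 \in A_{\tau,\beta}$ the relation $\beta \in S_{\sigma+1}$ (already established) forces $S_\sigma \cap \beta$ to be stationary in $\beta$, and then to combine these witnesses — exploiting that $\beta$ is inaccessible and that $\ot(A_{\tau,\beta}) < c^*(\beta) = \beta^+$ (the value $c^*(\beta) = \beta^+$ being available since the greatly Mahlo cardinal $\kappa$ is inaccessible, cf.\ the remark after Theorem 7.5) — via a diagonal-intersection argument to conclude that $\{\alpha < \beta : \alpha \text{ inaccessible and } \forall \sigma \in A_{\tau,\alpha}\ \alpha \in S_\sigma\} = S_\tau \cap \beta$ is stationary in $\beta$. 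The delicate point is reconciling the potential size $\beta$ of $A_{\tau,\beta}$ with $\cf(\beta) = \beta$, which forces the argument through the coherence properties of $\vec A$ rather than a crude intersection of clubs.

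Finally, for (6) I would show that $\{r \le p : Q \in B_r\}$ is dense below $p$: given $q \le p$, first use the hypothesis to extend $q$ to some $r_0$ adjoining a $\vec S$-strong $P \in B_{r_0}$ with $\tau \in P$ and $P \cap \kappa < \beta$ large enough to dominate the relevant traces of the finitely many models of $q$; then, since by (1), (3), (4) and Lemma 8.3 the model $Q$ is simple and $\vec S$-strong with $\cf(\sup Q) = Q \cap \kappa$, and since $Q \cap \tau$ coheres with $P \cap \tau$ on an initial segment, apply the amalgamation apparatus of Section 13 (Proposition 13.2, together with Lemma 5.5 and the canonical-model closure of Section 10) to conclude that $q$ and $(\emptyset,\{Q\})$ are compatible, producing $r \le q$ with $Q \in B_r$. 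This yields $p \Vdash Q \in \dot Y$.
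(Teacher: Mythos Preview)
Your inductive argument for the ``core fact'' behind (3) and (5) does not go through. You correctly observe that for each $\sigma \in A_{\tau,\beta}$ the ordinals $\gamma_i = P_i \cap \kappa$ eventually lie in $S_\sigma$, so $S_\sigma \cap \beta$ is cofinal in $\beta$. But the successor clause in the definition of $\vec S$ requires $S_{\sigma_0} \cap \beta$ to be \emph{stationary} in $\beta$ in order to conclude $\beta \in S_{\sigma_0+1}$; cofinality is not enough. The base case already fails for the same reason: knowing that inaccessibles are cofinal below $\beta$, together with $\cf(\beta) > \omega$, does not make $\beta$ inaccessible (or even regular). So the induction never gets off the ground, and without $\beta$ inaccessible your computation of $\cf(\sup Q)$ in (4) also collapses (you only get $\cf(\sup Q) = \cf(\beta)$, not $\beta$).

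The paper's approach avoids the $\vec S$-hierarchy entirely and instead exploits the definition of $\vec S$-obedience directly. One fixes $M \in \mathcal X$ with $p, \beta, \tau \in M$, forms $q := (A_p \cup \{M\}, B_p) \le p$, and then (since $q$ still forces $\beta \in \lim(\dot D_\tau)$ and $\sup(M \cap \beta) < \beta$) extends $q$ to some $r$ with $P \in B_r$ satisfying $\tau \in P$ and $\sup(M \cap \beta) < P \cap \kappa < \beta$. Then $\beta = \min((M \cap \kappa) \setminus (P \cap \kappa))$ and $\tau + 1 \in M \cap P \cap \kappa^+$, so clause (3) of Definition~5.3 applied to $r$ gives $\beta \in S_{\tau+1}$ in one stroke. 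This immediately yields (3), and hence $\beta$ is inaccessible and $\beta \in S_\tau$; since $\tau$ is a limit, (5) follows, and then (4) and (1) drop out as you described.

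Your plan for (6) is also far heavier than needed. No amalgamation machinery from Section~13 is required: for any $q \le p$, the pair $(A_q, B_q \cup \{Q\})$ is already a condition. The only nontrivial check is Definition~5.3(3) for $Q$ against each $M \in A_q$, and this is handled by the \emph{same} trick as above --- given $\sigma \in M \cap Q$, extend $q$ to some $s$ with $P \in B_s$ satisfying $\tau \in P$, $\sigma \in A_{\tau,P\cap\kappa} \subseteq P$, and $\sup(M \cap \beta) < P \cap \kappa < \beta$, then read off $\zeta \in S_\sigma$ from the $\vec S$-obedience of $s$.
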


\begin{proof}
Define 
$$
Z^* := \{ P \in \mathcal Y : \textrm{$P$ is $\vec S$-strong}, \ 
P \cap \kappa < \beta, \ \tau \in P \},
$$
and 
$$
Z := \{ P \cap \tau : P \in Z^* \}.
$$
Note that by Lemma 7.28, 
if $P_1$ and $P_2$ are in $Z^*$ and $P_1 \cap \kappa \le P_2 \cap \kappa$, 
then $P_1 \cap \tau \subseteq P_2 \cap \tau$.

We claim that for all $\gamma < \beta$, there is $P \in Z^*$ 
such that $\gamma < P \cap \kappa$. 
Namely, since $p$ forces that $\beta$ is a limit point of $\dot D_\tau$, 
there is $q \le p$ and $P \in B_q$ such that 
$\tau \in P$ and $\gamma < P \cap \kappa < \beta$. 
Since $P \in B_q$, it follows that $P \in \mathcal Y$ is $\vec S$-strong. 
So $P \in Z^*$ and $\gamma < P \cap \kappa$, proving the claim. 
Consequently, $(\bigcup Z) \cap \kappa = \beta$.

Next we claim that $\bigcup Z = A_{\tau,\beta}$. 
First, suppose that $P \in Z^*$, and we will show that 
$P \cap \tau \subseteq A_{\tau,\beta}$. 
Since $\tau \in P$, it follows that $P \cap \tau = A_{\tau,P \cap \kappa}$ 
by Lemma 7.27, 
which is a subset of $A_{\tau,\beta}$ since $P \cap \kappa < \beta$. 
This shows that $\bigcup Z \subseteq A_{\tau,\beta}$. 
Conversely, let $\xi \in A_{\tau,\beta}$, and we will show that 
$\xi \in \bigcup Z$. 
Since $\beta$ is a limit ordinal, we can fix $\gamma < \beta$ 
such that $\xi \in A_{\tau,\gamma}$. 
By the first claim, there is $P \in Z^*$ 
such that $\gamma < P \cap \kappa$. 
Since $\gamma < P \cap \kappa$ and 
$\xi \in A_{\tau,\gamma}$, 
it follows that $\xi \in A_{\tau,P \cap \kappa}$. 
But $\tau \in P$, so $A_{\tau,P \cap \kappa} = P \cap \tau$ by Lemma 7.27. 
Hence $\xi \in P \cap \tau$. 
As $P \cap \tau \in Z$, we have that $\xi \in \bigcup Z$.

Since $\tau$ is closed under $H^*$, every set in $Z$ is closed under $H^*$. 
As $Z$ is a $\subseteq$-chain, $\bigcup Z = A_{\tau,\beta}$ is also 
closed under $H^*$. 
In particular, $Q \cap \kappa^+ = A_{\tau,\beta}$. 
As noted above, 
$$
Q \cap \kappa = A_{\tau,\beta} \cap \kappa = 
(\bigcup Z) \cap \kappa = \beta.
$$
This proves (2).

By Lemma 7.28(2), if $P_1$ and $P_2$ are in $Z^*$ and 
$P_1 \cap \kappa < P_2 \cap \kappa$, then 
$\sup(P_1 \cap \tau) \in P_2 \cap \tau$, and hence 
$\sup(P_1 \cap \tau) < \sup(P_2 \cap \tau)$.  
In particular, since $A_{\tau,\beta} = \bigcup Z$, 
$$
\sup(A_{\tau,\beta}) = \sup \{ \sup(P \cap \tau) : P \in Z^* \}.
$$
Since $\beta$ has uncountable cofinality, 
$\sup(A_{\tau,\beta}) = \sup(Q)$ has uncountable cofinality. 
It follows that $Q \in \mathcal Y$ by Lemma 7.15.

\bigskip

Now we prove that $Q \cap \kappa = \beta$ is in $S_{\tau+1}$. 
Fix $M$ in $\mathcal X$ such that $p$, $\beta$, and $\tau$ are in $M$. 
Then $q := (A_p \cup \{ M \},B_p)$ is in $\p$ and $q \le p$. 
Since $\beta$ has uncountable cofinality, $\sup(M \cap \beta) < \beta$. 
As $q$ forces that $\beta$ is a limit point of $\dot D_\tau$, we can fix 
$r \le q$ and $P \in B_r$ such that 
$\sup(M \cap \beta) < P \cap \kappa < \beta$ 
and $\tau \in P$. 
It follows that $\beta = \min((M \cap \kappa) \setminus (P \cap \kappa))$. 
As $\tau \in M \cap P$, $\tau + 1 \in M \cap P$ by elementarity. 
So $M \in A_r$, $P \in B_r$, and $\tau + 1 \in M \cap P \cap \kappa^+$, 
which implies that $\beta = \min((M \cap \kappa) \setminus (P \cap \kappa))$ 
is in $S_{\tau+1}$ by the fact that $(A_r,B_r)$ is $\vec S$-obedient.

\bigskip

Since $\beta \in S_{\tau+1}$, $\beta$ is inaccessible, and in particular is regular. 
Therefore the ordinal 
$\sup(A_{\tau,\beta}) = \sup \{ \sup(P \cap \tau) : P \in Z^* \}$ is the supremum of 
a sequence of ordinals of order type $\beta$. 
It follows that $\sup(Q)$ has cofinality $\beta$. 
So $\cf(\sup(Q)) = Q \cap \kappa$. 

Now we show that $Q$ is $\vec S$-strong. 
Since $\beta \in S_{\tau+1}$, $\beta \in S_\tau$. 
As $\tau$ is a limit ordinal, for all $\xi \in A_{\tau,\beta}$, 
$\beta \in S_\xi$. 
So if $\xi \in Q \cap \kappa^+ = A_{\tau,\beta}$, then 
$Q \cap \kappa = \beta \in S_\xi$, which shows that $Q$ is $\vec S$-strong.

\bigskip

It remains to show that $p$ forces that $Q$ is in $\dot Y$. 
It suffices to prove that for all $q \le p$, there is $r \le q$ such that $Q \in B_r$. 
So let $q \le p$. 
We claim that $(A_q,B_q \cup \{ Q \})$ is a condition below $q$. 

Since $Q$ is $\vec S$-strong, to prove that $(A_q,B_q \cup \{ Q \})$ 
is an $\vec S$-obedient side condition, it suffices to show that 
if $M \in A_q$ and 
$\zeta = \min((M \cap \kappa) \setminus \beta)$, then for all 
$\sigma \in M \cap Q \cap \kappa^+$, $\zeta \in S_\sigma$. 

Let $\sigma \in M \cap Q \cap \kappa^+$. 
Since $\sigma \in Q \cap \kappa^+ = A_{\tau,\beta}$ 
and $\beta$ is a limit ordinal, we can fix $\gamma < \beta$ 
such that $\sigma \in A_{\tau,\gamma}$. 
By increasing $\gamma$ if necessary, also assume that 
$\sup(M \cap \beta) < \gamma$. 
As $q$ forces that $\beta$ is a limit point of $\dot D_\tau$, 
we can fix $s \le q$ and $P \in B_s$ such that 
$\gamma < P \cap \kappa < \beta$ and $\tau \in P$. 
So 
$$
A_{\tau,\gamma} \subseteq A_{\tau,P \cap \kappa} = P \cap \tau.
$$
In particular, $\sigma \in P$. 
Since $\zeta = \min((M \cap \kappa) \setminus \beta)$, we have that  
$$
\sup(M \cap \zeta) = \sup(M \cap \beta) < \gamma < P \cap \kappa < \beta \le \zeta.
$$
So $M \in A_s$, $P \in B_s$, $\sigma \in M \cap P \cap \kappa^+$, and 
$\zeta = \min((M \cap \kappa) \setminus (P \cap \kappa))$. 
Therefore $\zeta \in S_\sigma$ since $(A_s,B_s)$ is an 
$\vec S$-obedient side condition.
\end{proof}

\begin{lemma}
Let $\tau < \kappa^+$ be an ordinal of cofinality $\kappa$, 
and let $\beta \in S_{\tau+1}$.  
Suppose that $Q \in \mathcal Y$ is $\vec S$-strong, $Q \cap \kappa = \beta$, 
$Q \cap \kappa^+ = A_{\tau,\beta}$, and 
$\cf(\sup(Q)) = \beta$. 
Then the set 
$$
\{ P \in Q \cap \mathcal Y : 
\textrm{$P$ is $\vec S$-strong}, \ 
\cf(\sup(P)) = P \cap \kappa \}
$$
is stationary in $P_{\beta}(Q)$.
\end{lemma}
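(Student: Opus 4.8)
The plan is to imitate the proof of Proposition 14.2, working ``inside'' the model $Q$, with $\beta$ playing the role of $\kappa$. Fix $F : Q^{<\omega} \to Q$; we must produce a member $P$ of the displayed set which is closed under $F$ and has $P \cap \beta \in \beta$. First, since $Q \in \mathcal Y$ we have $Q = f^*[Q \cap \kappa^+]$, and $Q \cap \kappa^+ = A_{\tau,\beta}$ is closed under $H^*$ (being of the form $N \cap \kappa^+$ for an elementary substructure $N$ of $\mathcal A$), so $Q = Sk(A_{\tau,\beta})$. By Notation 7.4(1), $A_{\tau,\beta} = \bigcup_{i < \beta} A_{\tau,i}$, and since $Sk$ commutes with unions of $\subseteq$-chains, $Q = \bigcup_{i < \beta} Sk(A_{\tau,i})$ is an increasing continuous union. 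Also $\cf(\sup Q) = Q \cap \kappa = \beta$, so $Q$ is simple by Lemma 8.3.

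Next I would fix a club $C \subseteq \beta$ consisting of those $i$ such that $A_{\tau,i}$ is closed under $H^*$, $A_{\tau,i} \cap \kappa = i$, $Sk(A_{\tau,i})$ is closed under $F$, and $\sup(A_{\tau,j}) \in A_{\tau,i}$ for all $j \in C \cap i$. Since $\beta \in S_{\tau+1}$, the set $S_\tau \cap \beta$ is stationary in $\beta$, so one may choose $\alpha \in \lim(C) \cap S_\tau \cap \beta$ and put $P := Sk(A_{\tau,\alpha})$. Exactly as in Proposition 14.2 one checks that $P \cap \kappa^+ = A_{\tau,\alpha}$, $P \cap \kappa = \alpha$, and $P$ is closed under $F$ (being a union of a $\subseteq$-chain of models each closed under $F$); that $\alpha$ is inaccessible and, since $\tau$ is a limit ordinal and $\alpha \in S_\tau$, that $P$ is $\vec S$-strong; and that $\sup(P) = \sup(A_{\tau,\alpha}) = \sup\{ \sup(A_{\tau,j}) : j \in C \cap \alpha \}$ is the supremum of a strictly increasing sequence of length $\ot(C \cap \alpha) = \alpha$, so $\cf(\sup P) = \alpha = P \cap \kappa$. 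In particular $P \in \mathcal Y$ by Lemma 7.15, and $P$ lies in the displayed set.

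It remains to see $P \in Q$, not merely $P \subseteq Q$. Since $P \cap \kappa = \alpha < \beta = Q \cap \kappa$ we have $\alpha \in Q$, so by Lemma 7.24 it suffices to show $\sup(P) = \sup(A_{\tau,\alpha}) \in Q$, i.e.\ $\sup(A_{\tau,\alpha}) \in A_{\tau,\beta}$. Now $P \cap \kappa^+$ is closed under ordinal successor, so $\sup(A_{\tau,\alpha})$ is a limit point of $A_{\tau,\alpha} \subseteq A_{\tau,\beta}$ lying below $\tau$, and by Lemma 7.17, $\ot(C_{\sup(A_{\tau,\alpha})}) \in \cl(P \cap \kappa) = \cl(\alpha)$, hence $\ot(C_{\sup(A_{\tau,\alpha})}) \le \alpha < \beta$; Notation 7.4(8) then yields $\sup(A_{\tau,\alpha}) \in A_{\tau,\beta} = Q \cap \kappa^+$. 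Thus $P \in Q$, and $P \cap \beta = P \cap \kappa = \alpha \in \beta$, so $P$ is the required witness.

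The step requiring the most care is the clause in the club $C$ that $\sup(A_{\tau,j}) \in A_{\tau,i}$ for $j \in C \cap i$, which (unlike in Proposition 14.2, where the analogous clause is free because $\bigcup_{i<\kappa} A_{\tau,i} = \tau$) needs the fact that $\sup(A_{\tau,j}) \in A_{\tau,\beta}$ for club-many $j < \beta$; this is what makes the club well-defined and also underlies the ``strictly increasing sequence'' assertion above. I expect to handle this by the following route: since $Q$ is simple, $\ot(C_{\sup Q}) = \beta$, and combining Notations 7.4(6) and 7.2(3) with the fact that $\beta$ is a limit of limit ordinals one gets $\sup(A_{\tau,\beta}) = c_{\tau,\beta}$, so $\rho := \sup(Q) \in \lim(C_\tau)$; then Notation 7.4(4) gives $A_{\tau,j} = A_{\rho,j}$ for $j < \beta$, and a second application of Notation 7.4(6) inside $C_\rho$ shows that for club-many limit $j < \beta$ one has $\sup(A_{\rho,j}) = c_{\rho,j} \in A_{\rho,\beta} = A_{\tau,\beta}$. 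Everything else is a routine relativization of Proposition 14.2, with the hypothesis $\beta \in S_{\tau+1}$ supplying precisely the stationarity of $S_\tau \cap \beta$ in $\beta$.
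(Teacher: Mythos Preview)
Your proof is correct and follows the same overall strategy as the paper: filter $Q$ as the increasing continuous union of the $Sk(A_{\tau,i})$'s, pass to a suitable club in $\beta$, and pick $\alpha$ in that club intersected with $S_\tau \cap \beta$ (using $\beta \in S_{\tau+1}$). The verification that $P = Sk(A_{\tau,\alpha})$ is $\vec S$-strong with $\cf(\sup P) = P \cap \kappa$ is the same.

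The difference is in how you establish that $\sup(A_{\tau,j}) \in Q$ for the relevant $j$, which drives both the cofinality computation and the membership $P \in Q$. You argue via Notation~7.4(8) together with Lemma~7.17 (for $P \in Q$), and via an analysis of the square sequence showing $\sup Q = c_{\tau,\beta} \in \lim(C_\tau)$ and then $\sup(A_{\tau,j}) = c_{\tau,j} \in A_{\tau,\beta}$ (for the club clause). This works, but it is more technical than necessary. The paper instead observes directly that $|A_{\tau,\alpha}| < |\alpha|^+ < \beta = \cf(\sup Q)$, so $\sup(Q_\alpha) < \sup(Q)$, and $\cf(\sup Q_\alpha) \le \alpha < \beta = \cf(Q \cap \kappa)$; hence Lemma~7.14 gives $\sup(Q_\alpha) \in Q$ immediately, for \emph{every} $\alpha$ in the first club. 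One then passes to a further club $E$ on which $Q_\gamma \in Q_\alpha$ for all earlier $\gamma$, and the cofinality computation follows. This avoids invoking Notations~7.4(6),(8) and any explicit square-sequence bookkeeping, and it also removes the self-referential clause in your definition of $C$.
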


\begin{proof}
Let $F : Q^{<\omega} \to Q$, and we will find 
$P \in Q \cap \mathcal Y$ such that 
$\cf(\sup(P)) = P \cap \kappa$, $P$ is $\vec S$-strong, and 
$P$ is closed under $F$. 
Since $Q \in \mathcal Y$, $Q \cap \kappa^+ = A_{\tau,\beta}$ is 
closed under $H^*$. 
As $Q \cap \kappa^+$ is the union of the increasing and continuous chain 
$\{ A_{\tau,i} : i < \beta \}$, 
there exists a club $C \subseteq \beta$ 
such that for all $\alpha \in C$, 
$A_{\tau,\alpha}$ is closed under $H^*$ and 
$A_{\tau,\alpha} \cap \kappa = \alpha$. 
Then $Q$ is the union of the increasing and continuous chain 
$\{ Sk(A_{\tau,\alpha}) : \alpha \in C \}$. 
Fix a club $D \subseteq C$ such that for all $\alpha \in D$, 
$Sk(A_{\tau,\alpha})$ is closed under $F$.

For each $\alpha \in D$, let $Q_\alpha := Sk(A_{\tau,\alpha})$. 
We claim that for all $\alpha \in D$, $Q_\alpha \in Q$. 
Since $|Q_\alpha \cap \kappa^+| = 
|A_{\tau,\alpha}| < |\alpha|^+ < \beta$ and $\cf(\sup(Q)) = \beta$, 
it follows that 
$A_{\tau,\alpha}$ is a bounded subset of $Q \cap \kappa^+ = A_{\tau,\beta}$. 
Also 
$$
\cf(\sup(Q_\alpha)) \le \alpha < \beta = \cf(Q \cap \kappa).
$$
By Lemma 7.14, $\sup(Q_\alpha) \in Q$. 
But $A_{\tau,\alpha} = A_{\sup(A_{\tau,\alpha}),\alpha}$ by coherence, 
and since $\sup(A_{\tau,\alpha})$ and $\alpha$ are in $Q$, so is 
$A_{\tau,\alpha}$ by elementarity. 
Hence $Q_\alpha \in Q$ by elementarity.

Fix a club $E \subseteq \lim(D)$ such that for all $\alpha \in E$, 
for all $\gamma \in \alpha \cap D$, $Q_\gamma \in Q_\alpha$. 
In particular, for all $\alpha \in E$, since 
$Q_\alpha = \bigcup \{ Q_\gamma : \gamma \in \alpha \cap D \}$, 
it follows that $\cf(\sup(Q_\alpha)) = \cf(\ot(\alpha \cap D))$. 
So if $\alpha \in E$ is regular, then $\cf(\sup(Q_\alpha)) = \alpha$.

Since $\beta \in S_{\tau+1}$, $S_\tau \cap \beta$ is stationary in $\beta$. 
So we can fix $\alpha \in E \cap S_{\tau}$. 
To finish the proof, it suffices to show that 
$Q_\alpha= Sk(A_{\tau,\alpha})$ is in $Q \cap \mathcal Y$, 
$Q_\alpha$ is $\vec S$-strong, 
$\cf(\sup(Q_\alpha)) = Q_\alpha \cap \kappa = \alpha$,  and 
$Q_\alpha$ is closed under $F$.

We know that $Q_\alpha$ is closed under $F$ by the definition of $D$. 
We previously observed that $Q_\alpha \in Q$, 
and since $\alpha \in E$ is regular, $\cf(\sup(Q_\alpha)) = \alpha$. 
In particular, $Q_\alpha \in \mathcal Y$ by Lemma 7.15. 
To see that $Q_\alpha$ is $\vec S$-strong, 
let $\xi \in Q_\alpha \cap \kappa^+ = A_{\tau,\alpha}$. 
Since $\alpha \in S_\tau$ and $\tau$ is a limit ordinal, 
$\alpha \in S_\eta$ for all $\eta \in A_{\tau,\alpha}$. 
In particular, $Q_\alpha \cap \kappa = \alpha \in S_\xi$.
\end{proof}

\begin{lemma}
Suppose that $Q \in \mathcal Y$ is $\vec S$-strong and 
$Q \prec (\mathcal A,\mathcal Y,\p)$. 
Let $\beta := Q \cap \kappa$. 
Suppose that the set 
$$
\{ P \in Q \cap \mathcal Y : 
\textrm{$P$ is $\vec S$-strong}, \ 
\cf(\sup(P)) = P \cap \kappa \}
$$
is stationary in $P_{\beta}(Q)$. 
Then the forcing poset $\p \cap Q$ forces that $\beta$ is a 
regular cardinal.
\end{lemma}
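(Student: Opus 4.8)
The plan is to show that the forcing poset $\p \cap Q$ is $\beta$-strongly proper on a stationary set, where $\beta := Q \cap \kappa$, and then invoke the cardinal preservation results of Section~3. First note that since $Q$ is $\vec S$-strong we have $\beta \in S_0$, so $\beta$ is inaccessible, hence a regular uncountable cardinal in $V$, and $\beta \le \lambda_{\p \cap Q}$. By Corollary~3.10, together with the remark after Definition~3.6 that permits replacing $H(\lambda_{\p \cap Q})$ by $H(\chi)$ for a large cardinal $\chi$, it suffices to exhibit stationarily many $N \in P_\beta(H(\chi))$ carrying a universal strongly $N$-generic condition for $\p \cap Q$. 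These models will be produced by lifting the stationary set supplied by the hypothesis. Write $\mathcal S := \{ P \in Q \cap \mathcal Y : \textrm{$P$ is $\vec S$-strong},\ \cf(\sup(P)) = P \cap \kappa \}$ and $\mathcal S^* := \{ P \in \mathcal S : P \prec (\mathcal A,\mathcal Y,\p) \}$. Since $\beta$ is inaccessible, every $P \in Q \cap \mathcal Y$ is a subset of $Q$ of size less than $\beta$, so $\mathcal S, \mathcal S^* \subseteq P_\beta(Q)$; moreover the set of $P \in P_\beta(Q)$ with $P \prec (\mathcal A,\mathcal Y,\p)$ contains a club in $P_\beta(Q)$ (closure under the Skolem functions of the structure, which, restricted to $Q$, map $Q^{<\omega}$ into $Q$). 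As $\mathcal S$ is stationary in $P_\beta(Q)$ by hypothesis, $\mathcal S^*$ is stationary in $P_\beta(Q)$ as well.

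The central step is to show that for every $P \in \mathcal S^*$, the pair $q_P := (\emptyset,\{P\})$ is a universal strongly $P$-generic condition for $\p \cap Q$. Here $q_P \in \p \cap Q$ because $P \in Q$ and $P$ is $\vec S$-strong. The idea is that the proof of Proposition~15.6 applies verbatim with $\p$ replaced throughout by $\p \cap Q$: given $r_0 \le q_P$, the closure step (Lemmas 5.5(2), 5.5(3), 7.16) and the amalgamation step (Proposition~13.2) only ever adjoin models of the form $P \cap M$, $P \cap Q'$, and $M \cap P'$, and form conditions $(A_r \cup A_w, B_r \cup B_w)$, all of whose components are members of, or subsets of, $Q$; hence every object produced remains in $\p \cap Q$. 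The sole appeal to elementarity in that proof is the reflection of an existential statement whose parameters ($A_r \cap P$, $B_r \cap P$, and $M_i \cap P$, $p(M_i,P)$ for the relevant $i$) all lie in $P$, and since $\p$ enters that statement as a predicate of the structure $(\mathcal A,\mathcal Y,\p)$, the hypothesis $P \prec (\mathcal A,\mathcal Y,\p)$ delivers the witness inside $P$, hence inside $\p \cap Q$.

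Finally, because $\mathcal S^* \subseteq Q$ and $Q \in H(\chi)$, the standard lifting lemma for generalized stationarity gives that $\mathcal T := \{ N \in P_\beta(H(\chi)) : N \cap Q \in \mathcal S^* \}$ is stationary in $P_\beta(H(\chi))$. For $N \in \mathcal T$ put $P := N \cap Q \in \mathcal S^*$; then $q_P \in \p \cap Q$, and since $\p \cap Q \subseteq Q$ we have $N \cap (\p \cap Q) = P \cap (\p \cap Q)$, so by the combinatorial characterization of strong genericity (Lemma~3.4) the condition $q_P$ is also a universal strongly $N$-generic condition for $\p \cap Q$. This provides the stationarily many models needed above, whence $\p \cap Q$ is $\beta$-strongly proper on a stationary set, and by Proposition~3.9 it has the $\beta$-covering property and therefore forces $\beta$ to be regular.

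I expect the main obstacle to be the central step: checking carefully that the entire amalgamation apparatus behind Proposition~15.6 — the side condition constructions of Section~13, the canonical models of Sections~9 and 10, and the proxies of Sections~11 and 12 — descends to $\p \cap Q$, i.e. that every countable model, uncountable model, canonical model, proxy, and amalgamated condition manufactured in that argument actually belongs to $Q$, and that no elementarity beyond $P \prec (\mathcal A,\mathcal Y,\p)$ is secretly required. The lifting of stationarity in the last paragraph is routine and would simply be cited.
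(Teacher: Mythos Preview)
Your approach is correct but takes a different route from the paper. You establish that $\p \cap Q$ is $\beta$-strongly proper on a stationary set by verifying that the entire proof of Proposition~15.6 descends to $\p \cap Q$: starting from $r_0 \in \p \cap Q$, the closure $r$ stays in $Q$ by elementarity of $Q$, the reflected witnesses land in $P \subseteq Q$, and the amalgam $(A_r \cup A_w, B_r \cup B_w)$ is in $Q$ because its components are. This is sound, and your check that $P \subseteq Q$ (using $|P| < \beta$ and $P \in Q$) is the point where inaccessibility of $\beta$ enters. The paper instead argues directly about a single $(\p \cap Q)$-name $\dot f$ for a function $\gamma \to \beta$: it picks $P$ in the stationary set that is elementary both in $(\mathcal A,\mathcal Y,\p)$ and in $(Q,\in,\p\cap Q,p,\gamma,F)$ where $F$ encodes the relevant forcing relation, invokes Proposition~15.6 \emph{for $\p$ as stated} to get a strongly $P$-generic $q \le p$, and then uses predensity of a suitable $D \subseteq P \cap \p$ below $q$ in $\p$, together with closure of $\p \cap Q$ under greatest lower bounds, to show $q \Vdash_{\p\cap Q} \ran(\dot f) \subseteq P \cap \kappa$. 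The paper's argument thus never re-verifies strong genericity for the suborder $\p \cap Q$; it uses genericity for $\p$ and transfers compatibility back into $Q$. Your approach buys a cleaner structural statement ($\p \cap Q$ has the $\beta$-covering property), at the cost of tracking membership in $Q$ through the amalgamation; the paper's approach is shorter and reuses Proposition~15.6 as a black box, at the cost of the somewhat ad hoc predicate $F$ and the two simultaneous elementarity requirements on $P$.
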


\begin{proof}
Let $\gamma < \beta$, and 
let $\dot f$ be a $(\p \cap Q)$-name for a function 
from $\gamma$ to $\beta$. 
Fix a condition $p \in \p \cap Q$, and we will find $q \le p$ in 
$\p \cap Q$ which forces that $\dot f$ is bounded in $\beta$. 
Let $F$ be the set of triples $(u,i,\xi)$ such that 
$u \in \p \cap Q$ and $u \Vdash_{\p \cap Q} \dot f(i) = \xi$.

Let $\{ g_n : n < \omega \}$ be a set of definable Skolem 
functions for the structure $(\mathcal A,\mathcal Y,\p)$. 
Since $Q \prec (\mathcal A,\mathcal Y,\p)$, $Q$ is closed under 
$g_n$ for all $n < \omega$. 
By the assumption of the lemma, we can fix $P \in Q \cap \mathcal Y$ 
such that $P$ is $\vec S$-strong, 
$\cf(\sup(P)) = P \cap \kappa$, 
$P$ is closed under $g_n$ for all $n < \omega$, 
and $P \prec (Q,\in,\p \cap Q,p,\gamma,F)$. 
In particular, $P \prec (\mathcal A,\mathcal Y,\p)$. 
As $p \in P$, Proposition 15.6 implies that 
$q := (A_p,B_p \cup \{ P \})$ is a strongly $P$-generic 
condition below $p$.

We claim that 
$$
q \Vdash_{\p \cap Q} \ran(\dot f) \subseteq P \cap \kappa.
$$
Since $P \cap \kappa < Q \cap \kappa = \beta$, this completes the proof. 
Let $i < \gamma$, and we will show that 
$$
q \Vdash_{\p \cap Q} \dot f(i) \in P \cap \kappa.
$$

Let $D$ be the set of $s \in \p \cap P$ such that for some 
$\xi \in P \cap \kappa$, $(s,i,\xi) \in F$. 
We claim that $D$ is dense in $\p \cap P$. 
So let $u \in \p \cap P$ be given. 
Then $u \in \p \cap Q$. 
Since $\dot f$ is a $(\p \cap Q)$-name for a function from $\gamma$ to $\beta$, 
there is $v \le u$ and $\xi < Q \cap \kappa$ such that $v \Vdash_{\p \cap Q} \dot f(i) = \xi$, 
and hence $(v,i,\xi) \in F$. 
Since $P \prec (Q,\in,\p \cap Q,p,\gamma,F)$ and $u$ and $i$ are in $P$, 
by elementarity there is $v \in P$ and $\xi \in P \cap \kappa$ 
such that $v \le u$ and $(v,i,\xi) \in F$. 
Then $v \le u$ and $v \in D$.

Since $q$ is strongly $P$-generic, $D$ is predense in $\p$ 
below $q$. 
Let $r \le q$ in $Q \cap \p$ decide the value of $\dot f(i)$ to be $\xi$, 
and we will show that $\xi \in P$. 
Then $r \le q$ is in $\p$, and $(r,i,\xi) \in F$. 
Since $D$ is predense in $\p$ below $q$, 
for some $u \in D$, $r$ and $u$ are compatible in $\p$. 
By the elementarity of $Q$, $\p \cap Q$ is closed under greatest 
lower bounds, and therefore 
$r$ and $u$ are also compatible in $\p \cap Q$. 
Since $u \in D$, by the definition of $D$ there is $\xi' \in P \cap \kappa$ 
such that $(u,i,\xi') \in F$. 
But $(r,i,\xi) \in F$ and $(u,i,\xi') \in F$ imply, by the compatibility of 
$r$ and $u$ in $\p \cap Q$, that $\xi = \xi'$. 
Since $\xi' \in P$, it follows that $\xi \in P$. 
\end{proof}

Recall that for a sequence 
$\vec a = \langle a_i : i < \omega_2 \rangle$ of countable sets, 
$S_{\vec a}$ is the set of limit ordinals 
$\alpha < \omega_2$ for which there exists 
a club $c \subseteq \alpha$ with order type $\cf(\alpha)$ such that for all 
$\beta < \alpha$, there is $i < \alpha$ with $c \cap \beta = a_i$. 
A set $S$ is in the approachability ideal 
$I[\omega_2]$ iff there exists such a sequence $\vec a$ and a club $D$ with 
$S \cap D \subseteq S_{\vec a}$. 
In particular, if $I[\omega_2]$ contains a stationary subset of 
$\omega_2 \cap \cof(\omega_1)$, then for some sequence $\vec a$, 
$S_{\vec a} \cap \cof(\omega_1)$ is stationary. 
We will show that this last statement fails in any generic extension by $\p$.

\begin{thm}
The forcing poset $\p$ forces that there is no stationary subset of 
$\omega_2 \cap \cof(\omega_1)$ in the approachability ideal $I[\omega_2]$.
\end{thm}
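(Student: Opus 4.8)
The plan is to argue by contradiction inside a generic extension $V[G]$ by $\p$. By Proposition 16.1, $V[G] \models \kappa = \omega_2$, so it suffices to contradict the assumption that some $p \in \p$ forces that a sequence $\dot{\vec a} = \langle \dot a_i : i < \kappa \rangle$ of bounded subsets of $\kappa$ witnesses that $S_{\dot{\vec a}} \cap \cof(\omega_1)$ is stationary. Working in $V$, fix a nice $\p$-name $\dot A$ for a subset of $\kappa$ coding $\dot{\vec a}$, and write $\dot A = f^*(\xi)$ for some $\xi < \kappa^+$. Then fix once and for all an ordinal $\tau < \kappa^+$ with $\cf(\tau) = \kappa$ that is closed under $H^*$ and satisfies $\xi < \tau$. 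Since $A_{\tau,\cdot}$ is an increasing continuous chain with union $\tau$, the set of $\beta < \kappa$ for which $A_{\tau,\beta}$ contains $\xi$ and is closed under the Skolem functions of the expanded structure $(\mathcal A,\mathcal Y,\p)$ is a club in $\kappa$; call it $D_0$. Note $D_0$ remains a club of $\kappa$ in $V[G]$.

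Now pass to $V[G]$ and let $\vec a := \dot A^G$ (decoded). First, $\dot D_\tau^G = \{ P \cap \kappa : P \in \dot Y^G,\ \tau \in P \}$ is cofinal in $\kappa$: given a condition and $\gamma < \kappa$, Proposition 14.2 furnishes an $\vec S$-strong $P \in \mathcal Y$ with $\cf(\sup(P)) = P \cap \kappa$ containing the condition, $\gamma$, and $\tau$, and by Lemma 5.4(2) adjoining $P$ to the side condition extends it and forces $P \cap \kappa > \gamma$ into $\dot D_\tau$. Hence the set $E_\tau$ of limit points of $\dot D_\tau^G$ is a club in $\kappa$. Intersecting the club $E_\tau \cap D_0$ with the stationary set $S_{\vec a} \cap \cof(\omega_1)$, we fix $\delta$ with $\cf(\delta) = \omega_1$ (in $V[G]$), $\delta \in E_\tau \cap D_0$, and $\delta \in S_{\vec a}$. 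Since $\p$ preserves $\omega_1$, $\cf^V(\delta) \ge \omega_1 > \omega$, and some $p_0 \in G$ forces $\check\delta$ to be a limit point of $\dot D_\tau$. Applying Proposition 16.5 with $\beta = \delta$ and $p_0$, we obtain $Q := Sk(A_{\tau,\delta})$, an $\vec S$-strong member of $\mathcal Y$ with $Q \cap \kappa = \delta$, $Q \cap \kappa^+ = A_{\tau,\delta}$, $\delta \in S_{\tau+1}$, $\cf(\sup(Q)) = \delta$, and such that $p_0$ forces $Q \in \dot Y$. Because $\delta \in D_0$, we moreover have $Q \prec (\mathcal A,\mathcal Y,\p)$, and $\xi \in A_{\tau,\delta} = Q \cap \kappa^+$ gives $\dot A \in Q$. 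Since $p_0$ forces $Q \in \dot Y$, some $p' \in G$ has $Q \in B_{p'}$, so $q_Q := (\emptyset,\{Q\}) \in G$; by Proposition 15.6, $q_Q$ is a universal strongly $Q$-generic condition, and by Proposition 16.3 we may write $V[G] = V[G \cap Q][H]$ with $G \cap Q$ a $V$-generic filter on $\p \cap Q$ and $(V[G \cap Q],V[G])$ satisfying the $\omega_1$-approximation property.

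Fix in $V[G]$ a club $c \subseteq \delta$ of order type $\omega_1$ witnessing $\delta \in S_{\vec a}$: for every $\beta < \delta$ there is $i < \delta$ with $c \cap \beta = a_i$. I claim $c \cap \beta \in V[G \cap Q]$ for all $\beta < \delta$. Fix such $\beta$ and $i < \delta$ with $c \cap \beta = a_i \subseteq \beta$. Since $\delta = Q \cap \kappa$, both $i$ and $\beta$ lie in $Q$, so from $\dot A \in Q$ we may form a nice $\p$-name $\dot b \in Q$ for the $i$-th slice of $\dot A$ truncated below $\beta$, with $\dot b^G = a_i \cap \beta = c \cap \beta \subseteq \beta < Q \cap \kappa$. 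As $q_Q \in G$ is strongly $Q$-generic, Lemma 16.4 gives $c \cap \beta = \dot b^G \in V[G \cap Q]$. Since $c$ has order type $\omega_1$, Lemma 6.1 now yields $c \in V[G \cap Q]$.

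It remains to derive a contradiction. The model $Q$ satisfies the hypotheses of Lemma 16.7 ($\cf(\tau) = \kappa$, $\delta \in S_{\tau+1}$, $Q$ is $\vec S$-strong, $Q \cap \kappa = \delta$, $Q \cap \kappa^+ = A_{\tau,\delta}$, $\cf(\sup(Q)) = \delta$), so the set of $\vec S$-strong $P \in Q \cap \mathcal Y$ with $\cf(\sup(P)) = P \cap \kappa$ is stationary in $P_\delta(Q)$; by Lemma 16.8, $\p \cap Q$ forces $\delta$ to be a regular cardinal, whence $\delta$ is regular in $V[G \cap Q]$ since $G \cap Q$ is $V$-generic on $\p \cap Q$ (Lemma 3.3). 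But $\delta = Q \cap \kappa$ is inaccessible in $V$ (as $Q$ is $\vec S$-strong, $\delta \in S_0$), so $\delta > \omega_1$. Thus $c$ is, in $V[G \cap Q]$, a cofinal subset of the regular cardinal $\delta$ of order type $\omega_1 < \delta$ — impossible, since a cofinal subset of a regular cardinal has order type of cofinality equal to that cardinal. This contradiction completes the proof. The main obstacle is the orchestration of Propositions 16.3--16.5 together with Lemmas 16.4, 16.7, and 16.8: one must choose $\delta$ so that it simultaneously lies in the stationary set $S_{\vec a} \cap \cof(\omega_1)$, is a limit point of $\dot D_\tau$, and has the closure needed for $Q$ to be fully elementary and to capture $\dot A$; and one must verify that the truncated slice names $\dot b$ are genuine members of $Q$ so that Lemma 16.4 applies. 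The conceptual heart is simply that each initial segment $c \cap \beta$ of the witnessing club equals some $a_i$ with index $i < \delta = Q \cap \kappa$, which is exactly why $c$ is forced to reflect into the intermediate model $V[G \cap Q]$, where $\delta$ is already regular.
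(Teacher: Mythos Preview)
Your argument follows essentially the same route as the paper's proof: locate an ordinal $\delta$ that is simultaneously in $S_{\vec a}\cap\cof(\omega_1)$, a limit point of $\dot D_\tau$, and gives rise to a $Q=Sk(A_{\tau,\delta})$ with enough elementarity; then factor over $Q$, use Lemma 16.4 and the $\omega_1$-approximation property to push the witnessing club $c$ down into $V[G\cap Q]$, and contradict the fact that $\p\cap Q$ preserves the regularity of $\delta$.

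There is one genuine (though easily repaired) gap. You choose $\tau$ merely closed under $H^*$ and then assert that the set $D_0$ of $\beta<\kappa$ for which $A_{\tau,\beta}$ is closed under the Skolem functions of the \emph{expanded} structure $(\mathcal A,\mathcal Y,\p)$ is a club. But closure under $H^*$ only guarantees $Sk(\tau)\prec\mathcal A$; since the predicate $\mathcal Y$ involves the clause ``$P\prec\mathcal A$'', it is not first-order definable in $\mathcal A$, so $Sk(\tau)$ need not be elementary in $(\mathcal A,\mathcal Y,\p)$ and $D_0$ need not be unbounded. The fix is exactly what the paper does: choose an $M\prec(\mathcal A,\mathcal Y,\p)$ of size $\kappa$ with $M\cap\kappa^+\in\kappa^+\cap\cof(\kappa)$ and set $\tau=M\cap\kappa^+$; then a club of $\alpha$ have $Q_\alpha=Sk(A_{\tau,\alpha})\prec(\mathcal A,\mathcal Y,\p)$, which is what you need for Proposition 15.6 and the lemma on regularity. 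Also note your lemma numbers are off by one: what you cite as Lemmas 16.7 and 16.8 are the paper's Lemmas 16.6 and 16.7.
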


\begin{proof}
Suppose for a contradiction that $p$ is a condition, 
$\vec{a} = \langle \dot a_i : i < \kappa \rangle$ is a sequence of 
$\p$-names for countable subsets of $\kappa$, and $p$ forces that 
$\dot S_{\vec a} \cap \cof(\omega_1)$ is stationary. 
Without loss of generality, assume that each $\dot a_i$ is a nice name, 
which means that for some sequence of antichains 
$\langle A^i_\alpha : \alpha < \kappa \rangle$ 
of $\p$, $\dot a_i$ is equal to the set of pairs 
$\{ (p,\check \alpha) : p \in A^i_\alpha, \ \alpha < \kappa \}$. 
As $\p$ is $\kappa^+$-c.c., each name $\dot a_i$ is a 
member of $H(\kappa^+)$. 
Fix $\gamma < \kappa^+$ such that for all $i < \kappa$, 
$\dot a_i$ is in $f^*[\gamma]$, 
where $f^* : \kappa^+ \to H(\kappa^+)$ 
is the bijection described in Notation 7.1.

Let $M$ be an elementary substructure of $(\mathcal A,\mathcal Y,\p)$ 
such that $|M| = \kappa$, $M \cap \kappa^+ \in \kappa^+ \cap \cof(\kappa)$, 
and $\gamma < M \cap \kappa^+$. 
Let $\tau := M \cap \kappa^+$. 
Since $\gamma < M \cap \kappa^+$ and $M$ is closed under $f^*$, 
it follows that for all $i < \kappa$, $\dot a_i$ is in $M$. 
Fix $\p$-names $\dot Y$ and $\dot D_\tau$ such that 
$\p$ forces 
$$
\dot Y = \{ P : \exists p \in \dot G \ (P \in B_p) \} \ \textrm{and} \ 
\dot D_\tau = \{ P \cap \kappa : P \in \dot Y, \ \tau \in P \}.
$$
An easy observation which follows from Lemma 15.5 is that $\dot D_\tau$ 
is forced to be cofinal in $\kappa$. 
Therefore $\lim(\dot D_\tau)$ is forced to be club in $\kappa$.

For each $\alpha < \kappa$, let $Q_\alpha := Sk(A_{\tau,\alpha})$. 
Since $\tau$ is the union of the increasing and continuous sequence 
$\{ A_{\tau,\alpha} : \alpha < \kappa \}$, clearly 
$M = Sk(\tau)$ is the union of the increasing and continuous sequence 
$\{ Q_\alpha : \alpha < \kappa \}$. 
Let $E$ be a club subset of $\kappa$ such that for all $\alpha \in E$, 
$Q_\alpha \cap \kappa = \alpha$, 
$Q_{\alpha} \cap \kappa^+ = A_{\tau,\alpha}$, 
$Q_\alpha \prec (\mathcal A,\mathcal Y,\p)$, and for all $i < \alpha$, 
$\dot a_i \in Q_\alpha$.

Clearly $p$ forces that $E \cap \lim(\dot D_\tau)$ is club in $\kappa$. 
Since $p$ forces that $\dot S_{\vec a} \cap \cof(\omega_1)$ is stationary, 
we can fix $q \le p$ and $\alpha < \kappa$ such that 
$q$ forces that $\alpha$ is in 
$E \cap \lim(\dot D_\tau) \cap \dot S_{\vec a} \cap \cof(\omega_1)$. 
Since $q$ forces that $\cf(\alpha) = \omega_1$, clearly 
$\alpha$ has uncountable cofinality. 
Let $Q := Q_\alpha$. 
Then by Proposition 16.5, $Q \in \mathcal Y$ is $\vec S$-strong, 
$Q \cap \kappa = \alpha \in S_{\tau+1}$, 
$Q \cap \kappa^+ = A_{\tau,\alpha}$, 
$\cf(\sup(Q)) = Q \cap \kappa = \alpha$, and $q$ forces that 
$Q$ is in $\dot Y$. 
By extending $q$ if necessary, we can assume without loss of 
generality that $Q \in B_q$.

By Lemma 16.6, the set 
$$
\{ P \in Q \cap \mathcal Y : 
\textrm{$P$ is $\vec S$-strong}, \ 
\cf(\sup(P)) = P \cap \kappa \}
$$
is stationary in $P_{\alpha}(Q)$. 
By Lemma 16.7, the forcing poset $\p \cap Q$ forces that $\alpha$ 
is a regular cardinal.
Since $Q \in B_q$, 
clearly $q \le q_Q := (\emptyset,\{ Q \})$. 

Let $G$ be a $V$-generic filter on $\p$ containing $q$, and we will 
get a contradiction by considering the generic extension $V[G]$. 
Since $q \le q_Q$, it follows that $q_Q \in G$. 
By Proposition 16.3, 
$V[G]$ can be factored as 
$$
V[G] = V[G \cap Q][H],
$$
where $G \cap Q$ is a $V$-generic filter on $\p \cap Q$, $H$ is a 
$V[G \cap Q]$-generic filter on 
$(\p / q_Q) / (G \cap Q)$, and the pair 
$(V[G \cap Q],V[G])$ has the $\omega_1$-approximation property.

As $\alpha$ is in $S_{\vec a} \cap \cof(\omega_1)$, 
in $V[G]$ there is a 
club $c \subseteq \alpha$ with order type $\omega_1$ 
such that for all 
$\beta < \alpha$, there is $i < \alpha$ such that 
$c \cap \beta = \dot a_i^G$. 
For any such $\beta$ and $i$, $\dot a_i \in Q$, and 
$\dot a_i^G = c \cap \beta$ is a subset of $Q \cap \kappa = \alpha$. 
By Lemma 16.4, it follows that $\dot a_i^G \in V[G \cap Q]$. 
So for all $\beta < \alpha$, $c \cap \beta \in V[G \cap Q]$.

By Lemma 6.1, $c \in V[G \cap Q]$. 
But since $c$ has order type $\omega_1$, it follows that 
$\alpha$ has cofinality $\omega_1$ 
in $V[G \cap Q]$. 
Now $\alpha \in S_{\tau+1}$, and in particular, 
$\alpha$ is inaccessible in $V$, but $\alpha$ is not regular in $V[G \cap Q]$. 
However, we previously observed that $\p \cap Q$ forces that 
$\alpha$ is a regular cardinal, so we have a contradiction.
\end{proof}

\bibliographystyle{plain}
\bibliography{paper27}

\begin{thebibliography}{10}

\bibitem{baumgartner}
J.~Baumgartner, A.~Taylor, and S.~Wagon.
\newblock On splitting stationary subsets of large cardinals.
\newblock {\em J. Symbolic Logic}, 42(2):203--214, 1977.

\bibitem{jk26}
S.~Cox and J.~Krueger.
\newblock Quotients of strongly proper forcings and guessing models.
\newblock {\em J. Symbolic Logic}, 81(1):264--283, 2016.

\bibitem{friedman}
S.D. Friedman.
\newblock Forcing with finite conditions.
\newblock In {\em Set Theory: Centre de Recerca Matem\`atica, Barcelona,
  2003-2004, Trends in Mathematics}, pages 285--295. Birkh\"auser Verlag, 2006.

\bibitem{kruegerthin}
S.D. Friedman and J.~Krueger.
\newblock Thin stationary sets and disjoint club sequences.
\newblock {\em Trans. Amer. Math. Soc.}, 359:2407--2420, 2007.

\bibitem{hamkins}
J.~Hamkins.
\newblock Extensions with the approximation and cover properties have no new
  large cardinals.
\newblock {\em Fund. Math.}, 180:257--277, 2003.

\bibitem{jk21}
J.~Krueger.
\newblock Forcing with adequate sets of models as side conditions.
\newblock To appear in \emph{Mathematical Logic Quarterly}.

\bibitem{jk23}
J.~Krueger.
\newblock Coherent adequate sets and forcing square.
\newblock {\em Fund. Math.}, 224:279--300, 2014.

\bibitem{jk22}
J.~Krueger.
\newblock Strongly adequate sets and adding a club with finite conditions.
\newblock {\em Arch. Math. Logic}, 53(1-2):119--136, 2014.

\bibitem{jk24}
J.~Krueger.
\newblock Adding a club with finite conditions, part {II}.
\newblock {\em Arch. Math. Logic}, 54(1-2):161--172, 2015.

\bibitem{jk25}
J.~Krueger and M.A. Mota.
\newblock Coherent adequate forcing and preserving {CH}.
\newblock {\em J. Math. Log.}, 15(2), 2015.

\bibitem{mitchellold}
W.~Mitchell.
\newblock Aronszajn trees and the independence of the transfer property.
\newblock {\em Ann. Math. Logic}, pages 21--46, 1972.

\bibitem{mitchell}
W.~Mitchell.
\newblock {$I[\omega_2]$} can be the nonstationary ideal on {${\rm
  Cof}(\omega_1)$}.
\newblock {\em Trans. Amer. Math. Soc.}, 361(2):561--601, 2009.

\bibitem{neeman}
I.~Neeman.
\newblock Forcing with sequences of models of two types.
\newblock {\em Notre Dame J. Form. Log.}, 55(2):265--298, 2014.

\bibitem{shelah}
S.~Shelah.
\newblock Reflecting stationary sets and successors of singular cardinals.
\newblock {\em Arch. Math. Logic}, 31(1):25--53, 1991.

\bibitem{todor}
S.~{Todor\v cevi\' c}.
\newblock A note on the proper forcing axiom.
\newblock In {\em Axiomatic set theory (Boulder, Colo., 1983)}, volume~31 of
  {\em Contemp. Math.}, pages 209--218. Amer. Math. Soc., Providence, RI, 1984.

\end{thebibliography}

\end{document}